\documentclass[12pt, oldfontcommands]{memoir}
\usepackage[utf8]{inputenc}
\usepackage{hyperref}
\usepackage{mathtools}
\usepackage{amsmath,amsfonts,amssymb,amsthm}
\usepackage{graphicx, color, soul}
\usepackage{MnSymbol}
\usepackage[dvipsnames]{xcolor}
\usepackage[explicit]{titlesec}
\usepackage{psl-cover}
\usepackage{minitoc}
\usepackage{ulem}
\usepackage{tcolorbox}

\graphicspath{{resources/}}

\newtheorem{theorem}{Theorem}
\newtheorem{proposition}[theorem]{Proposition}
\newtheorem{lemma}[theorem]{Lemma}
\newtheorem{corollary}[theorem]{Corollary}

\theoremstyle{remark}
\newtheorem{remark}[theorem]{Remark}

\theoremstyle{definition}
\newtheorem{definition}[theorem]{Definition}

\theoremstyle{example}
\newtheorem{example}[theorem]{Example}

\numberwithin{equation}{section} 

\newcommand{\go}{[G,o]}
\newcommand{\bgo}{[\textbf{G},\textbf{o}]}

\newcommand{\bgfno}{[\textbf{G},f^n(\textbf{o})]}
\newcommand{\bbargo}{[\bar{\textbf{G}},\bar{\textbf{o}}]}
\newcommand{\calp}{\mathcal{P}}
\newcommand{\calg}{\mathcal{G}}
\newcommand{\calpfn}{\mathcal{P}_0^{f,n}}

 \newcommand*{\RNum}[1]{\expandafter\@slowromancap\romannumeral #1@}

\includeonly{
    chapters/0_acknowledgements,
    chapters/1_intro,
    chapters/2_prelim,
    chapters/3_record_vs,
    chapters/4_vertex_shift_probabilities,
    chapters/5_R_probability,
    chapters/6_stationary_case
    }

\title{Records of stationary processes and unimodular graphs}
\institute{Département d'Informatique de l'ENS}
\doctoralschool{\mbox{Mathématiques Paris Centre}}{386}
\specialty{Mathématiques}
\author{Bharath Roy Choudhury}
\date{18/09/2023}

\jurymember{7}{François Baccelli}{INRIA}{Directeur de thèse}
\jurymember{8}{Bartłomiej (Bartek) Błaszczyszyn}{INRIA}{Co-Directeur de thèse}
\jurymember{3}{Hermann Thorisson}{University of Iceland}{Rapporteur}
\jurymember{2}{Charles Bordenave}{Institut de Mathématiques de Marseille}{Rapporteur}
\jurymember{5}{Sergey Foss}{Heriot-Watt University}{Examinateur}
\jurymember{1}{Pierre Calka}{Université de Rouen Normandie}{Président du jury}
\jurymember{4}{David Dereudre}{Université de Lille}{Examinateur}
\jurymember{6}{Eliza O'Reilly}{Johns Hopkins University}{Examinatrice}

\frabstract{Considérons une règle de navigation définie sur un graphe qui mappe chaque sommet du graphe à un sommet de telle manière que la règle de navigation commute avec chaque automorphisme du graphe. C'est à dire que la règle de navigation appliquée aux sommets reste la même après prise d'un éventuel automorphisme du graphe. Une telle règle de navigation est dite avoir la propriété de covariance.
Cette étude se penche sur un ensemble de telles règles de navigation covariantes, indexées par des graphes localement finis et soumises à une condition de mesurabilité. Cet ensemble de règles est appelé déplacement de sommet. Plus généralement, on peut considérer les déplacements de sommets sur les réseaux, des graphes qui ont des étiquettes sur les arêtes et sur les sommets. Un déplacement de sommet induit une dynamique sur l’espace des réseaux enracinés localement finis. L'objectif central de ce travail réside dans l'étude de la dynamique associée à une règle de navigation spécifique appelée record vertex-shift. Il est défini sur les trajectoires de toute marche aléatoire discrète unidimensionnelle dont les incréments ont une moyenne finie et la marche aléatoire ne peut sauter que d'un pas vers la gauche. De plus, les travaux incluent plusieurs résultats notables concernant les déplacements de sommets d'enregistrement appliqués aux processus à incréments stationnaires. La thèse contient également des résultats sur les déplacements de sommets plus généraux sur les réseaux unimodulaires.
}

\enabstract{Consider a navigation rule defined on a graph that maps every vertex of the graph to a vertex in such a way that the navigation rule commutes with every automorphism of the graph. It is to say that the navigation rule applied to the vertices remains the same after taking any automorphism of the graph. Such a navigation rule is said to have the covariance property.
This study delves into a collection of such covariant navigation rules, indexed by locally finite graphs, and subject to a measurability condition. This ensemble of rules is termed a vertex-shift.  More generally, one can consider vertex-shifts on networks, graphs that have labels on edges and on vertices. A vertex-shift induces a dynamic on the space of locally finite rooted networks. The central focus of this work lies in investigating the dynamic associated to a specific navigation rule called the record vertex-shift. It is defined on the trajectories of any one dimensional discrete random walk whose increments have finite mean and the random walk can jump only one step to the left. Additionally, the work includes several notable results concerning record vertex-shifts applied to processes with stationary increments. The thesis also contains results on more general vertex-shifts on unimodular networks.}
\frkeywords{unimodularité, graphe des records, limites locales, réseaux aléatoires}
\enkeywords{unimodularity, record graph, local limits, random networks}

\pslassetspath{psl-cover/}
 
\begin{document} 
\maketitle
\setcounter{tocdepth}{2}
\dominitoc
\tableofcontents

\newpage 
\chapter*{Acknowledgements}
I would like to express my sincere gratitude to all the people who have supported me during my PhD journey.
Many thanks to the ERC (grant number 788851) for financially supporting this work.
I am profoundly indebted to my advisor, Fran\c{c}ois Baccelli, for the invaluable guidance, encouragement, and mentorship he has provided. His kindness and patience have made this journey not only enjoyable but also rewarding. Under his tutelage, I've learned not only about mathematics but also about life and happiness, gaining insights that extend far beyond the academic realm.

I wholeheartedly thank my co-supervisor, Bartłomiej Błaszczyszyn. He has generously shared his insights and expertise with me and has always encouraged me to improve my work. 
I have greatly benefited from our fruitful discussions and his constructive feedback.

I am thankful to the reviewers of my thesis, Hermann Thorisson, and Charles Bordenave, for their careful reading and thoughtful suggestions. Their suggestions have helped me to improve my work. 
I extend my thanks to the other jury members: Pierre Calka, David Dereudre, Sergey Foss, and Eliza O'Reilly, for agreeing to be part of this important milestone of my academic career and for asking stimulating questions.

I thank my colleague Ali Khezeli, who has consistently been available to validate the technical aspects of the proofs.

My heartfelt appreciation also goes to Michel Davydov, Ke Feng, Sayeh Khaniha, and Pierre Popineau. They not only enriched my experience during my stay at INRIA but also made it enjoyable and memorable through their kindness, humor, and goûter.
I am thankful to my colleagues Christine Fricker, Roman Gambelin, Guodong Sun,  Alessia Rigonat, and Sanket Kalamkar for the informal discussions and encouragement.

I would like to thank Julien Guieu, Marion Cueille, Christine Tanguy from INRIA, and Eric Sinaman from ENS for their help in the administrative work.

I express my gratitude to my wife Sayanika, whose support was indispensable in bringing this work to completion.
Finally, many thanks to all those individuals whom I may not have mentioned explicitly but who have contributed to my academic journey in various ways.
\chapter{Introduction}

Consider a navigation rule defined on a graph that maps every vertex of the graph to a vertex in such a way that the navigation rule commutes with every automorphism of the graph.
It is to say that the navigation rule applied to the vertices remains the same after taking any automorphism of the graph.
Such a navigation rule is said to have the covariance property.
Such covariant navigation rule indexed by locally finite graphs and satisfying a measurability condition is called a vertex-shift.
More generally, one can consider vertex-shifts on networks, graphs that have labels on edges and on vertices.
A vertex-shift induces a dynamic on the space of locally finite rooted networks.
We are interested in a navigation rule, called the record vertex-shift, defined on trajectories of the one dimensional discrete random walk that can jump only one step to the left.
The main part of the thesis is focused on studying the record vertex-shift on random walks.
A few results on record vertex-shift on processes with stationary increments are also derived.
The thesis also contains results on more general vertex-shifts on unimodular networks (see Def. \ref{def_unimodular}).

A random walk on $\mathbb{Z}$ is called skip-free to the left (resp. right) if its i.i.d. increments take values in $\{j \in \mathbb{Z}:j \geq -1\}$(resp. $\{j \in \mathbb{Z}:j \leq 1\}$).

  Consider an i.i.d. sequence of random variables $X= (X_n)_{n \in \mathbb{Z}}$, where $X_n$ takes values in $\{j \in \mathbb{Z}:j \geq -1\}$ and with finite mean for all \(n \in \mathbb{Z}\).
 A two-sided skip-free random walk $(S_n)_{n \in \mathbb{Z}}$ on $\mathbb{Z}$ with increment sequence $X$ is given by: $S_0=0$, $S_n = \sum_{k=0}^{n-1}X_k, \forall n \geq 0$, and $S_n = \sum_{k=n}^{-1}-X_k, \forall n \leq -1$.
 Observe that $(S_n)_{n \geq 0}$ is skip-free to the left whereas $(S_{-n})_{n \geq 0}$ is skip-free to the right.
  Construct the record graph of \(X\) on $\mathbb{Z}$ by drawing a directed edge from each integer $i$ to its record $R(i):=\min\{n>i:S_n \geq S_i\}$ if the minimum exists, otherwise to $i$ itself.
  The record graph is a random directed graph, with \(\mathbb{Z}\) as the vertex set and where the directed edges are random.
  Consider the connected component of $0$ in the constructed record graph and root it at $0$. 
  Denote it by $[T,0]$.
  This work is motivated by the following questions:
  \begin{enumerate}
    \item  What is the distribution of the random rooted graph $[T,0]$?
    \item How does the distribution of $[T,0]$ vary with the mean of the increment $\mathbb{E}[X_{0}]$?
    \item What is the trajectory of the random walk seen from the $n$-th record of $0$ as $n \to \infty$?
    \item Is there a  non-trivial measure-preserving map on the trajectory of the random walk which is compatible with the record map $R$ (see the discussion below for the definition of `compatibility')?
  \end{enumerate} 
  Concerning the second question, we show that the record graph exhibits a phase transition when the mean $\mathbb{E}[X_{0}]$ is varied between $- \infty$ and $\infty$.
  There are three distinct phases, and each phase has a characteristic feature in terms of the classification of unimodular directed trees.  
  The first phase is observed in the region $(-\infty,0)$, in which the record graph has an infinite number of connected components with the component of the origin being a finite unimodular directed tree.
  The second phase occurs when $\mathbb{E}[X_{0}]=0$.
  In this phase, the record graph is a.s. connected, and it is a one-ended unimodular directed tree.
  The third phase, which occurs when $\mathbb{E}[X_{0}] \in (0,\infty)$, has the characteristic feature that the record graph is a.s. a two-ended unimodular directed tree.

  A Family Tree is a rooted directed tree whose out-degree is at most $1$.
  By neglecting the loops, which occur only if $\mathbb{E}[X_{0}]< 0$, we obtain that the connected component of $0$ in the record graph is a random Family Tree $T$ rooted at $0$.
  We address the first question by explicitly computing the distribution of the Family Tree $[T,0]$ in the three regimes, namely when $\mathbb{E}[X_{0}] \in (-\infty,0)$ (Theorem \ref{20230213191656}), $\mathbb{E}[X_{0}]=0$ (Theorem \ref{theorem:R-graph_egwt}), and $\mathbb{E}[X_{0}]\in (0,\infty)$ (Theorem \ref{r_graph_positive_drift_20230203174636}, Theorem \ref{20230226170801}).

  In \cite{baccelliEternalFamilyTrees2018a}, the authors introduced the notion of vertex-shifts on graphs.
  Informally, a vertex-shift is a collection of maps on graphs that are covariant and satisfy a measurability condition.
  Given a vertex-shift $f$, we can associate to each graph $G$ its $f$-graph $G^f$ whose vertices are the same as those of $G$ and with directed edges given by $\{(x,f(x)):x \in V(G), f(x) \not = x\}$.
  The vertex-shift $f$ defines an equivalence relation on $V(G)$ by saying that two vertices $u$ and $v$ are equivalent if there exists some positive integer $n$ such that $f^n(u)=f^n(v)$.
  The equivalence class of \(u \in V(G)\) is called the foil of \(u\) (denoted as foil\((u)\)) and the collection of all foils of \(V(G)\) is called the foliation of \(G\).
  Since the connected component $G_f(u)$ of any vertex $u$ in $G^f$ is the set \(\{v \in V(G): f^n(v)=f^m(u) \text{ for some } n \geq 1, m \geq 1\} \), we have foil\((u)\subseteq V(G_f(u))\).

  The authors showed in \cite{baccelliEternalFamilyTrees2018a} that given a unimodular random rooted graph $[\mathbf{G}, \mathbf{o}]$, the connected component of the root in the $f$-graph $G^f$ is a unimodular Family Tree, and a.s. every component of \(G^f\) belongs to one of the three classes: $\mathcal{F}/\mathcal{F}$, $\mathcal{I}/\mathcal{I}$, and $\mathcal{I}/\mathcal{F}$.
 The class $\mathcal{F}/\mathcal{F}$ corresponds to the component being finite and its foils being finite.
 Similarly, the classes $\mathcal{I}/\mathcal{I}$ and $\mathcal{I}/\mathcal{F}$ correspond to the component being infinite and all foils of the component being infinite, and the component being infinite and all foils of the component being finite, respectively.
  It is easy to see that the record map $R$ is a vertex-shift, which
  implies that the Family Tree $[T,0]$ is unimodular, where \(T\)  is the connected component of \(0\) in the record graph.
  The three phases of the record graph $[T,0]$ for \(\mathbb{E}[X_0]<0, \mathbb{E}[X_0]=0\) and \(\mathbb{E}[X_0]>0\) correspond to these three classes \(\mathcal{F}/\mathcal{F}, \mathcal{I}/\mathcal{I}\) and \(\mathcal{I}/\mathcal{F}\) respectively.
  So, the record vertex-shift is an example for which all the three classes respectively occur depending on the mean $\mathbb{E}[X_{0}]$.

Using the framework of \cite{aldousProcessesUnimodularRandom2007}, we view the increments of the skip-free random walk as marks associated to the edges of $\mathbb{Z}$, seen from the root $0$.
More precisely, we consider the random rooted network $[\mathbb{Z},0,X]$, with $X=(X_n)_{n \in \mathbb{Z}}$ being the sequence of increments and \(X_n\) being the label assigned to the edge \((n,n+1)\), for all  \(n \in \mathbb{Z}\).
It is easy to see that $[\mathbb{Z},0,X]$ is a unimodular network.
By the local view from the $n$-th record ($n \geq 1$), we mean the distribution of $[\mathbb{Z},R^n(0),X]$.
The latter is the same as $[\mathbb{Z},0,R^n(0)X]$, where $R^n(0)X$ is the shifted sequence \( (X_{m+R^n(0)})_{m \in \mathbb{Z}}\).
Using the unimodularity of $[\mathbb{Z},0,X]$, we show that the local view from the $n$-th record is absolutely continuous with respect to $[\mathbb{Z},0,X]$ with Radon-Nikodym derivative $d_n(0)$, which is defined as the number of integers for which $0$ is the $n$-th record.

Further, in connection with the third question, we show that the local view seen from the $n$-th record as $n$ tends to  $\infty$, $\lim_{n \to \infty}[\mathbb{Z},0,R^n(0)X]$, exists for \(\mathbb{E}[X_0]<0, \mathbb{E}[X_0]=0\) and \(\mathbb{E}[X_0]>0\) respectively.
Denote this limit by $[\mathbb{Z},0,Y]$, where $Y=(Y_n)_{n \in \mathbb{Z}}$ is the mark on the edges of $\mathbb{Z}$.
We observe that $Y$ is not an i.i.d. sequence.
In particular, its negative part $(Y_n)_{n<0}$ fails to be an i.i.d. sequence.
However, it has a regenerative structure when $\mathbb{E}[X_{0}]\geq 0$.

As for the fourth question, a map $R'$ on $\mathbb{Z}$ is said to be \emph{foil-preserving} with $R$ if it maps every foil to itself, i.e., for every $i \in \mathbb{Z}$, $R'(i)$ belongs to the foil($i$) (the foil associated to the vertex-shift $R$ and $i$).

When \(\mathbb{E}[X_0] \geq 0\), we define a random map $R_{\perp}$ that takes $i$ to 
\[ R_{\perp}(i)=\inf\{k>i:R^n(i)=R^n(k) \text{ for some } n \geq 1\}.\]
We show that $R_{\perp}$ is foil-preserving with \(R\) and it preserves the distribution of $[\mathbb{Z},0,X]$, i.e., the distribution of \([\mathbb{Z},R_{\perp}(0),X]\) is the same as that of \([\mathbb{Z},0,X]\).

The statistics of records of time series have found applications in finance, hydrology and physics.
See for instance the references in the survey \cite{godrecheRecordStatisticsStrongly2017} for a non-exhaustive list of applications.
These studies of records are focused on statistics such as the distribution of $n$-th record, the age of a record (i.e., the gap between two consecutive records), and the age of the long-lasting record (\cite{wergenRecordsStochasticProcesses2013}, \cite{godrecheRecordStatisticsStrongly2017}). 
When the time series under consideration is an i.i.d. sequence, it is shown in \cite{stepanovCharacterizationTheoremWeak1994} that the mean of the jump between two consecutive record values conditioned on the initial record value could characterize the distribution of the i.i.d. sequence.
In contrast, in the present work, the focus is on the joint local structure of the records of skip-free random walks. 
The structure has a natural family tree flavour with an order suggesting to term it as `genealogy of records'.

The relation between skip-free random walks and Galton-Watson trees was extensively discussed in, e.g., \cite{benniesRandomWalkApproach2000}, \cite{legallRandomTreesApplications2005b}, and \cite{jimpitmanCombinatorialStochasticProcesses2006}.
In fact, the authors of \cite{benniesRandomWalkApproach2000} encode the critical Galton-Watson tree and show that the encoding gives a correspondence between records of the random walk and ancestors of the root in the Galton-Watson tree.
We extend these ideas to define the backward map $\Phi_R$ that takes a record graph and maps it back to the trajectory of a random walk (see Section \ref{subsec_backward_map}, Chapter \ref{chapter_record_V_shift_probability}).
However, we are not restricted to finite Galton-Watson trees here.
To the best of our knowledge this complete description of the record graph of skip-free random walk is new.

In Chapter \ref{chapter_prelim}, we give the background and the statements of the theorems that we use in this work.
We  introduce the canonical probability spaces in which our random objects take values and  introduce the dynamics induced by vertex-shifts.
Our focus is on studying vertex-shifts on unimodular networks.
We state a key theorem that we use in this regard known as Foil classification in unimodular networks Theorem.
We also introduce some operations on unimodular Family Trees.

In Chapter \ref{chapter_record_v_shift}, we introduce the record map with which we obtain the record vertex-shift.
We study its properties on deterministic integer-valued sequences.
We study the properties of the record graph associated to the sequences.
Using these properties, we prove that the record graph of \([\mathbb{Z},0,X]\) exhibits phase transitions, where \(X\) is the i.i.d. sequence of  increments of a skip-free to the left random walk.
We also describe the distribution of component of \(0\) in the record graph of \([\mathbb{Z},0,X]\) in each of these phases.
We define the map \(R_{\perp}\) and prove that it preserves the measure of \([\mathbb{Z},0,X]\) when \(\mathbb{E}[X_0]=0\).

In Chapter \ref{chapter_v_prob}, we focus on general vertex-shifts on unimodular networks and study the existence and uniqueness of vertex-shift probabilities.
We provide sufficient conditions under which a vertex-shift on a random network has a unique vertex-shift probability.
We apply these results to compute the vertex-shift probability of unimodular Family Trees where the vertex-shift is given by the parent vertex-shift.

In Chapter \ref{chapter_record_V_shift_probability}, we apply the results of Chapter \ref{chapter_v_prob} to study the \(R\)-probability of \([\mathbb{Z},0,X]\), where \(R\) is the record vertex-shift and \(X\) is the i.i.d. sequence of increments of a ship-free to the left random walk.
We also provide constructions to generate the \(R\)-probability.

In Chapter \ref{chapter_stationary_seq}, we study the record graph of a stationary and ergodic sequence. 
We show that the component of \(0\) in the record graph exhibits a phase transition at \(0\) when the mean is varied between \(-\infty\) and \(+\infty\) (see Theorem \ref{thm_phase_transition_stationary}).
We provide a sufficient condition under which a unimodular ordered EFT can be represented as the component of \(0\) in the record graph of the network associated to a stationary sequence (Theorem \ref{thm_representation}).

\section*{ Introduction in French}

Considérez une règle de navigation définie sur un graphe qui associe à chaque sommet du graphe un sommet de telle sorte que la règle de navigation commute avec chaque automorphisme du graphe.
Cela signifie que la règle de navigation appliquée aux sommets reste la même après avoir pris n'importe quel automorphisme du graphe.
Une telle règle de navigation est dite avoir la propriété de covariance.
Une telle règle de navigation covariante indexée par des graphes localement finis et satisfaisant une condition de mesurabilité est appelée un décalage de sommet.
Plus généralement, on peut considérer des décalages de sommets sur des réseaux, des graphes qui ont des étiquettes sur les arêtes et sur les sommets.
Un décalage de sommet induit une dynamique sur l'espace des réseaux enracinés localement finis.
Nous nous intéressons à une règle de navigation, appelée décalage de sommet des records, définie sur les trajectoires de la marche aléatoire discrète à une dimension qui ne peut sauter qu'un pas vers la gauche.
La partie principale de la thèse est consacrée à l'étude du décalage de sommet des records sur ces marches aléatoires.
Quelques résultats sur le décalage de sommet des records sur des processus à accroissements stationnaires sont également obtenus.
La thèse contient également des résultats sur des décalages de sommets plus généraux sur des réseaux unimodulaires (voir Def. \ref{def_unimodular}).

Une marche aléatoire sur $\mathbb{Z}$ est dite sans saut vers la gauche (resp. droite) si ses accroissements i.i.d. prennent des valeurs dans $\{j \in \mathbb{Z}:j \geq -1\}$ (resp. $\{j \in \mathbb{Z}:j \leq 1\}$).

Considérons une suite i.i.d. de variables aléatoires $X= (X_n)_{n \in \mathbb{Z}}$, où $X_n$ prend des valeurs dans $\{j \in \mathbb{Z}:j \geq -1\}$ et avec une espérance finie pour tout \(n \in \mathbb{Z}\).
Une marche aléatoire bilatérale sans saut vers la gauche $(S_n)_{n \in \mathbb{Z}}$ sur $\mathbb{Z}$ associée à une suite d'accroissements $X$ est donnée par : $S_0=0$, $S_n = \sum_{k=0}^{n-1}X_k, \forall n \geq 0$, et $S_n = \sum_{k=n}^{-1}-X_k, \forall n \leq -1$.
Remarquez que $(S_n)_{n \geq 0}$ est sans saut vers la gauche tandis que $(S_{-n})_{n \geq 0}$ est sans saut vers la droite.
Construisez le graphe des records de \(X\) sur $\mathbb{Z}$ en traçant une arête orientée de chaque entier $i$ vers son record $R(i):=\min\{n>i:S_n \geq S_i\}$ si le minimum existe, sinon vers $i$ lui-même.
Le graphe des records est un graphe aléatoire orienté, avec \(\mathbb{Z}\) comme ensemble de sommets et où les arêtes orientées sont aléatoires.
Considérez la composante connexe de $0$ dans le graphe des records construit et enracinez-la en $0$.
Notez-la par $[T,0]$.
Ce travail est motivé par les questions suivantes :
\begin{enumerate}
\item  Quelle est la distribution du graphe enraciné aléatoire $[T,0]$ ?
\item Comment varie la distribution de $[T,0]$ avec l'espérance de l'accroissement $\mathbb{E}[X_{0}]$ ?
\item Quelle est la trajectoire de la marche aléatoire vue depuis le $n$-ième record de $0$ lorsque $n \to \infty$ ?
\item Existe-t-il une application non triviale qui préserve la mesure sur la trajectoire de la marche aléatoire qui est compatible avec l'application record $R$ (voir la discussion ci-dessous pour la définition de `compatibilité') ?
\end{enumerate}

En ce qui concerne la deuxième question, nous montrons que le graphe des records présente une transition de phase lorsque la moyenne $\mathbb{E}[X_{0}]$ varie entre $- \infty$ et $\infty$.
Il y a trois phases distinctes, et chaque phase a une caractéristique propre en termes de classification des arbres dirigés unimodulaires.

La première phase est observée dans la région $(-\infty,0)$, dans laquelle le graphe des records a un nombre infini de composantes connexes avec la composante de l'origine étant un arbre dirigé unimodulaire fini.
La deuxième phase se produit lorsque $\mathbb{E}[X_{0}]=0$.
Dans cette phase, le graphe des records est connexe, et c'est un arbre dirigé unimodulaire à une extrémité.
La troisième phase, qui se produit lorsque $\mathbb{E}[X_{0}] \in (0,\infty)$, a la caractéristique que le graphe des records est un arbre dirigé unimodulaire à deux extrémités.

Un arbre généalogique est un arbre dirigé enraciné dont le degré sortant est au plus $1$.
En négligeant les boucles, qui ne se produisent que si $\mathbb{E}[X_{0}]< 0$, nous obtenons que la composante connexe de $0$ dans le graphe des records est un arbre généalogique aléatoire $T$ enraciné en $0$.
Nous abordons la première question en calculant explicitement la distribution de l'arbre généalogique $[T,0]$ dans les trois régimes, à savoir lorsque $\mathbb{E}[X_{0}] \in (-\infty,0)$ (Théorème \ref{20230213191656}), $\mathbb{E}[X_{0}]=0$ (Théorème \ref{theorem:R-graph_egwt}), et $\mathbb{E}[X_{0}]\in (0,\infty)$ (Théorème \ref{r_graph_positive_drift_20230203174636}, Théorème \ref{20230226170801}).

Dans \cite{baccelliEternalFamilyTrees2018a}, les auteurs ont introduit la notion de décalages de sommets sur les graphes.
Informellement, un décalage de sommet est une collection de fonctions sur les graphes qui sont covariantes et satisfont une condition de mesurabilité.
Étant donné un décalage de sommet $f$, nous pouvons associer à chaque graphe $G$ son $f$-graphe $G^f$ dont les sommets sont les mêmes que ceux de $G$ et avec des arêtes dirigées données par $\{(x,f(x)):x \in V(G)\}$.
Le décalage de sommet $f$ définit une relation d'équivalence sur $V(G)$ en disant que deux sommets $u$ et $v$ sont équivalents s'il existe un entier positif $n$ tel que $f^n(u)=f^n(v)$.
La classe d'équivalence de \(u \in V(G)\) est appelée le feuillet de \(u\) (noté feuillet(\(u\))) et l'ensemble de tous les feuillets de \(V(G)\) est appelé la foliation de \(G\).
Comme la composante connexe $G_f(u)$ de tout sommet $u$ dans $G^f$ est l'ensemble \(\{v \in V(G): f^n(v)=f^m(u) \text{ pour certains } n \geq 1, m \geq 1\} \), nous avons feuillet\((u)\subseteq V(G_f(u))\).

Les auteurs ont montré dans \cite{baccelliEternalFamilyTrees2018a} que, étant donné un graphe aléatoire unimodulaire enraciné $[\mathbf{G}, \mathbf{o}]$, c.-à-d. la composante connexe de la racine dans le $f$-graphe $G^f$, est un arbre généalogique unimodulaire appartenant à l'une des trois classes : $\mathcal{F}/\mathcal{F}$, $\mathcal{I}/\mathcal{I}$, et $\mathcal{I}/\mathcal{F}$.
La classe $\mathcal{F}/\mathcal{F}$ correspond au ces où la composante de \(\mathbf{o}\) est finie et ses feuillets sont finis.
De même, les classes $\mathcal{I}/\mathcal{I}$ et $\mathcal{I}/\mathcal{F}$ correspondent au ces où la composante est infinie et tous les feuillets de la composante sont infinis, et où la composante est infinie et tous les feuillets de la composante sont finis, respectivement.
Il est facile de voir que le décalage record $R$ est un décalage de sommet, ce qui
implique que l'arbre généalogique $[T,0]$ est unimodulaire, où \(T\)  est la composante connexe de \(0\) dans le graphe des records.
Les trois phases du graphe des records $[T,0]$ pour \(\mathbb{E}[X_0]<0, \mathbb{E}[X_0]=0\) et \(\mathbb{E}[X_0]>0\) correspondent à ces trois classes \(\mathcal{F}/\mathcal{F}, \mathcal{I}/\mathcal{I}\) et \(\mathcal{I}/\mathcal{F}\) respectivement.
Ainsi, le décalage record est un exemple pour lequel les trois classes se produisent respectivement en fonction de la moyenne $\mathbb{E}[X_{0}]$.

En utilisant le cadre de \cite{aldousProcessesUnimodularRandom2007}, nous considérons les accroissements de la marche aléatoire sans saut comme des marques associées aux arêtes de $\mathbb{Z}$, vues depuis la racine $0$.
Plus précisément, nous considérons le réseau aléatoire enraciné $[\mathbb{Z},0,X]$, avec $X=(X_n)_{n \in \mathbb{Z}}$ étant la suite des accroissements et \(X_n\) étant le label assigné à l'arête \((n,n+1)\), pour tout  \(n \in \mathbb{Z}\).
Il est facile de voir que $[\mathbb{Z},0,X]$ est un réseau unimodulaire.
Par vue locale depuis le $n$-ième record ($n \geq 1$), nous entendons la distribution de $[\mathbb{Z},R^n(0),X]$.
Cette dernière est la même que celle de $[\mathbb{Z},0,R^n(0)X]$, où $R^n(0)X$ est la suite décalée \( (X_{m+R^n(0)})_{m \in \mathbb{Z}}\).
En utilisant l'unimodularité de $[\mathbb{Z},0,X]$, nous montrons que la vue locale depuis le $n$-ième record est absolument continue par rapport à $[\mathbb{Z},0,X]$ avec une dérivée de Radon-Nikodym $d_n(0)$, qui est définie comme le nombre d'entiers pour lesquels $0$ est le $n$-ième record.

De plus, en lien avec la troisième question, nous montrons que la vue locale vue depuis le $n$-ième record lorsque $n$ tend vers  $\infty$, $\lim_{n \to \infty}[\mathbb{Z},0,R^n(0)X]$, existe pour \(\mathbb{E}[X_0]<0, \mathbb{E}[X_0]=0\) et \(\mathbb{E}[X_0]>0\) respectivement.
Notons cette limite par $[\mathbb{Z},0,Y]$, où $Y=(Y_n)_{n \in \mathbb{Z}}$ est la marque sur les arêtes de $\mathbb{Z}$.
Nous observons que $Y$ n'est pas une suite i.i.d.
En particulier, sa partie négative $(Y_n)_{n<0}$ n'est pas une suite i.i.d.
Cependant, elle a une structure régénérative lorsque $\mathbb{E}[X_{0}]\geq 0$.

En ce qui concerne la quatrième question, une fonction $R'$ sur $\mathbb{Z}$ est dite \emph{préservant les feuillets} avec $R$ si elle envoie chaque feuillet sur lui-même, c'est-à-dire que pour tout $i \in \mathbb{Z}$, $R'(i)$ appartient au feuillet($i$) (le feuillet associé au décalage de sommet $R$ et $i$).

Lorsque \(\mathbb{E}[X_0] \geq 0\), nous définissons une fonction aléatoire $R_{\perp}$ qui envoie $i$ en
\[ R_{\perp}(i)=\inf\{k>i:R^n(i)=R^n(k) \text{ pour un certain } n \geq 1\}.\]
Nous montrons que $R_{\perp}$ préserve les feuillets de \(R\) et préserve la distribution de $[\mathbb{Z},0,X]$, c'est-à-dire que la distribution de \([\mathbb{Z},R_{\perp}(0),X]\) est la même que celle de \([\mathbb{Z},0,X]\).

Les statistiques des records des séries temporelles ont trouvé des applications en finance, hydrologie et physique.
Voir par exemple les références dans l'árticle de synthèse \cite{godrecheRecordStatisticsStrongly2017} pour une liste non exhaustive d'applications.
Ces études sur les records sont centrées sur des statistiques telles que la distribution du $n$-ième record, l'âge d'un record (c'est-à-dire l'écart entre deux records consécutifs), et l'âge du record le plus durable (\cite{wergenRecordsStochasticProcesses2013}, \cite{godrecheRecordStatisticsStrongly2017}).
Lorsque la série temporelle considérée est une suite i.i.d., il est montré dans \cite{stepanovCharacterizationTheoremWeak1994} que la moyenne du saut entre deux valeurs consécutives de record conditionnée à la valeur initiale du record peut caractériser la distribution de la suite i.i.d.
En revanche, dans le présent travail, l'accent est mis sur la structure locale conjointe des records des marches aléatoires sans saut vers la gauche.
La structure a une connexion naturelle avec la notion d'arbre généalogique qui justifiè de parler de `généalogie des records'.

La relation entre les marches aléatoires sans saut vers la gauche et les arbres de Galton-Watson a été largement discutée dans, par exemple, \cite{benniesRandomWalkApproach2000}, \cite{legallRandomTreesApplications2005b} et \cite{jimpitmanCombinatorialStochasticProcesses2006}.
En fait, les auteurs de \cite{benniesRandomWalkApproach2000} codent l'arbre de Galton-Watson critique et montrent que le codage donne une correspondance entre les records de la marche aléatoire et les ancêtres de la racine dans l'arbre de Galton-Watson.
Nous étendons ces idées pour définir l'application réciproque $\Phi_R$ qui prend un graphe des records et le renvoie à la trajectoire d'une marche aléatoire (voir Section \ref{subsec_backward_map}, Chapitre \ref{chapter_record_V_shift_probability}).
Cependant, nous ne sommes pas limités aux arbres de Galton-Watson finis ici.
À notre connaissance, cette description complète du graphe des records de la marche aléatoire sans saut est nouvelle.
Nous ne sommes pas limités au cas des marche aléatoires par ailleur.

Dans le chapitre \ref{chapter_prelim}, nous donnons les préliminaires et les énoncés des théorèmes que nous utilisons dans ce travail.
Nous introduisons les espaces probabilisés canoniques dans lesquels nos objets aléatoires prennent leurs valeurs et introduisons la dynamique induite par les décalages de sommets.
Nous nous concentrons sur l'étude des décalages de sommets sur les réseaux unimodulaires.
Nous énonçons un théorème clé que nous utilisons à cet égard, connu sous le nom de théorème de classification des feuillets dans les réseaux unimodulaires.
Nous introduisons également quelques opérations sur les arbres généalogiques unimodulaires.

Dans le chapitre \ref{chapter_record_v_shift}, nous introduisons le décalage de sommet des records.
Nous étudions ses propriétés sur les suites déterministes à valeurs entières.
Nous étudions les propriétés du graphe des records associé aux suites.
En utilisant ces propriétés, nous prouvons que le graphe des records de \([\mathbb{Z},0,X]\) présente des transitions de phase, où \(X\) est la suite i.i.d. des accroissements d'une marche aléatoire sans saut vers la gauche.
Nous décrivons également la distribution de la composante de (0) dans le graphe des records de \([\mathbb{Z},0,X]\) dans chacune de ces phases.
Nous définissons la fonction \(R_{\perp}\) et prouvons qu'elle préserve la loi de \([\mathbb{Z},0,X]\) lorsque \(\mathbb{E}[X_0]=0\).

\hspace{0.5em} Dans le chapitre \ref{chapter_v_prob}, nous nous intéressons aux décalages de sommets généraux sur les réseaux unimodulaires et nous étudions l'existence et l'unicité des probabilités de décalage de sommet.
Nous fournissons des conditions suffisantes sous lesquelles un décalage de sommet sur un réseau aléatoire a une probabilité de décalage de sommet unique.
Nous appliquons ces résultats pour calculer la probabilité de décalage de sommet des arbres généalogiques unimodulaires où le décalage de sommet est donné par le décalage de sommet parent.

Dans le chapitre \ref{chapter_record_V_shift_probability}, nous appliquons les résultats du chapitre \ref{chapter_v_prob} pour étudier la \(R\)-probabilité de \([\mathbb{Z},0,X]\), où \(R\) est le décalage record de sommet et \(X\) est la suite i.i.d. des accroissements d'une marche aléatoire sans saut vers la gauche.
Nous donnons également des constructions pour générer la \(R\)-probabilité.

Dans le chapitre \ref{chapter_stationary_seq}, nous étudions le graphe des records d'une suite stationnaire et ergodique.
Nous montrons que la composante de \(0\) dans le graphe des records présente une transition de phase en \(0\) lorsque la moyenne varie entre \(-\infty\) et \(+\infty\) (voir le théorème \ref{thm_phase_transition_stationary}).
Nous fournissons une condition suffisante sous laquelle un EFT ordonné unimodulaire peut être représenté comme la composante de \(0\) dans le graphe des records du réseau associé à une suite stationnaire (théorème \ref{thm_representation}).

\chapter{Preliminaries} \label{chapter_prelim}

In this chapter, we summarize the framework of networks given in \cite{aldousProcessesUnimodularRandom2007} and in \cite{baccelliEternalFamilyTrees2018a}.

\section{Canonical probability spaces for networks}\label{sec_canonical_prob}

A graph \(G\) with a vertex set \(V=V(G)\) and an (undirected) edge set \(E=E(G)\) is denoted by \(G=(V,E)\).
A \textbf{network} is a graph \(G=(V,E)\) together with a Polish space \(\Xi\) and two maps: one from \(V\) to \(\Xi\) and the other from \(E\) to \(\Xi\).
The space \(\Xi\) is called mark space and the value assigned to a vertex (resp. an edge) is called \textbf{mark of the vertex} (resp. edge).
Each edge \(e=\{x,y\}\) in the network has two marks associated with its endpoints, one assigned to \(\{e,x\}\) and the other assigned to \(\{e,y\}\) .
A directed edge can be obtained from an undirected edge by assigning \(0\) as mark to one of its endpoint and \(1\) as mark to the other, with the understanding that the orientation of the directed edge is from the endpoint with mark \(0\) to the endpoint with mark \(1\).
A graph can be seen as a network with a fixed mark.
In this work, we focus on the networks that are connected and locally finite, i.e., degree of every vertex is finite, and whose mark space is \(\mathbb{Z}\).

An  \textbf{isomorphism} \(\phi\) of networks \(G_1\) and \(G_2\), denoted as \(\phi:G_1 \to G_2\), is a bijective map \(\phi_V:V(G_1) \to V(G_2)\) satisfying the three conditions: (1) for every pair of vertices \(u,v \in V(G_1)\), \((u,v)\) is an edge of \(G_1\) if and only if \((\phi_V(u),\phi_v(v))\) is an edge of \(G_2\), (2) for all \(u \in V(G_1)\), mark of \(u\) is the same as that of \(\phi_V(u)\), (3) for all \(e=(u,v) \in E(G_1)\), mark of \(e\) is same as that of \((\phi_V(u),\phi_V(v))\).
In particular, both \(G_1,G_2\) should have the same mark space.

A \textbf{rooted network} \((G,o)\) is a network \(G\) with a distinguished vertex \(o \in V(G)\).
For any \(r\geq 0\), let \((G,o)_r\) be the subgraph induced by the vertices \(\{u \in V(G):d_{G}(u,o) \leq r\}\), where \(d_{G}\) denotes the graph distance in \(G\), and for any vertex \(v \in V((G,o)_r)\),  let \(((G,o)_r,v)\) be the rooted network obtained by rooting \((G,o)_r\) at \(v\).
The marks of \((G,o)_r\) are induced by the marks of \((G,o)\).
A \textbf{rooted-isomorphism} \(\phi\) of two rooted  networks \((G_1,o_1),(G_2,o_2)\) is an isomorphism  \(\phi:G_1\to G_2\) of networks \(G_1,G_2\) with the additional condition that \(\phi_V(o_1)=\phi_V(o_2)\).
Two rooted networks \((G_1,o_1)\) and \((G_2,o_2)\) are said to be \textbf{equivalent}, denoted as \((G_1,o_1) \sim (G_2,o_2)\), if and only if there is a rooted-isomorphism from one to the other.
An equivalent class of \((G_1,o_1)\) is denoted by \([G_1,o_1]\).
We do not distinguish between a rooted graph and its equivalent class unless explicitly stated.

A doubly rooted network \((G,u,v)\) is a network \(G\) with two distinguished vertices \(u,v \in V(G)\).
For any \(r\geq 0\), let \((G,u,v)_r\) be the subgraph induced by the vertices \(\{w \in V(G): d_{G}(w,u) \wedge d_G(w,v) \leq r\}\) with marks of edges and vertices induced from that of \(G\), where \(d_{G}\) denotes the graph distance in \(G\) and \(\wedge\) denotes the minimum function.
A doubly rooted-isomorphism \(\phi\) of two doubly rooted networks \((G_1,u_1,v_1)\) and \((G_2,u_2,v_2)\) is an isomorphism \(\phi:G_1\to G_2\) of networks \(G_1,G_2\) such that \(\phi_V(u_1)=u_2\) and \(\phi_V(v_1)=v_2\).
Two doubly rooted networks \((G_1,u_1,v_1)\) and \((G_2,u_2,v_2)\) are said to be equivalent, denoted as \((G_1,u_1,v_1) \sim (G_2,u_2,v_2)\), if and only if there exists a doubly rooted-isomorphism between them.

We denote the \textbf{space of equivalence class of rooted locally finite and connected networks} by \(\mathcal{G}_*\) and the \textbf{space of equivalence class of doubly rooted locally finite and connected networks} by \(\mathcal{G}_{**}\).

The space \(\mathcal{G}_*\) can be equipped with a metric in the following way: distance between two rooted networks \((G_1,o_1)\) and \((G_2,o_2)\) is defined as \(2^{-d_{1,2}}\), where \(d_{1,2}\) is the supremum of \(r \geq 0\) such that there exists a network isomorphism \(\phi_r:(G_1,o_1)_r \to (G_2,o_2)_r\) and that for every \(u \in  V(G_1), (v,w) \in E(G_1)\), the distance between marks of \(u\) and of \(\phi(u)\) is at most \(\frac{1}{r}\), and the distance between mark of \((v,w)\) and of \((\phi_r(v),\phi_r(w))\) is at most \(\frac{1}{r}\).
The space \(\mathcal{G}_{**}\) can also be equipped with a metric in the similar way as in \(\mathcal{G}_*\) by defining the distance between two doubly rooted networks \((G_1,u_1,v_1)\) and \((G_2,u_2,v_2)\) as \(2^{-d_{1,2,}}\), where \(d_{1,2}\) is the same as in \(\mathcal{G}_*\) but applied to \((G_1,u_1,v_1)_r\), \((G_2,u_2,v_2)_r\) instead of \((G_1,o_1)_r\), \((G_2,o_2)_r\)  respectively.
The topology generated on \(\mathcal{G}_*\) with the above  metric is called the local topology.

Both the spaces \(\mathcal{G}_*\) and \(\mathcal{G}_{**}\) endowed with their respective metrics are Polish spaces.
Let us denote the space of equivalence classes of graphs by \(\hat{\mathcal{G}}_*\)

In this work, we often encounter with a special networks called Family Trees.
A \textbf{Family Tree} is a locally finite directed tree that has the property that the out-degree of every vertex is at most $1$.
If the out-degree of every vertex is equal to $1$ a.s. then it is called an \textbf{Eternal Family Tree} (EFT).
A vertex $u$ of a Family Tree is called the \textbf{parent} of a vertex $v$ if there is a directed edge from $v$ to $u$. 
In this case, $v$ is called a \textbf{child} of $u$.
 An \textbf{ordered Family Tree} is a Family Tree in which children of every vertex are ordered.
Although, we use similar notation for ordered and unordered, one can distinguish them depending on the context.
A rooted (ordered) Family Tree is a pair $(T,o)$, where $T$ is an (ordered) Family Tree and $o$ is a distinguished vertex of $T$.
The space of equivalence classes of rooted (ordered) Family Trees is denoted by $\mathcal{T}_*$, where the equivalence relation is given by rooted (ordered) isomorphisms, i.e., two rooted Family Trees are equivalent if there is a rooted isomorphism between them that preserves the order.
The space $\mathcal{T}_*$ is Polish with respect to local topology \cite{baccelliEternalFamilyTrees2018a}.
By forgetting the order of an ordered Family Tree, one obtains an unordered Family Tree.
We often call a rooted Family Tree, a Family Tree.

We will also encounter marked Family Trees whose vertices have \(\mathbb{Z}\)-valued marks.
Denote the space of equivalence classes of such (rooted) marked Family Trees by \(\hat{\mathcal{T}}_*\).

A random (rooted) network is a measurable map from a probability space to \(\mathcal{G}_*\).
We use the bold fonts to denote a random network and normal fonts to denote a realization of a  random network.

We use the following theorem to prove the convergence of random networks.
For any rooted network \((G,o)\) and \(r \geq 0\), let \([G,o]_r\) denote the equivalent class \((G,o)_r\).
\begin{theorem}[\cite{nicolascurienRandomGraphs}]
    A sequence of random networks \(([\mathbf{G}_n,\mathbf{o}_n])_{n \geq 1}\) converges in distribution to a random network \([\mathbf{G},\mathbf{o}]\) as \(n \to \infty\) if and only if for every \(r \geq 0\), the sequence of random networks \(([\mathbf{G}_n,\mathbf{o}_n]_r)_{n \geq 1}\) converges in distribution as \(n \to \infty\).
\end{theorem}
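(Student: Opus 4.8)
The real content of this statement is the upgrade from a purely deterministic fact — that the local topology on \(\mathcal{G}_*\) is precisely the one making all the ball maps continuous — to its weak-convergence analogue, so the plan is to first record the deterministic facts and then run a tightness-plus-consistency argument. At the level of the metric space \(\mathcal{G}_*\) two things hold. First, for each integer \(r\ge 0\) the truncation \(\pi_r\colon\mathcal{G}_*\to\mathcal{G}_*\), \([G,o]\mapsto[G,o]_r\), is continuous: a root-preserving network isomorphism between radius-\(N\) balls restricts to one between radius-\(r\) balls whenever \(r\le N\), so closeness of \([G_1,o_1]\) and \([G_2,o_2]\) forces closeness of \([G_1,o_1]_r\) and \([G_2,o_2]_r\); moreover \(d([G,o],[G,o]_r)\le 2^{-r}\) for every network, since the two agree on all balls of radius \(\le r\). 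Second, \(\Pi:=(\pi_r)_{r\in\mathbb{N}}\) embeds \(\mathcal{G}_*\) homeomorphically onto the projective limit \(\{(x_r)_r:\pi_s(x_r)=x_s\text{ whenever }s\le r\}\), which is a closed subset of \(\prod_{r\in\mathbb{N}}\mathcal{G}_*\); the routine checks here are that a locally finite network of bounded radius is finite, that a consistent tower of balls glues to a genuine element of \(\mathcal{G}_*\), and that local closeness of two networks is equivalent to closeness of all their balls (with \(\mathbb{Z}\)-valued marks there is essentially no mark bookkeeping). Granting these, the ``only if'' direction is just the continuous mapping theorem applied to each \(\pi_r\).

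For ``if'', suppose \([\mathbf{G}_n,\mathbf{o}_n]_r\Rightarrow\mu_r\) for every \(r\). First I would show that the laws of \([\mathbf{G}_n,\mathbf{o}_n]\) are tight. Fix \(\varepsilon>0\); a convergent sequence of laws on a Polish space is uniformly tight (Prokhorov), so for each \(r\) there is a compact \(K_r\subseteq\mathcal{G}_*\) with \(\sup_n\mathbb{P}([\mathbf{G}_n,\mathbf{o}_n]_r\notin K_r)<\varepsilon\,2^{-r}\). Then \(K:=\bigcap_r\pi_r^{-1}(K_r)\) is relatively compact — a diagonal extraction together with the consistency of the resulting towers of balls shows any sequence in \(K\) has a locally convergent subsequence — while \(\sup_n\mathbb{P}([\mathbf{G}_n,\mathbf{o}_n]\notin K)<\varepsilon\) because \(\pi_r([\mathbf{G}_n,\mathbf{o}_n])=[\mathbf{G}_n,\mathbf{o}_n]_r\). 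Hence the laws of \([\mathbf{G}_n,\mathbf{o}_n]\) are relatively compact for weak convergence. Next I would identify the limit: if \(\mu\) is a subsequential weak limit, the continuous mapping theorem gives \(\pi_r\mu=\mu_r\) for every \(r\); but a probability measure on \(\mathcal{G}_*\) is determined by the family \((\pi_r\mu)_r\), since \(\Pi_*\mu\) is carried by the closed projective limit and is pinned down by its one-dimensional marginals (cylinder sets form a generating \(\pi\)-system for the product Borel \(\sigma\)-algebra). Thus all subsequential limits coincide, and combined with tightness this yields \([\mathbf{G}_n,\mathbf{o}_n]\Rightarrow\mu\).

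The hard part will be the tightness step: coordinatewise convergence of the balls does not by itself force the full networks to converge, so one genuinely has to bring in Prokhorov's theorem together with the explicit projective-limit structure of \(\mathcal{G}_*\) (including the finiteness of bounded-radius locally finite networks and the gluing of consistent ball-towers) in order to upgrade it. By contrast, once tightness is in hand, identifying the limit is a soft consistency argument, and the ``only if'' direction is immediate.
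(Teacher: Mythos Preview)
The paper does not give a proof of this theorem at all: it is simply stated with a citation to \cite{nicolascurienRandomGraphs} and then used as a black box, so there is no in-paper argument to compare against. Your proposal is a correct and standard proof of the result: the ``only if'' direction is indeed the continuous mapping theorem for the restriction maps \(\pi_r\), and your ``if'' direction via Prokhorov tightness lifted through the projective-limit description of \(\mathcal{G}_*\), followed by identification of the limit from its ball-marginals, is exactly the kind of argument one finds in the cited lecture notes. In the specific setting of this paper the mark space is \(\mathbb{Z}\), so each \(\pi_r(\mathcal{G}_*)\) is genuinely discrete and the compactness/diagonal-extraction step is even cleaner than in the general Polish-mark case; your remark that ``with \(\mathbb{Z}\)-valued marks there is essentially no mark bookkeeping'' is on point.
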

\subsection{Relative compactness and tightness} \label{subsec_rel_compact}
We recall the definitions of relative compactness and tightness.
This subsection is concerned only about graphs (not networks).
A subset $\mathcal{A}$ of a topological space is said to be \textbf{relatively compact} if every sequence in $\mathcal{A}$ has a convergent subsequence.
A subset $\mathcal{A}$ of probability measures on a measurable space is said to be \textbf{tight} if for any $\epsilon>0$ there exists a compact set $K$ such that $\forall \calp \in \mathcal{A}$, $\calp[K] \geq 1-\epsilon$.
In particular, if there exists an increasing sequence of compact sets $\{K_m\}_{m \in \mathbb{N}}$ such that
$$\lim_{m \rightarrow \infty}\sup_{\calp \in \mathcal{A}}\calp[K_m^c] =0,$$
then $\mathcal{A}$ is tight.
Prokhorov's theorem states that if $\mathcal{A}$ is tight then $\mathcal{A}$ is relatively compact and if the measurable space is complete and separable, then relative compactness is equivalent to tightness, i.e., $\mathcal{A}$ is relatively compact if and only if $\mathcal{A}$ is tight.

For any $r \ge 0$, we define the restriction map $|_r:\calg_* \rightarrow \calg_*$ taking $\go$ to $\go_r$ by $[G,o]_r = [(G,o)|_r,o]$ for any $[G,o] \in \calg_*$.
The image of this restriction map $\calg_*|_r$ is countable and does not have any accumulation point.
So, every compact subset of $\calg_*|_r$ is finite.
Let $K_m^r$ be those rooted graphs of $\calg_*|_r$ that have at most $m$ vertices, i.e., $K_m^r:= \{\go \in \calg_*|_r \text{ such that } |V(G)|\leq m\}$ which is a finite set.
Then, the sequence $\{K_m^r\}_{m \in \mathbb{N}}$ is an increasing sequence of compact sets.

We state a lemma that gives an equivalent condition for a sequence of random graphs to be tight in terms of their neighborhood restrictions.
\begin{lemma}\label{lm:curien}
  A family of random graphs $\{[\textbf{G}_i,\textbf{o}_i], i \in I\}$ is tight if and only if for all $r \geq 1$, the family $\{[\textbf{G}_i,\textbf{o}_i]_r, i \in I\}$ is tight.
\end{lemma}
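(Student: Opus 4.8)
The plan is to deduce both implications from the structural facts about restrictions recalled just above: for each $r$ the image $\mathcal{G}_*|_r$ of the restriction map is countable and discrete, so its compact subsets are exactly its finite subsets, and $\mathcal{G}_*|_r=\bigcup_{m}K_m^r$ with $\{K_m^r\}_m$ increasing. I will also use that each restriction map $|_r:\mathcal{G}_*\to\mathcal{G}_*$ is continuous — indeed $1$-Lipschitz for the local metric, since a rooted isomorphism between the radius-$\rho$ balls around the roots restricts to one between the radius-$r$ balls for every $r\le\rho$ (a shortest path in $G$ to a vertex at distance $\le r\le\rho$ stays inside the radius-$\rho$ ball).

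For the forward implication, assume $\{[\mathbf{G}_i,\mathbf{o}_i]\}_{i\in I}$ is tight, fix $\delta>0$, and pick a compact $K\subseteq\mathcal{G}_*$ with $\mathbb{P}([\mathbf{G}_i,\mathbf{o}_i]\in K)\ge 1-\delta$ for all $i$. Then $|_r(K)$ is compact and $[\mathbf{G}_i,\mathbf{o}_i]_r\in|_r(K)$ whenever $[\mathbf{G}_i,\mathbf{o}_i]\in K$, so $\mathbb{P}([\mathbf{G}_i,\mathbf{o}_i]_r\in|_r(K))\ge 1-\delta$ for every $i$; this gives tightness of $\{[\mathbf{G}_i,\mathbf{o}_i]_r\}$ and is immediate.

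For the converse, fix $\epsilon>0$. For each $r\ge 1$, tightness of $\{[\mathbf{G}_i,\mathbf{o}_i]_r\}$ furnishes a compact subset of $\mathcal{G}_*$ carrying mass $\ge 1-\epsilon 2^{-r}$ uniformly in $i$; since $[\mathbf{G}_i,\mathbf{o}_i]_r$ a.s. lies in the closed discrete set $\mathcal{G}_*|_r$, we may intersect with $\mathcal{G}_*|_r$ and assume this set is finite, hence contained in some $K_{m_r}^r$, giving $\sup_i\mathbb{P}([\mathbf{G}_i,\mathbf{o}_i]_r\notin K_{m_r}^r)\le\epsilon 2^{-r}$. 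Set $\mathcal{K}:=\bigcap_{r\ge 1}\{[G,o]\in\mathcal{G}_*:[G,o]_r\in K_{m_r}^r\}$. A union bound yields $\mathbb{P}([\mathbf{G}_i,\mathbf{o}_i]\notin\mathcal{K})\le\sum_{r\ge 1}\epsilon 2^{-r}=\epsilon$ for every $i$, so once $\mathcal{K}$ is shown compact, Prokhorov's theorem (stated above) gives tightness of $\{[\mathbf{G}_i,\mathbf{o}_i]\}$ and we are done. The set $\mathcal{K}$ is closed, being an intersection of preimages under the continuous maps $|_r$ of the finite (hence closed) sets $K_{m_r}^r$. For compactness I will show sequential compactness by diagonal extraction: given a sequence in $\mathcal{K}$, its radius-$r$ restriction lies in the finite set $K_{m_r}^r$ for each $r$, so by passing to nested subsequences one makes the radius-$r$ restriction eventually constant, equal to some $A_r$, with $(A_{r+1})_r=A_r$; the diagonal subsequence then converges in $\mathcal{G}_*$ to the rooted graph obtained by gluing the consistent tower $(A_r)_r$, which is connected and locally finite because the degree of a vertex at distance $d$ from the root is bounded by $|V(A_{d+1})|\le m_{d+1}$. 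As $\mathcal{G}_*$ is Polish, sequential compactness of the closed set $\mathcal{K}$ is compactness.

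The main obstacle is precisely this last point: the compactness of $\mathcal{K}$, equivalently the deterministic statement that a family of rooted locally finite connected networks whose radius-$r$ restrictions all lie in one fixed finite set (for every $r$) is relatively compact. The probabilistic packaging — allocating the budgets $\epsilon 2^{-r}$, the union bound, and invoking Prokhorov — is routine; the only additional care needed is to verify that the glued limit of a consistent tower of finite rooted graphs with uniformly bounded $r$-balls is a genuine element of $\mathcal{G}_*$, i.e. connected and locally finite, which the bounds $|V|\le m_r$ inside radius-$r$ balls guarantee.
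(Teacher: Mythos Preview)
Your proof is correct and complete. The paper does not actually prove this lemma at all: its entire proof reads ``Refer \cite{nicolascurienRandomGraphs}.'' So there is nothing to compare approaches against---you have supplied the argument that the paper outsources to a reference.

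Two minor remarks. First, once you have shown $\mathcal{K}$ is compact and $\mathbb{P}([\mathbf{G}_i,\mathbf{o}_i]\notin\mathcal{K})\le\epsilon$ for every $i$, that is the \emph{definition} of tightness; invoking Prokhorov there is unnecessary. Second, your diagonal argument is the standard way to exhibit compactness of the set of rooted graphs with uniformly controlled balls, and your check that the glued limit is locally finite and connected is exactly the point that needs care; you have handled it.
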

\begin{proof}
  Refer \cite{nicolascurienRandomGraphs}.
\end{proof}

Therefore, from the above discussion and Lemma \ref{lm:curien}, we conclude that a family of random graphs $\{[\textbf{G}_i,\textbf{o}_i], i \in I\}$ is relatively compact if and only if for any $r \geq 1$
$$\lim_{m \rightarrow \infty}\sup_{i \in I}\mathbb{P}[[\textbf{G}_i,\textbf{o}_i]_r \not \in K^r_m] = 0.$$
We use this conclusion to prove the relative compactness of vertex-shift-probabilities when the vertex-shift has finite orbits.

\section{Dynamics on networks}
The dynamics we look at are navigation rules known as vertex-shifts on random networks.
The definitions and notations of this section are taken from \cite{baccelliEternalFamilyTrees2018a}.
\begin{definition}
    A \textbf{vertex-shift} $f$ is a collection of maps indexed by networks $f_G:V(G)\rightarrow V(G)$, where $G$ is a locally finite network, satisfying the following properties:
      \begin{enumerate}
      \item  \textit{Covariance:} For any graph isomorphism $\sigma:G \rightarrow H$ of two graphs $G$ and $H$, $f$ satisfies $f_H \circ \sigma = \sigma \circ f_G$,
      \item \textit{Measurability:} The map  $[G,u,v] \mapsto \mathbf{1}_{\{f_G(u)=v\}}$ from the doubly rooted space $\mathcal{G}_{**}$ is measurable. 
    \end{enumerate}
\end{definition}

The first condition allows us to define a (deterministic) dynamic on the space \(\mathcal{G}_*\) in the following way: for each vertex-shift \(f\), define a map \(\theta_f\) by
\begin{align} \label{eq_theta_f}
    \theta_f:\mathcal{G}_* & \longrightarrow \mathcal{G}_* \nonumber \\ 
                                [G,o]& \mapsto    [G,f(o)].
\end{align}
The map \(\theta_f\) is well-defined.

The second condition allows us to define the following functions.
For any non-negative measurable function \(h:\mathcal{G}_{**} \to \mathbb{R}_{\geq 0}\), we define two measurable functions \(h^+:\mathcal{G}_* \to \mathbb{R}_{\geq 0}\) and \(h^-: \mathcal{G}_* \to \mathbb{R}_{\geq 0}\) by 
\begin{align*}
    h^+([G,o]) &= \sum_{u \in V(G)} h([G,o,u]) \mathbf{1}\{f_G(u)=o\}\\
    h^-([G,o])  &= \sum_{u \in V(G)}h([G,u,o])\mathbf{1}\{f_G(o)=u\},
\end{align*}
for all \([G,o] \in \mathcal{G}_*\).

\subsection{Notations}
Given a vertex-shift $f$ and a network $G$, the $f$-graph of $G$ is the network $G^f$ with vertex set $V(G^f):=V(G)$ and directed edges $E(G^f)=\{(u,f_G(u)):u \in V(G)\text{ and } f_G(u)\not = u\}$.
Let \(G^f(u)\) denote the component of \(u\), for any vertex \(u \in V(G)\).
Define the map $\Psi_f:\mathcal{G}_* \rightarrow \mathcal{T}_*$ that maps a rooted network $[G,o]$ to  $[G^f(o),o]$, where \(G^f(o)\) is obtained by discarding all self-loops (if there are any) of the connected component of $o$ in its $f$-graph $G^f$.

For every vertex-shift \(f\) on a network \(G\), the components of the $f$-graph of $G$ are Family Trees \cite{baccelliEternalFamilyTrees2018a}. 
For any integer $n \geq 0$, a vertex of the form \(f^n_G(u)\) is called the \textbf{ancestor of order \(n\) of \(u\)} (also called \(n\)-th ancestor of \(u\)).
We denote the set of \textbf{$n$-th descendants} (also called descendants of order \(n\)) of vertex $u \in V(G)$ by $D_n(u):= \{v \in V(G)\backslash \{u\}: f_G^n(v)=u\}$ and its cardinality by $d_n(u):=\#D_n(u)$, for any \(n \geq 1\).
We follow the convention that \(f^0(u)=D_0(u)=u\).
Denote the set of all descendants of $u$ by $D(u):=\{v \in V(G)\backslash \{u\}: f_G^n(v)=u \text{ for some $n>0$}\}\cup \{u\}$ and its cardinality by $d(u)=\#D(u)$.
For any vertex \(u \in G\), its ancestor of order \(1\) is called the \textbf{parent of \(u\)}, its descendants of order \(1\) are called \textbf{children of \(u\)}, and the set \(\{v \in V(G)\backslash \{u\}:f_G(v)=f_G(u)\}\) is called the \textbf{set of siblings of \(u\)}.

A vertex-shift $f$ defines the following equivalence relation on the vertices of any network $G$:
two vertices $u$ and $v$ of $G$ are equivalent if and only if $f_G^n(u) = f_G^n(v)$ for some $n \geq 0$.
The equivalence class of $u$ is called the \textbf{$f$-foil} of $u$, and the collection of $f$-foils of $G$ is called the \textbf{$f$-foliation} of $G$.

A \textbf{covariant subset} is a collection of subsets \(S_G\) indexed by network \(G\), where \(S_G \subseteq V(G)\) and the collection satisfies the following two conditions, (1) for every network isomorphism \(\alpha:G \to H\), we have \(\alpha(S_G)=S_H\), (2) the map \([G,o] \mapsto \mathbf{1}_{\{o \in S_G\}}\) is measurable.
A \textbf{covariant partition} is a collection of partitions \(\Pi_G\) indexed by network \(G\), where \(\Pi_G\) is a partition of \(V(G)\) and the collection satisfies the following two conditions, (1) for every network isomorphism \(\alpha:G \to H\), we have \(\alpha \circ \Pi_{G}=\Pi_H\), (2) the subset \(\{[G,o,u]:u \in \Pi_G(o)\}\) is a measurable subset.

Examples of covariant partitions can be constructed from any vertex-shift \(f\).
Given \(f\), the collection of \(f\)-foils of \(G\) indexed by \(G\) is a covariant partition.
Similarly, the collection of subsets \(S_G = \{u \in V(G): f_G(u)=u\}\) of \(V(G)\) indexed by network \(G\) is an example of a covariant subset.

\begin{example}
    For every Family Tree \(T\), consider the map \(F_T:V(T) \to V(T)\) that maps every vertex \(u\) to its parent if it exists, otherwise maps to itself.
    The vertex-shift \(F\) is called the \textbf{parent  vertex-shift}.
\end{example}

For every vertex-shift \(f\) on \(G\), the $f$-graph of $G$ is a Family Tree \cite{baccelliEternalFamilyTrees2018a}.
Observe that the terminologies of parent and child are consistent with respect to the parent vertex-shift.
The dynamic naturally induced by the parent vertex-shift is given by \(\theta_F:\mathcal{T}_* \to \mathcal{T}_*\) given by \(\theta_F([T,o]) = [T,F(o)]\).

Given a Family Tree \(T\),  let \(l(\cdot,\cdot)=l_T(\cdot,\cdot):V(T) \times V(T) \to \mathbb{Z}\) be the function defined by the following  equations: for every \(v,w \in V(T)\),
\begin{align*}
    l(v,v)=0,  \quad l(v,F(w))=l(v,w)-1,\\
    l(w,v) = - l(v,w),\\
    l(v,w)+l(w,z) = l(v,z).
\end{align*}
For a rooted Family Tree \([T,o]\), the set \(\{w \in V(T):l(o,w)=n\}\) is  the \textbf{\(n\)-th generation} of \([T,o]\).

\subsection{Royal Line of Succession order}\label{subsec_RLS}
 Let $T$ be an ordered Family Tree, i.e., the children of every vertex are totally ordered, with an order denoted by $<$.
We can extend this order on children to a total order on the vertices of $T$ in the following way.
Let $u,v$ be two vertices of $T$.
We say that $v$ has precedence over $u$, denoted as $v \succ u$ (or $u \prec v$), if either $F^k(u)=v$ for some \(k>0\), or there exists a common ancestor $w= F^m(u)=F^n(v)$ of $u$ and $v$ with $m,n$ being the smallest positive integers with this property, and $F^{m-1}(u)<F^{n-1}(v)$.
In the latter case, both $F^{m-1}(u)$ and $F^{n-1}(v)$ can be compared since they are children of $w$.

\begin{remark} \label{remark:rls_order}
  For two vertices $u,v$ with common ancestor $w$, distinct from $u$ and $v$, $u \prec v$ if and only if $y_u \prec y_v $ for every descendant $y_u$ of $u$ and for every descendant $y_v$ of $v$.
\end{remark}
We call the total order $\prec$ the {\bf Royal Line of Succession (RLS) order} associated with \(<\).
It is also known as {\bf Depth First Search order}.

\section{Unimodular random networks and their properties}

\begin{definition}\label{def_unimodular}
    A random network \([\mathbf{G},\mathbf{o}]\) is said to be \textbf{unimodular} if it satisfies the \textbf{Mass Transport Principle}:
    \begin{equation}
        \mathbb{E}\left[\sum_{u \in V(\mathbf{G})}f([\mathbf{G},\mathbf{o},u])\right] = \mathbb{E}\left[\sum_{u \in V(\mathbf{G})}f([\mathbf{G},u,\mathbf{o}])\right],
    \end{equation}
    for every non-negative measurable function \(f:\mathcal{G}_{**}\to \mathbb{R}_{\geq 0}\).
\end{definition}

The following theorem suggests that there are a wide class of unimodular random networks.
For any finite network \(G\), we can associate a random rooted network \([G,\mathbf{o}]\) by choosing the root uniformly among the vertices of \(G\), i.e., for any measurable set \(A \subset \mathcal{G}_*\), we have
\begin{equation}\label{eq_unif_chosen_root}
    \mathbb{P}[[G,\mathbf{o}] \in A] = \frac{1}{\#V(G)}\sum_{u \in V(G)}\mathbf{1}_A([G,u]). 
\end{equation}
It can be easily shown that \([G,\mathbf{o}]\) is unimodular \cite{nicolascurienRandomGraphs}.
In fact every random finite network can be obtained by uniformly choosing the root \cite{nicolascurienRandomGraphs}.

\begin{theorem}[\cite{aldousProcessesUnimodularRandom2007}]
    Let \([\mathbf{G}_n,\mathbf{o}_n]\) be a sequence of unimodular networks that converge in distribution to a random network \([\mathbf{G},\mathbf{o}]\).
    Then, \([\mathbf{G},\mathbf{o}]\) is unimodular.
\end{theorem}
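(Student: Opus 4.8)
The plan is to verify the Mass Transport Principle (MTP) of Definition~\ref{def_unimodular} for the limit \([\mathbf{G},\mathbf{o}]\) directly, by testing it first on a convenient class of functions and then passing to the weak limit along the sequence \(([\mathbf{G}_n,\mathbf{o}_n])_n\). Checking MTP for \emph{all} non-negative measurable \(f\) at once is awkward, because the sums \(\sum_{u\in V(\mathbf{G})}f([\mathbf{G},\mathbf{o},u])\) may a priori have infinitely many terms and are not continuous functionals of the network. So the first step is to reduce to a class \(\mathcal{C}\) of ``tame'' test functions for which both sides of MTP are bounded continuous functionals of the rooted network. The logic is then: MTP holds for every \([\mathbf{G}_n,\mathbf{o}_n]\) and every \(f\in\mathcal{C}\); both sides are bounded continuous functions of \([\mathbf{G}_n,\mathbf{o}_n]\), hence converge because \([\mathbf{G}_n,\mathbf{o}_n]\Rightarrow[\mathbf{G},\mathbf{o}]\); therefore MTP holds for \([\mathbf{G},\mathbf{o}]\) and every \(f\in\mathcal{C}\); and finally a monotone-class plus monotone-convergence argument upgrades this to arbitrary non-negative measurable \(f\).

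For the class \(\mathcal{C}\) I would fix integers \(r,N\ge 1\) and let \(\mathcal{C}_{r,N}\) consist of the functions \(f:\mathcal{G}_{**}\to\mathbb{R}_{\ge0}\) that are bounded, depend on \([G,u,v]\) only through the finite doubly-rooted ball \((G,u,v)_r\), and vanish unless \(d_G(u,v)\le r\) and \(|(G,u,v)_r|\le N\). Because marks lie in the discrete space \(\mathbb{Z}\), such an \(f\) is locally constant, hence continuous on \(\mathcal{G}_{**}\); and for any finite doubly-rooted network \(H\) with its two roots at distance \(\le r\), the cylinder indicator \([G,u,v]\mapsto\mathbf{1}\{(G,u,v)_r\cong H\}\) is the increasing limit as \(N\to\infty\) of functions of this form, so \(\bigcup_{r,N}\mathcal{C}_{r,N}\) generates the Borel \(\sigma\)-algebra of \(\mathcal{G}_{**}\) for the local topology. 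Now, for \(f\in\mathcal{C}_{r,N}\), set \(f^{+}([G,o])=\sum_{u\in V(G)}f([G,o,u])\) and \(f^{-}([G,o])=\sum_{u\in V(G)}f([G,u,o])\) (defined in the spirit of the functions \(h^{\pm}\) introduced earlier in this chapter, with the vertex-shift indicator removed). A non-zero term of \(f^{+}\) forces \(u\in(G,o)_r\) and \(|(G,o,u)_r|\le N\); since \((G,o)_r\subseteq(G,o,u)_r\), this gives \(|(G,o)_r|\le N\), so \(f^{+}\) has at most \(N\) non-zero terms, is bounded by \(N\|f\|_\infty\), and is a finite sum of continuous functions determined by a fixed-radius neighbourhood of \(o\), hence continuous on \(\mathcal{G}_*\); the same reasoning, with the roles of the two roots exchanged, applies to \(f^{-}\).

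With this in place the limit passage is immediate: for \(f\in\mathcal{C}_{r,N}\), unimodularity of \([\mathbf{G}_n,\mathbf{o}_n]\) gives \(\mathbb{E}[f^{+}([\mathbf{G}_n,\mathbf{o}_n])]=\mathbb{E}[f^{-}([\mathbf{G}_n,\mathbf{o}_n])]\), and since \(f^{\pm}\) are bounded and continuous and \([\mathbf{G}_n,\mathbf{o}_n]\Rightarrow[\mathbf{G},\mathbf{o}]\), letting \(n\to\infty\) yields \(\mathbb{E}[f^{+}([\mathbf{G},\mathbf{o}])]=\mathbb{E}[f^{-}([\mathbf{G},\mathbf{o}])]\). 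To finish, I would extend from \(\bigcup_{r,N}\mathcal{C}_{r,N}\) to all bounded measurable \(f\) by a monotone-class argument (the set of \(f\) for which MTP holds is a vector space, closed under bounded increasing limits, containing the generating family above), and then from bounded \(f\) to arbitrary non-negative measurable \(f\) by monotone convergence, using that every locally finite connected network has \(d_G(u,v)<\infty\) and finite balls, so the truncations \(f\wedge k\) restricted to \(\{d_G(u,v)\le r,\ |(G,u,v)_r|\le N\}\) increase pointwise to \(f\) as \(k,r,N\to\infty\). The one genuinely delicate point — and the step I would be most careful about — is the construction of \(\mathcal{C}\): one must truncate not only the distance between the two roots but also the \emph{size} of the relevant neighbourhood, so that the aggregated functionals \(f^{\pm}\) remain bounded and continuous even when vertex degrees in the networks \(\mathbf{G}_n\) are unbounded. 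Everything after that reduction is soft measure theory together with a routine weak-limit argument.
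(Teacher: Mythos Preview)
The paper does not supply its own proof of this theorem: it is quoted in the preliminaries chapter as a result of Aldous and Lyons \cite{aldousProcessesUnimodularRandom2007}, with no accompanying argument. So there is no ``paper's proof'' to compare against.

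That said, your proposal is correct and is essentially the standard argument (as in the cited reference). The reduction to test functions \(f\in\mathcal{C}_{r,N}\) that depend only on a bounded doubly-rooted neighbourhood and vanish outside a fixed size bound is exactly the right move: it is what makes \(f^{+}\) and \(f^{-}\) bounded and continuous on \(\mathcal{G}_*\), so that weak convergence can be applied. Your observation that one must truncate both the inter-root distance \emph{and} the neighbourhood size is the key point, and you have identified it correctly. The monotone-class extension and the final monotone-convergence step are routine and valid as you describe them. One minor remark: your appeal to ``marks lie in the discrete space \(\mathbb{Z}\)'' to get local constancy is fine in the setting of this thesis, but for a general Polish mark space \(\Xi\) one would instead take \(f\) to be bounded continuous in the marks (rather than locally constant); the rest of the argument is unchanged.
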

The above theorem implies that if \((G_n)_{n \geq 1}\) is a sequence of finite deterministic networks such that the sequence of random networks \(([G_n,\mathbf{o}_n])_{n \geq 1}\), obtained by applying Eq. (\ref{eq_unif_chosen_root}), converges in distribution to a random network \([\mathbf{G},\mathbf{o}]\), then \([\mathbf{G},\mathbf{o}]\) is unimodular.
In the above, we can replace deterministic finite network by random finite network.
The network \([\mathbf{G},\mathbf{o}]\) is said to be obtained by taking \textbf{Benjamini-Schramm} limit of \((G_n)_{n \geq 1}\).
    
\begin{definition}[Network associated to a sequence]
    Let \(x=(x_n)_{n \in \mathbb{Z}}\) be a sequence of real numbers.
    The network associated to \(x\) is the network \((\mathbb{Z},x)\) with vertex set \(\mathbb{Z}\), with edge set \(\{(n,n+1):n \in \mathbb{Z}\}\), and with \(x_n\) as mark of the edge \((n,n+1)\) for all \(n \in \mathbb{Z}\).
\end{definition}
See Figure \ref{fig:network_seq} for an illustration.

\begin{figure}[h]
    \begin{center}
      \includegraphics[scale=1]{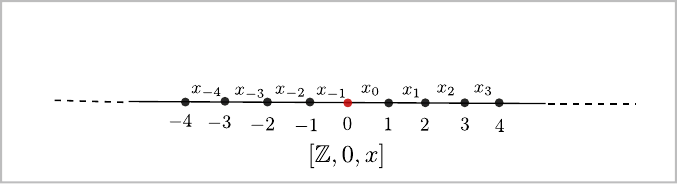}
    \end{center}
     \caption{The network \((\mathbb{Z},x)\) associated to a sequence \(x=(x_n)_{n \in \mathbb{Z}}\). The rooted network \([\mathbb{Z},0,x]\) denotes the equivalence class of the network \((\mathbb{Z},x)\) rooted at \(0\).}
     \label{fig:network_seq}
 \end{figure}

\begin{example}[of a unimodular network]\label{ex_network_sequence}
    Let \(X=(X_n)_{n \in \mathbb{Z}}\) be a stationary sequence.
    The network \([\mathbb{Z},0,X]\) associated to the sequence \(X\) is unimodular.
\end{example}
The network \([\mathbb{Z},0,X]\) in Example \ref{ex_network_sequence} is unimodular because the network can be shown to be the Benjamini-Schramm limit of the sequence \((G_n,X^{(n)})_{n \geq 1}\), where \((G_n,X^{(n)})\) is the network with vertex set \(\{-n,-n+1,\ldots,0,1,\ldots,n\}\), edge set \(\{(k,k+1): -n \leq k \leq n-1\}\) with mark \(X_k\) assigned to the edge \((k,k+1)\) for all \(-n \leq k \leq n-1 \).

\subsection{Vertex-shifts on unimodular networks}
Recall the map \(\theta_f\), in Eq. (\ref{eq_theta_f}), associated to a vertex-shift \(f\).
The following proposition will be used in the later chapters while studying measure preserving vertex-shifts.
\begin{proposition}[Mecke-Thorisson Point Stationarity \cite{baccelliEternalFamilyTrees2018a}]\label{prop_mecke}
    Let \(f\) be  a vertex-shift and \([\mathbf{G},\mathbf{o}]\) be a unimodular measure.
    Then, the map \(\theta_f\) preserves the distribution of \([\mathbf{G},\mathbf{o}]\) if and only if \(f\) is  a.s. bijective.
\end{proposition}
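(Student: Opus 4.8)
The plan is to reduce the proposition to a single application of the Mass Transport Principle. Since $\mathcal{G}_*$ is Polish, $\theta_f$ preserves the law of $[\mathbf{G},\mathbf{o}]$ if and only if $\mathbb{E}[g(\theta_f([\mathbf{G},\mathbf{o}]))]=\mathbb{E}[g([\mathbf{G},\mathbf{o}])]$ for every bounded measurable $g:\mathcal{G}_*\to\mathbb{R}_{\geq 0}$. For such a $g$, consider the non-negative measurable function on $\mathcal{G}_{**}$ given by $h([G,u,v]):=g([G,v])\,\mathbf{1}\{f_G(u)=v\}$; it is well defined on the doubly rooted space by covariance of $f$ and measurable by the measurability axiom in the definition of a vertex-shift. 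Writing $N_G(o):=\#\{u\in V(G):f_G(u)=o\}$ for the number of $f$-preimages of $o$, the inner sums in the Mass Transport Principle collapse to
\[
\sum_{v\in V(G)}h([G,o,v]) = g([G,f_G(o)]) = g(\theta_f([G,o])),\qquad
\sum_{u\in V(G)}h([G,u,o]) = g([G,o])\,N_G(o),
\]
the first because $v=f_G(o)$ is the only nonzero term. Hence for unimodular $[\mathbf{G},\mathbf{o}]$ one gets
\[
\mathbb{E}\big[g(\theta_f([\mathbf{G},\mathbf{o}]))\big] \;=\; \mathbb{E}\big[g([\mathbf{G},\mathbf{o}])\,N_{\mathbf{G}}(\mathbf{o})\big]\qquad\text{for all bounded measurable } g\geq 0.
\]
(Equivalently, this is the identity $\mathbb{E}[h^+([\mathbf{G},\mathbf{o}])]=\mathbb{E}[h^-([\mathbf{G},\mathbf{o}])]$ for $h([G,u,v])=g([G,u])$, itself an instance of the Mass Transport Principle.) Taking $g\equiv 1$ already gives $\mathbb{E}[N_{\mathbf{G}}(\mathbf{o})]=1$, so every expectation below is finite.

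Given this identity, the \emph{if} direction is immediate: if $f$ is a.s.\ bijective then each fibre $f_{\mathbf{G}}^{-1}(\mathbf{o})$ is a singleton, i.e.\ $N_{\mathbf{G}}(\mathbf{o})=1$ a.s., so the right-hand side reduces to $\mathbb{E}[g([\mathbf{G},\mathbf{o}])]$ and $\theta_f$ preserves the distribution.

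For the \emph{only if} direction, assume $\theta_f$ preserves the distribution, so the left-hand side equals $\mathbb{E}[g([\mathbf{G},\mathbf{o}])]$, whence $\mathbb{E}\big[g([\mathbf{G},\mathbf{o}])\,(N_{\mathbf{G}}(\mathbf{o})-1)\big]=0$ for all bounded measurable $g\geq 0$. Since $N_{\mathbf{G}}(\mathbf{o})$ is a measurable function of $[\mathbf{G},\mathbf{o}]$ (it is $h^+$ with $h\equiv 1$), choosing $g=\mathbf{1}\{N_{\mathbf{G}}(\mathbf{o})\geq 2\}$ makes the integrand non-negative with zero mean, forcing $N_{\mathbf{G}}(\mathbf{o})\leq 1$ a.s.; together with $\mathbb{E}[N_{\mathbf{G}}(\mathbf{o})]=1$ this yields $N_{\mathbf{G}}(\mathbf{o})=1$ a.s. It then remains to promote this statement about the root to a global one. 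The collection $S_G:=\{u\in V(G):N_G(u)\neq 1\}$ is a covariant subset with $\mathbb{P}[\mathbf{o}\in S_{\mathbf{G}}]=0$, and the standard fact that a covariant subset a.s.\ avoiding the root is a.s.\ empty (send from each vertex outside $S_G$ a unit mass split equally among its nearest vertices of $S_G$, and apply the Mass Transport Principle) gives $S_{\mathbf{G}}=\emptyset$ a.s. Since $N_G(u)=1$ for all $u$ is exactly injectivity together with surjectivity of $f_G$, we conclude $f_{\mathbf{G}}$ is a.s.\ a bijection.

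The main obstacle is not an estimate but the bookkeeping: checking that $h$ and the count $N$ genuinely descend to measurable functions on $\mathcal{G}_{**}$ and $\mathcal{G}_*$ so the Mass Transport Principle applies verbatim, and the final propagation step, which is the one place where unimodularity is invoked beyond the single identity and where the gap between ``the root has a unique $f$-preimage a.s.'' and ``$f_{\mathbf{G}}$ is globally bijective a.s.'' is closed.
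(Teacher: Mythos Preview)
Your proof is correct and follows essentially the same route as the paper. The paper derives the key identity as Lemma~\ref{lemma_abs_cont} (the pushforward under $\theta_f$ has Radon--Nikodym derivative $d_1(\mathbf{o})=N_{\mathbf{G}}(\mathbf{o})$ with respect to the original law), then observes that $d_1(\mathbf{o})=1$ a.s.\ is equivalent to $f$ being bijective a.s.; your argument unpacks this same identity directly from the Mass Transport Principle and makes the propagation step (from ``root has a unique preimage'' to ``$f$ is globally bijective'') explicit via the covariant-subset lemma, which the paper leaves implicit.
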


The following two propositions will be used to show in the later chapters that a unimodular ordered Family Tree does not have a smallest nor a largest vertex.

\begin{proposition}[\cite{baccelliEternalFamilyTrees2018a}]\label{prop:injective_bijective}
    Let \([\mathbf{G},\mathbf{o}]\) be unimodular and \(f\) be a vertex-shift.
    \begin{enumerate}
        \item If \(f\) is injective a.s., then \(f\) is bijective a.s.
        \item If \(f\) is surjective a.s., then \(f\)is bijective a.s.
    \end{enumerate}
\end{proposition}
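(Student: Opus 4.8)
The plan is to obtain both statements from a single application of the Mass Transport Principle, together with a covariance argument that upgrades an almost-sure property of the root to an almost-sure property of every vertex.

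First I would apply the Mass Transport Principle to the test function $h([G,x,y]) = \mathbf{1}\{f_G(x) = y\}$. For every realization the outgoing sum $\sum_{u \in V(\mathbf{G})} h([\mathbf{G},\mathbf{o},u])$ equals $1$, since $\mathbf{o}$ has exactly one image under $f_{\mathbf{G}}$, while the incoming sum $\sum_{u \in V(\mathbf{G})} h([\mathbf{G},u,\mathbf{o}])$ is the number of preimages $\#f_{\mathbf{G}}^{-1}(\mathbf{o})$. Hence $\mathbb{E}[\#f_{\mathbf{G}}^{-1}(\mathbf{o})] = 1$; in particular $\#f_{\mathbf{G}}^{-1}(\mathbf{o})$ is finite almost surely. (This is also the identity $\mathbb{E}[h^+] = \mathbb{E}[h^-]$ applied to $h \equiv 1$.)

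Now for (1): if $f$ is almost surely injective then $\#f_{\mathbf{G}}^{-1}(v) \in \{0,1\}$ for every vertex $v$, so $\#f_{\mathbf{G}}^{-1}(\mathbf{o}) \le 1$, which together with $\mathbb{E}[\#f_{\mathbf{G}}^{-1}(\mathbf{o})] = 1$ forces $\#f_{\mathbf{G}}^{-1}(\mathbf{o}) = 1$ almost surely. Consider then the covariant subset $S_G := \{v \in V(G) : f_G^{-1}(v) = \emptyset\}$ of vertices with no preimage; we have just shown $\mathbb{P}[\mathbf{o} \in S_{\mathbf{G}}] = 0$, and the globalization lemma discussed below yields $S_{\mathbf{G}} = \emptyset$ almost surely, i.e. $f$ is almost surely surjective, hence almost surely bijective. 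For (2): if $f$ is almost surely surjective then $\#f_{\mathbf{G}}^{-1}(v) \ge 1$ for every $v$, and again the mean-one constraint forces $\#f_{\mathbf{G}}^{-1}(\mathbf{o}) = 1$ almost surely; applying the same reasoning to the covariant subset $S'_G := \{v \in V(G) : \#f_G^{-1}(v) \ge 2\}$ gives $S'_{\mathbf{G}} = \emptyset$ almost surely, so $f$ is almost surely injective and hence almost surely bijective.

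The step I expect to be the main obstacle is the globalization lemma: if $[\mathbf{G},\mathbf{o}]$ is unimodular and $S$ is a covariant subset with $\mathbb{P}[\mathbf{o} \in S_{\mathbf{G}}] = 0$, then $S_{\mathbf{G}} = \emptyset$ almost surely. I would prove this by a further mass transport: on the event $\{S_{\mathbf{G}} \ne \emptyset\}$, each vertex $u$ sends total mass $1$ split equally among the vertices of $S_{\mathbf{G}}$ nearest to it (a nonempty finite set, by connectedness and local finiteness), and no mass is sent on $\{S_{\mathbf{G}} = \emptyset\}$. The expected outgoing mass at $\mathbf{o}$ equals $\mathbb{P}[S_{\mathbf{G}} \ne \emptyset]$, whereas the incoming mass at $\mathbf{o}$ is supported on $\{\mathbf{o} \in S_{\mathbf{G}}\}$, an event of probability zero, so the Mass Transport Principle gives $\mathbb{P}[S_{\mathbf{G}} \ne \emptyset] = 0$. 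If this lemma is already available in the framework it can simply be cited; otherwise the only routine verifications are that the transport rules are covariant and measurable, which holds because "number of preimages", "set of nearest vertices of $S$", and "image under $f$" are each determined by a bounded neighbourhood of the doubly rooted network.
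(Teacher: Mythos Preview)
The paper does not prove this proposition; it is stated with a citation to \cite{baccelliEternalFamilyTrees2018a} and no argument is given. Your proof is correct and is the standard one. Your ``globalization lemma'' is exactly the contrapositive of Lemma~\ref{lemma_non_empty_covariant_set} in the paper, so in this context you could simply cite that lemma rather than reprove it; that said, your nearest-vertex mass transport is the right argument and is in fact more careful than the one-line sketch the paper gives for Lemma~\ref{lemma_non_empty_covariant_set}.
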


\begin{lemma}[No Infinite/Finite Inclusion \cite{baccelliEternalFamilyTrees2018a}]\label{lemma:no_infinite_finite}
    Let \([\mathbf{G},\mathbf{o}]\) be a unimodular network, \(\mathbf{\mathfrak{S}}\) be a covariant subset and \(\mathbf{\Pi}\) be a covariant partition.
    Almost surely, there is no infinite element \(E\) of \(\mathbf{\Pi}_{\mathbf{G}}\) such that \(E \cap \mathbf{\mathfrak{S}}_{\mathbf{G}}\) is finite and non-empty.
\end{lemma}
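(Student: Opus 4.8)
The plan is to apply the Mass Transport Principle twice, after first packaging the forbidden configuration into a covariant subset. Define, for each network $G$, the set
$$\mathfrak{T}_G := \{v \in \mathfrak{S}_G : \Pi_G(v) \text{ is infinite and } \Pi_G(v) \cap \mathfrak{S}_G \text{ is finite}\},$$
where $\Pi_G(v)$ denotes the element of $\Pi_G$ containing $v$. Since $v$ itself lies in $\Pi_G(v) \cap \mathfrak{S}_G$, we have $\mathfrak{T}_G \neq \emptyset$ if and only if there is an infinite $E \in \Pi_G$ with $E \cap \mathfrak{S}_G$ finite and nonempty, so the lemma is equivalent to the statement that $\mathfrak{T}_{\mathbf{G}} = \emptyset$ a.s. One checks that $\mathfrak{T}$ is a covariant subset: network isomorphisms map $\Pi$-classes to $\Pi$-classes bijectively and preserve $\mathfrak{S}$, and $[G,o] \mapsto \mathbf{1}\{o \in \mathfrak{T}_G\}$ is the product of $\mathbf{1}\{o \in \mathfrak{S}_G\}$, $\mathbf{1}\{|\Pi_G(o)| = \infty\}$ and $\mathbf{1}\{|\Pi_G(o) \cap \mathfrak{S}_G| < \infty\}$, each of which is a measurable function of $[G,o]$ obtained from the measurability clauses in the definitions of covariant subset and covariant partition (the two infinite cardinalities being monotone limits over balls of increasing radius). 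So the proof splits into two claims.

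First I would show that if $\mathbb{P}[\mathbf{o} \in \mathfrak{T}_{\mathbf{G}}] = 0$ then $\mathfrak{T}_{\mathbf{G}} = \emptyset$ a.s. For each integer $r \geq 0$, apply the Mass Transport Principle to the kernel
$$f_r([G,u,v]) = \frac{\mathbf{1}\{u \in \mathfrak{T}_G\}\,\mathbf{1}\{d_G(u,v) \leq r\}}{\#\{w \in V(G) : d_G(u,w) \leq r\}}.$$
Every vertex $u$ sends out total mass exactly $\mathbf{1}\{u \in \mathfrak{T}_G\}$, so the outgoing mass at $\mathbf{o}$ has expectation $\mathbb{P}[\mathbf{o} \in \mathfrak{T}_{\mathbf{G}}] = 0$; by the Principle the incoming mass at $\mathbf{o}$ has expectation $0$ and hence vanishes a.s., which says that a.s. there is no vertex of $\mathfrak{T}_{\mathbf{G}}$ within graph distance $r$ of $\mathbf{o}$. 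Intersecting these a.s. events over $r \in \mathbb{N}$ and using that $\mathbf{G}$ is connected (so its vertex set is exhausted by balls around $\mathbf{o}$) yields $\mathfrak{T}_{\mathbf{G}} = \emptyset$ a.s.

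It then remains to prove $\mathbb{P}[\mathbf{o} \in \mathfrak{T}_{\mathbf{G}}] = 0$. Here I would use the overloading transport: a vertex $u$ whose class $E = \Pi_G(u)$ is infinite with $E \cap \mathfrak{S}_G$ finite and nonempty sends mass $1/\#(E \cap \mathfrak{S}_G)$ to each vertex of $E \cap \mathfrak{S}_G$, and every other vertex sends nothing; the resulting kernel $g([G,u,v])$ is measurable as before. The total outgoing mass of any vertex is $0$ or $1$, so $\mathbb{E}\big[\sum_{v} g([\mathbf{G},\mathbf{o},v])\big] \leq 1 < \infty$, and the Mass Transport Principle gives $\mathbb{E}\big[\sum_{u} g([\mathbf{G},u,\mathbf{o}])\big] \leq 1$ as well. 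But on the event $\{\mathbf{o} \in \mathfrak{T}_{\mathbf{G}}\}$, the class $E = \Pi_{\mathbf{G}}(\mathbf{o})$ is infinite and $\mathbf{o} \in E \cap \mathfrak{S}_{\mathbf{G}}$, so each of the infinitely many vertices of $E$ sends mass $1/\#(E \cap \mathfrak{S}_{\mathbf{G}}) > 0$ to $\mathbf{o}$, whence $\sum_{u} g([\mathbf{G},u,\mathbf{o}]) = +\infty$ on that event. If $\mathbb{P}[\mathbf{o} \in \mathfrak{T}_{\mathbf{G}}] > 0$ this forces the incoming mass to have infinite expectation, contradicting the bound. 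Hence $\mathbb{P}[\mathbf{o} \in \mathfrak{T}_{\mathbf{G}}] = 0$, and combining with the previous claim finishes the proof.

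The main obstacle is the gap between what a single transport delivers and what is asserted: the overloading transport only controls whether the \emph{root} lies in $\mathfrak{T}$, while the lemma claims global emptiness. Bridging this is precisely the first claim, whose ball-averaging transport, combined with connectedness and a countable intersection over radii, upgrades ``the root is a.s. not in $\mathfrak{T}$'' to ``$\mathfrak{T}$ is a.s. empty''. The rest is routine verification that $\mathfrak{T}$ and the two kernels satisfy the covariance and measurability requirements needed to invoke the Mass Transport Principle.
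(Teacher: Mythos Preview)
The paper does not prove this lemma; it is quoted from \cite{baccelliEternalFamilyTrees2018a} without proof, so there is no in-paper argument to compare against. Your proof is correct and is essentially the standard mass-transport argument for this result: the overloading transport shows $\mathbb{P}[\mathbf{o} \in \mathfrak{T}_{\mathbf{G}}] = 0$, and your first claim (which is the contrapositive of the paper's Lemma~\ref{lemma_non_empty_covariant_set}, re-derived via a ball-averaging transport rather than the paper's one-line transport $f([G,u,v]) = \mathbf{1}\{v \in \mathfrak{S}_G\}$) upgrades this to $\mathfrak{T}_{\mathbf{G}} = \emptyset$ almost surely.
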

The following lemma shows that probability that covariant subset is non-empty is related to the probability that the root belongs to this covariant subset.

\begin{lemma}[\cite{baccelliEternalFamilyTrees2018a}]\label{lemma_non_empty_covariant_set}
    Let \([\mathbf{G},\mathbf{o}]\) be a unimodular network, \(\mathfrak{S}\) be a covariant subset.
    Then, \(\mathbb{P}[\mathfrak{S}_{\mathbf{G}} \text{ is non-empty}]>0\) if and only if \(\mathbb{P}[\mathbf{o} \in \mathfrak{S}_{\mathbf{G}}]>0\).
\end{lemma}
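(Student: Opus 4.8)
The plan is to use the Mass Transport Principle with a carefully chosen transport that moves mass from every vertex into the covariant subset $\mathfrak{S}_{\mathbf{G}}$. The ``if'' direction is immediate: if $\mathbb{P}[\mathbf{o} \in \mathfrak{S}_{\mathbf{G}}] > 0$ then of course $\mathfrak{S}_{\mathbf{G}}$ is non-empty with positive probability, since $\{\mathbf{o} \in \mathfrak{S}_{\mathbf{G}}\} \subseteq \{\mathfrak{S}_{\mathbf{G}} \neq \emptyset\}$. So the content is the ``only if'' direction.

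For that direction, suppose $\mathbb{P}[\mathbf{o} \in \mathfrak{S}_{\mathbf{G}}] = 0$; I want to conclude $\mathbb{P}[\mathfrak{S}_{\mathbf{G}} \neq \emptyset] = 0$. First I would observe, using the covariance of $\mathfrak{S}$ and the fact that $[\mathbf{G},\mathbf{o}]$ is unimodular (hence its distribution is invariant under re-rooting along the connected component in the appropriate sense), that $\mathbb{P}[\mathbf{o} \in \mathfrak{S}_{\mathbf{G}}] = 0$ forces $\mathbb{P}[\text{some } u \in V(\mathbf{G}) \text{ lies in } \mathfrak{S}_{\mathbf{G}}] = 0$ only once we have a transport argument in hand — so I will not try to shortcut this. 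Instead, define the non-negative measurable function $f:\mathcal{G}_{**} \to \mathbb{R}_{\geq 0}$ by
\[
  f([G,u,v]) = \mathbf{1}\{v \in S_G\}.
\]
This is measurable because $\mathfrak{S}$ is a covariant subset (condition (2) of the definition of covariant subset gives measurability of $[G,o] \mapsto \mathbf{1}\{o \in S_G\}$, hence of $[G,u,v]\mapsto \mathbf{1}\{v\in S_G\}$ on $\mathcal{G}_{**}$), and it is well-defined on equivalence classes by covariance of $\mathfrak{S}$ under isomorphisms. Applying the Mass Transport Principle to this $f$ gives
\[
  \mathbb{E}\Bigl[\sum_{u \in V(\mathbf{G})} \mathbf{1}\{u \in S_{\mathbf{G}}\}\Bigr]
  = \mathbb{E}\Bigl[\sum_{u \in V(\mathbf{G})} \mathbf{1}\{\mathbf{o} \in S_{\mathbf{G}}\}\Bigr].
\]
The left-hand side equals $\mathbb{E}[\#(\mathfrak{S}_{\mathbf{G}})]$ and the right-hand side equals $\mathbb{E}[\#V(\mathbf{G}) \cdot \mathbf{1}\{\mathbf{o}\in S_{\mathbf{G}}\}]$; the latter is $0$ whenever $\mathbb{P}[\mathbf{o}\in S_{\mathbf{G}}]=0$. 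Hence $\mathbb{E}[\#\mathfrak{S}_{\mathbf{G}}] = 0$, so $\#\mathfrak{S}_{\mathbf{G}} = 0$ almost surely, i.e. $\mathbb{P}[\mathfrak{S}_{\mathbf{G}} \neq \emptyset] = 0$. This proves the contrapositive of the ``only if'' direction.

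The main subtlety — and the step I would be most careful about — is the passage from ``$\#V(\mathbf{G})\cdot\mathbf{1}\{\mathbf{o}\in S_{\mathbf{G}}\}$ has expectation $0$'' to ``$\mathbf{1}\{\mathbf{o}\in S_{\mathbf{G}}\}=0$ a.s.'': if $V(\mathbf{G})$ can be infinite, then $\#V(\mathbf{G})\cdot\mathbf{1}\{\mathbf{o}\in S_{\mathbf{G}}\}$ is $+\infty$ on the event $\{\mathbf{o}\in S_{\mathbf{G}}, \#V(\mathbf{G})=\infty\}$, and a vanishing expectation still forces that event to be null, so $\mathbb{P}[\mathbf{o}\in S_{\mathbf{G}},\,\#V(\mathbf{G})=\infty]=0$; combined with the finite case this still yields $\mathbb{P}[\mathbf{o}\in S_{\mathbf{G}}]=0$, which is what we assumed, so the implication is really ``$\mathbb{E}[\#\mathfrak{S}_{\mathbf{G}}]=\mathbb{E}[\#V(\mathbf{G})\mathbf{1}\{\mathbf{o}\in S_{\mathbf{G}}\}]=0 \Rightarrow \#\mathfrak{S}_{\mathbf{G}}=0$ a.s.''. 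The only thing one genuinely needs is that the Mass Transport Principle applies to $[0,\infty]$-valued sums, which is exactly the form of Definition \ref{def_unimodular}. No appeal to Lemma \ref{lemma:no_infinite_finite} or to the other propositions is needed; this is a direct one-line transport once the right test function is identified.
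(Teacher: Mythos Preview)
Your proof is correct and follows exactly the paper's approach: the paper's entire proof is the single line ``Take \(f([G,u,v]) = \mathbf{1}\{v \in \mathfrak{S}_{G}\}\) and apply the Mass Transport Principle,'' which is precisely the transport function you chose. Your additional care about the \([0,\infty]\)-valued sums is a welcome elaboration but not a departure from the paper's argument.
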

\begin{proof}
    Take \(f([G,u,v]) = \mathbf{1}\{v \in \mathfrak{S}_{G}\}\) and apply the Mass Transport Principle.
\end{proof}

The following theorem gives a classification of the components of \(f\)-graph, for any vertex-shift \(f\) on a unimodular network.

\begin{theorem}[Foil Classification in Unimodular Networks \cite{baccelliEternalFamilyTrees2018a}]
    \label{thm_foil_classification}
    Let $\mathbf{[G, o]}$ be a unimodular network and $f$ be a vertex-shift. Almost surely, every vertex has finite degree in the graph $G^f$. In addition, each component $C$ of $G^f$ has at most two ends and it belongs to one of the following three classes: 
    
    \begin{itemize}
    \item Class $\mathcal{F}/\mathcal{F}$: $C$ and all its foils are finite. If $n = n(C)$ is the number of foils in $C$, then
    \begin{itemize}
    \item $C$ has a unique $f$-cycle and its length is $n$;
    \item $f_{G}^{\infty}(C)$ is the set of vertices of the cycle;
    \item Each foil of $C$ contains exactly one vertex of the cycle.
    \end{itemize}
    \item Class $\mathcal{I}/\mathcal{F}$: $C$ is infinite but all its foils are finite. In this case,
    \begin{itemize}
    \item The (undirected) $f$-graph on $C$ is a tree;
    \item There is a unique bi-infinite $f$-path in $C$, each foil in $C$ contain exactly one vertex of the path, and $f_{G}^{\infty}(C)$ coincides with the set of vertices of the path;
    \item The order of the foils of $C$ is of type $\mathbb{Z}$; that is, there is no youngest foil in $C$.
    \end{itemize}
    \item Class $\mathcal{I}/\mathcal{I}$: $C$ and all foils of $C$ are infinite. In this case,
    \begin{itemize}
    \item  The (undirected) $f$-graph on $C$ is a tree;
    \item $C$ has one end, there is no bi-infinite f-path in $C$, $D(v)$ is finite for every
    vertex $v \in C$, and  $f_{G}^{\infty}(C)=\emptyset$;
    \item The order of the foils of $C$ is of type $\mathbb{N}$ or $\mathbb{Z}$, that is, there may or may
    not be a youngest foil in $C$.
    \end{itemize}
    \end{itemize}
    \end{theorem}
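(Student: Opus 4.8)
The plan is to establish the three assertions in turn, with the Mass Transport Principle (Definition~\ref{def_unimodular}) as the main engine, supplemented by the No Infinite/Finite Inclusion Lemma (Lemma~\ref{lemma:no_infinite_finite}), Lemma~\ref{lemma_non_empty_covariant_set}, and König's lemma for locally finite trees. First I would treat finite degree. Since $f$ is a vertex-shift, the out-degree of every vertex of $G^f$ is at most $1$, so only the in-degree needs control. Applying the Mass Transport Principle to $g([G,u,v]) = \mathbf{1}\{f_G(u)=v\}$, the right-hand side of the transport identity is $\mathbb{E}[\#\{v : f_{\mathbf{G}}(\mathbf{o}) = v\}] = 1$, hence the left-hand side $\mathbb{E}[\#\{u : f_{\mathbf{G}}(u) = \mathbf{o}\}]$ equals $1$ and the in-degree of $\mathbf{o}$ in $G^f$ is finite almost surely. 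Transporting once more with $g([G,u,v]) = \mathbf{1}\{v \text{ has infinite degree in } G^f\}$ and using that this event has probability $0$ at the root shows that almost surely no vertex has infinite degree.

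Next I would analyze the $f$-foliation, which is a covariant partition. The key elementary observation is that $f_G$ descends to the set of foils: $u \sim v$ implies $f_G(u) \sim f_G(v)$, and the induced map on foils is injective, because $f_G(u) \sim f_G(v)$ implies $u \sim v$. Hence, inside a component $C$ of $G^f$, the foils with the induced dynamics form a deterministic digraph in which every vertex has out-degree exactly $1$ and in-degree at most $1$; a connected such digraph is either a finite cycle, a one-sided ray of order type $\mathbb{N}$ (a youngest foil exists), or a bi-infinite path of order type $\mathbb{Z}$. These are exactly the three order types in the statement, and it remains to match each with the finiteness behaviour of $C$ and of its foils and to pin down the fine structure.

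For the cycle case, a foil-cycle means there is $u$ with $f_G^{m+n}(u) = f_G^m(u)$ for some $m\ge 0$, $n\ge 1$, so $C$ contains a periodic orbit of $f_G$; the set of all vertices lying on such an orbit is a covariant subset meeting $C$ in a finite non-empty set. Taking the covariant partition of $V(\mathbf{G})$ into components of $G^f$, Lemma~\ref{lemma:no_infinite_finite} then forbids $C$ from being infinite, so $C$ is finite, which is class $\mathcal{F}/\mathcal{F}$, and counting around the periodic orbit gives that each foil meets it exactly once and that $f_{G}^{\infty}(C)$ is its vertex set. When there is no foil-cycle the undirected $f$-graph on $C$ is a tree, and $f_G^{\infty}(C) := \bigcap_{n \ge 0} f_G^n(C)$ is non-empty if and only if some $D(v)$ is infinite (König's lemma on the locally finite tree $C$). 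If $f_G^{\infty}(C) = \emptyset$, every $D(v)$ is finite, so the only escape to infinity is along forward $f$-orbits, and any two vertices of $C$ have a common ancestor, so all these orbits merge: $C$ has exactly one end; moreover, if the foil order type is $\mathbb{N}$ the youngest foil must be infinite (otherwise the component/foliation pair violates Lemma~\ref{lemma:no_infinite_finite} again), and since the foil map is injective forward, all foils of $C$ are then infinite — class $\mathcal{I}/\mathcal{I}$. If $f_G^{\infty}(C) \neq \emptyset$, a mass transport along $f$ shows $f_G^{\infty}(C)$ carries a unique bi-infinite $f$-path meeting each foil exactly once; whether the foils are finite or infinite is not further constrained, giving class $\mathcal{I}/\mathcal{F}$ (two ends) or class $\mathcal{I}/\mathcal{I}$ (one end, all $D(v)$ finite) respectively. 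In all cases $C$ has at most two ends.

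The main obstacle is this last step: ruling out the ``mixed'' configurations — a finite $f$-cycle, a youngest foil perched over infinitely many vertices in an unbalanced way, a branching of $f_G^{\infty}(C)$, or three or more ends — none of which can be excluded for general non-unimodular networks. Each exclusion is a concrete instance of the informal principle that a unimodular network cannot carry a finite (or one-ended, locally describable) piece fed from infinitely many sources, made precise either by Lemma~\ref{lemma:no_infinite_finite} applied to a well-chosen covariant subset or by a bespoke mass transport; verifying that the three surviving configurations carry exactly the claimed fine structure (uniqueness of the cycle resp.\ of the bi-infinite path, its coincidence with $f_G^{\infty}(C)$, and the order type of the foliation) is where the bulk of the work lies.
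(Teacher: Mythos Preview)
The paper does not prove this theorem: it is stated in Chapter~\ref{chapter_prelim} as a result quoted from \cite{baccelliEternalFamilyTrees2018a}, with no proof given. So there is no ``paper's own proof'' to compare against, and your proposal is an independent reconstruction of the argument from that reference.

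As a reconstruction, your outline hits the right targets --- the Mass Transport Principle for finite in-degree, the observation that $f$ descends injectively to foils so that the foil-digraph of a component is a cycle, a ray, or a bi-infinite line, and the use of Lemma~\ref{lemma:no_infinite_finite} to rule out a finite $f$-cycle inside an infinite component. The finite-degree step and the $\mathcal{F}/\mathcal{F}$ case are essentially correct.

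There is, however, a genuine muddle in your final case analysis. You write that when $f_G^{\infty}(C)\neq\emptyset$, ``whether the foils are finite or infinite is not further constrained, giving class $\mathcal{I}/\mathcal{F}$ (two ends) or class $\mathcal{I}/\mathcal{I}$ (one end, all $D(v)$ finite) respectively.'' But $f_G^{\infty}(C)\neq\emptyset$ means some vertex has infinitely many descendants, so the parenthetical ``all $D(v)$ finite'' is impossible here; you cannot land in $\mathcal{I}/\mathcal{I}$ from this branch. What is actually needed is the implication: if $f_G^{\infty}(C)\neq\emptyset$ then \emph{every} foil of $C$ is finite. This is a unimodularity step you have not supplied --- the bi-infinite path meets each foil in exactly one vertex, so if some foil were infinite, the covariant subset $f_G^{\infty}$ would have finite non-empty intersection with that infinite foil, contradicting Lemma~\ref{lemma:no_infinite_finite}. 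Symmetrically, in the branch $f_G^{\infty}(C)=\emptyset$ you assert that all foils are infinite but only argue it for order type $\mathbb{N}$; you also need to handle order type $\mathbb{Z}$, and your argument for the $\mathbb{N}$ case (``the youngest foil must be infinite, otherwise \dots'') does not specify which covariant subset and which partition element you feed into Lemma~\ref{lemma:no_infinite_finite}. The correct pairing is: take the foliation as the covariant partition and, for instance, the set of vertices whose descendant tree has a given finite depth as the covariant subset; then a finite foil in an infinite acyclic component forces a finite non-empty intersection with an infinite element of the foliation. Once these two implications are in place, the trichotomy is clean: infinite component with $f_G^{\infty}(C)\neq\emptyset$ forces $\mathcal{I}/\mathcal{F}$, and infinite component with $f_G^{\infty}(C)=\emptyset$ forces $\mathcal{I}/\mathcal{I}$.
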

    
We will see examples from each class in the next chapter.
The above theorem when applied to Family Trees gives the following theorem, which will be used in the later chapters.
Recall the notation that \(F\) denotes the parent vertex-shift.
\begin{theorem}[Classification of unimodular Family Trees \cite{baccelliEternalFamilyTrees2018a}]
    A unimodular Family Tree almost surely belongs to one of the following three classes:
    \begin{enumerate}
        \item Class \(\mathcal{F}/\mathcal{F}\): the tree is finite.
        \item Class \(\mathcal{I}/\mathcal{I}\): every generation is infinite, each vertex has finitely many descendants, and there is no bi-infinite path, i.e., the tree is one-ended.
        \item Class \(\mathcal{I}/\mathcal{F}\): every generation is finite and the set of vertices that have infinitely many descendants form a unique bi-infinite \(F\)-path, i.e., the tree has two ends.
    \end{enumerate}
\end{theorem}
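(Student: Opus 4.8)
The plan is to derive this from the Foil Classification Theorem (Theorem~\ref{thm_foil_classification}), applied to the unimodular network $[\mathbf{T},\mathbf{o}]$ with $f=F$ the parent vertex-shift, and then to translate each of its three cases into the asserted form. The starting observation is that the $F$-graph $\mathbf{T}^{F}$ is $\mathbf{T}$ itself: it has vertex set $V(\mathbf{T})$ and directed edges $(u,F(u))$ over the non-root vertices $u$, which are precisely the directed edges of the Family Tree $\mathbf{T}$ (a root contributes no edge to either). Since a Family Tree is connected, $\mathbf{T}^{F}$ has the single component $C=\mathbf{T}$, so ``$C$ is finite'' means ``$\mathbf{T}$ is finite''; thus Theorem~\ref{thm_foil_classification} already tells us that, almost surely, $\mathbf{T}$ is finite (class $\mathcal{F}/\mathcal{F}$), or $\mathbf{T}$ is infinite with all $F$-foils finite (class $\mathcal{I}/\mathcal{F}$), or $\mathbf{T}$ is infinite with all $F$-foils infinite (class $\mathcal{I}/\mathcal{I}$).

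The bridge to the stated phrasing is the identification of the $F$-foils of $\mathbf{T}$ with its generations, i.e.\ the level sets of $w\mapsto l_{\mathbf{T}}(\mathbf{o},w)$. Since $l(\mathbf{o},F^{n}(w))=l(\mathbf{o},w)-n$, any $u,v$ with $F^{n}(u)=F^{n}(v)$ satisfy $l(\mathbf{o},u)=l(\mathbf{o},v)$, giving one inclusion. For the other, I would first show that any two vertices $u,v$ of a connected Family Tree have a common ancestor: along the unique path $u=x_{0},\dots,x_{k}=v$, call the edge $\{x_{i},x_{i+1}\}$ of type $U$ if $F(x_{i})=x_{i+1}$ and of type $D$ if $F(x_{i+1})=x_{i}$; a type-$D$ edge cannot be immediately followed by a type-$U$ edge, since that would give $x_{i}=F(x_{i+1})=x_{i+2}$, contradicting simplicity of the path. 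Hence the types form a single block $U^{a}D^{b}$ with $a+b=k$, so $x_{a}=F^{a}(u)=F^{b}(v)$ is a common ancestor. If moreover $l(\mathbf{o},u)=l(\mathbf{o},v)$, applying $l(\mathbf{o},\cdot)$ to $F^{a}(u)=F^{b}(v)$ forces $a=b$, whence $F^{a}(u)=F^{a}(v)$ and $u,v$ lie in the same foil. Therefore ``all foils finite (resp.\ infinite)'' amounts to ``all non-empty generations finite (resp.\ infinite)'', which gives the generation assertions of classes $\mathcal{I}/\mathcal{I}$ and $\mathcal{I}/\mathcal{F}$; class $\mathcal{F}/\mathcal{F}$ is already the statement that $\mathbf{T}$ is finite.

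It then remains to transcribe the remaining structure from Theorem~\ref{thm_foil_classification}. In class $\mathcal{I}/\mathcal{I}$ the theorem directly yields that $D(v)$ is finite for every vertex $v$ and that $C$ has one end, i.e.\ there is no bi-infinite $F$-path; this is case~2. In class $\mathcal{I}/\mathcal{F}$ it yields a unique bi-infinite $F$-path whose vertex set equals $F^{\infty}(C)=\bigcap_{n\ge1}F^{n}(C)$, and I would check that $F^{\infty}(C)=\{v:\#D(v)=\infty\}$: since $v\in F^{n}(C)$ iff $D_{n}(v)\neq\emptyset$, one has $v\in F^{\infty}(C)$ iff $D_{n}(v)\neq\emptyset$ for all $n$, and this is equivalent to $\#D(v)=\infty$ because every generation is finite in this class, so a single empty $D_{N}(v)$ forces $D_{m}(v)=\emptyset$ for all $m\ge N$ (push a would-be order-$m$ descendant up by $m-N$ steps) and hence makes $D(v)=\{v\}\cup\bigcup_{n<N}D_{n}(v)$ finite. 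Since a bi-infinite $F$-path makes $\mathbf{T}$ two-ended, this is case~3. I expect the common-ancestor argument underpinning ``foils $=$ generations'' to be the only genuinely non-routine step; the rest is bookkeeping on top of Theorem~\ref{thm_foil_classification}.
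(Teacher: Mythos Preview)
Your proposal is correct and takes essentially the same approach as the paper: the paper simply states that this theorem is obtained by applying the Foil Classification Theorem~\ref{thm_foil_classification} to Family Trees with the parent vertex-shift, and you carry out precisely this specialization, supplying the details (foils $=$ generations, and the identification of $F^{\infty}(C)$ with the vertices having infinitely many descendants) that the paper leaves implicit.
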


The following lemma gives a recipe to construct unimodular Family Trees.
Recall that for a vertex-shift \(f\) and a network \(G\), \(G^f\) denotes the \(f\)-graph of \(G\) and for any vertex \(u \in V(G)\), \(G^f(u)\) denotes the component of \(u\) in \(G^f\).
\begin{lemma}[\cite{baccelliEternalFamilyTrees2018a}]\label{lemma:f_graph_unimodular}
    Let \(f\) be a vertex-shift and \([\mathbf{G}, \mathbf{o}]\) be a random rooted network.
    If \([\mathbf{G}, \mathbf{o}]\) is unimodular then the Family Tree \([\mathbf{G}^f(\mathbf{o}),\mathbf{o}]\) is unimodular.
\end{lemma}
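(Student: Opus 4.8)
The plan is to transfer the Mass Transport Principle (MTP) from the unimodular network $[\mathbf{G},\mathbf{o}]$ to the Family Tree $[\mathbf{G}^f(\mathbf{o}),\mathbf{o}]$. The key observation is that $\mathbf{G}^f(\mathbf{o})$ is a \emph{covariant} construction out of $\mathbf{G}$: since $f$ commutes with every network isomorphism, the $f$-graph $G^f$, its connected components, and the loop-deletion used to define $\Psi_f$ are all preserved by isomorphisms. Moreover, the vertex set $V(\mathbf{G}^f(\mathbf{o}))$ is exactly the set of vertices lying in the same $f$-component as $\mathbf{o}$, which is an intrinsic subset of $V(\mathbf{G})$; and for $u,v$ in this component, the doubly rooted Family Tree $[\mathbf{G}^f(\mathbf{o}),u,v]$ is a measurable function of $[\mathbf{G},u,v]$ (the edge set of the Family Tree is read off by checking, for each pair, whether $f_G(\cdot)=\cdot$, which is measurable on $\mathcal{G}_{**}$ by the defining property of a vertex-shift).

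First I would fix a non-negative measurable function $g:\mathcal{T}_{**}\to\mathbb{R}_{\geq 0}$ on the doubly rooted Family Tree space, and define a function $\tilde g$ on $\mathcal{G}_{**}$ by
\[
\tilde g([G,u,v]) = \mathbf{1}\{v \in V(G^f(u))\}\, g\big([G^f(u),u,v]\big),
\]
with the convention that $\tilde g=0$ when $v$ and $u$ are in different $f$-components. By the covariance and measurability in the definition of a vertex-shift, together with measurability of the component map and of the loop-deletion in $\Psi_f$, the function $\tilde g$ is a well-defined non-negative measurable function on $\mathcal{G}_{**}$. Note that $G^f(u)=G^f(v)$ as rooted-at-nothing networks whenever $v\in V(G^f(u))$, so $\tilde g([G,u,v])$ and the ``reversed'' quantity involve the \emph{same} Family Tree, only re-rooted.

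Next I would apply the MTP for $[\mathbf{G},\mathbf{o}]$ to $\tilde g$:
\[
\mathbb{E}\!\left[\sum_{u \in V(\mathbf{G})} \tilde g([\mathbf{G},\mathbf{o},u])\right]
= \mathbb{E}\!\left[\sum_{u \in V(\mathbf{G})} \tilde g([\mathbf{G},u,\mathbf{o}])\right].
\]
On the left, the indicator restricts the sum over $u$ to $V(\mathbf{G}^f(\mathbf{o}))$, giving $\mathbb{E}[\sum_{u \in V(\mathbf{G}^f(\mathbf{o}))} g([\mathbf{G}^f(\mathbf{o}),\mathbf{o},u])]$; on the right, $\tilde g([\mathbf{G},u,\mathbf{o}])$ is supported on $\{u: \mathbf{o}\in V(\mathbf{G}^f(u))\}=\{u: u\in V(\mathbf{G}^f(\mathbf{o}))\}$ and there equals $g([\mathbf{G}^f(\mathbf{o}),u,\mathbf{o}])$, giving $\mathbb{E}[\sum_{u \in V(\mathbf{G}^f(\mathbf{o}))} g([\mathbf{G}^f(\mathbf{o}),u,\mathbf{o}])]$. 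Hence the MTP holds for $[\mathbf{G}^f(\mathbf{o}),\mathbf{o}]$ with test function $g$, which is exactly unimodularity of the Family Tree. One should also remark that $[\mathbf{G}^f(\mathbf{o}),\mathbf{o}]$ genuinely lands in $\mathcal{T}_*$: by Theorem \ref{thm_foil_classification} every component of $G^f$ is a Family Tree, so the map $\Psi_f$ is well-defined and $[\mathbf{G}^f(\mathbf{o}),\mathbf{o}]$ is a random rooted Family Tree.

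The main obstacle is the bookkeeping around the loop-deletion and the identification ``$v\in V(G^f(u)) \iff u\in V(G^f(v))$, and then $G^f(u)=G^f(v)$ canonically''; this is what makes the two sides of the transported MTP match up cleanly. Once that equivalence of components (as the same intrinsic sub-network, differently rooted) is pinned down, the measurability of $\tilde g$ and the rest of the argument are routine consequences of the vertex-shift axioms and the definition of $\Psi_f$.
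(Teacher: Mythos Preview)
The paper does not prove this lemma; it is quoted from \cite{baccelliEternalFamilyTrees2018a} without argument. Your proposal is the standard and correct proof: lift a test function $g$ on $\mathcal{T}_{**}$ to $\tilde g$ on $\mathcal{G}_{**}$ via the covariant map $[G,u,v]\mapsto [G^f(u),u,v]$ (restricted to pairs in the same $f$-component), apply the MTP for $[\mathbf{G},\mathbf{o}]$, and use that $G^f(u)=G^f(v)$ whenever $u,v$ lie in the same component to identify the two sides. One minor remark: you invoke Theorem~\ref{thm_foil_classification} to justify that $[\mathbf{G}^f(\mathbf{o}),\mathbf{o}]\in\mathcal{T}_*$, but this is more than needed---the paper already records (just before the Parent vertex-shift example) that components of $G^f$ are Family Trees, which is an elementary consequence of each vertex having out-degree at most one in $G^f$.
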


\subsection{Unimodular ordered Eternal Galton-Watson Tree}\label{subsec_EGWT}
We give an example of unimodular Family Tree of class \(\mathcal{I}/\mathcal{I}\) originally introduced in \cite{baccelliEternalFamilyTrees2018a}.

Let $\pi$ be a probability distribution on $\mathbb{N}\cup \{0\}$ with mean $m(\pi)=1$, and $\hat{\pi}$ be its size-biased distribution defined by $\hat{\pi}(k) = k \pi(k), \forall k \geq 0$.

The ordered unimodular Eternal Galton-Watson Tree with offspring distribution $\pi$, $EGWT(\pi)$, is a rooted ordered Eternal Family Tree $[\mathbf{T},\mathbf{o}]$ with the following properties. 
The root $\mathbf{o}$ and all of its descendants reproduce independently with the common offspring distribution \(\pi\).
For all $n \geq 1$, the ancestor $F^n(\mathbf{o})$ of $\mathbf{o}$ reproduces with distribution $\hat{\pi}$. 
The descendants of $F^n(\mathbf{o})$ which are not the descendants of $F^{n-1}(\mathbf{o})$ (and not $F^{n-1}(\mathbf{o})$) reproduce independently with the common offspring distribution \(\pi\), with the convention $F^0(\mathbf{o})=\mathbf{o}$.
All individuals (vertices) reproduce independently.
The order among the children of every vertex is uniform.

In particular, except for the vertices $F^n(\mathbf{o}),\, n\geq 1$, which reproduce independently with distribution $\hat{\pi}$, all the remaining vertices of the tree reproduce independently with distribution $\pi$. See Figure \ref{fi:EGWT} for an illustration. 
It is proved in \cite{baccelliEternalFamilyTrees2018a} that \(EGWT(\pi)\) is unimodular if and only if the mean \(m(\pi)\) is \(1\).
\begin{figure}[htbp] 
  \centering 
  \includegraphics[scale=0.8]{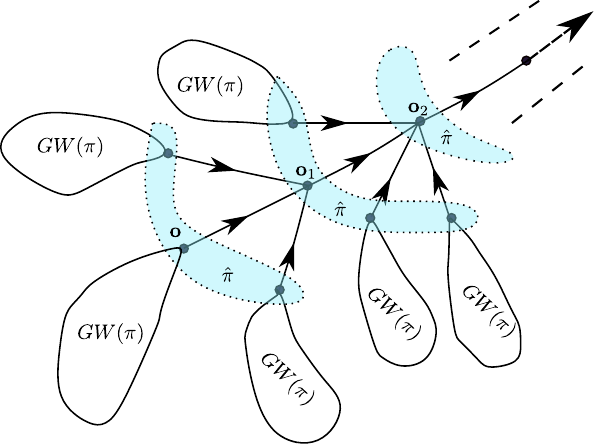}
  \caption{The Eternal Galton-Watson Tree with offspring distribution $\pi$. Notation: $GW(\pi)$ denotes the Galton-Watson Tree, $\hat{\pi}$ is the size-biased distribution of $\pi$. The order among the children of every vertex is from the left to the right.}
  \label{fi:EGWT}
\end{figure}

\begin{remark}\label{remark_supp_EGWT}
 Let $[\mathbf{T},\mathbf{o}]$ be an $EGWT(\pi)$ with mean $m(\pi)=1$. Then, the descendant trees of all vertices, except for the ancestors of the root $\mathbf{o}$, are independent  critical Galton-Watson trees with offspring distribution $\pi$.
 Therefore, the descendant trees are finite.
 Hence, a realization of $EGWT(\pi)$ has the property that the descendant tree of every vertex of it is finite.
 So, it is of class \(\mathcal{I}/\mathcal{I}\).
\end{remark}

The following theorem characterizes a unimodular \(EGWT\).

\begin{theorem}[\cite{baccelliEternalFamilyTrees2018a}]\label{theorem:characterisation_egwt}
    A unimodular Eternal Family Tree \([\mathbf{T},\mathbf{o}]\) is an \(EGWT\) if and only if the number of children of the root \(d_1(\mathbf{o})\) is independent of the non-descendant tree \(D^c(\mathbf{o})\), i.e., the subtree induced by \((V(\mathbf{T})\backslash D(\mathbf{o}))\cup \{\mathbf{o}\}\).
\end{theorem}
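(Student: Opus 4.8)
The statement is a biconditional, so I would prove the two implications separately: the forward one directly from the construction of the $EGWT$, and the converse one through repeated use of the Mass Transport Principle together with the independence hypothesis. For the forward implication, suppose $[\mathbf{T},\mathbf{o}]$ is an $EGWT(\pi)$. By construction every vertex reproduces independently: the root and its descendants with law $\pi$, the ancestors $F^n(\mathbf{o})$ with law $\hat{\pi}$, and the off-spine descendants of the ancestors with law $\pi$. The key observation is that in $D^c(\mathbf{o})$ the root $\mathbf{o}$ is a leaf, since its children are descendants and so the edges from $\mathbf{o}$ to its children are discarded when passing to the subtree induced by $(V(\mathbf{T})\setminus D(\mathbf{o}))\cup\{\mathbf{o}\}$. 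Hence $D^c(\mathbf{o})$ is a measurable function of the reproduction data of the ancestors, the off-spine subtrees, the uniform orders, and the successive positions of the spine among the children of each ancestor, none of which involves the offspring count $d_1(\mathbf{o})$ of the root. Since $\mathbf{o}$ reproduces independently of every other vertex, $d_1(\mathbf{o})$ is independent of $D^c(\mathbf{o})$.

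For the converse, assume $[\mathbf{T},\mathbf{o}]$ is a unimodular Eternal Family Tree with $d_1(\mathbf{o})\perp D^c(\mathbf{o})$, and set $\pi=\mathrm{law}(d_1(\mathbf{o}))$. First I would extract the spine data that unimodularity produces on its own. Applying the Mass Transport Principle to the transport that sends mass $1$ from each vertex to its parent gives $\mathbb{E}[d_1(\mathbf{o})]=1$, so $m(\pi)=1$; applying it to the transport sending mass $g(d_1(x))$ from $x$ to $F(x)$ gives $\mathbb{E}[g(d_1(F(\mathbf{o})))]=\mathbb{E}[d_1(\mathbf{o})\,g(d_1(\mathbf{o}))]=\sum_k g(k)\hat{\pi}(k)$, so $d_1(F(\mathbf{o}))\sim\hat{\pi}$, and a symmetric variant restricting the transport to a single prescribed child shows that, given $d_1(F(\mathbf{o}))=j$, the root $\mathbf{o}$ is a uniformly chosen child of $F(\mathbf{o})$. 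These identities already fix the marginal size-biased structure of the spine, and crucially they do not yet use the independence hypothesis.

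The crux, and the step I expect to be the main obstacle, is to show that the descendant tree $D(\mathbf{o})$ is a critical Galton--Watson tree with offspring law $\pi$. The hypothesis gives only that the root degree $d_1(\mathbf{o})$, and not yet the whole subtree $D(\mathbf{o})$, is independent of $D^c(\mathbf{o})$, so I would bootstrap from the root degree to the full subtree. The tool is the mass-transport identity $\mathbb{E}\bigl[\phi(d_1(\mathbf{o}))\psi(D^c(\mathbf{o}))\bigr]=\mathbb{E}\bigl[\sum_{c\,:\,F(c)=\mathbf{o}}\phi(d_1(c))\psi(D^c(c))\bigr]$, obtained by transporting $\phi(d_1(x))\psi(D^c(x))$ from each $x$ to its parent. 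Combining this with the factorization of the left-hand side furnished by the hypothesis transfers the independence of the root degree to the children, and then, by induction on the depth, to every descendant. Carrying the induction through shows that, conditionally on $d_1(\mathbf{o})=k$, the $k$ child subtrees are independent copies of $D(\mathbf{o})$, i.e.\ that $D(\mathbf{o})$ satisfies the branching property with offspring $\pi$; the criticality $m(\pi)=1$ then forces the descendant trees to be a.s.\ finite, placing $[\mathbf{T},\mathbf{o}]$ in class $\mathcal{I}/\mathcal{I}$ (consistent with Remark \ref{remark_supp_EGWT}), and identifies $D(\mathbf{o})$ as exactly $\mathrm{GW}(\pi)$. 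This inductive upgrade from ``root degree independent of the complement'' to ``full descendant tree independent of the complement with the branching property'' is where the independence hypothesis is genuinely used rather than mere unimodularity.

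Finally I would assemble the pieces. Once the descendant tree of the root is known to be $\mathrm{GW}(\pi)$, the same holds for the descendant tree of every vertex by covariance and a further mass-transport argument, so in particular every off-spine subtree hanging from an ancestor is $\mathrm{GW}(\pi)$. Propagating the single-step spine identities upward by iteration yields $d_1(F^n(\mathbf{o}))\sim\hat{\pi}$ with uniform distinguished child at each ancestor $F^n(\mathbf{o})$, $n\geq 1$; together with the independence supplied by the hypothesis, the joint law of $[\mathbf{T},\mathbf{o}]$ then coincides term by term with that of $EGWT(\pi)$, so $[\mathbf{T},\mathbf{o}]$ is an $EGWT$. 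This closes the converse and hence the biconditional.
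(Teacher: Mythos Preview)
The paper does not prove this theorem itself; it is cited from \cite{baccelliEternalFamilyTrees2018a} as a preliminary. However, the paper does prove the analogous characterization for finite Family Trees (Proposition~\ref{20230305162953}, Characterization of $TGWT$) and states explicitly that ``the following proposition and its proof are analogous to the characterization of Eternal Galton-Watson Tree of \cite{baccelliEternalFamilyTrees2018a}.'' So the comparison should be against that proof.

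Your proposal is correct in outline but organized differently from the paper's analogue. The paper's route is more economical: it first invokes Lemma~\ref{20230303153138}, which says that a unimodular Family Tree is \emph{completely} determined by the law of $[D(\mathbf{o}),\mathbf{o}]$. This immediately dispenses with all of your separate work on the spine (the size-biasing $d_1(F^n(\mathbf{o}))\sim\hat\pi$, the uniform child positions, and the final assembly): once you know $D(\mathbf{o})$ is $\mathrm{GW}(\pi)$ and the tree is unimodular, there is nothing left to check. The paper then proves the single factorization
\[
\mathbb{P}[A(A';A_1,\dots,A_k)] = \mathbb{P}[d_1(\mathbf{o})=k]\,\mathbb{P}[D^c(\mathbf{o})\in A']\,\prod_{i=1}^k \mathbb{P}[D(\mathbf{o})\in A_i]
\]
by induction on the depth of the events $A_i$, using at each step the mass transport $h_j([T,o])=\mathbf{1}_A[T,F(o)]\mathbf{1}\{o=c_j(F(o))\}$ to re-root to the $j$-th child. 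This is essentially the same inductive bootstrap you describe in your third paragraph, so the core of the two arguments coincides. What you gain by your route is that the spine structure is made explicit; what the paper's route buys is brevity, since Lemma~\ref{20230303153138} absorbs the entire non-descendant side into one stroke and makes the explicit spine verification unnecessary.
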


\subsection{Unimodular Family Trees of class $\mathcal{I}/\mathcal{F}$}

In the following, we describe a construction given in \cite{baccelliEternalFamilyTrees2018a} used to generate unimodular Family Trees of class \(\mathcal{I}/\mathcal{F}\).

Let $([\mathbf{T}_i,\mathbf{o}_i])_{i \in \mathbb{Z}}$ be a stationary sequence of finite random rooted Family Trees.
Add a directed edge $(\mathbf{o}_i,\mathbf{o}_{i+1})$ for all $i \in \mathbb{Z}$, and denote this new Eternal Family Tree (EFT) by $\mathbf{T}'$ and
let $\mathbf{o}'=\mathbf{o}_0$.
The EFT $[\mathbf{T}',\mathbf{o}']$ is called the{ \bf joining} of $([\mathbf{T}_i,\mathbf{o}_i])_{i \in \mathbb{Z}}$.

In addition, suppose that $\mathbb{E}[\#V(\mathbf{T}_0)]< \infty$.
Choose the root uniformly in $V(\mathbf{T}_0)$ and bias the measure by the mean size of $V(\mathbf{T}_0)$.
Formally, define a measure $\sigma'$ by:
for any measurable set $A$, 
\begin{equation}\label{eq:size_biased_uniform_root}
  \sigma'[A] = \frac{1}{\mathbb{E}[\# V(\mathbf{T}_0)]} \mathbb{E}\displaystyle\left[\sum_{v \in V(\mathbf{T}_0)} \mathbf{1}_{[\mathbf{T}',v]}(A) \right].
\end{equation}
An operator similar to \(\sigma'\) is defined in \cite{aldousAsymptoticFringeDistributions1991}.
An EFT  $[\mathbf{T},\mathbf{o}]$ whose distribution is  $\sigma'$ is called the {\bf typically rooted joining} of $([\mathbf{T}_i,\mathbf{o}_i])_{i \in \mathbb{Z}}$.
Note that the assumption \(\mathbb{E}[\#V(\mathbf{T}_0)]< \infty\) is not needed in the joining operation, however, it is essential in order to define the typically rooted joining operation.
We state the following results taken from \cite{baccelliEternalFamilyTrees2018a}.

\begin{theorem}[\cite{baccelliEternalFamilyTrees2018a}]\label{thm:I_F_unimodularizable}
  Let $[\mathbf{T},\mathbf{o}]$ be the typically rooted joining of a stationary sequence $([\mathbf{T}_i,\mathbf{o}_i])_{i \in \mathbb{Z}}$, where $\mathbb{E}[\# V(\mathbf{T}_0)]< \infty$.
Then, $[\mathbf{T},\mathbf{o}]$ is unimodular.
\end{theorem}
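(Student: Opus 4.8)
The plan is to exhibit $[\mathbf{T},\mathbf{o}]$ as a Benjamini--Schramm limit of finite random networks and then invoke the theorem of \cite{aldousProcessesUnimodularRandom2007} stating that a distributional limit of unimodular networks is unimodular. First I would recall that a finite network with a uniformly chosen root is automatically unimodular (Eq. \eqref{eq_unif_chosen_root}), so it suffices to build an approximating sequence of finite networks whose uniformly rooted versions converge in distribution to $\sigma'$. The natural candidate is to truncate the bi-infinite joining: for $n \geq 1$, let $\mathbf{T}'^{(n)}$ be the finite Family Tree obtained by joining the consecutive blocks $\mathbf{T}_{-n}, \mathbf{T}_{-n+1}, \dots, \mathbf{T}_{n}$, i.e. the disjoint union of these finite trees together with the directed edges $(\mathbf{o}_i, \mathbf{o}_{i+1})$ for $-n \leq i \leq n-1$, and then let $[\mathbf{T}^{(n)}, \mathbf{o}^{(n)}]$ be $\mathbf{T}'^{(n)}$ with the root chosen uniformly among its vertices. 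Each $[\mathbf{T}^{(n)}, \mathbf{o}^{(n)}]$ is unimodular.

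The key computation is to identify the distributional limit of $[\mathbf{T}^{(n)}, \mathbf{o}^{(n)}]$. Choosing the root uniformly in $V(\mathbf{T}'^{(n)})$ amounts to first choosing the block index $i \in \{-n,\dots,n\}$ with probability proportional to $\#V(\mathbf{T}_i)$, and then choosing the root uniformly within $V(\mathbf{T}_i)$. By stationarity of $([\mathbf{T}_i,\mathbf{o}_i])_{i\in\mathbb{Z}}$ and the finiteness assumption $\mathbb{E}[\#V(\mathbf{T}_0)]<\infty$, as $n \to \infty$ the law of the chosen block (recentered so the chosen block becomes block $0$) converges, by a size-biasing/renewal-type argument, to the size-biased block distribution: the block containing the root is picked with a weight proportional to its size, which is exactly the content of Eq. \eqref{eq:size_biased_uniform_root} defining $\sigma'$. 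Concretely, for a bounded continuous $h$ on $\mathcal{G}_*$,
\begin{align*}
\mathbb{E}\big[h([\mathbf{T}^{(n)},\mathbf{o}^{(n)}])\big]
= \frac{\mathbb{E}\big[\sum_{i=-n}^{n}\sum_{v\in V(\mathbf{T}_i)} h([\mathbf{T}'^{(n)},v])\big]}{\mathbb{E}\big[\sum_{i=-n}^{n}\#V(\mathbf{T}_i)\big]},
\end{align*}
and using stationarity to replace each block-$i$ term by a block-$0$ term in a recentered tree $\mathbf{T}'^{(n),i}$, together with dominated convergence (the local neighbourhood of $v$ in $\mathbf{T}'^{(n),i}$ stabilizes to that in $\mathbf{T}'$ once $n$ is large relative to the distance explored, and $\#V(\mathbf{T}_0)$ is integrable), this ratio converges to
\begin{align*}
\frac{1}{\mathbb{E}[\#V(\mathbf{T}_0)]}\,\mathbb{E}\Big[\sum_{v\in V(\mathbf{T}_0)} h([\mathbf{T}',v])\Big] = \int h\, d\sigma'.
\end{align*}

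I expect the main obstacle to be the justification of the interchange of limit and expectation in the displayed ratio: one must control the edge effects, namely that for a typical vertex $v$ in a block far from the boundary $\{-n, n\}$, the $r$-neighbourhood of $v$ in the truncated tree $\mathbf{T}'^{(n)}$ agrees with that in the full tree $\mathbf{T}'$, and that the contribution of blocks near the boundary is asymptotically negligible. This is handled by the integrability $\mathbb{E}[\#V(\mathbf{T}_0)]<\infty$: the probability that the uniformly chosen root lands within graph-distance $r$ of the truncation boundary tends to $0$, since the expected total size of the $O(r)$ boundary blocks is $O(r)$ while the expected total size grows like $n$. Once this tightness/uniform-integrability point is dealt with, Theorem \cite{nicolascurienRandomGraphs} (convergence of $r$-neighbourhoods suffices for convergence in $\mathcal{G}_*$) gives $[\mathbf{T}^{(n)},\mathbf{o}^{(n)}] \Rightarrow [\mathbf{T},\mathbf{o}]$ with $[\mathbf{T},\mathbf{o}]\sim\sigma'$, and the Aldous--Lyons limit theorem for unimodular networks concludes that $[\mathbf{T},\mathbf{o}]$ is unimodular. $\qed$
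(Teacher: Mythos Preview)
The paper does not give its own proof of this theorem; it is simply quoted from \cite{baccelliEternalFamilyTrees2018a}. So there is no ``paper's proof'' to compare against. I will comment on your argument on its own merits and contrast it with the standard direct approach.

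Your overall strategy---realising $\sigma'$ as a Benjamini--Schramm limit of uniformly rooted finite truncations---is reasonable, but the displayed identity
\[
\mathbb{E}\big[h([\mathbf{T}^{(n)},\mathbf{o}^{(n)}])\big]
= \frac{\mathbb{E}\big[\sum_{i=-n}^{n}\sum_{v\in V(\mathbf{T}_i)} h([\mathbf{T}'^{(n)},v])\big]}{\mathbb{E}\big[\sum_{i=-n}^{n}\#V(\mathbf{T}_i)\big]}
\]
is incorrect. Uniformly rooting a \emph{random} finite network (Eq.~\eqref{eq_unif_chosen_root} is for a deterministic $G$) gives
\[
\mathbb{E}\big[h([\mathbf{T}^{(n)},\mathbf{o}^{(n)}])\big]
= \mathbb{E}\!\left[\frac{\sum_{i=-n}^{n}\sum_{v\in V(\mathbf{T}_i)} h([\mathbf{T}'^{(n)},v])}{\sum_{i=-n}^{n}\#V(\mathbf{T}_i)}\right],
\]
with the normalisation \emph{inside} the expectation. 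To pass to the ratio of expectations you need the random denominator $(2n+1)^{-1}\sum_{i=-n}^n \#V(\mathbf{T}_i)$ to concentrate on the constant $\mathbb{E}[\#V(\mathbf{T}_0)]$. The ergodic theorem gives this only when the stationary sequence is ergodic; under mere stationarity the a.s.\ limit is the conditional expectation given the invariant $\sigma$-algebra, which need not be constant. Your argument as written therefore proves the theorem only in the ergodic case; the general stationary case would require an additional ergodic-decomposition step that you do not mention.

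The approach used in \cite{baccelliEternalFamilyTrees2018a} (and the one that avoids all of this) is a direct verification of the Mass Transport Principle. For $g\geq 0$ measurable on $\mathcal{G}_{**}$, by the definition of $\sigma'$,
\[
\mathbb{E}_{\sigma'}\!\Big[\sum_{u} g([\mathbf{T},\mathbf{o},u])\Big]
=\frac{1}{\mathbb{E}[\#V(\mathbf{T}_0)]}\,
\mathbb{E}\!\Big[\sum_{v\in V(\mathbf{T}_0)}\sum_{u\in V(\mathbf{T}')} g([\mathbf{T}',v,u])\Big].
\]
Split the inner sum according to the block $\mathbf{T}_i$ containing $u$, use stationarity to shift block $i$ to block $0$, and relabel; the double sum becomes $\sum_{u\in V(\mathbf{T}_0)}\sum_{v\in V(\mathbf{T}')} g([\mathbf{T}',v,u])$, which after swapping the names of $u$ and $v$ is exactly the expression for $\mathbb{E}_{\sigma'}[\sum_u g([\mathbf{T},u,\mathbf{o}])]$. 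This three-line argument uses only stationarity (no ergodicity, no limits, no tightness), which is what makes it preferable here.
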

\begin{theorem}[\cite{baccelliEternalFamilyTrees2018a}]\label{thm_eft_I_f_joining}
  Let $[\mathbf{T},\mathbf{o}]$ be a unimodular EFT of class $\mathcal{I}/\mathcal{F}$ a.s., and $[\mathbf{T}',\mathbf{o}']$ be the Family Tree obtained by conditioning $[\mathbf{T},\mathbf{o}]$ on the event that $\mathbf{o}$ belongs to the bi-infinite path of $\mathbf{T}$.
  Then, $[\mathbf{T}',\mathbf{o}']$ is the joining of some stationary sequence of finite Family Trees $([\mathbf{T}_i,\mathbf{o}_i])_{i \in \mathbb{Z}}$ and $[\mathbf{T},\mathbf{o}]$ is the typically rooted joining of  $([\mathbf{T}_i,\mathbf{o}_i])_{i \in \mathbb{Z}}$.
\end{theorem}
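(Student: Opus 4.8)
The plan is to reverse the typically rooted joining construction: read off the sequence $(\mathbf{T}_i,\mathbf{o}_i)_{i\in\mathbb{Z}}$ from the finite ``bushes'' hanging off the bi-infinite path of $\mathbf{T}$, and verify every claim by mass transport. Since $[\mathbf{T},\mathbf{o}]$ is a.s. of class $\mathcal{I}/\mathcal{F}$ for the parent vertex-shift $F$, the Foil Classification Theorem (Theorem~\ref{thm_foil_classification}) tells us that $S_G:=\{v\in V(G):D(v)\text{ is infinite}\}$ is, for $G=\mathbf{T}$, a.s. the vertex set of the unique bi-infinite $F$-path, that every $F$-foil is finite and meets that path in exactly one vertex, and hence that the $F$-ancestor line of every vertex $v$ enters $S$ at a well-defined first vertex $\beta_G(v)$ (with $\beta_G(v)=v$ when $v\in S_G$). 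One checks that $G\mapsto S_G$ is a covariant subset and that $\beta$ is covariant and measurable; by Lemma~\ref{lemma_non_empty_covariant_set}, $p:=\mathbb{P}[\mathbf{o}\in S_{\mathbf{T}}]>0$, so conditioning on $\{\mathbf{o}\in S_{\mathbf{T}}\}$ is legitimate and gives precisely $[\mathbf{T}',\mathbf{o}']$. Write $\mathbb{E}_S:=\mathbb{E}[\,\cdot\mid\mathbf{o}\in S_{\mathbf{T}}]$.

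Next I would set up the bush decomposition. Working under $[\mathbf{T}',\mathbf{o}']$, index the bi-infinite path as $(p_i)_{i\in\mathbb{Z}}$ with $p_0=\mathbf{o}'$ and $F(p_i)=p_{i+1}$, and let $\mathbf{T}_i$ be the subtree of $\mathbf{T}'$ induced on $\beta_{\mathbf{T}'}^{-1}(p_i)$, rooted at $\mathbf{o}_i:=p_i$. Equivalently $\mathbf{T}_i=\{p_i\}\cup\bigcup_c D(c)$, the union over the children $c\neq p_{i-1}$ of $p_i$; any such $c$ has $D(c)$ finite, since otherwise $c\in S$, contradicting that $p_i$ has the single path-child $p_{i-1}$. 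Hence each $\mathbf{T}_i$ is a finite Family Tree whose root has out-degree $0$ inside $\mathbf{T}_i$. Every edge of $\mathbf{T}'$ is either internal to some $\mathbf{T}_i$ or one of the edges $(p_i,p_{i+1})=(\mathbf{o}_i,\mathbf{o}_{i+1})$, so re-adding the latter to $\bigsqcup_i\mathbf{T}_i$ recovers $\mathbf{T}'$; that is, $[\mathbf{T}',\mathbf{o}']$ is the joining of $(\mathbf{T}_i,\mathbf{o}_i)_{i\in\mathbb{Z}}$, and this sequence is a measurable functional of $[\mathbf{T}',\mathbf{o}']$ once the $\mathbb{Z}$-labelling is pinned by $\mathbf{o}'=p_0$.

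Then I would prove stationarity. Substituting $f([G,o,u])=\mathbf{1}\{o\in S_G\}\mathbf{1}\{u\in S_G\}h([G,o,u])$ into the Mass Transport Principle for $[\mathbf{T},\mathbf{o}]$ and dividing by $p$ gives the restricted principle $\mathbb{E}_S[\sum_{u\in S_{\mathbf{T}}}h([\mathbf{T},\mathbf{o},u])]=\mathbb{E}_S[\sum_{u\in S_{\mathbf{T}}}h([\mathbf{T},u,\mathbf{o}])]$ for all measurable $h\ge 0$. Taking $h([G,o,u])=\mathbf{1}\{u=F_G(o)\}\phi([G,u])$: on $\{\mathbf{o}\in S_{\mathbf{T}}\}$ the left side is $\mathbb{E}_S[\phi([\mathbf{T},F(\mathbf{o})])]$ (there is exactly one such $u$, namely $F(\mathbf{o})$, and it lies in $S_{\mathbf{T}}$), while the right side counts the children of $\mathbf{o}$ lying in $S_{\mathbf{T}}$, of which there is exactly one, giving $\mathbb{E}_S[\phi([\mathbf{T},\mathbf{o}])]$. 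Hence $\mathbb{E}_S[\phi([\mathbf{T},F(\mathbf{o})])]=\mathbb{E}_S[\phi([\mathbf{T},\mathbf{o}])]$ for every bounded measurable $\phi$, i.e. re-rooting $[\mathbf{T}',\mathbf{o}']$ at $F(\mathbf{o}')=p_1$ preserves its law; since this shifts the $\mathbb{Z}$-labelling of the bushes by one, $(\mathbf{T}_{i+1},\mathbf{o}_{i+1})_i\stackrel{d}{=}(\mathbf{T}_i,\mathbf{o}_i)_i$.

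Finally I would check integrability and identify the law. Transporting unit mass from each vertex to the root of its bush, the Mass Transport Principle for $[\mathbf{T},\mathbf{o}]$ gives $1=\mathbb{E}[\mathbf{1}\{\mathbf{o}\in S_{\mathbf{T}}\}\#V(\mathbf{T}_{\mathbf{o}})]=p\,\mathbb{E}_S[\#V(\mathbf{T}_0)]$, so $\mathbb{E}[\#V(\mathbf{T}_0)]=1/p<\infty$ and the typically rooted joining $\sigma'$ of $(\mathbf{T}_i,\mathbf{o}_i)_i$ is well defined (and unimodular, by Theorem~\ref{thm:I_F_unimodularizable}). For measurable $A\subseteq\mathcal{G}_*$, apply the Mass Transport Principle to $h([G,o,u])=\mathbf{1}\{o\in S_G\}\mathbf{1}\{\beta_G(u)=o\}\mathbf{1}\{[G,u]\in A\}$: the left side equals $\mathbb{E}[\mathbf{1}\{\mathbf{o}\in S_{\mathbf{T}}\}\sum_{u\in V(\mathbf{T}_{\mathbf{o}})}\mathbf{1}\{[\mathbf{T},u]\in A\}]$, which by the defining formula \eqref{eq:size_biased_uniform_root} for $\sigma'$ (together with $1/\mathbb{E}[\#V(\mathbf{T}_0)]=p$) equals $\sigma'[A]$, while the right side equals $\mathbb{E}[\mathbf{1}\{[\mathbf{T},\mathbf{o}]\in A\}\,\#\{u\in S_{\mathbf{T}}:\beta_{\mathbf{T}}(\mathbf{o})=u\}]=\mathbb{P}[[\mathbf{T},\mathbf{o}]\in A]$, since $\mathbf{o}$ lies in exactly one bush. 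Thus $\sigma'$ is the law of $[\mathbf{T},\mathbf{o}]$, completing the proof. I expect the main obstacle to be the bookkeeping around the $\mathbb{Z}$-indexing in the stationarity step — making precise that the intrinsic, re-rooting-covariant labelling of the path turns ``re-rooting at $F(\mathbf{o}')$ preserves the law'' into genuine stationarity of the bush sequence — together with the routine but not wholly trivial checks that $S$ is a covariant subset, that $\beta$ is well defined, and that each $\mathbf{T}_i$ is finite; the two mass-transport computations themselves are short once the restricted principle and the map $\beta$ are in place.
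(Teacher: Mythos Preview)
The paper does not give its own proof of this theorem: it is stated with a citation to \cite{baccelliEternalFamilyTrees2018a} and used as a black box. Your argument is correct and is essentially the standard reversal of the typically rooted joining construction via mass transport --- define the bushes $\beta^{-1}(p_i)$, get stationarity of the bush sequence from the fact that the along-path shift is an a.s.\ bijection on the covariant subset $S$, get $\mathbb{E}_S[\#V(\mathbf{T}_0)]=1/p$ by transporting unit mass to the bush root, and identify $\sigma'$ with the law of $[\mathbf{T},\mathbf{o}]$ by one more transport. The only places where you are slightly hand-wavy are exactly the ones you flag yourself: that the $\mathbb{Z}$-indexing of the path is a genuine measurable labelling so that ``re-rooting at $F(\mathbf{o}')$ preserves the law'' really yields shift-invariance of the sequence $([\mathbf{T}_i,\mathbf{o}_i])_i$, and that $S$ and $\beta$ are covariant and measurable; both are routine.
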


\subsection{Typical re-rooting operation}
\begin{definition}
    Let \([\mathbf{T},\mathbf{o}]\) be a random rooted finite Family Tree such that \(\mathbb{E}[\#V(\mathbf{T})]<\infty\).
    A random rooted finite Family Tree \([\mathbf{T}',\mathbf{o}']\) is said to be obtained by \textbf{re-rooting to a typical vertex of} \([\mathbf{T},\mathbf{o}]\) if 
    \begin{equation} \label{eq_typical_rerooting}
        \mathbb{P}[[\mathbf{T}',\mathbf{o}'] \in A] = \frac{1}{\mathbb{E}[\#V(\mathbf{T})]} \mathbb{E}\left[\sum_{u \in V(\mathbf{T})} \mathbf{1}_A([\mathbf{T},u])\right],
    \end{equation}
    for every measurable subset \(A\) of \(\mathcal{T}_*\).
\end{definition}


\chapter{Record vertex-shift for random walk case}\label{chapter_record_v_shift}

In this chapter, we define the record vertex-shift and study its \(f\)-graph called record graph.
The record vertex-shift is defined on the networks of the form \((\mathbb{Z},x)\), where \(x=(x_n)_{n \in \mathbb{Z}}\) is a sequence of integers in \(\{-1,0,1,2,\cdots\}\).
The record vertex-shift is defined by the record map defined as follows: for each integer \(i\), look at the sums \(\sum_{k=i}^{n-1}x_k\), starting from \(i\), and assign \(i\) to the smallest \(n>i\) for which  \(\sum_{k=i}^{n-1}x_k\) is non-negative.
The randomness of the network is introduced when  we take i.i.d. sequence \(X=(X_n)\) with a common distribution on \(\{-1,0,1,\cdots\}\) such that \(0<\mathbb{P}[X_0=-1]<1\) and \(-\infty<\mathbb{E}[X_0]<\infty\).
The \(f\)-graph of the record vertex-shift on \((\mathbb{Z},X)\) is called the record graph.
The properties of the record map and the nature of the random walk influence the distribution of the component of \(0\) of the record graph.
These properties are studied in Section \ref{section_record_map}.
Using these properties, we show, in Section \ref{section_classification}, that the component of \(0\) in the record graph exhibits phase transitions at \(0\) when the mean of increment \(\mathbb{E}[X_0]\) is varied between \(-\infty\) and \(\infty\).
These three phases of the component of \(0\) corresponding to the region \((-\infty,0)\), at \(0\), and the region \((0, \infty)\) are namely Typically rooted Galton-Watson Tree (\(TGWT\)), unimodular Eternal Galton-Watson Tree \(EGWT\) and the unimodularised bi-variate Eternal Kesten Tree \(MEKT\) respectively.
The Family Tree \(EGWT\)  was introduced in \cite{baccelliEternalFamilyTrees2018a}.
We introduce the \(TGWT\) and \(MEKT\) in Subsection \ref{sec_examples}.
All of these Family Trees are unimodular.

{\bf (In French)}
Dans ce chapitre, nous définissons le décalage de sommet des records et étudions son \(f\)-graphe appelé graphe des records.
Le décalage de sommet des records est défini sur les réseaux de la forme \((\mathbb{Z},x)\), où \(x=(x_n)_{n \in \mathbb{Z}}\) est une suite d'entiers dans \(\{-1,0,1,2,\cdots\}\).
Le décalage de sommet des records est défini par la fonction record définie comme suit : pour chaque entier \(i\), on regarde les sommes \(\sum_{k=i}^{n-1}x_k\), en partant de \(i\), et on assigne \(i\) au plus petit \(n>i\) pour lequel  \(\sum_{k=i}^{n-1}x_k\) est non négatif.
L'aléa du réseau est introduit lorsque nous prenons une suite i.i.d. \(X=(X_n)\) avec une loi commune sur \(\{-1,0,1,\cdots\}\) telle que \(0<\mathbb{P}[X_0=-1]<1\) et \(-\infty<\mathbb{E}[X_0]<\infty\).
Le \(f\)-graphe du décalage de sommet des records sur \((\mathbb{Z},X)\) est appelé le graphe des records.
Les propriétés de la fonction record et la nature de la marche aléatoire influencent la distribution de la composante de \(0\) du graphe des record.
Ces propriétés sont étudiées dans la section \ref{section_record_map}.
En utilisant ces propriétés, nous montrons, dans la section \ref{section_classification}, que la composante de \(0\) dans le graphe des records présente des transitions de phase en \(0\) lorsque la moyenne des accroissements \(\mathbb{E}[X_0]\) varie entre \(-\infty\) et \(\infty\).
Ces trois phases de la composante de \(0\) correspondant à la région \((-\infty,0)\), en \(0\), et à la région \((0, \infty)\) sont respectivement l'arbre généalogique typiquement enraciné de Galton-Watson (\(TGWT\)), l'arbre généalogique éternel unimodulaire de Galton-Watson (\(EGWT\)) et l'arbre éternel bivarié de Kesten unimodularisé (\(MEKT\)).
L'arbre généalogique \(EGWT\)  a été introduit dans \cite{baccelliEternalFamilyTrees2018a}.
Nous introduisons le \(TGWT\) et le \(MEKT\) dans la sous-section \ref{sec_examples}.
Tous ces arbres généalogiques sont unimodulaires.

\section{Record map}\label{section_record_map}

\noindent {\bf Notations:}
 Given an integer valued sequence $x=(x_n)_{n \in \mathbb{Z}}$ with \(x_n \geq -1\) for all \(n \in \mathbb{Z}\), let 
\begin{equation}
  y(n,j):= \sum_{l=n}^{j-1}x_l,
\end{equation}
for all integers $n\leq j$, with the convention that $\sum_n^{n-1}\equiv 0$.

For any integer \(i\), the sequence \((-y(n,i))_{n<i}\) is called the sequence of sums in the past of \(i\) seen from \(i\), whereas, the sequence \((y(i,n))_{n\geq i}\) is called the sums in the future of \(i\) seen from \(i\).
Similarly, the sequence \((n,-y(n,i))_{n<i}\) is called the trajectory in past of \(i\) seen from \(i\) and the sequence \((n,y(i,n))_{n\geq i}\) is called the trajectory in the future of \(i\) seen from \(i\).

Let \(s^{(i)}\) denote the sequence \((s^{(i)}_n)_{n\geq 0}\), where \(s^{(i)}_n = y(i-n,i)\) for all \(n\geq 0\).
Let \(s=(s_n)_{n\in \mathbb{Z}}\) be the sequence of locations associated to \(x\), where \(s_0=0\), \(s_n = -y(n,0)\), for all \(n<0\), and \(s_n = y(0,n)\) for all \(n >0\).
In particular, we have \(s^{(0)}_n = -s_{-n}\) for all \(n \geq 0\).

\begin{definition}\label{hyp:increments}
  A sequence of i.i.d. random variables $X=(X_n)_{n \in \mathbb{Z}}$ with $X_n$ taking values in $\mathbb{N}\cup\{-1\}$ for all \(n \in \mathbb{Z}\), such that $\mathbb{P}[X_0=-1]>0$ and $\mathbb{E}[X_0]<\infty$, are called i.i.d. increments of a skip-free to the left random walk.
\end{definition}

We use capital letters to denote random variables.
For instance, the sums associated to \(X=(X_n)_{n \in \mathbb{Z}}\) are denoted by \(S=(S_n)_{n \in \mathbb{Z}}\), and \(S^{(i)}=(S^{(i)}_n)_{n \geq 0}\) respectively.

\begin{remark} \label{remark_one_step_negative}
Since \(x_k \geq -1\) for all \(k \in \mathbb{Z}\), an obvious observation is that if \(y(n,j)=m<0\) for any two integers \(n<j\), then there exist integers \(n=i_{-m+1}<i_{-m}<\cdots<i_2<i_1=j\) such that \(y(i_{l+1},i_l)=-1\) for all \(1 \leq l \leq -m\).
\end{remark}

Define the {\bf record map} $R_{x}:\mathbb{Z} \rightarrow \mathbb{Z}$ to be the function that maps any $n \in \mathbb{Z}$ to 

\begin{equation*}
  R_{x}(n) = \begin{cases}
                                  \inf\{j>n: y(n,j) \geq \max\{y(n,l):n \leq l \leq j-1\}\} \text{ if $\inf$ exists}\\
                                  n \text{ otherwise},
                                   \end{cases}
\end{equation*}
with the convention that $y(n,n)=0$. 
Equivalently, we obtain

\begin{equation*}
  R_{x}(n) = \begin{cases}
                                \inf\{j>n:y(n,j) \geq 0\} \text{ if the infimum exists}\\
                                n \text{ otherwise}.
                                 \end{cases}
\end{equation*}
Denote the $k$-th iterate of the record map by $R^k_x$.

Given a real valued sequence $x=(x_n)_{n \in \mathbb{Z}}$, we consider its associated network $(\mathbb{Z},x)$ with label $x_i$ on the edge $(i,i+1)$, for all $i \in \mathbb{Z}$ (see \cite{aldousProcessesUnimodularRandom2007} for the terminology on networks).
For the random variables $X = (X_n)_{n \in \mathbb{Z}}$, we consider the random rooted network $[\mathbb{Z},0,X]$ associated to \(X\) (with mark $X$ and where the network $(\mathbb{Z},X)$ is rooted at $0$).
Note that $[\mathbb{Z},0,X]$ is unimodular since \(X\) is stationary \cite{aldousProcessesUnimodularRandom2007}.

Consider networks of the form $(\mathbb{Z},i,x)$, where $i \in \mathbb{Z}$ is the root and the mark $x=(x_n)_{n \in \mathbb{Z}}$ is a real-valued sequence which assigns label $x_n$ to edge $(n,n+1)$ for each $n \in \mathbb{Z}$.
We know that any order preserving graph isomorphism of $\mathbb{Z}$ is a shift map $T_i$, for some integer $i$, namely $T_i(n)=n+i,\ n \in  \mathbb{Z}$.
The map \(T_i\)  acts on a sequence \(x=(x_n)_{n \in \mathbb{Z}}\) by \(T_i x = (x_{n-i})_{n \in \mathbb{Z}}\).
So, any network $(\mathbb{Z},i,x)$ is root-isomorphic to $(\mathbb{Z},0,T_{-i}x)$.

Consider the collection of networks $\{(\mathbb{Z},x):x = (x_n)_{n \in \mathbb{Z}}, x_n \in \mathbb{Z}_{\geq -1}\}$.
Then, by taking the record map $R$ on this collection and the identity map on the rest of the networks, we obtain a vertex-shift.
Indeed, $R$ satisfies the first condition, i.e., $R$ is covariant for all $i$ because
\begin{align*}
  R_{T_ix}(k+i) &= \begin{cases} \inf\{ j > k+i: \sum_{l=k+i}^{j-1} x_{l-i} \geq 0\} \text{ if infimum is defined},\\ 
    k+i \text{ otherwise} \end{cases}\\
  &= \begin{cases} i + \inf\{j>k: \sum_{l=k}^{j-1} x_{l}\}  \text{ if infimum is defined} \\
     i+ k \text{ otherwise}\end{cases}\\
  &= i+ R_{x}(k).
\end{align*}

The record map $R$ also satisfies the measurability condition since the map $[\mathbb{Z},i,j,x] \mapsto 1_{R_x(i)=j}$ is a function of $(x_i,x_{i+1},\cdots,x_{j-1})$.

\subsection{Properties of the record map}

Let $x:=(x_n)_{n \in \mathbb{Z}}$ be an arbitrary integer-valued sequence with \(x_n \geq -1, \, \forall n \in \mathbb{Z}\), and let $(\mathbb{Z},x)$ be its associated network with $x_n$ as the mark on the edge $(n,n+1)$ for $n \in \mathbb{Z}$. 
Define the function $L: \mathbb{R}^{\mathbb{Z}} \times \mathbb{Z}\rightarrow \mathbb{Z} \cup \{-\infty\}$ in the following way:
\begin{equation}\label{eq:L_x_defn}
  L_{x}(i) = \begin{cases}
    \inf\{j<i: y(k,i) \geq 0, \forall j \leq k < i\} \text{, if $\inf$ exists;}\\
    i \text{ otherwise}.
    \end{cases}
\end{equation}

The integer \(L_x(i)\) is the largest integer (if it exists) up to which the sums in the past of \(i\) and seen from \(i\) are non-positive, equivalently, \(L_x(i)\) is the largest integer up to which the trajectory in the past of \(i\) seen from \(i\) stays below level \(1\).
Note that $-\infty \leq L_x(i) <i$ if and only if the sum $-y(j,i) \leq 0$ for all integers \(j\) such that $L_x(i) \leq j<i$.
It could happen that \(L_x(i)=i\).
This is possible only if $x_{i-1}=-1$.

For the sequence \(x\) and any integer \(i \in \mathbb{Z}\), the set \(D_1(i):=\{j<i: R_x(j)=i\}\) is called the set of children of \(i\) and \(D(i):=\{j<i: R^n_x(j)=i\) for some \(n>0\}\) is  called the set of descendants of \(i\).
The following lemma shows that the set of descendants of any integer $i$ is the set of integers that lie between $L_x(i)$ and $i$ (including $L_x(i)$ and excluding \(i\)).

\begin{lemma}\label{lemma:descendants}\label{lemma_descendants}
  Let $x = (x_n)_{n \in \mathbb{Z}}$ be an integer-valued sequence with \(x_n \geq -1, \, \forall n \in \mathbb{Z}\), and $R$ be the record vertex-shift on the network \((\mathbb{Z},x)\). 
The set of descendants of any integer $i$ is given by
  \begin{equation*}
    D(i) = \{j \in \mathbb{Z}: L_{x}(i) \leq j <i\}.
  \end{equation*}
\end{lemma}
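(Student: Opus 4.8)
The claim is that for any integer $i$, the descendant set $D(i)$ under the record vertex-shift equals the block of integers $\{j : L_x(i) \le j < i\}$. The plan is to prove this by two inclusions, both leveraging the characterization of $L_x(i)$ as the leftmost integer from which all partial sums $y(k,i)$ (for $L_x(i) \le k < i$) are non-negative, i.e. the trajectory in the past of $i$ seen from $i$ stays below level $1$ on exactly the window $[L_x(i), i)$. The key structural fact I would isolate first is a \emph{monotonicity/nesting lemma}: if $R_x(j) = i$, then $j < i$ and every integer strictly between $j$ and $i$ is a descendant of $i$ as well — more precisely, $R_x$ maps each such intermediate integer into $(j, i]$. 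This follows because $R_x(j)$ is the first time the future sum from $j$ becomes $\ge 0$; for an intermediate $k \in (j,i)$ we have $y(j,k) < 0$ (else $R_x(j) \le k < i$), so by Remark \ref{remark_one_step_negative} the record of $k$ cannot jump past $i$, hence $R_x(k) \le i$, and iterating shows $k \in D(i)$.

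**First inclusion: $D(i) \subseteq \{j : L_x(i) \le j < i\}$.** Here I would take $j \in D(i)$, so $R_x^n(j) = i$ for some $n \ge 1$, and in particular $j < i$. Writing the chain $j = j_0 < j_1 < \cdots < j_n = i$ with $R_x(j_{t}) = j_{t+1}$, I want to show $j \ge L_x(i)$, equivalently that $y(j, i) \ge 0$ and in fact $y(k,i) \ge 0$ for all $k$ in $[j,i)$ — no wait, more carefully, I need $j$ to lie in the window where past-sums-from-$i$ are non-negative. The cleanest route: show by induction on $n$ that if $R_x^n(j) = i$ then $y(k,i) \ge 0$ for all $k \in [j,i)$; the base case $n=1$ says $y(j,i)\ge 0$ together with $y(k,i) \ge 0$ for $k \in (j,i)$ (the latter because $k$ is an intermediate point and by the record property from $j$ the partial sums $y(j,k)<0$, so $y(k,i) = y(j,i)-y(j,k) > y(j,i) \ge 0$). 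The inductive step glues a shorter chain onto a single record step. Concluding $y(k,i)\ge 0$ for all $k \in [j,i)$ gives $j \ge L_x(i)$ by definition of $L_x$.

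**Second inclusion: $\{j : L_x(i) \le j < i\} \subseteq D(i)$.** Fix $j$ with $L_x(i) \le j < i$. By definition of $L_x(i)$, $y(j,i) \ge 0$, so $R_x(j)$ is well-defined and $R_x(j) \le i$. I claim $R_x(j) \in (j,i]$ and then argue that iterating $R_x$ starting from $j$ strictly increases and stays $\le i$, hence must reach $i$ in finitely many steps. The point is that whenever $L_x(i) \le k < i$, we have $y(k,i) \ge 0$, so $R_x(k)$ exists and $R_x(k) \le i$; and $R_x(k) > k$ always. Since the iterates $j = k_0 < k_1 < \cdots$ are strictly increasing integers bounded above by $i$, the sequence terminates at $i$, giving $j \in D(i)$. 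I should double-check the edge behavior when $j$ is such that $R_x(j)$ lands strictly inside $(j,i)$ — that intermediate value $k_1$ still satisfies $L_x(i) \le k_1 < i$ because $k_1 > j \ge L_x(i)$, so the induction goes through cleanly.

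**Anticipated main obstacle.** The delicate step is the precise interplay between the "record from $j$" condition (partial sums $y(j,k) < 0$ for $j < k < R_x(j)$) and the "descendant window" condition (partial sums $y(k,i) \ge 0$ for $L_x(i) \le k < i$); getting the inequalities to chain correctly — in particular verifying that intermediate points of a record step automatically inherit descendant status of $i$, using Remark \ref{remark_one_step_negative} so that a negative partial sum can only be "undone" one unit at a time and hence the record cannot overshoot $i$ — is where I expect the real work to be. The finiteness/termination argument (strictly increasing bounded integer sequence) is routine, and the definition-unwinding of $L_x$ is routine, but the two-sided estimate linking $R_x$-orbits to the sign pattern of partial sums is the crux.
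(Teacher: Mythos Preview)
Your proof is correct, and the second inclusion (that $L_x(i)\le j<i$ implies $j\in D(i)$) is essentially identical to the paper's argument: $y(j,i)\ge 0$ forces $j<R_x(j)\le i$, the iterates stay in $[L_x(i),i]$, and a strictly increasing bounded integer sequence must terminate at $i$.

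For the other inclusion your route differs from the paper's. You argue \emph{directly}: if $R_x^n(j)=i$, then by induction on $n$ one has $y(k,i)\ge 0$ for every $k\in[j,i)$, whence $j\ge L_x(i)$ by the very definition of $L_x(i)$. The paper instead argues by \emph{contrapositive}: it takes $j<L_x(i)$ and proves the ``jump--over'' claim that either $R_x(j)<L_x(i)$ or $R_x(j)>i$, so the $R_x$--orbit of $j$ can never land in the window $[L_x(i),i]$. Your approach is shorter and more transparent, since the key inequality $y(k,i)=y(j,i)-y(j,k)\ge 0$ (for $j\le k<R_x(j)=i$) chains cleanly through the record iterates. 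The paper's version, on the other hand, isolates a structural fact (records from below $L_x(i)$ skip the whole block $[L_x(i),i]$) that is of independent interest and is reused later.

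One small point: your invocation of Remark~\ref{remark_one_step_negative} (the skip--free property $x_n\ge -1$) is unnecessary. The statement ``$R_x(k)\le i$ whenever $y(k,i)\ge 0$'' is immediate from the definition of $R_x$ as an infimum, and needs no control on the size of negative increments. Indeed the paper notes (Remark~\ref{remark_intervalProperty_integerValued}) that this lemma holds for arbitrary integer-valued sequences. Your actual inequality manipulations never use skip--freeness, so this is only a cosmetic misattribution, not a gap.
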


\begin{proof}
We first prove that $ \{j \in \mathbb{Z}: L_{x}(i) \leq j <i\}  \subset D(i)$.
If $L_x(i)=i$, then there is nothing to prove.
So, assume that $L_{x}(i)<i$ (note that $L_{x}(i)$ can be $-\infty$). 
Consider any integer $j$ such that $L_{x}(i) \leq j < i$.
Since \(y(j,i) \geq 0\), we have,
$$
j< R_{x}(j) := \inf\{k>j: y(j,k) \geq 0\} \leq i.
$$
\noindent By iteratively applying the same argument to \(R^l(j)\) for each \(l>0\), we find the smallest non-negative integer \(m\) such that \(R^m_x(j)=i\).
Such an \(m\) exists as there are only finitely many integers between \(j\) and \(i\).
This implies that $j$ is a descendant of $i$. 
Therefore, $\{j:L_{x}(i) \leq j < i\} \subseteq D(i)$.

We now prove that $D(i) =  \{j \in \mathbb{Z}: L_{x}(i) \leq j <i\}$, i.e., if $-\infty< j<L_x(i)$, then \(j\)  is not a descendant of \(i\).
For any $-\infty<j < L_{x}(i)$, we \emph{claim} that, either $R_{x}(j)< L_{x}(i)$  or $R_{x}(j)>i$.
Assume that the claim is true.
If \(R_x(j)<L_x(i)\), then by applying the claim again to \(R_x(j)\), we obtain that, either \(R^2_x(j)<L_x(i)\) or \(R^2_x(j)>i\).
Iterate this process several times until we find the largest non-negative integer \(n\) such that $R_x^n(j)<L_x(i)$.
Such an \(n\) exists as there are only finitely many integers between \(j\) and \(L_x(i)\).
As \(n\) is the largest integer satisfying this condition, by applying the claim to $R_x^n(j)$, we obtain that $R_x^{n+1}(j)>i$.
This implies that none of the descendants of $R_x^n(j)$ (including $j$) is a descendant of $i$, which completes the proof.

\noindent We now prove the last \emph{claim}.
Let $k := L_{x}(i)-1$ and let \(j\) be as in the claim.
Assume that $i \geq R_{x}(j) \geq L_x(i)$, i.e., $i \geq R_x(j)>k$.
Then, $y(k,i)<0$, by the definition of $L_x(i)$. 
Since $R_x(j)>k$, we have $y(j,k) \leq 0$ (equality holds only if $j=k$).
So, $y(j,i) = y(j,k)+y(k,i)<0$.
Therefore, for  any $L_{x}(i) \leq m\leq i$, $y(j,m) = y(j,i)-y(m,i) <0$, since $y(m,i) \geq 0$.
So, $R_{x}(j)>i$ and the claim is proved.
\end{proof}

\begin{lemma}[Interval property]\label{lemma:record_descendants}
  Let $x = (x_n)_{n \in \mathbb{Z}}$ be an integer-valued sequence with \(x_n \geq -1, \, \forall n \in \mathbb{Z}\), and $R$ be the record vertex-shift on the network \((\mathbb{Z},x)\).
  If $i \in \mathbb{Z}$ satisfies $R_{x}(i)>i$, then the set of descendants of $R_x(i)$ contains $\{j:i \leq j < R_{x}(i)\}$.
  There could be more descendants of $R_x(i)$   that are less than $i$.
\end{lemma}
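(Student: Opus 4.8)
The plan is to deduce this from Lemma~\ref{lemma:descendants}, which already identifies the descendant set of any integer as an interval. Write $r := R_x(i)$ and recall that Lemma~\ref{lemma:descendants} gives $D(r) = \{j \in \mathbb{Z} : L_x(r) \le j < r\}$. Since $r > i$ by hypothesis, the asserted inclusion $\{j : i \le j < r\} \subseteq D(r)$ is therefore equivalent to the single inequality $L_x(r) \le i$, so the whole statement reduces to establishing that bound. The final sentence of the lemma (``there could be more descendants less than $i$'') then corresponds exactly to the possibility that $L_x(r) < i$, or even $L_x(r) = -\infty$, and requires no separate argument.

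To prove $L_x(r) \le i$, I would unwind the definition of $L_x$ in Eq.~(\ref{eq:L_x_defn}): since $i < r$, it suffices to check that $i$ lies in the set $\{j < r : y(k,r) \ge 0 \text{ for all } j \le k < r\}$, i.e.\ that $y(k,r) \ge 0$ for every $k$ with $i \le k < r$. First, because $R_x(i) = r > i$ the infimum defining $R_x(i)$ is attained (it is an infimum of a nonempty subset of $\mathbb{Z}$ bounded below by $i+1$), so $y(i,r) \ge 0$; this handles $k = i$. Next, for $i < k < r$, minimality of $r$ forces $y(i,k) < 0$, and then additivity of the partial sums, $y(i,r) = y(i,k) + y(k,r)$, gives $y(k,r) = y(i,r) - y(i,k) > 0$. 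Hence $y(k,r) \ge 0$ on the whole range $i \le k < r$, so $L_x(r) \le i$, and Lemma~\ref{lemma:descendants} finishes the proof.

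As an alternative, should one prefer not to invoke Lemma~\ref{lemma:descendants}, the same computation runs directly: for each $j$ with $i \le j < r$ one shows $R_x^n(j) = r$ for some $n \ge 1$ by noting that $y(j,r) \ge 0$ forces $j < R_x(j) \le r$, iterating, and using that only finitely many integers lie in $(j,r)$ so that the iteration must terminate at $r$. I do not expect a serious obstacle in either route; the only points needing a little care are that the infima defining $R_x$ and $L_x$ are genuinely attained (which holds because the relevant integer sets are nonempty and bounded below) and the bookkeeping of strict versus non-strict inequalities in the chain $y(i,k) < 0 \le y(i,r)$.
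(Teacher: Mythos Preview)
Your proposal is correct and follows essentially the same route as the paper: reduce via Lemma~\ref{lemma:descendants} to the inequality $L_x(R_x(i)) \le i$, then verify $y(k,R_x(i)) \ge 0$ for $i \le k < R_x(i)$ using additivity $y(i,R_x(i)) = y(i,k) + y(k,R_x(i))$ together with $y(i,k) < 0$ and $y(i,R_x(i)) \ge 0$. The paper's proof is just a terser version of your first argument; your extra care with the $k=i$ case and the alternative direct-iteration route are fine but not needed.
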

\begin{proof}
  By Lemma \ref{lemma:descendants}, it is enough to show that $L_x(R_x(i))\leq i$.
  For any integer $m$ such that $i <m < R_x(i)$, since $y(i,m)<0$ and $y(i,R_x(i)) \geq 0$, we have
\[
y(m,R_x(i)) = y(i,R_x(i))- y(i,m) \geq 0.
\]
Thus, $L_{x}(R_{x}(i)) \leq i$.
\end{proof}

\begin{remark}\label{remark_intervalProperty_integerValued}
  Although, Lemma \ref{lemma_descendants} and Lemma \ref{lemma:record_descendants} are proved under the assumption that \(x=(x_n)_{n \in \mathbb{Z}}\) is an integer-valued sequence satisfying the condition \(x_n \geq -1 \ \forall n \in \mathbb{Z}\), the condition does not play any role in the proofs of these two lemmas.
  So the statements are indeed valid for any integer-valued sequence.
\end{remark}

\subsection{Properties of the record map on skip-free to the left random walk}

The following two lemmas focus on computing the probability of certain events of skip-free random walks.
They are used in the later sections for describing distribution of the record graph associated to skip-free random walks.
Although, these computations can be found in \cite{nicolascurienRandomWalksGraphs}, we prove them here for the sake of completeness and notational consistency.

We use the following notation in this and the remaining sections thereafter.
Let $X=(X_n)_{n \in \mathbb{Z}}$ be i.i.d. increments of a skip-free to the left random walk, and $[\mathbb{Z},0,X]$ be its associated network.
Let $(S_n)_{n \in \mathbb{Z}}$ be the random walk associated to the increment sequence $X$, i.e., $S_0=0$, $S_n = S_0+\sum_{i=0}^{n-1}X_i$ for $n \geq 1$ and $S_n = S_0+\sum_{i=n}^{-1}-X_i$ for $n \leq -1$.
Clearly, $(S_n)_{n \geq 0}$ is a skip-free to the left random walk, and $(S_{-n})_{n \geq 0}$ is a skip-free to the right random walk.
For any $j \in \mathbb{Z}$, let 
\begin{equation}\label{eq_eta_j}
  \eta_j:=\begin{cases}
    \inf\{n\geq 0: S_n = j\} \text{ if } S_n=j \text{ for some } n\geq 0\\
    \infty \text{ otherwise.}
  \end{cases}
\end{equation}
For an event $A$, let $\mathbb{P}_k[A]$ denote the probability of the event $A$ when the random walk starts at $k \in \mathbb{Z}$.
Let $\mathbb{P}:=\mathbb{P}_0$, and $c := \mathbb{P}[\eta_{-1} < \infty]$.
 
\begin{lemma}  \label{20230118184651}\label{hitting_time_20230118184651}
    For $0 \leq j \leq k$, $\mathbb{P}[\eta_{j-k}< \infty]= \mathbb{P}_k[\eta_j < \infty]= c^{k-j}$.
\end{lemma}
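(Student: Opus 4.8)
The plan is to prove the statement in two stages: first the equality $\mathbb{P}[\eta_{j-k}<\infty] = \mathbb{P}_k[\eta_j<\infty]$, then the formula $\mathbb{P}_k[\eta_j<\infty]=c^{k-j}$. The first equality is immediate from spatial homogeneity of the random walk: starting from $k$ and hitting $j$ is, after translating by $-k$, the same as starting from $0$ and hitting $j-k$; formally, $(S_n-k)_{n\geq 0}$ under $\mathbb{P}_k$ has the same law as $(S_n)_{n\geq 0}$ under $\mathbb{P}=\mathbb{P}_0$, and $\{\eta_j<\infty\}$ is a measurable event of the trajectory. So it suffices to treat $\mathbb{P}[\eta_{-m}<\infty]=c^m$ for every integer $m\geq 0$, where $m=k-j\geq 0$ and $c=\mathbb{P}[\eta_{-1}<\infty]$.

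\textbf{The key step: decomposing descent by one level.}

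Because the walk $(S_n)_{n\geq 0}$ is skip-free to the left, it cannot jump over any integer on the way down: to reach level $-m$ it must pass through $-1$, then $-2$, and so on down to $-m$. So I would define the successive descent times $\tau_0=0$ and $\tau_i = \inf\{n\geq 0: S_n = -i\}$, and observe that $\{\eta_{-m}<\infty\} = \{\tau_1<\infty\}\cap\{\tau_2<\infty\}\cap\cdots\cap\{\tau_m<\infty\}$, with $\tau_1\leq \tau_2\leq\cdots\leq\tau_m$ on this event. Then by the strong Markov property applied at the finite stopping time $\tau_{i-1}$ (on the event $\{\tau_{i-1}<\infty\}$), the post-$\tau_{i-1}$ trajectory is an independent copy of the walk started at $-(i-1)$, and $\tau_i<\infty$ on that event is exactly the event that this fresh walk ever hits one level below its start. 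By the spatial homogeneity noted above, $\mathbb{P}[\tau_i<\infty \mid \tau_{i-1}<\infty] = \mathbb{P}_{-(i-1)}[\eta_{-i}<\infty] = \mathbb{P}[\eta_{-1}<\infty] = c$. Multiplying these conditional probabilities gives $\mathbb{P}[\eta_{-m}<\infty] = c^m$, and the convention for $m=0$ gives $c^0=1$ (the walk starts at $0$), consistent with the claim.

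\textbf{Main obstacle and how to handle it.}

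The only real subtlety is making the strong Markov argument clean: $\tau_{i-1}$ is a stopping time but may be infinite, so one must restrict to the event $\{\tau_{i-1}<\infty\}$ before invoking the Markov property, and one must check that $\{\tau_i<\infty\}\subseteq\{\tau_{i-1}<\infty\}$, which is precisely where skip-freeness to the left is used (the walk cannot land on $-i$ without having previously visited $-(i-1)$, since downward jumps have size exactly $1$). I would state this as a short lemma-style remark: for $i\geq 1$, $\mathbb{P}[\tau_i<\infty] = \mathbb{P}[\tau_{i-1}<\infty]\cdot c$, and then induct on $i$. An alternative, perhaps even shorter, is to note that $\eta_{-1}$ (when finite) is a stopping time and the walk started afresh from $-1$ must reach $-m$ by first reaching $-1$ again relative to its new start, giving the recursion $\mathbb{P}[\eta_{-m}<\infty] = c\,\mathbb{P}[\eta_{-(m-1)}<\infty]$ directly; either route yields the result with no computation beyond the geometric product.
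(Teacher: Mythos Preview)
Your proposal is correct and follows essentially the same approach as the paper: both use spatial homogeneity for the first equality, then exploit skip-freeness to the left together with the strong Markov property to decompose the descent from $k$ to $j$ into $k-j$ independent one-level descents, each having probability $c$. The paper writes this as a short induction on $k-j$ via the recursion $\mathbb{P}_k[\eta_j<\infty]=\mathbb{P}_k[\eta_{k-1}<\infty]\mathbb{P}_{k-1}[\eta_j<\infty]$, which is precisely the alternative you sketch at the end; your treatment is more explicit about handling the infinite-stopping-time case and pinpointing where skip-freeness enters, but the argument is the same.
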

\begin{proof}
    The first equality (in the statement) is obvious since the probability is invariant to shift of the starting point of the random walk.
    The second equality follows because of the skip free property which prohibits the random walk from taking jumps smaller than $-1$.
    We prove it by induction on $k-j$.
    If $j=k$, then $\mathbb{P}_j[\eta_j<\infty]=1$.
    So, assume that $0 \leq j <k$.
    By induction, we have \(\mathbb{P}_{k'}[\eta_{j'}< \infty] = c^{k'-j'}\) for all $0 \leq j' \leq k'$ such that $k'-j'<k-j$. 
    Then,
    \begin{align*}
        \mathbb{P}_k[\eta_j< \infty] = \mathbb{P}_k[\eta_{k-1}< \infty]\mathbb{P}_{k-1}[\eta_j < \infty] = \mathbb{P}[\eta_{-1}< \infty]c^{k-1-j} = c^{k-j}.
    \end{align*}
    The second equality follows by applying the inductive statement to $k'=k-1$, and $j'=j$.
\end{proof}

Let $\tau:= \inf\{n>0: S_n \geq 0\}$, with \(\tau= \infty\) if \(S_n<0 \quad \forall n>0\), be the weak upper record (epoch), $S_{\tau}$ be the weak upper record height, and $X_{\tau-1}$ be its last increment (both are defined to be arbitrary when \(\tau = \infty\)).

\begin{lemma}[\cite{benniesRandomWalkApproach2000}]\label{20230119141633}
     For all integers $j,k$ such that $0 \leq j \leq k$, 
    \[\mathbb{P}[\tau<\infty,S_{\tau}=j,X_{\tau-1}=k] = \mathbb{P}[X_0=k] c^{k-j}.\]
\end{lemma}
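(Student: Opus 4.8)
We want to compute $\mathbb{P}[\tau < \infty, S_\tau = j, X_{\tau-1} = k]$ for $0 \le j \le k$. The event in question says: the walk stays strictly negative on $\{1, \dots, \tau-1\}$, the last increment $X_{\tau-1}$ equals $k$, and the walk lands at height $j \ge 0$ at time $\tau$. Since $X_{\tau-1} = k$ and $S_\tau = j$, we must have $S_{\tau-1} = j - k \le 0$; and because the walk is skip-free to the left, before the jump at time $\tau-1$ the walk must have visited every integer between $0$ and $j-k$, in particular hit the level $j - k$ at some time $\le \tau - 1$ while remaining negative throughout $\{1, \dots, \tau - 2\}$ — but actually the cleanest decomposition is to condition on $S_{\tau - 1} = j - k$ directly.

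**The plan.** First I would write $\{\tau < \infty, S_\tau = j, X_{\tau-1} = k\}$ as the disjoint union over the value $\tau - 1 = n \ge 0$ of the events $\{S_1 < 0, \dots, S_n < 0, S_{n+1} \ge 0, X_n = k\}$ intersected with $\{S_{n+1} = j\}$. On each such event $S_n = j - k$. The key structural observation (using skip-free-to-the-left) is that the event ``$S_1 < 0, \dots, S_n < 0$ and $S_n = j - k$'' for $j - k \le 0$ is, up to time-reversal, exactly the event that a skip-free-to-the-right walk started at $0$ first hits $j - k$ at time $n$ — equivalently, after negating, that a skip-free-to-the-left walk started at $0$ first hits $k - j$ at time $n$. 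Summing over $n$ and then using the independence of the increment $X_n = X_{\tau-1}$ from the past, one gets
\[
\mathbb{P}[\tau < \infty, S_\tau = j, X_{\tau-1} = k] = \mathbb{P}[X_0 = k]\,\mathbb{P}\big[\eta_{-(k-j)} < \infty,\ \text{first hit of } -(k-j)\big],
\]
and by Lemma \ref{hitting_time_20230118184651} the latter probability is $c^{k-j}$, which is exactly the claim. (Note $\mathbb{P}[\eta_{j-k}<\infty] = c^{k-j}$ already accounts for the ``first'' qualifier since hitting $j-k < 0$ necessarily happens at a first time.)

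**Making the time-reversal rigorous.** The one genuinely delicate point is the reversal step: I would argue that for fixed $n$,
\[
\mathbb{P}[S_1 < 0, \dots, S_{n-1} < 0, S_n < 0,\, S_n = j-k,\, X_n = k,\, S_{n+1} \ge 0]
\]
splits, by independence of $X_n$, into $\mathbb{P}[S_1 < 0, \dots, S_n < 0, S_n = j - k] \cdot \mathbb{P}[X_0 = k]$, using that $S_{n+1} = S_n + X_n = j - k + k = j \ge 0$ is automatic given $S_n = j-k$ and $X_n = k$ (so the constraint $S_{n+1} \ge 0$ is free). Then I reverse time on the block $(X_0, \dots, X_{n-1})$: the map $(X_0, \dots, X_{n-1}) \mapsto (X_{n-1}, \dots, X_0)$ preserves the i.i.d. law, sends $S_n$ to $S_n$, and converts ``all partial sums $S_1, \dots, S_n$ are $< 0$'' into ``the reversed walk stays strictly above $S_n$ until the last step'', i.e. (after the global sign flip sending the skip-free-to-the-left walk to a skip-free-to-the-right walk, or directly) into ``$-(k-j)$ is first hit at time $n$''. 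Summing the reversed-walk probabilities over $n \ge 0$ gives $\mathbb{P}[\eta_{-(k-j)} < \infty] = c^{k-j}$ by Lemma \ref{hitting_time_20230118184651}.

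**Main obstacle.** I expect the bookkeeping of the time-reversal — getting the strict inequalities, the endpoint conditions, and the direction of the skip-free property all consistent — to be the only real work; the probabilistic content is entirely carried by Lemma \ref{hitting_time_20230118184651} and the independence of the last increment. An alternative, perhaps cleaner, route avoiding an explicit reversal: condition first on $\{\eta_{j-k} = n\}$ being the first hitting time of level $j-k \le 0$ (which by skip-free-to-the-left is where the walk, going down, must pass), note $\mathbb{P}[\cup_n \{\eta_{j-k}=n\}] = c^{k-j}$, and check that on this event the record epoch structure forces $\tau = n + 1$ precisely when $X_n = k$ lands us at $j \ge 0$ for the first time — but I'd want to be careful here that no earlier weak record occurred, which is exactly what the ``stay strictly negative'' phrasing guarantees, so the reversal formulation is probably the safest to write down.
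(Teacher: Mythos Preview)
Your proposal is correct and follows essentially the same approach as the paper: decompose over the value of $\tau$, apply time-reversal (the paper calls it the ``duality principle'') to the i.i.d.\ increments, and reduce to the first-passage probability $\mathbb{P}[\eta_{j-k}<\infty]=c^{k-j}$ via Lemma~\ref{hitting_time_20230118184651}. The only cosmetic difference is that you split off the last increment $X_{\tau-1}=k$ by independence before reversing, whereas the paper reverses the full block and then peels off the first step via the Markov property; the resulting computation is the same.
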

\begin{proof}
    Note the equality of the following events:
    \[\{S_{\tau}=X_{\tau-1},\tau<\infty\} = \{\tau = 1\} = \{S_1 \geq 0\}.\]
    Therefore, for $0 \leq j=k$, we have $\mathbb{P}[S_{\tau}=j,X_{\tau-1}=j,\tau< \infty] = \mathbb{P}[S_1 = j]$.

    So, for $0 \leq j <k$,
    \begin{align*}
        \mathbb{P}[\tau<\infty,S_{\tau}=j,X_{\tau-1}=k] &= \sum_{n=2}^{\infty}\mathbb{P}[\tau=n,S_n=j,X_{n-1}=k]\\
        &=\sum_{n=2}^{\infty} \mathbb{P}[S_1<0,\cdots,S_{n-1}<0,S_n=j,S_n-S_{n-1}=k].
    \end{align*}
The sum in the first equation starts from $n=2$ since we assumed that $j<k$ (\(\{\tau=1\}\) occurs if and only \(j=k\) which is already covered).
By the duality principle (also known as reflection principle), the last expression is equal to
\[\sum_{n=2}^{\infty}\mathbb{P}[S_n-S_{n-1}<0, S_n - S_{n-2}<0, \cdots,S_n-S_1<0, S_n=j,S_n-(S_n-S_1)=k].\]
So, we obtain:
\begin{align*}
  \mathbb{P}[\tau<\infty,S_{\tau}=j,X_{\tau-1}=k]  &= \sum_{n=2}^{\infty}\mathbb{P}[S_n=j, S_{n-1}>j,S_{n-2}>j, \cdots,S_1>j,S_1=k]\\
  &=\sum_{n=2}^{\infty} \mathbb{P}[S_1=k] \mathbb{P}_k[S_1>j,\cdots,S_{n-2}>j,S_{n-1}=j]\\
        &=\mathbb{P}[S_1=k] \sum_{n=2}^{\infty}\mathbb{P}_k[\eta_{j}=n-1]\\
        &= \mathbb{P}[S_1=k] \mathbb{P}_k[0<\eta_j< \infty]\\
        &= \mathbb{P}[S_1=k]c^{k-j} = \mathbb{P}[X_0=k]c^{k-j}.
\end{align*}
The second equation follows by the Markov property.
The second to last equation follows from Lemma \ref{20230118184651} and the fact that $\mathbb{P}[\eta_{j-k}=0]=\delta_j(k)$, where $\delta$ is the Dirac function.
\end{proof}

\begin{remark}\normalfont
    If the random walk has positive drift, i.e., $0< \mathbb{E}{X_0}< \infty$, then $S_n \to \infty$ a.s. as $n \to \infty$.
    So, \(\tau<\infty\) a.s..
    Therefore, $\mathbb{P}[\tau< \infty, S_{\tau}=j, X_{\tau-1}=k]= \mathbb{P}[S_{\tau}=j,X_{\tau-1}=k]$ for $0 \leq j \leq k$.

    \noindent If the random walk has negative drift, i.e., $\mathbb{E}[X_0]<0$, then $S_n \to -\infty$ a.s. as $n \to \infty$.
    Therefore, $c=1$, which implies that $\mathbb{P}[\tau<\infty,S_{\tau}=j,X_{\tau-1}=k] = \mathbb{P}[X_0=k]$, and 
    \[\mathbb{P}[\tau< \infty, X_{\tau-1}=k] = \sum_{j=0}^{k}\mathbb{P}[\tau<\infty,S_{\tau}=j,X_{\tau-1}=k] = (k+1) \mathbb{P}[X_0=k]. \]
  \end{remark}
 Let us assume that \(\mathbb{E}[X_0]>0\).
  The following lemma shows that the random walk conditioned to hit \(-1\) is also a random walk with a different increment distribution.
  Recall that \(\eta_{-1}\) is the first time the random walk \((S_n)_{n \geq 0}\) hits \(-1\) (see Eq. (\ref{eq_eta_j})). 
  Let \(\bar{S}\) has the conditional distribution of \((S_{n\wedge \eta_{-1}})_{n \geq 0}\) given that \(\eta_{-1}< \infty\).
  Let \(p_k:= \mathbb{P}[X_0=k],\, \forall k\geq -1\).
  Observe that \(\sum_{k=-1}^{\infty}p_kc^{k+1} = c\), since the sum on the left can be written as 
  \begin{align*}
    \sum_{k=-1}^{\infty}p_kc^{k+1} &= \sum_{k=-1}^{\infty} \mathbb{P}[X_0=k]c^{k+1}\\
    &= \sum_{k=-1}^{\infty} \mathbb{P}[X_0=k]\mathbb{P}_{k}[\eta_{-1}< \infty]\\
    &= \mathbb{P}_0[\eta_{-1}<\infty]=c.
  \end{align*}
  Therefore, we have 
  \begin{equation}\label{eq_conditioned_valid}
    \sum_{k=-1}^{\infty}p_kc^k=1.
  \end{equation}
 Let \(\hat{X}=(\hat{X}_n)_{n \geq 0}\) be i.i.d. sequence of a skip-free to the left random walk whose common distribution is given by: 
  \begin{equation}\label{eq_conditioned_rw}
    \mathbb{P}[\hat{X}_0=k] = \mathbb{P}[X_0=k]c^k=p_kc^k, \, \forall k \geq -1,
  \end{equation}
  which is a valid distribution by Eq. (\ref{eq_conditioned_valid}).
  Let \((\hat{S}_n)_{n \geq 0}\) be the random walk whose increment sequence is \(\hat{X}\), i.e., \(\hat{S}_n = \sum_{k=0}^{n-1}\hat{X}_k\) for all \(k \geq 0\), and \(\hat{S} = (\hat{S}_{n \wedge \hat{\eta}_{-1}})_{n\geq 0}\) be the stopped random walk.

  \begin{remark}
    In fact, the process \((\hat{S}_n)_{n \geq 0}\) is the Doob \(h\)-transform of \(p\), where \(p\) is the distribution of \(X_0\) and \(h\) is the harmonic function defined on \(\mathbb{Z}_{\geq 0}\) by \(h(i)=\mathbb{P}_i[\eta_{-1}< \infty]\) for all \(i \geq 0\).
  \end{remark}

  \begin{lemma}\label{20230113184856}
   Let \(\mathbb{E}[X_0]>0\).
   Then the process \(\bar{S}\) has the same  distribution as the stopped skip-free to the left random walk \(\hat{S}\).
  \end{lemma}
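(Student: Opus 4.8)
The plan is to prove that $\bar S$ and $\hat S$ have the same law by checking that all their finite-dimensional marginals agree; since both are probability measures on the sequence space $\mathbb{Z}^{\mathbb{Z}_{\geq 0}}$ (with the product $\sigma$-algebra generated by the coordinate cylinders), this suffices. The two ingredients I will use are: (i) because $\mathbb{E}[X_0]>0$ the walk is transient to $+\infty$, so $c=\mathbb{P}[\eta_{-1}<\infty]\in(0,1)$; and (ii) the hitting-probability identity from Lemma~\ref{20230118184651}, namely $\mathbb{P}_i[\eta_{-1}<\infty]=c^{\,i+1}$ for every $i\geq 0$ — equivalently, the function $h(i)=c^{\,i+1}$ is harmonic for the walk killed upon reaching $-1$. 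This geometric form of $h$ is exactly what the skip-free hypothesis buys us, and it is the reason the conditioning constant will cancel.

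First I would fix $n\geq 0$ and a candidate trajectory $0=s_0,s_1,\dots,s_n$ of the stopped walk, and split into two cases according to whether the trajectory has reached $-1$ by time $n$. If $s_i\geq 0$ for all $i\leq n$, then on this event $\eta_{-1}>n$, so $(S_{m\wedge\eta_{-1}})_{m\leq n}=(S_m)_{m\leq n}$, and by the Markov property together with Lemma~\ref{20230118184651},
\begin{align*}
\mathbb{P}[\bar S_0=s_0,\dots,\bar S_n=s_n]
&=\frac1c\,\mathbb{P}[S_0=s_0,\dots,S_n=s_n]\,\mathbb{P}_{s_n}[\eta_{-1}<\infty]\\
&=\frac1c\Bigl(\prod_{i=0}^{n-1}p_{\,s_{i+1}-s_i}\Bigr)c^{\,s_n+1}
=\Bigl(\prod_{i=0}^{n-1}p_{\,s_{i+1}-s_i}\Bigr)c^{\,s_n}.
\end{align*}
On the other side, since $\hat X$ is i.i.d.\ with $\mathbb{P}[\hat X_0=k]=p_kc^k$ and the trajectory has not yet triggered the stopping,
\[
\mathbb{P}[\hat S_0=s_0,\dots,\hat S_n=s_n]=\prod_{i=0}^{n-1}p_{\,s_{i+1}-s_i}\,c^{\,s_{i+1}-s_i}=\Bigl(\prod_{i=0}^{n-1}p_{\,s_{i+1}-s_i}\Bigr)c^{\,s_n-s_0},
\]
which equals the previous value because $s_0=0$. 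In the second case, let $k\leq n$ be the first index with $s_k=-1$, so $s_i\geq 0$ for $i<k$ and $s_k=\dots=s_n=-1$; then $\{\bar S_0=s_0,\dots,\bar S_n=s_n\}=\{\eta_{-1}=k,\ S_i=s_i\ \forall i\leq k\}$, a subevent of $\{\eta_{-1}<\infty\}$, so
\[
\mathbb{P}[\bar S_0=s_0,\dots,\bar S_n=s_n]=\frac1c\prod_{i=0}^{k-1}p_{\,s_{i+1}-s_i},
\]
while, since $\hat S$ is stopped at its first visit to $-1$ and $\sum_{i=0}^{k-1}(s_{i+1}-s_i)=s_k-s_0=-1$,
\[
\mathbb{P}[\hat S_0=s_0,\dots,\hat S_n=s_n]=\prod_{i=0}^{k-1}p_{\,s_{i+1}-s_i}\,c^{\,s_{i+1}-s_i}=\frac1c\prod_{i=0}^{k-1}p_{\,s_{i+1}-s_i}.
\]
Every trajectory not of one of these two forms has probability $0$ under both laws. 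Hence the finite-dimensional distributions coincide and $\bar S\overset{d}{=}\hat S$.

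An equivalent, slightly more conceptual way to organize the argument, anticipated in the remark preceding the statement, is: conditioning $(S_{m\wedge\eta_{-1}})_{m\geq0}$ on $\{\eta_{-1}<\infty\}$ is precisely the Doob $h$-transform of the walk killed at $-1$ by the harmonic function $h(i)=\mathbb{P}_i[\eta_{-1}<\infty]$, and the $h$-transform of a killed i.i.d.\ walk with step law $p$ by a \emph{multiplicative} $h(i)=c^{\,i+1}$ is again (up to the killing) an i.i.d.\ walk, with step law proportional to $p_kc^k$ — which is exactly the normalization $\sum_{k\geq-1}p_kc^k=1$ already recorded in Eq.~(\ref{eq_conditioned_valid}). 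Either way, there is no real obstacle: the only thing requiring care is the bookkeeping around the stopping time — separating the ``not yet at $-1$'' and ``already at $-1$'' cases and tracking the telescoping powers of $c$ — and the observation that the cancellation of the conditioning constant $c^{-1}$ hinges on the geometric shape of $h$, i.e.\ on the skip-free assumption via Lemma~\ref{20230118184651}.
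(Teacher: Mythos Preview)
Your proof is correct and takes essentially the same approach as the paper: both compute the finite-dimensional distributions of the conditioned stopped walk using the Markov property together with the hitting-probability identity $\mathbb{P}_i[\eta_{-1}<\infty]=c^{\,i+1}$ from Lemma~\ref{20230118184651}, and observe that the factor $c^{s_n}$ (equivalently $c^{x_1+\cdots+x_n}$) produced by this identity is exactly what tilts the increment law from $p_k$ to $p_kc^k$. The paper phrases the computation in terms of increments and restricts to the event $\{\eta_{-1}\geq n\}$, while you work with positions and explicitly split into the two cases ``not yet at $-1$'' and ``already stopped at $-1$''; this makes your bookkeeping slightly more complete, but the underlying argument is identical.
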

  \begin{proof}
    Let $(x_0,x_1,\cdots,x_n)$ be the first \(n+1\) increments of a sample of the process with $x_k \geq -1, \forall k \leq n$, $x_1+\cdots+x_m \geq 0, \forall m<n$, and $x_1+\cdots+x_n \geq -1$.
    Then, for all such \(n\) and \(x\),
  
   \begin{align*}
      &\mathbb{P}[S_0=0,S_1=x_1, S_2-S_1=x_2,\cdots,S_{(n \wedge \eta_{-1})}-S_{(n \wedge \eta_{-1}-1)}=x_n, \eta_{-1} \geq n|\eta_{-1}< \infty] \\
      &= \frac{\mathbb{P}[S_1=x_1]\mathbb{P}_{x_1}[S_1 = x_2,S_2-S_1=x_3,\cdots, S_{n-1}-S_{n-2}=x_{n},\eta_{-1}<\infty]}{\mathbb{P}[\eta_{-1}< \infty]}\\
      &= \frac{\left(\prod_{i=1}^n \mathbb{P}[S_1=x_i]\right) \mathbb{P}_{x_1+\cdots+x_n}[\eta_{-1}< \infty]}{\mathbb{P}[\eta_{-1}< \infty]}= \frac{\left(\prod_{i=1}^n \mathbb{P}[S_1=x_i]\right) c^{x_1+x_2+\cdots+x_n+1}}{c}\\
     &= \prod_{i=1}^n (\mathbb{P}[X_0=x_i]c^{x_i})\\
     &=\mathbb{P}[\hat{S}_1=x_1,\hat{S}_2-\hat{S}_1 = x_2, \cdots, \hat{S}_{(n \wedge \eta_{-1})}- \hat{S}_{(n \wedge \eta_{-1}-1)}=x_n, \hat{\eta}_{-1} \geq n].
    \end{align*}
      Thus, the conditioned random walk is a stopped random walk, stopped at the stopping time \(\hat{\eta}_{-1}\), whose increments are distributed as $\mathbb{P}{[S_1=j]}c^j, \forall j \geq -1$.
  \end{proof}
    
  We now show that the random walk \((\hat{S}_n)_{n \geq 0}\) has negative drift.
  
  \begin{lemma}\label{20230116135048}
    Assume \(\mathbb{E}[X_0]>0\). Then the mean of \(\hat{X}_0\) is negative.
  \end{lemma}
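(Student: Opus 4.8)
The plan is to read off the law of $\hat{X}_0$ from the one-variable generating function $g(s):=\sum_{k\geq -1}p_k s^k$ (so $g(s)=\mathbb{E}[s^{X_0}]$ for $s>0$) and to exploit its convexity. Two values of $g$ are already known: $g(1)=\sum_{k\geq-1}p_k=1$, and $g(c)=\sum_{k\geq-1}p_kc^k=1$ by Eq. (\ref{eq_conditioned_valid}). Also $0<c<1$: positivity is clear since $c=\mathbb{P}[\eta_{-1}<\infty]\geq\mathbb{P}[X_0=-1]>0$, and $c<1$ is the classical fact — consistent with the discussion preceding Lemma \ref{20230113184856}, and equivalently the statement that the nonnegative integer random variable $X_0+1$ has mean $\mathbb{E}[X_0]+1>1$, hence extinction probability $c<1$ — valid here because $\mathbb{E}[X_0]>0$. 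Differentiating the power series term by term on $(0,1)$ gives $g'(s)=\sum_{k\geq-1}kp_ks^{k-1}$, so that, by the definition (\ref{eq_conditioned_rw}) of $\hat{X}_0$,
\[
  \mathbb{E}[\hat{X}_0]=\sum_{k\geq-1}k\,p_kc^k=c\,g'(c),
\]
which is finite since $c<1$. It therefore suffices to prove $g'(c)<0$.

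To that end, I would note that $g$ is strictly convex on $(0,1)$: differentiating twice, $g''(s)=\sum_{k\geq-1}k(k-1)p_ks^{k-2}\geq 2p_{-1}s^{-3}>0$ on $(0,1)$, the displayed lower bound coming from the $k=-1$ term alone and being strictly positive precisely because $p_{-1}=\mathbb{P}[X_0=-1]>0$ (the remaining terms are nonnegative, and the series converges for $s<1$). Now $g$ is continuous on $[c,1]$, differentiable on $(c,1)$, and has the equal boundary values $g(c)=g(1)=1$; by the mean value theorem there is $\xi\in(c,1)$ with $g'(\xi)=\frac{g(1)-g(c)}{1-c}=0$. Since $g'$ is strictly increasing on $(0,1)$ (because $g''>0$ there) and $c<\xi$, we get $g'(c)<g'(\xi)=0$, and hence $\mathbb{E}[\hat{X}_0]=c\,g'(c)<0$.

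The only genuinely delicate point is the \emph{strictness} of the final inequality: convexity of $g$ by itself would only yield $g'(c)\leq 0$, i.e. $\mathbb{E}[\hat{X}_0]\leq 0$, and it is exactly the skip-free hypothesis $\mathbb{P}[X_0=-1]>0$ of Definition \ref{hyp:increments} — entering through the term $2p_{-1}s^{-3}$ in $g''$ — that upgrades this to $\mathbb{E}[\hat{X}_0]<0$. The supporting facts (finiteness of $\mathbb{E}[\hat{X}_0]$, validity of term-by-term differentiation on $(0,1)$, continuity of $g$ at $s=1$, and $c<1$) are routine consequences of $c<1$, $\mathbb{E}[X_0]<\infty$, and the positive-drift assumption, so I expect them to cost little.
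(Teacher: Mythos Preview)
Your proof is correct and essentially the same as the paper's. The paper works with the auxiliary function $\psi(x)=x(g(x)-1)$ rather than your $g$, but $\psi$ shares the zeros at $c$ and $1$, and the paper applies Rolle's theorem on $[c,1]$ together with strict monotonicity of $\psi'$ exactly as you apply Rolle and strict convexity of $g$; your packaging is just a bit more direct.
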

  \begin{proof}  
    Consider the function $\phi(x)=\sum_{k=0}^{\infty}(k+1)x^kp_k -1$ on the interval $[0,1]$.
    We have 
    \begin{align*}
      \phi(c) &= \sum_{k=0}^{\infty} (k+1)c^k p_k- 1 = \sum_{k=0}^{\infty} kc^k p_k +\left(\sum_{k=0}^{\infty}c^kp_k \right)- 1\\
      &= \sum_{k=0}^{\infty}kc^kp_k + 1- \frac{p_{-1}}{c} -1= \sum_{k=-1}^{\infty}kc^kp_k= \mathbb{E}[\hat{X}].
    \end{align*}
    The function $\phi$ has the following properties on $[0,1]$:
    \begin{itemize}
      \item $\phi(0)<0$: Since $\phi(0)=p_0-1<0$.
      \item $\phi(1)>0$: Since $\phi(1)= \sum_{k=0}^{\infty}(k+1)p_k-1 = \sum_{k=0}^{\infty}kp_k + 1-p_{-1} - 1 = \mathbb{E}[X]>0$.
      \item $\phi$ is strictly increasing since $\phi'(x)>0, \forall x \in (0,1)$.
    \end{itemize}
    Therefore, there exists a unique $c' \in (0,1)$ such that $\phi(c')=0$.
    The proof is complete if we show that $c<c'$, as this implies that $\phi(c)<\phi(c')=0$ by monotonicity of $\phi$, and the fact that $\phi(c)=\mathbb{E}[\hat{X}]$.
  
    We now show that $c<c'$.
    Consider the function $\psi(x) = \sum_{k=0}^{\infty}x^{k+1}p_k + p_{-1} - x$ on the interval $[0,1]$.
    Observe that $\psi'(x) = \sum_{k=0}^{\infty}(k+1)x^kp_k - 1 = \phi(x)$,
    \[\psi(c)=\sum_{k=0}^{\infty}c^{k+1}p_k+p_{-1}-c = c \left(\sum_{k=0}^{\infty}c^k p_k + \frac{p_{-1}}{c} -1\right)=0,\]
    and $\psi(1) = \sum_{k=0}^{\infty}p_k+p_{-1}-1=0$.
    As $\psi(c)=\psi(1)=0$, there exists a $d \in (c,1)$ such that $\psi'(d)=0$.
    Since $\phi(d)=\psi'(d)=0$, we have $d=c'$.
    Thus, $c<c'$. 
  \end{proof}

  \subsection{Relation between RLS order and the order on integers}
  Let $X = (X_n)_{n \in \mathbb{Z}}$ be the i.i.d. increments  of a skip-free to the left random walk (see Def. \ref{hyp:increments}) with $\mathbb{E}[X_0]=0$ and let $[\mathbb{Z},0,X]$ be its associated network.
  Let $[\mathbf{T},0]$ denote the connected component of $0$ (rooted at \(0\)) in the $R$-graph (see notations of Chapter \ref{chapter_prelim}) of the network $[\mathbb{Z},0,X]$.
  Since the vertices of $\mathbf{T}$ are integers, we can order the children of every vertex using the total order $<$ of $\mathbb{Z}$.
  Then, the RLS order \(\prec\) gives a total order on the vertices of $\mathbf{T}$.
  Thus, we have two total orders $<$ and $\prec$ on the vertices of $\mathbf{T}$.
  The former is deterministic whereas the latter is random (it depends on the realization of $X$). 
  Using the interval property (Lemma \ref{lemma:record_descendants}) of the record vertex-shift, we show in the following lemma that these two orders are in fact equivalent in the following sense:
  
  \begin{lemma} \label{lemma:rls_order}
    Let $x=(x_n)_{n \in \mathbb{Z}}$ be a real-valued sequence, $[\mathbb{Z},0,x]$ be its associated network. Let $i,j$ be two integers having a smallest common ancestor \(w\), with $w=R^m(i)=R^k(j)$, where $m$ and $k$ are the smallest non-negative integers satisfying this property. Then, $i<j$ (with respect to the order on $\mathbb{Z}$) if and only if $i \prec j$.
  \end{lemma}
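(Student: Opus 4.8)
The plan is to reduce the statement to the interval structure of descendant sets (Lemma~\ref{lemma_descendants}) together with the elementary fact that the record map is strictly increasing, $R_x(n)>n$ whenever $R_x(n)\neq n$, so that in the record graph every proper ancestor of a vertex is a strictly larger integer.

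First I would dispose of the degenerate cases where $w$ coincides with $i$ or $j$ (we may assume $i\neq j$). Say $w=i$, so $m=0$ and $i=R_x^k(j)$ with $k\ge 1$; then $i$ is a proper ancestor of $j$, so by the definition of the RLS order in Section~\ref{subsec_RLS} we have $i\succ j$, i.e.\ the assertion ``$i\prec j$'' is false, while $i=R_x^k(j)>j$ shows ``$i<j$'' is false as well, so the equivalence holds vacuously. The case $w=j$ is symmetric.

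Now assume $m\ge 1$ and $k\ge 1$, and set $i'=R_x^{m-1}(i)$ and $j'=R_x^{k-1}(j)$. By the minimality of $w$ the integers $i'$ and $j'$ are distinct, both are children of $w$ (in particular $i',j'\neq w$), and by the definition of descendant $i$ lies in $D(i')\cup\{i'\}$ and $j$ lies in $D(j')\cup\{j'\}$. By the definition of the RLS order, $i\prec j$ if and only if $i'<j'$, so it suffices to prove $i<j\iff i'<j'$. Apply Lemma~\ref{lemma_descendants} to $i'$ and to $j'$ (its proof uses neither integrality of $x$ nor the bound $x_n\ge-1$, cf.\ Remark~\ref{remark_intervalProperty_integerValued}): since $R_x(i')=w>i'$ and $R_x(j')=w>j'$,
\[
D(i')\cup\{i'\}=\{\ell:L_x(i')\le\ell\le i'\},\qquad D(j')\cup\{j'\}=\{\ell:L_x(j')\le\ell\le j'\},
\]
two intervals of integers with right endpoints $i'$ and $j'$ respectively. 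These intervals are disjoint, because $i'$ and $j'$ are distinct children of $w$ and distinct children have disjoint descendant subtrees in a Family Tree. Suppose $i'<j'$. Then the interval with right endpoint $j'$ cannot contain any point $\le i'$: such a point would force $L_x(j')\le i'\le j'$, hence $i'\in D(j')\cup\{j'\}$, contradicting disjointness (the same contradiction arises if $L_x(j')=-\infty$). Therefore $L_x(j')>i'$, so every element of $D(i')\cup\{i'\}$ is strictly less than every element of $D(j')\cup\{j'\}$; in particular $i\le i'<L_x(j')\le j$, i.e.\ $i<j$. Exchanging the roles of $i$ and $j$ yields the converse implication, completing the proof.

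The argument has no genuine analytic difficulty; the only points requiring care are the bookkeeping — verifying that $i'\neq j'$ (which is precisely minimality of $w$, and is also what licenses the use of Lemma~\ref{lemma_descendants} at $i'$ and $j'$) and selecting the correct clause of the definition of $\prec$ according to whether $w$ equals one of $i,j$. The substantive input, namely that descendant sets are intervals of integers with prescribed right endpoint, has already been established.
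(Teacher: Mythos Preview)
Your proof is correct and rests on the same key input as the paper's, namely the interval description of descendant sets from Lemma~\ref{lemma_descendants}. The paper organizes the argument slightly differently---proving each implication separately, with the forward direction climbing from $i$ via iterated records until it first overshoots $j$ and invoking Lemma~\ref{lemma:record_descendants} to identify $w$---whereas you reduce both directions at once to the single equivalence $i<j\iff i'<j'$ using disjointness of the two descendant intervals; this is a bit more symmetric, but the substance is the same.
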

  \begin{proof}
    ($i<j \implies i \prec j$):
    Since $R^n(i)$ is an increasing sequence for $0<n < m$, there exists a largest $n_0 \leq m$ such that $R^{n_0}(i) \leq j$.
    If \(R^{n_0}(i)=j\), then \(i \prec j\), which is the desired relation.
    So, let us assume that \(R^{n_0}(i)<j\) and \(R^{n_0+1}(i)>j\).
    By Lemma \ref{lemma:record_descendants}, both $R^{n_0}(i)$ and $j$ are descendants of $R^{n_0+1}(i)$, which implies  that $R^{n_0+1}(i)=w$.
    Hence, \(R^{m_0}(j)=R^{n_0+1}(i)=w\) for some \(m_0>0\).
    Since $R^{n_0}(i)<j \leq R^{m_0-1}(j)$, we have $R^{n_0}(i)\prec R^{m_0-1}(j)$, and by Remark \ref{remark:rls_order}, we have $i \prec j$.
  
    ($i\prec j \implies i < j$):
  The case where $j$ is an ancestor of $i$ is trivial.
  So, let us assume that $j$ is not an ancestor of $i$.
  Then we have $k>0$, and $R^{m-1}(i) <  R^{k-1}(j)$.
  Since $R^{m-1} (i)$ is not a descendant of $R^{k-1}(j)$, but $j$ is a descendant of $R^{k-1}(j)$, by Lemma \ref{lemma:descendants}, we have $i<R^{m-1}(i)<L_x(R^{k-1}(j)) \leq j$.
  \end{proof}

\subsection{Relation between offspring count and increment in the record graph}

Let $T$ be an ordered Family Tree, $o \in V(T)$, and $u_n<u_{n-1}<\cdots<u_1$ be the children of $o$ (totally ordered by $<$).
Define the order of any child $u_j$ among its siblings as $j$.
That is, \(u_j\) is the \(j\)-th child of \(\mathbf{o}\).
Recall the notation $y(j,k):=\sum_{l=j}^{k-1}x_l$ for integers $j<k$ and for a given sequence $x=(x_n)_{n \in \mathbb{Z}}$.

The skip-free property of the increments of the random walk gives us the following lemma.

\begin{lemma}\label{lemma_offspring_count}
  Let $x = (x_k)_{k \in \mathbb{Z}}$ be a sequence where $x_k \in \mathbb{N}\cup\{0,-1\}$.
  Let $[\mathbb{Z},0,x]$ be the associated network of \(x\), and $T$ be the $R$-graph of \([\mathbb{Z},0,x]\).
  Let $i$ be an integer such that $L_x(i)>-\infty$.
  Then,
  \begin{enumerate}
    \item The number of children $d_1(i)$ of $i$ in $T$ is given by $d_1(i) = x_{i-1}+1$.
    \item Let $d_1(i)=n>0$ and $i_n<i_{n-1}<\cdots<i_1$ be the positions on $\mathbb{Z}$ of the \(n\) children of $i$. Then, the position $i_m$ of the $m$-th child satisfies the relation $m = x_{i-1}+1-y(i_m,i)$.
    \item Let $d_1(i) = n>0$, and $m \in \{1,2,\cdots,n\}$. Then, an integer $i'<i$ is the $m$-th child of $i$ if and only if $i'$ is the largest integer among $\{L_x(i), \cdots,i-1\}$ that satisfies $y(i',i) = x_{i-1}+1-m$.
  \end{enumerate} 
\end{lemma}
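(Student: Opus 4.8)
The plan is to reformulate everything in terms of the backward partial sums $\beta(j) := y(j,i) = \sum_{l=j}^{i-1} x_l$ for $j \le i$. The hypothesis $x_l \ge -1$ gives the \emph{one-step-down property}: $\beta(j-1) = x_{j-1} + \beta(j) \ge \beta(j) - 1$, so as $j$ decreases by one, $\beta$ can drop by at most $1$. Unwinding the definition of the record map, $R_x(j) = i$ holds iff $y(j,k) = \beta(j) - \beta(k) < 0$ for all $j < k < i$ and $y(j,i) = \beta(j) \ge 0$; equivalently, $j$ is a child of $i$ iff $\beta(j) \ge 0$ and $\beta(j) < \beta(k)$ for every $k \in \{j+1,\ldots,i-1\}$, i.e. $\beta(j)$ is a strict minimum of $\beta$ on $\{j,\ldots,i-1\}$. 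By Lemma~\ref{lemma:descendants} every child lies in $\{L_x(i),\ldots,i-1\}$; on this range $\beta \ge 0$ (directly from the definition of $L_x(i)$), so the constraint ``$\beta(j)\ge 0$'' above is automatic there, and moreover $\beta(L_x(i)) = 0$ — if $L_x(i) < i$ this is because $\beta(L_x(i)-1) < 0 \le \beta(L_x(i))$, which by the one-step-down property forces $\beta(L_x(i)) = 0$, and if $L_x(i) = i$ it is the empty sum. The degenerate case $x_{i-1} = -1$ forces $L_x(i) = i$, hence no children, agreeing with $d_1(i) = 0 = x_{i-1}+1$; so from now on assume $x_{i-1} \ge 0$.

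Next I would track the suffix minimum $M(j) := \min\{\beta(k) : j \le k \le i-1\}$ as $j$ runs downward from $i-1$ to $L_x(i)$, with the convention $M(i) = +\infty$. It is non-increasing in this direction, starts at $M(i-1) = \beta(i-1) = x_{i-1}$, ends at $M(L_x(i)) = 0$ (using $\beta(L_x(i)) = 0$), and by the one-step-down property it decreases by at most $1$ at each step. From the characterization above, the child positions are exactly the $j$ at which $M$ strictly decreases, and at such $j$ one has $\beta(j) = M(j)$. Since $M$ passes from $+\infty$ through each of $x_{i-1}, x_{i-1}-1, \ldots, 1, 0$ exactly once, there are exactly $x_{i-1}+1$ children, which is Part 1; listing them in decreasing order of position as $i_1 > i_2 > \cdots > i_n$, the values $\beta(i_m)$ run through $x_{i-1}, x_{i-1}-1, \ldots, 0$, so $y(i_m,i) = \beta(i_m) = x_{i-1}+1-m$, which is Part 2 (note $i_1 = i-1$, since $\beta(i-1) = x_{i-1}\ge 0$ makes $i-1$ a child).

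Part 3 then follows formally. Fix $m$ and put $v := x_{i-1}+1-m \in \{0,\ldots,x_{i-1}\}$. By Part 2 the $m$-th child $i_m$ satisfies $y(i_m,i) = v$, and since $i_m$ is a child, $\beta(i_m) < \beta(k)$ for all $k \in \{i_m+1,\ldots,i-1\}$; hence no integer $j$ with $i_m < j \le i-1$ has $\beta(j) = v$, so $i_m$ is the largest integer in $\{L_x(i),\ldots,i-1\}$ with $y(\cdot,i) = v$. Conversely, by Parts 1--2 there is exactly one child with $\beta$-value $v$, so the largest integer in the range with $y(\cdot,i) = v$ must be that child, which is the $m$-th one. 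This gives the stated equivalence.

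The only genuinely delicate points are the two facts about $L_x(i)$ used in the first paragraph — that $\beta \ge 0$ throughout $\{L_x(i),\ldots,i-1\}$ and that $\beta(L_x(i)) = 0$ — together with the reduction that child candidates never escape this interval; the first two come from combining the maximality built into the definition of $L_x(i)$ with the one-step-down property, and the third is exactly Lemma~\ref{lemma:descendants}. Everything after that is elementary bookkeeping about a non-increasing integer sequence whose steps are $\ge -1$, so I expect no real difficulty there.
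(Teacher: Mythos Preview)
Your proof is correct and follows essentially the same approach as the paper: both identify children of $i$ as the strict running (suffix) minima of $\beta(j)=y(j,i)$ on $\{L_x(i),\ldots,i-1\}$, use the one-step-down property to force $\beta(L_x(i))=0$ and make the minimum drop by exactly one at each new child, and then read off Parts~1--3. The only cosmetic difference is packaging---the paper proves $y(i_{m+1},i_m)=-1$ for consecutive children and derives Part~1 last, while you track the suffix minimum $M$ explicitly and get Part~1 first---but the mathematical content is identical.
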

\begin{proof}
  If $x_{i-1}=-1$, then $L_x(i) = i$. So, by Lemma \ref{lemma:descendants}, $i$ has no descendant.
  Then, \((1.)\) holds as \(d_1(i) = x_{i-1}+1=0\), whereas the other statements are empty.

  So, let us assume that $x_{i-1}\geq 0$.
  Clearly $i-1$ is then a child of $i$ as \(i\) is the record of \(i-1\).
  Therefore, \(d_1(i)\geq 1\).
  We show below that the sum of increments between any two successive children is $-1$.

  Let $i_n<i_{n-1}<\cdots <i_1$ be the positions on $\mathbb{Z}$ of the children of $i$, with $i_1=i-1$.
Consider a child $i_m$ with $m<n$.
If $i_{m+1} = i_m-1$, then $x_{i_m-1}=-1$. 
Therefore, $y(i_{m+1},i_m)=-1$. 
Consider now the case where $i_{m+1}<i_m-1$, and let $i'$ be an integer with $i_{m+1}<i'<i_{m}$.
  As $i'$ is not a child of $i$, and $i_m$ is a child of $i$, by Lemma \ref{lemma:record_descendants}, we have $R_x(i')\leq i_m$, and there exists a smallest $k>0$ such that $R_x^k(i')=i_m$.
  Therefore, $y(i',i_m) = \sum_{l=0}^{k-1} y(R_x^l(i'),R_x^{l+1}(i')) \geq 0$. 

  In particular, $y(i_{m+1}+1,i_m) \geq 0$. 
But, $y(i_{m+1},i_m)<0$, because the record of $i_{m+1}$ is \(i\) (not $i_m$).
Therefore, the only possibility is that $x_{i_{m+1}}=-1$.
This also implies that $y(i_{m+1}+1,i_m)=0$.
Indeed, if $y(i_{m+1}+1,i_m)\geq 1$, then $y(i_{m+1},i_m)=x_{i_{m+1}}+y(i_{m+1}+1,i_m)=-1+y(i_{m+1}+1,i_m) \geq 0$, which gives a contradiction that \(i_m\) is the record of \(i_{m+1}\).
Hence,  $y(i_{m+1},i_m)=x_{i_{m+1}}+y(i_{m+1}+1,i_m)=-1$.
Thus, we have shown that the sum of increments between two successive children is \(-1\). We now use this to prove the three statements of the lemma.

The proof of the \emph{second statement} follows because, for any child $i_m$ of $i$,  we have 
\begin{equation} \label{eqn:cond_child}
  y(i_m,i)=y(i_m,i_1)+x_{i-1}=y(i_m,i_{m-1})+\cdots+y(i_2,i_1)+x_{i-1}=-(m-1)+x_{i-1}.
\end{equation}
We now prove the third statement.
Since \(i_m\) is a child of \(i\), \(y(i_m,i)\geq 0,\, \forall 1 \leq m \leq n\).
Further, for any $i_m<i' \leq i-1$, we have $y(i',i) = y(i',i_j)+y(i_j,i)$, with $i_j\geq i'$ being the unique smallest integer which is a child of $i$.
Such an \(i_j\) always exists since \(i-1\) is a child of \(i\).
Moreover, we have \(j<m\).
Since \(y(i',i_j) \geq 0\), we have
\[y(i',i)=y(i',i_j)+x_{i-1}+1-j \geq x_{i-1}+1-j > x_{i-1}+1-m,\]
where we used Eq. (\ref{eqn:cond_child}) to get the first equality.
This proves the \emph{forward implication of the third statement}: if $h$ is the $m$-th child of $i$ then $h<i$ is the largest integer that satisfies $y(h,i)=x_{i-1}+1-m$.

As for the \emph{backward implication of the third statement}, if $i'<i$ is the largest integer satisfying $y(i',i)=x_{i-1}+1-m \geq 0$ for some $m \in \{1,2, \cdots, n\}$, then $y(i',j)= y(i',i)-y(j,i)<0$, for all $i'<j \leq i-1$.
So, $i'$ is a child of $i$. 
But, by Eq. (\ref{eqn:cond_child}), there is only one child that satisfies this condition, namely, the $m$-th child of $i$.
This proves the third statement.

We now show that $y(L_x(i),i)=0$.
Indeed, from the definition of $L_x(i)$, it follows that $y(L_{x}(i)-1, i) = x_{L_x(i)-1}+y(L_x(i),i)<0$.
But $y(L_x(i),i) \geq 0$, implying that $x_{L_{x}(i)-1}=-1$ (the only possibility for \(x_{L_{x}(i)-1}\)).
Hence, $y(L_x(i),i)=0$.
This implies that $i_n = L_x(i)$ since no integer smaller than \(L_x(i)\) can be a descendant of \(i\) by Lemma \ref{lemma:descendants}.
To prove the first statement, observe that 
\begin{equation*}
  y(i_n,i) = x_{i-1}+\sum_{k=1}^{n-1} y(i_k,i_{k+1}) = x_{i-1}-(n-1).
\end{equation*}
Therefore, $y(i_n,i)=0=x_{i-1}+1-n$ which gives $d_1(i) = n =x_{i-1}+1$, proving the \emph{first statement}.
\end{proof}

\begin{remark}\label{remark_smallest_child_type_negative}
  From the last part of the proof of Lemma \ref{lemma_offspring_count}, it follows that, if \(L_x(i)>-\infty\), then \(L_x(i)\) is the smallest child of \(i\).
\end{remark}

\subsection{The type function and its properties}\label{subsection_typeFunction}
Let $x=(x_n)_{n \in \mathbb{Z}}$ be an integer-valued sequence with $x_n \geq -1$ for all $n \in \mathbb{Z}$, \([\mathbb{Z},0,x]\) be the rooted network associated to \(x\), and \(T\) be the record graph of the network \((\mathbb{Z},x)\).
For any integer \(i\), denote the number of children of \(i\) in \(T\) by \(d_1(i,T)\).

The {\bf type function} associated to \(x\) is the map $t:\mathbb{Z} \to \mathbb{Z}$ that assigns to each integer $i \in \mathbb{Z}$ the integer $t(i)\in \{-1,0,1,\cdots\}$ defined as 
\begin{equation}
  t(i):= t_x(i)= \inf\{n \geq -1: y(m,i) = n \text{ for some } m<i\}.
\end{equation}
The integer \(t(i)\) is called the {\bf type} of \(i\).
Note that the type function is a deterministic function of \(x\).
The type \(t_x(i)\) of an integer \(i\) can also be written 
\begin{equation} \label{eq_type_inf}
  t_x(i)  = \inf\{y(m,i)\vee -1: m<i\},
\end{equation}
where \(\vee\) is the max function.

The type function is analogous to the Loynes construction \cite{baccelliElementsQueueingTheory2003}.

We show, in the later sections, that when \(\mathbb{E}[X_0]>0\) the record graph together with the type function encodes the trajectory of the random walk.
This allows one to move back and forth between a trajectory of the random walk and its record graph together with the type function.

\begin{remark} \label{remark_type_well_defined}
  Let \(X\) be the i.i.d. increments of a skip-free to the left random walk.
  Then, for any integer \(i \in \mathbb{Z}\), the sequence $S^{(i)} = (S^{(i)}_m)_{m \geq 0}:=(\sum_{k=i-m}^{i-1}X_k)_{m \geq 0}$ is a skip-free to the left random walk.
  So, it can only take negative steps of size \(1\) at a time (i.e., $S^{(i)}_m <0$ for some $m \geq 1$ if and only if $S^{(i)}_{j} = -1$ for some $j\leq m$).
  The type of vertex $i$ is $-1$ if and only if $S^{(i)}_m=-1$ for some $m \geq 1$, and it is equal to $n \geq 0$ if and only if \(\min_{m \geq 1}\{S^{(i)}_m\} = n\), i.e., the random walk $(S^{(i)}_m)_{m \geq 1}$ reaches $n$ but stays above \(n-1\).
  In particular, $t(i)$ is well-defined for all $i \in \mathbb{Z}$.
\end{remark}
Define the function $l_x:\mathbb{Z}\to \mathbb{Z}$ that maps an integer $i$ to
\begin{equation}\label{eqn:def_l_x}
  l(i):=l_x(i)= \sup\{m<i:y(m,i)=t_x(i)\}.
\end{equation} 

Recall the function \(L\) defined in Eq. (\ref{eq:L_x_defn}).
This function is related to the function \(l\) in the following way.
First note that for every \(i \in \mathbb{Z}\),  we have \(-\infty<l(i)<i\) since \(t_x(i)=\min\{y(m,i)\vee -1: m<i\}\) and \(l(i)\) is the largest \(m\) for which \(y(m,i)\) attains \(t_x(i)\), when \(m\) is varied in \(\mathbb{Z}_{<i}\). 
 If \(t_x(i)=-1\), then \(l(i)=L(i)-1\).
 For any integer \(i\), \(t_x(i) \geq 0\) if and only if \(L(i)=-\infty\).

\begin{lemma}\label{defn_l_20230404185835}\label{smallest_child_20230404185835}
  For any integer \(i\), we have the following dichotomy:
  \begin{itemize}
    \item if \(t_x(i)=-1\), then \(L(i)\) is the smallest child of \(i\) in \(T\),
    \item if \(t_x(i)\geq 0\), then \(l(i)\) is the smallest child of \(i\) in \(T\).
  \end{itemize}
\end{lemma}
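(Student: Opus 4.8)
The plan is to split according to the value of $t_x(i)$, using the two relations recorded just above the statement: $t_x(i)=-1$ exactly when $L(i)>-\infty$ (and then $l(i)=L(i)-1$), while $t_x(i)\ge 0$ exactly when $L(i)=-\infty$, i.e. $y(m,i)\ge 0$ for every $m<i$. Recall also that $l(i)$ is a well-defined integer with $-\infty<l(i)<i$.

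For the first bullet, assume $t_x(i)=-1$, so $L(i)>-\infty$. If $x_{i-1}=-1$ then $L(i)=i$ and, by Lemma \ref{lemma_descendants}, $i$ has no descendants, so the assertion is vacuous (this is the same degenerate situation that is implicitly excluded in Remark \ref{remark_smallest_child_type_negative}). Otherwise $x_{i-1}\ge 0$, hence by item (1) of Lemma \ref{lemma_offspring_count} the vertex $i$ has $d_1(i)=x_{i-1}+1\ge 1$ children, and the last paragraph of the proof of Lemma \ref{lemma_offspring_count} — restated as Remark \ref{remark_smallest_child_type_negative} — already identifies $L(i)$ as the smallest of them. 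So nothing further is needed in this case.

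For the second bullet, assume $t_x(i)\ge 0$ and write $t:=t_x(i)$, so $y(m,i)\ge t\ge 0$ for all $m<i$, with equality at $m=l(i)$ and, by maximality of $l(i)$ in $\{m<i:\,y(m,i)=t\}$, strict inequality $y(m,i)>t$ whenever $l(i)<m<i$. I would first check that $l(i)$ is a child of $i$: for $l(i)<m<i$,
\[
  y(l(i),m)=y(l(i),i)-y(m,i)=t-y(m,i)<0,
\]
while $y(l(i),i)=t\ge 0$, so $R_x(l(i))=i$. Then I would show that no integer below $l(i)$ is a child of $i$: if some $j<l(i)$ had $R_x(j)=i$, then $y(j,k)<0$ for all $j<k<i$, in particular $y(j,l(i))<0$, whence $y(j,i)=y(j,l(i))+y(l(i),i)<t$, contradicting $y(j,i)\ge t$. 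Combining the two, $l(i)$ is the smallest child of $i$.

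The whole argument is essentially bookkeeping with the partial sums $y(\cdot,\cdot)$, so I do not expect a genuine obstacle. The two points that need a little care are the degenerate sub-case $x_{i-1}=-1$ in the first bullet (where $i$ has no child, and the statement is empty), and, in the second bullet, reconciling the ``last minimiser'' definition of $l(i)$ with the ``first non-negative overshoot'' definition of $R_x$ — namely, that strict positivity of $y(\cdot,i)$ to the right of $l(i)$ is exactly what forces the record map started at $l(i)$ to land on $i$, and that dropping below $l(i)$ necessarily drops $y(\cdot,i)$ below its minimum $t$.
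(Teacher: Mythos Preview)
Your proof is correct and follows essentially the same approach as the paper. The only minor difference is in the second bullet's ``no smaller child'' step: the paper shows directly that $y(m,l(i))\ge 0$ for all $m<l(i)$ (hence $R(m)\le l(i)$), whereas you argue by contradiction via $y(j,i)<t$; both are straightforward rearrangements of the same inequality $y(m,i)\ge t$.
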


\begin{proof}
  If \(t_x(i)=-1\), then \(L(i)>-\infty\).
  So, by Remark \ref{remark_smallest_child_type_negative}, \(L(i)\) is the smallest child of \(i\) in \(T\).

  So, let us assume that \(t_x(i)\geq 0\).
  In this case, $i$ is the record of $l(i)$, i.e., $i = R(l(i))$.
  This follows because, for any $l(i)<m<i$, we have $y(m,i)>t_x(i)$ by the definition of $t_x(i)$ and $l(i)$.
  Since $y(l(i),i)=t_x(i)$, we have $y(l(i),m) = y(l(i),i)-y(m,i)<0$.
  Thus, $R(l(i))=i$.
  Further, if $t_x(i)\geq 0$, then $y(m,l(i)) \geq 0$ for all $m<l(i)$.
  Indeed, for \(m<l(i)\), since $y(m,i) \geq t_x(i)$, we have $y(m,l(i)) = y(m,i)-y(l(i),i) \geq t_x(i)-t_x(i)=0$.
  So, $R(m) \leq l(i)$ for all $m<l(i)$.
  This implies that $l(i)$ is the smallest among the children of $i$ as none of the integers smaller than $l(i)$ are children of $i$.
\end{proof}

The following lemma describes the relation between the number of children of any integer \(i\), its associated increment \(x_i\) and its type \(t_x(i)\) when \(t_x(i) \geq 0\).
When \(t_x(i)=-1\), the type does not play any role in determining this relation.

\begin{lemma}\label{20230405112810}
  Let $i \in \mathbb{Z}$ be an integer.
  We have the following dichotomy:
  \begin{itemize}
    \item If \(t_x(i)=-1\), then the number of children of $i$ in $T$ is given by \(d_1(i,T)=x_{i-1}+1\).
    \item If \(t_x(i) \geq 0\), then the number of children of $i$ in $T$ is given by $d_1(i,T) = x_{i-1}+1-t_x(i)$.
  \end{itemize}
\end{lemma}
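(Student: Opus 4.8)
The plan is to split along the stated dichotomy and, in each case, reduce to material already available. For the case $t_x(i) = -1$: the discussion preceding the lemma records that $t_x(i) \geq 0$ if and only if $L(i) = -\infty$, so $t_x(i) = -1$ forces $L_x(i) > -\infty$, and then part (1) of Lemma \ref{lemma_offspring_count} applies verbatim to give $d_1(i,T) = x_{i-1} + 1$. No new argument is needed in this case.

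For the case $t_x(i) \geq 0$: first note that, since the type is nonnegative, the skip-free property (Remark \ref{remark_one_step_negative}) forces $y(m,i) \geq 0$ for every $m < i$ — otherwise some $y(m',i)$ would equal $-1$ and the type would be $-1$ — hence $y(m,i) \geq t_x(i)$ for all $m < i$. In particular $x_{i-1} = y(i-1,i) \geq t_x(i) \geq 0$, so $R_x(i-1) = i$ and $i-1$ is a child of $i$; thus $d_1(i,T) = n$ for some $n \geq 1$. I would then list the children as $i_n < i_{n-1} < \cdots < i_1 = i-1$, and use Lemma \ref{smallest_child_20230404185835}, which identifies the smallest child as $i_n = l(i)$; by the definition of $l$ this child satisfies $y(l(i),i) = t_x(i)$.

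The remaining ingredient is the computation from the proof of Lemma \ref{lemma_offspring_count} showing that the sum of increments between two consecutive children of $i$ equals $-1$; that argument uses only the interval property (Lemma \ref{lemma:record_descendants}) and not the finiteness of $L_x(i)$, so it applies here as well. Telescoping it — this is exactly Eq. (\ref{eqn:cond_child}) — gives $y(i_m,i) = x_{i-1} - (m-1)$ for all $1 \leq m \leq n$. Taking $m = n$ and using $i_n = l(i)$ with $y(l(i),i) = t_x(i)$ yields $t_x(i) = x_{i-1} - (n-1)$, i.e. $n = x_{i-1} + 1 - t_x(i)$, which is the claim.

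The point requiring care — rather than a real obstacle — is checking that the ``$-1$ between consecutive children'' step is genuinely independent of the hypothesis $L_x(i) > -\infty$ under which Lemma \ref{lemma_offspring_count} was stated: inspecting that proof, the hypothesis enters only at the final step identifying $i_n$ with $L_x(i)$, which here is replaced by $i_n = l(i)$ via Lemma \ref{smallest_child_20230404185835}. Everything else is routine bookkeeping with the sums $y(\cdot,\cdot)$.
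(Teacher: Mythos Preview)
Your argument is correct, and it takes a genuinely different route from the paper's. The paper handles the case $t_x(i)\geq 0$ by building an auxiliary sequence $\bar x$ that agrees with $x$ on $[l_x(i),\infty)$ and is identically $-1$ to the left of $l_x(i)$; this forces $L_{\bar x}(i)=l_x(i)-t_x(i)>-\infty$, so Lemma~\ref{lemma_offspring_count} applies as a black box to $\bar T$, giving $d_1(i,\bar T)=x_{i-1}+1$. One then checks that $D_1(i,T)=D_1(i,\bar T)\cap[l_x(i),i)$ and that the $t_x(i)$ integers in $[L_{\bar x}(i),l_x(i))$ are exactly the extra children of $i$ in $\bar T$, so subtracting gives the result. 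Your approach instead opens up the proof of Lemma~\ref{lemma_offspring_count} and extracts the one piece that matters---the local ``$y(i_{m+1},i_m)=-1$ between consecutive children'' computation---observing correctly that this step uses only the interval property and not $L_x(i)>-\infty$; you then telescope and anchor at $i_n=l(i)$ via Lemma~\ref{smallest_child_20230404185835}. Your route is shorter and avoids the auxiliary construction; the paper's route keeps Lemma~\ref{lemma_offspring_count} as a black box but pays for it with the modified-sequence bookkeeping and a figure.
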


\begin{figure}[h]
    \begin{center}
      \includegraphics[scale=0.92]{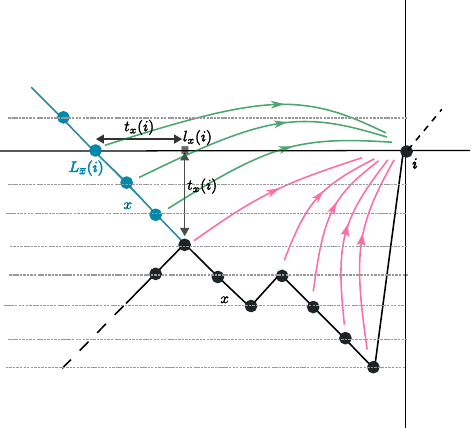}
    \end{center}
    \caption{The trajectories of $x=(x_n)_{n \in \mathbb{Z}}$ (in black) and $\bar{x}=(\bar{x}_n)_{n \in \mathbb{Z}}$ (in blue). The children of $i$ in $T$ are drawn in red, and the additional children of $i$ in $\bar{T}$ are drawn in green. See Lemma \ref{20230405112810}.}
    \label{fig:relation_children_type_20230405112810}
\end{figure}
\begin{proof}
  If \(t_x(i)=-1\), then \(L_x(i) > - \infty\) (see the paragraph above Lemma \ref{defn_l_20230404185835}), which is the condition needed to apply Lemma \ref{lemma_offspring_count}.
  The first part of the statement follows from part 1 of Lemma \ref{lemma_offspring_count}.

  We now prove the second part of the statement.
  To follow the proof, see Fig. \ref{fig:relation_children_type_20230405112810}. Define a new sequence $\bar{x} = (\bar{x}_n)_{n \in \mathbb{Z}}$ by
  \[\bar{x}_n = \begin{cases}
    -1, \forall n<l_x(i)\\
    x_n, \forall n \geq l_x(i). 
  \end{cases}\]
  Let $\bar{y}(j,k)=\sum_{l=j}^{k-1}\bar{x}_l$ for all integers $j<k$.
  Then, $y(j,k)=\bar{y}(j,k)$ for all $ l_x(i) \leq j < k$.
  Thus, we have 
  \begin{equation}\label{eq:undisturbed_x_20230405112810}
      \bar{y}(k,i) = y(k,i) \geq t_x(i)>0 \quad \forall l_x(i)\leq k < i,
  \end{equation}
  and 
  \[\bar{y}(k,i) = k- l_x(i)+t_x(i) \quad \forall k<l_x(i).\]
  Therefore, (see Eq.(\ref{eq:L_x_defn}) for the definition of \(L\)) 
  \begin{equation}\label{eq:rel_l}
    L_{\bar{x}}(i) = l_x(i) - t_x(i)> - \infty.
  \end{equation}
  Let $\bar{T}$ denote the record graph of the network $(\mathbb{Z},\bar{x})$, $D_1(i,\bar{T})$ denote the set of children of $i$ in $\bar{T}$ and let $d_1(i,\bar{T}) = \#D_1(i,\bar{T})$.
  Then, by the first part of Lemma \ref{lemma_offspring_count}, we obtain that $d_1(i,\bar{T})=\bar{x}_{i-1}+1$.
  Since $l_x(i)<i$, we have $x_{i-1} = \bar{x}_{i-1}$ which implies that $d_1(i,\bar{T}) = x_{i-1}+1$.

  Note that $\bar{y}(l_x(i),i)=t_x(i)$ and $\bar{y}(j,i)>t_x(i)$ for all $l_x(i)<j<i$.
  Therefore, by the third part of Lemma \ref{lemma_offspring_count}, $l_x(i)$ is the $x_{i-1}+1-t_x(i)$-th child of $i$ in $\bar{T}$.
  Since $\bar{x}_j=-1$ for all $j<l_x(i)$, by (3) of Lemma \ref{lemma_offspring_count}, we obtain that
  \[\{j \in \mathbb{Z}: L_{\bar{x}}(i) \leq j < l_x(i)\} \subseteq D_1(i,\bar{T}).  \]

  By Eq. (\ref{eq:rel_l}), $\#\{j \in \mathbb{Z}:L_{\bar{x}}(i)  \leq j < l_x(i)\}=l_x(i)-L_{\bar{x}}(i)=t_x(i)$.
  Observe that $D_1(i,\bar{T})\cap \{j \in \mathbb{Z}: l_x(i)\leq j < i\} = D_1(i,T)$ by Eq. (\ref{eq:undisturbed_x_20230405112810}).
  Finally, Lemma \ref{defn_l_20230404185835} implies that $l_x(i)$ is the smallest child of $i$ in $T$.
  Thus, we have
  \begin{align*}
      D_1(i,T) &= D_1(i,\bar{T})\backslash \{j \in \mathbb{Z}: L_{\bar{x}} \leq j<l_x(i)\},
  \end{align*}
  which gives
  \[d_1(i,T)=d_1(i,\bar{T})- t_x(i)=x_{i-1}+1-t_x(i).\]
\end{proof}

\section{Classification theorem and phase transitions} \label{section_classification}
Recall the foil classification theorem \ref{thm_foil_classification}.
It states that a.s. any unimodular Family Tree belongs to either class \(\mathcal{F}/\mathcal{F}\) or class $\mathcal{I}/\mathcal{F}$ or class $\mathcal{I}/\mathcal{I}$.
These three classes are characterized by the following distinctive properties.
Every Family Tree of class \(\mathcal{F}/\mathcal{F}\) has finitely many vertices.
Every Family Tree of class $\mathcal{I}/\mathcal{I}$ is eternal, one-ended and all of its vertices have finitely many descendants. 
In contrast, every Family Tree of class $\mathcal{I}/\mathcal{F}$ is eternal, two ended, and all the vertices that lie on its unique bi-infinite path have infinitely many descendants and all the remaining vertices have finitely many descendants.
In addition, if a unimodular Family Tree is of class \(\mathcal{I}/\mathcal{F}\) a.s., then it is obtained by the operation called typically rooted joining of a stationary sequence of finite Family Trees.

In Subsection \ref{sec_examples}, we will use the typically rooted joining construction to define a unimodular Family Tree of  class \(\mathcal{I}/\mathcal{F}\).

We first describe the class (according to the classification theorem) of the connected component of $0$ in  the \(R\)-graph of \([\mathbb{Z},0,X]\), where \(X=(X_n)_{n \in \mathbb{Z}}\) are the i.i.d. increments of a skip-free to the left random walk (see Def. \ref{hyp:increments}).
Let \((S_n)_{n \in \mathbb{Z}}\) be the two-sided random walk associated to \(X\), where \(S_0=0,\, S_n = \sum_{k=0}^{n-1}X_k\) for \(n>0\), and \(S_n= \sum_{k=n}^{-1}-X_k\) for \(n<0\).
Let \(\mathbb{Z}^R\)  be the record graph of \((\mathbb{Z},X)\) and \(\mathbf{T} = \mathbb{Z}^R(0)\) be the connected component of \(0\) in \(\mathbb{Z}^R\).
Since \([\mathbb{Z},0,X]\) is unimodular, \([\mathbf{T},0]\) is a unimodular Family Tree (see Lemma \ref{lemma:f_graph_unimodular} taken from \cite{baccelliEternalFamilyTrees2018a}).

The phase transitions occur not only for the i.i.d. sequence of random variables that have the support in \(\mathbb{Z}_{\geq -1}\), but also for more general stationary and ergodic integer-valued sequence of random variables (See Theorem \ref{thm_phase_transition_stationary}).
In the i.i.d. case, the component of \(0\) in the record graph exhibits three phases (\(\mathcal{F}/\mathcal{F},\mathcal{I}/\mathcal{I}\) and \(\mathcal{I}/\mathcal{F}\) resp.)  corresponding to the conditions \(\mathbb{E}[X_0]<0,\mathbb{E}[X_0]=0\) and \(\mathbb{E}[X_0]>0\) respectively.
But for more general stationary and ergodic case, one can construct examples for which there are only two phases: \(\mathcal{F}/\mathcal{F}\) when \(\mathbb{E}[X_0]<0\) and \(\mathcal{I}/\mathcal{F}\) when \(\mathbb{E}[X_0]\geq 0\) (see Example \ref{example_i_f_mean_0} and Theorem \ref{thm_phase_transition_stationary}).

\begin{proposition}
    If \(\mathbb{E}[X_0]<0\), then the unimodular Family Tree \([\mathbf{T},0]\) is of class \(\mathcal{F}/\mathcal{F}\).
  \end{proposition}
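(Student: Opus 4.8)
The plan is to prove that the Family Tree $\mathbf T$ is almost surely \emph{finite}; by the Classification of unimodular Family Trees a unimodular Family Tree is of class $\mathcal F/\mathcal F$ exactly when it is finite, so this suffices. Everything rests on the strong law of large numbers: since $\mathbb E[X_0]<0$ we have $S_n\to-\infty$ a.s.\ as $n\to+\infty$, while $(S_{-n})_{n\ge 0}$ has i.i.d.\ increments $(-X_{-k})_{k\ge 1}$ of mean $-\mathbb E[X_0]>0$, so $S_n\to+\infty$ a.s.\ as $n\to-\infty$. Using the elementary identity $y(k,i)=S_i-S_k$ (valid for $k\le i$), the condition ``$y(k,i)\ge 0$ for all $j\le k<i$'' in the definition of $L_X(i)$ in Eq.~(\ref{eq:L_x_defn}) reads ``$S_k\le S_i$ for all $j\le k<i$''. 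On the probability-$1$ event where $S_k\to+\infty$ as $k\to-\infty$, for each $i$ there is a random integer $K$ with $S_k>S_i$ for all $k\le K$, so no $j\le K$ can satisfy the condition; hence $L_X(i)>-\infty$ for \emph{every} $i\in\mathbb Z$ simultaneously, a.s.

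Next I would show that the forward orbit $\big(R^k(0)\big)_{k\ge 0}$ is a.s.\ eventually constant, equal to a fixed point of $R$. The record map never decreases the walk along an orbit: $S_{R(i)}\ge S_i$ by the very definition of $R_X$ (with equality when $R_X(i)=i$), so $k\mapsto S_{R^k(0)}$ is non-decreasing, and it is bounded above by $\sup_{n\ge 0}S_n<\infty$. If $\big(R^k(0)\big)_k$ were not eventually constant it would be strictly increasing, hence $R^k(0)\to+\infty$, and then $\big(S_{R^k(0)}\big)_k$ would be a non-decreasing subsequence of $(S_n)_n$, contradicting $S_n\to-\infty$. So a.s.\ there is $N$ with $i^\ast:=R^N(0)=R^{N+1}(0)$; that is, $i^\ast$ is a fixed point of $R_X$ (equivalently $S_j<S_{i^\ast}$ for all $j>i^\ast$), so $R^m(i^\ast)=i^\ast$ for all $m\ge 0$.

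Finally I would identify the connected component of $0$ in the record graph $\mathbb Z^R$. Since $0,R(0),\dots,R^N(0)=i^\ast$ is a directed path in $\mathbb Z^R$, the component of $0$ equals the component of $i^\ast$; and because $i^\ast$ is a fixed point, its component is $\{v:R^n(v)=i^\ast\text{ for some }n\ge 0\}=D(i^\ast)\cup\{i^\ast\}$. By Lemma~\ref{lemma_descendants}, $D(i^\ast)=\{j\in\mathbb Z:L_X(i^\ast)\le j<i^\ast\}$, a finite interval because $L_X(i^\ast)>-\infty$ on the event fixed above. Hence $\mathbf T$ is a.s.\ a finite Family Tree (discarding the self-loop at $i^\ast$ does not affect finiteness), so $[\mathbf T,0]$ is of class $\mathcal F/\mathcal F$. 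The one genuinely substantive step is the second one, ruling out that the record orbit of $0$ escapes to $+\infty$; the monotonicity of $n\mapsto S_{R^n(0)}$ along the orbit together with $S_n\to-\infty$ settles it, and the rest is bookkeeping with Lemma~\ref{lemma_descendants} and the identity $y(k,i)=S_i-S_k$.
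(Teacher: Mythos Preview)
Your proof is correct and rests on the same two facts as the paper's: $S_n\to-\infty$ a.s.\ as $n\to+\infty$ (so $0$ has finitely many ancestors) and $S_{-n}\to+\infty$ a.s.\ (so $L_X(i)>-\infty$, hence finitely many descendants via Lemma~\ref{lemma_descendants}). The paper's version is considerably terser: it simply records these two consequences and invokes the Classification of unimodular Family Trees to conclude $\mathcal{F}/\mathcal{F}$, since both infinite classes require every vertex to have infinitely many ancestors. You instead carry out the argument by hand---explicitly locating the fixed point $i^\ast$ via the monotonicity of $S_{R^k(0)}$, identifying the component of $0$ as $D(i^\ast)\cup\{i^\ast\}$, and reading off finiteness from Lemma~\ref{lemma_descendants}. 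This buys you a self-contained proof that does not lean on the classification theorem, at the cost of some extra bookkeeping; the paper's route is shorter but presupposes that machinery.
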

  \begin{proof}
    Since \(\mathbb{E}[X_0]<0\), the random walk \((S_n)_{n \geq 0}\) drifts to \(-\infty\), i.e., \(S_n \to-\infty\) a.s..
    So, a.s., \(0\) has finitely many ancestors in \(\mathbf{T}\).
    However, the random walk \((S_{-n})_{n \geq 0}\) drifts to \(\infty\).
    So, \(L_X(0)>-\infty\).
    Therefore, by Lemma \ref{lemma:descendants}, \(0\) has finitely many descendants.
    Thus, \([\mathbf{T},0]\) is of class \(\mathcal{F}/\mathcal{F}\).
  \end{proof}
  
  \begin{proposition}\label{proposition:r-graph_is_eft}
    If \(\mathbb{E}[X_0]=0\), then the record graph \(\mathbb{Z}^R\) is connected and the unimodular Family Tree \([\mathbf{T},0]\) is of class $\mathcal{I}/\mathcal{I}$. 
  \end{proposition}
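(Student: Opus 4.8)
The plan is to combine the recurrence of the mean-zero walk with the two structural lemmas on the record map (Lemma \ref{lemma_descendants} and Lemma \ref{lemma:record_descendants}) and then read off the class from the Classification of unimodular Family Trees Theorem.

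Since $X_0$ takes values in $\mathbb{N}\cup\{-1\}$ with $\mathbb{P}[X_0=-1]>0$ and $\mathbb{E}[X_0]=0$, the increment law is non-degenerate with zero mean, so by the Chung--Fuchs theorem the walk $(S_n)_{n\ge 0}$ is recurrent; hence a.s. $\limsup_{n\to\infty}S_n=+\infty$ and $\liminf_{n\to\infty}S_n=-\infty$. The reversed walk $(S_{-n})_{n\ge 0}$ has i.i.d. increments $-X_{-n}$ of mean $0$, so likewise a.s. $\limsup_{k\to-\infty}S_k=+\infty$ and $\liminf_{k\to-\infty}S_k=-\infty$. I draw two consequences. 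First, for each fixed $i$ there is $j>i$ with $S_j\ge S_i$, so $R(i)<\infty$; taking a countable union over $i\in\mathbb{Z}$, a.s. $R(i)<\infty$, and in fact $R(i)>i$, for every $i$. In particular $\mathbb{Z}^R$ has no self-loops, and the orbit $0<R(0)<R^2(0)<\cdots$ is infinite, so $\mathbf{T}$ is a.s. infinite. Second, for each fixed $i$ there is $k<i$ with $S_k>S_i$, hence $y(k,i)=S_i-S_k<0$, so the defining condition of $L_X(i)$ fails past such a $k$ and $L_X(i)\ge k+1>-\infty$; again by a countable union, a.s. $L_X(i)>-\infty$ for every $i$, so by Lemma \ref{lemma_descendants} the descendant set $D(i)=\{j:L_X(i)\le j<i\}$ is finite for every $i\in\mathbb{Z}$.

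Next I establish connectedness of $\mathbb{Z}^R$. Work on the full-measure event that $R(m)>m$ for all $m$, and fix $i$. If $R(i)=i+1$, there is an edge of $\mathbb{Z}^R$ between $i$ and $i+1$. If $R(i)\ge i+2$, then by the interval property (Lemma \ref{lemma:record_descendants}) we have $i+1\in\{j:i\le j<R(i)\}\subseteq D(R(i))$, hence there is a directed path from $i+1$ to $R(i)$; together with the edge $i\to R(i)$ this again places $i$ and $i+1$ in one component. Since every pair of consecutive integers lies in a common component, all of $\mathbb{Z}$ forms a single component, i.e. $\mathbb{Z}^R$ is connected, and $\mathbf{T}=\mathbb{Z}^R(0)=\mathbb{Z}^R$.

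Finally, $[\mathbf{T},0]$ is a unimodular Family Tree by Lemma \ref{lemma:f_graph_unimodular}, so by the Classification of unimodular Family Trees Theorem it is a.s. of class $\mathcal{F}/\mathcal{F}$, $\mathcal{I}/\mathcal{I}$, or $\mathcal{I}/\mathcal{F}$. It is not of class $\mathcal{F}/\mathcal{F}$ because $\mathbf{T}$ is a.s. infinite; it is not of class $\mathcal{I}/\mathcal{F}$, since in that class the vertices with infinitely many descendants form a non-empty bi-infinite $F$-path, whereas we showed every vertex of $\mathbf{T}$ has finitely many descendants. Hence $[\mathbf{T},0]$ is a.s. of class $\mathcal{I}/\mathcal{I}$ (and in particular every generation is infinite and the tree is one-ended). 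The proof has no single difficult step; the only care needed is to pass from the recurrence statements, which a priori hold only for each fixed $i$, to their simultaneous validity over all $i\in\mathbb{Z}$ via countable unions, and then to apply the interval property to get connectivity of consecutive integers.
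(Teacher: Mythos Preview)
Your proof is correct and follows essentially the same approach as the paper: both invoke Chung--Fuchs for recurrence, use the interval property (Lemma~\ref{lemma:record_descendants}) to obtain connectedness, show $L_X(i)>-\infty$ via recurrence of the reversed walk to get finiteness of all descendant sets (Lemma~\ref{lemma_descendants}), and conclude class $\mathcal{I}/\mathcal{I}$ from the classification theorem. The only cosmetic difference is that the paper connects an arbitrary $i$ to $0$ by finding an ancestor $R^k(i)$ that overshoots $0$, whereas you chain consecutive integers; both are the same use of the interval property.
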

  
  \begin{proof}
    Since $\mathbb{E}[|X_n|]< \infty$, by the Chung-Fuchs theorem \cite{kallenbergFoundationsModernProbability2021}, the random walk starting at $0$ with increments $(X_n)_{n \geq i}$ is recurrent for any $i \in \mathbb{Z}$.
  This implies that for any $i \in \mathbb{Z}$, almost surely, the sequence $(R^n_X(i))_{n \in \mathbb{N}}$ (of higher order record epochs of \(i\)) is strictly increasing, i.e., $R^n_X(i) \not = R^n_X(i), \forall n \geq 1$.
  
  Let $A_i$ be the event that the integer $i$ is in the connected component of $0$ in the $R$-graph.
  We claim that  the event $A = \bigcap_{i \in \mathbb{Z}}A_i$ occurs with probability $1$.
  
  Let $i<0$ (the proof for $i>0$ is obtained by interchanging $i$ and $0$).
  Since the sequence $(R^n_X(i))_{n \in \mathbb{Z}}$ is strictly increasing a.s., there exists a smallest (random) integer $k>0$ such that $R_X^{k-1}(i)<0 \leq R^k_X(i)$ a.s..
  By Lemma \ref{lemma:record_descendants} (applied to $R_X^{k-1}(i)$), $0$ is a descendant of $R^k_X(i)$ a.s..
  Thus, a.s., $0$ and $i$ are in the same connected component.
  The claim follows since $i$ is arbitrary.
  
  The above argument also implies that, a.s., \([\mathbf{T},0]\) is an EFT, and Lemma \ref{lemma:f_graph_unimodular} implies that it is unimodular.
  We prove that \([\mathbf{T},0]\) is of class \(\mathcal{I}/\mathcal{I}\) by
  showing that the descendant tree of every vertex of \(\mathbf{T}\) is finite.
  For any integer $i$, the random walk starting at $0$ with increments $(-X_{(i-1-n)})_{n \geq 0}$ is recurrent by the Chung-Fuchs theorem.
  Therefore, $L_X(i)>- \infty$ a.s., implying that the set of descendants of $i$ is a.s. finite by Part 1 of Lemma \ref{lemma_offspring_count}.
  This completes the proof.
  \end{proof}
  
  \begin{proposition}\label{remark_pos_drift_i_f}
    If \(0<\mathbb{E}[X_0]<\infty\), then the record graph \(\mathbb{Z}^R\) is connected and the unimodular Family Tree \([\mathbf{T},0]\) is of class \(\mathcal{I}/\mathcal{F}\).
  \end{proposition}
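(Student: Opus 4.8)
The plan is to follow the template of the two previous propositions: first pin down the coarse structure of $\mathbb{Z}^R$ (it is a connected \emph{EFT}), then identify its class using the classification of unimodular Family Trees (equivalently, the Foil Classification Theorem \ref{thm_foil_classification}), ruling out the two other classes. Throughout, $\mathbf{T}=\mathbb{Z}^R(0)$ is a unimodular Family Tree by Lemma \ref{lemma:f_graph_unimodular}.

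First I would show $\mathbb{Z}^R$ is a connected EFT. Since $\mathbb{E}[X_0]>0$, the remark after Lemma \ref{20230119141633} gives $S_n\to\infty$ a.s., so $\tau<\infty$ a.s.; by stationarity $\mathbb{P}[R_X(i)=i]=\mathbb{P}[\tau=\infty]=0$ for each $i\in\mathbb{Z}$, and a countable union shows that a.s. $R_X(i)>i$ for every $i$. Hence a.s. $\mathbb{Z}^R$ is an EFT and each sequence $(R_X^n(i))_{n\ge 0}$ is strictly increasing, hence unbounded. The connectedness argument is then verbatim that of Proposition \ref{proposition:r-graph_is_eft}: for $i<0$, let $k\ge 1$ be the smallest integer with $R_X^k(i)\ge 0$; then $R_X^{k-1}(i)<0\le R_X^k(i)$, so by the interval property (Lemma \ref{lemma:record_descendants}) the origin is a descendant of $R_X^k(i)$, and therefore $i$ and $0$ lie in the same component; the case $i>0$ is symmetric, using the records of $0$. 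Thus a.s. $\mathbf{T}=\mathbb{Z}^R$ is a connected infinite EFT, which in particular rules out class $\mathcal{F}/\mathcal{F}$.

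Next I would rule out class $\mathcal{I}/\mathcal{I}$ by exhibiting, a.s., a vertex of $\mathbf{T}$ with infinitely many descendants; by Theorem \ref{thm_foil_classification} this forces class $\mathcal{I}/\mathcal{F}$. By Lemma \ref{lemma:descendants}, a vertex $v$ has infinitely many descendants if and only if $L_X(v)=-\infty$, i.e. if and only if $y(k,v)=S_v-S_k\ge 0$ for all $k<v$, i.e. if and only if $S_v$ is a weak running maximum of the two-sided walk. Such a vertex exists a.s.: applying the law of large numbers to the past increments, $S_n=-\sum_{k=n}^{-1}X_k\to-\infty$ a.s. as $n\to-\infty$, so a.s. $\sup_{j\le 0}S_j$ is finite and attained at some $m\le 0$; for this $m$ one has $S_k\le S_m$ for all $k\le m$, hence $L_X(m)=-\infty$ and $D(m)=\{j\in\mathbb{Z}:j<m\}$ is infinite. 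Since in class $\mathcal{I}/\mathcal{I}$ every vertex has a finite descendant set, $\mathbf{T}$ is a.s. not of that class, so it is a.s. of class $\mathcal{I}/\mathcal{F}$. (Alternatively one could identify the bi-infinite $R$-path directly with $\{v:L_X(v)=-\infty\}$, which is a.s. bi-infinite because $S_n\to+\infty$ and $S_n\to-\infty$ as $n\to\pm\infty$; or observe that the shift-invariant event $\{\exists v:L_X(v)=-\infty\}$ has probability at least $\mathbb{P}[L_X(0)=-\infty]=1-c>0$ and hence probability $1$ by ergodicity of the i.i.d.\ sequence.)

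The crux is the middle step: recognizing that positive drift of the increments forces $S_n\to-\infty$ as $n\to-\infty$, which is exactly what produces weak running maxima, hence vertices with infinite descendant sets, hence class $\mathcal{I}/\mathcal{F}$ rather than $\mathcal{I}/\mathcal{I}$. The remaining ingredients are direct applications of the lemmas of Section \ref{section_record_map} and of the classification theorem, so I anticipate no further difficulty.
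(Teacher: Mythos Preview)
Your proposal is correct and follows essentially the same approach as the paper: use $S_n\to+\infty$ to get infinitely many ancestors (hence an EFT, ruling out $\mathcal{F}/\mathcal{F}$), use $S_{-n}\to-\infty$ to locate a weak running maximum $m\le 0$ with $L_X(m)=-\infty$ (hence a vertex with infinitely many descendants, ruling out $\mathcal{I}/\mathcal{I}$), and conclude via the classification theorem. The paper differs only cosmetically: it first exhibits the vertex $u^*$ with infinite descendants and then deduces connectedness from the interval property applied to that vertex, whereas you prove connectedness first by reusing the argument of Proposition \ref{proposition:r-graph_is_eft}; both orderings are fine.
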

  \begin{proof}
   We show that there exists a vertex of \(\mathbf{T}\) that has infinitely many descendants, which implies that \([\mathbf{T},0]\) is of class \(\mathcal{I}/\mathcal{F}\).
    Since the random walk has positive drift (\(0<\mathbb{E}[X_0]<\infty\)), the positively indexed part of the random walk \((S_n)_{n \geq 0}\) drifts to \(+\infty\) a.s..
    So, a.s., the \(n\)-th record epoch of \(0\) is always larger than the \((n-1)\)-th record epoch of \(0\) for all \(n \geq 1\), i.e., \(R^n(0)>R^{n-1}(0),\,  \forall n \geq 1\).
    This implies that \(0\) has an infinite number of ancestors.
    The negatively indexed part \((S_{-n})_{n \geq 0}\) drifts to \(- \infty\) a.s..
    In particular, a.s., the random variable \(m_{-1}:= \sup_{n \geq 1}\{S_{-n}\}\) is finite and the random non-positive integer \(u^* := -\min\{n \geq 1: S_{-n}=m_{-1}\}\) exists.
    Since \(R(u^*) \geq 0\), there exists the smallest ancestor \(u \in \mathbb{Z}\) of \(0\) such that \(R(u^*) = u\).
    Since \(S_{n} \leq m_{-1}\) for all \(n \leq u^*\), \(u^*\) has infinitely many descendants by Lemma \ref{lemma:descendants}.
    Therefore, by the interval property (Lemma \ref{lemma:record_descendants}), it follows that the record graph \(\mathbb{Z}^R\) is connected.
    Further, by the classification theorem of unimodular Family Trees \cite{baccelliEternalFamilyTrees2018a}, \([\mathbf{T},0]\) is of class \(\mathcal{I}/\mathcal{F}\).
  \end{proof}
  
  \subsection{Instances of unimodular Family Trees}\label{sec_examples}

In this subsection, we describe three instances of unimodular Family Trees, one from each of the classes \(\mathcal{F}/\mathcal{F}, \mathcal{I}/\mathcal{F}\) and \(\mathcal{I}/\mathcal{I}\), which will be used in  the next sections.
The first instance is the Eternal Galton-Watson tree with offspring distribution \(\pi\).
It was studied in \cite{baccelliEternalFamilyTrees2018a}, in which it is
shown that the tree is unimodular if and only if the mean of \(\pi\) is \(1\).
It is of class \(\mathcal{I}/\mathcal{I}\) when \(\pi\) is non degenerate.
We introduce two new unimodular Family Trees, namely, the typically rooted Galton-Watson tree; which is of class \(\mathcal{F}/\mathcal{F}\); and the unimodularised marked ECS ordered bi-variate Eternal Kesten tree, which is of class \(\mathcal{I}/\mathcal{F}\).
The former is shown to  be obtained from Galton-Watson Tree by re-rooting the root to a typically chosen vertex of the Galton-Watson Tree.

\subsubsection{Typically rooted Galton-Watson Tree (TGWT)}

Let $\pi$ be a probability distribution on $\{0,1,2,3,\cdots\}$ such that its mean $0< m(\pi)<1$ and \(\hat{\pi}\) be the size-biased distribution of \(\pi\), given by \(\hat{\pi}(k) = \frac{k \pi(k)}{m(\pi)}\) for all \(k \in \{0,1,2,\cdots\}\).
The {\bf Typically rooted Galton-Watson Tree} (\(TGWT(\pi)\)) with offspring distribution $\pi$ is a Family Tree $[\mathbf{T},\mathbf{o}]$ defined in the following way.
\begin{itemize}
    \item Add a parent to the root $\mathbf{o}$ with probability $m(\pi)$. Independently iterate the same to the parent of $\mathbf{o}$ and to all of its ancestors. So, the number of ancestors of the root $\mathbf{o}$ has a geometric distribution with the success probability $1-m(\pi)$.
    \item Let $Z$ be a random variable that has distribution $\hat{\pi}$ (the size-biasing of $\pi$). To each of the ancestors of $\mathbf{o}$ attach independently children with distribution the same as that of $Z-1$.
    \item To each of these new children and to the root $\mathbf{o}$, attach independently ordered Galton-Watson trees with offspring distribution $\pi$.
\end{itemize}
See Figure \ref{figure_sgwt_2.31} for an illustration.

\begin{remark}\normalfont
    The \(TGWT(\pi)\) is a finite unimodular ordered Family Tree (see Proposition \ref{20230305162953}).
\end{remark}

\begin{figure}[h]
\begin{center}
      \includegraphics[scale=0.8]{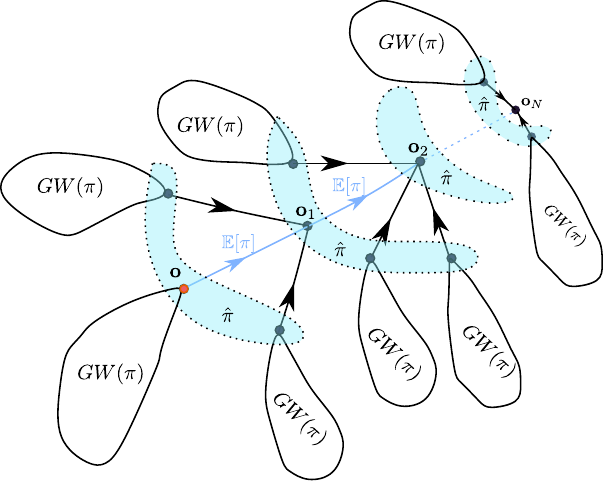}    
\end{center}
    \caption{Typically rooted Galton-Watson Tree with offspring distribution $\pi$ (\(\mathbf{o}\) is the root), \(\mathbb{E}[\pi]\) is the probability that the root \(\mathbf{o}\) has a parent, \(\hat{\pi}\) is the offspring distribution of the parent of \(\mathbf{o}\), \(N\) is a geometric random variable with the success probability \(1-\mathbb{E}[\pi]\).}
    \label{figure_sgwt_2.31}
\end{figure}

The nomenclature ``typically rooted'' in the typically rooted Galton-Watson Tree suggests that it is obtained by re-rooting to a typical vertex of a Galton-Watson tree.
It is indeed true, we prove this fact in Proposition \ref{prop_tgwt_is_typical}.
We also give a characterizing condition for \(TGWT\) in Proposition \ref{20230305162953}.

\subsubsection{Bi-variate Eternal Kesten Tree}\label{subsec_bi_variate_ekt}

In the following, we give an instance of a unimodular tree of class \(\mathcal{I}/\mathcal{F}\) using the typically rooted joining operation.
We first introduce a Family Tree which we call it the bi-variate Eternal Kesten Tree \(EKT(\alpha,\beta)\) with offspring distributions \(\alpha\) and \(\beta\). 
It is unordered, unlabelled and parametrized by \(\alpha\) and \(\beta\).
Using this Family Tree, we introduce a class of marked and ordered Family Trees namely marked ECS ordered bi-variate Eternal Kesten Trees.
None of these trees are unimodular, but we can obtain a unimodularised version from them using an operation called typically rooted joining.
In order to perform this operation, it is essential that the mean of offspring distribution \(\beta\) satisfies \(m(\beta)<1\).
But, to define \(EKT(\alpha,\beta)\), it is sufficient that the mean of the offspring distributions satisfy \(m(\alpha)<\infty\) and \(m(\beta)\leq 1\).

Let $\alpha, \beta$ be two probability distributions on $\{0,1,2,\cdots\}$ such that their means satisfy $m(\alpha)<\infty$ and $m(\beta) \leq 1$.
A {\bf bi-variate Eternal Kesten Tree} with offspring distributions $\alpha,\beta$, denoted as $EKT(\alpha,\beta)$, is a random Family Tree $[\mathbf{T}',\mathbf{o}']$ consisting of a unique bi-infinite $F$-path $(\mathbf{o}_n)_{n \in \mathbb{Z}}$, where $\mathbf{o}_0=\mathbf{o}'$, with the following property:
 the sequence of Family Trees $([D(\mathbf{o}_n)\backslash D(\mathbf{o}_{n-1}), \mathbf{o}_n])_{n \in \mathbb{Z}}$ is i.i.d. with the following common distribution: the offspring distribution of the root is \(\alpha\) and the descendant trees of the children of the root are i.i.d. Galton-Watson trees with offspring distribution $\beta$ (denoted as \(GW(\beta)\)).
The Family Trees \(\{[D(\mathbf{o}_n)\backslash D(\mathbf{o}_{n-1}), \mathbf{o}_n]:n \in \mathbb{Z}\}\) are called the bushes of \(EKT(\alpha,\beta)\) and the Family Tree \\ \([D(\mathbf{o}_0)\backslash D(\mathbf{o}_{-1}), \mathbf{o}_0]\) is called the {\bf bush of the root}.

Note that $EKT(\alpha,\beta)$ is the joining of the i.i.d. sequence $([\mathbf{T}_i,\mathbf{o}_i])_{i \in \mathbb{Z}}$, where $T_i = D(\mathbf{o}_i)\backslash D(\mathbf{o}_{i-1})$.
Keep in mind that $d_1(\mathbf{o}_i,\mathbf{T}_i)= d_1(\mathbf{o}_i,\mathbf{T}')-1$, where \(d_1(\mathbf{o}_i,\mathbf{T}_i)\) denotes the offspring count of \(\mathbf{o}_i\) in \(\mathbf{T}_i\).

In addition, suppose that \(m(\beta)<1\).
Then, \(GW(\beta)\) is subcritical, which implies that \(\mathbb{E}[\#V(\mathbf{T}_0)]< \infty\).
So, in this case, we can take the typically rooted joining of the i.i.d. Family Trees \(([\mathbf{T}_i,\mathbf{o}_i])_{i \in \mathbb{Z}}\).
Let \([\mathbf{T},\mathbf{o}]\) be the typically rooted joining of \(([\mathbf{T}_i,\mathbf{o}_i])_{i \in \mathbb{Z}}\) (as in Eq. (\ref{eq_typical_rerooting})).
Then, by Theorem \ref{thm:I_F_unimodularizable}, the EFT \([\mathbf{T},\mathbf{o}]\) is unimodular.
We call the distribution of \([\mathbf{T},\mathbf{o}]\) the unimodularised bi-variate \(EKT(\alpha,\beta)\).
The unimodularised \(EKT(\alpha,\beta)\) is of class \(\mathcal{I}/\mathcal{F}\).

An \textbf{Every Child Succeeding (ECS) order} on \([\mathbf{T}',\mathbf{o}']\) is obtained by declaring that $\mathbf{o}_n$ is the smallest among $D_1(\mathbf{o}_{n+1})$ and using the uniform order on the remaining children for all $n \in \mathbb{Z}$.
The name ``ECS'' order comes from the fact that the succession line (see Def. \ref{defn_succession_line} ) starting from the root reaches all the vertices on the bi-infinite path
(i.e., the succession line is an order preserving bijection from \(\mathbb{Z}\) to the vertices of the tree).
The unimodularised ECS ordered \(EKT(\alpha,\beta)\) is an example of a unimodular ordered EFT of class \(\mathcal{I}/\mathcal{F}\).  
Later, we show that the unimodularised ECS ordered \(EKT(\alpha,\beta)\) is the record graph of the network \([\mathbb{Z},0,X]\), where \(X\) is as in Def. \ref{hyp:increments} and satisfies \(\mathbb{E}[X_0]>0\).

We focus on the ordered Family Trees whose vertices have marks that take values in \(\mathbb{Z}\), as will be shown in the later sections that they encode information about the trajectory of a skip-free to the left random walk when the mean of the random walk is positive.
Let \(\hat{\mathcal{T}_*}\) denote the space of equivalence classes of rooted ordered Family Trees whose mark function assigns \(\mathbb{Z}\)-valued labels to its vertices.
The space \(\hat{\mathcal{T}_*}\) is Polish \cite{aldousProcessesUnimodularRandom2007}.
We give an instance of a unimodular marked ordered Family Tree of class \(\mathcal{I}/\mathcal{F}\).
It is important to note that the distribution of the marks need not be independent of the tree structure.
For instance, marks can be a deterministic function of the structure of a tree.

Let \(\alpha, \beta\) be probability distributions on \(\{0,1,2,\cdots\}\) such that their means satisfy \(m(\alpha)<\infty\) and $m(\beta) < 1$.
A {\bf bi-variate ECS ordered marked Eternal Kesten Tree} \(MEKT(\alpha,\beta)\) with parameters \(\alpha, \beta\) is a random ordered marked rooted Family Tree \([\mathbf{T}',\mathbf{o}',Z']\)  that has the following properties:
\begin{enumerate}
  \item The Family Tree \([\mathbf{T}',\mathbf{o}']\) obtained by forgetting the marks is the ECS ordered \(EKT(\alpha,\beta)\).
  Let \(([\mathbf{T}_i,\mathbf{o}_i])_{i \in\mathbb{Z}}\) be the i.i.d. sequence of Family Trees whose joining is \([\mathbf{T}',\mathbf{o}']\) (with \(\mathbf{o}'=\mathbf{o}_0\)).

  \item The sequence of marked Family Trees \(([\mathbf{T}_i,\mathbf{o}_i,Z_i])_{i \in \mathbb{Z}}\) is i.i.d., where \(Z_i\) is the mark function on \(\mathbf{T}_i\) obtained by restricting \(Z'\) to \(\mathbf{T}_i\).
\end{enumerate}

\begin{figure}[h]
  \begin{center}
        \includegraphics[scale=1.5]{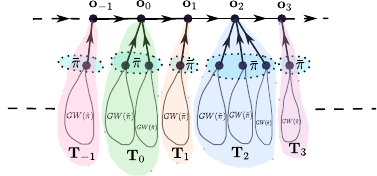}    
  \end{center}
      \caption{Bi-variate ECS ordered EKT with distributions $\bar{\pi},\tilde{\pi}$ (\(\mathbf{o}_0\) is the root), \(\bar{\pi}\) is the offspring distribution of \(\mathbf{o}_0\) in \(\mathbf{T}_0\), \(\tilde{\pi}\) is the offspring distribution of every child of \(\mathbf{o}_0\) in \(\mathbf{T}_0\), see Eq. (\ref{eq:pi_defn}) for the expressions of \(\bar{\pi},\tilde{\pi}\).}
      \label{fig_bi_variate_EKT}
  \end{figure}

\begin{remark}\label{remark_joining_ekt}
  Because of the independence condition involved in the second part of the construction of \(MEKT(\alpha,\beta)\), it follows that $EKT(\alpha,\beta)$ is the ECS ordered joining of an i.i.d. sequence of finite marked Family Trees $([\mathbf{T}_i,\mathbf{o}_i,Z_i])_{i \in \mathbb{Z}}$, where the distribution of $[\mathbf{T}_0,\mathbf{o}_0]$ is as follows: the root $\mathbf{o}_0$ does not have parent, 
  the offspring distribution of $\mathbf{o}_0$ is $\alpha$, 
  the descendant trees of the children of $\mathbf{o}_0$ are independent subcritical ordered Galton-Watson trees with offspring distribution $\beta$.
\end{remark}

It follows, from the second part of the construction of \(MEKT(\alpha,\beta)\), that the sequence of random variables \((Z_i(\mathbf{o}_i))_{i \in \mathbb{Z}}\) is i.i.d.
(see the Figure \ref{fig_bi_variate_EKT}).
Note that \(Z_i\) need not be independent of \([\mathbf{T}_i,\mathbf{o}_i]\) for any \(i \in \mathbb{Z}\).

\begin{remark}\label{remark_characterization_bi_ecs_EKT}
    It is also clear from the construction of \(MEKT(\alpha,\beta)\) that the distribution of \( [\mathbf{T}_0,\mathbf{o}_0,Z_0]\) characterizes \(MEKT(\alpha,\beta)\).
\end{remark}
\begin{remark}
Let \([\mathbf{T}',\mathbf{o}',Z']\) be an \(MEKT(\alpha,\beta)\) with \(\alpha,\beta\) same as the above, and let \([\mathbf{T},\mathbf{o},Z]\) be the random ordered marked Family Tree obtained by taking \(\sigma'\) on \([\mathbf{T}',\mathbf{o}',Z']\), i.e., for any measurable subset \(A\) of \(\hat{\mathcal{T}}_*\),
\begin{equation}\label{eq_unimodularised_ekt}
  \sigma'[A] = \frac{1}{\mathbb{E}[\# V(\mathbf{T}_0)]} \mathbb{E}\displaystyle\left[\sum_{v \in V(\mathbf{T}_0)} \mathbf{1}_{[\mathbf{T}',v,Z']}(A) \right].
\end{equation}
Then, by Theorem \ref{thm:I_F_unimodularizable}, the marked Family Tree \([\mathbf{T},\mathbf{o},Z]\) is unimodular.
\end{remark}

The distribution of the marked Family Tree \([\mathbf{T},\mathbf{o},Z]\) is called the {\bf unimodularised} \(MEKT(\alpha,\beta)\).

\subsection{Distribution of the record graph in the three phases}

An \(n+1\)-tuple of integers of the form \((0,s_1,\cdots,s_{n-1},1)\) with \(s_j \leq 0\) for all \( 1 \leq j \leq n-1 \) is called an {\bf excursion set}.
Consider the skip-free to the right random walk $(S_{-n})_{n \geq 0}$ associated to the i.i.d. sequence \(X=(X_n)_{n \in \mathbb{Z}}\).
The following proposition describes the relation between an excursion set of this random walk and the descendant tree of $0$.
Let $\delta=\max\{n<0: S_n\geq 1\} = \max\{n<0: S_n = 1\}$ (by the skip-free property).

\begin{proposition}\label{20230128172431}
  Assume $\mathbb{E}[X_{0}]\leq 0$.
  Consider the rooted network $[\mathbb{Z},0,X]$. Then, the descendant tree of the root $0$ in the connected component of $0$ of the $R$-graph of $[\mathbb{Z},0,X]$ is a sample of the ordered Galton-Watson tree with offspring distribution $\pi \overset{\mathcal{D}}{=} X_{0}+1$.
\end{proposition}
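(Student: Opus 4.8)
The plan is to show that the descendant tree of $0$ is encoded by the reversed trajectory $(S_{-n})_{n\geq 0}$, which is a skip-free to the right random walk with i.i.d.\ steps $-X_{-1}, -X_{-2}, \dots$, and then to read off the Galton-Watson structure from Lemma~\ref{lemma_offspring_count}. First I would recall that, since $\mathbb{E}[X_0]\leq 0$, the walk $(S_{-n})_{n\geq 0}$ drifts to $+\infty$ (or is recurrent when $\mathbb{E}[X_0]=0$; in either case it attains the level $1$ in finite time, even when $\mathbb{E}[X_0]=0$ by Chung--Fuchs recurrence combined with the skip-free-to-the-right property), so $\delta = \max\{n<0: S_n = 1\}$ is a.s.\ finite and $L_X(0) = \delta+1 > -\infty$. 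By Lemma~\ref{lemma_descendants}, $D(0) = \{j: \delta+1 \leq j < 0\}$, so the descendant tree of $0$ is determined by the finitely many increments $X_{\delta}, X_{\delta+1}, \dots, X_{-1}$, i.e.\ by the excursion of $(S_{-n})$ from $0$ up to its first passage to $1$.

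Next I would identify the offspring structure. By Lemma~\ref{lemma_offspring_count}(1), every vertex $i$ with $\delta+1 \leq i \leq 0$ has $d_1(i,T) = X_{i-1}+1$ children (the hypothesis $L_X(0)>-\infty$ propagates to every $i$ in the range since $L_X(i) \geq \delta+1 > -\infty$ as well, by the interval property or directly). Thus the number of children of a vertex is a deterministic $+1$ shift of the corresponding increment, and the increments $X_{i-1}$ for $i$ ranging over the descendant set are precisely the steps of the excursion of the reversed walk. To turn ``the tree is built from these increments'' into ``the tree is a Galton-Watson tree with offspring law $\pi \stackrel{\mathcal D}{=} X_0+1$'', I would invoke the classical bijection (cited in the paper via \cite{benniesRandomWalkApproach2000}) between a finite rooted ordered tree and the excursion of its (skip-free) contour/Łukasiewicz-type walk: the excursion set $(0, S_{-1}, \dots, S_{-(-\delta)}, 1) = (0, s_1, \dots, s_{n-1}, 1)$ with $n = -\delta$ codes a unique finite ordered Family Tree on $n$ vertices whose vertex in position $m$ (in the appropriate search order, which by Lemma~\ref{lemma:rls_order} is just the order on $\mathbb{Z}$ restricted to $D(0)$) has $s_{m-1} - s_m + 1$ children, and this is exactly the out-degree sequence produced by the record map on $X_{\delta},\dots,X_{-1}$. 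Since the $X_k$ are i.i.d., the excursion, conditioned to be an excursion (first return to level $1$ from $0$ staying $\leq 0$), has exactly the law of the Łukasiewicz path of a Galton-Watson tree with offspring distribution $\pi \stackrel{\mathcal D}{=} X_0 + 1$; hence the decoded tree is $GW(\pi)$.

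The main obstacle I anticipate is justifying rigorously that the decoding of the excursion set into a tree really coincides, vertex-by-vertex and child-order-by-child-order, with the descendant tree produced by the record vertex-shift — that is, matching the combinatorial bijection of \cite{benniesRandomWalkApproach2000} with the $R$-graph structure. Lemmas~\ref{lemma_descendants}, \ref{lemma:record_descendants}, \ref{lemma:rls_order} and especially \ref{lemma_offspring_count}(2)--(3), which pin down the positions of the $m$-th child via $y(i_m,i) = x_{i-1}+1-m$, are exactly the tools for this: they show the record-map genealogy on an interval with $y(L_x(i),i)=0$ is the standard tree-from-walk decoding. So the proof is essentially: (a) a.s.\ finiteness of the excursion (drift/recurrence + skip-free), (b) the descendant tree depends only on the excursion increments, (c) per-vertex offspring count $=$ increment $+1$ with positions governed by Lemma~\ref{lemma_offspring_count}, giving the bijection with ordered trees, and (d) i.i.d.\ increments conditioned to form an excursion $\Leftrightarrow$ Łukasiewicz path of $GW(X_0+1)$. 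I would also remark that when $\mathbb{E}[X_0]=0$ the same argument goes through because recurrence still forces the reversed walk to hit $1$, so $\delta$ is finite a.s.
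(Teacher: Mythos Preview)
Your proposal is correct and follows essentially the same strategy as the paper: encode the descendant tree of $0$ by the first excursion of the reversed walk $(S_{-n})_{n\geq 0}$ from $0$ to level $1$, use $L_X(0)>-\infty$ to justify applying Lemma~\ref{lemma_offspring_count}, and read off offspring counts as $X_{i-1}+1$. The only difference is in how the Galton--Watson law is extracted. You invoke the \L{}ukasiewicz bijection of \cite{benniesRandomWalkApproach2000} as a known result (and correctly identify that matching it to the record-map genealogy is the one thing requiring care), whereas the paper proves the branching structure directly by a recursive stopping-time argument: the positions $i_1>i_2>\cdots>i_{d_1(0)}$ of the children of $0$ given by Lemma~\ref{lemma_offspring_count}(3) are stopping times for $(S_{-n})_{n\geq 0}$, and the strong Markov property makes the sub-excursions between consecutive children i.i.d.\ copies of $(S_0,\dots,S_\delta)$, so the subtrees of the children are i.i.d.\ with the same law as the descendant tree of $0$ itself. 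This recursion is exactly the content of the \L{}ukasiewicz correspondence, so the two routes have identical mathematical substance; the paper's version is simply more self-contained within the thesis's own lemmas and sidesteps the verification you flag as your ``main obstacle''.
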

\begin{proof}
Since the random walk $(S_{-n})_{n \geq 0}$ drifts to $\infty$, we have $\delta>-\infty$.
It follows from the definition of $L_X(0)$ (see Eq. (\ref{eq:L_x_defn})) that $L_X(0)= \delta-1$ since $y(\delta,0) = -S_{\delta} = -1$ and for any $\delta-1\leq j<0$, $y(j,0)=-S_j>-1$.
Thus, $L_X(0)> - \infty$ a.s., so that the condition required to apply Lemma \ref{lemma_offspring_count} is satisfied.
The first part of Lemma \ref{lemma_offspring_count} implies that $d_1(0)=X_{-1}+1 \overset{\mathcal{D}}{=} \pi$.

We now show that, conditioned on the number of children of $0$, the subtrees of these children are jointly independent Galton-Watson trees with offspring distribution $\pi$.

For each $m \geq 1$, let $i_m = \max\{n<0:y(i_m,0)=X_{-1}+1-m\}$ and let \(i_0=0\).
By the third part of Lemma \ref{lemma_offspring_count}, $i_1>i_2>\cdots>i_{X_{-1}+1}$ are the children of $0$, and $i_{X_{-1}+1}=L_X(0)$.
The fact that, for each $m\geq 1$, $i_m$ is a stopping time for the random walk $(S_{-n})_{n \geq 0}$, and the third part of Lemma \ref{lemma_offspring_count}, together imply the following:
the excursions $((S_{i_k}-S_{i_{k}},S_{i_k-1}-S_{i_k},\cdots,S_{i_{k+1}}-S_{i_{k}}))_{k=1}^{X_{-1}}$, with the notation $S_{i_0}=0$, are independent and identically distributed as $(S_0,S_{-1},\cdots,S_{\delta})$.

By Lemma \ref{lemma:descendants}, for any $1 \leq k \leq X_{-1}+1$, the descendant tree of $i_k$ is completely determined by 
\[(S_{i_k}-S_{i_{k-1}},S_{i_k-1}-S_{i_{k-1}},\cdots,S_{i_{k+1}}-S_{i_{k-1}}) \overset{\mathcal{D}}{=}(S_0,S_{-1},\cdots,S_{\delta}). \]
This completes the proof.
\end{proof}

\begin{remark}
  Consider the i.i.d. sequence \(\hat{X}=(\hat{X}_n)_{n \in \mathbb{Z}}\) with the distribution of \(\hat{X}_0\) given by Eq. (\ref{eq_conditioned_rw}).
Since \(\mathbb{E}[\hat{X}_0]<0\) by Lemma \ref{20230116135048}, the assumption of Proposition \ref{20230128172431} is satisfied.
This implies that the descendant tree of \(0\) in the record graph of \([\mathbb{Z},0,\hat{X}]\) is a Galton-Watson tree with offspring distribution the same as that of \(\hat{X}_0+1\).
This will be used in the last part of Step 2 of Theorem \ref{r_graph_positive_drift_20230203174636}.
\end{remark}

\subsubsection{Negative mean}

\begin{theorem}\label{20230213191656}
  Let $[T,o]$ be the connected component of $0$ in the $R$-graph of $[\mathbb{Z},0,X]$ and rooted at $o=0$.
  If \(\mathbb{E}[X_0]<0\), then $[T,o]$ is the Typically rooted Galton-Watson Tree with offspring distribution $\pi$, where \(\pi\) is the distribution of \(X_0+1\).
\end{theorem}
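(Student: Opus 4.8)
The plan is to show that $[T,o]$ reproduces, piece by piece, the recursive construction of $TGWT(\pi)$: a geometric ancestor line with success probability $1-m(\pi)$, a size‑biased‑$\pi$ number of offspring at each ancestor, and mutually independent $GW(\pi)$ subtrees hanging off the root and off every non‑spine child. First note that, since the chapter assumes $0<\mathbb{P}[X_0=-1]<1$, the hypothesis $\mathbb{E}[X_0]<0$ gives $\mathbb{E}[X_0]\in(-1,0)$, so $m(\pi)=\mathbb{E}[X_0]+1\in(0,1)$ and $TGWT(\pi)$ is well defined. Negative drift makes $(S_n)_{n\ge 0}$ drift to $-\infty$ and $(S_{-n})_{n\ge 0}$ drift to $+\infty$; hence $0$ has only finitely many ancestors $0=u_0<u_1<\dots<u_N$ in $T$ (these are the successive weak ascending ladder epochs $R^j(0)$), and $L_X(i)>-\infty$ a.s.\ for every $i$, so every vertex has finitely many descendants and $T$ is exactly the finite descendant tree of $u_N$, viewed as rooted at $0$. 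Thus it suffices to compute the joint law of $N$, of the offspring counts along $u_0,\dots,u_N$, and of the descendant subtrees attached to $0$ and to the non‑spine children of each $u_j$.

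For the ancestor line: by the strong Markov property at the successive weak ascending ladder epochs, $N$ is geometric with $\mathbb{P}[N\ge j+1\mid N\ge j]=\mathbb{P}[\tau<\infty]$, where $\tau=R(0)$. Since $\mathbb{E}[X_0]<0$ forces $c=\mathbb{P}[\eta_{-1}<\infty]=1$, Lemma~\ref{20230119141633} gives $\mathbb{P}[\tau<\infty]=\sum_{k\ge 0}\sum_{j=0}^{k}\mathbb{P}[X_0=k]=\sum_{k\ge 0}(k+1)\mathbb{P}[X_0=k]=\mathbb{E}[X_0+1]=m(\pi)$, so $N$ is geometric with success probability $1-m(\pi)$, as in the definition of $TGWT(\pi)$. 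For the offspring counts: by Lemma~\ref{lemma_offspring_count}(1) (applicable because $L_X(u_j)>-\infty$), each $u_j$ has $X_{u_j-1}+1$ children in $T$, one of which is the spine child $u_{j-1}$; and Lemma~\ref{20230119141633} with $c=1$ shows that, conditionally on $\{\tau<\infty\}$, $\mathbb{P}[X_{\tau-1}=k]=(k+1)\mathbb{P}[X_0=k]/m(\pi)$, i.e.\ $X_{\tau-1}+1$ has the size‑biased law $\hat\pi$, while $S_\tau$ is then uniform on $\{0,\dots,X_{\tau-1}\}$. By the strong Markov property the pairs (ladder‑height increment, last increment) at $u_1,\dots,u_N$ are i.i.d.\ given $N$; hence each ancestor carries the spine child together with $\hat\pi-1\overset{d}{=}Z-1$ further children, exactly as in $TGWT(\pi)$.

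It remains to identify the descendant subtrees and establish the independence. Proposition~\ref{20230128172431} (valid since $\mathbb{E}[X_0]\le 0$) gives that the descendant tree of $0$ is $GW(\pi)$, and the same excursion/duality computation, translated by a ladder epoch, shows that the descendant tree of each non‑spine child of each $u_j$ is again $GW(\pi)$. For the joint law I would decompose the whole trajectory into independent blocks: the $N$ reversed weak‑ladder excursions of $(S_n)_{n\ge 0}$ (which carry the positive‑index bushes of $u_1,\dots,u_N$ and are i.i.d.\ given $N$ by strong Markov), the post‑maximum tail of $(S_n)_{n\ge 0}$, and the reversed negative walk $\tilde S:=(S_{-n})_{n\ge 0}$ — a skip‑free‑to‑the‑right random walk with positive drift, independent of $(S_n)_{n\ge 0}$. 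The walk $\tilde S$ regenerates at its successive strict ascending ladder epochs (its first hits of $1,2,3,\dots$), splitting it into i.i.d.\ Lukasiewicz‑type excursions; the excursion at level $\ell$ encodes an independent $GW(\pi)$ and is precisely the descendant subtree of the level‑$\ell$ vertex on the chain formed by $0$ and the negative‑side children of $u_1,u_2,\dots$, and only the first $S_{u_N}+1$ of these excursions are realized in $T$. Combining the two sides (independence of $\tilde S$ from the forward walk, strong Markov forward at the ladder epochs and backward at the $\tilde S$‑ladder epochs) yields that all these $GW(\pi)$ subtrees are mutually independent and independent of $N$ and of the offspring counts. Re‑rooting $D(u_N)$ at $0$ then displays exactly the law of $TGWT(\pi)$: an ancestor line of geometric length $N$, size‑biased offspring along it, and independent $GW(\pi)$ at the root and at each non‑spine child. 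As ordered trees the two coincide as well: each $GW(\pi)$ subtree carries the uniform sibling order, and since, given its offspring count $K$, the spine child of $u_j$ occupies a uniformly random position among the $K$ children of $u_j$ (because the ladder‑height increment is uniform on $\{0,\dots,K-1\}$ given the last increment), the integer order on children — which by Lemma~\ref{lemma:rls_order} is the RLS order — is exchangeable, hence uniform.

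The crux is the joint independence asserted in the last step. The descendant subtrees of the non‑spine children sit ``to the left'' of the ancestors, whose positions are dictated by the forward walk, so a priori the left endpoints $L_X(\cdot)$ of these subtrees, and hence the subtrees themselves, involve both halves of the trajectory at once. Disentangling them is the real work, and it is what the two‑sided regeneration argument above (forward ladder structure for the positive bushes, the decomposition of $(S_n)_{n\ge 0}$ at its last running maximum $u_N$, and the backward ladder structure of $\tilde S$ for the negative bushes, each reversed ladder excursion being turned into an independent $GW(\pi)$ via the Lukasiewicz encoding) is designed to supply.
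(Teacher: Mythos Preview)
Your proof is correct and runs largely parallel to the paper's, using the same ingredients (Proposition~\ref{20230128172431}, Lemma~\ref{20230119141633}, strong Markov). Two points of departure are worth noting. First, the paper obtains $\mathbb{P}[\mathbf{o}\text{ has a parent}]=m(\pi)$ via unimodularity (mass transport gives $\mathbb{E}[d_1(\mathbf o)]$ in one line), whereas you sum Lemma~\ref{20230119141633} over $j,k$; both are fine. Second, and more substantively, the paper handles the joint independence you flag as ``the crux'' by iteration rather than by a global two-sided ladder decomposition: its Step~6 simply observes that $(S_n-S_\tau)_{n\ge\tau}$, conditioned on $\tau<\infty$, is a fresh copy of the walk, so once the bush $D(F(\mathbf o))\setminus D(\mathbf o)$ has been analyzed (Steps~3--5) the same analysis repeats verbatim at $F^2(\mathbf o),F^3(\mathbf o),\dots$. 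This inductive shortcut avoids having to match backward-ladder excursions to ancestors. Your decomposition is valid---the backward ladder excursions are i.i.d.\ $GW(\pi)$ independent of the forward walk, with $S_{u_N}+1$ of them realized in $T$---but the paper's iteration makes the bookkeeping lighter.
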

\begin{proof}
  We prove the theorem in several steps.

  In \emph{Step 1}, we show that the descendant tree of the root $o$ is a Galton-Watson Tree with offspring distribution $\pi$.
  In \emph{Step 2}, we compute the probability that $o$ has a parent.
  One way to compute it is by using Kemperman's formula (see \cite[Proposition 3.7]{bhattacharyaRandomWalkBrownian2021}).
  Alternatively, one could use the unimodularity of $[T,o]$.
  We use the latter in this step.
  \emph{Step 3} involves computing the offspring distribution of $F(o)$ conditioned that $o$ has a parent.
  In \emph{Step 4}, we give the distribution of the order of $o$ among its siblings conditioned on the fact that $o$ has a parent.
  In \emph{Step 5}, we show that the descendant trees of the siblings of $o$ conditioned on the fact that $o$ has a parent are independent Galton-Watson trees with offspring distribution $\pi$.
  
  The final step (\emph{Step 6}) involves showing that the offspring distribution of $F^2(o)$ conditioned on the fact that $o$ has an ancestor of order \(2\) is the same as the offspring distribution of $F(o)$ conditioned on the fact that $o$ has a parent.
  Similarly, we show that the descendant trees of siblings of $F(o)$ and its order among its siblings conditioned on the fact that $o$ has an ancestor of order \(2\) have the same distribution as that of $o$ conditioned on the fact that $o$ has a parent.

  \noindent \textbf{Step 1:} 
  Since the random walk $(S_{-n})_{n \geq 0}$ has positive drift, $-\infty<L(0)$.
  By Proposition \ref{20230128172431}, the descendant tree of $o$ is GW($\pi$).

  \noindent \textbf{Step 2:}
  Since $[T,o]$ is the connected component of $0$ in the $R$-graph of a unimodular network $[\mathbb{Z},0,X]$, it is also unimodular by Lemma \ref{lemma:f_graph_unimodular}.
  Therefore, 
  \begin{align*}
      \mathbb{P}[o \text{ has a parent}] &= \mathbb{E}\left[\sum_{u \in V(T)}\mathbf{1}\{F(o)=u\} \right]=\mathbb{E}\left[\sum_{u \in V(T)}\mathbf{1}\{F(u)=o\} \right]\\
      &= \mathbb{E}[d_1(o)]=m(\pi)=\mathbb{E}[X_{0}]+1,
  \end{align*}
  where \(m(\pi)\) is the mean of \(\pi\).
In particular, 
\[\mathbb{P}[S_n<0\quad \forall n>0] = \mathbb{P}[o \text{ does not have parent}] = -\mathbb{E}[X_{0}].\]

\noindent \textbf{Step 3:} 
Note that $o$ has a parent if and only if $\tau<\infty$ (see the paragraph above Lemma \ref{20230119141633} for the definition of \(\tau\)).
So, $\mathbb{P}[\tau< \infty] = m(\pi)$.
Since $(S_{-n})_{n \geq 0}$ drifts to $+\infty$, part (1) of Lemma \ref{lemma_offspring_count} implies the equality of the two events $\{d_1(F(o))=n,\tau<\infty\} = \{X_{\tau-1}=n-1,\tau< \infty\}$, for any $n>0$.
Therefore, for any $n>0$,
\begin{align}
  \mathbb{P}[d_1(F(o))=n,\tau<\infty] &= \sum_{j=0}^{n-1}\mathbb{P}[\tau<\infty,X_{\tau-1}=n-1,S_{\tau}=j] \notag\\
  &= \sum_{j=0}^{n-1}\mathbb{P}[S_1=n-1]c^{j-n+1} =n\pi(n). \label{eq_1_2.25} 
\end{align} 
Eq. (\ref{eq_1_2.25}) follows from the previous equation by Lemma \ref{20230119141633}.
The last equation follows since $c=\mathbb{P}[S_n=-1 \text{ for some } n>0]=1$ as the random walk $(S_n)_{n \geq 0}$ drifts to $-\infty$.
Therefore,
\[\mathbb{P}[d_1(F(o))=n|\tau< \infty] = \frac{n\pi(n)}{m(\pi)}= \hat{\pi}(n),\]
for all $n>0$.

\noindent \textbf{Step 4:}
Let $c_k(u)$ denote $k$-th child of $u$ for any vertex $u$ and positive integer $k$.
Note that the equality of the two events $\{\tau< \infty,c_k(F(o))=o,d_1(o)=n\}= \{\tau<\infty, X_{\tau-1}=n-1,S_{\tau}=n-k\}$ for any $0<k\leq n$ follows from part (3) of Lemma \ref{lemma_offspring_count} (with \(i_m\) as \(0\) and \(i\) as \(F(0)\)).
Therefore, using Lemma \ref{20230119141633} and $c=1$, we obtain for any $0<k \leq n$,
\[\mathbb{P}[\tau< \infty,c_k(F(o))=o,d_1(o)=n] = \mathbb{P}[X_0 = n-1].\]
Thus, for any $0<k \leq n$
\[\mathbb{P}[c_k(F(o))=o|d_1(o)=n,\tau<\infty] = \frac{\mathbb{P}[X_0 = n-1]}{n \mathbb{P}[X_0 = n-1]}=\frac{1}{n},\]
which is the uniform order among the siblings of $o$.

\noindent \textbf{Step 5:}
On the event $\{\tau<\infty, d_1(F(o))=n, c_k(F(o))=o\}$, let $i_n<i_{n-1}<\cdots<i_1$ be the positions of the children of $F(o)$.
Then, conditioned on $\{\tau<\infty, d_1(F(o))=n, c_k(F(o))=o\}$, for each $1\leq j \leq n-1$, the part of the random walk $(0, S_{i_j-1}-S_{i_j},S_{i_j-2}-S_{i_j},\cdots, S_{i_{j+1}}-S_{i_j})$ is an excursion set by Lemma \ref{lemma_offspring_count} (part 3).
Further, these excursion sets are independent of one another because the times $i_j$ (for $1 \leq j \leq n$) are stopping times.
Indeed, for all $1 \leq j \leq n-1$, $S_{i_{j+1}}-S_{i_j}=1$ and $S_l<S_{i_{j+1}}$ for all $i_{j+1}<l \leq i_j$.
Therefore, for each $1 \leq j \leq n-1$, $(0, S_{i_j-1}-S_{i_j},S_{i_j-2}-S_{i_j},\cdots, S_{i_{j+1}}-S_{i_j})$ is the skip-free to the right random walk $(S_{-n})_{n \geq 0}$ conditioned on $\eta_1< \infty$, and stopped at $\eta_{1}$, where $\eta_1 = \min\{n>0: S_{-n}=1\}$.
But $(S_{-n})_{n \geq 0}$ drifts to $+\infty$.
So, \(\eta_1<\infty\) a.s..
Hence, $(0, S_{i_j-1}-S_{i_j},S_{i_j-2}-S_{i_j},\cdots, S_{i_{j-1}}-S_{i_j})$ has the same distribution as $(0,S_{-1},\cdots,S_{\eta_1})$.
Therefore, by Proposition \ref{20230128172431}, for each $1 \leq j \leq n$, the descendant trees are independent $GW(\pi)$.

\noindent \textbf{Step 6:}
Observe that the distribution of $(S_n-S_{\tau})_{n \geq \tau}$ conditioned on $\tau< \infty$ is the same as that of $(S_n)_{n \geq 0}$ by the strong Markov property.
Therefore, 
\[\mathbb{P}[F(o) \text{ has a parent }|\{o \text{ has a parent}\}] = \mathbb{P}[o \text{ has a parent}] = m(\pi).\]
Since $(S_{-n})_{n \geq 0}$ drifts to $+ \infty$ a.s., it reaches $S_{R^2(0)+1}$ a.s..
Therefore, by part (1) of Lemma \ref{lemma_offspring_count}, the distribution of $d_1(F^2(o))$ conditioned on the event $\{o \text{ has a grandparent}\}$ is equal to the distribution of $X_{R^2(0)-1}+1$ conditioned on $\{R^2(0)>R(0)\}$ (i.e., $o$ has a grandparent) which has the same distribution as that of $X_{R(0)-1}+1$ conditioned on $R(0)>0$ (i.e., \(X_{\tau-1}\) conditioned on \(\tau< \infty\)).
In particular, the distribution of $[D(F^2(o))\backslash D(F(o)),o]$ conditioned on $\{o \text{ has a grandparent}\}$ is the same as that of $[D(F(o))\backslash D(o),o]$ conditioned on $\{o \text{ has a parent}\}$.
By induction, it follows that \\ $[D(F^n(o))\backslash D(F^{n-1}(o)),o]$ conditioned on $\{o \text{ has }n- \text{th ancestor}\}$ has the same distribution as that of $[D(F(o))\backslash D(o),o]$ conditioned on \\ $\{o \text{ has a parent}\}$.
\end{proof}

We now show that the Typically rooted Galton-Watson Tree is obtained by re-rooting to a typical vertex of a Galton-Watson Tree.
Let us continue to use the notations in Theorem \ref{20230213191656} and let \(\{F(\mathbf{o})=\mathbf{o}\}\) denote the event that \(\mathbf{o}\) does not have a parent.
Let \([\mathbf{T}',\mathbf{o}']\) be the Family Tree obtained from \([\mathbf{T},\mathbf{o}]\) by conditioning on the event that \(\mathbf{o}\) does not have a parent.
We first show that the distribution of  \([\mathbf{T}',\mathbf{o}']\) is \(GW(\pi)\).
Since \(m(\pi)<1\), then \(GW(\pi)\) has finite mean size.
So, we could apply Eq. \ref{eq_typical_rerooting} to \(GW(\pi)\).
We then prove that \([\mathbf{T},\mathbf{o}]\) is obtained from \([\mathbf{T}',\mathbf{o}']\) by re-rooting to a typical vertex of \(\mathbf{T}'\).

\begin{lemma}\label{lemma_conditioned_is_GW}
    With the notations as in the above paragraph, the distribution of \([\mathbf{T}',\mathbf{o}']\) is \(GW(\pi)\).
\end{lemma}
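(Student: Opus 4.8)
The plan is to recognize $[\mathbf{T}',\mathbf{o}']$ as the connected component of $0$ in the record graph, conditioned on $0$ having no parent, and to observe that conditioning on this event removes all the ancestor structure of the $TGWT(\pi)$, leaving only the descendant tree of the root, which is already identified as $GW(\pi)$ in Step~1 of Theorem~\ref{20230213191656}. More precisely, from the proof of Theorem~\ref{20230213191656} we know that in the regime $\mathbb{E}[X_0]<0$ the component $[\mathbf{T},\mathbf{o}]$ is built from: (i) the descendant tree of $\mathbf{o}$, which is $GW(\pi)$ (Step~1, via Proposition~\ref{20230128172431}, using $L_X(0)>-\infty$ a.s.); (ii) a geometric number of ancestors of $\mathbf{o}$ together with their extra offspring and the descendant trees hanging off them. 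The event $\{F(\mathbf{o})=\mathbf{o}\}$ — i.e.\ $\{\tau=\infty\}=\{S_n<0\ \forall n>0\}$ — is exactly the event that the ancestor part (ii) is empty.

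First I would note that the event $\{F(\mathbf{o})=\mathbf{o}\}$ depends only on the future increments $(X_n)_{n\geq 0}$, namely $\{S_n<0\ \forall n>0\}$, whereas the descendant tree of $\mathbf{o}$ is a measurable function of the past increments $(X_n)_{n<0}$ (by Lemma~\ref{lemma_descendants} and Lemma~\ref{lemma_offspring_count}, since $D(\mathbf{o})=\{j:L_X(0)\leq j<0\}$ and the whole subtree is determined by $(S_0,S_{-1},\dots,S_\delta)$ with $\delta=L_X(0)+1$). By independence of the i.i.d.\ increments, conditioning on $\{F(\mathbf{o})=\mathbf{o}\}$ does not change the law of the descendant tree of $\mathbf{o}$: it remains $GW(\pi)$ with $\pi\overset{\mathcal D}{=}X_0+1$. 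Since on the event $\{F(\mathbf{o})=\mathbf{o}\}$ the root has no ancestors, the whole tree $\mathbf{T}$ equals the descendant tree of $\mathbf{o}$, and hence $[\mathbf{T}',\mathbf{o}']$ has law $GW(\pi)$.

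Alternatively, and perhaps cleaner as a write-up, one can argue directly from the $TGWT(\pi)$ description given before the lemma: the $TGWT(\pi)$ attaches a parent to $\mathbf{o}$ with probability $m(\pi)$, and conditioning on the complementary event (no parent) leaves precisely the ordered $GW(\pi)$ rooted at $\mathbf{o}$ with offspring distribution $\pi$, since by construction all the extra structure (ancestors, their size-biased offspring, the trees hanging off the ancestors) is attached independently and only when a parent exists. The main (minor) obstacle is to make rigorous the independence/measurability claim — that $\{F(\mathbf{o})=\mathbf{o}\}$ is independent of the descendant subtree of $\mathbf{o}$ — which reduces to the observation that the former is a tail event in the forward increments $(X_n)_{n\geq 0}$ while the latter is determined by the backward increments $(X_n)_{n<0}$; I would spell this out using $\delta=\max\{n<0:S_n=1\}$ and the fact that $\delta$ is measurable with respect to $(X_n)_{n<0}$, so that the excursion $(S_0,S_{-1},\dots,S_\delta)$ encoding the subtree is as well. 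Everything else is a direct appeal to Step~1 of Theorem~\ref{20230213191656}.
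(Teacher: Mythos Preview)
Your proposal is correct and follows essentially the same approach as the paper: identify $\{F(\mathbf{o})=\mathbf{o}\}=\{S_n<0\ \forall n>0\}$ as a function of the future increments $(X_n)_{n\geq 0}$, note that the descendant tree of $\mathbf{o}$ is a function of the past increments $(X_n)_{n<0}$, and use independence to conclude that conditioning leaves the descendant tree distributed as $GW(\pi)$ via Proposition~\ref{20230128172431}. The alternative argument you sketch from the $TGWT(\pi)$ construction is not in the paper but is also valid.
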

\begin{proof}
    Note that the events \(\{F(\mathbf{o})=\mathbf{o}\}\) and \(\{S_n<0 \, \forall n>0\}\) are one and the same because the latter event is the same as \(\{R(0)=0\}\).
    The random variables \(\{S_n:n \leq 0\}\) are independent of the event \(\{S_n<0 \, \forall n>0\}\), and the descendant tree of \(0\) is a measurable function of \(\{S_n:n \leq 0\}\).
    This implies that the distribution of \([D(\mathbf{o}),\mathbf{o}]\) conditioned on the event \(\{F(\mathbf{o})=\mathbf{o}\}\) is the same as \([[D(\mathbf{o}),\mathbf{o}]]\) (the unconditioned Family Tree).
    Therefore,
    \begin{equation*}
        [\mathbf{T}',\mathbf{o}']= [D(\mathbf{o}),\mathbf{o}]|\{F(\mathbf{o})=\mathbf{o}\}  \overset{\mathcal{D}}{=} [D(\mathbf{o}),\mathbf{o}].
    \end{equation*}

    By Proposition \ref{20230128172431}, the distribution of \([D(\mathbf{o}),\mathbf{o}]\) is \(GW(\pi)\).
\end{proof}

\begin{proposition}\label{prop_tgwt_is_typical}
Let \(\alpha\) be a probability measure on \(\{0,1,2,\cdots\}\) such that its mean \(m(\alpha)<1\).
Then, the \(TGWT(\alpha)\) is obtained by re-rooting to a typical vertex of \(GW(\alpha)\).
\end{proposition}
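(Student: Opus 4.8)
The plan is to compute both laws explicitly on finite ordered rooted Family Trees and match them term by term. Write $m:=m(\alpha)\in(0,1)$ and $q:=1-m>0$. Since $m<1$, $GW(\alpha)$ is a.s.\ finite and $\mathbb{E}[\#V(GW(\alpha))]=\sum_{n\ge0}m^{n}=1/q<\infty$, so the typical re-rooting of Eq.~(\ref{eq_typical_rerooting}) is well defined for $GW(\alpha)$.

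First I would fix a finite ordered rooted Family Tree $g=[t,v]$ and record its \emph{ancestral decomposition}: let $h$ be the number of ancestors of $v$, put $w_{j}=F^{j}(v)$ for $0\le j\le h$ (so $w_{h}$ has no parent), let $k_{j}\ge1$ be the number of children of $w_{j}$ for $1\le j\le h$ (one of which is the spine vertex $w_{j-1}$, sitting in a position that $g$, being ordered, specifies), let $\sigma_{j,1},\dots,\sigma_{j,k_{j}-1}$ be the descendant subtrees of the remaining children of $w_{j}$ listed in order, and let $s:=D(v)$ be the descendant subtree rooted at $v$. In the plane-tree setting this data determines and is determined by a single plane tree $t_{g}$ with a marked vertex $v$: un-re-rooting $g$ at $w_{h}$ gives $t_{g}$, and re-rooting $t_{g}$ at $v$ gives back $g$. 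Hence the sum in Eq.~(\ref{eq_typical_rerooting}) has exactly one nonzero term and
\[
  \mathbb{P}\!\left[\text{typical re-rooting of }GW(\alpha)=g\right]=q\,\mathbb{P}\!\left[GW(\alpha)=t_{g}\right].
\]

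Next I would expand $\mathbb{P}[GW(\alpha)=t_{g}]=\prod_{x\in V(t_{g})}\alpha(\deg^{+}(x))$ using the partition $V(t_{g})=V(s)\sqcup B_{1}\sqcup\cdots\sqcup B_{h}$, where $B_{j}$ consists of $w_{j}$ together with the non-spine children of $w_{j}$ and all their descendants. The $V(s)$-factor is precisely $\mathbb{P}[GW(\alpha)=s]$ (the out-degree of $v$ in $t_{g}$ equals its number of children in $g$), and the $B_{j}$-factor is $\alpha(k_{j})\prod_{i=1}^{k_{j}-1}\mathbb{P}[GW(\alpha)=\sigma_{j,i}]$. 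On the $TGWT(\alpha)$ side, reading off its construction: the root has exactly $h$ ancestors with probability $m^{h}q$; conditionally, independently over $j$, the $j$-th ancestor has $k_{j}$ children with probability $\hat\alpha(k_{j})=k_{j}\alpha(k_{j})/m$, the spine child occupies its prescribed slot with probability $1/k_{j}$ (uniform order among children), the $k_{j}-1$ fresh descendant subtrees match $\sigma_{j,1},\dots,\sigma_{j,k_{j}-1}$ with probability $\prod_{i}\mathbb{P}[GW(\alpha)=\sigma_{j,i}]$, and the descendant tree of the root matches $s$ with probability $\mathbb{P}[GW(\alpha)=s]$. Multiplying, each ancestor contributes $\hat\alpha(k_{j})\cdot\tfrac1{k_{j}}=\alpha(k_{j})/m$, so the $h$ factors $1/m$ cancel $m^{h}$ and one is left with $q\,\mathbb{P}[GW(\alpha)=s]\prod_{j=1}^{h}\bigl(\alpha(k_{j})\prod_{i}\mathbb{P}[GW(\alpha)=\sigma_{j,i}]\bigr)$, which is the previous display. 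Since both laws are carried by finite ordered rooted Family Trees, equality on every $g$ yields the proposition.

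The only real (and mild) obstacle is the ordering bookkeeping: one must check that un-re-rooting is a genuine bijection $g\leftrightarrow(t_{g},v)$ so that Eq.~(\ref{eq_typical_rerooting}) collapses to a single term, and that on the $TGWT$ side the combination ``offspring $\sim\hat\alpha$'' with ``uniform order among children'' produces exactly the per-ancestor weight $\alpha(k_{j})/m$ — the cancellation $\hat\alpha(k_{j})/k_{j}=\alpha(k_{j})/m$ being the whole point. If one prefers unordered Family Trees, the same computation goes through with automorphism-count factors appearing symmetrically on both sides and cancelling.
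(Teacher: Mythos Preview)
Your proof is correct but takes a genuinely different route from the paper. The paper's argument is indirect and unimodularity-based: it realizes $TGWT(\alpha)$ concretely as the record-graph component of a skip-free random walk with increment law $\alpha(\cdot+1)$ (Theorem~\ref{20230213191656}), identifies the conditional law given ``root has no parent'' as $GW(\alpha)$ (Lemma~\ref{lemma_conditioned_is_GW}), and then applies the Mass Transport Principle with the test function $g_A([T,u,v])=\mathbf 1_A([T,u])\mathbf 1\{F(v)=v\}$ to derive the typical-rerooting identity without ever writing down a product over vertices. Your approach is a direct elementary computation: spine-decompose a fixed finite plane tree, collapse the re-rooting sum to a single term via the bijection $g\leftrightarrow(t_g,v)$, and match products, the key cancellation being $\hat\alpha(k)\cdot\tfrac1k=\alpha(k)/m$ so that $m^h$ vanishes. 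Your argument is self-contained and does not need the random-walk realization or any unimodular input, at the price of the ordering bookkeeping you flag; the paper's approach buys a proof that plugs directly into its existing machinery and foreshadows the characterization in Proposition~\ref{20230305162953}.
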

\begin{proof}
    Take an i.i.d. sequence of random variables \(X' = (X'_n)_{n \in \mathbb{Z}}\) such that their common distribution is given by \(\mathbb{P}[X'_0=j]=\alpha(j+1)\) for all \(j \in \{-1,0,1,2,\cdots\}\).
    Let \([\mathbf{T}, \mathbf{o}]\) be the component of \(0\) in the record graph of \([\mathbb{Z},0,X']\) and let \([\mathbf{T}',\mathbf{o}']\) be the Family Tree obtained from \([\mathbf{T},\mathbf{o}]\) by conditioning on the event that \(\mathbf{o}\) does not have a parent.
    By Theorem \ref{20230213191656}, the distribution of \([\mathbf{T}, \mathbf{o}]\) is \(TGWT(\alpha)\), which is unimodular; and by Lemma \ref{lemma_conditioned_is_GW}, the distribution of \([\mathbf{T}',\mathbf{o}']\) is \(GW(\alpha)\), which has finite mean size.
    We show that \([\mathbf{T}, \mathbf{o}]\) is obtained from \([\mathbf{T}',\mathbf{o}']\) by re-rooting to a typical vertex of \(\mathbf{T}'\).

    For any measurable subset \(A\) of \(\mathcal{T}_*\), consider the function \(g_A:\mathcal{T}_{**} \to \mathbb{R}_{\geq 0}\) given by \(g_A([T,u,v]) = \mathbf{1}_A([T,u]) \mathbf{1}\{F(v)=v\}\).
    Then, for any measurable set \(A\), we have,
    \begin{equation*}
        \mathbb{E}\left[\sum_{u \in V(\mathbf{T})}g_A([\mathbf{T},\mathbf{o},u])\right]= \mathbb{E}[\mathbf{1}_A([\mathbf{T},\mathbf{o}])] = \mathbb{P}[[\mathbf{T},\mathbf{o}] \in A],
    \end{equation*}
since there is only one vertex that has no parent.
Similarly,
\begin{align*}
    \mathbb{E}\left[\sum_{u \in V(\mathbf{T})}g_A([\mathbf{T},u,\mathbf{o}])\right] &= \mathbb{E}\left[\mathbf{1}\{F(\mathbf{o})=\mathbf{o}\} \sum_{u \in V(\mathbf{T})}\mathbf{1}_A([\mathbf{T},u])\right]\\
    &= \mathbb{E}\left[ \sum_{u \in V(\mathbf{T}')}\mathbf{1}_A([\mathbf{T}',u])\right] \mathbb{P}[F(\mathbf{o})=\mathbf{o}].
\end{align*}
The last step follows since \([\mathbf{T}',\mathbf{o}']\) is the Family Tree obtained from \([\mathbf{T},\mathbf{o}]\) by conditioning  on the event \(\{F(\mathbf{o})=\mathbf{o}\}\).

By the unimodularity of \([\mathbf{T},\mathbf{o}]\), we have 
\begin{equation*}
    \mathbb{P}[[\mathbf{T},\mathbf{o}] \in A] = \mathbb{E}\left[ \sum_{u \in V(\mathbf{T}')}\mathbf{1}_A([\mathbf{T}',u])\right] \mathbb{P}[F(\mathbf{o})=\mathbf{o}].
\end{equation*}

By taking \(A=\mathcal{T}_*\), we obtain \( \mathbb{P}[F(\mathbf{o})=\mathbf{o}] \mathbb{E}[\#V(\mathbf{T})]=1\).
Thus, for any measurable set \(A\), we have 
\[\mathbb{P}[[\mathbf{T},\mathbf{o}] \in A] = \frac{1}{\mathbb{E}[\#V(\mathbf{T})]}\mathbb{E}\left[ \sum_{u \in V(\mathbf{T}')}\mathbf{1}_A([\mathbf{T}',u])\right].\]
\end{proof}

We now give a characterizing condition for \(TGWT\).
The proof of this proposition depends on the following lemma which says that in order to completely describe a unimodular Family Tree, it is sufficient to describe the distribution of the descendant tree of the root.

\begin{lemma}\label{20230303153138}
    Let $[\mathbf{T},\mathbf{o}]$ be a unimodular Family Tree.
    Then, $[\mathbf{T},\mathbf{o}]$ is completely characterized by $[D(\mathbf{o}),\mathbf{o}]$.
\end{lemma}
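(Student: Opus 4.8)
The plan is to reconstruct the law of $[\mathbf{T},\mathbf{o}]$ from that of $[D(\mathbf{o}),\mathbf{o}]$ by peeling along the ancestral line of the root and invoking the Mass Transport Principle (Definition \ref{def_unimodular}).

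\emph{Reconstruction from descendant trees along the ancestral line.} For $n \ge 0$ set
\[
  \mathcal{D}_n := [\,D(F^n(\mathbf{o})),\ F^n(\mathbf{o}),\ F^{n-1}(\mathbf{o})\,],
\]
with the conventions $F^{-1}(\mathbf{o}):=\mathbf{o}$ (so $\mathcal{D}_0 = [D(\mathbf{o}),\mathbf{o}]$) and $F$ extended to fix the apex when the tree is finite. Since $D(F^n(\mathbf{o})) \subseteq D(F^{n+1}(\mathbf{o}))$ and $\bigcup_{n\ge 0} D(F^n(\mathbf{o})) = V(\mathbf{T})$ (because $\mathbf{T}$ is connected, so every vertex shares an $F$-ancestor with $\mathbf{o}$), one recovers $[\mathbf{T},\mathbf{o}]$ from the sequence $(\mathcal{D}_n)_{n\ge 0}$: the second root of $\mathcal{D}_n$ locates $F^{n-1}(\mathbf{o})$ inside $D(F^n(\mathbf{o}))$, its descendant subtree there is $\mathcal{D}_{n-1}$, and iterating these canonical inclusions $\mathcal{D}_{n-1}\hookrightarrow \mathcal{D}_n$ realizes $[\mathbf{T},\mathbf{o}]$ as an increasing union. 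Thus $[\mathbf{T},\mathbf{o}]$ is a measurable function of $(\mathcal{D}_n)_{n\ge 0}$, and it suffices to show that every finite-dimensional marginal of $(\mathcal{D}_n)_{n\ge 0}$ is determined by the law of $[D(\mathbf{o}),\mathbf{o}]$.

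\emph{The mass-transport step.} Fix $n\ge 0$ and a bounded measurable $\varphi$ on the relevant product space, and let $\mathcal{E}_n(T,u,v)$, defined when $v=F^n_T(u)$, denote the tuple $\big([D(F^k(u)),F^k(u),F^{k-1}(u)]\big)_{0\le k\le n}$ read along the chain from $u$ up to $v$. Apply the Mass Transport Principle to
\[
  f([T,u,v]) := \mathbf{1}\{F^n_T(u)=v\}\,\varphi\big(\mathcal{E}_n(T,u,v)\big).
\]
On the event $\{v = F^n_T(u)\}$ the whole chain $u,F(u),\dots,v$ and all the subtrees $D(F^k(u))$ lie inside $D(v)$, so $f([T,u,v])$ is in fact a function of $[D(v),v,u]$ alone. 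Hence the left-hand side of the transport identity equals $\mathbb{E}\big[\varphi(\mathcal{D}_0,\dots,\mathcal{D}_n)\big]$ (there is exactly one $v$ with $F^n(\mathbf{o})=v$), while the right-hand side equals $\mathbb{E}\big[\sum_{v}\mathbf{1}\{v\in D(\mathbf{o}),\,F^n_{D(\mathbf{o})}(v)=\mathbf{o}\}\,\widetilde{\varphi}([D(\mathbf{o}),\mathbf{o},v])\big]$ for a suitable $\widetilde{\varphi}$ built from $\varphi$, since $\mathbf{o}=F^n(v)$ forces $v\in D(\mathbf{o})$ and the parent map of the Family Tree $D(\mathbf{o})$ agrees with $F$ on $D(\mathbf{o})\setminus\{\mathbf{o}\}$. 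This last expectation depends only on the law of $[D(\mathbf{o}),\mathbf{o}]$. Since $\varphi$ is arbitrary, the joint law of $(\mathcal{D}_0,\dots,\mathcal{D}_n)$ is determined by that of $[D(\mathbf{o}),\mathbf{o}]$; letting $n$ range over $\mathbb{N}$ completes the proof.

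\emph{Where care is needed.} The substantive bookkeeping is: checking that $\mathcal{D}_n$, the map $\mathcal{E}_n$, the reconstruction $(\mathcal{D}_n)_n\mapsto[\mathbf{T},\mathbf{o}]$, and $f$ are measurable (this reduces to iterating the measurability clause in the definition of the parent vertex-shift and using that the descendant-tree operation is measurable); and handling the degenerate conventions when $\mathbf{o}$ is, or lies close to, the apex in class $\mathcal{F}/\mathcal{F}$. In all three classes of Theorem \ref{thm_foil_classification} each $D_k(\mathbf{o})$ is almost surely finite, so every sum that appears is a finite sum of a bounded summand and the transport identity is applied to a genuinely integrable function. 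I expect the main conceptual obstacle to be phrasing the reconstruction map and its inverse cleanly enough that the Mass Transport Principle applies verbatim; once that is set up, the identity itself is immediate.
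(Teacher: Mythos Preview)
Your argument is correct and follows essentially the same route as the paper: exhaust $\mathbf{T}$ by the increasing subtrees $D(F^n(\mathbf{o}))$ and use the Mass Transport Principle with the indicator $\mathbf{1}\{F^n(u)=v\}$ to express their law in terms of $[D(\mathbf{o}),\mathbf{o}]$. The paper's version is a bit more economical: it works directly with the singly rooted tree $[D(F^n(\mathbf{o})),\mathbf{o}]$ (rooted at $\mathbf{o}$, not at $F^n(\mathbf{o})$), notes that this converges locally to $[\mathbf{T},\mathbf{o}]$, and a single application of mass transport gives $\mathbb{P}\big[[D(F^n(\mathbf{o})),\mathbf{o}]\in A\big]=\mathbb{E}\big[\sum_{u\in D_n(\mathbf{o})}\mathbf{1}_A[D(\mathbf{o}),u]\big]$, so your chain of doubly-rooted objects $(\mathcal{D}_k)_{k\le n}$ and the reconstruction map are not needed.
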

\begin{proof}
    Observe that $[D(F^n(\mathbf{o})),\mathbf{o}]$ converges weakly to $[\mathbf{T},\mathbf{o}]$ as $n \to \infty$.
    Indeed, $[D(F^n(\mathbf{o})),\mathbf{o}]_r \overset{\mathcal{D}}{=} [\mathbf{T},\mathbf{o}]_r$ for all $n>r$.

   We have, for any measurable set $A$,
    \begin{align*}
        \mathbb{P}[[D(F^n(\mathbf{o})),\mathbf{o}] \in A] &= \mathbb{E}\left[\sum_{u \in V(\mathbf{T})} \mathbf{1}_A [D(u),\mathbf{o}] \mathbf{1}\{u = F^n(\mathbf{o})\} \right].
    \end{align*}
    By unimodularity, the equation on the right-hand side is equal to
       \[ \mathbb{E}\left[\sum_{u \in V(\mathbf{T})} \mathbf{1}_A [D(\mathbf{o}),u] \mathbf{1}\{\mathbf{o} = F^n(u)\} \right].\]
      Therefore, we have
     \[ \mathbb{P}[[D(F^n(\mathbf{o})),\mathbf{o}] \in A]= \mathbb{E}\left[\sum_{u \in D_n(\mathbf{o})} \mathbf{1}_A[D(\mathbf{o}),u]\right].\]
\end{proof}

Lemma \ref{20230303153138} is closely related to \cite[Proposition 10]{aldousAsymptoticFringeDistributions1991}.
In the language of this work, the latter proposition proves that if the descendant tree \([D(\mathbf{o}),\mathbf{o}]\) of a rooted Family Tree \([\mathbf{T},\mathbf{o}]\) is a fringe distribution, then \([\mathbf{T},\mathbf{o}]\) is completely described by \([D(\mathbf{o}),\mathbf{o}]\).
Its connection to Lemma \ref{20230303153138} follows from the observation: for any unimodular tree \([\mathbf{T},\mathbf{o}]\) of class \(\mathcal{I}/\mathcal{I}\), its descendant tree \([D(\mathbf{o}),\mathbf{o}]\) is a fringe distribution.
However, Lemma \ref{20230303153138} is also applicable to the unimodular Family trees of class \(\mathcal{I}/\mathcal{F}\).
For more details on this connection, see \cite[Bibliographical Comments (Section 6.3)]{baccelliEternalFamilyTrees2018a}.
 
The following proposition and its proof are analogous to the characterization of Eternal Galton-Watson Tree of \cite{baccelliEternalFamilyTrees2018a}.
However, the following proposition differs from the former as its statement is about finite Family Trees, whereas the former is about EFTs.

In the following proposition, only the random objects are denoted in bold letters.
We use the notations of Section \ref{sec_canonical_prob},  Chapter \ref{chapter_prelim}.
For a (deterministic) Family Tree $[T,o]$, the non-descendant tree of the root $o$ is the Family Tree $[D^c(o),o]$, where $D^c(o)$ is the subtree induced by $(V(T)\backslash D(o))\cup \{o\}$.
For any vertex $u$ of $T$, let $c_j(u)$ denote the $j$-th child of $u$.

 \begin{proposition}[Characterization of $TGWT$]\label{20230305162953}
    A random \emph{finite} Family Tree $[\mathbf{T},\mathbf{o}]$ is a Typically rooted Galton-Watson Tree ($TGWT$) if and only if
    \begin{enumerate}
        \item it is unimodular, and
        \item the number of children of the root $d_1(\mathbf{o})$ is independent of the non-descendant tree of the root $\mathbf{o}$.
    \end{enumerate} 
 \end{proposition}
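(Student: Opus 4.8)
The plan is to prove the two implications separately, with essentially all the work in the \textquotedblleft if\textquotedblright\ direction. Throughout, write $\pi$ for the law of $d_1(\mathbf{o})$, $F$ for the parent vertex-shift, $T^{\circ}:=[D(\mathbf{o}),\mathbf{o}]$ for the descendant tree of the root, and $c_j(u)$ for the $j$-th child of $u$. For the \textquotedblleft only if\textquotedblright\ part, suppose $[\mathbf{T},\mathbf{o}]$ is a $TGWT(\pi)$; then $m(\pi)<1$, which forces $\pi(0)>0$. I would take an i.i.d.\ sequence $X=(X_n)_{n\in\mathbb{Z}}$ with $X_0+1\sim\pi$; this meets Definition~\ref{hyp:increments} (indeed $\mathbb{P}[X_0=-1]=\pi(0)>0$ and $\mathbb{E}[X_0]=m(\pi)-1<0$), so by Theorem~\ref{20230213191656} the component of $0$ in the $R$-graph of $[\mathbb{Z},0,X]$ is a $TGWT(\pi)$, and since $[\mathbb{Z},0,X]$ is unimodular (Example~\ref{ex_network_sequence}) Lemma~\ref{lemma:f_graph_unimodular} shows this component is unimodular; hence $[\mathbf{T},\mathbf{o}]$ is unimodular (alternatively one invokes Proposition~\ref{prop_tgwt_is_typical}). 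For condition~(2), I would read off the construction: $D^{c}(\mathbf{o})$ is a deterministic function of the geometric number of ancestors of $\mathbf{o}$, of their size-biased extra progeny, of the ordered $GW(\pi)$ trees hanging from those extra children, and of the uniform rank of $\mathbf{o}$ among its siblings; all of these are generated independently of the ordered $GW(\pi)$ tree attached to $\mathbf{o}$ itself, whose root-offspring count is $d_1(\mathbf{o})$. Hence $d_1(\mathbf{o})$ is independent of $D^{c}(\mathbf{o})$.

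For the \textquotedblleft if\textquotedblright\ part, assume $[\mathbf{T},\mathbf{o}]$ is unimodular, a.s.\ finite, and $d_1(\mathbf{o})$ independent of $D^{c}(\mathbf{o})$; I may assume $d_1(\mathbf{o})$ is not a.s.\ $0$ (otherwise $[\mathbf{T},\mathbf{o}]$ is the single vertex). Applying the Mass Transport Principle to $f([T,u,v])=\mathbf{1}\{F_T(v)=u,\,v\neq u\}$ gives $m(\pi)=\mathbb{E}[d_1(\mathbf{o})]=\mathbb{P}[\mathbf{o}\text{ has a parent}]$. A finite Family Tree contains exactly one parentless vertex, so the covariant subset of parentless vertices is a.s.\ non-empty, whence $\mathbb{P}[F(\mathbf{o})=\mathbf{o}]>0$ by Lemma~\ref{lemma_non_empty_covariant_set}; thus $m(\pi)<1$, the Family Tree $TGWT(\pi)$ is well-defined, and by the \textquotedblleft only if\textquotedblright\ part it is unimodular with root descendant tree distributed as $GW(\pi)$. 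Since, by Lemma~\ref{20230303153138}, a unimodular Family Tree is determined by the law of its descendant tree, it then suffices to prove $T^{\circ}\overset{\mathcal{D}}{=}GW(\pi)$, after which $[\mathbf{T},\mathbf{o}]\overset{\mathcal{D}}{=}TGWT(\pi)$.

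To identify $T^{\circ}$, I would show by induction on $n$ that the first $n$ generations of $T^{\circ}$ are distributed as those of $GW(\pi)$ and are independent of $[D^{c}(\mathbf{o}),\mathbf{o}]$; since $T^{\circ}$ and $GW(\pi)$ are a.s.\ finite, letting $n\to\infty$ yields the claim. The case $n=1$ is exactly hypothesis~(2) together with $d_1(\mathbf{o})\sim\pi$. For the step $n\to n+1$, writing $T^{\circ}_{(i)}$ for the descendant tree of $c_i(\mathbf{o})$, it is enough to show that, conditionally on $\{d_1(\mathbf{o})=k\}$, the $n$-generation truncations of $T^{\circ}_{(1)},\dots,T^{\circ}_{(k)}$ are i.i.d.\ copies of the $n$-generation truncation of $T^{\circ}$ and jointly independent of $[D^{c}(\mathbf{o}),\mathbf{o}]$. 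The engine is a pair of Mass Transport identities, applied for $1\le j\le k$ to bounded test functions of the form $\mathbf{1}\{F(v)=u,\,d_1(u)=k,\,v=c_j(u)\}\,\psi([D^{c}(u),u])\prod_{i=1}^{k}g_i([D(c_i(u)),c_i(u)])$, with $g_i$ and $\psi$ depending only on the first $n$ generations: transporting mass from $c_j(\mathbf{o})$ to $\mathbf{o}$ rewrites $\mathbb{E}[\mathbf{1}\{d_1(\mathbf{o})=k\}\,\psi(D^{c}(\mathbf{o}))\prod_i g_i(T^{\circ}_{(i)})]$ in terms of the parent of $\mathbf{o}$ and its $k$ children, where the inductive hypothesis makes the single factor $g_j(T^{\circ})$ independent of the remaining (now $D^{c}(\mathbf{o})$-measurable) quantity and pulls it out; a second Mass Transport, from $\mathbf{o}$ to its parent, folds what is left back into the same expectation with the factor $g_j$ deleted. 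Iterating over $j$ strips off the $g_i$'s one at a time, and at the base (no $g$-factor left) hypothesis~(2) gives $\mathbb{E}[\mathbf{1}\{d_1(\mathbf{o})=k\}\psi(D^{c}(\mathbf{o}))]=\pi(k)\,\mathbb{E}[\psi(D^{c}(\mathbf{o}))]$. The resulting factorization $\pi(k)\,\mathbb{E}[\psi(D^{c}(\mathbf{o}))]\prod_i\mathbb{E}[g_i(T^{\circ})]$ is precisely the asserted conditional independence; together with the inductive hypothesis on the $n$-generation law this yields the $(n+1)$-generation statement, the independence from all of $D^{c}(\mathbf{o})$ following by letting the truncation level in $\psi$ tend to infinity. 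This argument parallels the characterization of the Eternal Galton--Watson Tree in \cite{baccelliEternalFamilyTrees2018a} (Theorem~\ref{theorem:characterisation_egwt}), the difference being that here $[\mathbf{T},\mathbf{o}]$ is finite.

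I expect the main obstacle to be exactly this inductive step: forcing the children's descendant subtrees to be jointly (not merely marginally) i.i.d.\ and independent of the entire non-descendant tree, and arranging the bookkeeping of the two Mass Transport applications so that the recursion on the number of $g$-factors closes and hypothesis~(2) enters at precisely the right place (the base of the recursion). The remaining ingredients --- the inequality $m(\pi)<1$, the passage to the limit $n\to\infty$, and the concluding appeal to Lemma~\ref{20230303153138} --- are routine.
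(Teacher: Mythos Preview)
Your proposal is correct and follows essentially the same route as the paper: the \textquotedblleft only if\textquotedblright\ direction via the random-walk realization of $TGWT$ and Lemma~\ref{lemma:f_graph_unimodular}, and the \textquotedblleft if\textquotedblright\ direction via Lemma~\ref{20230303153138} together with an induction on the generation depth in which a Mass Transport identity (re-rooting at $c_j(\mathbf{o})$) peels off one child-subtree factor at a time until hypothesis~(2) closes the recursion. Your explicit verification that $m(\pi)<1$ (so that $TGWT(\pi)$ is well-defined) is a detail the paper glosses over, and your \textquotedblleft second Mass Transport to fold back\textquotedblright\ is exactly what the paper does implicitly when it identifies the remaining expectation with $\mathbb{P}[A(A';\mathcal{T}_*,A_2,\dots,A_k)]$.
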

\begin{proof}
    Let $[\mathbf{T},\mathbf{o}]$ be a random Family Tree whose distribution is $TGWT(\pi)$, where $\pi$ is a probability measure on $\{0,1,2,3,\cdots\}$ with $m(\pi)<1$.
    Consider an i.i.d. sequence $X=(X_n)_{n \in \mathbb{Z}}$ of random variables whose common distribution is given by $\mathbb{P}[X_0=n] = \pi(n+1)$ for all $n \in \{-1,0,1,2,\cdots\}$, and let $[\mathbb{Z},0,X]$ be its network.
    Since $\mathbb{E}[X_0]<0$, by Theorem \ref{20230213191656}, the connected component of $0$ in the $R$-graph of $[\mathbb{Z},0,X]$ is $TGWT(\pi)$.
    Therefore, $[\mathbf{T},\mathbf{o}]$ is unimodular by Lemma \ref{lemma:f_graph_unimodular}.
    The second condition is satisfied by $TGWT$, which follows from its construction.

    We now show that if a random finite Family Tree $[\mathbf{T},\mathbf{o}]$ satisfies the above conditions $1$ and $2$, then it is a $TGWT$.

    Let $E$ denote the event that $o$ has a parent.
    For a positive integer $k$, let $A = (A';A_1,A_2,\cdots,A_k)$ be an event of the form
    \begin{equation}\label{eq_event_comp_child}
        d_1(o)=k, D^c(o) \in A', D(c_1(o)) \in A_1,\cdots, D(c_k(o)) \in A_k.
    \end{equation}
    By Lemma \ref{20230303153138}, any unimodular Family Tree is characterized by the descendant tree of its root.
    So, it is sufficient to prove that
    \begin{equation} \label{eq:1_2.35}
        \mathbb{P}[A] = \mathbb{P}[d_1(\mathbf{o})=k] \mathbb{P}[D^c(\mathbf{o}) \in A'] \left(\prod_{i=1}^{k}\mathbb{P}[D(\mathbf{o}) \in A_i]\right),
    \end{equation}
    for any such event \(A\) of the form given by Eq (\ref{eq_event_comp_child}).
    Assume further that the events $A_1,A_2,\cdots,A_k$ depend only up to $n$-th generation of $o$, which is the union of \(D_1(o)\), \(D_2(o)\), \(\cdots\), \(D_n(o)\).
    It suffices to show Eq. (\ref{eq:1_2.35}) for such events because of the local topology.

    We prove the result by induction on $n$.
    For $n=0$, we have $\mathbb{P}[A]= \mathbb{P}[d_1(\mathbf{o})=k]\mathbb{P}[D^c(\mathbf{o}) \in A']$ by condition $2$ of the hypothesis.
    So, assume that the result is true for $n-1$.   
    Let $n \geq 1$ and $1 \leq j \leq k$.
    For any Family Tree $[T,o]$, let $h_j([T,o]):= \mathbf{1}_A[T,F(o)] \mathbf{1}\{o = c_j(F(o))\}$.
    Then, for any $1 \leq j \leq k$, we have
  \begin{equation} \label{eq:2_2.35}
      \mathbf{1}_A[T,o] = \sum_{u \in D_1(o)} \mathbf{1}_A[T,F(u)] \mathbf{1}\{u=c_j(o)\} = \sum_{u \in D_1(o)}h_j([T,u]).
  \end{equation}
  Note that, since \(k \geq 1\), the event \(\{[T,o] \in A\}\) is a subset of the event \(\{D_1(o)\not = \emptyset\}\)
  Applying Eq. (\ref{eq:2_2.35}) for $j=1$, we obtain
  \begin{align}
    \mathbb{P}[A] &= \mathbb{E}\left[\sum_{u \in V(\mathbf{T})} \mathbf{1}_A[\mathbf{T},F(u)] \mathbf{1}\{u = c_1(\mathbf{o})\} \mathbf{1}\{u \in D_1(\mathbf{o})\}\right] \notag\\
    &= \mathbb{E}\left[\sum_{u \in V(\mathbf{T})}\mathbf{1}_A[\mathbf{T},F(\mathbf{o})] \mathbf{1}\{\mathbf{o} = c_1(u)\} \mathbf{1}\{\mathbf{o} \in D_1(u)\}\right] \label{eq:3_2.35}\\
    &= \mathbb{E}\left[\mathbf{1}_A[\mathbf{T},F(\mathbf{o})] \mathbf{1}\{ \mathbf{o} \in E\}  \mathbf{1}\{\mathbf{o} = c_1(F(\mathbf{o}))\} \right] \notag\\
    &= \mathbb{P}[\mathbf{o} \in E, \mathbf{o} = c_1(F(\mathbf{o})),[\mathbf{T},F(\mathbf{o})] \in A ]\label{eq:4_2.35}\\
    &= \mathbb{P}[D(\mathbf{o}) \in A_1, \mathbf{o} \in E, \mathbf{o} = c_1(F(\mathbf{o})),[\mathbf{T},F(\mathbf{o})] \in A(A';\mathcal{T}_*,A_2,\cdots,A_k) ] \notag.
  \end{align}
  In the above, Eq. (\ref{eq:3_2.35}) follows from unimodularity.
  Note that $D(o) \in A_1$ depends on one generation less that of $[T,F(o)] \in A$.
  Therefore, by induction, we obtain,
  \begin{align*}
    \mathbb{P}&[A]\\
    &= \mathbb{P}[D(\mathbf{o}) \in A_1] \mathbb{P}[\mathbf{o} \in E, \mathbf{o} = c_1(F(\mathbf{o})),[\mathbf{T},F(\mathbf{o})] \in A(A';\mathcal{T}_*,A_2,\cdots,A_k)]\\
    &= \mathbb{P}[D(\mathbf{o}) \in A_1] \mathbb{P}[A(A';\mathcal{T}_*,A_2,\cdots,A_k)].
  \end{align*}
  The last equation is obtained by Eq. (\ref{eq:4_2.35}).

  Applying Eq.(\ref{eq:2_2.35}) with $j=2$ to $A(A';\mathcal{T}_*,A_2,\cdots,A_k)$, we obtain
  \begin{align*}
    \mathbb{P}[A&(A';\mathcal{T}_*,A_2,\cdots,A_k)]\\
     &= \mathbb{P}[D(\mathbf{o}) \in A_2]\mathbb{P}[\mathbf{o} \in E, \mathbf{o} = c_1(F(\mathbf{o})), [\mathbf{T},F(\mathbf{o})] \in A(A';\mathcal{T}_*,\mathcal{T}_*,\cdots,A_k)]\\
    &=\mathbb{P}[D(\mathbf{o}) \in A_2]\mathbb{P}[A(A';\mathcal{T}_*,\mathcal{T}_*,\cdots,A_k)].
  \end{align*}
 By iterating the same procedure for $j=3,4,\cdots$, we obtain,
 \begin{align*}
    \mathbb{P}[A] &= \left(\prod_{i=1}^{k} \mathbb{P}[D(\mathbf{o}) \in A_i]\right) \mathbb{P}[d_1(\mathbf{o})=k, D^c(\mathbf{o}) \in A']\\
    &= \left(\prod_{i=1}^{k} \mathbb{P}[D(\mathbf{o}) \in A_i]\right) \mathbb{P}[d_1(\mathbf{o})=k] \mathbb{P}[ D^c(\mathbf{o}) \in A'].
 \end{align*}
Thus, we showed Eq. (\ref{eq:1_2.35}), which completes the proof. 
 \end{proof}

 \subsubsection{Zero mean}\label{subsec:r_graph_egwt}
 \begin{theorem} \label{theorem:R-graph_egwt}
    Let $X=(X_n)_{n \in \mathbb{Z}}$ be a sequence  of random variables as in Def. \ref{hyp:increments} with $\mathbb{E}[X_0]=0$, $[\mathbb{Z},0,X]$ be its associated network, and $[\mathbf{T},0]$ be the connected component of $0$ in the $R$-graph of $[\mathbb{Z},0,X]$.
    Then, $[\mathbf{T},0]$ is the ordered Eternal Galton-Watson Tree $EGWT(\pi)$ with offspring distribution $\pi \stackrel{\mathcal{D}}{=}X_{0}+1$.
  \end{theorem}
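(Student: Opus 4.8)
The plan is to reduce everything to the descendant tree of the root and then invoke the characterisation of a unimodular Family Tree by that single datum, Lemma~\ref{20230303153138}. First, by Proposition~\ref{proposition:r-graph_is_eft} (or directly by Lemma~\ref{lemma:f_graph_unimodular}, since $[\mathbb{Z},0,X]$ is unimodular), $[\mathbf{T},0]$ is a unimodular Eternal Family Tree, a.s.\ of class $\mathcal{I}/\mathcal{I}$. Next, since $\mathbb{E}[X_0]=0\le 0$, Proposition~\ref{20230128172431} applies and identifies the descendant tree $[D(0),0]$ of the root in the $R$-graph of $[\mathbb{Z},0,X]$ as a sample of the ordered Galton--Watson tree $GW(\pi)$ with offspring distribution $\pi\overset{\mathcal{D}}{=}X_0+1$; note that $m(\pi)=\mathbb{E}[X_0]+1=1$, so $\pi$ is critical.

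On the other side, $EGWT(\pi)$ is a unimodular Eternal Family Tree precisely because $m(\pi)=1$, and by Remark~\ref{remark_supp_EGWT} the descendant tree of its root is a critical $GW(\pi)$, that is, exactly the same ordered Galton--Watson tree as above. Thus $[\mathbf{T},0]$ and $EGWT(\pi)$ are two unimodular Family Trees whose descendant-tree-of-the-root distributions coincide, and Lemma~\ref{20230303153138} forces them to have the same law, which is the claim. (Alternatively one could verify the hypothesis of Theorem~\ref{theorem:characterisation_egwt}, namely that $d_1(0)=X_{-1}+1$ is independent of the non-descendant tree $D^c(0)$, and then read off the offspring law as that of $X_0+1$; but the route through Lemma~\ref{20230303153138} is shorter, as it uses only what Proposition~\ref{20230128172431} already supplies, and it sidesteps the somewhat delicate independence statement, because $L_X(0)$ itself depends on $X_{-1}$.)

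The step that needs the most care is the bookkeeping with orders, since $EGWT(\pi)$ is an \emph{ordered} tree. One must check that the descendant tree of $0$ in the record graph, with the order on children inherited from the usual order of $\mathbb{Z}$, agrees as a random ordered Family Tree with an ordered critical $GW(\pi)$; equivalently, that conditionally on the offspring counts the order among siblings is uniform. This is exactly the content of Proposition~\ref{20230128172431}, obtained via Lemma~\ref{lemma_offspring_count} and Lemma~\ref{lemma:rls_order}: conditionally on the number of children of $0$, the descendant subtrees of those children are i.i.d., hence exchangeable, so any deterministic (here, positional) labelling of them induces the uniform order on the abstract ordered tree. Finally one should record that this $\mathbb{Z}$-order makes the ordered record-graph component a covariant (local) factor of the unimodular network $[\mathbb{Z},0,X]$, so that its ordered version is again unimodular and Lemma~\ref{20230303153138} may legitimately be invoked in the ordered category; granting this, the argument above is complete.
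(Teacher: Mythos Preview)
Your proof is correct and takes a genuinely different route from the paper. The paper's argument goes through Theorem~\ref{theorem:characterisation_egwt}: it verifies directly that $d_1(0)$ is independent of the non-descendant tree $D^c(0)$ by exploiting the strong Markov property of the reversed walk $(X_{-n})_{n\ge 1}$ at the stopping time $-L_X(0)$, splitting the increments into a piece determining $D(0)$ and an independent piece determining $D^c(0)$; it then reads off the offspring law from $d_1(0)=X_{-1}+1$. You instead bypass this independence check altogether by invoking Lemma~\ref{20230303153138}: since both $[\mathbf{T},0]$ and $EGWT(\pi)$ are unimodular and Proposition~\ref{20230128172431} together with Remark~\ref{remark_supp_EGWT} give them the same descendant-tree law $GW(\pi)$, they must coincide.

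Your route is shorter and packages the work into already-established results; the paper's route is more self-contained and makes the probabilistic mechanism (the Markov splitting at $L_X(0)$) explicit. Your parenthetical worry that the independence statement is ``delicate because $L_X(0)$ depends on $X_{-1}$'' is slightly overstated: $L_X(0)$ is a stopping time for the sequence $(X_{-n})_{n\ge 1}$, so the post-$L_X(0)$ increments are independent of the pre-$L_X(0)$ block, which contains $X_{-1}$; the paper's argument is straightforward once one observes this. Your last paragraph on the order is a little muddled---the relevant point is not exchangeability of subtrees but simply that Proposition~\ref{20230128172431} already delivers the \emph{ordered} $GW(\pi)$ and that the order, being a covariant mark inherited from $\mathbb{Z}$, preserves unimodularity so that Lemma~\ref{20230303153138} applies in the ordered category---but the conclusion you reach there is correct.
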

  \begin{proof}
    We apply Theorem \ref{theorem:characterisation_egwt}.
    We show that \([\mathbf{T},0]\) satisfies the second condition of Theorem \ref{theorem:characterisation_egwt}.
    Observe that $L_X(0)$ is a stopping time for the random walk with increments $(X_{-n})_{n\geq 1}$ and \(-\infty<L_X(0)\) a.s..
    By the strong Markov property, the random variables $Y=(X_{-n})_{n > -L_X(0)}$ are independent of  $Z=(X_{-n})_{n=1}^{n=L_X(0)}$.
    Let $(\mathbf{T}',0)$ denote the non-descendant tree of \(o\), i.e., the tree generated by $(\mathbf{T}\backslash D(0))\cup \{0\}$.
    By Lemma \ref{lemma:descendants}, the subtree $(D(0),0)$ is a function of $Z$ and the subtree $(\mathbf{T}',0)$ is a function of $Y \cup (X_n)_{n \geq 0}$.
    So, the unordered tree $(D(0),0)$ is independent of the unordered tree $(\mathbf{T}',0)$.
    The second part of Lemma \ref{lemma_offspring_count} implies that, conditionally on the event that $0$ has children, the order of a child is independent of $Y \cup (X_n)_{n \geq 0}$, whereas the order of any vertex in $(\mathbf{T}',0)$ is a function of $Y \cup (X_n)_{n \geq 0}$.
    Thus, the ordered subtree $(D(0),0)$ is independent of the ordered subtree $(\mathbf{T}',0)$.
    So, the sufficient condition of Theorem \ref{theorem:characterisation_egwt} is satisfied by \([\mathbf{T},0]\).
    
    The first statement of Lemma \ref{lemma_offspring_count} implies that $d_1(0) = X_{-1}+1$, and thus completes the proof.
  \end{proof}

\subsubsection{Positive mean}\label{subsec_positive_mean}

Let $X=(X_n)_{n \in \mathbb{Z}}$ be the i.i.d. increments of a skip-free random walk with mean $0<\mathbb{E}[X_{0}]< \infty$, and $[\mathbb{Z},0,X]$ be its associated network.

Let $\eta_{-1} = \inf\{n> 0: S_n = -1\}$ if the infimum exists and \(\infty\) if \(S_n > -1 \, \forall n \geq 1\), and let $c = \mathbb{P}[\eta_{-1}< \infty]$.
Let $[\mathbf{T},\mathbf{o}]$ be the connected component of \(0\) in the $R$-graph of $[\mathbb{Z},0,X]$, with root $0 = \mathbf{o}$.

Let $\tilde{\pi},\bar{\pi}$ be the probability distributions on \(\{0,1,2,\cdots\}\) given by:
\begin{align} \label{eq:pi_defn}
 \tilde{\pi}(k) &= c^{k-1} \mathbb{P}[X_{0}=k-1],\\ \nonumber
 \bar{\pi}(k)&= \mathbb{P}[X_{0}\geq k] c^{k}.
\end{align}

Let $[\mathbf{T}_0,\mathbf{o}_0]$ be a Family Tree whose distribution is given by the following: the offspring distribution of $\mathbf{o}_0$ is \(\bar{\pi}\).
The descendant trees of the children of $\mathbf{o}_0$ are independent Galton-Watson trees with offspring distribution $\tilde{\pi}$, and they are independent of the offspring distribution of \(\mathbf{o}_0\).
Let \(([\mathbf{T}_i,\mathbf{o}_i])_{i \in \mathbb{Z}}\) be an i.i.d. sequence of Family Trees.
Recall that the distribution of the Family Tree obtained from the typically rooted joining of \(([\mathbf{T}_i,\mathbf{o}_i])_{i \in \mathbb{Z}}\) is the unimodularised \(EKT(\bar{\pi},\tilde{\pi})\).

\begin{theorem}\label{r_graph_positive_drift_20230203174636}
   If  \(0<\mathbb{E}[X_{0}]< \infty\), then the distribution of $[\mathbf{T}, \mathbf{o}]$ is the unimodularised \(EKT(\bar{\pi},\tilde{\pi})\).
\end{theorem}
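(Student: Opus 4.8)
The plan is to reduce the statement to a computation of the law of the \emph{bushes} of $[\mathbf{T},\mathbf{o}]$, and then to carry out that computation with the hitting‑probability lemmas already established. By Proposition~\ref{remark_pos_drift_i_f} the record graph is connected and $[\mathbf{T},\mathbf{o}]$ is a unimodular Eternal Family Tree of class $\mathcal{I}/\mathcal{F}$, so Theorem~\ref{thm_eft_I_f_joining} applies: if $[\mathbf{T}',\mathbf{o}']$ denotes $[\mathbf{T},\mathbf{o}]$ conditioned on the event that $\mathbf{o}$ lies on the (unique) bi‑infinite path of $\mathbf{T}$, then $[\mathbf{T}',\mathbf{o}']$ is the joining of a stationary sequence of finite Family Trees $([\mathbf{T}_i,\mathbf{o}_i])_{i\in\mathbb{Z}}$ and $[\mathbf{T},\mathbf{o}]$ is its typically rooted joining. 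Since the typically rooted joining is a measurable functional of the law of the sequence, and since the unimodularised $EKT(\bar\pi,\tilde\pi)$ is by construction the typically rooted joining of an i.i.d.\ sequence of bushes whose root has offspring law $\bar\pi$ and whose children carry independent $GW(\tilde\pi)$ descendant trees, it suffices to prove that $([\mathbf{T}_i,\mathbf{o}_i])_{i\in\mathbb{Z}}$ is i.i.d.\ with exactly that common law.

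Next I would describe the conditioning event and the path in walk terms. By Lemma~\ref{lemma_descendants} and the Foil Classification Theorem~\ref{thm_foil_classification}, the vertices of $\mathbf{T}$ with infinitely many descendants are exactly the integers $i$ with $L_X(i)=-\infty$, and $\{L_X(0)=-\infty\}$ is the event $\{S_n\le 0\ \forall n\le -1\}$, which has probability $1-c$ and is measurable with respect to $(X_j)_{j<0}$. Conditioned on it, the path through $\mathbf{o}=0$ is $\cdots<v_{-1}<v_0=0<v_1<\cdots$ with $v_{n+1}=R(v_n)$, the smallest child of a path vertex lying again on the path and being its path‑predecessor (cf.\ Lemma~\ref{smallest_child_20230404185835}); the bush $[\mathbf{T}_n,\mathbf{o}_n]$ is then the subtree of $\mathbf{T}$ on $\{v_{n-1}+1,\dots,v_n\}$ rooted at $v_n$. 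For $n\ge 1$ this subtree, its root's children, and those children's descendant trees are all measurable functions of the forward increments $X_{v_{n-1}},\dots,X_{v_n-1}$ — using Lemma~\ref{lemma_descendants} and the interval property Lemma~\ref{lemma:record_descendants} to see that no descendant of such a child reaches $0$. Since $(X_j)_{j\ge0}$ is independent of the conditioning event and the $v_n=R^n(0)$, $n\ge0$, are successive stopping times of $(S_m)_{m\ge0}$, the strong Markov property shows $([\mathbf{T}_n,\mathbf{o}_n])_{n\ge 1}$ is i.i.d., with common law that of $[\mathbf{T}_1,\mathbf{o}_1]$.

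I would then compute the law of $[\mathbf{T}_1,\mathbf{o}_1]$, rooted at $v_1=R(0)=\tau$. On the conditioning event every past sum of $\tau$ is at least $S_\tau\ge0$, with equality attained at $0$, so $t_X(\tau)=S_\tau$ and $l_X(\tau)=0=v_0$; Lemma~\ref{20230405112810} then gives $X_{\tau-1}+1-S_\tau$ children of $\tau$ in $\mathbf{T}$, hence $X_{\tau-1}-S_\tau$ after discarding the path‑predecessor $v_0$, and by Lemma~\ref{20230119141633} together with $\tau<\infty$ a.s.\ (positive drift) one gets $\mathbb{P}[X_{\tau-1}-S_\tau=k]=\sum_{\ell\ge k}p_\ell\,c^{k}=c^{k}\,\mathbb{P}[X_0\ge k]=\bar\pi(k)$, the desired offspring law. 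For the descendant trees of these children: consecutive children of $\tau$ are separated by an increment‑sum equal to $-1$ (Lemma~\ref{lemma_offspring_count}), so, reading the increments backward from $\tau$, the children sit at successive stopping times of a walk distributed as $(S_m)_{m\ge0}$, and by Lemma~\ref{lemma_descendants} the descendant tree rooted at a given child is the record graph of the corresponding reversed block run until it first reaches $-1$. Conditioning such a block on reaching $-1$ turns it into the stopped walk $\hat S$ with increments $\hat X$ (Lemma~\ref{20230113184856}), which has negative drift (Lemma~\ref{20230116135048}); Proposition~\ref{20230128172431} applied to $\hat X$ then identifies each descendant tree as $GW(\hat X_0+1)=GW(\tilde\pi)$, and the strong Markov property at the children's locations yields that these descendant trees are mutually independent and independent of the offspring count $X_{\tau-1}-S_\tau$. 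This is precisely the prescribed bush law.

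Finally, $([\mathbf{T}_n,\mathbf{o}_n])_{n\ge 1}$ is i.i.d.\ with that law, and since by Theorem~\ref{thm_eft_I_f_joining} the sequence $([\mathbf{T}_i,\mathbf{o}_i])_{i\in\mathbb{Z}}$ is stationary, shifting any finite family of indices into $\{1,2,\dots\}$ shows the whole sequence is i.i.d.\ with the same common law; hence $[\mathbf{T},\mathbf{o}]$ is the typically rooted joining of such an i.i.d.\ sequence, i.e.\ the unimodularised $EKT(\bar\pi,\tilde\pi)$. I expect the third paragraph to be the main obstacle: one must run the time‑reversal/duality argument from $\tau$ carefully enough to extract simultaneously the $\bar\pi$ offspring law of the path vertices, the i.i.d.\ $GW(\tilde\pi)$ descendant trees of the off‑path children, and the independence of the two — in particular one must check that the children of a record are located exactly at the stopping times needed to split the reversed walk into independent pieces, which is the positive‑drift analogue of Steps~1 and~5 of the proof of Theorem~\ref{20230213191656}.
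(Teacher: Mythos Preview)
Your proposal is correct and follows essentially the same route as the paper's own proof: both reduce via Proposition~\ref{remark_pos_drift_i_f} and Theorem~\ref{thm_eft_I_f_joining} to showing that the bush sequence $([\mathbf{T}_i,\mathbf{o}_i])_{i\in\mathbb{Z}}$ is i.i.d.\ with the prescribed law, then use the identification $\{\mathbf{o}\in\neswarrow\}=\{S_n\le 0,\ \forall n\le -1\}$, the strong Markov property at the record epochs for the i.i.d.\ structure on $n\ge 1$ (extended to all $n$ by stationarity), and Lemmas~\ref{20230119141633}, \ref{20230405112810}, \ref{20230113184856}, \ref{20230116135048} together with Proposition~\ref{20230128172431} to identify the $\bar\pi$ offspring law and the independent $GW(\tilde\pi)$ descendant trees.
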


\begin{proof}
   Since the random walk has a finite positive drift, the unimodular EFT $[\mathbf{T},\mathbf{o}]$ is of class $\mathcal{I}/\mathcal{F}$, see Theorem \ref{remark_pos_drift_i_f}.
   Let $\mathbf{o} \in \neswarrow$ denote the event that $\mathbf{o}$ belongs to the bi-infinite path of $\mathbf{T}$.
   Let $[\mathbf{T}',\mathbf{o}']$ denote the Family Tree obtained from $[\mathbf{T},\mathbf{o}]$ by conditioning on the event $\mathbf{o} \in \neswarrow$, denoted as \([\mathbf{T},\mathbf{o}]|\mathbf{o} \in \neswarrow\).
   By Theorem \ref{thm_eft_I_f_joining}, $[\mathbf{T}',\mathbf{o}']$ is the joining of some stationary sequence of Family Trees $([\mathbf{T}'_i,\mathbf{o}'_i])_{i \in \mathbb{Z}}$.
   The proof is complete once we prove that this sequence is i.i.d., and $[\mathbf{T}'_1,\mathbf{o}'_1] \overset{\mathcal{D}}{=} [\mathbf{T}_0,\mathbf{o}_0]$.
   
   \noindent \emph{Step 1: $([\mathbf{T}'_i,\mathbf{o}'_i])_{i \in \mathbb{Z}}$ is an i.i.d. sequence:}
   Since the increments have positive mean, $S_n \to +\infty$ as $n \to + \infty$ and $S_n \to - \infty$ as $n \to -\infty$.
   By Lemma \ref{lemma:descendants}, $\mathbf{o} \in \neswarrow$ if and only if \(L_X(0) = \infty\), and the latter condition holds  if and only if $S_n \leq 0, \forall n \leq -1$.

   For any $u \in \mathbb{Z}$, let $\mathfrak{S}(u)$ denote the (random) subtree of the descendants of $u$ (including \(u\)) in $\mathbf{T}$. 
   The distribution of $([\mathbf{T}'_i,\mathbf{o}'_i])_{i \geq 1}$ is the same as that of 
   \[([\mathfrak{S}(F^n(\mathbf{o}))\backslash \mathfrak{S}(F^{n-1}(\mathbf{o})),F^n(\mathbf{o})])_{n \geq 1}|\mathbf{o} \in \neswarrow\]
   (distribution obtained by conditioning on \(\mathbf{o} \in \neswarrow\)), where \(F\) is the parent vertex-shift on \(\mathbf{T}\) with the convention that $F^0(\mathbf{o})= \mathbf{o}$ (and \(F^n(\mathbf{o})\) is the parent of order \(n\) of \(\mathbf{o}\)).
   On the event $\mathbf{o} \in \neswarrow$, each subtree $\mathfrak{S}(F^n(\mathbf{o}))\backslash \mathfrak{S}(F^{n-1}(\mathbf{o}))$ is completely determined by the part of the random walk
   \[(S_{R^{n-1}(0)+1}-S_{R^{n-1}(0)}, \ldots, S_{R^{n}(0)}-S_{R^{n-1}(0)}),\]
   for each $n \geq 1$ (see Figure \ref{fig_I_f_trajectory} for an illustration).
The distribution of the sequence 
   \[((S_{R^{n-1}(0)+1}-S_{R^{n-1}(0)}, \ldots, S_{R^{n}(0)}-S_{R^{n-1}(0)}))_{n \geq 1}\]conditioned on $\{S_n \leq 0: n \leq -1\}$ is the same as that of the unconditioned sequence (the same sequence without the condition) because the sequence is independent of the event on which it is conditioned.
   Moreover, it is an i.i.d. sequence by the strong Markov property of the random walk.
   This implies that $([\mathbf{T}'_i,\mathbf{o}'_i])_{i \geq 1}$ is an i.i.d. sequence.
   \begin{figure}[h]
    \begin{center}
          \includegraphics[scale=0.7]{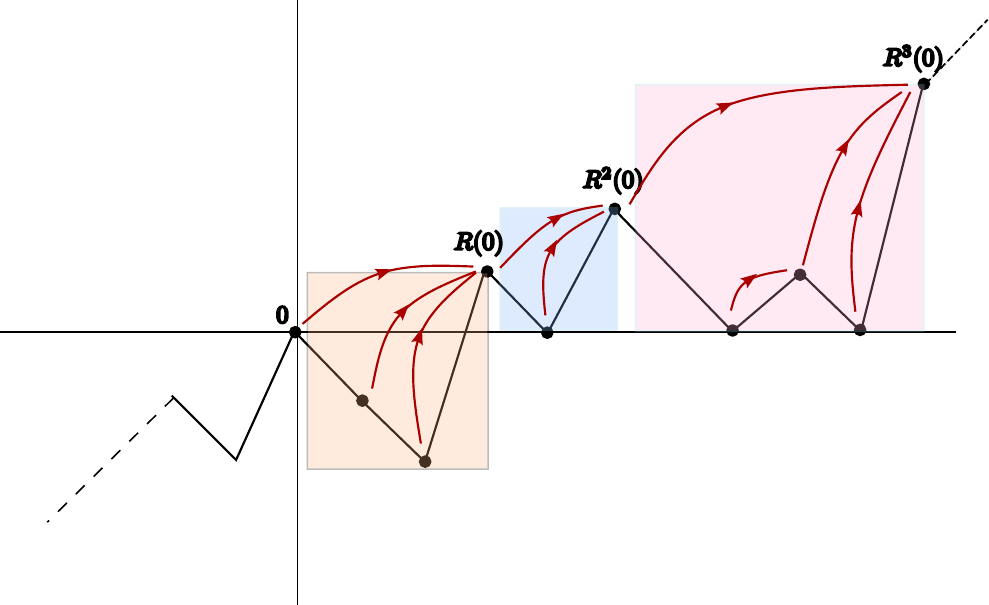}    
    \end{center}
        \caption{Illustration of the subtrees of ancestors of \(0\) on a trajectory conditioned on the event that \(S_n \leq 0\) for all \(n \leq 0\) when \(\mathbb{E}[X_0]>0\). Each box represents the subtrees \(\mathbf{T}_1,\mathbf{T}_2, \mathbf{T}_3, \cdots\) (see Step 1 of Theorem \ref{r_graph_positive_drift_20230203174636}).}  
        \label{fig_I_f_trajectory}
    \end{figure}

   Further, the stationarity of $([\mathbf{T}_i',\mathbf{o}_i'])_{i \in \mathbb{Z}}$ implies that $([\mathbf{T}_i',\mathbf{o}_i'])_{i \in \mathbb{Z}}$  is an i.i.d. sequence (the index set here is \(\mathbb{Z}\), whereas in the previous sentence it is \(i \geq 1\)).

   \noindent \emph{Step 2: \([\mathbf{T}'_1, \mathbf{o}'_1] \overset{\mathcal{D}}{=} [\mathbf{T}_0,\mathbf{o}_0]\):}
   We first show that the offspring distribution of $\mathbf{o}'_1$ is \(\bar{\pi}\).
   Let $\tau = \inf\{n>0: S_n \geq 0\}$ and \(\infty\) if \(S_n<0,\, \forall n>0\), and $X_{\tau-1}$, $S_{\tau}$ be the increment and location of the random walk at this time.
   Since the increments have positive mean, $\tau< \infty$ a.s., and therefore \(R(0)=\tau\) a.s. (if \(\tau=\infty\) then \(R(0)\) is defined to be \(0\)). 

   We also have the equality of the following events,
   \[\{d_1(F(\mathbf{o}))=1\} = \{\tau = 1\} = \{X_{\tau-1} = S_{\tau}\} = \{S_1 \geq 0\}.\] 
   Therefore,
   \[\mathbb{P}[d_1(F(\mathbf{o}))=1, \mathbf{o} \in \neswarrow] = \sum_{j=0}^{\infty}(\mathbb{P}[X_0 = j] \mathbb{P}[ \mathbf{o} \in \neswarrow]) = \mathbb{P}[X_0 \geq 0] \mathbb{P}[\mathbf{o} \in \neswarrow].\]
   We now study the event \(d_1(F(\mathbf{o}))=k\) with \(k>1\) (since \(0\) is the child of \(\tau\), the event \(\mathbb{P}[d_1(\tau) = 0]=0\)).
   Observe that, on the event $\mathbf{o} \in \neswarrow$, no negative integer is a child of $F(\mathbf{o})$.
   To see this, it is sufficient to show that \(L_X(0)=- \infty\). Because, the latter condition implies that all the non-negative integers are the descendants of \(0\) by Lemma \ref{lemma:descendants}.
   So, none of them are the children of \(\tau\) (see Figure \ref{fig_I_f_trajectory}).
   The latter condition is equivalent to \(S_n \leq 0,\, \forall n<0\), which is further equivalent to \(\mathbf{o} \in \neswarrow\).
   Thus, the observation follows. 
   This observation implies that \(0\) is the last child of \(\tau\).
   So, \(l_X(\tau) = 0\) and \(t_X(\tau) = S_{\tau} \geq 0\).
   Using Lemma \ref{20230405112810}, we obtain the relation \(d_1(\tau)=X_{\tau-1}+1-S_{\tau}\).
   Use this relation to get
   \[\mathbb{P}[d_1(F(\mathbf{o}))=k, \mathbf{o} \in  \neswarrow] = \sum_{j=k-1}^{\infty}\mathbb{P}[X_{\tau-1}=j, S_{\tau}= j-(k-1)] \mathbb{P}[\mathbf{o} \in \neswarrow].\]
   We apply Lemma \ref{20230119141633} to the first term inside the sum to get 
   \[\mathbb{P}[X_{\tau-1}=j, S_{\tau}= j-(k-1)] = \mathbb{P}[X_0 = j]c^{k-1},  0<k-1 \leq j.\]
   Using this, we obtain
   \begin{align*}
       \mathbb{P}[d_1(F(\mathbf{o}))=k, \mathbf{o} \in  \neswarrow] &= \sum_{j=k-1}^{\infty}\mathbb{P}[X_0=j]c^{k-1}\mathbb{P}[\mathbf{o} \in \neswarrow]\\
       &= \mathbb{P}[X_0 \geq k-1] c^{k-1} \mathbb{P}[\mathbf{o} \in \neswarrow].
   \end{align*}
   Thus, the offspring distribution of \(\mathbf{o}_1'\) in the tree \(T_1'\) is given by: for any $k \geq 1$,
   \begin{align*}
    \mathbb{P}[d_1(\mathbf{o}_1',T_1')=k-1]=\mathbb{P}[d_1(\mathbf{o}_1',T') = k] &= \mathbf{P}[d_1(F(\mathbf{o}))=k|\mathbf{o} \in \neswarrow]\\ 
    &= \mathbb{P}[X_0 \geq k-1]c^{k-1},
   \end{align*}
   which proves the first result.

   We now show that conditioned on $\{d_1(\mathbf{o}_1',T')=k\}$, the descendant subtrees of the children of $\mathbf{o}_1'$ are independent Galton-Watson trees with offspring distribution $\tilde{\pi}$.

   On $\{d_1(F(\mathbf{o}))=k\}$, let $0=i_k<i_{k-1}< \cdots<i_1$ be the children of $F(\mathbf{o})$.
   Observe that, on the event $\{d_1(F(\mathbf{o}))=k\}$, for each $1 \leq j \leq k$, the part of the random walk $(0, S_{i_j-1}-S_{i_j},S_{i_j-2}-S_{i_j},\ldots, S_{i_{(j+1)}}-S_{i_j})$ is an excursion set because, by the third part of Lemma \ref{lemma_offspring_count}, $S_{i_{(j+1)}}-S_{i_{j}}=1$ and $S_l \leq S_{i_{(j+1)}}$ for all $i_{(j+1)}<l \leq i_j$.
   Further, these excursion sets are mutually independent because the times $i_j$, for $1 \leq j \leq k$, are stopping times.
   Therefore, for each $j$ chosen above, $(0, S_{i_j-1}-S_{i_j},S_{i_j-2}-S_{i_j},\cdots, S_{i_{(j+1)}}-S_{i_j})$ obtained by conditioning on \(\{d_1(F(\mathbf{o}))=k\}\) is the conditioned skip-free to the right random walk of $(S_{-n})_{n \geq 0}$ conditioned on $\eta_{1}< \infty$ and stopped at $\eta_1$, where $\eta_1 = \min\{n >0: S_{-n}=1\}$ and \(-\infty\) if \(S_{-n}<1\) for all \(n>0\).
   By Lemma \ref{20230113184856} (and applying the same to $(S_n)_{n \geq 0}$ and $\eta_{-1}$), the conditioned random walk is an unconditioned random walk $(\hat{S}_n)_{n \geq 0}$ whose increments $(\hat{X}_n)_{n \geq 1}$ have distribution $\mathbb{P}[\hat{X}_1=k] = \mathbb{P}[X_1=k]c^{k}$, for all $k \geq -1$.
   Moreover, by Lemma \ref{20230116135048}, the random walk $(\hat{S}_n)_{n \geq 0}$ has negative drift.
   Therefore, by Proposition \ref{20230128172431}, the descendant subtree of the children of \(\mathbf{o}_1'\) is a Galton-Watson tree with offspring distribution $\tilde{\pi}$.
   In particular, the mean \(m(\tilde{\pi}) < 1\) by Lemma \ref{20230116135048}.
   So, \(GW(\tilde{\pi})\) is a.s. finite.

\end{proof}

We now focus on the distribution of the type function associated to a trajectory of the random walk and on the order of the record graph when \(\mathbb{E}[X_0]>0\).
We enrich the record graph by assigning the type function as the mark function to the record graph.
It will be shown later that the knowledge of the type function and of the record graph of a trajectory is sufficient to construct the negative part of the trajectory.
The following theorem shows that the distribution of the record graph with type function as its mark function is a bi-variate ECS ordered marked Eternal Kesten Tree.
The latter is characterized by the joint distribution of the mark function restricted to the bush of the root and the distribution of the (unlabelled) bush of the root  by Remark \ref{remark_characterization_bi_ecs_EKT} and the discussion above it.

\begin{theorem}\label{20230226170801}
  Let \(0<\mathbb{E}[X_0]<\infty\).
  Let \([\mathbf{T},\mathbf{o},t]\) be the component of \(0\) in the record graph of \([\mathbb{Z},0,X]\) with \(t\), the type function associated to \(X\), as the mark function.
  Let \([\mathbf{T}',\mathbf{o}',t']\) be the marked Family Tree obtained from \([\mathbf{T},\mathbf{o},t]\) by conditioning on \(\mathbf{o} \in \neswarrow\).
  Then, the distribution of \([\mathbf{T}',\mathbf{o}',t']\) is \(MEKT(\bar{\pi},\tilde{\pi})\) where the marked bush \([\hat{\mathbf{T}}_0,\hat{\mathbf{o}}_0,m_0]\) of the root has the following distribution: the offspring distribution of \(\hat{\mathbf{o}}_0\) is \(\bar{\pi}\); the descendant trees of the children of \(\hat{\mathbf{o}}_0\) are i.i.d. \(GW(\tilde{\pi})\) and they are independent of \(d_1(\hat{\mathbf{o}}_0)\) (see Eq. (\ref{eq:pi_defn}) for the definitions of \(\bar{\pi}\) and \(\tilde{\pi}\)); the mark of every vertex other than \(\hat{\mathbf{o}}_0\) is \(-1\); the mark of \(\hat{\mathbf{o}}_0\) is independent of the descendant subtrees of children of \(\hat{\mathbf{o}}_0\), i.e., 
  \[\mathbb{P}[m_0(\hat{\mathbf{o}}_0)|[\hat{\mathbf{T}}_0, \hat{\mathbf{o}}_0]] = \mathbf{P}[m_0(\hat{\mathbf{o}}_0)|d_1(\hat{\mathbf{o}}_0)];\]
   and the joint distribution of the mark of \(\hat{\mathbf{o}}_0\) and \(d_1(\hat{\mathbf{o}}_0)\) is given by 
  \begin{equation} \label{eqn_joint_dist_type_offsprings}
    \mathbb{P}[m_0(\hat{\mathbf{o}}_0)=n, d_1(\hat{\mathbf{o}}_0)=k] = \mathbb{P}[X_0=n+k]c^k,\,  \forall n \geq 0, k \geq 0.
  \end{equation}
\end{theorem}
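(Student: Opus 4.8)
<br>

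The plan is to build on Theorem \ref{r_graph_positive_drift_20230203174636}, which already establishes that the unlabelled record graph conditioned on $\mathbf{o} \in \neswarrow$ is the ECS ordered $EKT(\bar{\pi},\tilde{\pi})$, obtained as the joining of the i.i.d. sequence of bushes $([\mathbf{T}'_i,\mathbf{o}'_i])_{i \in \mathbb{Z}}$ where each bush is determined by the trajectory segment $(S_{R^{n-1}(0)+1}-S_{R^{n-1}(0)}, \ldots, S_{R^n(0)}-S_{R^{n-1}(0)})$. To prove the present statement I first need to check that adding the type function $t$ as a vertex mark is compatible with this i.i.d. bush decomposition: the key observation is that on the event $\mathbf{o} \in \neswarrow$ (equivalently $S_n \le 0$ for all $n \le -1$), for every non-record vertex $v$ the walk $S^{(v)}$ hits $-1$, so $t(v) = -1$, while for a record vertex $w = R^{n-1}(0)$ the type $t(w)$ is a function of the same trajectory segment that determines the bush $[\mathbf{T}'_n, \mathbf{o}'_n]$. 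Hence the marked bushes $([\mathbf{T}'_i, \mathbf{o}'_i, t'_i])_{i \in \mathbb{Z}}$ still form an i.i.d. sequence, which is exactly the defining property of an $MEKT$; by Remark \ref{remark_characterization_bi_ecs_EKT} it then suffices to identify the law of the single marked bush $[\hat{\mathbf{T}}_0, \hat{\mathbf{o}}_0, m_0]$ of the root.

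The identification of the marked bush law then has three components. First, the mark of every non-root vertex of the bush is deterministically $-1$: the non-root vertices of $\mathbf{T}'_1$ lie strictly below the record $\tau = R(0)$ and are descendants of $0$, so conditioning on $\mathbf{o} \in \neswarrow$ the relevant segment of $S^{(v)}$ hits $-1$, giving $t(v)=-1$ (this uses Remark \ref{remark_type_well_defined}). Second, I must compute the joint law of $m_0(\hat{\mathbf{o}}_0) = t(\tau)$ and $d_1(\hat{\mathbf{o}}_0) = d_1(F(\mathbf{o})) - 1$. On $\mathbf{o} \in \neswarrow$ I argued in Theorem \ref{r_graph_positive_drift_20230203174636} that $0$ is the last child of $\tau$, hence $l_X(\tau)=0$ and $t_X(\tau) = S_\tau \ge 0$; so $m_0(\hat{\mathbf{o}}_0) = S_\tau$, and Lemma \ref{20230405112810} gives $d_1(\tau) = X_{\tau-1}+1-S_\tau$, i.e. $d_1(\hat{\mathbf{o}}_0) = X_{\tau-1} - S_\tau$. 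Thus $\{m_0(\hat{\mathbf{o}}_0)=n, d_1(\hat{\mathbf{o}}_0)=k\}$ translates into $\{S_\tau = n, X_{\tau-1} = n+k\}$ (with $\tau < \infty$, which is a.s. true), and Lemma \ref{20230119141633} yields $\mathbb{P}[\tau<\infty, S_\tau = n, X_{\tau-1}=n+k] = \mathbb{P}[X_0 = n+k]\,c^{k}$, which is Eq. (\ref{eqn_joint_dist_type_offsprings}) after dividing by $\mathbb{P}[\mathbf{o} \in \neswarrow] = \mathbb{P}[S_n \le 0\ \forall n \le -1]$ and absorbing the normalization (one checks $\sum_{n,k}\mathbb{P}[X_0=n+k]c^k = \sum_m \mathbb{P}[X_0=m]\sum_{k=0}^m c^k$, which after summation gives the correct normalizing constant; alternatively the marginal in $k$ already matches $\bar\pi$ from Theorem \ref{r_graph_positive_drift_20230203174636}).

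Third, I need the conditional independence of $m_0(\hat{\mathbf{o}}_0)$ and the descendant trees of the children of $\hat{\mathbf{o}}_0$ given $d_1(\hat{\mathbf{o}}_0)$, and the fact that those descendant trees are i.i.d. $GW(\tilde\pi)$ independent of $d_1(\hat{\mathbf{o}}_0)$. The $GW(\tilde\pi)$ part and their mutual independence are already contained in Step 2 of Theorem \ref{r_graph_positive_drift_20230203174636} (via the reflection-principle excursion decomposition and Lemma \ref{20230113184856}, Lemma \ref{20230128172431}); what remains is to observe that, conditionally on $\{\tau<\infty, X_{\tau-1}=j, S_\tau = n\}$, the descendant excursions below the children of $\tau$ depend only on the walk segments strictly between consecutive children and these are, by the strong Markov property at the stopping times $i_m$, independent of the pair $(X_{\tau-1}, S_\tau)$ once their number $k = j-n$ is fixed; since $m_0(\hat{\mathbf{o}}_0) = S_\tau$ is a function of $(X_{\tau-1},S_\tau)$, this gives the stated conditional independence $\mathbb{P}[m_0(\hat{\mathbf{o}}_0) \mid [\hat{\mathbf{T}}_0,\hat{\mathbf{o}}_0]] = \mathbb{P}[m_0(\hat{\mathbf{o}}_0)\mid d_1(\hat{\mathbf{o}}_0)]$. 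I expect the main obstacle to be bookkeeping rather than conceptual: carefully justifying that $t$ restricted to a bush really is measurable with respect to exactly the trajectory segment claimed (so that the i.i.d. structure of marked bushes is not broken), and tracking the event translation $\{S_\tau=n, X_{\tau-1}=n+k\}$ together with the normalizing constant $\mathbb{P}[\mathbf{o}\in\neswarrow]$ so that Eq. (\ref{eqn_joint_dist_type_offsprings}) comes out with the stated clean form.
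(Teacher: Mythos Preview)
Your approach is essentially the same as the paper's: reduce to Theorem \ref{r_graph_positive_drift_20230203174636} for the unlabelled tree, note that non-path vertices have type $-1$, and compute the joint law of $(t,d_1)$ at a path vertex via Lemma \ref{20230119141633}. Two points deserve tightening.

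First, your segment argument (``the type of a record vertex is a function of the trajectory segment defining its bush'') only covers the bushes $([\mathbf{T}'_i,\mathbf{o}'_i,t'_i])_{i \geq 1}$, and your joint-law computation is for the bush rooted at $\tau = R(0) = \mathbf{o}'_1$, not for the bush of the root $\mathbf{o}'_0 = 0$. To pass from this to the full i.i.d.\ statement over $i \in \mathbb{Z}$ you must either invoke the marked version of Theorem \ref{thm_eft_I_f_joining} (stationarity of the marked bush sequence, plus i.i.d.\ on $i \geq 1$, forces i.i.d.\ on all of $\mathbb{Z}$), or handle the past directly. The paper does the latter: it computes $\mathbb{P}[t(\mathbf{o}_0)=n,\, d_1(\mathbf{o}_0)=k]$ from $(X_n)_{n<0}$ using Lemma \ref{hitting_time_20230118184651}, then separately shows the negatively-indexed pairs $(t(\mathbf{o}_k),d_1(\mathbf{o}_k))_{k\le 0}$ are i.i.d.\ via a conditioned-random-walk argument, and finally matches this to the positively-indexed computation. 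Your route is shorter but you should say explicitly which stationarity statement closes the gap.

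Second, there is no ``dividing by $\mathbb{P}[\mathbf{o}\in\neswarrow]$ and absorbing the normalization''. The pair $(S_\tau, X_{\tau-1})$ is measurable with respect to $(X_n)_{n\ge 0}$, which is independent of the event $\{\mathbf{o}\in\neswarrow\} = \{S_n\le 0\ \forall n<0\}$; hence the conditional probability equals the unconditional one, and Lemma \ref{20230119141633} gives Eq.~(\ref{eqn_joint_dist_type_offsprings}) directly. The paper uses exactly this independence (and repeats it when computing bush $0$, where $\mathbb{P}[\mathbf{o}\in\neswarrow]=1-c$ appears but cancels). Also, in your first paragraph the record vertex whose type lives in the $n$-th segment is $R^n(0)$ (the root of bush $n$), not $R^{n-1}(0)$.
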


\begin{proof}
Forgetting the marks and the order of vertices of $\mathbf{T}'$, it follows from Theorem \ref{r_graph_positive_drift_20230203174636} that $[\mathbf{T}',\mathbf{o}']$ is the joining of the i.i.d. sequence $([\mathbf{T}_i,\mathbf{o}_i])_{i \in \mathbb{Z}}$, where $[\mathbf{T}_0,\mathbf{o}_0]$ has the same distribution as that of $[\hat{\mathbf{T}}_0,\hat{\mathbf{o}}_0]$, the random Family Tree obtained after forgetting the marks of $V(\hat{\mathbf{T}}_0)$.
Let \(t_i\) be the restriction of the type function \(t'\) to \(V(\mathbf{T}_i)\) for all \(i \in \mathbb{Z}\).

Observe that an integer \(j\) belongs to the bi-infinite path of \(\mathbf{T}\) if and only if \(t(j) \geq 0\).
This follows because, for any \(j \in \mathbb{Z}\), its type \(t(j)\) is non-negative  if and only if \(y(n,j) \geq 0\) for all \(n<j\).
The latter condition is satisfied if and only if \(L(j)=-\infty\).
By Lemma \ref{lemma:descendants}, \(L(j)=-\infty\) if and only if \(j\) has infinitely many descendants.
In view of this, it is sufficient to focus on the type of vertices that lie on the bi-infinite path since the remaining vertices have fixed type \(-1\).

\noindent By the above argument, if the sequence of random variables  \((t(\mathbf{o}_i), d_1(\mathbf{o}_i))_{i \in \mathbb{Z}}\) is i.i.d., then the sequence \(([\mathbf{T}_i,\mathbf{o}_i,t_i])_{i \in \mathbb{Z}}\) is i.i.d..
So, the statement of the theorem is proved once we show that
\begin{itemize}
    \item  (Step 1:) the order of $\mathbf{o}_{i}$ is the smallest among the children of $\mathbf{o}_{i+1}$, and
    \item (Step 2:) the sequence \(((t(\mathbf{o}_i), d_1(\mathbf{o}_i))_{i \in \mathbb{Z}})\) is i.i.d. with the common distribution given by Eq. (\ref{eqn_joint_dist_type_offsprings}).
  \end{itemize}

 Let $\mathbf{o} \in \neswarrow$ denote the event that the root $\mathbf{o}$ belongs to the bi-infinite path of $\mathbf{T}$.   

 \noindent \emph{Step 1:}
 For any $i \in \mathbb{Z}$ with \(t(i) \geq 0\), let $l(i) :=l_X(i)= \max\{m<i: y(m,i)=t(i)\}$ and $l^n(i):=l^{n-1}(l(i))$ for all $n>1$.
 By Lemma \ref{smallest_child_20230404185835}, it follows that $l(i)$ is the smallest among the children of $i$.

 On the event $\{S_n\leq 0 \quad \forall n<0\}$ (which is equivalent to $\mathbf{o} \in \neswarrow$), we have $t(0)\geq 0$.
 Since $S_m \leq S_{l^n(0)}$ for all $m \leq l^n(0), n \geq 1$, we have $t(l^n(0))\geq 0$ for all $n \geq 1$.
 So, by the above discussion, on the event $\mathbf{o} \in \neswarrow$, we get that $R(l^n(0))=l^{n-1}(0)$ for all $n \geq 1$ (with the notation $l^0(0)=0$).
 Moreover, on the same event, the vertices of the bi-infinite $F$-path of $\mathbf{T}$ are $\{l^n(0): n \geq 1\} \cup \{0\} \cup \{R^m(0): m \geq 1\}$. 
 Since, on the event $\mathbf{o} \in \neswarrow$, for any $m \geq 1$, $S_{R^m(0)}-S_n > S_{R^m(0)}-S_{R^{m-1}(0)} $ for all $R^{m-1}(0) < n < R^m(0)$, we have $l(R^m(0))=R^{m-1}(0)$ (with the notation that $R^0(0)=0$).
 Thus, $\mathbf{o}_i$ has the smallest order among the children of $\mathbf{o}_{i+1}$ for all $i \in \mathbb{Z}$.
 Since the order of $(\mathbf{o}_i)_{i\in \mathbb{Z}}$ characterizes the order on $[\mathbf{T}', \mathbf{o}']$, this completes Step 1.

 \noindent \emph{Step 2:}
  Firstly, we compute the joint distribution of \((t(\mathbf{o}_0),d_1(\mathbf{o}_0))\).
 Note that \(d_1(\mathbf{o}_0) = d_1(\mathbf{o}')-1=(d_1(\mathbf{o})-1)|\mathbf{o} \in \neswarrow\). 
 By Lemma \ref{20230405112810}, we have, for $n \geq 0, k \geq 0$,
 \begin{align*}
   \mathbb{P}[\mathbf{o} \in \neswarrow, t(\mathbf{o})=n, d_1(\mathbf{o})=k+1] &=  \mathbb{P}[\mathbf{o} \in \neswarrow, X_{-1}=k+n, t(\mathbf{o})=n].
 \end{align*}
 The right-hand term in the above is equal to
 \[ \mathbb{P}[X_{-1}=k+n] \mathbb{P}_{-(k+n)}[S_m = -n \text{ for some } m \leq 0] \mathbb{P}_{-n}[S_m \leq -n \  \forall m \leq 0],\]
 where the second term in the above product is the probability that the skip-free to the right random walk starting from $-(n+k)$ hits $-n$, whereas the third term is the probability that the same random walk starting from $-n$ never hits $-n+1$.
 Lemma \ref{hitting_time_20230118184651} implies that the second term is equal to $c^k$ and the third term is equal to $(1-c)$.
 Therefore, for $n \geq 0$,
 \[\mathbb{P}[\mathbf{o} \in \neswarrow,t(\mathbf{o})=n, d_1(\mathbf{o})=k+1] = \mathbb{P}[X_{-1}=n+k]c^k(1-c). \]
 Since \(\mathbb{P}[\mathbf{o}\in \neswarrow] = 1-c\), we obtain, for $n \geq 0$,
 \begin{align*}
    \mathbb{P}[t(\mathbf{o}_0)=n, d_1(\mathbf{o}_0)=k]&=\mathbb{P}[t(\mathbf{o}')=n, d_1(\mathbf{o}')=k+1]\\
    &=\mathbb{P}[t(\mathbf{o})=n, d_1(\mathbf{o})=k+1|\mathbf{o} \in \neswarrow] \\
    &=\mathbb{P}[X_{-1}=n+k]c^k.
 \end{align*}

 \begin{figure}[h]
  \begin{center}
        \includegraphics[scale=0.75]{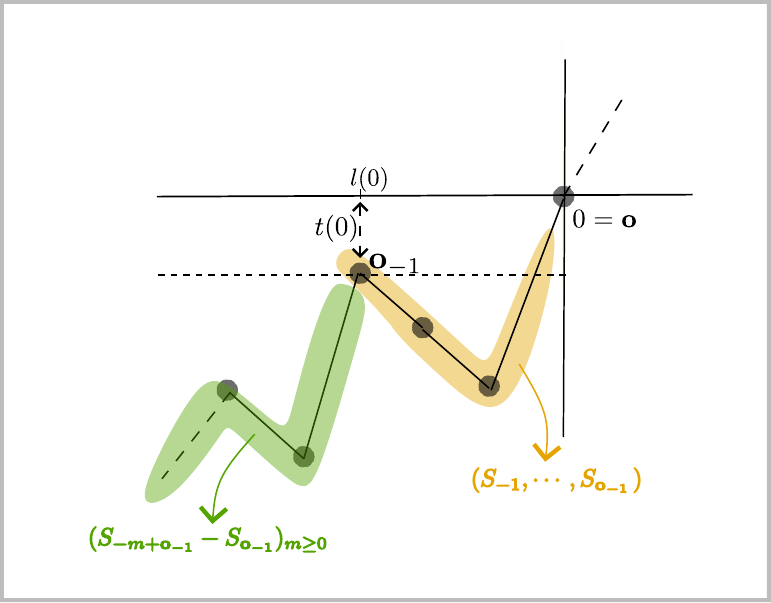}    
  \end{center}
      \caption{The conditioned part of the trajectory. An illustration showing that the distribution of \((S_{-m+\mathbf{o}_{-1}}-S_{\mathbf{o}_{-1}})_{m \geq 0}|\{\mathbf{o} \in \neswarrow, t(\mathbf{o})=n\}\) is the same as that of \((S_{-m})_{m \geq 0}|\{\mathbf{o}\in \neswarrow\}\) and independent of \((S_{-1},\cdots,S_{\mathbf{o}_{-1}})|\{\mathbf{o}  \in \neswarrow\}\).}
      \label{fig_cond_rw_is_cond}
  \end{figure}

 Secondly, observe that the distribution of $(S_{-m+\mathbf{o}_{-1}}-S_{\mathbf{o}_{-1}})_{m \geq 0}$ conditioned on $\{\mathbf{o} \in \neswarrow, t(\mathbf{o})=n\}$ is the same as that of $(S_{-m})_{m \geq 0}$ conditioned on $\mathbf{o} \in \neswarrow$. 
 This follows because
 \[(S_{-m+\mathbf{o}_{-1}}-S_{\mathbf{o}_{-1}})_{m \geq 0} \cap \{\mathbf{o} \in \neswarrow,t(\mathbf{o})=n\}\]
 is a conditioned skip-free to the right random walk starting at $0$ conditioned on the event that the random walk stays below $1$ (see Figure \ref{fig_cond_rw_is_cond}).
 Further, this conditioned random walk is independent of $(S_0,S_{-1}, \cdots,S_{\mathbf{o}_{-1}})|\mathbf{o} \in \neswarrow$ (the process conditioned on the event $\mathbf{o} \in \neswarrow$).
 The joint-distribution of \((t(\mathbf{o}_0), d_1(\mathbf{o}_0))\), however, is a function of \((S_0,S_{-1}, \cdots,S_{\mathbf{o}_{-1}})\)
 Thus, by iteratively applying the above argument to $(S_{-m+\mathbf{o}_{-1}}-S_{\mathbf{o}_{-1}})_{m \geq 0}$ conditioned on $\{\mathbf{o} \in \neswarrow,t(\mathbf{o})=n\}$, we obtain that the sequence of random variables $((t(\mathbf{o}_k),d_1(\mathbf{o}_k)))_{k \leq 0}$ is i.i.d.

 Thirdly, the distribution of the sequence of random variables\\ $((t(\mathbf{o}_n),d_1(\mathbf{o}_n)))_{n \geq 1}$ (conditioned on \(\mathbf{o}\in \neswarrow\)) is same as that of 
 \[((S_{R^n(0)}-S_{R^{n-1}(0)},X_{(R^n(0)-1)}-(S_{R^n(0)}-S_{R^{n-1}(0)})))_{n \geq 1}|\{\mathbf{o}\in \neswarrow\}\]
 (conditioned on the event $\mathbf{o} \in \neswarrow$), with the notation that $R^0(0)=0$.
 Note that the latter process has the same distribution as that of unconditioned process $((S_{R^n(0)}-S_{R^{n-1}(0)},X_{(R^n(0)-1)}-(S_{R^n(0)}-S_{R^{n-1}(0)})))_{n \geq 1}$ as it is independent of the event ${\mathbf{o} \in \neswarrow}$.
 But $((S_{R^n(0)}-S_{R^{n-1}(0)},X_{(R^n(0)-1)}-(S_{R^n(0)}-S_{R^{n-1}(0)})))_{n \geq 1}$ is i.i.d. by the strong Markov property (since $R(0)$ is a stopping time).
 Thus,  $((t(\mathbf{o}_n),d_1(\mathbf{o}_n)))_{n \geq 1}$ is an i.i.d. sequence.

 The proof is complete once we show that 
 \[\mathbb{P}[t(\mathbf{o}_1)=n,d_1(\mathbf{o}_1)=k] = \mathbb{P}[t(\mathbf{o}_0)=n,d_1(\mathbf{o}_0)=k],\]
for all $n\geq 0,\, k \geq 0$.
For $n \geq 0,\, k \geq 0$,
 \begin{align*}
   \mathbb{P}[t(\mathbf{o}_1)=n,d_1(\mathbf{o}_1)=k] &= \mathbb{P}[S_{R(0)}=n,X_{(R(0)-1)}-S_{R(0)}=k|\mathbf{o}\in \neswarrow]\\
   &=\mathbb{P}[S_{R(0)}=n,X_{(R(0)-1)}-S_{R(0)}=k] \text{ (by independence)}\\
   &= \mathbb{P}[S_{R(0)}=n,X_{R(0)-1}=n+k]\\
   &=\mathbb{P}[X_0=n+k]c^k= \mathbb{P}[t(\mathbf{o}_0)=n,d_1(\mathbf{o}_0)=k].\\
\end{align*}
We used Lemma \ref{20230119141633} in the last equation.
This concludes the proof.
\end{proof}

\section{Record foil structure and a measure preserving map}

In this section, we define the vertex-shift \(R_{\perp}\) on \([\mathbb{Z},0,X]\), where \(X\) is the i.i.d. sequence, such that its induced map \(\theta_{R_{\perp}}\) preserves the distribution of \([\mathbb{Z},0,X]\).
This question is of great interest in the field of stochastic geometry, see \cite{thorissonCouplingStationarityRegeneration2000} for more details.
\vskip3mm

\noindent \textbf{R-orthogonal map ($R_{\perp}$):}
Let $[\mathbb{Z},0,X]$ be a network, where $X=(X_n)_{n \in \mathbb{Z}}$ as in Def. \ref{hyp:increments} and satisfies $\mathbb{E}[X_0]=0$.
Consider the $R$-foliation of $(\mathbb{Z},X)$ (see Figure \ref{fi:foils} for an illustration).
Observe that the $R$-foliation is the same as the $F$-foliation of $\Psi_R([\mathbb{Z},0,X])$, where $F$ is the Parent vertex-shift and $\Psi_R([\mathbb{Z},0,X])$ is the connected component of $0$ in the $R$-graph of $[\mathbb{Z},0,X]$.
The order of $\mathbb{Z}$ naturally induces an order on an $R$-foil as the latter is a subset of the former.
This order is the same as the RLS order by Lemma \ref{lemma:rls_order}.
This motivates us to define a new map $R_{\perp}$ on the vertices of $\mathbb{Z}$ in the following way:
for an integer $u$, let $R_{\perp}(u)$ be the smallest integer of the $R$-foil of $u$ which is  larger than $u$ (if no such integer exists then define $R_{\perp}(u)=u$).
Since $R$ is a vertex-shift, $R_{\perp}$ is a vertex-shift.

On the random network $(\mathbb{Z},X)$, the map $R_{\perp}$ is the same as the following map, 
\[i \mapsto \begin{cases}
  \inf\{k>i: R_X^n(i) = R_X^n(k) \text{ for some } n\geq 1 \} \text{ if infimum exists}\\
  i \text{ otherwise},
\end{cases}\]
for all $i \in \mathbb{Z}$.
In Proposition \ref{prop:R_orthogonal_measure_preserving}, we show that $R_{\perp}$ is measure preserving.

\begin{figure}[!h]
  \centering 
  \includegraphics[scale=1]{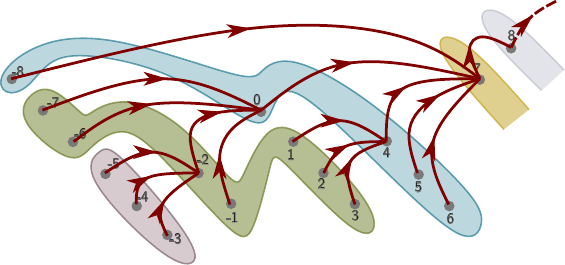}
  \includegraphics[scale=1]{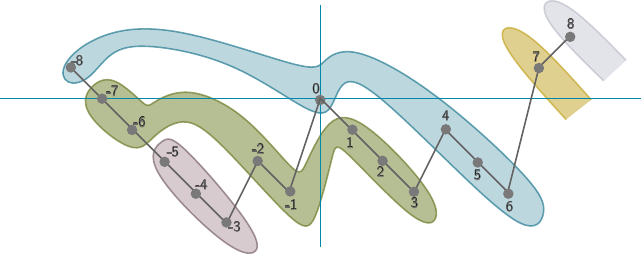}
  \caption{Illustration of $R$-foils in $R$-graph (above) and in trajectory (below) when the mean \(\mathbb{E}[X_0]=0\). Vertices that belong to the same $R$-foil are grouped together.}
  \label{fi:foils}
\end{figure}

\begin{proposition}\label{prop:R_orthogonal_measure_preserving}
  Let $X = (X_n)_{n \in \mathbb{Z}}$ be the sequence of random variables as in Def. \ref{hyp:increments} and satisfies $\mathbb{E}[X_0]=0$. 
  The map $R_{\perp}$ preserves the distribution of $X$, i.e., $[\mathbb{Z},R_{\perp}(0),X]  \overset{\mathcal{D}}{=} [\mathbb{Z},0,X]$. 
\end{proposition}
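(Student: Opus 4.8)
The plan is to apply the Mecke–Thorisson Point Stationarity criterion (Proposition \ref{prop_mecke}): since $[\mathbb{Z},0,X]$ is unimodular, the map $\theta_{R_\perp}$ preserves its distribution if and only if the vertex-shift $R_\perp$ is a.s. bijective on $\mathbb{Z}$. So the entire task reduces to showing that, almost surely, the map $i \mapsto R_\perp(i)$ is a bijection of $\mathbb{Z}$. By Proposition \ref{prop:injective_bijective}, for a vertex-shift on a unimodular network it suffices to prove that $R_\perp$ is a.s. injective (or a.s. surjective); I would aim for injectivity, as it is the cleaner of the two to verify directly from the definition.

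First I would recall the structure established earlier: when $\mathbb{E}[X_0]=0$, by Proposition \ref{proposition:r-graph_is_eft} the record graph is a.s. connected and $[\mathbf{T},0]$ is a unimodular EFT of class $\mathcal{I}/\mathcal{I}$, so every $R$-foil is infinite (the foils have order type $\mathbb{N}$ or $\mathbb{Z}$), and by Lemma \ref{lemma:rls_order} the order induced on each foil by the order of $\mathbb{Z}$ coincides with the RLS order. Now $R_\perp(i)$ is, by definition, the successor of $i$ inside its foil with respect to the order of $\mathbb{Z}$. Since each foil is an infinite subset of $\mathbb{Z}$ that is bounded below would be the only way a successor could fail to exist — but one must check it is never bounded above so that the successor always exists, and that distinct elements in possibly distinct foils never share the same $\mathbb{Z}$-successor-within-foil. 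Injectivity is essentially immediate: if $R_\perp(i)=R_\perp(j)=k$ with $i\ne j$, then $i$ and $j$ lie in the foil of $k$ and both are the largest element of that foil strictly below $k$, which forces $i=j$. So injectivity holds on every infinite foil; I would also handle the degenerate fixed-point case $R_\perp(i)=i$, showing it has probability zero because a.s. no foil has a largest element (the foil order type has no top element — this is the content of the $\mathcal{I}/\mathcal{I}$ classification, or can be deduced from Proposition \ref{prop:injective_bijective} applied to $R_\perp$ itself together with the "No largest vertex" type argument flagged in the text before Proposition \ref{prop:injective_bijective}).

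The one genuine obstacle is ruling out the fixed points, i.e., showing $\mathbb{P}[R_\perp(0)=0]=0$, equivalently that a.s. $0$ is not the largest element of its $R$-foil, equivalently (by the covariant-subset machinery, Lemma \ref{lemma_non_empty_covariant_set} applied to the covariant subset of foil-maximal vertices) that a.s. no $R$-foil has a maximal element. Here I would argue as follows: the set $\mathfrak{S}_G = \{u : u \text{ is the largest vertex of its } R\text{-foil}\}$ is a covariant subset, and each foil is infinite (class $\mathcal{I}/\mathcal{I}$), so each nonempty intersection $\text{foil} \cap \mathfrak{S}_G$ is a single point — a finite nonempty intersection inside an infinite element of the foliation. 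By Lemma \ref{lemma:no_infinite_finite} (No Infinite/Finite Inclusion) this cannot happen a.s., hence a.s. $\mathfrak{S}_G = \emptyset$, and in particular $R_\perp$ has no fixed points a.s. Combined with the elementary injectivity argument above, $R_\perp$ is a.s. an injective vertex-shift on the unimodular network $[\mathbb{Z},0,X]$, so by Proposition \ref{prop:injective_bijective} it is a.s. bijective, and then by Proposition \ref{prop_mecke} $\theta_{R_\perp}$ preserves the law of $[\mathbb{Z},0,X]$; that is, $[\mathbb{Z},R_\perp(0),X] \overset{\mathcal{D}}{=} [\mathbb{Z},0,X]$, which is exactly the claim.
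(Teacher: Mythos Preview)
Your proposal is correct and follows essentially the same route as the paper: show that $R_\perp$ has no fixed points by applying the No Infinite/Finite Inclusion Lemma to the covariant subset of foil-maximal vertices (using that all foils are infinite in class $\mathcal{I}/\mathcal{I}$), observe injectivity from the total order on each foil, upgrade to bijectivity via Proposition~\ref{prop:injective_bijective}, and conclude by Mecke--Thorisson Point Stationarity. The paper's proof is organized in exactly this way, with the same ingredients in the same order.
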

\begin{proof}
We show that $R_{\perp}$ is bijective and use the fact that bijective maps preserve every unimodular measure (in our case, it is the distribution of $[\mathbb{Z},0,X]$). 
  Firstly, we show that a.s. $R_{\perp}(i) \not = i$ for all $i \in \mathbb{Z}$.
   By Proposition \ref{proposition:r-graph_is_eft}, the component of \(0\) in the record graph of $[\mathbb{Z},0,X]$ is of class $\mathcal{I}/\mathcal{I}$.
  Therefore, all $R$-foils are infinite.
  In order to show that a.s., \(R_{\perp}(i)>i\) for all \(i \in \mathbb{Z}\), it is enough to show that a.s., there does not exist an $R$-foil with a largest element (i.e., for each integer \(i\), there exists an integer \(j\) such that \(j>i\) and \(j\) belongs to the \(R\)-foil of \(i\)).
  There can be at most one largest element in an \(R\)-foil as the foil is totally ordered.
  Let $S_{T}$ be the set of integers that are the largest in their $R$-foils.
  Then, $S$ is a covariant subset.
  The $R$-foliation $\Pi_X$ of $[\mathbb{Z},0,X]$ is a covariant partition.
  Since every foil $E \in \Pi$ is infinite, and $E \cap S$ has at most one element, by No Infinite/Finite inclusion Lemma \ref{lemma:no_infinite_finite}, $E \cap S$ is empty.
  In particular, $R$-foils do not have any largest element a.s..
  
  Secondly, since each $R$-foil is totally ordered as it is a subset of $\mathbb{Z}$, the map $R_{\perp}$ is injective.
  Then, it is bijective almost surely by Proposition \ref{prop:injective_bijective}.
  Therefore, by Mecke-Thorisson Point Stationarity (Prop. \ref{prop_mecke}) \(R_{\perp}\) preserves the distribution of \([\mathbb{Z},0,X]\).
  \end{proof}
  
\begin{remark}\label{remark:foils}
  In the proof of Proposition \ref{prop:R_orthogonal_measure_preserving}, we showed that, almost surely, $R$-foils of $[\mathbb{Z},0,X]$ do not have the largest vertex (according to RLS order).
  Similarly, by taking $S_{T}$ to be set of vertices that are smallest in their $R$-foils, it can be shown that almost surely, none of the $R$-foils has the smallest vertex.
\end{remark}

Note that it is not obvious to see that \(R_{\perp}\) is a measure preserving map without using the framework of \(R\)-foil structure and Mecke-Thorisson point Stationarity.


\chapter{Vertex-shift probabilities}\label{chapter_v_prob}

In this chapter, we consider vertex-shifts on random rooted networks and study the existence and uniqueness of certain limits induced by them on the random networks.
These limits are called vertex-shift probabilities.
In Section \ref{section_vertex_probs}, we introduce them and study their properties.
Results analogous to those in Section \ref{sec_fin_orbit_rel_cpt}-\ref{sec_periodicity} for point-shifts can be found in \cite{baccelliPointmapprobabilitiesPointProcess}.
Given a random network and a vertex-shift \(f\), one obtains the \(f\)-graph of the random network.
The component of the root in the \(f\)-graph is a random rooted Family Tree.
 One may study the existence and uniqueness of vertex-shift probabilities of this Family Tree under parent vertex-shift and may wonder if this study helps to understand the vertex-shift probabilities of the random network.
 This question is addressed by Lemma \ref{thm_forward_backward}, which plays a role in the next  chapter.
 Under different circumstances, discussed in the later sections, existence of vertex-shift probabilities can be shown.
The last section focuses on computing the \(F\)-probabilities of examples discussed in the previous chapter.
Some of these examples will be used in the next chapter to show the existence of a unique vertex-shift probability induced by the record vertex-shift on the network \([\mathbb{Z},0,X]\) when \(X\) is an i.i.d. sequence as in Def. \ref{hyp:increments} and satisfy \(\mathbb{E}[X_0]\geq 0\).

{\bf (In French)} Dans ce chapitre, nous considérons les décalages de sommets sur les réseaux aléatoires enracinés et nous étudions l'existence et l'unicité de certaines limites induites par eux sur les réseaux aléatoires.
Ces limites sont appelées probabilités de décalage de sommet.
Dans la section \ref{section_vertex_probs}, nous les introduisons et étudions leurs propriétés.
Des résultats analogues à ceux de la section \ref{sec_fin_orbit_rel_cpt}-\ref{sec_periodicity} pour les décalages de points peuvent être trouvés dans \cite{baccelliPointmapprobabilitiesPointProcess}.
Étant donné un réseau aléatoire et un décalage de sommet \(f\), on obtient le \(f\)-graphe du réseau aléatoire.
La composante de la racine dans le \(f\)-graphe est un arbre généalogique aléatoire enraciné.
On peut étudier l'existence et l'unicité des probabilités de décalage de sommet de cet arbre généalogique associè au le décalage de sommet des parents et se demander si cette étude aide à comprendre les probabilités de décalage de sommet du réseau aléatoire.
Cette question est abordée par le lemme \ref{thm_forward_backward}, qui joue un rôle dans le chapitre suivant.
Dans différentes circonstances, discutées dans les sections suivantes, l'existence des probabilités de décalage de sommet peut être montrée.
La dernière section se concentre sur le calcul des \(F\)-probabilités des exemples discutés dans le chapitre précédent.
Certains de ces exemples seront utilisés dans le chapitre suivant pour montrer l'existence d'une probabilité unique de décalage de sommet induite par le décalage de sommet des records sur le réseau \([\mathbb{Z},0,X]\) lorsque \(X\) est une suite i.i.d. comme dans la définition \ref{hyp:increments} et qui satisfait la condition \(\mathbb{E}[X_0]\geq 0\).

\section{Vertex-shift probabilities}\label{section_vertex_probs}
Let \textbf{[G, o]}  be a random network, i.e., \textbf{[G, o]} is a measurable map from a probability space $(\Omega,\mathcal{F},\mathbb{P})$ to the space of connected and locally finite rooted networks $\mathcal{G}_*$.
Let $\mathcal{P}_0$ be the probability measure on $\mathcal{G}_*$ induced by the random network \textbf{[G,o]}.
Let $f$ be a vertex-shift.
The map f induces a map $\theta_f:\mathcal{G}_* \rightarrow \mathcal{G}_*$, defined by $\theta_f([G,o]) = [G,f(o)]$.
This determines an action of $\mathbb{N}$ on $\mathcal{G}_*$ defined by the iterates of $\theta_f$, i.e., for each \(n \geq 1\), we have the map \(\theta_f^n:\mathcal{G}_* \to \mathcal{G}_*\) defined by \(\theta^n_f([G,o]) = [G,f^n(o)]\), for all \([G,o] \in \mathcal{G}_*\).
The push-forward measures of $\mathcal{P}_0$ under the iterates of $\theta_f$ are called the \textbf{vertex-shift probabilities} of \([\textbf{G},\textbf{o}]\) under \(f\).

Let \(\mathcal{P}_n\) be the push-forward measure of \(\mathcal{P}_0\) under the \(n\)-th iterate \(\theta_f^n\) of \(\theta_f\),i.e., $\mathcal{P}_n = (\theta_f^n)_* \mathcal{P}_0$.
Therefore, for any measurable subset $A \subset \mathcal{G}_*$, we have
\begin{align*}
\mathcal{P}_n[A] &= (\theta_f^n)_* \mathcal{P}_0[A]= \mathcal{P}_0[\{[G,o]: [G,f^n(o)] \in A\}]=  \mathbb{P}[[\textbf{G},f^n(\textbf{o})] \in A].
\end{align*}

Given a random network $\mathcal{P}_0$, we are interested to study the conditions under which the weak limit points of the sequence $\mathcal{A}_{\mathcal{P}_0}=\{\mathcal{P}_0,\mathcal{P}^{f,1}_0,\mathcal{P}^{f,2}_0,...\}$ exist.
Any probability measure which is a weak limit of $\mathcal{A}_{\mathcal{P}_0}$ is called an \textbf{$f$-probability} of the random network $\bgo$.
If the sequence $(\mathcal{P}^{f,n}_0)_{n \geq 0}$ (with the understanding that $\mathcal{P}_0 = \mathcal{P}^{f,0}_0$) converges weakly to a probability measure $\calp_0^f$ then, we call $\calp_0^f$  \textbf{the $f$-probability} of $\calp_0$.
Note that a probability measure $\mathcal{P}$ on $\mathcal{G}_*$ is a weak limit point of $\mathcal{A}_{\mathcal{P}_0}$ if and only if there exists a subsequence $\{\mathcal{P}_0^{f,n_k}\}$ such that
$$\int_{[G,u] \in \mathcal{G_*}} h([G,u]) \ \mathcal{P}_0^{f,n_k}(d[G,u]) \longrightarrow \int_{[G,u] \in \mathcal{G_*}} h([G,u]) \ \mathcal{P}(d[G,u])$$
for all continuous and bounded functions $h$ on $\mathcal{G}_*$.

In ergodic theory, one studies measure preserving transformations.
Seen from this perspective, Mecke's Point Stationarity (\cite{baccelliEternalFamilyTrees2018a}) tells that a vertex-shift is unimodular measure preserving if and only if the vertex-shift is bijective almost surely.
\begin{lemma} \label{lemma_abs_cont}
    Let $[\mathbf{G},\mathbf{o}]$ be a unimodular network with distribution $\mathcal{P}$ and $f$ be a vertex-shift. 
  Then, the distribution $\mathcal{P}^{f,n}_0$ of $[\mathbf{G},f^n(\mathbf{o})]$ is absolutely continuous with respect to the initial distribution $\mathcal{P}_0$ with the Radon-Nikodym derivative
    \begin{equation*}
      \frac{d\mathcal{P}^{f,n}_0}{d \mathcal{P}}[G,o] = d_n(o).
    \end{equation*}
  \end{lemma}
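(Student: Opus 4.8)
The plan is to prove the identity by testing against an arbitrary non-negative measurable function $h\colon \mathcal{G}_* \to \mathbb{R}_{\geq 0}$ and using the Mass Transport Principle applied to a suitably chosen function on the doubly-rooted space $\mathcal{G}_{**}$. First I would write, by the definition of $\mathcal{P}_0^{f,n}$ as the push-forward of $\mathcal{P}$ under $\theta_f^n$,
\begin{equation*}
  \int_{\mathcal{G}_*} h([G,o]) \, \mathcal{P}_0^{f,n}(d[G,o]) = \mathbb{E}\left[ h([\mathbf{G}, f^n(\mathbf{o})]) \right].
\end{equation*}
The goal is to rewrite the right-hand side as $\mathbb{E}\left[ h([\mathbf{G},\mathbf{o}]) \, d_n(\mathbf{o}) \right]$, which since $d_n$ is a non-negative measurable function of $[\mathbf{G},\mathbf{o}]$ (it is measurable because $[G,u,v]\mapsto \mathbf{1}\{f_G^n(u)=v\}$ is measurable, being an $n$-fold composition built from the vertex-shift measurability condition) would give exactly the claimed Radon–Nikodym derivative on $\mathcal{G}_*$.

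The key step is to introduce the transport function $g\colon \mathcal{G}_{**} \to \mathbb{R}_{\geq 0}$ defined by
\begin{equation*}
  g([G,u,v]) = h([G,v]) \, \mathbf{1}\{f_G^n(u) = v\},
\end{equation*}
i.e. a unit of mass $h([G,v])$ is sent from each vertex $u$ to its $n$-th ancestor $v = f_G^n(u)$. Then I would compute the two sides of the Mass Transport Principle. On one side,
\begin{equation*}
  \mathbb{E}\left[\sum_{v \in V(\mathbf{G})} g([\mathbf{G},\mathbf{o},v])\right] = \mathbb{E}\left[\sum_{v \in V(\mathbf{G})} h([\mathbf{G},v]) \mathbf{1}\{f_{\mathbf{G}}^n(\mathbf{o})=v\}\right] = \mathbb{E}\left[ h([\mathbf{G}, f^n(\mathbf{o})]) \right],
\end{equation*}
since there is exactly one vertex $v$ equal to $f_{\mathbf{G}}^n(\mathbf{o})$. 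On the other side,
\begin{equation*}
  \mathbb{E}\left[\sum_{u \in V(\mathbf{G})} g([\mathbf{G},u,\mathbf{o}])\right] = \mathbb{E}\left[ h([\mathbf{G},\mathbf{o}]) \sum_{u \in V(\mathbf{G})} \mathbf{1}\{f_{\mathbf{G}}^n(u)=\mathbf{o}\}\right] = \mathbb{E}\left[ h([\mathbf{G},\mathbf{o}]) \, d_n(\mathbf{o}) \right],
\end{equation*}
using that $\sum_{u} \mathbf{1}\{f_{\mathbf{G}}^n(u)=\mathbf{o}\} = \#\{u : f_{\mathbf{G}}^n(u) = \mathbf{o}\} = d_n(\mathbf{o})$ by the definition of $D_n$ and $d_n$ given in the preliminaries (noting the convention $f^0(u)=u$ handles the $n=0$ case, where $d_0(\mathbf{o})=1$, consistent with $\mathcal{P}_0^{f,0}=\mathcal{P}$). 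Equating the two displays via unimodularity of $[\mathbf{G},\mathbf{o}]$ gives $\mathbb{E}[h([\mathbf{G},f^n(\mathbf{o}))]] = \mathbb{E}[h([\mathbf{G},\mathbf{o}])\,d_n(\mathbf{o})]$ for all non-negative measurable $h$, which is precisely the statement that $\mathcal{P}_0^{f,n}$ is absolutely continuous with respect to $\mathcal{P}$ with density $d_n$.

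I do not anticipate a serious obstacle here; the main point requiring a little care is the measurability of $[G,u,v] \mapsto \mathbf{1}\{f_G^n(u)=v\}$, which should be justified by induction on $n$ (the base case $n=1$ is the measurability axiom for vertex-shifts, and the inductive step writes $\mathbf{1}\{f_G^{n}(u)=v\} = \sum_{w} \mathbf{1}\{f_G(u)=w\}\mathbf{1}\{f_G^{n-1}(w)=v\}$, a measurable operation on $\mathcal{G}_{**}$ via the canonical projections). A second minor point is to confirm that $g$ is genuinely a function on the equivalence-class space $\mathcal{G}_{**}$, i.e. invariant under doubly-rooted isomorphisms — this follows from covariance of $f$ and the fact that $h$ is defined on $\mathcal{G}_*$. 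Once these bookkeeping items are in place, the Mass Transport Principle does all the work.
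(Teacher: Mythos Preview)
Your proposal is correct and follows essentially the same argument as the paper: test against a non-negative measurable $h$, introduce the transport function $g([G,u,v])=h([G,v])\mathbf{1}\{f_G^n(u)=v\}$, and apply the Mass Transport Principle to swap the roles of root and image. The paper's proof is slightly terser (it does not name $g$ explicitly nor discuss the measurability and well-definedness bookkeeping you add), but the substance is identical.
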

  \begin{proof}
    Let $h$ be a non-negative function on $\mathcal{G}_*$. 
  Then,
  \begin{align*}
    \mathbb{E}[h([\mathbf{G},f^n(\mathbf{o})])] &=  \mathbb{E}\left[\sum_{u \in V(\mathbf{G})}h([\mathbf{G},u])\mathbf{1}_{u = f^n(\mathbf{o})}\right]\\
    &= \mathbb{E}\left[\sum_{u \in V(\mathbf{G})}h([\mathbf{G},\mathbf{o}])\mathbf{1}_{\mathbf{o}= f^n(u)}\right] \text{ (by unimodularity)}\\
    &= \mathbb{E}[h([\mathbf{G},\mathbf{o}])d_n(\mathbf{o})].\\
  \end{align*}
  \end{proof}
  
  Observe that, $d_1(\mathbf{o})=1$ a.s. if and only if $f$ is bijective a.s..
  Combining this observation with the above lemma, we obtain that a vertex-shift \(f\) on a unimodular network $[\mathbf{G},\mathbf{o}]$ is bijective a.s. if and only if it is measure preserving (i.e., \([\mathbf{G},f(\mathbf{o})]\overset{\mathcal{D}}{=}[\mathbf{G},\mathbf{o}]\)).
  This is Mecke-Thorisson point stationarity (see Prop. \ref{prop_mecke}).

Lemma \ref{lemma_abs_cont} implies that the probability measure \(\mathcal{P}^{f,n}_0\) is supported on the networks that have \(n\)-the descendants.
The following proposition compares this measure with the measure obtained from \(\mathcal{P}_0\) by conditioning on the event that the root has \(n\)-th descendants.

\begin{proposition}\label{prop:conditional}
  Let $\bgo$ be a unimodular network and $\calp_0$ be the probability measure induced by $\bgo$ on $\calg_*$.
  Let $f$ be a vertex-shift and $\{\calpfn\}_{n \in \mathbb{N}}$ be the vertex-shift probabilities of \(\mathcal{P}_0\) under $f$.
  Then, for any measurable subset $A$ of $\calg_*$,

  $$\calpfn[A] = \mathcal{E}_0 \left[ \frac{d_n(o)}{\mathcal{E}_0[d_n(o) |d_n(o)>0]} \mathbf{1}_A \ | \ d_n(o)>0\right],$$
  where $\mathcal{E}_0$ is the expectation with respect to $\calp_0$.
\end{proposition}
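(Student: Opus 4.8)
The plan is to read off everything from the absolute continuity statement of Lemma \ref{lemma_abs_cont} and then simply keep track of the normalizing constant. Recall that $\theta_f^n$ is an everywhere-defined map $\calg_* \to \calg_*$ (since $f$, hence $f^n$, sends every vertex to a vertex), so $\calpfn = (\theta_f^n)_*\calp_0$ is a genuine probability measure, and Lemma \ref{lemma_abs_cont} identifies its density with respect to $\calp_0$ as $d_n(o)$. Applying this to $A = \calg_*$ gives the key normalization
\[
\mathcal{E}_0[d_n(o)] = \calpfn[\calg_*] = 1 .
\]
In particular $\calp_0[d_n(o)>0] > 0$: otherwise $d_n(o) = 0$ holds $\calp_0$-a.s. and the expectation above would vanish. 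This is precisely the point that makes the conditional expectations in the statement meaningful, so it is the only thing one really has to check — and it is immediate once one observes that $\calpfn$ has total mass $1$. Moreover, since $d_n(o)$ is supported on $\{d_n(o)>0\}$, decomposing the expectation over this event and its complement gives
\[
1 = \mathcal{E}_0[d_n(o)] = \calp_0[d_n(o)>0]\,\mathcal{E}_0\!\left[d_n(o)\mid d_n(o)>0\right],
\]
so that $\mathcal{E}_0[d_n(o)\mid d_n(o)>0] = 1/\calp_0[d_n(o)>0]$.

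With this in hand the computation is short. For any measurable $A \subseteq \calg_*$, Lemma \ref{lemma_abs_cont} gives $\calpfn[A] = \mathcal{E}_0[d_n(o)\mathbf{1}_A]$, and again $d_n(o)\mathbf{1}_A$ is supported on $\{d_n(o)>0\}$, hence
\[
\calpfn[A] = \mathcal{E}_0[d_n(o)\mathbf{1}_A] = \calp_0[d_n(o)>0]\,\mathcal{E}_0\!\left[d_n(o)\mathbf{1}_A \mid d_n(o)>0\right].
\]
Substituting $\calp_0[d_n(o)>0] = 1/\mathcal{E}_0[d_n(o)\mid d_n(o)>0]$ and then pulling the (deterministic) constant $\mathcal{E}_0[d_n(o)\mid d_n(o)>0]^{-1}$ inside the conditional expectation yields exactly
\[
\calpfn[A] = \mathcal{E}_0\!\left[\frac{d_n(o)}{\mathcal{E}_0[d_n(o)\mid d_n(o)>0]}\,\mathbf{1}_A \ \Big|\ d_n(o)>0\right],
\]
which is the claimed identity.

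So there is no real obstacle here beyond the bookkeeping: the whole content is the Radon--Nikodym formula $d\calpfn/d\calp_0 = d_n(o)$ from the previous lemma, together with the elementary fact that its total mass is $1$, which forces $\calp_0[d_n(o)>0]>0$ and fixes the value of the conditional mean $\mathcal{E}_0[d_n(o)\mid d_n(o)>0]$. It is worth stating explicitly in the proof that, intuitively, the measure $\calpfn$ is obtained from $\calp_0$ by first conditioning on the root having $n$-th descendants and then size-biasing by the number $d_n(o)$ of such descendants (renormalized to be a probability), which is exactly what the displayed formula encodes.
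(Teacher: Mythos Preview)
Your proof is correct and follows essentially the same route as the paper's: both start from Lemma~\ref{lemma_abs_cont}, specialize to $A=\calg_*$ to extract the normalization $\mathcal{E}_0[d_n(o)]=1$, rewrite this as $\calp_0[d_n(o)>0]\,\mathcal{E}_0[d_n(o)\mid d_n(o)>0]=1$, and then substitute back into $\calpfn[A]=\mathcal{E}_0[d_n(o)\mathbf{1}_A\mid d_n(o)>0]\,\calp_0[d_n(o)>0]$. Your version is slightly more explicit in noting that $\calp_0[d_n(o)>0]>0$ is forced by the normalization, which the paper leaves implicit.
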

\begin{proof}
  By Lemma \ref{lemma_abs_cont},
  \begin{align*}
    \calpfn[A] &= \mathcal{E}_0[ d_n(o) \mathbf{1}_A]
= \mathcal{E}_0[d_n(o)\mathbf{1}_A\mathbf{1}_{d_n(o)>0}].
  \end{align*}
  In particular, for \(A=\mathcal{G}_*\), we obtain
  \begin{align*}
    \calpfn[\calg_*] =\mathcal{E}_0[d_n(o)\mathbf{1}_{d_n(o)>0}].
  \end{align*}
  Since  $\calpfn[\calg_*] = 1$, we obtain,
  \begin{equation*}
    1 = \mathcal{E}_0[d_n(o)\mathbf{1}_{d_n(o)>0}]=\mathcal{E}_0[d_n(o)|d_n(o)>0] \calp_0[d_n(o)>0].
  \end{equation*}
  From this, we obtain,
  \begin{align*}
    \calp_0[d_n(o)>0]&=\frac{1}{\mathcal{E}_0[d_n(o)|d_n(o)>0]}.
  \end{align*}
  Therefore,
  \begin{align*}
    \calpfn[A] &=  \mathcal{E}_0[d_n(o)\mathbf{1}_A\mathbf{1}_{d_n(o)>0}]
               = \mathcal{E}_0[d_n(o)\mathbf{1}_A|d_n(o)>0]\calp_0[d_n(o)>0]\\
               &= \mathcal{E}_0 \left[ \frac{d_n(o)}{\mathcal{E}_0[d_n(o) |d_n(o)>0]} \mathbf{1}_A \ | \ d_n(o)>0\right].
  \end{align*}
\end{proof}
From the above proposition, $\calpfn$ can be seen to be absolutely continuous with respect to the conditional probability $\calp_0[. | d_n(o)>0]$.

\begin{corollary}
  Consider the conditions mentioned in Proposition \ref{prop:conditional}.
  If the $f$-probability $\calp^f_0$ of $\calp_0$ exists, then the following hold:
  \begin{enumerate}
  \item For any bounded and continuous function $h:\calg_* \rightarrow \mathbb{R}$,
    $$\int_{\calg_*}h(\go) \ d\calp^f_0 = \lim_{n \rightarrow \infty}\int_{\calg_*}h(\go)d_n(o) \ d \calp_0.$$
  \item For any measurable subset $A$ of $\calg_*$ for which $\calpfn[A]$ tends to $\calp^f_0[A]$ as $n$ tends to infinity, 
    $$ \calp^f_0[A] = \lim_{n \rightarrow \infty}\mathcal{E}_0 \left[ \frac{d_n(o)}{\mathcal{E}_0[d_n(o) |d_n(o)>0]} \mathbf{1}_A \ | \ d_n(o)>0\right].$$
  \end{enumerate}
\end{corollary}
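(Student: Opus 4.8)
The plan is to obtain both assertions as immediate consequences of two facts already proved in the excerpt: the Radon--Nikodym identity $\frac{d\calpfn}{d\calp_0} = d_n(o)$ from Lemma \ref{lemma_abs_cont}, and the conditional-expectation formula for $\calpfn[A]$ from Proposition \ref{prop:conditional}, combined with the definition of $\calp^f_0$ as a weak limit of the sequence $(\calpfn)_{n \geq 0}$. Since $\calp^f_0$ is assumed to exist, we have by definition that $\int_{\calg_*} h \, d\calpfn \to \int_{\calg_*} h \, d\calp^f_0$ as $n \to \infty$ for every bounded continuous $h : \calg_* \to \mathbb{R}$.

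For part (1), I would first record that $d_n$ is $\calp_0$-integrable, because $\mathcal{E}_0[d_n(o)] = \calpfn[\calg_*] = 1$ by Lemma \ref{lemma_abs_cont}; hence, for bounded $h$, the function $\go \mapsto h(\go)\, d_n(o)$ lies in $L^1(\calp_0)$ and the right-hand integral is well defined for each $n$. The same lemma gives, by the change-of-variables formula for absolutely continuous measures,
\[
\int_{\calg_*} h(\go) \, \calpfn(d\go) \;=\; \int_{\calg_*} h(\go)\, d_n(o) \, \calp_0(d\go),
\]
for every $n$. Letting $n \to \infty$ and using the weak convergence $\calpfn \Rightarrow \calp^f_0$ on the left-hand side yields $\int_{\calg_*} h \, d\calp^f_0 = \lim_{n\to\infty}\int_{\calg_*} h(\go)\, d_n(o)\, d\calp_0$, which is the claimed identity. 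Note that this is not an interchange of limit and integral but merely the definition of weak convergence applied to the fixed bounded continuous $h$.

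For part (2), I would simply substitute the formula of Proposition \ref{prop:conditional}: for every $n$,
\[
\calpfn[A] \;=\; \mathcal{E}_0\!\left[\, \frac{d_n(o)}{\mathcal{E}_0[d_n(o)\mid d_n(o)>0]}\, \mathbf{1}_A \;\middle|\; d_n(o)>0 \,\right].
\]
By the hypothesis of part (2), the left-hand side converges to $\calp^f_0[A]$ as $n \to \infty$, so the right-hand side converges to the same limit, which is exactly the asserted equality. There is essentially no hard step: the corollary is a bookkeeping consequence of Lemma \ref{lemma_abs_cont} and Proposition \ref{prop:conditional}, the only minor point being the integrability remark in part (1) needed so that the limiting object on the right is meaningful at each stage.
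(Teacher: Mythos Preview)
Your proof is correct and follows exactly the approach the paper has in mind: the paper's own proof reads simply ``Follows from the definition of the $f$-probability of $\calp_0$,'' and you have spelled out precisely what that means using Lemma \ref{lemma_abs_cont} for part (1) and Proposition \ref{prop:conditional} for part (2).
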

\begin{proof}
  Follows from the definition of the $f$-probability of $\calp_0$.
\end{proof}

\subsection{Forward Backward Lemma} 

Consider a vertex-shift $f$ and a unimodular random network $[\mathbf{G},\mathbf{o}]$.
Recall that the map $\theta_f:\mathcal{G}_* \rightarrow \mathcal{G}_*$ takes $[G,o]$ to $[G,f(o)]$, and the forward map $\Psi_f:\mathcal{G}_* \rightarrow \mathcal{T}_*$ takes $[G,o]$ to $[G^f(o),o]$, where $G^f(o)$ is the connected component of $o$ (without any loops) in the $f$-graph $G^f$.

We know that $\Psi_f([\mathbf{G},\mathbf{o}])$ is a unimodular Family Tree (Lemma \ref{lemma:f_graph_unimodular}).
Consider the parent vertex-shift $F$ on this Family Tree.
If \(\Psi_f([\mathbf{G},\mathbf{o}])\) admits an $F$-probability, does it imply the existence and uniqueness of the $f$-probability?
The following lemma gives sufficient conditions, under which we obtain a unique $f$-probability from the $F$-probability.

\begin{lemma}[Forward Backward (FB)] \label{thm_forward_backward}
  Let $\mathcal{P}$ be a unimodular measure on $\mathcal{G}_*$, and $f$ be a vertex-shift. 
  Let $\mathcal{P}_n := \mathcal{P} \circ \theta_f^{-n} , n \geq 1$, be the push-forward measure of $\mathcal{P}$ under the $n$-th iterate of the vertex-shift map $\theta_f$.
  Let $\mathcal{Q}:= \mathcal{P} \circ \Psi_f^{-1}$ and $\forall n \geq 1$, let $\mathcal{Q}_n :=  \mathcal{Q} \circ \theta_F^{-n}$ be the push-forward measure of $\mathcal{Q}$ under the $n$-th iterate of the Parent vertex-shift map $\theta_F$.

  Suppose there exists a map $\Phi_f:\mathcal{T}_* \rightarrow  \mathcal{G}_*$ and a probability measure $\mathcal{Q}_F$ on $\mathcal{G}_*'$ such that
  \begin{enumerate}
    \item $\Phi_f \circ \Psi_f = I$, $\mathcal{P}$-a.s., where $Id$ is the identity map.
    \item $\mathcal{Q}_F$ is the $F$-probability of $\mathcal{Q}$, i.e., $\mathcal{Q}_n \rightarrow \mathcal{Q}_F$ weakly as $n \rightarrow \infty$.
    \item $\Phi_f$ is continuous $\mathcal{Q}_F$-a.s..
  \end{enumerate}
  Then, $\mathcal{Q}_F \circ \Phi_f^{-1}$ is the $f$-probability of $\mathcal{P}$.
\end{lemma}

\begin{proof}
  Observe that $\Psi_f \circ \theta_f = \theta_F \circ \Psi_f$. Indeed, for any $[G,o]\in \mathcal{G}_*$,
  \begin{align*}
    \Psi_f \circ \theta_f([G,o]) &= \Psi_f([G,f(o)])= [G^f(f(o)),f(o)]\\ 
    &= [G^f(o),f(o)]= [G^f(o),F(o)]= \theta_F \circ \Psi_f([G,o]).
  \end{align*} 
  The third equality in the above equation follows from the fact that the connected component of $o$ in $G^f$ is the same as that of $f(o)$.
  Using this inductively we obtain, for any $n \geq 1$, 
  \begin{equation*}
    \mathcal{Q}_n = \mathcal{Q} \circ \theta_F^{-n} = \mathcal{P}\circ \Psi_f^{-1} \circ \theta_F^{-n} = \mathcal{P}\circ \theta_f^{-n} \circ \Psi_f^{-1} = \mathcal{P}_n \circ \Psi_f^{-1}.
  \end{equation*}

We know that $\mathcal{Q}$ is unimodular because $\mathcal{P}$ is unimodular by Lemma \ref{lemma:f_graph_unimodular}.
By Lemma \ref{lemma_abs_cont}, $\mathcal{Q}_n$ (resp. $\mathcal{P}_n$) is absolutely continuous with respect to $\mathcal{Q}$ (resp. $\mathcal{P}$) for all $n \geq 1$.
So, it follows from the first condition that $\Phi_f \circ \Psi_f = I$ $\mathcal{P}_n$-a.s..
Using this, we obtain
\[\mathcal{P}_n = \mathcal{P}_n \circ (\Phi_f \circ \Psi_f)^{-1} =\mathcal{P}_n \circ \Psi_f^{-1} \circ \Phi_f^{-1}= \mathcal{Q}_n \circ \Phi_f^{-1}.\]

Since $\mathcal{Q}_n$ weakly converges to $\mathcal{Q}_F$ and $\Phi_f$ is continuous $\mathcal{Q}_F$-a.s., by the Mann-Wald theorem \cite{bhattacharyaRandomWalkBrownian2021}, $\mathcal{Q}_n \circ \Phi_f^{-1}$ converges weakly to $\mathcal{Q}_F \circ \Phi_f^{-1}$ as $n \rightarrow \infty$.
Thus, $\lim_{ n\rightarrow \infty}\mathcal{P}_n = \mathcal{Q}_F \circ \Phi_f^{-1}$.
\end{proof}

In general, the existence of the map $\Phi_f$ is not guaranteed since the $f$-graph may not contain the needed information about the structure of the network.
For instance, consider the identity vertex-shift.
 In this case, the connected components of its $f$-graph are just singletons.
 So, no information can be obtained from the \(f\)-graph.
We may however attach more information of a network to its $f$-graph as marks.
In the case of the record vertex-shift $R$ on the random network \([\mathbb{Z},0,X]\) (where \(X\) is the sequence of i.i.d. increments of a skip-free to the left random walk), it suffices to consider the order of $\mathbb{Z}$ when \(\mathbb{E}[X_0]=0\) and both the order and  the type function \(t\) when \(\mathbb{E}[X_0]>0\) as we shall see in the next chapter.
Note that, in FB Lemma \ref{thm_forward_backward}, the backward map \(\Phi_f\) may depend on the initial distribution \(\mathcal{P}\).

\section{Finite orbits and relative compactness}\label{sec_fin_orbit_rel_cpt}

We use the notations of Subsection \ref{subsec_rel_compact}.
In this section, we are only focused on unlabelled graphs.
Recall that \(\hat{\mathcal{G}}_*\) denotes the space of rooted graphs.

\begin{definition}
  A vertex-shift $f$ is said to have \textbf{finite orbits} on a random graph $\bgo$ if the set $\{[G,f^n(o)], n \in \mathbb{N}\}$ is finite for $-\calp_0$-a.s. $\go \in \hat{\calg}_*$, where \(\mathcal{P}_0\) is the probability measure induced by \([\mathbf{G},\mathbf{o}]\).
\end{definition}

\begin{proposition}
  Let $\bgo$ be a random graph which induces the probability measure $\calp_0$ on $\hat{\calg}_*$, $f$ be a vertex-shift and $\mathcal{A}_{\calp_0} = \{\calpfn, n \in \mathbb{N}\}$ be the vertex-shift probabilities induced by $f$. If $f$ has finite orbits on $\bgo$ then $\mathcal{A}_{\calp_0}$ is relatively compact.
\end{proposition}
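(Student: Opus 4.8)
The plan is to use the tightness criterion established at the end of Subsection \ref{subsec_rel_compact}: a family of random graphs $\{[\mathbf{G}_i,\mathbf{o}_i], i \in I\}$ is relatively compact if and only if for every $r \geq 1$,
\[
\lim_{m \rightarrow \infty}\sup_{i \in I}\mathbb{P}[[\mathbf{G}_i,\mathbf{o}_i]_r \not \in K^r_m] = 0,
\]
where $K^r_m$ is the (finite) set of rooted graphs in $\calg_*|_r$ with at most $m$ vertices. So it suffices to show that for each fixed $r$,
\[
\lim_{m \to \infty}\sup_{n \in \mathbb{N}}\mathbb{P}\big[[\mathbf{G},f^n(\mathbf{o})]_r \text{ has more than } m \text{ vertices}\big] = 0.
\]
The number of vertices in $[G,o]_r$ is controlled by the degrees in the $r$-ball, and the key observation is that $[\mathbf{G},f^n(\mathbf{o})]_r$, as a random rooted graph, is obtained from $\mathcal{P}_0$ by re-rooting along the orbit of $\mathbf{o}$. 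The finite-orbit hypothesis means that $\calp_0$-a.s. the set $\{[\mathbf{G},f^n(\mathbf{o})], n \in \mathbb{N}\}$ is finite, hence the set $\{[\mathbf{G},f^n(\mathbf{o})]_r, n \in \mathbb{N}\}$ is a.s. a finite subset of $\calg_*|_r$.

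First I would fix $r \geq 1$ and introduce the covariant function $N_r([G,o]) := \#V([G,o]_r)$, which is a measurable $\mathbb{N}$-valued function on $\calg_*$. Define, for each realization, $M_r := \sup_{n \in \mathbb{N}} N_r([\mathbf{G},f^n(\mathbf{o})])$. Because $f$ has finite orbits on $\bgo$, the orbit $\{[\mathbf{G},f^n(\mathbf{o})], n \in \mathbb{N}\}$ is $\calp_0$-a.s. a finite set, and each of its (finitely many) elements has a finite $r$-ball since the network is locally finite; hence $M_r < \infty$ $\calp_0$-a.s. This gives $\mathbb{P}[M_r < \infty] = 1$, so $\mathbb{P}[M_r > m] \to 0$ as $m \to \infty$. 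Now for every $n \in \mathbb{N}$ we have the pointwise bound $N_r([\mathbf{G},f^n(\mathbf{o})]) \leq M_r$, hence
\[
\mathbb{P}\big[[\mathbf{G},f^n(\mathbf{o})]_r \notin K^r_m\big] = \mathbb{P}\big[N_r([\mathbf{G},f^n(\mathbf{o})]) > m\big] \leq \mathbb{P}[M_r > m].
\]
Taking the supremum over $n$ and then letting $m \to \infty$ yields $\lim_{m \to \infty}\sup_{n}\mathbb{P}[[\mathbf{G},f^n(\mathbf{o})]_r \notin K^r_m] = 0$, which is exactly the criterion for relative compactness of $\mathcal{A}_{\calp_0}$ when applied for all $r \geq 1$.

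I do not expect a serious obstacle here; the proof is essentially a domination argument. The one point that needs a little care is justifying that $M_r$ is a genuine random variable (measurability of the supremum over the orbit) and that it is a.s. finite — the latter uses both the finite-orbit hypothesis (finitely many distinct rooted graphs appear in the orbit) and local finiteness of the network (each appearing rooted graph has a finite $r$-neighbourhood), so that a finite supremum of finite quantities is finite. Measurability follows since $[G,o] \mapsto [G,f^n(o)]$ is measurable for each $n$ (it is the $n$-th iterate of the measurable map $\theta_f$), so $N_r \circ \theta_f^n$ is measurable, and $M_r = \sup_n N_r \circ \theta_f^n$ is a countable supremum of measurable functions. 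With these checked, the tightness criterion from the preliminary discussion and Lemma \ref{lm:curien} finish the argument.
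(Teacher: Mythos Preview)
Your proof is correct and takes essentially the same approach as the paper. The paper phrases the dominating event as $\{orb(\bgo)_r \cap (\hat{\calg}_* \backslash K_m^r) \neq \emptyset\}$, where $orb(\bgo)_r = \{\bgfno_r, n \in \mathbb{N}\}$, but this event is exactly your $\{M_r > m\}$; both proofs then use continuity from above together with the a.s.\ finiteness of the orbit to conclude.
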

\begin{proof}
  Let $r\geq 1$ be fixed. We show that,
  \begin{equation}\label{eq:rel_comp}
    \lim_{m \rightarrow \infty}\sup_{n \in \mathbb{N}}\mathbb{P}[\bgfno_r \not \in K_m^r] = 0.
  \end{equation}
  For any fixed \(m \geq 1\) and for all \(n \geq 1\), the event $\{\bgfno_r \in \hat{\calg}_* \backslash K_m^r\}$ is a subset of $\{orb(\bgo)_r \cap (\hat{\calg}_* \backslash K_m^r) \not = \emptyset\}$, where $orb(\bgo)_r = \{\bgfno_r, n \in \mathbb{N}\}$.
  This implies that
  $$\sup_{n \in \mathbb{N}}\mathbb{P}[\bgfno_r \in \hat{\calg}_*\backslash K_m^r] \leq \mathbb{P}[orb(\bgo)_r \cap (\hat{\calg}_* \backslash K_m^r) \not = \emptyset].$$
  Since $K_m^r$ is an increasing sequence in $m$, $\hat{\calg}_*\backslash K_m^r$ is a decreasing sequence of sets, which implies that the events $\{orb(\bgo)_r \cap (\hat{\calg}_* \backslash K_m^r) \not = \emptyset\}$ are decreasing in $m$.
  Therefore,
  \begin{align*}
    \lim_{m \rightarrow \infty}\sup_{n \in \mathbb{N}}\mathbb{P}&[\bgfno_r \in \hat{\calg}_*\backslash K_m^r] \\
    &\leq \lim_{m \rightarrow \infty}\mathbb{P}[orb(\bgo)_r \cap (\hat{\calg}_* \backslash K_m^r) \not = \emptyset]\\
    &=\mathbb{P}\left[\bigcap_{m \in \mathbb{N}} \{orb(\bgo)_r \cap (\hat{\calg}_* \backslash K_m^r) \not = \emptyset \}\right].
  \end{align*}
  But $orb(\bgo)_r$ is a.s. finite. So, it can have non-empty intersection with $\hat{\calg}_* \backslash K_m^r$ only for finitely many $m$.
  Therefore,
  $$\mathbb{P}\left[\bigcap_{m \in \mathbb{N}} \{orb(\bgo)_r \cap (\hat{\calg}_* \backslash K_m^r) \not = \emptyset \}\right]=0,$$
  which gives the desired equality in Equation (\ref{eq:rel_comp}).
\end{proof}

\section{Evaporation}

The $n$-th vertex-shift-probability measure $\calp_0^{f,n}$ of a probability measure $\calp_0$ is supported on the rooted network $[G,o]$ whose root $o$ has $n$-th descendants, i.e., $d_n(o)>0$.
This leads us to ask, ``are all $f$-probabilities, assuming they exist, supported only on the rooted networks whose root has infinitely many descendants, i.e., $d_n(o)>0, \forall n \in \mathbb{N}$?".
The answer is affirmative under the sufficient conditions mentioned in Lemma \ref{lm:evap_lim} or in Lemma \ref{lm:evap_lim2}.

Let us denote $I:= \{\go \in \calg_* \text{ such that } d_n(o)>0 \ \forall n \geq 1\}$, the set of rooted networks whose root has infinitely many descendants.
Let $A_m = \{\go \in \calg_* \text{ such that } d_m(o) = 0\}$ and $J = \cup_{m \geq 1}A_m$. Then, \(J\) is the set of rooted networks whose root has finitely many descendants, and we have
$$J^c = \cap_{m \geq 1}A_m^c = \cap_{m \geq 1}\{ \go \in \calg_* : d_m(o)>0\} = I.$$

\begin{definition}
  We say that a random network $\bgo$ \textbf{evaporates} on a vertex-shift $f$ if $\mathbb{P}[\bgo \in J]=1$.
\end{definition}

\begin{lemma}\label{lm:evap_lim}
  Let \(f\) be a vertex-shift and \([\mathbf{G},\mathbf{o}]\) be a unimodular random network.
  If \(Q\) is an \(f\)-probability of \([\mathbf{G},\mathbf{o}]\), then \(Q[I]=1\).
\end{lemma}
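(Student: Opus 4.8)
The plan is to show that the $f$-probability $Q$ assigns zero mass to $J = \bigcup_{m \geq 1} A_m$, where $A_m = \{[G,o] : d_m(o) = 0\}$. Since $I = J^c$, this gives $Q[I] = 1$. First I would recall from Lemma \ref{lemma_abs_cont} that the $n$-th vertex-shift probability $\mathcal{P}_0^{f,n}$ is absolutely continuous with respect to $\mathcal{P}_0$ with Radon-Nikodym derivative $d_n(o)$; in particular $\mathcal{P}_0^{f,n}$ is supported on $\{[G,o]: d_n(o) > 0\} = A_n^c$, so $\mathcal{P}_0^{f,n}[A_m] = 0$ for all $m \leq n$. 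The key point is that the sets $A_m$ form an \emph{increasing} sequence in $m$ (if $o$ has no $m$-th descendant, it has no $(m+1)$-th descendant either), so $A_m^c$ is decreasing and $\mathbf{1}_{A_m}$ is monotone.

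The main obstacle is that $A_m$ need not be open or closed in the local topology, so weak convergence $\mathcal{P}_0^{f,n} \to Q$ does not immediately control $Q[A_m]$. The way around this is to fix $m$ and argue using the decomposition: for $n \geq m$ we have $\mathcal{P}_0^{f,n}[A_m] = 0$. If $A_m$ were a closed set, the Portmanteau theorem would give $Q[A_m] \leq \limsup_n \mathcal{P}_0^{f,n}[A_m] = 0$. So I would first check whether $A_m$ (equivalently $A_m^c$) is closed. The event $\{d_m(o) = 0\}$ depends only on a bounded neighborhood of the root once we know the degrees are bounded — but degrees are only a.s. finite, not uniformly bounded. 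A cleaner route: observe that $A_m^c = \{d_m(o) \geq 1\}$ and that having an $m$-th descendant is a property determined by the ball of radius $m$ around the root in the $f$-graph. Writing this as an increasing union $A_m^c = \bigcup_{k} \{[G,o]: \text{some vertex within } f\text{-distance } m \text{ maps to } o \text{ in } m \text{ steps, witnessed in the } k\text{-ball}\}$, each such set is open (it is a condition on a finite neighborhood), hence $A_m$ is a countable intersection of closed sets, i.e. $A_m$ is closed — or at least $G_\delta$, which combined with the fact that on any such set the relevant condition stabilizes allows the Portmanteau bound.

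Concretely, the plan is: fix $\epsilon > 0$ and $m \geq 1$. Since $A_m = \{d_m(o) = 0\}$ and $\{d_m(o) = 0\}$ can be written as $\{[G,o] : \text{for every } r, \text{ no vertex in } (G,o)_r \text{ has } f^m(\cdot) = o\}$, it is closed in $\mathcal{G}_*$ (an intersection of closed sets, each saying a finite check fails). By the Portmanteau theorem applied to the closed set $A_m$ and the weak convergence $\mathcal{P}_0^{f,n} \to Q$,
\begin{equation*}
  Q[A_m] \leq \limsup_{n \to \infty} \mathcal{P}_0^{f,n}[A_m] = \limsup_{n \to \infty, \, n \geq m} 0 = 0,
\end{equation*}
using $\mathcal{P}_0^{f,n}[A_m] = 0$ for $n \geq m$ from Lemma \ref{lemma_abs_cont}. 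Since this holds for every $m$, and $J = \bigcup_{m \geq 1} A_m$ is a countable union, $Q[J] \leq \sum_{m \geq 1} Q[A_m] = 0$, hence $Q[I] = Q[J^c] = 1$. The only delicate step to nail down is the closedness (or at least the measurability together with a Portmanteau-type inequality) of $A_m$, and I expect this to follow routinely from the definition of the local topology — a vertex $v$ with $f^m(v) = o$ sits at bounded graph distance from $o$ in the original network once $f$ has bounded "reach," but since we only need the inequality direction, the $G_\delta$ representation suffices.
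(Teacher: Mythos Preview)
Your approach is essentially the paper's: both show $\mathcal{P}_0^{f,n}[B_n]=1$ for $B_n=\{d_n(o)>0\}=A_n^c$ and pass this through the weak limit via Portmanteau. The paper handles the topological point by asserting directly that $[G,o]\mapsto \mathbf{1}_{d_n(o)>0}$ is continuous on $\mathcal{G}_*$ (so each $B_n$ is clopen, hence a continuity set), which is cleaner than your closed/$G_\delta$ discussion of $A_m$; one minor correction: since $Q$ is by definition only a \emph{subsequential} weak limit of $(\mathcal{P}_0^{f,n})_n$, your Portmanteau step should be run along a subsequence $(n_k)$ rather than the full sequence.
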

\begin{proof}
  For any $n \in \mathbb{N}$, let us denote $B_n = \{\go \in \calg_* \text{ such that } d_n(o)>0\}$.
  Then $B_n$ is a non-increasing sequence of subsets whose intersection is $I$, i.e., $I = \cap_n B_n$ and $B_n \searrow I$.
  Therefore,
  $$\mathcal{Q}[I] = \lim_{n \rightarrow \infty} \mathcal{Q}[B_n].$$
  Since $\mathcal{Q}$ is a $f$-probability, there exists a subsequence $\{\calp^{f,n_k}_0\}$ of the vertex-shift probabilities $\{\calpfn\}$ that converges weakly to $ \mathcal{Q}$.
  Equivalently, for any continuity set $B$, $\calp^{f,n_k}_0[B] \rightarrow \mathcal{Q}[B]$ as $k$ tends to infinity.

  For all \(n\geq 1\), the map $\go \mapsto \mathbf{1}_{d_n(o)>0}$ is continuous on \(\mathcal{G}_*\) since for any sequence \(([G_m,o_m])_{m \geq 1}\) that converges to a network \([G',o']\)  as \(m \to \infty\), the sequence \((d_n(o_m))_{m \geq 1}\) is eventually a constant sequence,i.e., there exists \(k \geq 1\) such that \(d_n(o_m) =d_n(o')\) for all \(m \geq k\).
  Therefore, $B_n$ is a $\mathcal{Q}$-continuity set, i.e., $\calp^{f,n_k}_0[B_n] \rightarrow \mathcal{Q}[B_n]$ as $k$ tends to infinity.
  But, notice that $\calp^{f,n_k}_0[B_{n_k}] = 1$, as it is only supported on the roots that have $n_k$-th descendants, and $B_{n_k} \subseteq B_n$ for all $n_k \geq n$.
  So, $\calp^{f,n_k}_0[B_n] = 1$ for all $n_k \geq n$ which implies that $ \mathcal{Q}[B_n]=1$.
  This gives the desired result that $ \mathcal{Q}[I] = 1$.

\end{proof}

\begin{lemma}\label{lm:evap_lim2}
   Let $f$ be a vertex-shift and $\mathcal{Q}$ be a probability measure on $\calg_*$ which satisfies  Mecke's invariant measure equation, i.e., $(\theta_f)^*\mathcal{Q} = \mathcal{Q}$. Then $\mathcal{Q}[I] = 1$.
 \end{lemma}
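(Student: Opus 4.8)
The statement to prove is: if $f$ is a vertex-shift and $\mathcal{Q}$ is a probability measure on $\mathcal{G}_*$ satisfying Mecke's invariant measure equation $(\theta_f)^*\mathcal{Q} = \mathcal{Q}$, then $\mathcal{Q}[I] = 1$, where $I = \{[G,o] : d_n(o) > 0 \ \forall n \geq 1\}$ is the set of rooted networks whose root has descendants of every order. The plan is to argue by contradiction, exploiting the fact that $\theta_f$-invariance means ``mass never disappears'' while the complement $J = \mathcal{G}_* \setminus I$ consists of networks whose root is a finite generations away from having no descendants, so that the dynamic $\theta_f$ pushes mass out of $J$ and it cannot return.

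First I would set up the bookkeeping. Recall $A_m = \{[G,o] : d_m(o) = 0\}$, so $J = \bigcup_{m\geq 1} A_m$ and $A_1 \subseteq A_2 \subseteq \cdots$. The key elementary observation is that if $d_m(o) = 0$ then $d_{m'}(f(o)) = 0$ for $m' = m-1$, or more usefully: $[G,o] \in A_m$ implies $\theta_f([G,o]) = [G,f(o)]$ satisfies $d_{m-1}(f(o)) = 0$, i.e.\ $\theta_f(A_m) \subseteq A_{m-1}$ (with $A_0$ interpreted as the set where the root equals $f(\text{root})$, i.e.\ a fixed point, and iterating once more keeps it there). Consequently $\theta_f^{-1}(A_m) \supseteq A_{m+1}$, and more to the point, for the forward images: a root in $A_m$ reaches, after finitely many steps of $\theta_f$, a fixed point of $f$ — a network $[G,o]$ with $f_G(o) = o$ and $d_1(o) = 0$ (an isolated vertex in $G^f$, in the $\mathcal{F}/\mathcal{F}$-degenerate sense), which is then absorbing under $\theta_f$. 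Actually the cleaner route: let $E = \{[G,o] : f_G(o) = o \text{ and } o \text{ has no child}\}$ be the set of such absorbing configurations; then $\theta_f(E) = E$ and every point of $J$ reaches $E$ in finitely many iterations of $\theta_f$.

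Now I would run the contradiction. Suppose $\mathcal{Q}[J] = \delta > 0$. Write $J = \bigcup_m A_m$ with $A_m \uparrow J$, so there is $m_0$ with $\mathcal{Q}[A_{m_0}] > \delta/2 > 0$. By the invariance $(\theta_f)^*\mathcal{Q} = \mathcal{Q}$ and the inclusion $\theta_f^{m_0}(A_{m_0}) \subseteq E$ (each application of $\theta_f$ decrements the ``index'' by one until the root lands on an isolated fixed point), we get
\begin{align*}
\mathcal{Q}[E] &= (\theta_f^{m_0})^*\mathcal{Q}\,[E] = \mathcal{Q}[\theta_f^{-m_0}(E)] \geq \mathcal{Q}[A_{m_0}] > 0.
\end{align*}
So $\mathcal{Q}[E] > 0$. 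But $E$ is $\theta_f$-invariant and absorbing, and on $E$ the root has no descendants of any positive order past the trivial ones; I would then push this further to derive a contradiction with $\mathcal{Q}$ being a probability measure that is genuinely shift-invariant, or more simply observe that one can iterate a covariant-subset mass-transport argument. The cleanest finish: apply the invariance to the covariant subset $\mathfrak{S}_G = \{u \in V(G) : d_m(u,G^f) = 0\}$ and note that under the dynamic its indicator is eventually pushed onto $E$; since $\mathcal{Q}$ assigns positive mass to $E$ and $E$ carries no ``incoming'' mass except from itself, one contradicts $\sum$ over generations being finite — essentially the argument that a recurrent invariant measure cannot be supported on a set from which the dynamic strictly escapes.

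The main obstacle I anticipate is making precise the claim ``$J$ escapes under $\theta_f$, and the escape cannot be balanced by invariance,'' because $\theta_f$-invariance alone (Mecke's equation) is weaker than unimodularity — in Lemma \ref{lm:evap_lim} unimodularity was available and one could use the Radon-Nikodym/conditioning machinery of Lemma \ref{lemma_abs_cont} and Proposition \ref{prop:conditional}. Here I only have $(\theta_f)^*\mathcal{Q} = \mathcal{Q}$, so I should instead rely on a Poincaré-recurrence-type statement for the measure-preserving map $\theta_f$: since $\theta_f$ preserves the probability measure $\mathcal{Q}$, almost every point of any positive-measure set returns to it infinitely often; but a point of $A_{m_0}$ is carried strictly into $A_{m_0 - 1} \subsetneq A_{m_0}$ direction and eventually into the absorbing set $E$, from which it never returns to $A_{m_0} \setminus E$ — so the only way to have recurrence is $\mathcal{Q}$-almost every point of $J$ already lies in $E$, and then a further argument (or a direct check that $E$ forces the root's foil to be a singleton, contradicting that $\mathcal{Q}$ as a limit/invariant law is non-degenerate in the relevant way) closes it. I would present the Poincaré recurrence route as the spine of the proof, since it uses exactly the hypothesis ``$(\theta_f)^*\mathcal{Q} = \mathcal{Q}$'' and nothing more.
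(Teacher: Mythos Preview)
Your proposal contains a genuine error at its foundation. The claimed inclusion $\theta_f(A_m) \subseteq A_{m-1}$---equivalently, ``if $d_m(o)=0$ then $d_{m-1}(f(o))=0$''---is false. The parent $f(o)$ may well have children other than $o$, and those other children can carry descendant subtrees of arbitrary depth. For a concrete counterexample: let $f(o)$ have two children $o$ and $w$, let $o$ be a leaf, and hang a path of length $m-2$ below $w$; then $d_m(o)=0$ yet $d_{m-1}(f(o))\geq 1$. The same example kills the companion inclusion $A_{m+1}\subseteq \theta_f^{-1}(A_m)$, the claim that every point of $J$ reaches your absorbing set $E$ in finitely many steps, and with it the entire Poincar\'e-recurrence scaffold. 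The conceptual mistake is directional: $\theta_f$ moves the root to its \emph{parent}, which generically has \emph{more} and \emph{deeper} descendants, not fewer---you have the arrow of the dynamics reversed.

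The paper's proof turns exactly this observation into a one-line argument, pulling back rather than pushing forward. By invariance, $\mathcal{Q}[A_m]=\mathcal{Q}\bigl[(\theta_f^{m})^{-1}(A_m)\bigr]$. But $(\theta_f^{m})^{-1}(A_m)=\{[G,o]:d_m(f^m(o))=0\}$, and the vertex $o$ itself is an $m$-th descendant of $f^m(o)$, so this preimage is empty. Hence $\mathcal{Q}[A_m]=0$ for every $m$, giving $\mathcal{Q}[J]=0$. No contradiction, no recurrence, no absorbing sets---just the observation that the root is always an $m$-th descendant of its own $m$-th ancestor, which is precisely what your ``index decrements'' picture misses.
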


 \begin{proof}
   We show that \(Q[J]=0\).
   For any $m \in \mathbb{N}$, let us denote $A_m = \{\go \in \calg_* :d_m(o)=0\}$.
   Since $(\theta_f)_* \mathcal{Q} = \mathcal{Q}$, we have $(\theta^m_f)_* \mathcal{Q} =\mathcal{Q}$ for any $m \in \mathbb{N}$. Therefore,
   \begin{align*}
     \mathcal{Q}[A_m] &= (\theta^m_f)_* \mathcal{Q}[A_m]=\mathcal{Q}[(\theta_f^{m})^{-1}(A_m)]=\mathcal{Q}\left[\go \in \calg_* : [G,f^m(o)] \in A_m \right]\\
                    &=0 \text{ (since no element of $A_m$ has $m$-th descendant).}
   \end{align*}
   Since $\mathcal{Q}[A_m]=0$ for all $m \in \mathbb{N}$, we have
   $$\mathcal{Q}[J] = \mathcal{Q}\left[\bigcup_{m \in \mathbb{N}}A_m\right] = 0.$$
  Therefore, $\mathcal{Q}[I] = 1$.
\end{proof}

 \begin{corollary}\label{cor:evap_sing}
   Let $\bgo$ be a random network which evaporates under the action of a vertex-shift $f$. Let $\calp_0^f$ be a $f$-probability of $\calp_0$ (the distribution of $\bgo$) which satisfies Mecke's invariant measure equation, i.e., $(\theta_f)^* \calp_0^f = \calp_0^f$. Then, $\calp_0^f$ is singular with respect to the distribution $\calp_0$ of $\bgo$.
 \end{corollary}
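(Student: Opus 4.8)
The plan is to combine the two preceding lemmas, Lemma~\ref{lm:evap_lim2} and the definition of evaporation, and then observe that $J$ and $I$ carry disjoint full-measure sets for the two measures. First I would invoke Lemma~\ref{lm:evap_lim2}: since $\calp_0^f$ satisfies Mecke's invariant measure equation $(\theta_f)^*\calp_0^f = \calp_0^f$, that lemma gives $\calp_0^f[I]=1$, equivalently $\calp_0^f[J]=0$. On the other hand, by hypothesis $\bgo$ evaporates under $f$, which by definition says exactly that $\calp_0[J] = \mathbb{P}[\bgo \in J] = 1$, so $\calp_0[I] = \calp_0[J^c] = 0$.

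Now the two measures are carried by complementary sets: $\calp_0^f$ is concentrated on the measurable set $I$, while $\calp_0$ is concentrated on its complement $J = I^c$ (recall $J^c = I$ was established in the discussion preceding Lemma~\ref{lm:evap_lim}, and $J$, $I$ are measurable since each $A_m = \{[G,o]\in\calg_* : d_m(o)=0\}$ is measurable). Two probability measures on $\calg_*$ that assign full mass to disjoint measurable sets are mutually singular by the very definition of singularity; that is precisely $\calp_0^f \perp \calp_0$. I would write this out as: for every measurable $A \subseteq \calg_*$, $\calp_0^f[A] = \calp_0^f[A \cap I]$ and $\calp_0[A] = \calp_0[A \cap J]$, and $I \cap J = \emptyset$, so taking $A = I$ exhibits a set with $\calp_0^f[I]=1$ and $\calp_0[I]=0$, which witnesses singularity.

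There is essentially no obstacle here: the corollary is a one-line consequence once Lemma~\ref{lm:evap_lim2} and the definition of evaporation are in hand. The only point requiring a word of care is measurability of $I$ (and hence of $J$), but this has already been recorded in the text via the sets $A_m$ and the relation $I = J^c = \bigcap_{m\ge 1} A_m^c$. So the proof is just: apply Lemma~\ref{lm:evap_lim2} to get $\calp_0^f[I]=1$; use evaporation to get $\calp_0[I]=0$; conclude $\calp_0^f \perp \calp_0$.

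\begin{proof}
  Since $\bgo$ evaporates under $f$, we have $\calp_0[J] = \mathbb{P}[\bgo \in J] = 1$, and therefore $\calp_0[I] = \calp_0[J^c] = 0$.
  On the other hand, $\calp_0^f$ satisfies Mecke's invariant measure equation $(\theta_f)^*\calp_0^f = \calp_0^f$, so by Lemma \ref{lm:evap_lim2}, $\calp_0^f[I] = 1$.
  Thus $I$ is a measurable subset of $\calg_*$ with $\calp_0^f[I] = 1$ and $\calp_0[I] = 0$, which shows that $\calp_0^f$ is singular with respect to $\calp_0$.
\end{proof}
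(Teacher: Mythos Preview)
Your proof is correct and essentially identical to the paper's: both invoke the definition of evaporation to get $\calp_0[J]=1$ and Lemma~\ref{lm:evap_lim2} to get $\calp_0^f[J]=0$, then conclude singularity. The only cosmetic difference is that you phrase the witness for singularity as $I$ (with $\calp_0^f[I]=1$, $\calp_0[I]=0$) while the paper uses the complementary set $J$.
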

 \begin{proof}
    Since $\bgo$ evaporates under $f$, $\calp_0[J]=1$.
   By Lemma \ref{lm:evap_lim2}, $\calp_0^f[J] = 0$.
   Therefore, $\calp_0^f$ is singular with respect to $\calp_o$.
 \end{proof}

 \begin{corollary}\label{cor_f_prob_sing}
   Let $\bgo$ be  a random network which evaporates under the action of a vertex-shift $f$. Assume that the vertex-shift probabilities converge to a unique $f$-probability $\calp_0^f$ and $\theta_f$ is continuous at $\calp_0^f$. Then, $\calp_0^f$ is singular with respect to the distribution $\calp_0$ of $\bgo$.
 \end{corollary}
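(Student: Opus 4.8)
The plan is to reduce the statement to Corollary \ref{cor:evap_sing}. That corollary already produces the desired singularity from two ingredients: that $\bgo$ evaporates under $f$ (which is assumed here), and that the $f$-probability $\calp_0^f$ satisfies Mecke's invariant measure equation $(\theta_f)_* \calp_0^f = \calp_0^f$. So the only thing left to establish is this invariance, after which Corollary \ref{cor:evap_sing} applies verbatim.

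To prove $(\theta_f)_* \calp_0^f = \calp_0^f$, first recall that by the definition of the vertex-shift probabilities one has $\calp_0^{f,n+1} = (\theta_f)_* \calp_0^{f,n}$ for every $n \geq 0$ (with $\calp_0^{f,0} = \calp_0$). By hypothesis $\calp_0^{f,n} \to \calp_0^f$ weakly as $n \to \infty$; since weak convergence of probability measures on the Polish space $\calg_*$ is metrizable, the shifted sequence $(\calp_0^{f,n+1})_{n \geq 0}$ also converges weakly to $\calp_0^f$. On the other hand, the discontinuity set of $\theta_f$ is $\calp_0^f$-null (this is the meaning we attach to ``$\theta_f$ continuous at $\calp_0^f$''), so the Mann--Wald theorem \cite{bhattacharyaRandomWalkBrownian2021} gives $(\theta_f)_* \calp_0^{f,n} \to (\theta_f)_* \calp_0^f$ weakly. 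Comparing these two limits and using uniqueness of weak limits yields $(\theta_f)_* \calp_0^f = \calp_0^f$.

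With the invariance in hand, apply Corollary \ref{cor:evap_sing}: since $\bgo$ evaporates under $f$ we have $\calp_0[J] = 1$, where $J = \bigcup_{m \geq 1}\{\go \in \calg_* : d_m(o) = 0\}$ is the set of rooted networks whose root has finitely many descendants; and since $\calp_0^f$ satisfies Mecke's equation, Lemma \ref{lm:evap_lim2} gives $\calp_0^f[I] = 1$, equivalently $\calp_0^f[J] = 0$, where $I = J^c$. Thus $\calp_0$ and $\calp_0^f$ are carried by the disjoint Borel sets $J$ and $I$, and hence are mutually singular.

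The only delicate point is the appeal to the Mann--Wald theorem; it is precisely to license this step that the hypothesis of $\calp_0^f$-almost sure continuity of $\theta_f$ is imposed. Everything else is a routine passage to the limit together with direct applications of the two earlier lemmas.
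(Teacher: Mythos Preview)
Your proof is correct and follows essentially the same approach as the paper: establish $(\theta_f)_*\calp_0^f = \calp_0^f$ by passing the relation $\calp_0^{f,n+1} = (\theta_f)_*\calp_0^{f,n}$ to the limit using the assumed continuity of $\theta_f$ at $\calp_0^f$, then invoke Corollary~\ref{cor:evap_sing}. The paper's version is terser (it does not explicitly name the Mann--Wald theorem), but the argument is identical.
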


 \begin{proof}
   Let $\{\calp_0^{f,n}\}$ be the vertex-shift-probabilities of $\calp_0$.
   Since, for all \(n\geq 1\), $(\theta_f)_*\calp_0^{f,n} = \calp_0^{f,n+1}$, the sequence \(((\theta_f)_*\calp_0^{f,n})_n\) converges to $\calp_0^f$.
   Since $\theta_f$ is continuous at $\calp_o^f$, we have $(\theta_f)_*\calp_0^f = \calp_0^f$.
   The statement follows from Corollary \ref{cor:evap_sing}.
 \end{proof}

\section{Periodicity}\label{sec_periodicity}

 In the previous Section, we saw conditions under which an $f$-probability is singular with respect to the initial distribution.
In this section, we show that if the vertex-shift is periodic then $f$-probabilities are absolutely continuous with respect to the initial distribution (see Theorem \ref{thm:1_periodic}, Remark \ref{rmk:p_periodic}).
\begin{definition}
  A vertex-shift $f$ is said to be \textbf{periodic} if for all $G$ and $o \in V(G)$, there exists positive integers $k=k(o)$ and $p=p(o)$ such that
  $$\forall n\geq k,\ f_G^{n+p}(o)=f^n(o).$$
\end{definition}
\begin{definition}
  A vertex-shift $f$ is said to be \textbf{$p$-periodic} if $p$ does not depend on $\go$.
\end{definition}
A vertex \(o \in V(G)\) is called a \textbf{trap} of \(G\) with respect to a vertex-shift $f$ if $f_G(o)=o$.
For the identity vertex-shift, every vertex of every network is a trap.
Note that, for any vertex-shift \(f\), the collection of set of traps of networks \(S_{G}:= \{u\in V(G):f_G(u)=u\}\) is a covariant subset.

\begin{lemma}
  If a vertex-shift $f$ is $1$-periodic a.s. on a random network $\bgo$ then the limit $\lim_{n \rightarrow \infty}\bgfno$ exists and is supported on the traps, i.e., \(\mathbb{P}[\lim_{n \to \infty}[\mathbf{G},f^n(\mathbf{o})] \in B]=1\), where \(B := \{[G,o] \in \mathcal{G}_*: o \text{ is a trap}\}\).
\end{lemma}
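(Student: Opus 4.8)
The statement is essentially a direct consequence of unwinding the definition of $1$-periodicity, the only real content being to check that the (pointwise) limit is a well-defined random element of $\mathcal{G}_*$ landing in the measurable set $B$. The plan is to show that for almost every realization the sequence $\bigl([\mathbf{G},f^n(\mathbf{o})]\bigr)_{n\geq 0}$ in $\mathcal{G}_*$ is eventually constant and equal to a rooted network whose root is a trap.

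\textbf{Step 1: unwind $1$-periodicity.} By the definition of periodic together with $p$-periodicity with $p=1$, for $\mathcal{P}_0$-a.e.\ $[G,o]$ there is an integer $k=k(o)$ with $f_G^{n+1}(o)=f_G^{n}(o)$ for all $n\geq k$. By induction this gives $f_G^{n}(o)=f_G^{k}(o)$ for every $n\geq k$. Writing $w:=f_G^{k}(o)$, one has $f_G(w)=f_G\bigl(f_G^{k}(o)\bigr)=f_G^{k+1}(o)=f_G^{k}(o)=w$, so $w$ is a trap of $G$ with respect to $f$, and hence $[G,w]\in B$ with $B=\{[G,o]\in\mathcal{G}_*:o\text{ is a trap}\}$ as in the statement.

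\textbf{Step 2: eventual constancy and convergence.} From Step 1, for a.e.\ realization the sequence $[\mathbf{G},f^n(\mathbf{o})]=\theta_f^n\bigl([\mathbf{G},\mathbf{o}]\bigr)$ equals the fixed element $[\mathbf{G},w]$ for all $n\geq k(\mathbf{o})$. Any sequence in a metric space that is eventually constant converges to that constant; since $\mathcal{G}_*$ carries the local topology induced by its metric, the limit $\lim_{n\to\infty}[\mathbf{G},f^n(\mathbf{o})]$ exists and equals $[\mathbf{G},w]\in B$ almost surely.

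\textbf{Step 3: measurability and conclusion.} To make sense of ``$\mathbb{P}[\lim\in B]=1$'' I would note that each $\theta_f^n$ is measurable (this follows from the measurability axiom in the definition of a vertex-shift, exactly as is used throughout when pushing forward $\mathcal{P}_0$), so $\omega\mapsto\theta_f^n([\mathbf{G}(\omega),\mathbf{o}(\omega)])$ is measurable for every $n$; a pointwise limit of measurable maps into the Polish space $\mathcal{G}_*$ is measurable, hence $\omega\mapsto\lim_n[\mathbf{G},f^n(\mathbf{o})]$ is a bona fide random network. Finally $B$ is measurable: it is the set on which the covariant subset of traps $S_G=\{u\in V(G):f_G(u)=u\}$ contains the root, and $[G,o]\mapsto\mathbf{1}\{o\in S_G\}$ is measurable (again by the vertex-shift measurability axiom, being the restriction of $[G,u,v]\mapsto\mathbf{1}\{f_G(u)=v\}$ to $u=v=o$). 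Combining with Step 2 gives $\mathbb{P}\bigl[\lim_{n\to\infty}[\mathbf{G},f^n(\mathbf{o})]\in B\bigr]=1$, which is the claim.

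\textbf{Main obstacle.} There is no substantial difficulty: the argument is almost immediate once the definition of $1$-periodicity is unpacked. The only points requiring any care are (i) checking that the $1$-periodicity forces stabilization to a \emph{trap} (Step 1), and (ii) the measurability bookkeeping (Step 3), both of which are handled by the vertex-shift axioms already in force in the excerpt.
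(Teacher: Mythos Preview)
Your proof is correct and follows essentially the same approach as the paper: both arguments unwind the definition of $1$-periodicity to conclude that the sequence $([\mathbf{G},f^n(\mathbf{o})])_n$ is eventually constant equal to $[\mathbf{G},f^{k(\mathbf{o})}(\mathbf{o})]$, and that $f^{k(\mathbf{o})}(\mathbf{o})$ is a trap. The paper's proof is a terse three lines; your Steps 2 and 3 spell out convergence and measurability details that the paper simply takes for granted.
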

\begin{proof}
  Since $f$ is $1$-periodic a.s., for $\calp_0$-a.s. $\go \in \calg_*$ there exists $k = k(o)$ such that
  $$\forall n\geq k, f_G^{n+1}(o)=f_G^n(o)=f_G^k(o).$$
Therefore, \(f_G^k(o)\) is a trap. Hence, $\mathbb{P}[\lim_{n \rightarrow \infty} \bgfno \in B]=1$.
\end{proof}

In view of the above lemma, for a random network $\bgo$ and a $1$-periodic vertex-shift $f$, let $\bbargo = \lim_{n \rightarrow \infty}\bgfno$ and $\calp_0^{\infty}$ be the distribution of $\bbargo$.
\begin{theorem}\label{thm:1_periodic}
  Let $\bgo$ be a random network, \(\mathcal{P}_0\) be its distribution and $f$ be an a.s. $1$-periodic vertex-shift on $\bgo$.
  Then the $f$-probability $\calp_0^f$ of $\calp_0$ exists and $\calp_0^f$ is the distribution of $\bbargo$.

  In addition, if $\bgo$ is unimodular then the $f$-probability $\calp_0^f$ is absolutely continuous with respect to the initial distribution $\calp_0$.
  The limit function $d_{\infty}(o)= \lim_{n \rightarrow \infty}d_n(o)$ exists and it is finite for $\calp_o^f$-a.s. $\go$.
  The Radon-Nikodym derivative of $\calp_0^f$ with respect to $\calp_0^f$ is given by
  $$\frac{d \calp_0^f}{d \calp_0}\go = d_{\infty}(o) \mathbf{1}\{\text{o is a trap}\}.$$
\end{theorem}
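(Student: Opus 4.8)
The plan is to prove the two assertions in turn, the first by an elementary almost-sure convergence, the second by a single application of the Mass Transport Principle. For the first assertion, the lemma stated just above the theorem already gives, $\calp_0$-a.s., that the orbit $(f^n(\mathbf{o}))_{n\ge0}$ is eventually constant, so that $\bgfno$ is eventually equal to $\bbargo=\lim_{n\to\infty}\bgfno$, which is supported on traps. Almost sure convergence in $\mathcal{G}_*$ entails convergence in distribution, hence $\calp_0^{f,n}=\calp\circ\theta_f^{-n}$ converges weakly to the law $\calp_0^{\infty}$ of $\bbargo$; so the $f$-probability exists and equals $\calp_0^{\infty}$. No unimodularity is needed here.

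For the second assertion I would assume $\bgo$ unimodular and write $t_G(o):=\lim_n f_G^n(o)$ for the trap at which the orbit of $o$ settles (defined $\calp_0$-a.s.), so that $\calp_0^{\infty}$ is the law of $[\mathbf{G},t_{\mathbf{G}}(\mathbf{o})]$. Given a non-negative measurable $h:\mathcal{G}_*\to\mathbb{R}_{\ge0}$, I would apply the Mass Transport Principle to
\[
\phi([G,x,y]):=h([G,y])\,\mathbf{1}\{f_G(y)=y\}\,\mathbf{1}\{f_G^n(x)=y\text{ for some }n\ge 0\},
\]
a non-negative measurable function on $\mathcal{G}_{**}$. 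On the a.s.\ event that the orbit of $\mathbf{o}$ is eventually constant, the only trap visited by $(f^n(\mathbf{o}))_n$ is $t_{\mathbf{G}}(\mathbf{o})$, so $\sum_{u\in V(\mathbf{G})}\phi([\mathbf{G},\mathbf{o},u])=h([\mathbf{G},t_{\mathbf{G}}(\mathbf{o})])$, and therefore
\begin{align*}
\int_{\mathcal{G}_*}h\,d\calp_0^{\infty}
&=\mathbb{E}\Big[\sum_{u\in V(\mathbf{G})}\phi([\mathbf{G},\mathbf{o},u])\Big]
 =\mathbb{E}\Big[\sum_{u\in V(\mathbf{G})}\phi([\mathbf{G},u,\mathbf{o}])\Big]\\
&=\mathbb{E}\Big[h([\mathbf{G},\mathbf{o}])\,\mathbf{1}\{f_{\mathbf{G}}(\mathbf{o})=\mathbf{o}\}\,\#\{u\in V(\mathbf{G}):f_{\mathbf{G}}^n(u)=\mathbf{o}\text{ for some }n\ge 0\}\Big].
\end{align*}
When $\mathbf{o}$ is a trap the sets $\{u:f_{\mathbf{G}}^n(u)=\mathbf{o}\}$ are non-decreasing in $n$ (exactly as in the proof of Lemma~\ref{lemma_abs_cont}) and have cardinality $d_n(\mathbf{o})$, so the inner count equals $\lim_n d_n(\mathbf{o})=:d_{\infty}(\mathbf{o})$. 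Hence the right-hand side is $\mathbb{E}[h([\mathbf{G},\mathbf{o}])\,d_{\infty}(\mathbf{o})\,\mathbf{1}\{\mathbf{o}\text{ is a trap}\}]$ for every non-negative measurable $h$, which says precisely that $\calp_0^f=\calp_0^{\infty}$ is absolutely continuous with respect to $\calp_0$ with density $d_{\infty}(o)\mathbf{1}\{o\text{ is a trap}\}$. Taking $h\equiv1$ yields $\mathbb{E}[d_{\infty}(\mathbf{o})\mathbf{1}\{\mathbf{o}\text{ trap}\}]=1$, so $d_{\infty}$ is $\calp_0$-a.s.\ finite, a fortiori $\calp_0^f$-a.s.\ finite.

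I would complement this with a structural observation that makes the limit function transparent: the covariant set of vertices with eventually constant $f$-orbit contains $\mathbf{o}$ a.s., hence by Lemma~\ref{lemma_non_empty_covariant_set} it a.s.\ contains every vertex, and then Theorem~\ref{thm_foil_classification} forces every component of $G^f$ to be finite (classes $\mathcal{I}/\mathcal{I}$ and $\mathcal{I}/\mathcal{F}$ both contain vertices whose $f$-orbit is infinite, hence not eventually constant), with a single trap reached from all of its vertices. In particular $d_n(o)=0$ for all large $n$ whenever $o$ is not a trap, so $d_{\infty}(o)=\lim_n d_n(o)$ in fact exists $\calp_0$-a.s.\ and equals $d_{\infty}(o)\mathbf{1}\{o\text{ trap}\}$, while for a trap $o$ it is the size of the component of $o$.

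The main obstacle, and the only place where the probabilistic input is really used, is this interplay between $1$-periodicity, unimodularity and the foil classification: one must be careful that ``$1$-periodic at the root a.s.'' propagates to all vertices and rules out the two infinite classes, and that the mass-transport test function $\phi$ and the counting convention for $d_n$ are matched to those of Lemma~\ref{lemma_abs_cont}. Once that bookkeeping is in place, the Radon--Nikodym identity is an immediate consequence of the Mass Transport Principle, and the finiteness of $d_\infty$ drops out of the normalisation $h\equiv1$.
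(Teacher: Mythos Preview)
Your proof is correct and follows essentially the same strategy as the paper: almost-sure convergence for the first part, a single mass-transport with a trap-indicator test function for the Radon--Nikodym derivative, and the foil classification theorem to identify the component of the root as $\mathcal{F}/\mathcal{F}$. The only minor differences are cosmetic: you deduce finiteness of $d_\infty$ from the normalisation $h\equiv 1$ (the paper instead reads it off directly from the finiteness of the $\mathcal{F}/\mathcal{F}$ component), and you route the propagation of $1$-periodicity through Lemma~\ref{lemma_non_empty_covariant_set} whereas the paper observes directly that finitely many ancestors forces class $\mathcal{F}/\mathcal{F}$; also, your parenthetical pointer to Lemma~\ref{lemma_abs_cont} for the monotonicity of $D_n(o)$ is misplaced, as that monotonicity is argued in the proof of the present theorem itself.
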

\begin{proof}
  We prove that $\calp_0^{\infty}$ (the distribution of $\bbargo$) is the $f$-probability of $\calp_0$.
  For any bounded and continuous function $h:\calg_* \rightarrow \mathbb{R}$,
  \begin{align*}
    \int_{\calg_*}h \go \ d \calp_0^{\infty} &= \mathbb{E}[h \bbargo]=\mathbb{E}[h(\lim_{n \rightarrow \infty}\bgfno)]\\
   &=\mathbb{E}[\lim_{n \rightarrow \infty}h \bgfno]  \text{ (by continuity of $h$)} \\
   &= \lim_{n \rightarrow \infty}\mathbb{E}[h(\bgfno)].
  \end{align*}
  In the last step of the above equation, we used the boundedness of $h$ and the dominated convergence theorem.
  Thus, $\calp_0^{\infty}$ is the $f$-probability of $\calp_0$.

  We now assume that \([\mathbf{G},\mathbf{o}]\) is unimodular.

  \noindent
  \textit{The limit function $d_{\infty}$:}

  The sequence of functions $(d_n(o))_{n \in \mathbb{N}}$ is non-decreasing on the set of traps due to the following reason. 
  Let \(o \in V(G)\) be a trap, and $u \in D_n(o)$ for some $n \in \mathbb{N}$. Then, since $f_G^{n+1}(u) = f_G^n(u)=o$, we have $D_n(o) \subseteq D_{n+1}(o)$ and $d_n(o) \leq d_{n+1}(o)$.

  Since the random network $\bbargo$ is supported on the traps, the random variable
  $$d_{\infty}(\bbargo) = \lim_{n \rightarrow \infty}d_n(\bbargo)$$
  exists a.s.

  As for the finiteness of the random variable $d_{\infty}(\bbargo)$, let
	\[\mathcal{A} := \{[G,o] \in \mathcal{G}_*: f_G^{n+1}(o)=f_G^n(o) \text{ for some $n= n(o) \in \mathbb{N}$ } \}.\]
  Then, for any $[G,o] \in \mathcal{A}$, the root \(o\) has finitely many ancestors in the $f$-graph.
  Therefore, by the unimodular classification theorem (\cite{baccelliEternalFamilyTrees2018a}), the connected component \(C(o)\) of \(o\) is of type $\mathcal{F}/\mathcal{F}$ as it is the only case in which vertices have finitely many ancestors.
  This implies that $C(o)$ is finite.
  But the connected component of $f^{n(o)}(o)$ in the $f$-graph is also $C(o)$ and $f^{n(o)}(o)$ is a trap.
Thus, $d_{\infty}(\lim_{m \to \infty}[G,f^m(o)]) = \#D(f^{n(o)}(o))=\#C(o)$ is finite.

	\noindent
  \textit{Radon-Nikodym derivation of $\calp_0^f$ with respect to $\calp_0$:}

  Let $h:\calg_* \rightarrow \mathbb{R}_{\geq 0}$ be any measurable function. Then,
  \begin{align}
    \int_{\calg_*} &h(\go)\ d\calp_0^f = \mathbb{E}[h(\bbargo)]  = \mathbb{E}\left[ \sum_{u \in V(\bar{\textbf{G}})}h([\bar{\textbf{G}},u])\mathbf{1}\{\bar{\textbf{o}} = u\}\right] \nonumber \\
  &=\mathbb{E}\left[ \sum_{u \in V(\textbf{G})}h([\textbf{G},u])\mathbf{1}\{f^{n(\textbf{o})}(\textbf{o}) = u\}\mathbf{1}\{f^{n(\textbf{o})}(\textbf{o}) \text{ is a trap}\}\right] \nonumber \\
  &=\mathbb{E}\left[ \sum_{u \in V(\textbf{G})}h([\textbf{G},\textbf{o}])\mathbf{1}\{f^{n(u)}(u)=\textbf{o}\}\mathbf{1}\{f^{n(u)}(u) \text{ is a trap}\}\right] \label{eq_trap_sum}\\
    &=\mathbb{E}\left[h(\bgo)\ \#D(\textbf{o})\mathbf{1}\{\textbf{o} \text{ is a trap}\}\right] \nonumber\\
    &=\mathbb{E}\left[h(\bgo)\lim_{m \rightarrow \infty}d_m(\textbf{o})\mathbf{1}\{\textbf{o} \text{ is a trap}\}\right] \nonumber \\
   &=\mathbb{E}[h(\bgo)\ d_{\infty}(\textbf{o})\mathbf{1}\{ \textbf{o}\text{ is a trap}\}].\nonumber
  \end{align}
  We used unimodularity to get Eq. (\ref{eq_trap_sum}).
  Thus, we obtain the Radon-Nikodym derivative of $\calp_0^f$ with respect to $\calp_0$,
  $$\frac{d \calp_0^f}{d \calp_0}\go = d_{\infty}(o) \mathbf{1}\{\text{o is a trap}\}.$$
\end{proof}

\begin{remark}
  From the proof of Theorem \ref{thm:1_periodic}, it follows that \(d_{\infty}(\mathbf{o}) = \#D(\mathbf{o})\).
\end{remark}

\begin{remark}\label{rmk:p_periodic}
  Let \(p\) be a positive integer.
  If a vertex-shift $f$ is a.s. $p$-periodic on a random network $\bgo$ then for $\calp_0$-a.s. $\go \in \calg_*$, there exists a $k=k(o)\in \mathbb{N}$ such that $f_G^{n+p}(o) = f_G^n(o)$ for all $n \geq k$.
  In particular, $f_G^{np+i+p}(o) = f_G^{np+i}(o)$ for all $n \geq k$ and for any $i=0,1, \cdots, p-1$.
  So, the vertex-shift $f^p$ is a.s. $1$-periodic on the random networks $\{\bgo, [\textbf{G},f(\textbf{o})], \cdots, [\textbf{G},f^{p-1}(\textbf{o})]\}$.
  By Theorem \ref{thm:1_periodic}, for each \(i=0,1,\cdots,p-1\), \([\mathbf{G},f^i(\mathbf{o})]\) has unique \(f^p\)-probability.
  So, \([\mathbf{G},\mathbf{o}]\) can have at most \(p\) \(f\)-probabilities.
  In addition, if \([\mathbf{G},\mathbf{o}]\) is unimodular, then all of its \(f\)-probabilities are absolutely continuous with respect to \(\mathcal{P}_0\) by Lemma \ref{lemma_abs_cont} and by Theorem \ref{thm:1_periodic}.
 \end{remark}

 \section{Examples and their F-probabilities}

 In this section, we compute the \(F\)-probabilities of some Family Trees.
A few remarks are given in the respective subsections about the application of results of the previous section to the examples studied here.
Among the examples, the Canopy tree and the unimodular Eternal Galton-Watson Tree are taken from \cite{baccelliEternalFamilyTrees2018a}.
We have already introduced two new unimodular Family Trees namely, the typically rooted Galton-Watson Tree and the unimodularised bi-variate Eternal Kesten Tree.
The \(F\)-probability of the latter is shown to be the bi-variate Eternal Kesten Tree in Proposition \ref{20230314185620}.
We introduce Eternal Kesten Tree, a special instance of bi-variate Eternal Kesten Tree, and prove that it is the \(F\)-probability of the Eternal Galton-Watson Tree (in Proposition \ref{prop:F_prob_EGWT_EKT}).
We also introduce the size-biased Galton-Watson Tree, and show that it is the \(F\)-probability of the typically rooted Galton-Watson Tree.

The results from this section together with FB Lemma \ref{thm_forward_backward} will be used in the next chapter to compute the \(R\)-probability.

 \subsection{Canopy tree}
The Canopy tree with the number of children d, $C_d$ is an Eternal Family Tree with vertices partitioned into infinitely many levels $L_0,L_1,L_2,\cdots$ such that, for any $i> 0$, every vertex $v$ in $L_i$ has $d$ children in $L_{i-1}$, and a unique parent in $L_{i+1}$; the vertices of \(L_0\) do not have any children but each vertex of \(L_0\) has a unique parent in \(L_1\).
The tree is deterministic, but the root is chosen randomly to obtain a random rooted Family Tree \([C_d,\mathbf{o}]\).

Since, for any two vertices \(u,v\) of the same level \(L_i\) (for any \(i \geq 0\)), the equivalent class of \((C_d,u)\) and \((C_d,v)\) are one and the same, the set of probabilities \(\{p_i = \mathbb{P}[\textbf{o} \in L_i]:i \geq 1\}\) fully characterizes the distribution of \([C_d,\mathbf{o}]\).
Thus, in order to describe the distribution of the random root $\textbf{o}$ of $C_d$, it is enough to define $\mathbb{P}[\textbf{o} \in L_i]$.
Let
$$p_i = \mathbb{P}[\textbf{o} \in L_i], \ i \geq 0.$$

Let $T_{d+1}$ be the directed tree whose vertices have one out-degree and $d$ in-degree, i.e., $T_{d+1}$ is the $d+1$ regular tree with one selected end.
Consider the parent vertex-shift on \(C_d\).
In Proposition \ref{prop_f_prob_canopy}, we show that the randomly rooted Canopy tree \([C_d,\mathbf{o}]\) has a unique \(F\)-probability \(T_{d+1}\).

Recall the notations of Section \ref{sec_canonical_prob} of Chapter \ref{chapter_prelim}. 

\begin{lemma}
  Let $u$ be a vertex of the canopy tree $C_d$ that belongs to level $L_i$.
  Then
  $$((C_d,u)_r,u) \sim ((T_{d+1},v)_r,v) \text{ for all } 0 \leq r \leq i.$$
\end{lemma}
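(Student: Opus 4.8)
The statement compares the $r$-neighborhood of a vertex $u \in L_i$ in the Canopy tree $C_d$ with the $r$-neighborhood of a vertex $v$ in the regular tree $T_{d+1}$ with a selected end, for $0 \le r \le i$. The natural approach is to construct an explicit rooted isomorphism between $(C_d,u)_r$ and $(T_{d+1},v)_r$ and verify it respects the graph structure. First I would recall the structure of both trees: in $C_d$, every vertex of $L_j$ with $j \ge 1$ has exactly $d$ children (in $L_{j-1}$) and exactly one parent (in $L_{j+1}$), while vertices of $L_0$ have no children and one parent; in $T_{d+1}$, every vertex has exactly $d$ children and one parent (toward the selected end). So the only structural difference between the two trees is the presence of the ``leaf level'' $L_0$ in $C_d$.

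The key observation is that for a vertex $u \in L_i$, every vertex within graph-distance $r \le i$ of $u$ lies in a level $L_j$ with $j \ge i - r \ge 0$. More precisely, I would argue by tracking levels along paths: moving from $u$ toward an ancestor decreases the level by $1$ each step, moving toward a descendant decreases or keeps bounded; in general any vertex $w$ with $d_{C_d}(u,w) = k$ lies in level $L_j$ for some $j \ge i - k$ (since to drop the level one must take a ``down'' step, and one cannot take more than $k$ steps total). Hence if $d_{C_d}(u,w) \le r \le i$, then $j \ge i - r \ge 0$, so $w \notin L_0$ unless $j=0$, and even then $w$ behaves inside the ball exactly like a vertex of $T_{d+1}$ — the distinguishing feature of $L_0$ (having no children) is invisible at radius $r$ because any child of such a $w$ would be at distance $> r$ from $u$. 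Therefore within the ball of radius $r$ the branching structure of $C_d$ around $u$ is indistinguishable from that of $T_{d+1}$ around any of its vertices $v$: every vertex in the ball that is at distance $< r$ from $u$ has its full complement of one parent and $d$ children present, and this matches $T_{d+1}$.

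Concretely, I would fix any $v \in V(T_{d+1})$ and define $\phi: (C_d,u)_r \to (T_{d+1},v)_r$ by induction on distance from the root: set $\phi(u) = v$; having defined $\phi$ on the sphere of radius $k < r$ compatibly with adjacency, extend it to radius $k+1$ by sending the parent of a vertex $w$ (if at distance $k+1$ from $u$) to the parent of $\phi(w)$, and the $d$ children of $w$ (in their natural correspondence) to the $d$ children of $\phi(w)$; this is well-defined and bijective because both trees are trees (no cycles, so each radius-$(k+1)$ vertex has a unique neighbor at radius $k$), and because the degree counts match by the level argument above. The map $\phi$ is a rooted graph isomorphism on the radius-$r$ balls; since $C_d$ is a graph (constant mark), the network conditions are automatic. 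This gives $((C_d,u)_r, u) \sim ((T_{d+1},v)_r, v)$.

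The main obstacle — really the only subtle point — is the level-tracking lemma: proving rigorously that no vertex of $L_0$ within distance $r \le i$ of $u$ can be ``caught'' behaving like a leaf, i.e., that its missing children would all be outside the ball. I would handle this by the inequality $j \ge i - d_{C_d}(u,w)$ together with the remark that a vertex in $L_0 \cap (C_d,u)_r$ necessarily sits at distance exactly $i$ from $u$ (so at the boundary sphere when $r = i$, and does not occur at all when $r < i$), hence has no required neighbors at distance $\le r-1$; thus the truncated ball genuinely looks regular. Everything else is a routine induction.
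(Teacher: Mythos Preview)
Your proof is correct and follows essentially the same strategy as the paper: both build the rooted isomorphism by induction on distance from $u$, and both rely on the same key fact that no vertex strictly inside the ball of radius $r \le i$ can sit in level $L_0$ (the paper phrases this as ``since $i \ge r$, the vertex $F(w) \notin L_0$'', while you phrase it via the level inequality $j \ge i - k$). One slip: you wrote that moving toward an ancestor \emph{decreases} the level, whereas in $C_d$ the parent of a vertex in $L_j$ lies in $L_{j+1}$, so it \emph{increases} the level; your inequality $j \ge i - k$ is nonetheless correct, since a path of length $k$ can contain at most $k$ downward steps.
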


\begin{proof}
  We prove the lemma by induction on $r$.
  For $r=0$, $((C_d,u)_r,u) \sim ((T_{d+1},v)_r,v)$.
  Let us assume that the statement is true for $r-1 \geq 0$ and let $\rho: (C_d,u)_{r-1} \rightarrow (T_{d+1},v)_{r-1}$ be a network isomorphism such that $\rho(u)=v$.
  To show $((C_d,u)_r,u) \sim ((T_{d+1},v)_r,v)$, it is enough to extend the isomorphism $\rho$ to every vertex in $ (C_d,u)_r \backslash (C_d,u)_{r-1}$.

  If $w \in (C_d,u)_r \backslash (C_d,u)_{r-1}$ then $w$ can only be one of the three cases: an ancestor of $u$, or a descendant of $u$, or a descendant of an ancestor of $u$.
  \begin{itemize}
  \item If $w$ is an ancestor of $u$ then $w = F^r(u)$ and $F^{r-1}(u) \in (C_d,u)_{r-1}$.
    We extend $\rho$ to $w$ by assigning $\rho(w)$ to the parent of $\rho(F^{r-1}(u))$ in $T_{d+1}$.

  \item If $w$ is a descendant either of $u$ or, of an ancestor of $u$ then $F(w) \in (C_d,u)_{r-1}$.
    As $i \geq r$, the vertex $F(w) \not \in L_0$.
    Therefore, $F(w)$ has $d$-children and we can obtain a bijection between the children of $F(w)$ and the children of $\rho(F(w))$.
    We extend $\rho$ to the children of $F(w)$ by using this bijection.
  \end{itemize}
\end{proof}

Let $[C_d,u_i]$ be the equivalence class of any $u_i$ vertex in $L_i$.
By the above lemma, the distance between $[C_D,u_i]$ and $[T_{d+1},v]$ in $\mathcal{T}_*$ is bounded above by \(\frac{1}{2^i}\).
Thus, $[C_d,u_i] \rightarrow [T_{d+1},v]$ in $\mathcal{G}_*$ as $i$ tends to infinity.
We now show that the \(F\)-probability of a randomly rooted Canopy tree \([C_d,\mathbf{o}]\) is \([T_{d+1},v]\), for any vertex \(v\)  of \(T_{d+1}\).

\begin{proposition}\label{prop_f_prob_canopy}
  Let $[C_d,\textbf{o}]$ be the canopy tree with $\textbf{o}$ the root \(\mathbf{o}\) chosen randomly
  and $[C_d,\textbf{o}_i] \in \mathcal{T}_*$ be the canopy tree with random root $\textbf{o}_i$, where $\textbf{o}_i = F^i(\textbf{o}), \ i \geq 0 $.
  Then $[C_d,\textbf{o}_i]$ converges in distribution to $[T_{d+1},v]$ as \(i \to \infty\), for any vertex \(v\)  of \(T_{d+1}\).
\end{proposition}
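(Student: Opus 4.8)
The statement to prove is that $[C_d,\mathbf{o}_i] \to [T_{d+1},v]$ in distribution as $i \to \infty$, where $\mathbf{o}_i = F^i(\mathbf{o})$ and $\mathbf{o}$ is the random root of the Canopy tree. The plan is to reduce the convergence in distribution to a purely deterministic convergence that has, in effect, already been established by the preceding lemma, and then to control the distribution of the level $L_{j(i)}$ in which $\mathbf{o}_i$ lands.

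First I would recall that, by the lemma just before the statement, if $u \in L_j$ then $((C_d,u)_r,u) \sim ((T_{d+1},v)_r,v)$ for all $0 \leq r \leq j$, so that $d_{\mathcal{T}_*}([C_d,u],[T_{d+1},v]) \leq 2^{-j}$. Hence the equivalence class $[C_d,u]$ depends on $u$ only through the level $j$ containing $u$, and $[C_d,u_j] \to [T_{d+1},v]$ as $j \to \infty$, where $u_j$ denotes any vertex of $L_j$. Next I would observe that $\mathbf{o}_i = F^i(\mathbf{o})$ lies in level $L_{N+i}$, where $N$ is the (random) level of $\mathbf{o}$, because applying the parent vertex-shift once moves a vertex from $L_j$ to $L_{j+1}$ (every vertex has a unique parent one level up). Therefore $[C_d,\mathbf{o}_i] = [C_d,u_{N+i}]$ with $N \geq 0$ a.s., and since $N+i \to \infty$ a.s. as $i \to \infty$, we get $d_{\mathcal{T}_*}([C_d,\mathbf{o}_i],[T_{d+1},v]) \leq 2^{-(N+i)} \to 0$ almost surely.

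From this almost sure convergence, convergence in distribution follows immediately (a.s. convergence implies convergence in distribution). Concretely, for any bounded continuous $h$ on $\mathcal{T}_*$, dominated convergence gives $\mathbb{E}[h([C_d,\mathbf{o}_i])] \to \mathbb{E}[h([T_{d+1},v])] = h([T_{d+1},v])$, since the limit is the deterministic point mass at $[T_{d+1},v]$. Alternatively, one may invoke the theorem of \cite{nicolascurienRandomGraphs} cited in the Preliminaries: for each fixed $r \geq 0$, once $i$ is large enough that $\mathbb{P}[N+i \geq r]$ is close to $1$, the restriction $[C_d,\mathbf{o}_i]_r$ equals $[T_{d+1},v]_r$ with probability tending to $1$, so $[C_d,\mathbf{o}_i]_r \to [T_{d+1},v]_r$ in distribution, and the theorem upgrades this to convergence of $[C_d,\mathbf{o}_i]$ itself.

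The only genuinely non-routine point is the bookkeeping that $F$ increments the level by exactly one and that the level of the random root is a.s. finite (so that $N+i \to \infty$); both are immediate from the definition of $C_d$ and from $\sum_{j \geq 0} p_j = 1$. I do not expect a real obstacle here: the heavy lifting was done in the isomorphism lemma, and the remaining argument is the standard "almost sure convergence $\Rightarrow$ convergence in distribution" together with the identification of the target as a deterministic rooted tree, independent of which vertex $v$ is chosen since $T_{d+1}$ is vertex-transitive.
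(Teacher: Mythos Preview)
Your proof is correct and rests on the same key ingredient as the paper's---the preceding lemma giving $((C_d,u)_r,u)\sim((T_{d+1},v)_r,v)$ for $u\in L_j$ and $r\le j$, hence $[C_d,u_j]\to[T_{d+1},v]$ deterministically as $j\to\infty$. The execution differs slightly: you observe that $\mathbf{o}_i\in L_{N+i}$ with $N$ the (a.s.\ finite) level of $\mathbf{o}$, so $d_{\mathcal{T}_*}([C_d,\mathbf{o}_i],[T_{d+1},v])\le 2^{-(N+i)}\to 0$ almost surely, and then invoke ``a.s.\ $\Rightarrow$ in distribution''. The paper instead computes $\mathbb{E}[g([C_d,\mathbf{o}_i])]=\sum_{k\ge i} g_k\,p_{k-i}$ for a bounded continuous $g$ (using $\mathbb{P}[\mathbf{o}_i\in L_k]=p_{k-i}$) and runs an explicit $\epsilon$-argument to show this sum converges to $g([T_{d+1},v])$. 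Your route is shorter and in fact yields the stronger statement of almost sure convergence; the paper's route makes the dependence on the level distribution $(p_j)$ explicit but does not need it beyond $\sum_j p_j=1$, which is exactly what your argument uses as well.
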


\begin{proof}
  Let g be a continuous bounded function on $\mathcal{G}_*$, $u_k \in L_k$  be a vertex of $C_d$, $g_k:=g([C_d,u_k])$, and $a := g([T_{d+1},v])$.
  As $[C_d,u_k] \rightarrow [T_d,v]$ in $\mathcal{G}_*$ when $k$ tends to infinity, and by the continuity of the function $g$, $g_k$ converges to $a$.

  The probability, $\mathbb{P}[\textbf{o}_i \in L_k] = \mathbb{P}[F^i(\textbf{o}) \in L_k] = \mathbb{P}[\textbf{o} \in L_{k-i}]$ is given by
\[\mathbb{P}[\textbf{o}_i \in L_k] = \begin{cases}
      0 & k-i<0 \\
      p_{k-i} & k-i \geq 0.
   \end{cases}
\] 
and  we have
\begin{align*}
  \mathbb{E}[g([C_d,\textbf{o}_i])] &= \sum_{k=0}^{\infty}g([C_d,u_k]) \mathbb{P}[\textbf{o}_i \in L_k]=\sum_{k=i}^{\infty}g_k p_{k-i}.
\end{align*}

Since $g_i$ converges to $a$ as $i$ tends to infinity, given $\epsilon >0 $ there exists $N$ such that for all $i \geq N$, $|g_i-a|< \epsilon$.
Therefore, for such $i$,

\begin{align*}
 \left|\mathbb{E}[g([C_d,\textbf{o}_i])] - g([T_{d+1},v])\right| &= \left| \sum_{k=i}^{\infty}g_k p_{k-i} -a\right|= \left| p_0(g_i-a)+p_1(g_{i+1}-a)+\cdots \right|\\
 &\leq p_0|g_i-a|+p_1|g_{i+1}-a|+\cdots+p_k|g_{i+k}-a|+ \cdots\\
& < \epsilon \left(\sum_{k=0}^{\infty}p_k\right)< \epsilon.
\end{align*}
\end{proof}

\begin{remark}
  Observe that the map \(\theta_F\) induced by the parent vertex-shift \(F\) is continuous on \(\mathcal{T}_*\).
  It is also clear that the randomly rooted Canopy tree \([C_d,\mathbf{o}]\) evaporates under \(F\) as the root \(\mathbf{o}\) has a.s. finitely many descendants.
  Therefore, by Corollary \ref{cor_f_prob_sing}, the \(F\)-probability of \([C_d,\mathbf{o}]\) is singular with respect to the distribution of \([C_d,\mathbf{o}]\).
  This is indeed true since the directed regular tree \([T_{d+1},u]\) has infinitely many descendants.
\end{remark}

\subsection{Typically rooted Galton-Watson Tree (TGWT)}

Let \(\pi\) be a probability distribution on \(\{0,1,2,\cdots\}\) with mean \(m(\pi)<1\) and \([\mathbf{T}',\mathbf{o}']\) be a random Family Tree distributed as \(GW(\pi)\).
The \textbf{Size-Biased Galton-Watson Tree} with offspring distribution \(\pi\), denoted as \(SBGW(\pi)\) is a random Family Tree \([\bar{\mathbf{T}},\bar{\mathbf{o}}]\) whose distribution is given by:
\begin{equation}
  \mathbb{P}[[\bar{\mathbf{T}},\bar{\mathbf{o}}] \in A] = \frac{1}{\mathbb{E}[\#V(\mathbf{T}')]}\mathbb{E}[\#V(\mathbf{T}') \mathbf{1}_A([\mathbf{T}',\mathbf{o}'])],
\end{equation}
for every measurable set \(A\).
Note that \(\mathbb{E}[\#V(\mathbf{T})]\) is finite since \(m(\pi)<1\).

We show that the \(F\)-probability of \(TGWT(\pi)\) is \(SBGW(\pi)\) using Theorem \ref{thm:1_periodic}.

\begin{proposition}
  Let \(m(\pi)<1\).
  Then, the \(F\)-probability of \(TGWT(\pi)\) is \(SBGW(\pi)\).
\end{proposition}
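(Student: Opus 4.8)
The plan is to apply Theorem \ref{thm:1_periodic} after showing that the parent vertex-shift $F$ is a.s.\ $1$-periodic on the $TGWT(\pi)$. First I would recall the structure of $[\mathbf{T},\mathbf{o}]$ distributed as $TGWT(\pi)$: by construction the root has a geometrically distributed number of ancestors $F(\mathbf{o}),F^2(\mathbf{o}),\dots,F^N(\mathbf{o})$, with $N$ finite a.s.\ since $m(\pi)<1$ forces the success probability $1-m(\pi)$ to be positive. Hence a.s.\ there is a well-defined topmost vertex $F^N(\mathbf{o})$ which has no parent, so $F(F^N(\mathbf{o}))=F^N(\mathbf{o})$ and $F^{n+1}(\mathbf{o})=F^{n}(\mathbf{o})=F^N(\mathbf{o})$ for all $n\geq N$. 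Thus $F$ is a.s.\ $1$-periodic with $k(\mathbf{o})=N$ and $p=1$, and the trap reached is the root of the whole tree. Theorem \ref{thm:1_periodic} then guarantees that the $F$-probability $\mathcal{P}_0^F$ exists, equals the distribution of $[\bar{\mathbf{G}},\bar{\mathbf{o}}]:=\lim_{n\to\infty}[\mathbf{T},F^n(\mathbf{o})]$, and (since $TGWT(\pi)$ is unimodular by the Remark after its definition, or by Lemma \ref{lemma:f_graph_unimodular}) is absolutely continuous with respect to $\mathcal{P}_0$ with Radon--Nikodym derivative $d_\infty(o)\mathbf{1}\{o\text{ is a trap}\}$.

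Next I would identify this limit law explicitly. The trap is the unique parentless vertex of $\mathbf{T}$, call it $\mathbf{r}$; I need the law of $[\mathbf{T},\mathbf{r}]$. On the event that $\mathbf{o}$ has exactly $N$ ancestors, the top vertex $\mathbf{r}=F^N(\mathbf{o})$ is, by the construction of $TGWT(\pi)$, a vertex with no parent whose offspring count has distribution $\hat{\pi}$ and whose descendant subtrees below those children are independent $GW(\pi)$ trees, except that the line down to $\mathbf{o}$ is the spine along which the $\hat\pi$-rule is repeated. Concretely, $[\mathbf{T},\mathbf{r}]$ is precisely a size-biased Galton--Watson tree in Kesten's sense: the root reproduces with $\hat\pi$, one distinguished child continues the spine and again reproduces with $\hat\pi$, all other children start independent $GW(\pi)$ trees, and the spine terminates after finitely many ($N$, geometric) steps because $m(\pi)<1$ makes the spine subcritical-Kesten type and a.s.\ finite. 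This is exactly the description of $SBGW(\pi)$: indeed, by Theorem \ref{thm:1_periodic} and the Remark following it, $d_\infty(\mathbf{r})=\#D(\mathbf{r})=\#V(\mathbf{T})$, so for any measurable $A$,
\[
\mathcal{P}_0^F[A]=\mathbb{E}\!\left[\#V(\mathbf{T})\,\mathbf{1}\{\mathbf{o}\text{ is a trap}\}\,\mathbf{1}_A([\mathbf{T},\mathbf{o}])\right].
\]
Conditioning $[\mathbf{T},\mathbf{o}]$ on $\{\mathbf{o}\text{ is a trap}\}$, i.e.\ on $\mathbf{o}$ having no parent, yields a plain $GW(\pi)$ tree by Lemma \ref{lemma_conditioned_is_GW} (or directly from the construction, since that event has probability $1-m(\pi)$ and is independent of the descendant tree), and by the computation $\mathbb{P}[F(\mathbf{o})=\mathbf{o}]\,\mathbb{E}[\#V(\mathbf{T})]=1$ appearing in the proof of Proposition \ref{prop_tgwt_is_typical}, the displayed formula becomes
\[
\mathcal{P}_0^F[A]=\frac{1}{\mathbb{E}[\#V(\mathbf{T}')]}\,\mathbb{E}\!\left[\#V(\mathbf{T}')\,\mathbf{1}_A([\mathbf{T}',\mathbf{o}'])\right],
\]
where $[\mathbf{T}',\mathbf{o}']\sim GW(\pi)$. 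This is exactly the defining formula of $SBGW(\pi)$.

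The main obstacle, and where I would spend the most care, is the bookkeeping of the last step: verifying that the size-biased re-rooting of $GW(\pi)$ described by the displayed integral formula literally coincides, as a measure on $\mathcal{T}_*$, with the spine construction of $SBGW(\pi)$, and doing this in a way that is consistent with the ordered structure. One clean route is to avoid re-deriving the size-biased/Kesten identification from scratch and instead argue purely through the two displayed formulas: $\mathcal{P}_0^F$ is characterized by $\mathcal{P}_0^F[A]=\mathbb{E}[\#V(\mathbf{T})\mathbf{1}\{\mathbf{o}\text{ trap}\}\mathbf{1}_A]$ from Theorem \ref{thm:1_periodic}, while $SBGW(\pi)$ is defined by $\frac{1}{\mathbb{E}[\#V(\mathbf{T}')]}\mathbb{E}[\#V(\mathbf{T}')\mathbf{1}_A([\mathbf{T}',\mathbf{o}'])]$; so it suffices to check that the law of $[\mathbf{T},\mathbf{o}]$ conditioned on $\{\mathbf{o}\text{ is a trap}\}$ is $GW(\pi)$, which is Lemma \ref{lemma_conditioned_is_GW}, and that the normalizing constants match, which is the identity $\mathbb{P}[F(\mathbf{o})=\mathbf{o}]\,\mathbb{E}[\#V(\mathbf{T})]=1$ already established. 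With those two inputs the equality of the two measures is immediate, so the proof reduces to invoking $1$-periodicity plus Theorem \ref{thm:1_periodic} plus the earlier conditioning lemma, and the only genuine work is confirming the $1$-periodicity claim and the finiteness of $\#V(\mathbf{T})$, both of which follow from $m(\pi)<1$.
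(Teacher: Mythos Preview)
Your proposal is correct and follows essentially the same approach as the paper: establish $1$-periodicity, invoke Theorem~\ref{thm:1_periodic} to obtain the Radon--Nikodym formula $\mathcal{P}_0^F[A]=\mathbb{E}[\#V(\mathbf{T})\mathbf{1}\{\mathbf{o}\text{ is a trap}\}\mathbf{1}_A([\mathbf{T},\mathbf{o}])]$, and then identify the right-hand side with the definition of $SBGW(\pi)$. The only cosmetic difference is that you reach the final display by conditioning on $\{\mathbf{o}\text{ is a trap}\}$ and invoking Lemma~\ref{lemma_conditioned_is_GW} together with the normalizing identity, whereas the paper substitutes the full typical-re-rooting formula of Proposition~\ref{prop_tgwt_is_typical} and then collapses the sum using that only the root of a $GW(\pi)$ tree is a trap; both are equivalent unpackings of the same fact. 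Your paragraph sketching the spine/Kesten description is unnecessary, as you yourself note---the clean formula-matching argument you settle on at the end is all that is needed.
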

\begin{proof}
  Let \([\mathbf{T},\mathbf{o}]\) be a random Family Tree distributed as \(TGWT(\pi)\) and let \([\mathbf{T}',\mathbf{o}']\) be a random Family Tree distributed as \(GW(\pi)\).
 Since, the root \(\mathbf{o}\) has a.s. finitely many ancestors, \([\mathbf{T},\mathbf{o}]\) is a.s. \(1\)-periodic with respect to the parent vertex-shift \(F\).
 By Theorem \ref{thm:1_periodic}, the \(F\)-probability of \([\mathbf{T},\mathbf{o}]\) exists.
 Let \([\bar{\mathbf{T}},\bar{\mathbf{o}}]\) be the \(F\)-probability of \([\mathbf{T},\mathbf{o}]\).
 Since \(TGWT(\pi)\) is unimodular (see Proposition \ref{20230305162953}), by Theorem \ref{thm:1_periodic}, we have
 \[\mathbb{P}[[\bar{\mathbf{T}},\bar{\mathbf{o}}] \in A] = \mathbb{E}[\#V(\mathbf{T})\mathbf{1}\{\mathbf{o} \text{ is a trap }\}\mathbf{1}_A([\mathbf{T},\mathbf{o}])],\]
 for every measurable set \(A\).
 By Proposition \ref{prop_tgwt_is_typical}, we have 
 \[\mathbb{P}[[\mathbf{T},\mathbf{o}] \in A] =\frac{1}{ \mathbb{E}[\#V(\mathbf{T}')]}\mathbb{E}\left[\sum_{u \in V(\mathbf{T}')}\mathbf{1}_A([\mathbf{T}',u])\right],\]
 for every measurable set \(A\).
 Combining the above two equations, we obtain
 \begin{align*}
  \mathbb{P}[[\bar{\mathbf{T}},\bar{\mathbf{o}}] \in A] &= \frac{1}{ \mathbb{E}[\#V(\mathbf{T}')]}\mathbb{E}\left[\#V(\mathbf{T}')\sum_{u \in V(\mathbf{T}')}\mathbf{1}\{\mathbf{u} \text{ is a trap }\}\mathbf{1}_A([\mathbf{T}',u])\right]\\
  &= \frac{1}{ \mathbb{E}[\#V(\mathbf{T}')]} \mathbb{E}\left[\#V(\mathbf{T}')\mathbf{1}_A([\mathbf{T}',\mathbf{o}'])\right],
 \end{align*}
 for every measurable set \(A\).
 The last step follows since the root is the only vertex that does not have a parent and thus the statement is proved.
\end{proof}

\subsection{Unimodular Eternal Galton-Watson Tree}\label{subsec:F_prob_EGWT}

 In the following, we introduce a new EFT called the Eternal Kesten Tree (\(EKT\)) and show that it is the \(F\)-probability of the unimodular Eternal Galton-Watson Tree.
The \(EKT\) is an eternal  version of Kesten's tree which was defined in \cite{abrahamIntroductionGaltonWatsonTrees2015}.

Let $[\mathbf{T},\mathbf{o}]$ be a rooted  Family Tree. 
A vertex of $\mathbf{T}$ is called {\bf common($\pi$)} if its offspring distribution is $\pi$ and all of its descendants reproduce independently with offspring distribution $\pi$.
A vertex of $\mathbf{T}$ is {\bf special($\pi$)} if it satisfies the following three properties: (1) its offspring distribution is $\hat{\pi}$ (the size-biased distribution of \(\pi\)), (2) all of its descendants reproduce independently, (3) for all \(n \geq 1\), exactly one of its \(n\)-th descendants reproduces with distribution $\hat{\pi}$ and the rest (if any) are common($\pi$).

\noindent \textbf{The ordered Eternal Kesten Tree:} 
Let \(\pi\) be a probability distribution on \(\{0,1,2,\cdots\}\) with mean \(m(\pi)=1\).
The ordered Eternal Kesten Tree with offspring distribution $\pi$, denoted as $EKT(\pi)$, is defined as follows: it is a rooted Eternal Family Tree $[\mathbf{S},\mathbf{u}]$ containing two types of individuals (vertices) namely common($\pi$) and special($\pi$).
 It has a unique bi-infinite $F$-path $(\mathbf{o}_n)_{n \in \mathbb{Z}}$ of special(\(\pi\)) individuals with $\mathbf{o}_0 = \mathbf{u}$, i.e., $\forall n \in \mathbb{Z}$, $\mathbf{o}_n$ is a parent of $\mathbf{o}_{n-1}$ and \(\mathbf{o}_{n-1}\) is the special(\(\pi\)) child of $\mathbf{o}_n$.
All the remaining vertices are common($\pi$).
The order among the children of every vertex is uniform.
All individuals reproduce independently.

The relation between the Eternal Kesten Tree and the bi-variate Eternal Kesten Tree (defined in Subsection \ref{subsec_bi_variate_ekt}) is as follows: Let \(\pi\) be a probability distribution on \(\{0,1,2,\cdots\}\) with mean \(m(\pi)=1\), \(\hat{\pi}\) be the size-biased distribution of \(\pi\), and \(\alpha\) be the distribution given by \(\alpha(k)=\hat{\pi}(k+1)\) for all \(k \geq 0\).
Then, the unordered \(EKT(\pi)\) is the same as the unordered \(EKT(\alpha,\pi)\), and the ordered \(EKT(\pi)\) is obtained by taking the uniform order on \(EKT(\alpha,\pi)\).

\begin{figure}[htbp]
  \centering 
  \includegraphics[scale=0.8]{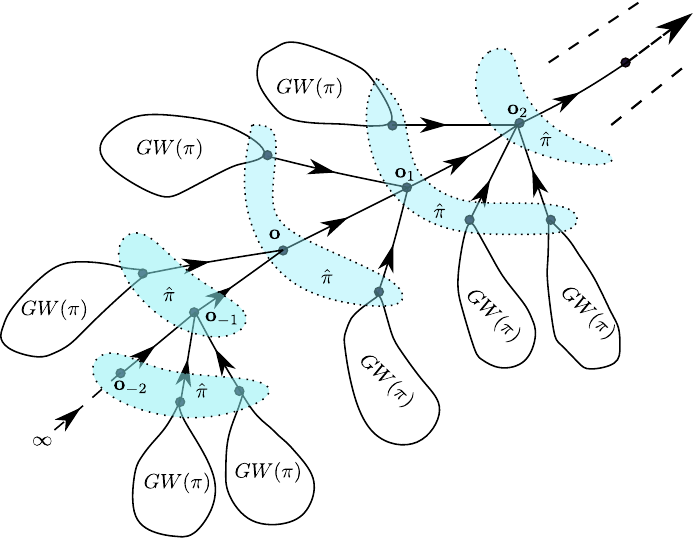}
  \caption{The Eternal Kesten tree with distribution $\pi$. Notation: $GW(\pi)$ denotes Galton-Watson tree; $\hat{\pi}$ is the size-biased distribution of $\pi$.}
  \label{fi:EKT}
\end{figure}

\begin{remark} \label{remark_supp_EKT}
For all \(n \in \mathbb{Z}\), the descendant trees of siblings of $\mathbf{o}_{n}$ are independent critical Galton-Watson trees with offspring distribution $\pi$, and thus they are all finite trees.
 Therefore, every realization of $EKT(\pi)$ has a unique bi-infinite $F$-path passing through the root. See Figure \ref{fi:EKT} for an illustration.
\end{remark}

We follow the notations of Section \ref{sec_canonical_prob}, Chapter \ref{chapter_prelim}.

\begin{proposition} \label{prop:F_prob_EGWT_EKT}
  The $F$-probability of the unimodular (ordered) Eternal Galton-Watson Tree $[\mathbf{T},\mathbf{o}]$ with offspring distribution $\pi$ is the (ordered) Eternal Kesten Tree $[\mathbf{S},\mathbf{u}]$ with distribution $EKT(\pi)$.
\end{proposition}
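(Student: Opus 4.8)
The plan is to realize the unimodular $EGWT(\pi)$ as the $R$-graph component of a network associated to an i.i.d.\ sequence with mean zero, and then to transfer the computation of the $F$-probability to a computation of the $R$-probability, which can be carried out by hand using the results already established. More precisely, take $X=(X_n)_{n\in\mathbb Z}$ i.i.d.\ with $\mathbb P[X_0=k]=\pi(k+1)$ for $k\ge-1$, so that $\mathbb E[X_0]=m(\pi)-1=0$; by Theorem \ref{theorem:R-graph_egwt}, the component $[\mathbf T,\mathbf o]$ of $0$ in the $R$-graph of $[\mathbb Z,0,X]$ is the ordered $EGWT(\pi)$. The dynamics induced by $F$ on $[\mathbf T,\mathbf o]$ is exactly $\theta_F$ applied to $\Psi_R([\mathbb Z,0,X])$, and since $\Psi_R\circ\theta_R=\theta_F\circ\Psi_R$ (as shown in the proof of Lemma \ref{thm_forward_backward}), the $n$-th $F$-probability of the $EGWT$ is the pushforward under $\Psi_R$ of the $n$-th $R$-probability of $[\mathbb Z,0,X]$, i.e.\ the law of $[\mathbb Z,0,R^n(0)X]$ viewed through $\Psi_R$. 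So it suffices to identify the weak limit of $[\mathbb Z,R^n(0),X]$ and then apply $\Psi_R$ to it.

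First I would show directly that $[\mathbb Z,R^n(0),X]$ converges in distribution as $n\to\infty$. Using Lemma \ref{lemma_abs_cont}, the law of $[\mathbb Z,R^n(0),X]$ is absolutely continuous with respect to that of $[\mathbb Z,0,X]$ with Radon--Nikodym derivative $d_n(0)$, the number of integers for which $0$ is the $n$-th record. Equivalently, by Proposition \ref{prop:conditional}, the limit should be obtained by size-biasing the conditional law given $d_n(0)>0$ by $d_n(0)$. Concretely, $\{d_n(0)>0\}$ is the event that $0$ has an $n$-th descendant in the $R$-graph, and by the interval/descendant structure (Lemma \ref{lemma_descendants}, Lemma \ref{lemma_offspring_count}) the number $d_n(0)$ of $n$-th descendants of $0$ equals the number of vertices in the $n$-th generation below $0$ in the Galton--Watson forest built from the past increments. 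Thus $[\mathbb Z,R^n(0),X]$ is the law of $X$ tilted to make $0$ the $n$-th record of a uniformly chosen $n$-th descendant; this is precisely the recipe that produces the size-biased Galton--Watson ancestry. I would compute the finite-dimensional marginals of the past increments $(X_{R^n(0)-1},X_{R^n(0)-2},\dots)$ under the tilted law and check they converge to: the future increments $(X_{R^n(0)},X_{R^n(0)+1},\dots)$ remaining an independent copy of the i.i.d.\ sequence (by the strong Markov property at the record epoch $R^n(0)$, which is a stopping time for the forward walk), while the past increments become the increment sequence encoding a spine of special$(\pi)$ vertices with size-biased offspring $\hat\pi$ along the spine and independent critical $GW(\pi)$ bushes off it. The key algebraic input here is Lemma \ref{lemma_offspring_count}: the offspring count of a vertex $i$ is $X_{i-1}+1$, so size-biasing the offspring count along the ancestral line of $0$ is the same as size-biasing the increments $X_{i-1}$ along that line, and $\widehat{\pi}(k)=k\pi(k)$ with $m(\pi)=1$ makes the tilting consistent (no normalization blow-up, since $c=1$ in the mean-zero case as noted in the Remark after Lemma \ref{20230119141633}).

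Having identified $Y=(Y_n)_{n\in\mathbb Z}$, the limiting increment sequence, as the one whose past encodes a size-biased spine and whose future is an independent i.i.d.\ copy, I would then apply $\Psi_R$ to $[\mathbb Z,0,Y]$ and read off the tree. The forward part $(Y_n)_{n\ge0}$ being i.i.d.\ with mean zero means the $R$-graph above $0$ is a critical $GW(\pi)$ ancestry (recurrence of the mean-zero walk, as in Proposition \ref{proposition:r-graph_is_eft}, gives a bi-infinite ancestral line), with the root $0$ and all its true descendants reproducing with $\pi$; the past part, by Lemma \ref{lemma_offspring_count} again, produces along the ancestral line $F^n(0)$ exactly one vertex per generation with size-biased offspring $\hat\pi$ and all its other children carrying independent critical $GW(\pi)$ descendant trees — which is the definition of special$(\pi)$ vertices on a bi-infinite $F$-path, and hence the definition of $EKT(\pi)$. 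The order statement follows from Lemma \ref{lemma:rls_order}: the RLS order on $\mathbf T$ coincides with the order of $\mathbb Z$, and the uniform order on children is preserved under the tilting by part (2) of Lemma \ref{lemma_offspring_count} (as used in Step 4 of Theorem \ref{20230213191656} and in the proof of Theorem \ref{theorem:R-graph_egwt}), so the limiting ordered tree is the ordered $EKT(\pi)$.

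The main obstacle will be making the convergence argument for $[\mathbb Z,R^n(0),X]$ rigorous at the level of finite-dimensional distributions of the \emph{past} increments, since that is where the size-biasing genuinely acts and where one must show that the tilted law of $(X_{R^n(0)-1},\dots,X_{R^n(0)-m})$ stabilizes. One clean way to handle this, which I would adopt to avoid re-deriving everything, is to sidestep the direct computation entirely: show that $EKT(\pi)$ is unimodular (via Theorem \ref{thm:I_F_unimodularizable}, since $EKT(\pi)$ is the typically rooted joining of the i.i.d.\ bush sequence with subcritical-bush mean finite — here the bushes are finite critical $GW$ trees, so one must instead argue one-endedness and use a limiting/approximation argument, or invoke the $\mathcal I/\mathcal I$ structure), then verify that $EKT(\pi)$ is an $F$-probability of $EGWT(\pi)$ by checking that $[\mathbf S,F^n(\mathbf u)]\overset{\mathcal D}{=}[\mathbf S,\mathbf u]$ up to radius $r$ for $n$ large — but that only shows $EKT(\pi)$ is $F$-stationary, not that it is the limit. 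So the honest route really is the tilting computation via Lemma \ref{lemma_abs_cont} and Proposition \ref{prop:conditional}: one shows $d_n(\mathbf o)\to d_\infty(\mathbf o)$ fails (it is infinite here since $EGWT$ is $\mathcal I/\mathcal I$), so the periodicity machinery of Theorem \ref{thm:1_periodic} does not apply, and instead one must show tightness of $\{\mathcal P_0^{F,n}\}$ (the generations are controlled since each has finite expected size under the tilted law by the mean-one property) plus uniqueness of the subsequential limit via the explicit finite-dimensional computation above. I expect the bulk of the work to be this tightness-plus-identification step, with all the structural inputs (offspring $=$ increment $+1$, RLS $=$ natural order, strong Markov at record epochs) already available from the earlier lemmas.
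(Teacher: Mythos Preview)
Your proposal takes a long detour through the random-walk representation where the paper's proof is a two-line comparison from the definitions. The paper simply observes that for $n>r$ the $r$-ball $[\mathbf T,F^n(\mathbf o)]_r$ has \emph{exactly} the same distribution as $[\mathbf S,\mathbf u]_r$: in $EGWT(\pi)$ every ancestor $F^k(\mathbf o)$, $k\ge1$, reproduces with $\hat\pi$ and every other vertex with $\pi$, so re-rooting at $F^n(\mathbf o)$ and restricting to radius $r$ shows a length-$r$ spine of $\hat\pi$-reproducing vertices (namely $F^n(\mathbf o),\dots,F^{n-r+1}(\mathbf o)$) with independent $GW(\pi)$ bushes hanging off --- which is precisely the $r$-ball of $EKT(\pi)$. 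No tightness argument, no Radon--Nikodym computation, no random walk is needed; the local laws literally coincide for $n>r$. The ordered statement follows because the uniform order on children is the same in both definitions.

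By contrast, your route first realizes $EGWT(\pi)$ as $\Psi_R([\mathbb Z,0,X])$, then tries to compute the $R$-probability $\lim_n[\mathbb Z,R^n(0),X]$ directly, then pushes back through $\Psi_R$. Two problems. First, this inverts the paper's logical order: Theorem \ref{theorem:r-probability_exists} deduces the $R$-probability \emph{from} the present proposition via the FB Lemma, so using the $R$-probability to prove the proposition would be circular unless you supply an independent proof of the $R$-limit --- and that independent proof (your ``tightness-plus-identification step'') is the hard part you yourself flag as the main obstacle. Second, to pass the limit through $\Psi_R$ you need continuity of $\Psi_R$ at the limit network, which the paper never establishes (only continuity of the backward map $\Phi_R$ is proved). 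Your aside about showing $EKT(\pi)$ is unimodular via typically rooted joining is also off: the bushes are \emph{critical} $GW(\pi)$ trees, which have infinite expected size, so Theorem \ref{thm:I_F_unimodularizable} does not apply, and in fact $EKT(\pi)$ is not unimodular (it is $\theta_F$-invariant but singular with respect to $EGWT(\pi)$, as Corollary \ref{cor_f_prob_sing} predicts). The direct definition-matching argument avoids all of this.
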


\begin{proof}
  Let $r>0$, and $\mathbf{u}_{-r},\mathbf{u}_{-r+1},\cdots,\mathbf{u}_{-1}$ be the special descendants of $\mathbf{u}_0=\mathbf{u}$ in \(\mathbf{S}\).
Note that $F^n(\mathbf{o})$ has at least one \(k\)-th descendant, for all $1 \leq k \leq n$.
  For all $n>r$, the distributions of $[\mathbf{T},F^n(\mathbf{o})]_r$ and $[\mathbf{S},\mathbf{u}]_r$ are the same.
  Indeed, the offspring distribution of $F^n(\mathbf{o})$ is the same as that of $\mathbf{u}$ which is $\hat{\pi}$.
The special child of $\mathbf{u}$ has the same offspring distribution $\hat{\pi}$ as that of $F^{n-1}(\mathbf{o})$ and the remaining other children of \(u\) follow the offspring distribution $\pi$.
By induction, $F^{n-r+1}(\mathbf{o})$ has the same distribution as that of the special $n-r+1$-th descendant of $\mathbf{u}$. 
The same is true for the remaining vertices of $[\mathbf{T},F^n(\mathbf{o})]_r$ and $[\mathbf{S},\mathbf{u}]_r$.
Therefore, 
$$
[\mathbf{S},\mathbf{u}] \stackrel{\mathcal{D}}{=} \lim_{n \rightarrow \infty} [\mathbf{T},F^n(\mathbf{o})] .
$$

In addition, if $[\mathbf{T},\mathbf{o}]$ is an ordered $EGWT(\pi)$ and $[\mathbf{S},\mathbf{u}]$ is an ordered $EKT(\pi)$, then the order of $F^i(\mathbf{o})$ is uniform among the children of $F^{i+1}(\mathbf{o})$ for $n-r+1 \leq i \leq n-1$, which has the same distribution as the order of $u_{i-n}$ among the children of $u_{i-n+1}$.
The same is true for the order of the remaining vertices of \([\mathbf{T},F^n(\mathbf{o})]_r\) and \([\mathbf{S},u]_r\).
Thus, the \(F\)-probability of the unimodular ordered \(EGWT(\pi)\) is the ordered \(EKT(\pi)\).
\end{proof}

\begin{remark}
Consider a realization of $EGWT(\pi)$, where $\pi$ has mean $m(\pi)=1$ and satisfies $\pi(1)<1$.
Then, its root has finitely many descendants, i.e., \(EGWT(\pi)\) evaporates under the parent vertex-shift.
In contrast, the root of a realization of $EKT(\pi)$ has infinitely many descendants.
Therefore, $EKT(\pi)$ is \emph{singular} with respect to $EGWT(\pi)$.
This is another instance where the \(F\)-probability is singular with respect to the initial distribution.
This fact can also be inferred from Lemma \ref{lm:evap_lim2} and Corollary \ref{cor_f_prob_sing}, since the parent vertex-shift is continuous on \(\mathcal{T}_*\).
\end{remark}

\subsection{ Unimodularised bi-variate marked Eternal Kesten Tree}\label{subsec_f_prob_ecs_ekt}

Let $\alpha,\beta$ be probability distributions on $\{0,1,2,\cdots\}$ such that their means satisfy $m(\alpha)<\infty$ and $m(\beta) < 1$.
Let $[\mathbf{T}',\mathbf{o}',Z']$ be a random marked Family Tree whose distribution is $MEKT(\alpha,\beta)$, where \(Z'\) is a mark. 
Let \([\mathbf{T}',\mathbf{o}',Z']\) be the joining of $([\mathbf{T}_i,\mathbf{o}_i,Z_i])_{i \in \mathbb{Z}}$, the i.i.d. sequence of random marked Family Trees defined in Remark \ref{remark_joining_ekt}.

Let \([\mathbf{T},\mathbf{o},Z]\) denote the size-biased joining of $([\mathbf{T}_i,\mathbf{o}_i,Z_i])_{i \in \mathbb{Z}}$.
Recall that the distribution of $[\mathbf{T},\mathbf{o},Z]$ is given by:
\begin{equation}\label{eq_size_biased}
  \mathbb{P}[[\mathbf{T},\mathbf{o},Z] \in A] = \frac{\mathbb{E}[\sum_{u \in V(T_0)}\mathbf{1}_A([\mathbf{T}',u,Z'])]}{\mathbb{E}[\#V(\mathbf{T}_0)]},
\end{equation}
for any measurable set $A$.
Observe that $\mathbb{E}[\#V(\mathbf{T}_0)]< \infty$ because $\beta <1$,  and the descendant trees of the children of $\mathbf{o}_0$ (in \(\mathbf{T}_0\)) are subcritical Galton-Watson trees.

Recall that $\hat{\mathcal{T}}_*$ denotes the space of rooted marked Family Trees.
For any $[T,u,z] \in \hat{\mathcal{T}}_*$ that has a unique bi-infinite $F$-path $\neswarrow(T)$, for any \(w \in V(T)\), let
\[M(w):= \inf\{n \geq 0: F^n(w) \in \neswarrow(T)\},\]
with the convention that $F^0(w)=w$ and let  $w^*=F^{M(w)}(w)$ be the smallest ancestor of $w$ that lies on the unique bi-infinite $F$-path of $T$.
Let $[T,u,z]_r$ denote the subtree induced by the vertices of $T$ that are at a graph distance of at most $r$ from \(u\) and with the mark function restricted to these vertices.

\begin{proposition}\label{20230314185620}
    The $F$-probability of the unimodularized \(MEKT(\alpha,\beta)\) is \(MEKT(\alpha,\beta)\).
\end{proposition}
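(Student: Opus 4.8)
The plan is to mimic the structure of the proof of Proposition \ref{prop:F_prob_EGWT_EKT} (the $F$-probability of $EGWT$ is $EKT$), adapted to the joining/typically-rooted-joining picture. Write $[\mathbf{T},\mathbf{o},Z]$ for the unimodularised $MEKT(\alpha,\beta)$, obtained by the size-biased joining (Eq.~(\ref{eq_size_biased})) of the i.i.d.\ sequence $([\mathbf{T}_i,\mathbf{o}_i,Z_i])_{i\in\mathbb{Z}}$ of Remark \ref{remark_joining_ekt}, and write $[\mathbf{T}',\mathbf{o}',Z']$ for the $MEKT(\alpha,\beta)$ itself (the plain joining of the same sequence, rooted at $\mathbf{o}'=\mathbf{o}_0$). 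Since $\beta$ is subcritical the bushes are a.s.\ finite, so on any realization of the unimodularised $MEKT$ the root has finitely many ancestors that are \emph{not} on the bi-infinite path, and in fact the root is a.s.\ at finite graph-distance from the unique bi-infinite $F$-path (this is the class $\mathcal{I}/\mathcal{F}$ structure). Hence $F^n(\mathbf{o})$ lands on the bi-infinite path for all $n$ large enough, and from that point on $F^n(\mathbf{o})$ ranges over the path vertices $\mathbf{o}_{j}$.

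First I would show the weak convergence $[\mathbf{T},F^n(\mathbf{o})]\to MEKT(\alpha,\beta)$ directly via neighborhood restrictions, exactly as in Proposition \ref{prop:F_prob_EGWT_EKT}. Fix $r\ge 0$. Using the tightness/convergence criterion (Lemma \ref{lm:curien} and the theorem of \cite{nicolascurienRandomGraphs}) it suffices to show $[\mathbf{T},F^n(\mathbf{o}),Z]_r$ converges in distribution. For $n$ large, $F^n(\mathbf{o})$ lies on the bi-infinite path; conditionally on the (random) index $j=j(n)$ of the path vertex it has hit, $[\mathbf{T},F^n(\mathbf{o}),Z]_r$ is a deterministic function of the finitely many bushes $[\mathbf{T}_{j-r},\mathbf{o}_{j-r},Z_{j-r}],\dots,[\mathbf{T}_{j+r},\mathbf{o}_{j+r},Z_{j+r}]$ (since any vertex within distance $r$ of $\mathbf{o}_j$ lies in a bush indexed within $r$ of $j$), together with the ECS order on them. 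But this tuple of $2r+1$ consecutive bushes is i.i.d.\ and its law does not depend on $j$; hence $[\mathbf{T},F^n(\mathbf{o}),Z]_r\overset{\mathcal D}{=}[\mathbf{T}',\mathbf{o}',Z']_r$ for every $n>$ (the time to reach the path), and the latter tail event has probability tending to $1$. Here is where the one subtlety enters: on the size-biased law the root $\mathbf{o}$ sits in a \emph{size-biased} bush $\mathbf{T}_0$ at a \emph{uniformly chosen} vertex, so for small $n$ the law of $[\mathbf{T},F^n(\mathbf{o}),Z]_r$ differs from $MEKT$; but since it agrees for all $n$ exceeding $M(\mathbf{o})$ (the distance from the root to the path) and $M(\mathbf{o})<\infty$ a.s., the distributional limit is unaffected. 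Combining over all $r$ gives $\theta_F^n$-push-forwards of the unimodularised $MEKT(\alpha,\beta)$ converging weakly to $MEKT(\alpha,\beta)$, which is precisely the statement that $MEKT(\alpha,\beta)$ is the $F$-probability.

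The main obstacle, and the place I would be most careful, is bookkeeping the ECS order and the marks under the re-rooting. One must check that (i) after re-rooting to $F^n(\mathbf{o})=\mathbf{o}_j$, the induced ordering on the $r$-neighborhood is the same ECS-ordering one sees from a typical path-vertex of $MEKT$ — this uses the defining ECS property that each path vertex $\mathbf{o}_{j-1}$ is the smallest child of $\mathbf{o}_j$, which is translation-invariant along the path; and (ii) the marks $Z$ restricted to the relevant bushes have, jointly, exactly the law prescribed in the construction of $MEKT(\alpha,\beta)$ (Remark \ref{remark_joining_ekt}), namely i.i.d.\ marked bushes with the stated joint law of mark-of-root and offspring count, all inner vertices carrying mark $-1$; since the size-biasing operation $\sigma'$ in Eq.~(\ref{eq_size_biased}) only re-weights which bush contains the root and picks a uniform vertex inside it — it does \emph{not} alter the joint law of any block of consecutive bushes away from index $0$ — this is immediate once one writes out $\mathbb{P}[[\mathbf{T},F^n(\mathbf{o}),Z]_r\in A]$ and shifts the sum. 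An alternative, slightly slicker route would be to invoke Theorem \ref{thm_eft_I_f_joining}: the unimodularised $MEKT$ is of class $\mathcal{I}/\mathcal{F}$, so conditioning its root onto the bi-infinite path recovers the plain joining, i.e.\ $MEKT(\alpha,\beta)$ itself; and one checks that $[\mathbf{T},F^n(\mathbf{o}),Z]$ converges to the same thing because, asymptotically, re-rooting to a far ancestor is the same as conditioning the root onto the path. I would present the direct neighborhood-restriction argument as the main proof since it parallels Proposition \ref{prop:F_prob_EGWT_EKT} most transparently, and remark on the Theorem \ref{thm_eft_I_f_joining} viewpoint afterwards.
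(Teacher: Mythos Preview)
Your approach is correct and matches the paper's almost exactly: both fix $r$, decompose according to whether the hitting time $M(\mathbf{o})$ of the bi-infinite path satisfies $M(\mathbf{o})<n-r$, and use that the $r$-ball around $F^n(\mathbf{o})=\mathbf{o}_{n-M(\mathbf{o})}$ then depends only on i.i.d.\ bushes of strictly positive index, which are independent of the size-biased bush $[\mathbf{T}_0,\mathbf{o}_0,Z_0]$ (and hence of $M(\mathbf{o})$). The one place to tighten is the threshold --- you need $n>M(\mathbf{o})+r$, not merely $n>M(\mathbf{o})$, so that the $r$-ball around $\mathbf{o}_{n-M(\mathbf{o})}$ actually avoids $\mathbf{T}_0$; the paper makes this explicit by writing out the expectation via Eq.~(\ref{eq_size_biased}) on the event $\{M(\mathbf{o})<n-r\}$ and bounding the complementary event by $\mathbb{P}[M(\mathbf{o})\ge n-r]\to 0$.
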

 \begin{proof}
 Fix $r>0$.
 Let $A$ be a measurable subset of $\hat{\mathcal{T}}_*$, and let $n>r$.
 Then,
 \begin{align*}
    \mathbb{P}[[\mathbf{T},F^n(\mathbf{o}),Z]_r \in A] =& \mathbb{P}\left[[\mathbf{T},F^n(\mathbf{o}),Z]_r \in A,M(\mathbf{o})<n-r\right]\\
    &+\mathbb{P}\left[[\mathbf{T},F^n(\mathbf{o}),Z]_r \in A,M(\mathbf{o})\geq n-r\right].
 \end{align*}
 The second sum is bounded by $\mathbb{P}[M(\mathbf{o}) \geq n-r]$.
 As $n \to \infty$, $\mathbb{P}[M(\mathbf{o}) \geq n-r] \to 0$ since $M(\mathbf{o})$ is a.s. finite, which follows from the finiteness of the tree $\mathbf{T}_0$.
  Therefore, the second sum tends to $0$ as $n \to \infty$.

  Consider the first sum.
  We have,
  \begin{align*}
    \mathbb{P}[\mathbf{T},F^n(\mathbf{o}),Z]_r \in A,&M(\mathbf{o})<n-r] \\
    &= \sum_{k=0}^{n-r-1}\mathbb{P}[[\mathbf{T},F^n(\mathbf{o}),Z]_r \in A, M(\mathbf{o})=k]\\
    &=\sum_{k=0}^{n-r-1}\mathbb{P}[[\mathbf{T},F^{n-k}(\mathbf{o}^*),Z]_r \in A, M(\mathbf{o})=k].
  \end{align*}
  The last equation follows by the definition of \(\mathbf{o}^*: = F^{M(\mathbf{o})}(\mathbf{o})\).
  By the definition of $[\mathbf{T},\mathbf{o},Z]$ (see Eq. (\ref{eq_size_biased})), the last equation is equal to 
  \begin{equation}\label{eqn:1_2.38}
    \sum_{k=0}^{n-r-1}\frac{\mathbb{E}\left[\sum_{u \in V(\mathbf{T}_0)}\mathbf{1}_A([\mathbf{T}',F^{n-k}(u^*),Z']_r) \mathbf{1}\{M(u)=k\}\right]}{\mathbb{E}[\#V(\mathbf{T}_0)]}.
  \end{equation}
  On the tree $\mathbf{T}'$, we have $F^{n-k}(u^*)=\mathbf{o}_{n-k}$, for all $u \in V(\mathbf{T}_0)$ and for all $0 \leq k \leq n-r-1$.
  Since \(([\mathbf{T}_i,\mathbf{o}_i,Z_i])_{i \in \mathbb{Z}}\) is an i.i.d. sequence, the distribution of $[\mathbf{T}',\mathbf{o}_{n-k},Z']_r$ is independent of $[\mathbf{T}_0,\mathbf{o}_0,Z_0]$ for all $0 \leq k \leq n-r-1$.
  By applying this fact to Eq. (\ref{eqn:1_2.38}), it is equal to 
  \begin{align*}
    &\sum_{k=0}^{n-r-1}\frac{\mathbb{E}\left[\sum_{u \in V(\mathbf{T}_0)}\mathbf{1}_A([\mathbf{T}',\mathbf{o}_{n-k},Z']_r) \mathbf{1}\{M(u)=k\}\right]}{\mathbb{E}[\#V(\mathbf{T}_0)]}\\
    &=\sum_{k=0}^{n-r-1} \frac{\mathbb{P}[[\mathbf{T}',\mathbf{o}_{n-k},Z']_r \in A]  \mathbb{E}\left[\sum_{u \in V(\mathbf{T}_0)} \mathbf{1}\{M(u)=k\}\right]}{\mathbb{E}[\#V(\mathbf{T}_0)]}.
  \end{align*}
  The distribution of \([\mathbf{T}',\mathbf{o}_{n-k},Z']_r\) is identical to that of \([\mathbf{T}',\mathbf{o}',Z']_r\) for all \(n>2r+k\), by the construction of \(MEKT(\alpha,\beta)\).
  Using this and applying Eq. (\ref{eq_size_biased}) to the last equation, we get
  \begin{align*}
    \mathbb{P}\left[[\mathbf{T},F^n(\mathbf{o}),Z]_r \in A,M(\mathbf{o})<n-r\right]&=\mathbb{P}[[\mathbf{T}',\mathbf{o}',Z']_r \in A]  \sum_{k=0}^{n-r-1}\mathbb{P}[M(\mathbf{o})=k]\\
    &=\mathbb{P}[[\mathbf{T}',\mathbf{o}',Z']_r \in A] \mathbb{P}[M(\mathbf{o})<n-r].
  \end{align*}
  As $n \to \infty$, $\mathbb{P}[M(\mathbf{o})<n-r]\to 1$.
Therefore,
  \[\mathbb{P}[[\mathbf{T},F^n(\mathbf{o}),Z]_r \in A] \xrightarrow[n \to \infty]{} \mathbb{P}[[\mathbf{T}',\mathbf{o}',Z']_r \in A].\]
  Since this is true for any $r\geq 1$, we have 
  \[[\mathbf{T},F^n(\mathbf{o}),Z] \xrightarrow[n \to \infty]{} [\mathbf{T}',\mathbf{o}',Z'].\]
 \end{proof}

\begin{remark}
From the construction of \(MEKT(\alpha,\beta)\), it follows that both  \([\mathbf{T}',F(\mathbf{o}'),Z']\) and \([\mathbf{T}',\mathbf{o}',Z']\) have the same distribution.
So, the marked EFT \([\mathbf{T}',\mathbf{o}',Z']\)  satisfies the Mecke's invariant measure equation.
Therefore, by Lemma \ref{lm:evap_lim2}, \(\mathbf{o}'\) has a.s. infinitely many descendants, which can also be seen directly from the construction of \(MEKT(\alpha,\beta)\).
\end{remark}


\chapter{Record vertex-shift probability for the random walk case}\label{chapter_record_V_shift_probability}

In this chapter, we focus again on the record vertex-shift on the network \([\mathbb{Z},0,X]\), where  \(X\) is an i.i.d. sequence as in Def. \ref{hyp:increments} and satisfies \(\mathbb{E}[X_0]\geq 0\).
We show the existence of a unique \(R\)-probability (i.e., a vertex-shift probability under record vertex-shift) when \(\mathbb{E}[X_0]\geq 0\).
Although one could show the same when \(\mathbb{E}[X_0]< 0\), we did not consider it since the component of \(0\) can only give partial information of the trajectory of random walk associated to \(X\).

We use the FB Lemma \ref{thm_forward_backward} to prove the existence and uniqueness of the  \(R\)-probability.
We define two maps namely the backward map \(\Phi_R\) and the extended backward map \(\hat{\Phi}_R\) that satisfy the hypotheses of the FB Lemma when \(\mathbb{E}[X_0]=0\) and \(\mathbb{E}[X_0]>0\) respectively.
This is done in several sections.
We also give constructions for the \(R\)-probability in these two cases and prove that the constructions indeed generate the \(R\)-probability.

{\bf (In French)} Dans ce chapitre, nous nous intéressons à nouveau au décalage de sommet des records sur le réseau \([\mathbb{Z},0,X]\), où  \(X\) est une suite i.i.d. comme dans la définition \ref{hyp:increments} et qui satisfait \(\mathbb{E}[X_0]\geq 0\).
Nous montrons l'existence d'une \(R\)-probabilité unique (c'est-à-dire une probabilité de décalage de sommet associée au décalage de sommet des records) lorsque \(\mathbb{E}[X_0]\geq 0\).
Bien que l'on puisse montrer la même chose lorsque \(\mathbb{E}[X_0]< 0\), nous ne l'avons pas considéré car la composante de \(0\) ne peut donner qu'une information partielle sur la trajectoire de la marche aléatoire associée à \(X\).

Nous utilisons le lemme FB \ref{thm_forward_backward} pour prouver l'existence et l'unicité de la \(R\)-probabilité.
Nous définissons deux fonctions nommées la fonction réciproque \(\Phi_R\) et la fonction réciproque étendue \(\hat{\Phi}_R\) qui satisfont les hypothèses du lemme FB lorsque \(\mathbb{E}[X_0]=0\) et \(\mathbb{E}[X_0]>0\) respectivement.
Ceci est fait dans plusieurs sections.
Nous donnons également des constructions pour la \(R\)-probabilité dans ces deux cas et prouvons que les constructions génèrent bien la \(R\)-probabilité.

\section{The backward map of the record vertex-shift}\label{subsec_backward_map}

In this section, we define a backward map $\Phi_R:\mathcal{T}_*' \to \mathcal{G}_*$, where $\mathcal{T}_*'$ is the subset of $\mathcal{T}_*$ consisting of all ordered Family Trees that have at most one bi-infinite $F$-path.
Recall that the forward map \(\Psi_R: \mathcal{G}_{*} \to \mathcal{T}_*'\) maps a network of the form \([\mathbb{Z},0,x]\) to \([\mathbb{Z}^R(0),0]\), where \(x\) is a sample of the i.i.d. sequence \(X\) with \(\mathbb{E}[X_0]=0\), and \(\mathbb{Z}^R(0)\) is the connected component of \(0\) of the record graph \(\mathbb{Z}^R\) of \([\mathbb{Z},0,x]\).
For any ordered Family Tree that has at most one bi-infinite path, using the depth-first search order and the tree structure, we can define a sequence in such a way that the record graph associated to the sequence is the Family Tree.
This map essentially depends on a sequence of vertices of the Family Tree called succession line, which can be seen as a factor map from \(\mathbb{Z}\) to the vertices of a unimodular Family Tree (\cite{ferrariPoissonTreesSuccession2004a}).

This map extends the usual depth-first search encoding of finite ordered trees, defined in \cite[Chapter~5]{jimpitmanCombinatorialStochasticProcesses2006}, \cite{legallRandomTreesApplications2005b}, to infinite ordered Family Trees that have at most one bi-infinite path.

In the next sections,we show that the map \(\Phi_R\) satisfies the three conditions of the FB Lemma \ref{thm_forward_backward} when \(\mathbb{E}[X_0]=0\).
However, this maps fails to be bijective when \(\mathbb{E}[X_0]>0\).
So, in Section \ref{subsec_reverse_map_I_F}, we extend this map to the space of marked Family Trees in order to obtain a bijective map.

In Subsection \ref{subsec:r_graph_egwt} of Chapter \ref{chapter_record_v_shift}, we show that the $R$-graph of \([\mathbb{Z},0,X]\) when $\mathbb{E}[X_0]=0$ is a unimodular ordered Eternal Galton-Watson Tree $EGWT(\pi)$ with offspring distribution $\pi$, where $\pi$ is the distribution of $X_{0}+1$.
The parent vertex-shift probability of $EGWT(\pi)$ is shown to be  the Eternal Kesten Tree $EKT(\pi)$ (see Subsection \ref{subsec:F_prob_EGWT} for the definition of \(EKT(\pi)\)).
In contrast to \(EGWT(\pi)\), when \(\pi(1)<1\), a realization of $EKT(\pi)$ has a unique bi-infinite path.
So, the map $\Phi_R$ should be defined such that it is continuous on the support of $EKT(\pi)$ in order to satisfy the hypotheses of FB Lemma \ref{thm_forward_backward}.
Therefore, our first goal is to define $\Phi_R$ on ordered Family Trees that have at most one bi-infinite path.
We achieve this goal using the properties of the RLS order.
Then, we show that the defined map $\Phi_R$ is continuous $EKT(\pi)$-a.s..

Let $T$ be an ordered Family Tree.
Consider the RLS order $\prec$ on the vertices of $T$ and recall that for any two vertices \(u\) and \(v\), we say \(u\) succeeds \(v\) if \(u \prec v\) (equivalently \(v\) precedes \(u\)).
\begin{definition}
  For any vertex $u \in T$, let $B(u):=\{v \in T: v \prec u\}$ and define $b(u) = \max\{v \in B(u)\}$ if it exists, where $\max$ is taken with respect to $\prec$.
Similarly, let $A(u):= \{v \in T: v \succ u\}$ and  $a(u)= \min\{u \in A(u)\}$ if it exists, where $\min$ is taken with respect to $\prec$.
\end{definition}

According to the RLS order, $b(u)$ has the highest precedence over all those vertices that succeed $u$, and $a(u)$ has the smallest precedence among those vertices that precede $u$.

\begin{lemma}\label{lemma:b_exists}
  Let $T$ be an ordered Family Tree and $u \in T$.
  If $u$ is not the smallest vertex, then $b(u)$ exists.
\end{lemma}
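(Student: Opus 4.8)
The plan is to exhibit $b(u)$ explicitly and then verify it is the $\prec$-maximum of $B(u)$. First observe that saying ``$u$ is not the smallest vertex'' is exactly saying that $B(u)=\{v\in T: v\prec u\}$ is non-empty, so the only content is to show that $B(u)$ has a $\prec$-greatest element.

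The key step is a structural description of $B(u)$. Write $u_0=u$ and $u_j=F^j(u)$ for the chain of ancestors of $u$ (finite if $u$ has a parentless ancestor, in which case $F$ fixes that root and the chain stabilizes, infinite otherwise). For each $j\ge 1$ with $u_j\neq u_{j-1}$, let $C_j$ be the set of children of $u_j$ that are smaller than $u_{j-1}$ in the child order of $u_j$. I would prove that
\[
  B(u) = D(u)\;\sqcup\;\bigsqcup_{j\ge 1}\ \bigsqcup_{c\in C_j}\big(\{c\}\cup D(c)\big),
\]
a disjoint union. This follows directly from the definition of $\prec$: a vertex $v\neq u$ with $v\prec u$ is either a descendant of $u$, or its smallest common ancestor with $u$ is some $u_j$ with $j\ge 1$, the $u_j$-branch towards $v$ is a child $c\neq u_{j-1}$ of $u_j$, and the inequality appearing in the definition of $u\succ v$ forces $c<u_{j-1}$, i.e.\ $c\in C_j$. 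Disjointness is immediate because the listed subtrees hang off distinct branches.

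Next I would pin down the order between and within these pieces, again straight from the definition of $\prec$ together with Remark~\ref{remark:rls_order} (descendants inherit the order of their ancestors): inside $\{c\}\cup D(c)$ the vertex $c$ itself has highest precedence, being an ancestor of all of $D(c)$, and in particular inside $D(u)$ the largest child $c^{*}$ of $u$ (if $u$ has a child at all) has highest precedence; every vertex of $D(u)$ precedes every vertex of every piece of level $j\ge 1$; and among those pieces, smaller $j$ precedes larger $j$, while within a fixed level $j$ the piece attached to the larger $c\in C_j$ precedes the one attached to the smaller. Consequently the $\prec$-greatest element of $B(u)$ is the largest child of $u$ if $u$ has a child; otherwise the largest element of $C_1$ (the largest sibling of $u$ that is smaller than $u$) if $C_1\neq\emptyset$; otherwise $\max C_{j^{*}}$, where $j^{*}$ is the least $j$ with $C_j\neq\emptyset$. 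Since $B(u)\neq\emptyset$ and $D(u)=\emptyset$ in the last two cases, at least one of ``$u$ has a child'', ``$C_1\neq\emptyset$'', ``some $C_j\neq\emptyset$'' must hold, so $b(u)$ is well defined, and the lemma follows.

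The main obstacle is not conceptual but bookkeeping: carrying out the order comparisons above requires keeping careful track of which of the two compared vertices plays the role of ``$u$'' and which plays ``$v$'' in the definition of $\prec$, and of the direction of the inequality $F^{m-1}(\cdot)<F^{n-1}(\cdot)$ at the smallest common ancestor; one must also handle the degenerate case of a parentless ancestor, where the chain $(u_j)$ is eventually constant and no further levels $C_j$ arise. Once these comparisons are established, the identification of $b(u)$ is immediate, and the same argument (with the inequalities reversed) will give the corresponding statement for $a(u)$.
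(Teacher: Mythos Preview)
Your proposal is correct and follows essentially the same approach as the paper: identify $b(u)$ as the largest child of $u$ when $u$ has children, and otherwise climb the ancestor chain to the first level $j$ where a younger sibling exists and take the largest such sibling. The paper's proof is terser, simply asserting these candidates without spelling out the full decomposition of $B(u)$ or the inter-piece comparisons, whereas you explicitly decompose $B(u)$ into $D(u)$ and the branches $\{c\}\cup D(c)$ and verify the ordering using Remark~\ref{remark:rls_order}; this extra bookkeeping is sound but not required, since locality of the RLS order makes the identification immediate once one notices that any child of $u$ dominates every other element of $B(u)$.
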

\begin{proof}
  By hypothesis, $B(u)$ is not empty.
  So, either of the following two cases occur:
  \begin{itemize}
    \item If $u$ has children, then since it could only have finitely many children, $b(u)$ is the largest(eldest) child of $u$.
    \item If $u$ does not have any child, let $w=F^m(u)$ (for some $m \geq 1$) be the smallest ancestor of $u$ that has a child smaller than $F^{m-1}(u)$, with the notation $F^0(u)=u$. Such an $m$ exists because $B(u)$ is non-empty.
    Then, $b(u)$ is the largest among those siblings of $F^{m-1}(u)$ that are smaller than $F^{m-1}(u)$.
  \end{itemize}
\end{proof}

\begin{lemma} \label{lemma:rls_order_previous_element_exists}
  Let $T$ be an ordered Family Tree, $u$ be a non-maximal vertex of $T$.
 If every sibling of $u$ that precedes $u$ has finitely many descendants, then $a(u)$ exists.
\end{lemma}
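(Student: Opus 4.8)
The plan is to exhibit $a(u)$ explicitly, splitting into two cases according to the position of $u$ among the children of its parent, and then to verify minimality with respect to $\prec$ by comparing whole subtrees at once via Remark \ref{remark:rls_order}. Before the cases, I would record two preliminary facts. Since $u$ is non-maximal, $A(u)\neq\emptyset$; and $u$ necessarily has a parent, for if $u$ had no ancestor of positive order then by the definition of $\prec$ no vertex could have precedence over $u$ (the alternative $v=F^k(u)$, $k>0$, is impossible, and the common-ancestor alternative requires $u$ to possess an ancestor of positive order), contradicting $A(u)\neq\emptyset$. Write $p:=F(u)$.

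\emph{Case 1: no sibling of $u$ precedes $u$}, i.e.\ $u$ is the largest child of $p$ in the child order. Here the claim is $a(u)=p$. Since $p\succ u$ we have $p\in A(u)$, so it suffices to show that no $v\in A(u)$ satisfies $v\prec p$. If $v\prec p$, then $p\succ v$, so either $v$ is a proper descendant of $p$, or $v$ lies on a branch at some common ancestor of $p$ and $v$ strictly to the right of the branch of $p$ there; the latter branch is also that of $u$, which would force $v\prec u$ and hence $v\notin A(u)$. So $v$ is a proper descendant of $p$, lying in the subtree of some child $c$ of $p$. If $c=u$ then $v$ is a descendant of $u$, so $v\prec u$; if $c\neq u$ then $c<u$, and Remark \ref{remark:rls_order} gives $u\succ v$. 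In all cases $v\notin A(u)$, a contradiction, so $a(u)=p$.

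\emph{Case 2: some sibling of $u$ precedes $u$}, i.e.\ $u$ has a sibling larger than $u$ in the child order. As $p$ has finitely many children, there is a least such sibling $u'$. By hypothesis $D(u')$ is finite and non-empty; being totally ordered by $\prec$ it has a $\prec$-minimum $v_0$, and the claim is $a(u)=v_0$. One inclusion is $D(u')\subseteq A(u)$: indeed $u'\succ u$, and by Remark \ref{remark:rls_order} every proper descendant of $u'$ also has precedence over $u$; hence $v_0\in A(u)$. For the reverse estimate I would show that every $v\in A(u)\setminus D(u')$ satisfies $v\succ v_0$: since $v\succ u$, either $v$ is an ancestor of $u$ — hence an ancestor of $u'$ and of all of $D(u')$, so $v\succ v_0$ — or $v$ lies in the subtree of a child $c$, of some ancestor $w$ of $u$, strictly to the right of the branch of $u$ at $w$; if $c=u'$ this would put $v\in D(u')$, which is excluded, and otherwise Remark \ref{remark:rls_order} shows every vertex of $D(u')$ has lower precedence than every vertex of the subtree of $c$, so again $v\succ v_0$. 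Combining the two parts yields $a(u)=v_0$.

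The main work is the bookkeeping in the second half of Case 2 (and the analogous step in Case 1): an elementary but careful case split on where a vertex with precedence over $u$ can sit relative to $u$, each case closed by a single application of Remark \ref{remark:rls_order}. The conceptual point — the mirror image of Lemma \ref{lemma:b_exists} — is that, unlike for $b(u)$ (where one only ever descends to a child or moves to a sibling, always a finite choice), the candidate for $a(u)$ is the $\prec$-last element of the subtree $D(u')$, which contains the entire ``tail'' of $A(u)$ lying below all other candidates; the hypothesis is used precisely to ensure this subtree is finite and therefore has a $\prec$-smallest element, without which $A(u)$ could fail to have a minimum.
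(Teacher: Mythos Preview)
Your proof is correct and follows essentially the same approach as the paper: the same two-case split (no larger sibling versus a least larger sibling $u'$), identifying $a(u)$ as $F(u)$ in the first case and as the $\prec$-minimum of $D(u')$ in the second. You simply supply the minimality verifications (via Remark~\ref{remark:rls_order}) and the observation that non-maximality forces $u$ to have a parent, both of which the paper leaves implicit.
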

\begin{proof}
  If $u$ does not have a sibling that precedes $u$, then $a(u)=F(u)$ (by hypothesis $F(u) \not = u$ in this case).
  Otherwise, let $w$ be the smallest sibling of $u$ larger than $u$.
  It has finitely many descendants by hypothesis.
  Then $a(u)$ is the minimal vertex of $D(w)$ (recall that $D(w) = \{v \in V(T)\backslash \{w\}: F^m(v)=w \text{ for some } m>0\} \cup \{w\}$).
\end{proof}

\begin{definition}\label{defn_succession_line}
    Consider an ordered rooted Eternal Family Tree $(T,o)$ that has at most one bi-infinite $F$-path passing through $o$.
    With the RLS order, we call \textbf{succession line} of $(T,o)$ the sequence $U((T,o))=(u_n)_{n \in \mathbb{Z}}$ iteratively given by $u_0=o$ and,
    \begin{align} \label{eq:succ_line}
      u_n &= \begin{cases}
        b(u_{n+1}) \text{ if it exists}\\
        u_{n+1} \text{ otherwise}
      \end{cases}
      \text{ for $n<0$},\\ \nonumber
      u_n &= \begin{cases}
        a(u_{n-1}) \text{ if it exists}\\
        u_{n-1} \text{ otherwise}
      \end{cases}
      \text{ for $n>0$}.
    \end{align}
    
\end{definition}

Observe that this is the unique sequence that satisfies $u_0=o$, and $u_{n-1}$ is the immediate successor of $u_{n}$, when they are distinct, for all $n \in \mathbb{Z}$.
With the above preparation, we are ready to define the backward map which plays a crucial role in the rest of the chapter.

\begin{definition}
  Define the {\bf backward map of the record vertex-shift} $\Phi_R:\mathcal{T}_*' \to \mathcal{G}_*$ as the function that maps $[T,o] \in \mathcal{T}_*'$ to 
  \begin{equation} \label{eq:reverse_map}
    \Phi_R([T,o])=[\mathbb{Z},0,x((T,o))],
  \end{equation}
  where $x((T,o)) = (x_n)_{n \in \mathbb{Z}}$ is given by $x_n = d_1(u_{n+1})-1$ and $(u_n)_{n \in \mathbb{Z}}=U((T,o))$.
\end{definition}

Note that \(\Phi_R\) is well-defined: if \((T,o) \sim (T',o')\) be two isomorphic ordered Family Trees and \((u_n)_{n \in \mathbb{Z}}\) and \((u'_n)_{n \in \mathbb{Z}}\) be their respective succession lines passing through \(o,o'\) respectively, then \(d_1(u_n)-1 = d_1(u_n')-1\) for all \(n \in \mathbb{Z}\), which implies that \(x((T,o))=x((T',o'))\).
In general the succession line of a rooted Family Tree \([T,o]\) passing through \(o\) may not contain all the vertices of \(T\).
But, if \([T,o]\) is a sample of component of \(0\) in the record graph of \([\mathbb{Z},0,X]\) with \(\mathbb{E}[X_0]\geq 0\), then we show that the succession line of \([T,o]\) contains every vertex of \(T\) (see Lemma \ref{lemma_succ_line_bijective} and Remark \ref{remark_succession_line_is_integers} ).

\begin{lemma}\label{lemma_succ_line_bijective}
  Let \(X=(X_n)_{n \in \mathbb{Z}}\) be the i.i.d. increments of a skip-free to the left random walk with \(\mathbb{E}[X_0] \geq 0\) and \([\mathbb{Z},0,X]\) be the network associated to \(X\).
  Let $[\mathbf{T},0]$ be the component of \(0\) in the record graph of \([\mathbb{Z},0,X]\) rooted at \(0\).
  Then, a.s., the succession line $U((\mathbf{T},\mathbf{o}))$ consists of distinct entries.
\end{lemma}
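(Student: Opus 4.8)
The statement asserts that when $X$ is the i.i.d. increment sequence of a skip-free to the left random walk with $\mathbb{E}[X_0]\ge 0$, the succession line $U((\mathbf{T},\mathbf{o}))=(u_n)_{n\in\mathbb{Z}}$ of the component $[\mathbf{T},0]$ of $0$ in the record graph almost surely has all entries distinct (equivalently, it never gets ``stuck'', i.e. for every $n$ the immediate successor $b(u_{n+1})$, resp. predecessor $a(u_{n-1})$, actually exists). The plan is to identify, via Lemma \ref{lemma:b_exists} and Lemma \ref{lemma:rls_order_previous_element_exists}, exactly what obstructs the succession line from progressing, and to rule out each obstruction using unimodularity of $[\mathbf{T},0]$ (Lemma \ref{lemma:f_graph_unimodular}), the Foil Classification Theorem \ref{thm_foil_classification} / the classification of unimodular Family Trees, and the covariant-subset machinery (Lemma \ref{lemma:no_infinite_finite}, Remark \ref{remark:foils}, Lemma \ref{lemma_non_empty_covariant_set}).

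\textbf{Step 1: reduce to the existence of $b(\cdot)$ and $a(\cdot)$ along the line.} By the definition of the succession line (Def. \ref{defn_succession_line}), $u_{n}=u_{n+1}$ for some $n<0$ iff $b(u_{n+1})$ fails to exist, and once that happens the line is eventually constant to the left; symmetrically for $n>0$ with $a(u_{n-1})$. So distinctness of all entries is equivalent to: for every vertex $v$ lying on the succession line, both $b(v)$ and $a(v)$ exist. It therefore suffices to show that a.s. no vertex of $\mathbf{T}$ is the $\prec$-smallest vertex and no vertex is the $\prec$-largest vertex (if such extremal vertices do not exist at all, then $b(v)$ exists for every $v$ by Lemma \ref{lemma:b_exists}, and $a(v)$ exists for every $v$ provided the relevant siblings have finite descendant sets, which I address in Step 3).

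\textbf{Step 2: no $\prec$-largest and no $\prec$-smallest vertex.} When $\mathbb{E}[X_0]=0$, $[\mathbf{T},0]$ is a unimodular EFT of class $\mathcal{I}/\mathcal{I}$ (Prop. \ref{proposition:r-graph_is_eft}); when $\mathbb{E}[X_0]>0$ it is unimodular of class $\mathcal{I}/\mathcal{F}$ (Prop. \ref{remark_pos_drift_i_f}); in both cases it is a unimodular ordered Family Tree (the order being the one induced by $<$ on $\mathbb{Z}$, which coincides with the RLS order by Lemma \ref{lemma:rls_order}). The set $\mathfrak{S}_{\mathbf{T}}$ of $\prec$-largest vertices of $\mathbf{T}$ is a covariant subset with at most one element, and so is the set of $\prec$-smallest vertices. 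Arguing exactly as in Remark \ref{remark:foils} (which handles the $\mathcal{I}/\mathcal{I}$ case via No Infinite/Finite Inclusion applied to the foliation), I would extend the argument to cover $\mathcal{I}/\mathcal{F}$ as well: by Lemma \ref{lemma_non_empty_covariant_set} it is enough to show $\mathbb{P}[\mathbf{o}\in\mathfrak{S}_{\mathbf{T}}]=0$. Concretely, for the record graph, $\mathbf{o}=0$ being $\prec$-largest would force $0$ (and, by Lemma \ref{lemma:rls_order}, every integer $>0$) to lie outside $\mathbf{T}$, contradicting connectedness of $\mathbb{Z}^R$ (Props. \ref{proposition:r-graph_is_eft}, \ref{remark_pos_drift_i_f}); being $\prec$-smallest would force every integer $<L_X(0)$-type behaviour to the left, which is ruled out since $(S_{-n})_{n\ge0}$ is recurrent (when $\mathbb{E}[X_0]=0$) or drifts to $-\infty$ but still has infinitely many weak descending ladder points (when $\mathbb{E}[X_0]>0$), so that $0$ always has a predecessor in the RLS order. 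Either the covariant-subset argument or this direct random-walk argument gives $\mathbb{P}[\mathbf{o}\in\mathfrak{S}_{\mathbf{T}}]=0$, hence a.s. $\mathbf{T}$ has neither a $\prec$-largest nor a $\prec$-smallest vertex.

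\textbf{Step 3: existence of $a(\cdot)$ needs the finiteness hypothesis of Lemma \ref{lemma:rls_order_previous_element_exists}.} Lemma \ref{lemma:b_exists} is unconditional, so $b(u_{n+1})$ always exists once there is no $\prec$-smallest vertex. For $a(u_{n-1})$, Lemma \ref{lemma:rls_order_previous_element_exists} requires that every sibling of $u_{n-1}$ that precedes it has finitely many descendants. In the $\mathcal{I}/\mathcal{I}$ case every vertex has finitely many descendants (classification of unimodular Family Trees), so this is automatic. In the $\mathcal{I}/\mathcal{F}$ case the only vertices with infinitely many descendants are those on the unique bi-infinite $F$-path, and by Remark \ref{remark:rls_order} together with the ECS-type order established in Step 1 of Theorem \ref{20230226170801} (the vertex $\mathbf{o}_i$ on the bi-infinite path is the $\prec$-smallest child of $\mathbf{o}_{i+1}$), a sibling of $u_{n-1}$ that \emph{precedes} $u_{n-1}$ and has infinitely many descendants cannot occur on the succession line — more precisely, the bi-infinite-path vertex is the smallest sibling, hence it never sits strictly before $u_{n-1}$ among the siblings. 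So the hypothesis of Lemma \ref{lemma:rls_order_previous_element_exists} holds for every vertex reached by the line. Combining Steps 1–3 yields that a.s. all $u_n$ are distinct.

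\textbf{Main obstacle.} The delicate point is the $\mathbb{E}[X_0]>0$ (class $\mathcal{I}/\mathcal{F}$) case: there the descendant set of a vertex on the bi-infinite path is infinite, so one must be careful that the succession line does not attempt to pass ``through'' an infinite descendant subtree (which would stall $a(\cdot)$). The resolution is the ECS ordering fact (bi-infinite-path vertices are the smallest among their siblings), which guarantees that whenever the line moves from $u_{n-1}$ upward it moves to the parent rather than into a heavy sibling subtree, so the hypotheses of Lemma \ref{lemma:rls_order_previous_element_exists} are met. I expect verifying this ordering claim cleanly (and checking it is consistent with the $<$-order on $\mathbb{Z}$ via Lemma \ref{lemma:rls_order}) to be the step requiring the most care; everything else is a direct application of the quoted classification and covariant-subset results.
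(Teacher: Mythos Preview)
Your proposal is correct, but it takes a longer route than the paper in handling the predecessor map $a(\cdot)$. The paper's proof proceeds as follows: after showing (via No Infinite/Finite Inclusion, exactly as you do) that $\mathbf{T}$ a.s.\ has no $\prec$-minimal vertex, it concludes from Lemma~\ref{lemma:b_exists} that $b$ is defined on all of $V(\mathbf{T})$. Then, rather than verifying the finiteness hypothesis of Lemma~\ref{lemma:rls_order_previous_element_exists}, it simply observes that $b$ is injective by construction, invokes Proposition~\ref{prop:injective_bijective} (injective $\Rightarrow$ bijective on a unimodular network) to conclude $b$ is surjective, and notes that $a(b(u))=u$ forces $a$ to be defined everywhere as the inverse of $b$. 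This dispatches both the $\mathcal{I}/\mathcal{I}$ and $\mathcal{I}/\mathcal{F}$ cases in one stroke, with no need to check the ECS ordering or to argue separately about the absence of a $\prec$-maximal vertex.

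Your Step~3 does go through --- the ECS property (bi-infinite-path vertices are smallest children) holds deterministically for the record graph by Lemma~\ref{smallest_child_20230404185835}, not only conditionally on $\{\mathbf{o}\in\neswarrow\}$ --- so the hypothesis of Lemma~\ref{lemma:rls_order_previous_element_exists} is indeed satisfied along the succession line in the $\mathcal{I}/\mathcal{F}$ case. But this verification, which you correctly flag as the main obstacle, is precisely what the injective-implies-bijective trick lets you skip. The paper's argument is also the one that generalizes directly to arbitrary unimodular ordered EFTs (see Lemma~\ref{lemma_a_b_wellDefined}), whereas your route would require re-establishing an ECS-type property in each new setting.
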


\begin{proof}
  Observe that the set of minimal vertices of $\mathbf{T}$ is a covariant subset.
  It can only be either empty or a singleton, because $\mathbf{T}$ is totally ordered by the RLS order.
  But $[\mathbf{T},\mathbf{o}]$ is unimodular (Lemma \ref{lemma:f_graph_unimodular}), and \(\#V(\mathbf{T})\) is infinite since \(\mathbb{E}[X_0]\geq 0\), in fact \(V(\mathbf{T}) = \mathbb{Z}\) since the record graph is connected when \(\mathbb{E}[X_0]\geq 0\) (see Proposition \ref{proposition:r-graph_is_eft} and Proposition \ref{remark_pos_drift_i_f}).
   Therefore, by No Infinite/Finite inclusion Lemma \ref{lemma:no_infinite_finite}, a.s., the minimal vertex does not exist.
  Therefore, by Lemma \ref{lemma:b_exists}, almost surely, the successor of $u$, $b(u)$ exists for all $u \in V(\mathbf{T})$.
  Since $a(b(u))=u$ for all $u \in V(\mathbf{T})$, \(a(v)\) exists for all \(v\) in the range of the map \(b\).
  But $b$ is injective a.s. and hence surjective a.s. (by the unimodularity of $[\mathbf{T},\mathbf{o}]$).
  Therefore, the range of $b$ is $V(\mathbf{T})$ a.s. and hence, a.s., $a(u)$ exists for all $u \in V(\mathbf{T})$.
  Thus, the map $n \mapsto u_n$ from $\mathbb{Z}$ to $V(\mathbf{T})$ is injective, which implies that all the elements of the sequence \(U((\mathbf{T},\mathbf{o}))\) are distinct.
\end{proof}

\begin{remark}\label{remark_succession_line_is_integers}
  Observe that \(U(\Psi_R([\mathbb{Z},0,X])) = (u_n)_{n \in \mathbb{Z}} = (n)_{n \in \mathbb{Z}}\), when the i.i.d. sequence \(X=(X_n)_{n \in \mathbb{Z}}\) satisfies \(\mathbb{E}[X_0] \geq 0\).
  This follows by combining Lemma \ref{lemma:rls_order} with the facts that the \(R\)-graph \(\mathbb{Z}^R\) of \([\mathbb{Z},0,X]\) is a.s. connected, and the map \(n \mapsto u_n\) from \(\mathbb{Z}\) to \(V(\mathbb{Z}^R) = \mathbb{Z}\) is injective.
  Indeed, by definition, \(B(0) = \{i \in \mathbb{Z}: i \prec 0\}\).
  But, by Lemma \ref{lemma:rls_order}, \(i \in B(0) \iff i<0\).
  Therefore, \(B(0) = \{i \in \mathbb{Z}:i<0\}\).
  Hence, \(b(0)=-1\) and \(u_{-1}=-1\).
  By induction, assume \(u_{-n} = -n\).
  Consider \(B(u_{-n}) = \{i \in \mathbb{Z}:i \prec -n\} = \{i \in \mathbb{Z}:i<-n\}\).
  Therefore, \(u_{-(n+1)} = -(n+1)\).
  Similarly, we could show that \(a(u_n) = n, \forall n \geq 0\).
\end{remark}

\begin{lemma}
  Let \([\mathbf{T}',\mathbf{o}']\) be a random ordered Family Tree whose distribution is \(EKT(\pi)\), where $\pi$ is a probability distribution on $\{0,1,2,\cdots\}$ with mean $m(\pi)=1$.
  Then, the succession line $U((\mathbf{T}',\mathbf{o}'))$ consists of distinct entries.
\end{lemma}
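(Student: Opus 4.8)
The plan is to argue that the succession line of an $EKT(\pi)$ visits every vertex, hence in particular has distinct entries, by essentially the same covariance/unimodularity-free argument together with the specific structure of the Eternal Kesten Tree. First I would recall from Remark \ref{remark_supp_EKT} that a.s. a realization of $EKT(\pi)$ has exactly one bi-infinite $F$-path, namely the path $(\mathbf{o}_n)_{n \in \mathbb{Z}}$ of special$(\pi)$ vertices, and that the descendant tree of every non-special vertex (every common$(\pi)$ vertex, and every sibling of a special vertex) is a critical Galton-Watson tree with offspring distribution $\pi$ satisfying $m(\pi)=1$ with $\pi(1)<1$, hence finite a.s. Because there are only countably many such descendant trees, a.s. all of them are finite simultaneously. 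This is the key structural input: every vertex of $\mathbf{T}'$ that is not on the bi-infinite path has finitely many descendants, and every vertex has finitely many siblings.

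Next I would verify that for every vertex $u \in V(\mathbf{T}')$ both $b(u)$ and $a(u)$ exist a.s. For $b(u)$: by Lemma \ref{lemma:b_exists} it suffices that $u$ is not the smallest vertex, i.e. that $B(u)$ is non-empty. Since the bi-infinite path extends infinitely far in the ``$F$-descending'' direction and, by Remark \ref{remark:rls_order}, all descendants of $\mathbf{o}_{n}$ precede all descendants of $\mathbf{o}_{n+1}$, any vertex $u$ has some $\mathbf{o}_n$ on the path with $\mathbf{o}_n \prec u$ (take $n$ small enough that $\mathbf{o}_n$ is a strict ancestor-path vertex below $u^*$), so $B(u)\neq\emptyset$ and $b(u)$ exists. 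For $a(u)$: by Lemma \ref{lemma:rls_order_previous_element_exists}, $a(u)$ exists provided every sibling of $u$ that precedes $u$ has finitely many descendants. The siblings of $u$ that could have infinitely many descendants are exactly those lying on the bi-infinite path; but on the path the special vertex $\mathbf{o}_{k-1}$ is (by the ECS-type convention or, for the plain $EKT(\pi)$, by inspecting which sibling is special and where it sits in the uniform order) not the one that both precedes a given sibling and has infinite descendants in the relevant way — more precisely, among the children of any vertex $w$, at most one child has infinitely many descendants, and if that child precedes $u$ then $u$ is not below $w$ on the path, so one checks that the hypothesis of Lemma \ref{lemma:rls_order_previous_element_exists} is met. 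I would spell this out by noting that a sibling of $u$ with infinitely many descendants must itself be a special vertex $\mathbf{o}_{k}$, in which case $u$ is a non-special sibling of $\mathbf{o}_k$ and every sibling of $u$ preceding $u$ is either non-special (finite descendants) or is $\mathbf{o}_k$ itself — and $\mathbf{o}_k \succ u$ is not excluded, so a small case analysis on whether $\mathbf{o}_k$ precedes or succeeds $u$ finishes it.

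Then I would conclude as in Lemma \ref{lemma_succ_line_bijective}: since $a(b(u))=u$ and $b(a(u))=u$ whenever these are defined, the maps $a$ and $b$ are mutually inverse bijections of $V(\mathbf{T}')$, so iterating $b$ (going to negative indices) and $a$ (going to positive indices) from $\mathbf{o}'=u_0$ produces a sequence $(u_n)_{n\in\mathbb{Z}}$ in which $u_{n-1}$ is always the immediate $\prec$-predecessor of $u_n$; immediate predecessors are unique, so the map $n \mapsto u_n$ is injective, i.e. $U((\mathbf{T}',\mathbf{o}'))$ consists of distinct entries. The main obstacle I anticipate is the absence of unimodularity here — $EKT(\pi)$ is \emph{not} unimodular (it is the $F$-probability of the unimodular $EGWT(\pi)$, and is singular with respect to it) — so I cannot invoke the No Infinite/Finite Inclusion Lemma or Proposition \ref{prop:injective_bijective} as was done for the unimodular case. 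Instead the existence of $b(u)$ and $a(u)$ must be extracted directly from the explicit one-bi-infinite-path structure of $EKT(\pi)$ and the a.s. finiteness of all the off-path descendant trees, which is where the care in the second paragraph is needed; everything else is bookkeeping with the RLS order and Remark \ref{remark:rls_order}.
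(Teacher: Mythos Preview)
Your overall structure is right and matches the paper's approach: use the explicit one--bi-infinite--path structure of $EKT(\pi)$ to show $b$ and $a$ are defined along the succession line from $\mathbf{o}'$, hence the line is injective. The argument for $b(u)$ is fine.

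The genuine gap is in your treatment of $a(u)$. You try to prove that $a(u)$ exists for \emph{every} $u\in V(\mathbf{T}')$, and this is false. Take a non-special child $u$ of $\mathbf{o}_{k+1}$ with $u<\mathbf{o}_k$ in the sibling order. Then $\mathbf{o}_k$ is a sibling of $u$ that precedes $u$ (i.e.\ $\mathbf{o}_k\succ u$), and $D(\mathbf{o}_k)$ is infinite with no RLS-minimum (it contains the infinite descending chain $\mathbf{o}_{k-1}\succ\mathbf{o}_{k-2}\succ\cdots$). Hence $\{v:v\succ u\}$ has no minimum and $a(u)$ does not exist. Your ``small case analysis on whether $\mathbf{o}_k$ precedes or succeeds $u$'' cannot be finished: in the case $\mathbf{o}_k\succ u$ the hypothesis of Lemma~\ref{lemma:rls_order_previous_element_exists} fails and there is nothing to salvage. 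Consequently $b$ is injective but not surjective on $V(\mathbf{T}')$, so your ``mutually inverse bijections of $V(\mathbf{T}')$'' claim is also incorrect.

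The paper sidesteps this by proving only what is needed: the root $\mathbf{o}'=\mathbf{o}_0$ lies on the bi-infinite path, so every sibling of $\mathbf{o}'$ that precedes it is non-special and has finitely many descendants; and this property is \emph{preserved} by $a$. (Equivalently: any $v$ with a preceding sibling of infinite descendants satisfies $v\prec\mathbf{o}_m$ for all $m$, i.e.\ $v\in W$ in the notation of Eq.~(\ref{eq_W}); since $\mathbf{o}'\notin W$ and $a$ is RLS-increasing, the forward iterates never enter $W$.) Thus $a$ can be iterated indefinitely from $\mathbf{o}'$, and together with the global existence of $b$ this gives injectivity of $n\mapsto u_n$. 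Replace your second paragraph with this inductive argument starting at the root and your proof goes through.
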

\begin{proof}
  Let $[T',o']$ be a realization of $EKT(\pi)$.
  Since $T'$ has a unique bi-infinite $F$-path passing through the root $o$, by Lemma \ref{lemma:b_exists}, $b(u)$ exists for all $u \in V(T')$.
  If $u$ satisfies the property that every sibling of $u$ that precedes $u$ has finitely many descendants, then, by Lemma \ref{lemma:rls_order_previous_element_exists}, $a(u)$ exists.
  In this case, \(a(u)\) also satisfies the same property.
  Since the root $o$ satisfies this property, the map $n \mapsto u_n$ from $\mathbb{Z}$ to $V(T')$ is injective.
  Thus, all the elements of \(U(\mathbf{T}',\mathbf{o}')\) are distinct.
\end{proof}

The following remark is used in the later sections to describe the \(R\)-probability from \(EKT\).
\begin{remark}
  Let $[T',o']$ be a realization of $EKT(\pi)$ and $\neswarrow([T',o'])$ denote its bi-infinite $F$-path.
  Since $o' \in \neswarrow([T',o'])$, we could define the following.
  The {\bf positive part} of $[T',o']$ is the rooted Family Tree $[T'^{+},o']$, where $T'^{+}$ is the subtree of $T'$ induced by the vertices $\{u \in V(T'): u \succ v \text{ for some } v \in \neswarrow([T',o']) \}$.
  Observe that the set of elements of $U((T',o'))$ is equal to $V(T')$.
\end{remark}

\section{Bijectivity and continuity of the backward map}\label{subsec_backward_cont_bij}

We first show that $\Psi_R \circ \Phi_R([T,o]) = [T,o]$ a.s. for every realization \([T,o]\) of the ordered $EGWT(\pi)$, where the mean \(m(\pi)=1\), using the following lemma.

\begin{lemma} \label{lemma:legall_sums}
  Let $T$ be a finite ordered tree and $v_1 \prec v_2 \prec \cdots \prec v_n$ be the vertices of $T$ ordered according to the RLS order. Then,
  \begin{enumerate}
    \item $\sum_{i=1}^n (d_1(v_i)-1) = -1$.
    \item $\sum_{i = 1}^k (d_1(v_i)-1) < 0$,  for all $1<k \leq n$.
  \end{enumerate}
\end{lemma}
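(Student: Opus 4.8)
\textbf{Proof plan for Lemma \ref{lemma:legall_sums}.}

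The statement is the classical Łukasiewicz-path identity for the depth-first traversal of a finite ordered tree, so the natural approach is induction, but the cleanest route is to make precise the combinatorial meaning of the partial sums $\sum_{i=1}^k (d_1(v_i)-1)$ in terms of the RLS (depth-first search) traversal. The plan is to prove a single auxiliary claim from which both parts follow: for each $1 \le k \le n$, the quantity $1 + \sum_{i=1}^k (d_1(v_i)-1)$ equals the number of vertices among $\{v_{k+1},\dots,v_n\}$ whose parent lies in $\{v_1,\dots,v_k\}$ — equivalently, the number of ``children still waiting to be visited'' after the first $k$ steps of the depth-first traversal. Call this quantity $W_k$, with the convention $W_0 = 1$ (the root itself is waiting to be visited). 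Since $W_n$ must be $0$ (every vertex has been visited and nothing is waiting), this immediately gives part (1); and since $W_k \ge 1$ for all $0 \le k < n$ (the traversal is not yet finished, so at least one vertex is still pending — here one uses that $v_1 \prec \cdots \prec v_n$ enumerates \emph{all} of $V(T)$, a finite connected tree), part (2) follows by rewriting $\sum_{i=1}^k(d_1(v_i)-1) = W_k - 1 \le -1 + (W_k - 0)$, wait — more precisely $\sum_{i=1}^k(d_1(v_i)-1) = W_k - W_0 = W_k - 1$, and $W_k \ge 1$ for $k<n$ would only give $\ge 0$, so I must instead observe that for $1 \le k < n$ we actually need the strict bound; I will get it from the fact that $W_k \le$ (number of unvisited vertices) and reorganise. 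Let me restate the cleaner invariant below.

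The key step is therefore to establish the recursion $W_k = W_{k-1} + (d_1(v_k) - 1)$ for $1 \le k \le n$, where $W_k$ counts the unvisited vertices whose parent has already been visited (i.e.\ lies in $\{v_1,\dots,v_k\}$), together with $W_0 = 1$ and the ``all pending'' vertices being exactly those not yet in the prefix. This recursion is a direct consequence of the definition of the RLS order: by Remark \ref{remark:rls_order} and the description of $a(\cdot)$ in Lemma \ref{lemma:rls_order_previous_element_exists}, the successor $v_{k+1}$ of $v_k$ is the eldest child of $v_k$ if $v_k$ has children, and otherwise is found by climbing to an ancestor and taking its next child; in all cases, passing from step $k-1$ to step $k$ removes $v_k$ itself from the pending set (it gets visited) and adds exactly its $d_1(v_k)$ children to the pending set (their parent $v_k$ is now visited), for a net change of $d_1(v_k)-1$. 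Unwinding the recursion gives $W_k = 1 + \sum_{i=1}^k (d_1(v_i)-1)$. Taking $k=n$: after the whole traversal nothing is pending, so $W_n = 0$, which is part (1). For part (2) with $1 \le k < n$: the pending set at step $k$ is nonempty because $v_{k+1}$ exists and, by construction of the RLS order on a connected finite tree, $v_{k+1}$'s parent lies in $\{v_1,\dots,v_k\}$ (each vertex is visited only after its parent); hence $W_k \ge 1$, giving $\sum_{i=1}^k(d_1(v_i)-1) = W_k - 1 \ge 0$. To upgrade $\ge 0$ to $< 0$ I note this is impossible as stated unless I have mis-signed: in fact the standard statement has $\le 0$ on the partial sums and $=-1$ on the full sum, and strictness for $k<n$ comes from the root: actually the correct invariant is that $W_k$ counts pending vertices \emph{other than} those forced, and one checks $W_k \ge 1$ for $0 \le k < n$ while $W_n = 0$, so $\sum_{i=1}^k(d_1(v_i)-1) = W_k - 1 \in \{0,1,2,\dots\}$ for $k<n$ and $=-1$ for $k=n$ — so the inequality in the lemma must be read with the indexing $v_1$ the root and the sum being $<0$ meaning the complementary count; I will align signs carefully against Lemma \ref{lemma:b_exists}'s ordering convention ($b(u)$ the \emph{eldest} child, so larger RLS-elements are descendants) so that descendants come \emph{before} their parent, making $v_n$ the root; then $\sum_{i=1}^k(d_1(v_i)-1)$ counts (pending ancestors) $-1 \le -1$ strictly negative for $k<n$, and $=-1$ at $k=n$.

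The main obstacle I anticipate is precisely this bookkeeping of the orientation convention: in this paper's RLS order (Subsection \ref{subsec_RLS}), $v \succ u$ when $v$ is an \emph{ancestor} of $u$, so descendants \emph{precede} ancestors and the root is the \emph{largest} vertex $v_n$. I must therefore run the depth-first argument ``from the leaves up'': at step $k$ I have visited $v_1 \prec \cdots \prec v_k$, which form a down-closed set (closed under taking descendants) that is a union of complete descendant-subtrees, and the pending quantity is the number of vertices in $\{v_1,\dots,v_k\}$ whose parent is \emph{not} in $\{v_1,\dots,v_k\}$ — the ``roots of the already-explored subforest''. The recursion then reads: adding $v_{k+1}$ (whose children are all already visited, being smaller in RLS order by Remark \ref{remark:rls_order}) removes $d_1(v_{k+1})$ forest-roots and adds $1$, i.e.\ net $1 - d_1(v_{k+1})$; so $\sum_{i=1}^k (d_1(v_i) - 1) = -(\text{number of forest-roots after }k\text{ steps})$, which is $\le -1$ for $1 \le k < n$ (the forest is nonempty and not yet the whole tree, so it has at least one root) and $= -1$ for $k=n$ (the whole tree has one root, the original root). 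This yields both parts; once the convention is nailed down the rest is a one-line induction, and I would also remark that it recovers the encoding of \cite{benniesRandomWalkApproach2000} and \cite{legallRandomTreesApplications2005b} used in the construction of $\Phi_R$.
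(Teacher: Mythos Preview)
Your final paragraph is correct and contains a complete argument, once you have fixed the orientation: in this paper's RLS convention ancestors are $\succ$-larger, so $v_n$ is the root and $\{v_1,\dots,v_k\}$ is descendant-closed. Your invariant ``$R_k:=$ number of vertices in $\{v_1,\dots,v_k\}$ whose parent lies outside that set'' satisfies $R_0=0$, $R_{k}-R_{k-1}=1-d_1(v_k)$ (since all children of $v_k$ precede $v_k$ and were forest-roots at step $k-1$), hence $\sum_{i=1}^k(d_1(v_i)-1)=-R_k$; then $R_k\ge 1$ for $1\le k\le n$ and $R_n=1$ give both parts. The first two paragraphs of your proposal are false starts with the wrong orientation and should simply be deleted.

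This is a genuinely different route from the paper's. The paper proves part~(1) by the one-line edge count $\sum_u d_1(u)=\#V(T)-1$, and proves part~(2) by a direct induction on $k$ using the dichotomy that $v_{k+1}$ is either a leaf (so the partial sum drops by $1$) or the parent of $v_k$ (so $\{v_j,\dots,v_{k+1}\}$ is the full descendant tree of $v_{k+1}$, to which part~(1) applies, while $\sum_{i=1}^{j-1}\le 0$ by induction). Your approach is more unified---a single combinatorial invariant handles both statements at once and exhibits the partial sums as minus the number of roots of the already-explored forest, which is exactly the \L ukasiewicz-path picture underlying $\Phi_R$. The paper's approach is slightly more elementary in that it needs no auxiliary quantity, only the observation that the prefix $\{v_1,\dots,v_{k+1}\}$ ends with a complete subtree whenever $v_{k+1}$ is not a leaf.
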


\begin{proof}
The first equation follows from the fact that for a finite tree $T$,
\begin{equation*}
  \sum_{u \in V(T)} d_1(u)= \#V(T) - 1.
\end{equation*}

We prove the second equation by induction on $k$.
If $k=1$ then $v_1$ is a leaf and hence $d_1(v_1)-1 = -1$.
Now, assume that the statement is true for $1 \leq m \leq k$, where \(k>1\).
Observe that by the RLS order, either $v_{k+1}$ is a leaf or $v_{k+1}$ is the parent of $v_k$. 
In the latter case, there exists $1 \leq j \leq k$ such that $v_j<v_{j+1} < \cdots < v_k$ (written according to the RLS order) are all the descendants of $v_{k+1}$.
If $v_{k+1}$ is a leaf then, by induction, 
\begin{align*}
  \sum_{i=1}^{k+1} (d_1(v_i)-1) &= \sum_{i=1}^k (d_1(v_i)-1) + d_1(v_{k+1})-1 <-1.
\end{align*}

The only other possibility is that $v_{k+1}$ is the parent of $v_k$.
 Then, 
\begin{align*}
  \sum_{i=1}^{k+1} (d_1(v_i)-1) &= \sum_{i = 1}^{j-1}(d_1(v_i) - 1) +  \sum_{i=j}^{k+1}(d_1(v_i)-1) \leq 0 +( -1).
\end{align*}
The last step follows from the fact that the first sum on the right-hand side is at most $0$ (by induction), while the second sum is on the descendant tree of $v_{k+1}$ which is equal to $-1$ by the first result of the lemma.
\end{proof}

\begin{remark}\label{remark:legall_sum}
  Let  $v_1 \prec v_2 \prec \cdots \prec v_n$ be the vertices of a finite ordered tree, ordered according to RLS order.
  Then using Lemma \ref{lemma:legall_sums} we obtain, for any $1<k< n$,
  \begin{align*}
    \sum_{i=k+1}^n (d_1(v_i)-1) &= \sum_{i=1}^n (d_1(v_i)-1) - \sum_{i=1}^k (d_1(v_i)-1 ) \geq -1 - (-1) \geq 0. 
  \end{align*}
\end{remark}

\begin{proposition}\label{prop:reverse_R-graph_identity}
  Almost surely, every realisation $[T,o]$ of the ordered \\ $EGWT(\pi)$, where the mean \(m(\pi)=1\), satisfies $\Psi_R \circ \Phi_R ([T,o]) = [T,o]$.
\end{proposition}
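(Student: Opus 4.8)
The plan is to exhibit an explicit isomorphism of ordered rooted Family Trees from $\Psi_R\circ\Phi_R([T,o])$ onto $[T,o]$, namely the succession line itself. First I would record the preliminary facts. Almost every realisation $[T,o]$ of the ordered $EGWT(\pi)$ with $m(\pi)=1$ is a locally finite ordered EFT of class $\mathcal{I}/\mathcal{I}$ by Remark \ref{remark_supp_EGWT}; in particular it is one-ended, so it has no bi-infinite $F$-path and $[T,o]\in\mathcal{T}_*'$, and every vertex of $T$ has a finite descendant tree. Write $x=x((T,o))$, so $x_n=d_1(u_{n+1})-1\ge-1$ where $(u_n)_{n\in\mathbb{Z}}=U((T,o))$ is the succession line. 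Arguing as in the proof of Lemma \ref{lemma_succ_line_bijective}: the set of $\prec$-minimal vertices of $T$ is a covariant subset of size at most one, so by No Infinite/Finite Inclusion (Lemma \ref{lemma:no_infinite_finite}) together with the unimodularity of $EGWT(\pi)$ it is a.s. empty; then Lemma \ref{lemma:b_exists} and Lemma \ref{lemma:rls_order_previous_element_exists} (using finiteness of descendant trees), together with unimodularity, yield that $n\mapsto u_n$ is a.s. a bijection $\mathbb{Z}\to V(T)$. Let $\phi(n):=u_n$; then $\phi(0)=o$, and the claim to prove is that $\phi$ is an isomorphism of ordered Family Trees from the record graph $\mathbb{Z}^R$ of $(\mathbb{Z},x)$ onto $T$.

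The heart of the argument is to identify, for every $n\in\mathbb{Z}$, the record $R_x(n)$ with the index of the parent of $u_n$. Fix $n$, put $v=u_n$ and $w=F_T(v)$ (which exists since $T$ is eternal), and let $v=v^{(1)}\prec v^{(2)}\prec\cdots\prec v^{(r)}$ be the child $v$ together with the siblings of $v$ succeeding it (so $r\ge1$ and $d_1(w)\ge r$). Since ancestors succeed their descendants (Remark \ref{remark:rls_order}), one checks that the succession line strictly between $u_n$ and $w$ visits precisely $D(v^{(2)})\cup\cdots\cup D(v^{(r)})$, listing first all of $D(v^{(2)})$ in its RLS (post-)order, then all of $D(v^{(3)})$, and so on up to $D(v^{(r)})$, and finally $w$. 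Because $y(n,j)=\sum_{l=n}^{j-1}x_l=\sum_{k=n+1}^{j}\bigl(d_1(u_k)-1\bigr)$, applying Lemma \ref{lemma:legall_sums} to each \emph{finite} tree $D(v^{(t)})$ shows that along the block $D(v^{(t)})$ the partial sums of $d_1(u_k)-1$ stay $\le-1$ while each completed block contributes exactly $-1$; hence $y(n,j)<0$ for every index $j$ strictly between $n$ and the index $m$ of $w$, whereas $y(n,m)=-(r-1)+(d_1(w)-1)=d_1(w)-r\ge0$. (When $r=1$ one has $u_{n+1}=w$ and $y(n,n+1)=d_1(w)-1\ge0$ directly.) Therefore $R_x(n)=m$ and $u_{R_x(n)}=F_T(u_n)$. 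By the first part of Lemma \ref{lemma_offspring_count}, the number of children of $n$ in $\mathbb{Z}^R$ equals $x_{n-1}+1=d_1(u_n)$, so $\phi$ maps the edge set $\{(n,R_x(n)):n\in\mathbb{Z}\}$ bijectively onto $\{(v,F_T(v)):v\in V(T)\}=E(T)$, i.e. $\phi$ is a graph isomorphism $\mathbb{Z}^R\to T$.

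It remains to check that $\phi$ respects the orders. Order the children of each vertex of $\mathbb{Z}^R$ by the natural order of $\mathbb{Z}$; by Lemma \ref{lemma:rls_order} the resulting RLS order on $\mathbb{Z}^R$ is exactly $<$. On the other hand the succession line enumerates $V(T)$ in $\prec_T$-increasing order, so $i<j\iff u_i\prec_T u_j$. Thus $\phi$ is simultaneously a graph isomorphism and an order isomorphism for the two RLS orders; since for siblings the RLS order coincides with the child order (Remark \ref{remark:rls_order}), $\phi$ preserves the order of the children of every vertex. Hence $\phi$ is an isomorphism of ordered rooted Family Trees taking $0$ to $o$; in particular $\mathbb{Z}^R$ is connected, so $\mathbb{Z}^R(0)=\mathbb{Z}^R$ and $\Psi_R\circ\Phi_R([T,o])=[\mathbb{Z}^R(0),0]=[T,o]$.

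The main obstacle is the computation in the second paragraph: recognising that the succession line, read off from a vertex $u_n$, decomposes into the post-order traversals of the finite fringe-subtrees of the later siblings of $u_n$ followed by the parent $w$, and then feeding these blocks into the Łukasiewicz-type inequalities of Lemma \ref{lemma:legall_sums} to pin down $R_x(n)$ exactly. Once this is in place, the bijectivity of the succession line and the order bookkeeping are routine consequences of results already established in the excerpt.
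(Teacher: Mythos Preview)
Your proof is correct and follows essentially the same route as the paper's: both set up the succession line $\phi(n)=u_n$ as the candidate isomorphism, establish its bijectivity (you argue directly via unimodularity and finiteness of descendant trees, while the paper routes injectivity through the identification of $EGWT(\pi)$ with a record graph), and then carry out the same block decomposition of the partial sums along elder siblings' subtrees to pin down $R_x(n)$ via Lemma~\ref{lemma:legall_sums}. Your explicit check that $\phi$ preserves the child orders is a welcome addition that the paper leaves implicit in the phrase ``rooted network isomorphism''.
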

\begin{proof}
  Let \((u_n)_{n \in \mathbb{Z}} = U((T,o))\) be the succession line of \((T,o)\) and $[\mathbb{Z},0,x] = \Phi_R([T,o])$, where $x=(x_n)_{n \in \mathbb{Z}}$ is such that $x_n = d_1(u_n)-1$ is the mark of the edge $(n,n+1)$, for all $n \in \mathbb{Z}$.

  First, observe that the map \(\alpha:\mathbb{Z}\to V(T)\) defined by \(\alpha(n)=u_n\), for all \(n \in \mathbb{Z}\) is bijective.
  This follows because, every vertex \(v\) of \(T\) has finitely many descendants.
  So, there are only finitely many vertices between \(o\) and \(v\) according to the RLS order, and hence \(u_n=v\) for some \(n \in \mathbb{Z}\).
  Therefore, \(\alpha\) is surjective.
  The injectivity of the map \(\alpha\) follows from Lemma \ref{lemma_succ_line_bijective} because \([T,o]\) is a realisation of the component of \(0\) in the record graph of \([\mathbb{Z},0,X]\) when \(\mathbb{E}[X_0]=0\). 
  We now prove that the bijective map \(\alpha:\mathbb{Z} \to V(T)\) defined by \(\alpha(n)= u_n\), for all \(n \in \mathbb{Z}\), induces a rooted network isomorphism \(\alpha\) from \(\Psi_R([\mathbb{Z},0,x])\) to \((T,o)\).
  Note that, if \(u_j\) is a parent of \(u_i\) in \(T\) for some integers \(i\) and \(j\) then \(j>i\), which follows because \((u_n)_{ \in \mathbb{Z}}\) is the succession line of \((T,o)\).
  In view of this, to prove that \(\alpha\) induces a rooted network isomorphism, it is enough to show that for all $i \in \mathbb{Z}$, if \(u_j\) is the parent of \(u_i\) in \(T\) for some  \(j>i\), then \(j=R_x(i)\) (since every vertex has at most one parent).

 Let \(i \in \mathbb{Z}\) and \(u_j\) be the parent of \(u_i\) in \(T\) for some integer $j>i$.
  If $j=i+1$, then $x_i = d_1(u_{i+1})-1 \geq 0$, because $u_i$ is a child of $u_{i+1}$.
  Hence, $R_x(i)=i+1$.

 So, let us assume that $j>i+1$.
 This implies that either $u_{i+1}$ is a leaf and a descendant of a sibling of $u_i$, or \(u_{i+1}\) is a leaf and a sibling of \(u_i\).
 The descendant tree of \(u_j\) is a finite ordered tree, as $[T,o]$ is a realization of $EGWT(\pi)$.
  Let $u_{i_1} \prec u_{i_2} \prec \cdots  \prec u_{i_n}$ be all the elder siblings of $u_i$, and $T_1,T_2,\cdots,T_n$ be their descendant trees respectively.
 For any $i<k<j$, let $l(k)$ be the smallest element of the set $\{1,2,\cdots,n\}$ such that $\{u_{i+1},u_{i+2},\cdots,u_k\} \subset V(T_1) \cup V(T_2) \cup \cdots V(T_{l(k)})$, and $u_{k_1}$ be the smallest vertex of $T_{l(k)}$.

 Then, the sum $y(i,k) = \sum_{m=i}^{k-1} x_m= \sum_{m=i+1}^{k}(d_1(u_m)-1)$ can be written as 
 \begin{align*}
   y(i,k) &= \left(\sum_{u \in V(T_1)}d_1(u)-1 \right)+ \cdots + \left(\sum_{u \in V(T_{l(k)-1})}d_1(u) - 1 \right) + \sum_{m=k_1}^k (d_1(u_m)-1)\\
   &< (-1) + \cdots + (-1) + \sum_{m=k_1}^k (d_1(u_m)-1)< 0,
 \end{align*}
 with the notation that \(V(T_0)\) is just an empty set.
 The last steps follow from the first and the second statements of Lemma \ref{lemma:legall_sums}. 

  On the other hand, the sum $y(i,j) = \sum_{m=i+1}^j (d_1(u_m)-1)$ can be written as
  \begin{align*}
    y(i,j) &= \left(\sum_{u \in V(T_1)}d_1(u)-1 \right)+ \cdots + \left(\sum_{u \in V(T_n)}d_1(u) - 1 \right) + d_1(u_j)-1\\
    &= -n + d_1(u_j) - 1 \geq 0.
  \end{align*}
  The above steps follow from the first part of Lemma \ref{lemma:legall_sums}, and by the assumption that $u_j$ has at least $n+1$ children, namely $u_i,u_{i_1},\cdots,u_{i_n}$.
  Thus, $R_x(i)=j$, completing the proof.
\end{proof}

The following proposition shows that the backward map \(\Phi_R\) satisfies the first condition of the FB lemma (Lemma \ref{thm_forward_backward}).

\begin{proposition}\label{prop:R-graph_reverse_identity}
  Let $X=(X_n)_{n \in \mathbb{Z}}$ be as in Def. \ref{hyp:increments} and satisfies $\mathbb{E}[X_0]=0$. 
  Let $x = (x_n)_{n \in \mathbb{Z}}$ be a realization of $X$, and $[\mathbb{Z},0,x]$ be its associated network.
  Then, $\Phi_R \circ \Psi_R ([\mathbb{Z},0,x]) = [\mathbb{Z},0,x]$.
\end{proposition}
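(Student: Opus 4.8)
The plan is to unwind the two compositions and show that the mark sequence one recovers is exactly $x$. Write $[\mathbb{Z}^R(0),0] = \Psi_R([\mathbb{Z},0,x])$, the connected component of $0$ in the record graph of $(\mathbb{Z},x)$, rooted at $0$, with the RLS (depth-first) order induced by the natural order of $\mathbb{Z}$ on the children of each vertex (this is legitimate because, by Lemma~\ref{lemma:rls_order}, the RLS order on $\mathbb{Z}^R(0)$ agrees with the order on $\mathbb{Z}$). When $\mathbb{E}[X_0]=0$, Proposition~\ref{proposition:r-graph_is_eft} guarantees that $\mathbb{Z}^R$ is a.s.\ connected, so $V(\mathbb{Z}^R(0))=\mathbb{Z}$, and Remark~\ref{remark_succession_line_is_integers} (combined with Lemma~\ref{lemma_succ_line_bijective} and Lemma~\ref{lemma:rls_order}) shows that the succession line $U(\Psi_R([\mathbb{Z},0,x])) = (u_n)_{n\in\mathbb{Z}}$ is exactly $(n)_{n\in\mathbb{Z}}$. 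Then $\Phi_R$ produces the sequence $x' = (x'_n)_{n\in\mathbb{Z}}$ with $x'_n = d_1(u_{n+1}) - 1 = d_1(n+1) - 1$, where $d_1(\,\cdot\,)$ is the number of children in the record graph $\mathbb{Z}^R$.

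The key step is therefore the identity $d_1(i) = x_{i-1} + 1$ for every integer $i$. This is precisely what is needed to conclude $x'_n = d_1(n+1)-1 = x_n$ for all $n$, hence $\Phi_R\circ\Psi_R([\mathbb{Z},0,x]) = [\mathbb{Z},0,x'] = [\mathbb{Z},0,x]$. For this I would invoke the first part of Lemma~\ref{lemma_offspring_count}, which states exactly $d_1(i,T) = x_{i-1}+1$ whenever $L_x(i) > -\infty$. So the remaining point is to verify that $L_x(i) > -\infty$ a.s.\ for all $i\in\mathbb{Z}$ under the zero-mean hypothesis. This follows from the Chung--Fuchs recurrence argument already used in the proof of Proposition~\ref{proposition:r-graph_is_eft}: for any $i$, the walk with increments $(-X_{i-1-n})_{n\ge 0}$ is recurrent since $\mathbb{E}|X_0|<\infty$, so it returns to (indeed goes below) level $0$, which forces $L_x(i)>-\infty$. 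Alternatively, one notes $t_x(i) = -1$ a.s.\ (the walk seen backward from $i$ hits $-1$), which by the discussion preceding Lemma~\ref{defn_l_20230404185835} is equivalent to $L_x(i)>-\infty$.

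I would assemble these facts in order: (i) $\mathbb{Z}^R$ is a.s.\ connected and $[\mathbf{T},0]=\Psi_R([\mathbb{Z},0,x])$ has vertex set $\mathbb{Z}$; (ii) by Lemma~\ref{lemma:rls_order} the RLS order on $\mathbf{T}$ coincides with the integer order, hence by Remark~\ref{remark_succession_line_is_integers} the succession line is the identity sequence $(n)_{n\in\mathbb{Z}}$; (iii) $L_x(i)>-\infty$ a.s.\ for every $i$ by Chung--Fuchs; (iv) Lemma~\ref{lemma_offspring_count}(1) then gives $d_1(i) = x_{i-1}+1$; (v) substituting into the definition of $\Phi_R$ yields $x_n' = d_1(n+1)-1 = x_n$, so $\Phi_R\circ\Psi_R([\mathbb{Z},0,x])=[\mathbb{Z},0,x]$.

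The main obstacle is largely bookkeeping rather than a deep difficulty: one must be careful that the ordered-tree structure recorded by $\Psi_R$ (including the order of children, which is what makes $\Phi_R$ well-defined and lets one read off the succession line) is precisely the one that $\Phi_R$ inverts, i.e.\ that the edge mark assigned by $\Phi_R$ to $(n,n+1)$ is governed by $u_{n+1}=n+1$ and not by an off-by-one shift. This is exactly why step (ii), identifying the succession line with $(n)_{n\in\mathbb{Z}}$, is the crux, and why the convention $x_n = d_1(u_{n+1})-1$ in the definition of $\Phi_R$ matches the convention $d_1(i)=x_{i-1}+1$ in Lemma~\ref{lemma_offspring_count}. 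Once the indexing is pinned down, the proof is immediate.
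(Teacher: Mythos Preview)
Your proposal is correct and follows essentially the same approach as the paper's proof: identify the succession line with $(n)_{n\in\mathbb{Z}}$ via Remark~\ref{remark_succession_line_is_integers}, then apply Lemma~\ref{lemma_offspring_count}(1) to get $d_1(n+1)-1=x_n$. If anything, you are slightly more careful than the paper in explicitly verifying the hypothesis $L_x(i)>-\infty$ of Lemma~\ref{lemma_offspring_count} via Chung--Fuchs, which the paper's short proof leaves implicit.
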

\begin{proof}
  Let \((u_n)_{n \in \mathbb{Z}} = U(\Psi_R([\mathbb{Z},0,x]))\) be the succession line of \(\Psi_R([\mathbb{Z},0,x])\).
  By Remark \ref{remark_succession_line_is_integers}, \(u_n=n\) for all \(n \in \mathbb{Z}\).
  Let $[\mathbb{Z},0,y]:= \Phi_R \circ \Psi_R ([\mathbb{Z},0,x])$, where $y=(y_n)_{n \in \mathbb{Z}}$ with $y_n := d_1(u_{n+1})-1 = d_1(n+1)-1$ for all $n \in \mathbb{Z}$.
  From the first statement of Lemma \ref{lemma_offspring_count}, $x_n = d_1(n+1)-1$, for all $n \in \mathbb{Z}$.
  Therefore, $x_n=y_n$ for all $n \in \mathbb{Z}$.
\end{proof}

We now show that the map $\Phi_R$ is continuous on the set of ordered family trees that have at most one bi-infinite $F$-path.

\begin{lemma}\label{lemma:reverse_map_continuity}
  Let $[T,o]$ be an ordered EFT that has at most one bi-infinite $F$-path, and such that the bi-infinite path passes through $o$. 
If \(([T_n,o_n])_{n \geq 1}\) is any sequence of rooted Family Trees such that $[T_n,o_n] \rightarrow [T,o]$ as $n \to \infty$, then $\Phi_R([T_n,o_n])\rightarrow \Phi_R([T,o])$  as $n \to \infty$ (both convergences are in local sense).
\end{lemma}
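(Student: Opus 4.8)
The plan is to prove that a local convergence $[T_n,o_n] \to [T,o]$ forces $\Phi_R([T_n,o_n]) \to \Phi_R([T,o])$, where the $\Phi_R$-images are networks of the form $[\mathbb{Z},0,x]$ whose edge-marks record the offspring counts along a succession line. Since convergence in $\mathcal{G}_*$ is governed by agreement of larger and larger finite rooted balls (Theorem of Curien recalled in Chapter \ref{chapter_prelim}), it suffices to show: for every $r \geq 1$ there is $N$ such that for all $n \geq N$ the marks $x_j((T_n,o_n))$ and $x_j((T,o))$ agree for all $|j| \leq r$. By the definition of $\Phi_R$, $x_j((T,o)) = d_1(u_{j+1}) - 1$, where $(u_k)_{k \in \mathbb{Z}} = U((T,o))$ is the succession line. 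So the whole lemma reduces to a \emph{stability of the succession line} statement: the first $2r+2$ entries of $U((T_n,o_n))$ around index $0$, together with their offspring counts, are eventually determined by a finite ball of $(T,o)$ around $o$.

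First I would record the key structural observation: because $[T,o]$ has exactly one bi-infinite $F$-path passing through $o$, each finite portion $(u_{-r}, \ldots, u_{r})$ of the succession line, together with the relevant $b(\cdot)$ and $a(\cdot)$ steps used to generate it, lies inside a finite subtree of $T$. Concretely, the $u_k$ with $|k| \leq r$ are obtained from $o$ by at most $r$ applications of $b$ (moving to a younger sibling's ``last descendant direction'') and $a$ (moving up into an elder sibling's subtree or to a parent). Each such step, by Lemmas \ref{lemma:b_exists} and \ref{lemma:rls_order_previous_element_exists}, only inspects children of ancestors of the current vertex and their descendant subtrees; since $T$ is locally finite and (apart from the single bi-infinite path, all of whose relevant branches off the path are finite descendant trees — note the siblings hanging off the bi-infinite path have finitely many descendants in the case of interest) these subtrees are finite, there is a radius $\rho = \rho(r, (T,o))$ such that the whole computation of $(u_{-r}, \ldots, u_r)$ and of $d_1(u_k)$ for $|k| \leq r+1$ depends only on $(T,o)_\rho$, the ball of radius $\rho$. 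I would make this precise by an induction on $|k|$: assuming $u_k$ is determined inside $(T,o)_{\rho_k}$, the vertex $u_{k+1} = a(u_k)$ (resp. $u_{k-1} = b(u_k)$) and its offspring count are determined inside $(T,o)_{\rho_k + c_k}$ for an explicit increment $c_k$ bounded by the size of the finite subtree that $a$ (resp. $b$) traverses.

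Next, given such a $\rho$, local convergence $[T_n,o_n] \to [T,o]$ yields $N$ so that for $n \geq N$ there is a rooted isomorphism $(T_n,o_n)_\rho \cong (T,o)_\rho$ respecting marks and, crucially for ordered Family Trees, the child-order (the local topology on $\mathcal{T}_*$ is defined with ordered isomorphisms, as stated in Chapter \ref{chapter_prelim}). Running the same finite succession-line computation inside $(T_n,o_n)_\rho$ therefore produces the same vertices $u_k^{(n)}$ (matched under the isomorphism) and the same offspring counts $d_1(u_k^{(n)}) = d_1(u_k)$ for $|k| \leq r+1$, hence $x_j((T_n,o_n)) = x_j((T,o))$ for $|j| \leq r$. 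This gives $[\mathbb{Z},0,x((T_n,o_n))]_r = [\mathbb{Z},0,x((T,o))]_r$ for all $n \geq N$, which is exactly what the Curien convergence criterion needs; letting $r \to \infty$ completes the proof.

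The main obstacle I anticipate is the bookkeeping in the induction that the succession-line steps only ``probe'' a finite region: one must argue that the $b$ and $a$ operations never require walking along an infinite branch other than the unique bi-infinite $F$-path, and that when they do move ``up'' the path, the relevant elder-sibling subtrees they descend into are finite. This is where the hypothesis ``at most one bi-infinite $F$-path'' is essential — if there were a second bi-infinite path, $a(u)$ could force traversal of an infinite descendant tree and the computation would not be finitely determined, breaking continuity. A secondary subtlety is making sure the radius $\rho$ in $T$ controls the corresponding radius in $T_n$ uniformly; this is handled because once the balls are isomorphic (as ordered marked rooted graphs), the intrinsic combinatorial recursion defining $U$ gives literally identical output, so no extra radius is lost on the $T_n$ side.
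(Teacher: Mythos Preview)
Your proposal is correct and follows essentially the same approach as the paper: both reduce to showing that for every $r$ there is a finite radius $R$ such that the ball $(T,o)_R$ determines $x_j((T,o))$ for $|j|\leq r$, and both use the ``at most one bi-infinite $F$-path'' hypothesis to ensure the subtrees visited by the succession-line construction are finite. The paper's argument is slightly more explicit---it names the vertices $o_{-r-1},\dots,o_{r+1}$ on the bi-infinite path and takes $R$ to be the maximum height of the finite bushes $D(o_i)\setminus D(o_{i-1})$---whereas you propose an induction on $|k|$ accumulating radii step by step; but these are the same idea with cosmetic differences in bookkeeping.
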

\begin{proof}
  For $r>0$, denote $x_r((T,o)) = (x_{-r},x_{-r+1},\cdots,x_0,\cdots,x_{r-1},x_r)$.
  We need to show that given $r>0$, there exists $N_0$ such that for all $n>N_0$, $x_r((T_n,o_n))=x_r((T,o))$.
  Since $(T_n,o_n)\rightarrow (T,o)$, it is enough to show that, given $r>0$, there exists $R>0$ such that $x_r((T,o)_R) = x_r(T,o)$, where $(T,o)_R$ is the restriction of $T$ to the graph ball of radius $R$ centered at $o$.

 Let $(o_{-r-1},o_{-r},\cdots,o,o_1,\cdots,o_{r+1})$ be the $r+1$ restriction of the $F$-bi-infinite path of $(T,o)$ passing through $o$.
 If such a path does not exist (take for instance, a sample of the non-trivial unimodular EGWT), then we can find the largest $k<0$ such that $o_{l}=o_{k}$ for all $-r-1 \leq l \leq k$.

 Let $h_i$ be the largest graph distance from $o_i$ to the descendants of $o_i$ which are not the descendants of $o_{i-1}$ for $-r-1 \leq i \leq r+1$.
 Then, $h_i$ is finite for all $i$ since the descendants of $o_i$ that are not the descendants of \(o_{i-1}\) are finite for all \(i\) (otherwise this contradicts the fact that $T$ has at most one bi-infinite $F$-path).
 Take $R= \max\{h_i: -r-1 \leq i \leq r+1\}$.
Then, we have $U((T,o)_R) = U((T,o))$.
Thus, $x_r((T,o)_R)=x_r((T,o))$.
\end{proof}

\section{ The extended backward map of record vertex-shift} \label{subsec_reverse_map_I_F}

In this section, we define a map \(\hat{\Phi}_R\) on the space of marked Family Trees and show that it satisfies all the conditions of FB Lemma \ref{thm_forward_backward}.
The map \(\hat{\Phi}_R\) will be used later to show that the \(R\)-probability of \([\mathbb{Z},0,X]\) exists.
We call the map \(\hat{\Phi}_R\) {\bf the extended backward map}.

Let $\mathcal{T}_*'$ (resp. $\hat{\mathcal{T}_*}'$) be the space of rooted ordered Family Trees (resp. space of rooted marked ordered Family Trees whose marks take values in \(\mathbb{Z}\)) that have at most one bi-infinite $F$-path.
 
We have defined earlier in Eq. (\ref{eq:succ_line}) the succession line of a rooted ordered Family Tree $[T,o]$ that has at most one bi-infinite $F$-path, and using this we defined in Eq. (\ref{eq:reverse_map}) the backward map $\Phi_R: \mathcal{T}_*' \rightarrow \mathcal{G}_*$.

In order to use the FB lemma (Lemma \ref{thm_forward_backward}), we need this backward map to satisfy

\begin{equation}\label{eq:1_2.39}
    \Phi_R \circ \Psi_R = I \ \mathcal{P} \text{-a.s.},
\end{equation}
 where $\mathcal{P}$ is the distribution of $[\mathbb{Z},0,X]$, where $X=(X_n)_{n \in \mathbb{Z}}$ satisfies $\mathbb{E}[X_0] \geq 0$.

When $\mathbb{E}[X_0]>0$, the random walk $(S_{-n})_{n \geq 0}$ starting at $0$ drifts to $-\infty$ (where $S_{-n} = \sum_{k=-n}^{-1}-X_k$ for all $n> 0$ and \(S_0=0\)).
In this case, $\Phi_R$ does not satisfy Eq. (\ref{eq:1_2.39}) since $\Psi_R$ fails to be injective.
For instance, consider the set \(A = \{[\mathbb{Z},0,x=(x_n)_{n \in \mathbb{Z}}]: x_{-1}=3 \text{ and }s_{-n} \to -\infty \text{ as } n \to \infty \}\).
The set \(A\) has positive measure since we assumed that \(\mathbb{E}[X_0]>0\).
For each \([\mathbb{Z},0,x] \in A\), define a new element \([\mathbb{Z},0,\tilde{x}]\), where the sequence $\tilde{x}=(\tilde{x}_n)_{n \in \mathbb{Z}}$ is given by the following: $\tilde{x}_n = x_n$ for all $n \not = -1$ and $\tilde{x}_{-1} = 4$.
In comparison to the sums \((s_n)_{n \in \mathbb{Z}}\) associated to \(x\), the sums $(\tilde{s}_n)_{n \in \mathbb{Z}}$ associated to the sequence $\tilde{x}$ are shifted by $-1$ for all $n \leq -1$.
Both the networks \([\mathbb{Z},0,x]\) and \([\mathbb{Z},0,\tilde{x}]\) have the same record graphs.
So, $\Psi_R$ is not injective with positive probability, see Figure \ref{fig_example_non_injective}.
\begin{figure}[h]
    \centering 
    \includegraphics[scale=1]{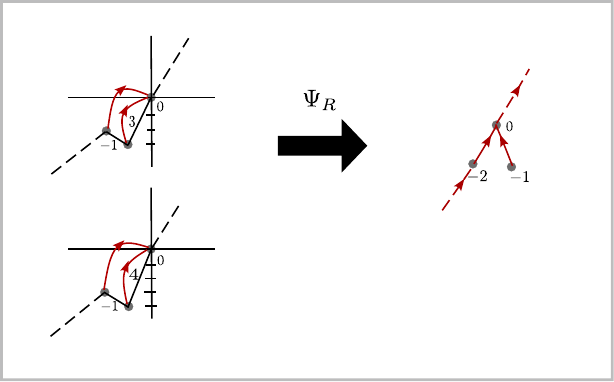}
    \caption{Example showing that when \(\mathbb{E}[X_0]>0\), two samples of \([\mathbb{Z},0,X]\) can have the same record graph.}
    \label{fig_example_non_injective}
  \end{figure}

To obtain the injectivity, we enrich the record graph \([\mathbb{Z},0,X]\) with the type function of \(X\) as a mark function on the vertices of the record graph.
This gives the forward map \(\hat{\Psi}_R:\mathcal{G}_* \to \hat{\mathcal{T}}_*'\) given by \(\hat{\Psi}_R([\mathbb{Z},0,x])=[T,0,t]\), where \([T,0] = \Psi_R([\mathbb{Z},0,x])\) is the component of \(0\) in the record graph of \([\mathbb{Z},0,x]\), \(t\) is the type function associated to \(x\), and \(x =(x_n)_{n \in \mathbb{Z}}\) is a sample of the i.i.d. sequence \(X=(X_n)_{n \in \mathbb{{Z}}}\) when \(0<\mathbb{E}[X_0]<\infty\).
We define a backward map $\hat{\Phi}_R$ (associated to \(X\)) on $\hat{\mathcal{T}_*}'$ such that it satisfies Eq. (\ref{eq:1_2.39}) with $\hat{\Psi}_R$ as the forward map.
\begin{definition}
  Let $[T,o,M]$ be a rooted marked Family Tree that has at most one bi-infinite $F$-path and \(M:V(T) \to \mathbb{Z}\) be a marked function such that \(M(u) \geq -1\) for all \(u \in V(T)\).
  Let $u((T,o,M)) = (u_n)_{n \in \mathbb{Z}}$ be its succession line with $u_0=o$.
  Define 
  \begin{equation}
    \hat{\Phi}_R([T,o,M])=[\mathbb{Z},0,x],
  \end{equation}
  where $x=(x_n)_{n \in \mathbb{Z}}$ is given by $x_n = (d_1(u_{n+1})-1) \mathbf{1}\{u_{n+1} \not \in \neswarrow\}+ (d_1(u_{n+1})+M(u_{n+1})-1) \mathbf{1}\{u_{n+1} \in \neswarrow\}$ for all $n \in \mathbb{Z}$.
\end{definition}
The map \(\hat{\Phi}_R\) is defined keeping in mind the relation between increment, offspring count and type as given in Lemma \ref{20230405112810}.

\section{Bijectivity and continuity of the extended backward map}\label{subsec_ext_backward_cont_bij}

Let \(\hat{\Psi}_R:\mathcal{G}_* \to \hat{\mathcal{T}}_*'\) be the forward map given by \(\hat{\Psi}_R([\mathbb{Z},0,x])=[T,0,t]\), where \([T,0] = \Psi_R([\mathbb{Z},0,x])\) is the component of \(0\) in the record graph of \([\mathbb{Z},0,x]\), \(t\) is the type function associated to \(x\) (see subsection \ref{subsection_typeFunction} for the definition of the type function), and \(x =(x_n)_{n \in \mathbb{Z}}\) is a sample of the i.i.d. sequence \(X=(X_n)_{n \in \mathbb{{Z}}}\) when \(0<\mathbb{E}[X_0]<\infty\).

\begin{proposition}\label{20230407161152}
  Let $X=(X_n)_{n \in \mathbb{Z}}$ be an i.i.d. sequence as in Def. \ref{hyp:increments} with \(0<\mathbb{E}[X_0]<\infty\), and $\mathcal{P}$ be the distribution of $[\mathbb{Z},0,X]$ on $\mathcal{G}_*$.
  Then, $\hat{\Phi}_R \circ \hat{\Psi}_R = I \ \mathcal{P}-a.s.$  
\end{proposition}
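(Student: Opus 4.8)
The plan is to show that for $\mathcal{P}$-a.s.\ realization $x = (x_n)_{n \in \mathbb{Z}}$ of $X$, applying the forward map $\hat{\Psi}_R$ and then the extended backward map $\hat{\Phi}_R$ returns $[\mathbb{Z},0,x]$. Let $[T,0,t] = \hat{\Psi}_R([\mathbb{Z},0,x])$, so $T = \Psi_R([\mathbb{Z},0,x])$ is the component of $0$ in the record graph (which, since $\mathbb{E}[X_0]>0$, is all of $\mathbb{Z}$ by Proposition \ref{remark_pos_drift_i_f} and is connected), and $t = t_x$ is the type function. The first ingredient is Remark \ref{remark_succession_line_is_integers}, which gives that the succession line $U(\Psi_R([\mathbb{Z},0,X])) = (u_n)_{n\in\mathbb{Z}}$ equals $(n)_{n\in\mathbb{Z}}$ a.s.\ (this uses Lemma \ref{lemma_succ_line_bijective}, valid since $\mathbb{E}[X_0]\geq 0$, together with Lemma \ref{lemma:rls_order}). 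So the succession line of $[T,0,t]$ is just the identity indexing of $\mathbb{Z}$.

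Next I would unwind the definition of $\hat{\Phi}_R$ on $[T,0,t]$. Writing $[\mathbb{Z},0,y] := \hat{\Phi}_R([T,0,t])$, we have for each $n \in \mathbb{Z}$,
\[
y_n = (d_1(n+1,T)-1)\mathbf{1}\{n+1 \notin \neswarrow\} + (d_1(n+1,T)+t(n+1)-1)\mathbf{1}\{n+1 \in \neswarrow\}.
\]
The goal is to show $y_n = x_n$ for all $n$. I would split into the two cases according to whether the vertex $n+1$ lies on the bi-infinite path $\neswarrow$ of $T$. As observed in the proof of Theorem \ref{20230226170801}, an integer $j$ belongs to $\neswarrow$ if and only if $t_x(j) \geq 0$ (equivalently $L_x(j) = -\infty$, equivalently $j$ has infinitely many descendants, by Lemma \ref{lemma:descendants}). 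Thus the two cases of the definition of $\hat{\Phi}_R$ correspond exactly to the dichotomy $t_x(n+1) = -1$ versus $t_x(n+1) \geq 0$.

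In the case $n+1 \notin \neswarrow$, i.e.\ $t_x(n+1) = -1$, Lemma \ref{20230405112810} (first bullet) gives $d_1(n+1,T) = x_n + 1$, so $y_n = d_1(n+1,T) - 1 = x_n$. In the case $n+1 \in \neswarrow$, i.e.\ $t_x(n+1) \geq 0$, Lemma \ref{20230405112810} (second bullet) gives $d_1(n+1,T) = x_n + 1 - t_x(n+1)$, hence $y_n = d_1(n+1,T) + t(n+1) - 1 = (x_n + 1 - t_x(n+1)) + t_x(n+1) - 1 = x_n$. Since $t = t_x$ by definition of $\hat{\Psi}_R$, in both cases $y_n = x_n$, so $[\mathbb{Z},0,y] = [\mathbb{Z},0,x]$, establishing $\hat{\Phi}_R \circ \hat{\Psi}_R = I$ $\mathcal{P}$-a.s.

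The only subtle points — and thus the main obstacle — are the a.s.\ validity of the hypotheses needed to invoke the cited lemmas: that the record graph is connected so that $V(T) = \mathbb{Z}$ and $T$ has a unique bi-infinite $F$-path (guaranteed a.s.\ by Proposition \ref{remark_pos_drift_i_f} and the class $\mathcal{I}/\mathcal{F}$ classification), and that the succession line is indeed $(n)_{n\in\mathbb{Z}}$ (Remark \ref{remark_succession_line_is_integers}). Everything else is a direct substitution using the offspring–type–increment relation of Lemma \ref{20230405112810}, so no genuinely hard estimate is involved; the work is just in carefully matching the two cases in the definition of $\hat{\Phi}_R$ with the two cases of that lemma via the characterization of $\neswarrow$ in terms of the type function.
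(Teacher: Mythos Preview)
Your proposal is correct and follows essentially the same approach as the paper's proof: identify the succession line with $(n)_{n\in\mathbb{Z}}$ via Remark \ref{remark_succession_line_is_integers}, then use Lemma \ref{20230405112810} together with the equivalence ``$j\in\neswarrow$ iff $t_x(j)\geq 0$'' to verify that $y_n=x_n$ in each of the two cases. The paper's argument is slightly more compressed (it writes the indicator identity in one line rather than splitting into cases), but the logical content is identical.
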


\begin{proof}
  Let $x=(x_n)_{n \in \mathbb{Z}}$ be a sample of $X$, $[\mathbb{Z},0,x]$ be its network, and let $[T,0,t]=\hat{\Psi}_R([\mathbb{Z},0,x])$ be the marked random rooted tree, where the mark function is give by the type function \(t\) associated to \(x\).
We know that $[T,0,t]$ has at most one bi-infinite path and $T$ is eternal.
So, we obtain its succession $u([T,0,t])=(u_n)_{n \in \mathbb{Z}}$.
Note that $u_n=n$ for all $n \in \mathbb{Z}$ by Remark \ref{remark_succession_line_is_integers}.
Let $[\mathbb{Z},0,y] = \hat{\Phi}_R([T,0,t])$, where $y=(y_n)_{n \in \mathbb{Z}}$ is the sequence given by $y_n = (d_1(u_{n+1})-1) \mathbf{1}\{u_{n+1} \not \in \neswarrow\}+ (d_1(u_{n+1})+t_x(u_{n+1})-1) \mathbf{1}\{u_{n+1} \in \neswarrow\}$ for all $n \in \mathbb{Z}$.
By Lemma \ref{20230405112810} and the fact that for any \(n \in \mathbb{Z}\), \(u_n \in \neswarrow\) if and only if \(t(u_n)>-1\), it follows that $x_n = (d_1(n+1)-1) \mathbf{1}\{t_x(n+1)=-1\}+ (d_1(n+1)+t(n+1)-1) \mathbf{1}\{t(n+1) \geq 0\}=y_n$ for all $n \in \mathbb{Z}$.
Thus, $[\mathbb{Z},0,x]=[\mathbb{Z},0,y]$.
\end{proof}

\begin{lemma} \label{20230414163734}
  Let $[T,o,m] \in \hat{\mathcal{T}_*}'$.
  If a sequence \(([T_n,o_n,m_n])_{n \geq 1}\) in \(\hat{\mathcal{T}_*}'\) converges to \([T,o,m]\)  as \(n \to \infty\), then \(\hat{\Phi}_R([T_n,o_n,m_n])\) converges to \(\hat{\Phi}_R([T,o,t])\) as \(n \to \infty\).
\end{lemma}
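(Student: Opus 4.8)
The statement is the continuity of the extended backward map $\hat{\Phi}_R$ on $\hat{\mathcal{T}_*}'$, and the strategy mirrors the proof of Lemma~\ref{lemma:reverse_map_continuity} for the unmarked backward map $\Phi_R$. Fix $[T,o,m] \in \hat{\mathcal{T}_*}'$ and a sequence $([T_n,o_n,m_n])_{n \geq 1}$ converging to it in the local topology. Writing $\hat{\Phi}_R([T,o,m]) = [\mathbb{Z},0,x]$ and $\hat{\Phi}_R([T_n,o_n,m_n]) = [\mathbb{Z},0,x^{(n)}]$, I must show that for every $r>0$ there is $N_0$ such that $x^{(n)}_j = x_j$ for all $|j| \leq r$ and all $n > N_0$. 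By the definition of convergence in $\hat{\mathcal{T}_*}'$ (the marks on a neighborhood eventually agree as well as the tree structure), it suffices to prove the finite-range stability statement: there exists a radius $R>0$ such that the values $x_j$ for $|j| \leq r$ are determined by the restriction $(T,o,m)_R$, i.e. by the $R$-ball around $o$ together with its marks.

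\textbf{Key steps.} First I would recall that the $n$-th coordinate of $\hat{\Phi}_R([T,o,m])$ is $x_n = (d_1(u_{n+1})-1)\mathbf{1}\{u_{n+1} \notin \neswarrow\} + (d_1(u_{n+1})+m(u_{n+1})-1)\mathbf{1}\{u_{n+1} \in \neswarrow\}$, where $(u_n)_{n \in \mathbb{Z}} = u((T,o,m))$ is the succession line. So to control $x_j$ for $|j| \leq r$ I need (a) the identity of the vertices $u_{j}$ for $|j| \leq r+1$ along the succession line, (b) their offspring counts $d_1(u_j)$, (c) whether each $u_j$ lies on the bi-infinite $F$-path $\neswarrow$, and (d) the mark $m(u_j)$ when it does. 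Exactly as in Lemma~\ref{lemma:reverse_map_continuity}, I take the $(r+1)$-restriction $(o_{-r-1},\ldots,o_0,\ldots,o_{r+1})$ of the (unique, if it exists) bi-infinite $F$-path of $(T,o)$ through $o$ — and if no such path exists, the largest $k<0$ with $o_\ell = o_k$ for all $-r-1 \leq \ell \leq k$. For each $-r-1 \leq i \leq r+1$ let $h_i$ be the maximal graph distance from $o_i$ to a descendant of $o_i$ that is not a descendant of $o_{i-1}$; each $h_i$ is finite because $T$ has at most one bi-infinite $F$-path, so those "side bushes" are finite. Setting $R := \max\{h_i : -r-1 \leq i \leq r+1\}$ guarantees $u((T,o,m))$ restricted to indices $|n| \leq r+1$ equals $u((T,o,m)_R)$, and since the marks $m(u_j)$ for these finitely many vertices are also contained in $(T,o,m)_R$, all the quantities (a)–(d) are read off from the $R$-ball. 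This yields $x_r((T,o,m)_R) = x_r((T,o,m))$, and then local convergence of the trees-with-marks forces $x^{(n)}_j = x_j$ on $[-r,r]$ for $n$ large.

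\textbf{Main obstacle.} The one genuinely new ingredient compared to Lemma~\ref{lemma:reverse_map_continuity} is the marks: I must check that the indicator $\mathbf{1}\{u_{n+1} \in \neswarrow\}$ is itself locally determined, i.e. that whether a vertex within the $R$-ball belongs to the bi-infinite $F$-path can be decided from a finite neighborhood. This follows from the finiteness of the side bushes (a vertex lies on $\neswarrow$ iff it has a descendant outside every fixed finite ball, but within our radius-$R$ construction the status of $o_{-r-1},\ldots,o_{r+1}$ is pinned down by whether the bi-infinite path through $o$ exists and where it is), and from the fact that in the case where no bi-infinite path through $o$ exists, none of $u_{-r-1},\ldots,u_{r+1}$ lies on $\neswarrow$ (so the mark term never activates). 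I should also note that since $[T,o,m] \in \hat{\mathcal{T}_*}'$ has at most one bi-infinite $F$-path, the succession line is well-defined on the relevant index range by Lemmas~\ref{lemma:b_exists} and~\ref{lemma:rls_order_previous_element_exists}, exactly as in the unmarked case; no extra hypothesis such as unimodularity is needed here. With these points in place the argument is a routine adaptation, and I would present it briefly, referring back to the proof of Lemma~\ref{lemma:reverse_map_continuity} for the parts that are verbatim the same.
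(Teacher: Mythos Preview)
Your approach is essentially the same as the paper's: both reduce the argument to Lemma~\ref{lemma:reverse_map_continuity}, with the only new point being that the integer-valued marks are locally determined once the underlying tree structure is. The paper's proof is in fact much terser than yours---it simply observes that since the marks take values in the discrete set $\mathbb{Z}$, the argument of Lemma~\ref{lemma:reverse_map_continuity} carries over verbatim with $\hat{\mathcal{T}_*}'$ in place of $\mathcal{T}_*'$---so your more detailed unpacking of the succession-line stability and your explicit flagging of the indicator $\mathbf{1}\{u_{n+1}\in\neswarrow\}$ go beyond what the paper spells out, but the underlying idea is identical.
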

\begin{proof}
  Since the marks take values in \(\mathbb{Z}\), which is a discrete set, the  proof of the statement is the same as in Lemma \ref{lemma:reverse_map_continuity}, but the underlined space is \(\hat{\mathcal{T}_*}'\) instead of \(\mathcal{T}_*'\).
\end{proof}

\section{ Existence of R-probability for non-negative mean}
Let \(X=(X_n)_{n \in  \mathbb{Z}}\) be the i.i.d. sequence of increments of a skip-free to the left random walk and let \([\mathbb{Z},0,X]\) be its associated network.
Let \(\pi\) be the distribution of \(X_0+1\).
\begin{theorem}\label{theorem:r-probability_exists}
If \(\mathbb{E}[X_0]=0\), then the $R$-probability of $[\mathbb{Z},0,X]$ exists.
  Its distribution is equal to the push-forward of $EKT(\pi)$ under the map $\Phi_R$, where $\pi$ is the probability distribution of $X_0+1$.
\end{theorem}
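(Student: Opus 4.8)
The plan is to apply the Forward--Backward Lemma (Lemma \ref{thm_forward_backward}) with the vertex-shift $f = R$, with $\mathcal{P}$ the distribution of $[\mathbb{Z},0,X]$, and with $\Phi_f = \Phi_R$, the backward map of Section \ref{subsec_backward_map} (defined on $\mathcal{T}_*'$, and extended in an arbitrary measurable way to all of $\mathcal{T}_*$, which is harmless since every measure appearing below is carried by $\mathcal{T}_*'$). Recall that $\mathcal{P}$ is unimodular because $X$ is stationary (Example \ref{ex_network_sequence}), and that the associated measure $\mathcal{Q} := \mathcal{P} \circ \Psi_R^{-1}$ is exactly the ordered Eternal Galton--Watson Tree $EGWT(\pi)$ with $\pi \overset{\mathcal{D}}{=} X_0 + 1$ by Theorem \ref{theorem:R-graph_egwt}; note that $m(\pi) = \mathbb{E}[X_0] + 1 = 1$, so $EGWT(\pi)$ is unimodular (consistent with Lemma \ref{lemma:f_graph_unimodular}). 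Since $\mathbb{P}[X_0 = -1] > 0$ by Def.\ \ref{hyp:increments}, we have $\pi(0) > 0$ and hence $\pi(1) < 1$, so $EGWT(\pi)$ is genuinely one-ended (class $\mathcal{I}/\mathcal{I}$) and $EKT(\pi)$ genuinely has a unique bi-infinite $F$-path through its root.

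Next I would verify the three hypotheses of Lemma \ref{thm_forward_backward} in turn. Hypothesis (1), that $\Phi_R \circ \Psi_R = I$ $\mathcal{P}$-a.s., is precisely Proposition \ref{prop:R-graph_reverse_identity}. Hypothesis (2), that $\mathcal{Q}$ admits an $F$-probability $\mathcal{Q}_F$, is furnished by Proposition \ref{prop:F_prob_EGWT_EKT}: the $F$-probability of $EGWT(\pi)$ is $EKT(\pi)$, so one sets $\mathcal{Q}_F := EKT(\pi)$ and has $\mathcal{Q}_n = \mathcal{Q} \circ \theta_F^{-n} \to EKT(\pi)$ weakly. Hypothesis (3), that $\Phi_R$ is continuous $\mathcal{Q}_F$-a.s., follows from Lemma \ref{lemma:reverse_map_continuity}: by Remark \ref{remark_supp_EKT} every realization of $EKT(\pi)$ is an ordered EFT whose unique bi-infinite $F$-path passes through the root, and Lemma \ref{lemma:reverse_map_continuity} asserts continuity of $\Phi_R$ at every such tree. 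One should also note in passing that the iterates $\mathcal{Q}_n$ are carried by $\mathcal{T}_*'$ (re-rooting a one-ended tree leaves it one-ended), so $\Phi_R$ is well-defined $\mathcal{Q}_n$-a.s.\ and $\mathcal{Q}_F$-a.s., which is what the lemma requires.

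With the three hypotheses in hand, Lemma \ref{thm_forward_backward} gives directly that $\lim_{n\to\infty}\mathcal{P} \circ \theta_R^{-n} = \mathcal{Q}_F \circ \Phi_R^{-1}$; that is, the $R$-probability of $[\mathbb{Z},0,X]$ exists and equals the push-forward $(\Phi_R)_*\,EKT(\pi) = EKT(\pi) \circ \Phi_R^{-1}$, which is exactly the claimed description. I do not expect any genuine obstacle at this stage: the substance of the argument is contained entirely in the earlier results (Theorem \ref{theorem:R-graph_egwt} identifying $\mathcal{Q}$, Proposition \ref{prop:R-graph_reverse_identity} for the left-inverse property, Proposition \ref{prop:F_prob_EGWT_EKT} for the $F$-probability, Lemma \ref{lemma:reverse_map_continuity} for continuity), and this proof is essentially their assembly. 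The only points deserving a word of care are the domain-of-definition bookkeeping around $\Phi_R$ and the observation that the non-degeneracy $\pi(1) < 1$ — needed so that $EKT(\pi)$ carries its unique bi-infinite path and Lemma \ref{lemma:reverse_map_continuity} applies — is automatic from the standing assumption $\mathbb{P}[X_0 = -1] > 0$.
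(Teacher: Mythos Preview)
Your proposal is correct and follows essentially the same approach as the paper's proof: both apply the Forward--Backward Lemma \ref{thm_forward_backward} with $f=R$, identify $\mathcal{Q}=EGWT(\pi)$ via Theorem \ref{theorem:R-graph_egwt}, and verify the three hypotheses using Proposition \ref{prop:R-graph_reverse_identity}, Proposition \ref{prop:F_prob_EGWT_EKT}, and Lemma \ref{lemma:reverse_map_continuity} respectively. Your additional remarks on the non-degeneracy $\pi(1)<1$ and the domain bookkeeping for $\Phi_R$ are sound elaborations but do not alter the argument.
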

\begin{proof}
Let $\mathcal{P}$ be the distribution of $[\mathbb{Z},0,X]$, and $\mathcal{Q}:= \mathcal{P} \circ \Psi_R^{-1}$.
We show that $\mathcal{P}$ and $\Phi_R$ satisfy all the hypotheses of FB Lemma \ref{thm_forward_backward}.
Theorem \ref{theorem:R-graph_egwt} shows that $\mathcal{Q}=EGWT(\pi)$, where  the offspring distribution $\pi \overset{\mathcal{D}}{=}X_0+1$.
Proposition \ref{prop:R-graph_reverse_identity} implies the first condition of Lemma \ref{thm_forward_backward}.
In Lemma \ref{prop:F_prob_EGWT_EKT}, we showed that the $F$-probability $\mathcal{Q}_F$ of $EGWT(\pi)$ exists and it is equal to $EKT(\pi)$, satisfying the second condition of Lemma \ref{thm_forward_backward}.
The third condition of Lemma \ref{thm_forward_backward} is shown to be satisfied by $\Phi_R$ in Lemma \ref{lemma:reverse_map_continuity}.
Thus, the $R$-probability of $[\mathbb{Z},0,X]$ exists, and its distribution is $\mathcal{Q}_F \circ \Phi_R^{-1}$.
\end{proof}

Assume \(0<\mathbb{E}[X_0]<\infty\).
Let \(\bar{\pi}\) and \(\tilde{\pi}\) be the probability distributions defined in Eq. (\ref{eq:pi_defn}).
Consider the bi-variate ECS ordered marked Eternal Kesten Tree \(MEKT(\bar{\pi},\tilde{\pi})\) whose marked bush has the distribution given in Theorem \ref{20230226170801}.

\begin{theorem}\label{20230414180529}
  If \(0<\mathbb{E}[X_{0}]< \infty\), then the $R$-probability of \([\mathbb{Z},0,X]\) exists, and its distribution is \(MEKT(\bar{\pi},\tilde{\pi})\circ \hat{\Phi}_R^{-1}\), the push-forward measure of \(MEKT(\bar{\pi},\tilde{\pi})\) under the extended backward map \(\hat{\Phi}_R\).
\end{theorem}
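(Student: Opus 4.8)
The plan is to prove Theorem \ref{20230414180529} by the same template as Theorem \ref{theorem:r-probability_exists}, namely by verifying that the triple consisting of the distribution $\mathcal{P}$ of $[\mathbb{Z},0,X]$, the forward map $\hat{\Psi}_R$, and the extended backward map $\hat{\Phi}_R$ satisfies all three hypotheses of the FB Lemma \ref{thm_forward_backward}, but now with the marked space $\hat{\mathcal{T}}_*$ playing the role of $\mathcal{T}_*$ and the parent vertex-shift $F$ acting on marked Family Trees. Let $\mathcal{P}$ denote the law of $[\mathbb{Z},0,X]$ and set $\mathcal{Q} := \mathcal{P}\circ \hat{\Psi}_R^{-1}$, so that $\mathcal{Q}$ is the law of the component of $0$ in the record graph of $[\mathbb{Z},0,X]$ marked by the type function $t$.

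First I would identify $\mathcal{Q}$: by Theorem \ref{20230226170801}, the marked Family Tree $[\mathbf{T},\mathbf{o},t]$ conditioned on $\mathbf{o}\in\neswarrow$ is $MEKT(\bar{\pi},\tilde{\pi})$, and combining with Theorem \ref{r_graph_positive_drift_20230203174636} the unconditioned $[\mathbf{T},\mathbf{o},t]$ is the unimodularised (typically rooted) $MEKT(\bar{\pi},\tilde{\pi})$; thus $\mathcal{Q}$ is exactly the unimodularised $MEKT(\bar{\pi},\tilde{\pi})$ of Subsection \ref{subsec_f_prob_ecs_ekt}. Since $[\mathbb{Z},0,X]$ is unimodular and $\hat{\Psi}_R$ is obtained from $\Psi_R$ by attaching a covariant mark (the type function is a deterministic covariant function of the network), $\mathcal{Q}$ is unimodular as well, which is the standing hypothesis of the FB Lemma. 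For condition (1) of Lemma \ref{thm_forward_backward} I would invoke Proposition \ref{20230407161152}, which gives $\hat{\Phi}_R\circ\hat{\Psi}_R = I$ $\mathcal{P}$-a.s. For condition (2), the $F$-probability of $\mathcal{Q}$: this is Proposition \ref{20230314185620}, which shows the $F$-probability of the unimodularised $MEKT(\bar{\pi},\tilde{\pi})$ is $MEKT(\bar{\pi},\tilde{\pi})$ itself; here one must check the hypotheses $m(\bar{\pi})<\infty$ and $m(\tilde{\pi})<1$ of Subsection \ref{subsec_f_prob_ecs_ekt}, the latter being guaranteed by Lemma \ref{20230116135048} (since $\mathbb{E}[X_0]>0$ forces $\mathbb{E}[\hat{X}_0]<0$, hence $m(\tilde{\pi})<1$) and the former being immediate from $\sum_k k\,\mathbb{P}[X_0\ge k]c^k < \infty$. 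For condition (3), continuity of $\hat{\Phi}_R$ at $MEKT(\bar{\pi},\tilde{\pi})$-a.e. point, I would cite Lemma \ref{20230414163734}: $MEKT(\bar{\pi},\tilde{\pi})$ is a.s. a marked ordered EFT with a unique bi-infinite $F$-path through the root and marks in $\mathbb{Z}_{\ge -1}$, so it lies in $\hat{\mathcal{T}}_*'$ where $\hat{\Phi}_R$ is continuous.

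Having verified the three hypotheses, the FB Lemma gives that $\mathcal{P}\circ\theta_R^{-n}$ converges weakly to $\mathcal{Q}_F\circ\hat{\Phi}_R^{-1} = MEKT(\bar{\pi},\tilde{\pi})\circ\hat{\Phi}_R^{-1}$, which is precisely the asserted $R$-probability. One subtlety to address carefully is that the FB Lemma as stated uses $\Psi_f$ with values in $\mathcal{T}_*$; I would note that the whole argument of its proof goes through verbatim with $\hat{\Psi}_R$ valued in $\hat{\mathcal{T}}_*$, since the only facts used are the intertwining $\hat{\Psi}_R\circ\theta_R = \theta_F\circ\hat{\Psi}_R$ (which holds because marks are covariant and the component of $o$ equals that of $f(o)$), unimodularity of $\mathcal{Q}$, and absolute continuity of $\mathcal{Q}_n$ with respect to $\mathcal{Q}$ from Lemma \ref{lemma_abs_cont}.

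The main obstacle, I expect, is not any single deep step but the careful bookkeeping that the $MEKT(\bar{\pi},\tilde{\pi})$ appearing as the answer of Theorem \ref{20230226170801} (whose marked bush law is specified there) is the \emph{same} object whose $F$-probability is computed in Proposition \ref{20230314185620}, and that $\hat{\Phi}_R$ is defined relative to the correct increment law $X$ so that Proposition \ref{20230407161152} applies; in other words, checking that the hypotheses ``$m(\beta)<1$'', ``marks in $\mathbb{Z}_{\ge -1}$'', and ``the backward map associated to $X$'' are all consistently in force. Once these identifications are made explicit, the theorem follows by assembling the cited results into the FB Lemma.
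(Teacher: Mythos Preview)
Your proposal is correct and follows essentially the same approach as the paper: verify the three hypotheses of the FB Lemma \ref{thm_forward_backward} using Proposition \ref{20230407161152} for condition (1), Theorem \ref{20230226170801} together with Proposition \ref{20230314185620} for condition (2), and Lemma \ref{20230414163734} for condition (3), then conclude. Your added remarks about checking $m(\tilde{\pi})<1$ via Lemma \ref{20230116135048} and about the FB Lemma carrying over verbatim to the marked space $\hat{\mathcal{T}}_*$ are correct refinements that the paper's proof leaves implicit.
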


\begin{proof}
  Let \([\mathbf{T},0,t] = \hat{\Psi}_R([\mathbb{Z},0,X])\), where \(t\)  is the type function associated to \(X\), and \(\mathcal{P}\) be the distribution of \([\mathbb{Z},0,X]\).
  We apply FB Lemma \ref{thm_forward_backward}.
  By Proposition \ref{20230407161152}, the extended map \(\hat{\Phi}_R \circ \hat{\Psi}_R = Id \ \mathcal{P}-a.s.\).
 By Theorem \ref{20230226170801}, \([\mathbf{T},0,t]\) is the unimodularised \(MEKT(\bar{\pi},\tilde{\pi})\) (in the sense of Eq. (\ref{eq:size_biased_uniform_root})).
  By Proposition \ref{20230314185620}, \(MEKT(\bar{\pi},\tilde{\pi})\) is the \(F\)-probability of the unimodularised \(MEKT(\bar{\pi},\tilde{\pi})\).
  By Lemma \ref{20230414163734}, the extended backward map \(\hat{\Phi}_R\) is continuous.

  So, all the three conditions of the FB lemma (Lemma \ref{thm_forward_backward}) are satisfied by the extended backward map \(\hat{\Phi}_R\) and the probability measure \(\mathcal{P}\).
  Thus, $R$-probability \([\mathbb{Z},0,X]\) exists, and its distribution is equal to \(MEKT(\bar{\pi},\tilde{\pi}) \circ \hat{\Phi}_R^{-1}\).
\end{proof}

\begin{remark}\label{remark_rProbability_conditioned}
  When \(0<\mathbb{E}[X_{0}]< \infty\), \(MEKT(\bar{\pi},\tilde{\pi})\) is the distribution of the record graph of the conditioned network \([\mathbb{Z},0,X]\) conditioned on \(\{S_n \leq 0, \forall n \leq 0\}\) which is equivalent to \(\{0 \in \neswarrow\}\)(i.e., \(0\) belongs to the bi-infinite path of the record graph) by Theorem \ref{20230226170801}.
  This implies that the \(R\)-probability of \([\mathbb{Z},0,X]\) is the conditioned network \([\mathbb{Z},0,X]\) conditioned on \(\{S_n \leq 0, \forall n \leq 0\}\).
\end{remark}

\section{Construction of the R-probability for zero mean}

We have seen in the previous sections that the \(R\)-probability of \([\mathbb{Z},0,X]\) exists when \(\mathbb{E}[X_0]=0\).
We now provide two constructions for the \(R\)-probability and prove in Proposition \ref{prop:R_prob_i_i} that the constructions indeed generate the \(R\)-probability when \(\mathbb{E}[X_0]=0\).
The second construction is useful to prove several properties of the \(R\)-probability, see Section \ref{subsec_prop_r_prob_i_i}.

\noindent \textbf{ Construction 1:} We now construct a sequence of random variables $Y=(Y_n)_{n \in \mathbb{Z}}$ and show, in Proposition \ref{prop:R_prob_i_i}, that its associated network $[\mathbb{Z},0,Y]$ is the $R$-probability of $[\mathbb{Z},0,X]$ when $\mathbb{E}[X_0]=0$. 
In particular, we show that $[\mathbb{Z},0,Y] = \Phi_R([\mathbf{T},\mathbf{o}])$, where $[\mathbf{T},\mathbf{o}]$ is distributed as $EKT(\pi)$ with \(\pi\) being the distribution of \(X_0+1\).

Define the non-negatively indexed subsequence $(Y_n)_{n \geq 0}$ to be i.i.d. random variables with $Y_n \stackrel{\mathcal{D}}{=}X_n, n \geq 0$.
The construction for the negatively indexed subsequence $(Y_n)_{n<0}$ is given in the following steps.
Recall that the size-biased distribution $\hat{\pi}$ of $\pi$ is given by $\hat{\pi}(k)=k\pi(k), k \in \{0,1,2,\cdots\}$.
In particular, \(\hat{\pi}(\{1,2,3,\cdots\})=1\).

  \noindent Let $\lambda$ be the distribution of $X_0$.
  \begin{enumerate}
    \item Let $l = -1$.
    \item Generate a random variable $Z$ with distribution $\hat{\pi}$ and assign $Y_l:=(Z-1)$.
    \item Let $K$ be an integer chosen uniformly among $\{0,1,\cdots,Y_l\}$.
    \item Generate an i.i.d. sequence of random variables $(Z_n)_{n<0}$ with distribution $\lambda$.
    \item Let $\tau = \sup\{n<0: -Y_{l}-\sum_{m=n}^{-1}Z_m= -K\}$.  Assign $Y_{n+l} := Z_n$ for $\tau \leq n < 0$.
    \item Assign $l := l+\tau-1$.
    \item Repeat the procedure from step 2.
  \end{enumerate}

Let $\tau_n,l_n$ denote $\tau$ and $l$ of the $n$-th ($n \geq 1$) iteration in the above construction, with the notation that $\tau_0=0,l_0=0$.
Then, for all $n \geq 2$, the following equality of joint distribution holds because of the iterative nature of the construction: 
\begin{equation} \label{eq:regeneration}
  (Y_{\tau_n+l_n},Y_{\tau_n+l_n+1},\cdots,Y_{\tau_{n-1}+l_{n-1}-1}) \stackrel{\mathcal{D}}{=}(Y_{\tau_{n-1}+l_{n-1}},Y_{\tau_{n-1}+l_{n-1}+1},\cdots,Y_{\tau_{n-2}+l_{n-2}-1}).
\end{equation}

\begin{figure}[htbp]
  \centering 
  \includegraphics[scale=0.8]{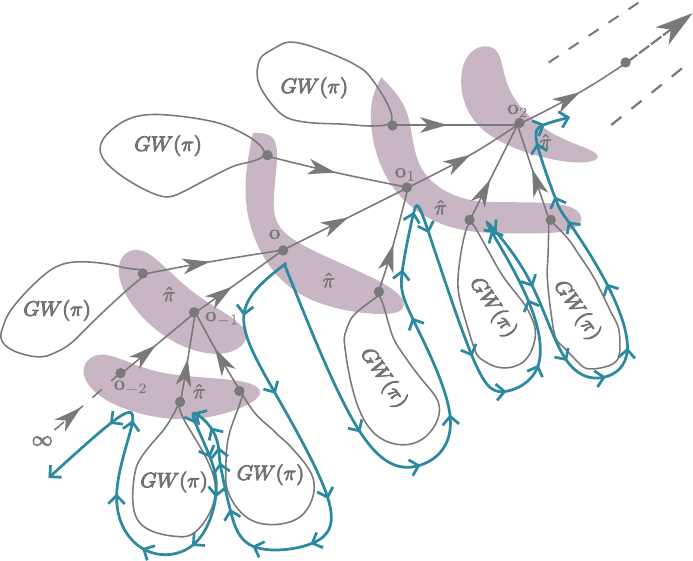}
  \caption{Distribution of $EKT(\pi)$ and its succession line starting from the root \(\mathbf{o}\) (drawn in blue). }
  \label{fi:EKT_succession_line}
\end{figure}
\begin{figure}[htbp]
  \centering 
  \includegraphics[scale=0.8]{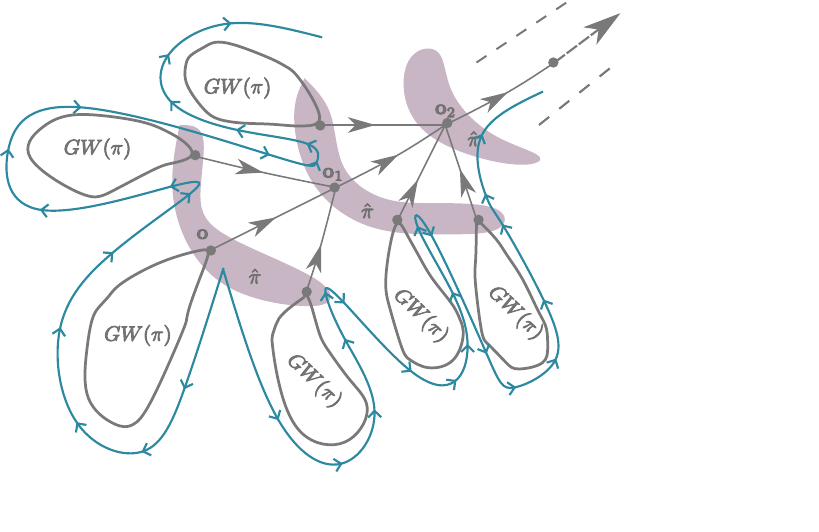}
  \caption{Distribution of $EGWT(\pi)$ and its succession line starting from the root \(\mathbf{o}\) (drawn in blue). }
  \label{fi:EGWT_succession_line}
\end{figure}

\begin{figure}[htbp]
  \centering 
  \includegraphics[scale=0.6]{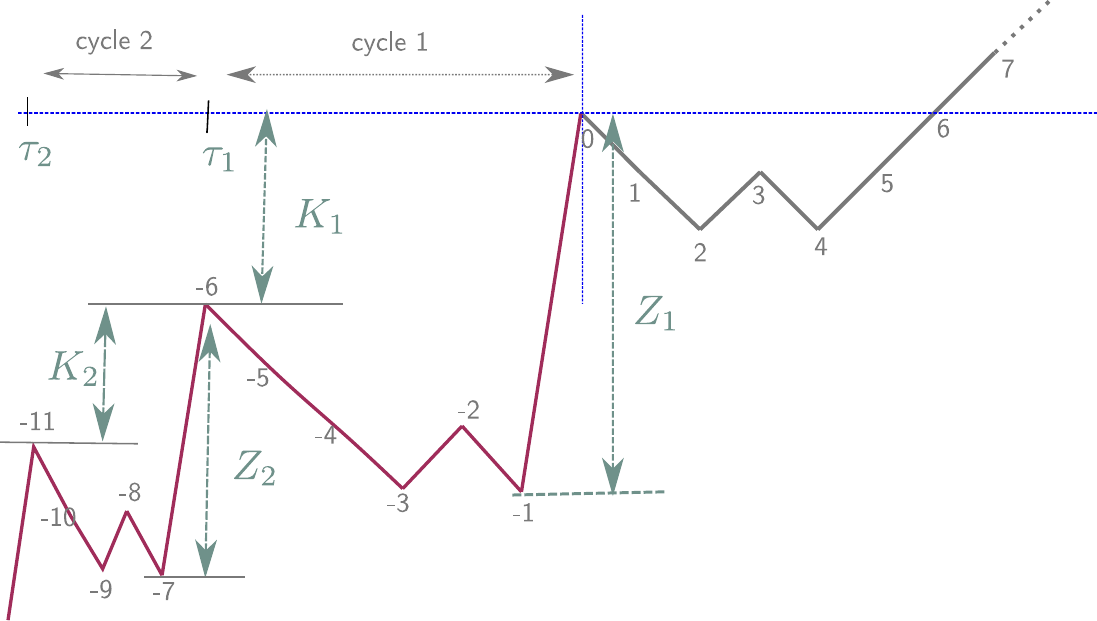}
  \caption{Sample trajectory of the sums of the $R$-probability sequence when \(\mathbb{E}[X_0]=0\) (the $R$-probability sequence $Y=(Y_n)_{n \in \mathbb{Z}}$ is the sequence of increments of this trajectory).}
  \label{fi:Rprobability}
\end{figure}

\begin{proposition}\label{prop:R_prob_i_i}
  Let $[\mathbb{Z},0,Y]$ be the network associated to the sequence of random variables $Y=(Y_n)_{n \in \mathbb{Z}}$ obtained from Construction 1.
  Then $[\mathbb{Z},0,Y]$ is the $R$-probability of $[\mathbb{Z},0,X]$, where $X=(X_n)_{n \in \mathbb{Z}}$ is as in  Def. \ref{hyp:increments} with $\mathbb{E}[X_0]=0$.
\end{proposition}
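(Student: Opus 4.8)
The plan is to show that the network $[\mathbb{Z},0,Y]$ obtained from Construction 1 coincides in distribution with $\Phi_R([\mathbf{T},\mathbf{o}])$, where $[\mathbf{T},\mathbf{o}]$ is distributed as $EKT(\pi)$ with $\pi$ the distribution of $X_0+1$; since Theorem \ref{theorem:r-probability_exists} already identifies the $R$-probability of $[\mathbb{Z},0,X]$ with the push-forward of $EKT(\pi)$ under $\Phi_R$, this suffices. The key is to read off how $\Phi_R$ acts on a realization of $EKT(\pi)$: by definition $\Phi_R([T,o]) = [\mathbb{Z},0,x]$ with $x_n = d_1(u_{n+1})-1$ where $(u_n)_{n\in\mathbb{Z}}$ is the succession line of $(T,o)$, and I would trace this succession line through an $EKT(\pi)$ sample. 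Since $o$ lies on the unique bi-infinite $F$-path $(\mathbf{o}_k)_{k\in\mathbb{Z}}$ of special vertices, the positive part of the succession line explores, in RLS order, the descendant bushes hanging along $\mathbf{o}_1, \mathbf{o}_2, \ldots$; these bushes (siblings of $\mathbf{o}_k$ together with their descendant trees) are i.i.d.\ across $k$ and, by the structure of $EKT(\pi)$, each sibling's descendant tree is a critical $GW(\pi)$, so the offspring counts minus one along the forward succession line form an i.i.d.\ sequence with the law of $X_0$. This matches the definition $(Y_n)_{n\geq 0}$ i.i.d.\ with $Y_n \overset{\mathcal{D}}{=} X_n$ (modulo an index shift that I would make precise, comparing $x_n = d_1(u_{n+1})-1$ with the fact that by Remark \ref{remark_succession_line_is_integers} the succession line of $\Psi_R([\mathbb{Z},0,X])$ is the identity).

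The heart of the argument is the negative part. First I would observe that for $n<0$, $x_n = d_1(u_{n+1})-1$ and $u_0 = \mathbf{o}$, so $x_{-1} = d_1(\mathbf{o})-1$; since $\mathbf{o}$ is a special vertex its offspring distribution is $\hat{\pi}$, which matches Step 2 of Construction 1 where $Y_{-1} = Z-1$ with $Z\sim\hat\pi$. Going further back, $u_{-1} = b(\mathbf{o})$, the eldest child of $\mathbf{o}$, then $u_{-2}, u_{-3}, \ldots$ descend into the descendant tree of that eldest child in reverse RLS order, until the succession line exits that child's descendant tree and moves to the next-eldest child of $\mathbf{o}$ that is also special — but in $EKT(\pi)$ exactly one child of $\mathbf{o}$ is special, namely $\mathbf{o}_{-1}$, while all others are common($\pi$). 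So the block of the succession line from $\mathbf{o}$ back to (just before) $\mathbf{o}_{-1}$ consists of $\mathbf{o}_{-1}$'s position being reached after the common siblings of $\mathbf{o}_{-1}$ that precede it, together with their finite descendant trees, are exhausted in reverse order. The combinatorial content I need is: if $\mathbf{o}$ has $Z$ children and $\mathbf{o}_{-1}$ is the $K+1$-st child (for $K$ uniform in $\{0,\ldots,Z-1\}$ by the size-biasing / uniform-order structure of $EKT(\pi)$ — this is where I would invoke that the order among children is uniform), then the increments $x_n$ for the corresponding $n$ are exactly the offspring-counts-minus-one encountered, which by the depth-first encoding lemmas (Lemma \ref{lemma:legall_sums}, and the identity $\Psi_R\circ\Phi_R = I$ on $EGWT$ samples, Proposition \ref{prop:reverse_R-graph_identity}) correspond to a random-walk excursion: the partial sums $-Y_{-1} - \sum Z_m$ stay above $-K$ and first hit $-K$ exactly when the common siblings preceding $\mathbf{o}_{-1}$ are used up. This is precisely the stopping time $\tau$ of Step 5. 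I expect this bookkeeping — matching the reverse-RLS traversal of the common bushes below $\mathbf{o}$ to the excursion description in Construction 1, including correctly handling the uniform position $K$ of the special child and the fact that the $K$ common-sibling descendant trees are i.i.d.\ $GW(\lambda+1)=GW(\pi)$ — to be the main obstacle; everything after reducing to it is a direct application of Lemma \ref{lemma:legall_sums} and Remark \ref{remark:legall_sum}.

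Once the block from $\mathbf{o}$ to $\mathbf{o}_{-1}$ is handled, the regeneration is automatic: $\mathbf{o}_{-1}$ is again a special vertex with offspring law $\hat\pi$, its special child is $\mathbf{o}_{-2}$, and the bushes below $\mathbf{o}_{-1}$ are i.i.d.\ copies of those below $\mathbf{o}$, independent of what came before by the independence-of-reproduction property of $EKT(\pi)$. This matches the iterative structure of Construction 1 and in particular Eq.\ \eqref{eq:regeneration}. So I would conclude: (i) identify the positive part of the succession line and check $(x_n)_{n\geq 0}$ is i.i.d.\ $\overset{\mathcal{D}}{=}(X_n)_{n\geq 0}$, independent of the negative part (using that the positive bushes and negative bushes are disjoint and reproduce independently in $EKT(\pi)$); (ii) identify the negative part block-by-block, each block between consecutive $\mathbf{o}_{-k}$ and $\mathbf{o}_{-(k-1)}$, and verify its law matches one iteration of Construction 1; (iii) invoke the i.i.d.\ structure across blocks to match the full negative sequence; (iv) conclude $[\mathbb{Z},0,Y]\overset{\mathcal{D}}{=}\Phi_R([\mathbf{T},\mathbf{o}])$ and apply Theorem \ref{theorem:r-probability_exists}. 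A minor technical point to address along the way is that the succession line of $EKT(\pi)$ indeed exhausts all vertices (already noted in the remark preceding Section \ref{subsec_backward_cont_bij}), so that $x((T,o))$ is well-defined and the matching is exact.
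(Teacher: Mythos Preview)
Your proposal is correct and follows essentially the same route as the paper: reduce to showing $[\mathbb{Z},0,Y]\overset{\mathcal{D}}{=}\Phi_R([\mathbf{T},\mathbf{o}])$ with $[\mathbf{T},\mathbf{o}]\sim EKT(\pi)$ and then invoke Theorem~\ref{theorem:r-probability_exists}; handle the positive part via the observation that the forward succession line has the same offspring-count law as in $EGWT(\pi)$, and handle the negative part block-by-block between consecutive special vertices $\mathbf{o}_{-k}$, using Lemma~\ref{lemma:legall_sums} and Remark~\ref{remark:legall_sum} to identify the stopping time $\tau$ with the index at which the succession line reaches $\mathbf{o}_{-1}$, with $K$ matching the number of younger siblings of $\mathbf{o}_{-1}$ (uniform by the uniform order in $EKT(\pi)$), and then regenerate. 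The paper carries out exactly this plan, with the key inequality stated as $\sum_{i=0}^{M}(d_1(u_{-i})-1)\geq \#\{u\in D_1(u_0):u\prec u_{-N}\}$ with equality iff $M=N-1$; your phrase ``stay above $-K$'' should be ``stay at or above,'' and you should be careful that $K$ counts the \emph{younger} siblings of $\mathbf{o}_{-1}$ (those with $u\prec u_{-N}$), not the elder ones that are traversed first, but modulo these bookkeeping details your outline matches the paper's argument.
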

\begin{proof}
  Let $\pi$ be the distribution of $X_0+1$, $[\mathbf{T},\mathbf{o}]$ and $[\mathbf{T}',\mathbf{o}']$ be random rooted trees with distributions $EKT(\pi)$ (see Fig. \ref{fi:EKT_succession_line}) and $EGWT(\pi)$ (see Fig. \ref{fi:EGWT_succession_line}) respectively.
  We show that \([\mathbb{Z},0,Y] = \Phi_R([\mathbf{T},\mathbf{o}])\).
  Let $(u_n)_{n \in \mathbb{Z}}$ and $(w_n)_{n \in \mathbb{Z}}$ be the succession lines of $[\mathbf{T},\mathbf{o}]$ and $[\mathbf{T}',\mathbf{o}']$ respectively, with $u_0 = \mathbf{o}$ and $w_0=\mathbf{o}'$.
  It follows from the constructions of $EGWT(\pi)$ and $EKT(\pi)$ that the distribution of the sequence of offspring counts of $(u_n)_{n \geq 1}$ is the same as that of $(w_n)_{n \geq 1}$.
  Since the map $\Phi_R$ is bijective $EGWT(\pi)$-a.s., by Proposition \ref{prop:R-graph_reverse_identity} we have $(d_1(w_n)-1)_{n \geq 1} \overset{\mathcal{D}}{=}(X_{n-1})_{n \geq 1}$.
  Therefore, $(d_1(u_n)-1)_{n \geq 1} \overset{\mathcal{D}}{=}(d_1(w_n)-1)_{n \geq 1} \overset{\mathcal{D}}{=}(X_{n-1})_{n \geq 1} \overset{\mathcal{D}}{=}(Y_{n-1})_{n \geq 1}$.

  We now show that $(Y_{-n})_{n \geq 1} \overset{\mathcal{D}}{=}(d_1(u_{-n+1})-1)_{n \geq 1}$.
  Consider the random variable $\tau$ defined in the first iteration of Construction 1.
  Let $N$ be a positive random variable such that $u_{-N}$ is the unique child of $\mathbf{o}$ that belongs to the bi-infinite path of $\mathbf{T}$, i.e., $u_{-N} = \mathbf{o}_{-1}$.
  By the construction of $EKT(\pi)$, $(d_1(u_{-N-n}))_{n \geq 0} \overset{\mathcal{D}}{=}(d_1(u_{-n}))_{n \geq 0}$.
  Therefore, by Eq. (\ref{eq:regeneration}), it is enough to show that 
  \begin{equation} \label{eq_tau}
    \tau \overset{\mathcal{D}}{=}-N+1
  \end{equation}
  and the equality of the following conditioned distributions
  \begin{equation} \label{eq_cond_regeneration}
    \left(Y_{-n},Y_{-n+1},\cdots,Y_{-1}\right)|_{\tau = -n+1} \overset{\mathcal{D}}{=}\left(d_1(u_{-n+1})-1,d_1(u_{-n+2})-1,\cdots,d_1(u_0)-1\right)|_{N=n}.
  \end{equation}

  By the construction of \(EKT(\pi)\) and Construction 1, Eq. (\ref{eq_cond_regeneration}) follows.
  It remains to show Eq. (\ref{eq_tau}).
  We show it using the following inequality.
 For a realization $[T,o]$ of $EKT(\pi)$, and any $M$ such that $0<M \leq N-1$ ($N$ as above), let $u':=u'(M)$ denote the youngest child of $u_0$ such that $u \succ u'$ implies that $u \in \{u_0,u_{-1}, \cdots, u_{-M}\}$. 
  Then, we have 
  \begin{equation} \label{eqn:legall_sum}
    \sum_{i=0}^{M}(d_1(u_{-i})-1) \geq \# \{u \in D_1(u_0): u \prec u_{-N}\},
  \end{equation}
  with equality if and only if $M=N-1$.
  Indeed,
  \begin{align*}
    \sum_{i=0}^{M}(d_1(u_{-i})-1) =& d_1(u_0)-1 \\
    &+ \sum_{\{u \in D_1(u_0): u \succ u'\}} \sum_{v \in T_u} (d_1(v)-1) + \sum_{\{u \in T_{u'}: u \succeq u_{-M}\}}(d_1(u)-1),
  \end{align*}
  where $T_u$ in the above equation denotes the descendant tree of $u$ (with root as $u$).
  The above equation is obtained by rearranging the sum on the left-hand side into sums on the descendant trees of the children of $u_0$ whose descendants are contained in $\{u_0,u_{-1}, \cdots, u_{-M}\}$ and the remaining part.
  By applying part 1 of Lemma \ref{lemma:legall_sums} to the second sum and Remark \ref{remark:legall_sum} to the third sum of the above equation respectively, the right-hand side of the above equation is 
  \begin{align*}
    &\geq d_1(u_0)-1 + \sum_{\{u \in D_1(u_0):u \succ u'\}} (-1) + 0.
  \end{align*}
  Since \(u' \succeq u_{-N}\), the sum in the above equation is lower bounded by \(\#\{u \in D_1(u_0):u \succ u_{-N}\}\), so we obtain,
  \begin{align*}
    \sum_{i=0}^{M}(d_1(u_{-i})-1) &\geq d_1(u_0)-1 - \#\{u \in D_1(u_0):u \succ u_{-N}\}\\
    &= \# \{u \in D_1(u_0): u \prec u_{-N}\}.
  \end{align*}
  Equality in the above occurs if and only if $u' = u_{-N}$, i.e., $M=N-1$.

  Since the order of $\mathbf{o}_{-1}$ (\(=u_{-N}\)) is uniform among the children of $\mathbf{o}$ in $EKT(\pi)$, and $d_1(u_0)>0$ (because $d_1(u_0)\overset{\mathcal{D}}{=}\hat{\pi}$ and \(\hat{\pi}(\mathbb{N})=1\)), it follows that the distribution of $\# \{u \in D_1(u_0): u \prec u_{-N}\}$ conditioned on $\{d_1(u_0) = \ell\}$ is the uniform distribution on $\{0,1,2,\cdots,\ell-1\}$.
  Therefore, $\# \{u \in D_1(u_0): u \prec u_{-N}\} \overset{\mathcal{D}}{=}K$, where $K$ is the random variable in the first iteration of Construction 1.

  By the argument following Eq. (\ref{eqn:legall_sum}), it follows that $M=N-1$ is the smallest non-negative integer for which equality is attained in  Eq. (\ref{eqn:legall_sum}).
  Therefore, $\tau \overset{\mathcal{D}}{=}-(N-1)$.
  Thus, 
  \begin{align*}
    \left(Y_{-N},Y_{-N+1},\cdots,Y_{-1}\right) &\overset{\mathcal{D}}{=}(Z_{\tau},Z_{\tau+1},\cdots,Z_{-1},Y_{-1})\\
    &\overset{\mathcal{D}}{=} (d_1(u_{-N+1})-1,d_1(u_{-N+2})-1,\cdots,d_1(u_0)-1).
  \end{align*}

\end{proof}

\noindent \textbf{ Construction 2:} In the following steps, we provide an alternative construction of the sequence $Y=(Y_n)_{n \in \mathbb{Z}}$ (also illustrated in Figure \ref{fi:Rprobability}) from the sequence $X=(X_n)_{n \in \mathbb{Z}}$, where $X$ is as in Def. \ref{hyp:increments} with $\mathbb{E}[X_0]=0$.
This construction is used in Section \ref{subsection:r-foils} to compare the foil structures of \([\mathbb{Z},0,X]\) and \([\mathbb{Z},0,Y]\) under the record vertex-shift.
Let $\pi$ be the distribution of $X_0+1$, and $\hat{\pi}$ be the size-biased distribution of $\pi$.
\begin{enumerate}
  \item Take $(Y_n)_{n \geq 0} \stackrel{\mathcal{D}}{=}(X_n)_{n \geq 0}$.
  \item Let  $(Z_n)_{n \geq 1}$ be an i.i.d. sequence of random variables, independent of $(Y_n)_{n \geq 0}$, that take values in $\mathbb{Z}_{\geq -1}$ and whose common distribution is given by $\mathbb{P}[Z_1=b] = \hat{\pi}(b+1)$ for all $b \geq -1$. Note that $\mathbb{P}[Z_1=-1]=0$.
  \item Let $(K_n)_{n \geq 1}$ be an i.i.d. sequence of random variables such that $K_n$ is independent of $Z_m$ whenever $m \not = n$, and $\mathbb{P}[K_n = a| Z_n = b]=\frac{1}{b+1} \mathbf{1}_{0 \leq a\leq  b}$, for all $(a,b) \in \mathbb{Z}_{\geq 0} \times \mathbb{Z}_{\geq 0}$ and for all $n \geq 1$.
  \item Consider an array $(X_m^{(n)})_{m,n \in \mathbb{N}}$ of i.i.d. random variables, independent of all the above random variables, such that $X_1^{(1)} \stackrel{\mathcal{D}}{=}X_1$. 
  \item Define a sequence of random times \((\tau_n)_{n \geq 1}$ by $\tau_n:= \inf\{m \geq 1: -Z_n - \sum_{l=1}^{m}X_m^{(n)} \geq - K_n\}\) for all $n \geq 1$. Note that $\tau_n < \infty$ almost surely for all $n \geq 1$ by Chung-Fuchs theorem. Since $X_1^{(1)}$ cannot be smaller than $-1$, we have $-Z_n - \sum_{l=1}^{\tau_n}X_m^{(n)} = - K_n$ for all $n \geq 1$.
\end{enumerate}

Finally, consider the following independent sequence of random-sized tuples 
\begin{align*}
  (Y_{-1},Y_{-2},\cdots,Y_{-(\tau_1+1)}) &\stackrel{\mathcal{D}}{=}(Z_1,X_1^{(1)},X_2^{(1)},\cdots, X_{\tau_1}^{(1)}),\\
  (Y_{-(\tau_1+2)},\cdots,Y_{-(\tau_1+\tau_2+2)}) & \stackrel{\mathcal{D}}{=} (Z_2,X_1^{(2)},\cdots,X_{\tau_2}^{(2)}),\cdots,\\
 (Y_{-(\tau_1+\cdots+\tau_{n-1}+n)}+\cdots+ Y_{-(\tau_1+\cdots+\tau_{n-1}+\tau_n+n)}) &\stackrel{\mathcal{D}}{=} (Z_n,X_1^{n},\cdots,X_{\tau_n}^{(n)}), \cdots.
\end{align*}

We call the sequence $Y=(Y_n)_{n \in \mathbb{Z}}$ the { \bf $R$-probability sequence} associated to $X=(X_n)_{n \in \mathbb{Z}}$. 

\begin{remark}
  Note that $K_1$ is a proper random variable since $\mathbb{P}[K_1= a] = \sum_{l = a+1}^{\infty}\pi(l)$ for all non-negative integers $a \geq 0$ and $\mathbb{P}[K_n < \infty] = \sum_{a=0}^{\infty} \pi((a,\infty))= m(\pi)=1$.
Observe that $\mathbb{P}[K_1>0]>0$ since $\pi(0)<1$.
\end{remark}

From the construction, it is clear that $(K_n,\tau_n)\overset{\mathcal{D}}{=}(K,\tau)$, where $K, \tau$ are the random variables defined in Construction 1.

\section{Properties of the R-probability sequence for zero mean}\label{subsec_prop_r_prob_i_i}
In this section, we discuss the properties of the R-probability sequence $Y=(Y_n)_{n \in \mathbb{Z}}$ of $X=(X_n)_{n \in \mathbb{Z}}$, defined in the last section, when $\mathbb{E}[X_0]=0$.
Recall that $(Y_n)_{n \geq 0}$ is an i.i.d. sequence whose common distribution is given by $Y_0 \stackrel{\mathcal{D}}{=}X_0$.
So, we focus on the properties of $(Y_n)_{n<0}$.

For two integers $m<n$, let $w(m,n) := \sum_{k = m}^{n-1}Y_k$ and $W_n = -w(-n,0) =-\sum_{k = -n}^{-1}Y_k$, for $n \geq 1$.
We continue to use the notations of the last section in the following lemma and proposition.
From the definitions of \(\tau_n\) and \(K_n\), it follows that \(W_{\tau_1+\tau_2+\cdots+\tau_n+n}=-(K_1+K_2+\cdots+K_n)\) for all \(n\geq 1\).

\begin{lemma} \label{lemma:inequality_negative_sequence}
 Almost surely $W_m<-(K_1+\cdots+K_n+K_{n+1})$ for all positive integers $m,n$ such that $n \geq 1$ and $\tau_1+\tau_2+\cdots+\tau_{n}+n< m < \tau_1+\tau_2+\cdots+\tau_{n+1}+n+1$, where \((\tau_i)_{i \geq 1}\) are as in Construction 2.
\end{lemma}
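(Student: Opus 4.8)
The plan is to analyze the structure of Construction 2 and track the partial sums $W_m$ across each ``block''. Recall from the construction that the negatively indexed part of $Y$ is assembled from independent random-sized tuples: for $n\geq 1$ the $n$-th block is $(Z_n, X_1^{(n)}, \ldots, X_{\tau_n}^{(n)})$, which occupies positions $-(\tau_1+\cdots+\tau_{n-1}+n), \ldots, -(\tau_1+\cdots+\tau_{n-1}+\tau_n+n)$. Set $\sigma_n := \tau_1+\cdots+\tau_n+n$, so that $\sigma_0 = 0$ and $W_{\sigma_n} = -(K_1+\cdots+K_n)$ by the remark preceding the lemma. The statement to prove is that for $n\geq 1$ and $\sigma_n < m < \sigma_{n+1}$ we have $W_m < -(K_1+\cdots+K_{n+1})$.

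First I would fix $n\geq 1$ and decompose $W_m$ for $\sigma_n < m < \sigma_{n+1}$ as $W_m = W_{\sigma_n} + (W_m - W_{\sigma_n}) = -(K_1+\cdots+K_n) + (W_m - W_{\sigma_n})$. The increment $W_m - W_{\sigma_n}$ is a partial sum taken strictly inside the $(n+1)$-th block, namely $W_m - W_{\sigma_n} = -\big(Z_{n+1} + \sum_{l=1}^{m-\sigma_n-1} X_l^{(n+1)}\big)$ when $m > \sigma_n + 1$ (and $= -Z_{n+1}$ when $m = \sigma_n+1$). So the inequality to establish reduces to showing that for every $1 \le j < \tau_{n+1}+1$,
\[
Z_{n+1} + \sum_{l=1}^{j-1} X_l^{(n+1)} > K_{n+1},
\]
with the convention that the empty sum is $0$. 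By the definition $\tau_{n+1} = \inf\{m\geq 1 : -Z_{n+1} - \sum_{l=1}^m X_l^{(n+1)} \geq -K_{n+1}\}$, for every $j \leq \tau_{n+1}$ we have $-Z_{n+1} - \sum_{l=1}^{j-1} X_l^{(n+1)} < -K_{n+1}$ when $j-1 < \tau_{n+1}$, i.e., $Z_{n+1} + \sum_{l=1}^{j-1}X_l^{(n+1)} > K_{n+1}$; this covers exactly the indices $1 \le j \le \tau_{n+1}$, which corresponds to $\sigma_n < m \le \sigma_{n+1}$, and in particular all $m$ with $\sigma_n < m < \sigma_{n+1}$. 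Hence $W_m = -(K_1+\cdots+K_n) - (Z_{n+1} + \sum_{l=1}^{m-\sigma_n-1}X_l^{(n+1)}) < -(K_1+\cdots+K_n) - K_{n+1} = -(K_1+\cdots+K_{n+1})$, as desired. The almost-sure qualifier comes from $\tau_{n+1} < \infty$ a.s. (Chung--Fuchs, as noted in the construction), which is needed so that the indices under discussion are well-defined.

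The only subtlety — and the part I would be most careful about — is the bookkeeping of indices: matching ``positions in $\mathbb{Z}_{<0}$'' with ``steps within a block'' and making sure the endpoint cases ($m = \sigma_n + 1$, i.e.\ the first entry $Z_{n+1}$ of the block, and $m$ just below $\sigma_{n+1}$) are handled by the stopping-time definition of $\tau_{n+1}$ rather than falling through a gap. Once the translation between $m$ and $j = m - \sigma_n$ is pinned down, the inequality is a direct consequence of the fact that a stopping time $\tau_{n+1}$ is, by definition, the first index at which the walk $-Z_{n+1} - \sum_{l=1}^{\cdot} X_l^{(n+1)}$ reaches the level $-K_{n+1}$, so strictly before that it stays strictly above — there is no real analytic content, only the reduction. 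I would also remark that the strictness of the inequality uses $K_n \ge 0$ and $Z_{n+1} \ge 0$ together with the skip-free property, exactly as in the construction, but since we only need strictness coming from the stopping-time characterization this is automatic.
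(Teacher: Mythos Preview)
Your proof takes essentially the same approach as the paper's: decompose $W_m = W_{\sigma_n} + (W_m - W_{\sigma_n})$, use the identity $W_{\sigma_n} = -(K_1+\cdots+K_n)$ stated just before the lemma, and bound the block increment $-(Z_{n+1}+\sum_{l=1}^{m-\sigma_n-1}X_l^{(n+1)})$ via the stopping-time definition of $\tau_{n+1}$. The paper phrases this as an induction on $n$, but the content is identical; your direct argument is in fact slightly more transparent since the key identity is already available.
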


\begin{proof}
The proof is by induction on $n$. 
For $n=1$ and $\tau_1+1<m<\tau_1+\tau_2+2$, we have 
\[W_m = W_{\tau_1+1}-w(-m,-(\tau_1+1))=-K_1-w(-m, -(\tau_1+1)).\]
But, by the definition of $\tau_2$,
\[-w(-m,-(\tau_1+1)) = -Y_{-(\tau_1+2)}-w(-m,-(\tau_1+2)) \stackrel{\mathcal{D}}{=}-Z_2- \sum_{l=1}^{m-(\tau_1+2)}X_{l}^{(2)}<-K_2.\]

Therefore, $W_m<-K_1-K_2$ for $n=1$. 

Assume that the statement is true for $n-1$. For $\tau_1+\cdots+\tau_n +n < m < \tau_1+\tau_2+\cdots+\tau_{n+1}+n+1$, we have

\begin{align*}
W_m &= W_{\tau_1+\cdots+\tau_n + n}- w(-m,-(\tau_1+\cdots+\tau_n + n))\\
& \stackrel{\mathcal{D}}{=}W_{\tau_1+\cdots+\tau_n + n} - Z_{n+1}- \sum_{l=1}^{m-(\tau_1+\cdots+\tau_n+n+1)}X_l^{(n+1)}\\
&<-(K_1+\cdots+K_n+K_{n+1}).
\end{align*}

Last step follows by induction, and by the definition of $\tau_{n+1}$.
\end{proof}

\begin{proposition}\label{prop:drift_negative}
  Almost surely $W_n \xrightarrow[]{} - \infty$ as $n \rightarrow \infty$.
\end{proposition}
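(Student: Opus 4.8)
The statement $W_n \to -\infty$ a.s.\ as $n \to \infty$ should follow from combining Construction~2, Lemma~\ref{lemma:inequality_negative_sequence}, and the fact that the sums $K_1 + K_2 + \cdots + K_n$ tend to $+\infty$ a.s.\ because the $K_i$ are i.i.d.\ with $\mathbb{P}[K_1 > 0] > 0$. The plan is to show that $\liminf_n W_n = -\infty$ by controlling $W_m$ along the subsequence of regeneration times $m_n := \tau_1 + \tau_2 + \cdots + \tau_n + n$, and then to use Lemma~\ref{lemma:inequality_negative_sequence} to control $W_m$ for $m$ between consecutive regeneration times.

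First I would recall that, by the definitions of $\tau_n$ and $K_n$ in Construction~2, $W_{m_n} = -(K_1 + K_2 + \cdots + K_n)$ for all $n \geq 1$, where $m_n = \tau_1 + \cdots + \tau_n + n$. Since $(K_i)_{i \geq 1}$ is an i.i.d.\ sequence of non-negative integer-valued random variables with $\mathbb{P}[K_1 > 0] > 0$ (noted in the Remark following Construction~2, because $\pi(0) < 1$), the strong law of large numbers (or a Borel--Cantelli argument showing infinitely many $K_i \geq 1$) gives $K_1 + \cdots + K_n \to +\infty$ a.s., hence $W_{m_n} \to -\infty$ a.s. Also $m_n \to \infty$ a.s.\ since each $\tau_i \geq 1$.

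Next I would handle the values of $W_m$ for general $m$. For any $m \geq 1$ with $m > m_1 = \tau_1 + 1$, there is a unique $n \geq 1$ with $m_n \leq m < m_{n+1}$. If $m = m_n$, then $W_m = -(K_1 + \cdots + K_n)$ as above. If $m_n < m < m_{n+1}$, then Lemma~\ref{lemma:inequality_negative_sequence} gives $W_m < -(K_1 + \cdots + K_n + K_{n+1})$. In either case, $W_m \leq -(K_1 + \cdots + K_n)$. Therefore, writing $n(m)$ for the index with $m_{n(m)} \leq m < m_{n(m)+1}$, we have $W_m \leq -(K_1 + \cdots + K_{n(m)})$ with $n(m) \to \infty$ as $m \to \infty$ (since $m_n \to \infty$). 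Combining this with $K_1 + \cdots + K_n \to +\infty$ a.s.\ yields $\limsup_{m \to \infty} W_m \leq \lim_{n \to \infty} -(K_1 + \cdots + K_n) = -\infty$ a.s., i.e., $W_m \to -\infty$ a.s.

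The main obstacle, though it is minor, is bookkeeping: one must be careful that the bound $W_m \leq -(K_1 + \cdots + K_{n(m)})$ is uniform over $m$ in a regeneration block and that $n(m) \to \infty$; this is exactly what Lemma~\ref{lemma:inequality_negative_sequence} provides for the interior of each block, and the endpoints $m = m_n$ are covered by the exact identity $W_{m_n} = -(K_1 + \cdots + K_n)$. No further probabilistic input is needed beyond the i.i.d.\ structure of $(K_i)$ and the a.s.\ finiteness of each $\tau_i$ (Chung--Fuchs), both already established in Construction~2.
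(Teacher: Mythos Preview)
Your proposal is correct and follows essentially the same approach as the paper: bound $W_m$ above by $-(K_1+\cdots+K_{n(m)})$ via Lemma~\ref{lemma:inequality_negative_sequence} and the endpoint identity $W_{m_n}=-(K_1+\cdots+K_n)$, then conclude using that the i.i.d.\ nonnegative $K_i$ with $\mathbb{P}[K_1>0]>0$ have partial sums tending to $+\infty$ (the paper phrases this last step via Borel--Cantelli for $\{K_n>0\}$, which is one of the two options you mention). Your block-by-block bookkeeping is in fact slightly more explicit than the paper's.
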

\begin{proof}
  From Lemma \ref{lemma:inequality_negative_sequence}, and the fact that $K_n \geq 0, \forall n \geq 1$, it follows that $W_m<-(K_1+K_2+\cdots+K_n)$, for all $n \geq 1$ and $m> \tau_1+\cdots+\tau_n+n$.
  Therefore, the event \(\{W_m \xrightarrow[m \to \infty]{} - \infty\}\) contains the event $\{-(K_1+\cdots+K_n) \xrightarrow[n \to \infty]{} - \infty\}$.
So, it is enough to show that $\mathbb{P}[-(K_1+K_2+\cdots+K_n) \xrightarrow[n \to \infty]{} - \infty]=1$.
Since $K_n$ is non-negative for all $n \geq 1$, it is enough to show that 
  \(\mathbb{P}[K_n>0 \textit{  infinitely often }]=1.\)
  The events $\{K_n>0\}$ satisfy $\sum_{n=1}^{ \infty}\mathbb{P}[K_n>0] = \infty$ since, $\mathbb{P}[K_1>0]>0$ and $(K_n)_{n \geq 1}$ are i.i.d. random variables.
  Therefore, $\mathbb{P}[K_n>0 \ i.o.]=1$ by the Borel-Cantelli Lemma.
\end{proof}

The following corollary is used to describe the record foils of the \(R\)-probability (see Proposition \ref{proposition:minimal_element}).

\begin{corollary} \label{corollary:drift_negative}
  For any integer $j$, almost surely
  \[-w(j-n,j) = -\sum_{k=j-n}^{j-1}Y_k \xrightarrow[n \to \infty]{} - \infty.\]
\end{corollary}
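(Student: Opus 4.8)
The plan is to reduce the statement for arbitrary $j$ to the already-established case $j = 0$, which is precisely Proposition \ref{prop:drift_negative} (i.e. $W_n = -w(-n,0) \to -\infty$ a.s.). The point is that $-w(j-n,j) = -\sum_{k=j-n}^{j-1} Y_k$ is, for fixed $j$, a partial sum of $(Y_k)_{k < j}$ read backwards from index $j-1$, so it should essentially be a "shift" of the sequence $W_n$. However, one has to be careful: the sequence $Y = (Y_n)_{n \in \mathbb{Z}}$ is \emph{not} stationary (its negative part has only a regenerative structure), so I cannot simply invoke translation invariance of the law. The honest route splits according to the sign of $j$.

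First I would treat $j \leq 0$. Write
\begin{align*}
  -w(j-n,j) &= -\sum_{k=j-n}^{j-1} Y_k = -\sum_{k=j-n}^{-1} Y_k + \sum_{k=j}^{-1} Y_k = W_{n-j} - W_{-j},
\end{align*}
valid for $n$ large enough that $j - n < 0$ (so all indices involved are negative), with the convention that the finite correction term $\sum_{k=j}^{-1} Y_k = W_{-j} - W_0 = W_{-j}$ (using $W_0 = 0$) is a fixed, a.s. finite random variable not depending on $n$. Since $W_{n-j} \to -\infty$ a.s. as $n \to \infty$ by Proposition \ref{prop:drift_negative}, and we are subtracting a finite quantity, $-w(j-n,j) \to -\infty$ a.s. This handles all $j \leq 0$, and in particular the cases relevant to Proposition \ref{proposition:minimal_element}. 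For $j > 0$ a similar decomposition works: for $n$ large enough that $j - n < 0$,
\begin{align*}
  -w(j-n,j) &= -\sum_{k=j-n}^{-1} Y_k - \sum_{k=0}^{j-1} Y_k = W_{n-j} - \sum_{k=0}^{j-1} Y_k,
\end{align*}
and again $\sum_{k=0}^{j-1} Y_k$ is a fixed finite random variable (a partial sum of finitely many of the i.i.d. variables $(Y_k)_{k \geq 0}$), so the left-hand side tends to $-\infty$ a.s. by Proposition \ref{prop:drift_negative}.

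There is no real obstacle here beyond bookkeeping: the only thing to be careful about is not to accidentally use stationarity of $Y$, and to make sure the index ranges are handled correctly when $j-n$ crosses from positive to negative (which only affects finitely many $n$ and hence is irrelevant for the limit). So the proof is essentially a one-line reduction: peel off a fixed finite block of $Y$'s and apply Proposition \ref{prop:drift_negative} to the remaining backward partial sum. I would write it up as the $j \leq 0$ computation $-w(j-n,j) = W_{n-j} - W_{-j} \to -\infty$, with a remark that the $j > 0$ case follows by the analogous splitting.
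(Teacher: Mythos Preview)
Your proof is correct and follows essentially the same approach as the paper: split $-w(j-n,j)$ at index $0$ into $W_{n-j}$ plus a fixed finite correction, then invoke Proposition~\ref{prop:drift_negative}. Your indexing ($W_{n-j}$ rather than $W_n$) is in fact slightly more precise than the paper's, and your explicit remark that stationarity of $Y$ is \emph{not} being used is a nice clarification.
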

\begin{proof}
  For $n>\max\{0,j\}$,
  \[ -w(j-n,j) = \begin{cases}
    W_n-w(0,j) \text{ if } j>0\\
    W_n + w(j,0) \text{ otherwise.}
  \end{cases}\]
  Therefore, by Proposition \ref{prop:drift_negative}, a.s. $-w(j-n,j) \rightarrow -\infty$ as $n \rightarrow \infty$.
\end{proof}

\section{Construction of R-probability for positive mean}

In the following steps, we construct a sequence \(\tilde{Y}=(\tilde{Y}_n)_{n \in \mathbb{Z}}\) and later show that \([\mathbb{Z},0,\tilde{Y}]\) is the \(R\)-probability of \([\mathbb{Z},0,X]\) when \(0<\mathbb{E}[X_0]<\infty\).
Recall the definitions of \(\tilde{\pi},\bar{\pi}\) and \(c\) from Eq. (\ref{eq:pi_defn}) in Section \ref{subsec_positive_mean}.

\begin{enumerate}
    \item The random variables \((\tilde{Y}_n)_{n\geq 0}\) are i.i.d. with the common distribution the same as that of \(X_0\). That is, \(\tilde{Y}_n \overset{\mathcal{D}}{=}X_0\) for all \(n \geq 0\).
   \item Let \(((K_n,Z_n))_{n \geq 1}\) be an i.i.d. sequence of random variables taking values in \(\mathbb{Z}_{\geq 0} \times \mathbb{Z}_{\geq 0}\) whose common joint-distribution is given by \(\mathbb{P}[K_1=m, Z_1 = l] = \mathbb{P}[X_0=m+l]c^l \) for all \(m \geq 0,\, l \geq 0\).
    \item Define an array \((X_m^{(n)})_{m,n \in \mathbb{N}}\) of i.i.d. sequence of random variables whose common distribution is given by \(\mathbb{P}[X_1^{(1)}=k] = \tilde{\pi}(k+1)\) for all \(k \geq 0\).
    \item Define the sequence of random times \((\tau_n)_{n \geq 1}\) by \(\tau_n = \inf\{m \geq 1: -Z_n -K_n-\sum_{l=1}^{m}X_m^{(n)} = -K_n\}=\inf\{m \geq 1: -Z_n -\sum_{l=1}^{m}X_m^{(n)} = 0\}\) if the infimum is attained and \(\tau_n = \infty\) otherwise, for all \(n \geq 1\). Note that \(\tau_1<\infty\) a.s. since the random walk \((\tilde{S}_m)_{m \geq 0}\) starting at \(0\) drifts to \(\infty\) by Lemma \ref{20230116135048}, where \(\tilde{S}_m = - \sum_{l=1}^{m}X_m^{(1)}\) for all \(m \geq 1\).
    \item Define \((\tilde{Y}_{-n})_{n \geq 0}\) by
        \begin{align*}
            (\tilde{Y}_{-1},\tilde{Y}_{-2},\cdots, \tilde{Y}_{-(\tau_1+1)}) &= (Z_1+K_1,X_1^{(1)},X_2^{(1)},\cdots,X_{\tau_1}^{(1)}),\\
            (\tilde{Y}_{-(\tau_1+2)},\tilde{Y}_{-(\tau_1+3)},\cdots, \tilde{Y}_{-(\tau_1+\tau_2+2)}) &= (Z_2+K_2,X_1^{(2)},X_2^{(2)},\cdots,X_{\tau_2}^{(2)}), \cdots,\\
            \cdots, (\tilde{Y}_{-(\tau_1+\cdots+\tau_{n-1}+n)},\cdots, \tilde{Y}_{-(\tau_1+\cdots+\tau_n+n)}) &= (Z_n+K_n,X_1^{(n)},\cdots,X_{\tau_n}^{(n)}), \cdots.
        \end{align*}
\end{enumerate}

Note that the \(R\)-probability of \([\mathbb{Z},0,X]\) exists by Theorem \ref{20230414180529}.
We now prove that the sequence constructed above is the \(R\)-probability of \([\mathbb{Z},0,X]\) when \(0<\mathbb{E}[X_0]<\infty\).
The proof of this statement together with Remark \ref{remark_rProbability_conditioned} implies that \([\mathbb{Z},0,\tilde{Y}]\) has the same distribution as that of the conditioned network \([\mathbb{Z},0,X]\) conditioned on the event that \(\{S_n \leq 0 \  \forall n \leq 0\}\), where \((S_n)_{n \in \mathbb{Z}}\) denotes the skip-free random walk associated to the increment sequence \(X=(X_n)_{n \in \mathbb{Z}}\).

For any Family Tree \(T\) that has a unique bi-infinite path, denote its bi-infinite path by \(\neswarrow(T)\).

\begin{proposition}\label{20230421185746}
    Let \(X=(X_n)_{n \in \mathbb{Z}}\) be the i.i.d. sequence of increments of a skip-free to the left random walk that saitsfies \(0<\mathbb{E}[X_0]<\infty\) and \([\mathbb{Z},0,X]\) be its associated network.
    Let \([\mathbb{Z},0,Y]\) be the \(R\)-probability of \([\mathbb{Z},0,X]\), where \(Y=(Y_n)_{n \in \mathbb{Z}}\).
    Then, \((Y_n)_{n \geq 0}\) is an i.i.d. sequence of random variables independent of \((Y_n)_{n<0}\) and their common distribution is the same as that of \(X_0\).
\end{proposition}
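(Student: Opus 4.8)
The plan is to identify the $R$-probability $[\mathbb{Z},0,Y]$ with the push-forward $MEKT(\bar\pi,\tilde\pi)\circ\hat\Phi_R^{-1}$, which exists by Theorem \ref{20230414180529}, and then read off the law of $(Y_n)_{n\ge 0}$ from the structure of the succession line. Recall that $\hat\Phi_R([T,o,M]) = [\mathbb{Z},0,x]$ with $x_n = (d_1(u_{n+1})-1)\mathbf{1}\{u_{n+1}\notin\neswarrow\} + (d_1(u_{n+1})+M(u_{n+1})-1)\mathbf{1}\{u_{n+1}\in\neswarrow\}$, where $(u_n)_{n\in\mathbb{Z}}$ is the succession line of $[T,o]$ with $u_0=o$. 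So the first step is to understand, for $[\mathbf{T}',\mathbf{o}',t']$ distributed as $MEKT(\bar\pi,\tilde\pi)$ (with the marked bush described in Theorem \ref{20230226170801}), the sequence $(u_n)_{n\ge 1}$ of vertices that succeed the root in RLS order, and to see that it visits exactly the descendant subtree sitting above the bi-infinite path at and after the root.

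The key geometric fact to establish is that, for $n\ge 0$, the vertices $u_{n+1}$ lie in the "positive part'' of the tree — concretely, $u_1 = \mathbf{o}_1^*$-type vertices never revisit the bush of the root for $n\ge 0$ except possibly finitely, and more to the point: the positive succession line $(u_n)_{n\ge 0}$ from the root reads off the forest of descendant subtrees hanging to the right of the bi-infinite path, bush by bush, in a regenerative way. Since in $MEKT(\bar\pi,\tilde\pi)$ the marked bushes $([\mathbf{T}_i,\mathbf{o}_i,t_i])_{i\in\mathbb{Z}}$ are i.i.d., the increments $x_n$ for $n\ge 0$ are produced by concatenating i.i.d. blocks, and I expect to show they are themselves i.i.d. This is precisely the positive-mean analogue of the argument in Proposition \ref{prop:R_prob_i_i} for the zero-mean case (where $(Y_n)_{n\ge 0}\overset{\mathcal{D}}{=}(X_n)_{n\ge 0}$ was obtained from the agreement of offspring-count sequences along succession lines of $EKT(\pi)$ and $EGWT(\pi)$). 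Here the analogue of $EGWT(\pi)$ is the unimodularised $MEKT(\bar\pi,\tilde\pi)$, which by Theorem \ref{20230226170801} is the marked record graph of $[\mathbb{Z},0,X]$ conditioned on $\mathbf{o}\in\neswarrow$; and by Remark \ref{remark_succession_line_is_integers} its succession line is $(n)_{n\in\mathbb{Z}}$, so the positive part of its succession line produces exactly $(X_{n})_{n\ge 0}$ in distribution. Since the distribution of the sequence of (offspring-count, type) pairs along $(u_n)_{n\ge 1}$ is the same for $MEKT(\bar\pi,\tilde\pi)$ and for its unimodularisation — both are governed by the i.i.d. sequence of marked bushes indexed by $n\ge 1$, which is unaffected by the size-biasing/re-rooting that only touches the bush containing $\mathbf{o}_0$ — it follows that under $MEKT(\bar\pi,\tilde\pi)\circ\hat\Phi_R^{-1}$ the coordinates $(Y_n)_{n\ge 0}$ are i.i.d. with $Y_0\overset{\mathcal{D}}{=}X_0$.

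Finally, for the independence of $(Y_n)_{n\ge 0}$ from $(Y_n)_{n<0}$: the negative coordinates $(Y_n)_{n<0}$ are a measurable function of the succession line $(u_n)_{n<0}$, i.e. of the bushes $[\mathbf{T}_i,\mathbf{o}_i,t_i]$ for $i\le 0$ together with the order-data of how $\mathbf{o}_0$ sits among the children of $\mathbf{o}_1$ and recursively — all of which involves the blocks indexed by $i\le 0$ and the ECS-order information, which in the ECS-ordered $MEKT$ is deterministic on the path ($\mathbf{o}_{i}$ is the smallest child of $\mathbf{o}_{i+1}$). The positive coordinates $(Y_n)_{n\ge 0}$ depend only on the bushes $[\mathbf{T}_i,\mathbf{o}_i,t_i]$ for $i\ge 1$ together with the internal order of $\mathbf{T}_0$ restricted to vertices succeeding $\mathbf{o}_0$ within $\mathbf{T}_0$'s bush — again a block-indexed-by-$i\ge 0$ quantity. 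The two collections of blocks are disjoint apart from $\mathbf{T}_0$, and within $\mathbf{T}_0$ the "vertices succeeding $\mathbf{o}_0$'' and "vertices preceding $\mathbf{o}_0$'' partition $V(\mathbf{T}_0)\setminus\{\mathbf{o}_0\}$ into two independent sub-forests by the Galton-Watson branching property (conditionally on $d_1(\mathbf{o}_0)$ and $t_0(\mathbf{o}_0)$, the descendant subtrees of the children of $\mathbf{o}_0$ are i.i.d.), with $x_{-1}$ (the one coordinate straddling) a deterministic function of $d_1(\mathbf{o}_0)$ and $t_0(\mathbf{o}_0)$ that must be assigned to the negative side. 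Tracking this bookkeeping carefully is the main obstacle: one must verify that no single coordinate $Y_n$ depends on data from both the positive and negative halves, and that the joint distribution factorises. I would formalise this by exhibiting, exactly as in Construction for positive mean, the explicit regenerative description of $(Y_n)_{n<0}$ as concatenated i.i.d. blocks $(Z_n+K_n, X_1^{(n)},\dots,X_{\tau_n}^{(n)})$ built from randomness independent of $(Y_n)_{n\ge 0}$, and then invoke the identification of $[\mathbb{Z},0,\tilde Y]$ with the $R$-probability (Proposition \ref{20230421185746}'s companion, proved in the same section) — but since that identification is what is being bootstrapped, the cleanest route is the direct succession-line argument above, using only Theorem \ref{20230226170801}, Theorem \ref{20230414180529}, Remark \ref{remark_succession_line_is_integers}, and the branching property of $GW(\tilde\pi)$.
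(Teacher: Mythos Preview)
Your overall strategy matches the paper's: identify the $R$-probability with $\hat\Phi_R$ applied to $MEKT(\bar\pi,\tilde\pi)$, then read off the law of $(Y_n)_{n\ge 0}$ by comparing succession lines. But the central identification is garbled, and this hides the one substantive step.

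You write that ``the unimodularised $MEKT(\bar\pi,\tilde\pi)$ \dots\ is the marked record graph of $[\mathbb{Z},0,X]$ conditioned on $\mathbf{o}\in\neswarrow$''. That is backwards: by Theorem~\ref{20230226170801} the \emph{conditioned} record graph is $MEKT(\bar\pi,\tilde\pi)$ itself, while the \emph{unconditioned} record graph is the unimodularised version. This matters because Remark~\ref{remark_succession_line_is_integers} tells you that the succession line of the unconditioned record graph is $(n)_{n\in\mathbb{Z}}$, so its positive $\hat\Phi_R$-increments are $(X_n)_{n\ge 0}$. What you need is the positive increments of $MEKT$, i.e.\ of the \emph{conditioned} record graph; these are a priori $(X_n)_{n\ge 0}$ conditioned on $\{S_n\le 0\ \forall n<0\}$. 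The missing step---and this is the whole content---is that $(X_n)_{n\ge 0}$ is independent of this conditioning event, so the conditional and unconditional laws coincide. The paper states this explicitly. Your substitute (``both are governed by the i.i.d.\ bushes indexed by $n\ge 1$, unaffected by the size-biasing'') does not close the gap: for the unimodularised tree the root is a typical vertex inside $\mathbf{T}_0$, so its positive succession line first traverses part of $\mathbf{T}_0$ before entering the bushes $i\ge 1$; the two positive succession lines are therefore not driven by the same data, and your bush comparison alone does not yield equality in law.

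Your argument for independence of $(Y_n)_{n\ge 0}$ from $(Y_n)_{n<0}$ is essentially correct and matches the paper's: in $MEKT$ with the ECS order and root $\mathbf{o}_0$, the vertices $\{u_n:n\ge 1\}$ lie entirely in the bushes indexed by $i\ge 1$ (there are no vertices of $\mathbf{T}_0$ succeeding $\mathbf{o}_0$, since $\mathbf{o}_0$ is the RLS-maximum of its bush), while $\{u_n:n\le 0\}$ lies in the bushes $i\le 0$. The i.i.d.\ bush structure then gives the independence directly; your detour through a conditional Galton--Watson branching argument inside $\mathbf{T}_0$ is unnecessary.
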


\begin{proof}
    Let \([\mathbf{T},0,t]\) be the record graph of \([\mathbb{Z},0,X]\) with the type function \(t\) as the mark function and \([\mathbf{T}',\mathbf{o}',t']\) be the random Family Tree obtained from \([\mathbf{T},0,t]\) by conditioning on the event \(0 \in \neswarrow\) (denoted as \([\mathbf{T},0,t]|0 \in \neswarrow\)).
    Note that \([\mathbf{T}',\mathbf{o}',t']\) is the \(F\)-probability of \([\mathbf{T},0,t]\) (see Proposition \ref{20230314185620} and Theorem \ref{20230226170801}).
    By Theorem \ref{r_graph_positive_drift_20230203174636} and Theorem \ref{20230226170801}, the distribution of \([\mathbf{T}',\mathbf{o}',t']\) is the \(MEKT(\bar{\pi},\tilde{\pi})\) where the marked bush of the root is distributed as in Eq. (\ref{eqn_joint_dist_type_offsprings}) and \(\bar{\pi},\, \tilde{\pi}\) are as in Eq. (\ref{eq:pi_defn}).

    Let \(U((\mathbf{T}',\mathbf{o}')) = (u_n)_{n \in \mathbb{Z}}\) be the succession line of \((\mathbf{T}',\mathbf{o}')\) with \(u_0 = \mathbf{o}'\), and \(U((\mathbf{T},0))=(w_n)_{n \in \mathbb{Z}}\) be the succession line of \((\mathbf{T},0)\) with \(w_0=0\).

    Since \([\mathbf{T}',\mathbf{o}',t'] \overset{\mathcal{D}}{=}[\mathbf{T},0,t]|0 \in \neswarrow\), we have the equality in distribution of the following two sequence of random variables:
    \begin{align} \label{eq_positive_sequence}
      ((d_1(u_n)-1) \mathbf{1}\{u_n \not \in \neswarrow\}&+(d_1(u_n)+t(u_n)-1) \mathbf{1}\{u_n \in \neswarrow\})_{n \geq 1} \\ \nonumber
      &\overset{\mathcal{D}}{=}\\ \nonumber
      ((d_1(w_n)-1) \mathbf{1}\{w_n \not \in \neswarrow\}&+(d_1(w_n)+t(w_n)-1) \mathbf{1}\{w_n \in \neswarrow\})_{n \geq 1}| 0 \in \neswarrow
    \end{align}
    \[\]
    Since \([\mathbb{Z},0,Y]\) is the push-forward of the \(MEKT(\bar{\pi},\tilde{\pi})\) (with \(t\) as the mark function) under the extended backward map \(\hat{\Phi}_R\), we have \((d_1(u_n)-1) \mathbf{1}\{u_n \not \in \neswarrow\}+(d_1(u_n)+t(u_n)-1) \mathbf{1}\{u_n \in \neswarrow\} = Y_{n-1}, \forall n \in \mathbb{{Z}}\).
    From the construction of the \(MEKT(\bar{\pi,\tilde{\pi}})\) with \(t\) as the mark function, the subtrees generated by \(\{u_n:n \geq 1\}\) and \(\{u_n: n \leq 0\}\) are independent of each other.
    This implies that the sequence \((Y_n)_{n \geq 0}\) and \((Y_n)_{n<0}\) are independent of each other.

    Recall that the extended backward map \(\hat{\Phi}_R\) is bijective, and we have \((d_1(w_n)-1) \mathbf{1}\{w_n \not \in \neswarrow\}+(d_1(w_n)+t(w_n)-1) \mathbf{1}\{w_n \in \neswarrow\}= X_{n-1}\) for all \(n \geq 1\).
    Observe that the event \(\{0 \in \neswarrow\} \) is equal to \(\{S_n \leq 0 \ \forall n<0\} \).

    Since \((X_n)_{n \geq 0}\) is independent of the event \(\{S_n \leq 0 \,\forall n<0\} \), the conditioned process \((X_n)_{n \geq 0}\) conditioned on this event has the same distribution as that of the unconditioned process \((X_n)_{n \geq  0}\).
    Therefore, by Eq. (\ref{eq_positive_sequence}), we have
    \[(Y_n)_{n \geq 0} \overset{\mathcal{D}}{=} (X_n)_{n \geq 0}|0 \in \neswarrow =(X_n)_{n \geq 0}|\{S_n \leq 0, \forall n<0\} \overset{\mathcal{D}}{=} (X_n)_{n \geq 0}.\]
    Thus,
    \[(Y_n)_{n \geq 0} \overset{\mathcal{D}}{=} (X_n)_{n \geq 0}.\]
\end{proof}

Consider the sequence \(\tilde{Y}=(\tilde{Y}_n)_{n \in \mathbb{Z}}\) generated by the above construction of this subsection.
Proposition \ref{20230421185746} proves that the non-negatively indexed part \((\tilde{Y}_n)_{n \geq 0}\) has the same distribution as that of the non-negatively indexed part \((Y)_{n \geq 0}\), where \(Y=(Y_n)_{n \in \mathbb{Z}}\) is the \(R\)-probability sequence associated to \(X\).
In the following proposition, we show that the negatively indexed part \((\tilde{Y}_n)_{n < 0}\) has the same distribution of the negatively indexed part \((Y_n)_{n<0}\).
These two propositions together show that the above construction generates the \(R\)-probability sequence of \(X\).

\begin{proposition}
    Let \([\mathbb{Z},0,Y]\) be the \(R\)-probability of \([\mathbb{Z},0,X]\), where \(Y=(Y_n)_{n \in \mathbb{Z}}\) where \(X\) is as in Proposition \ref{20230421185746}.
    Then, \((\tilde{Y}_n)_{n<0} \overset{\mathcal{D}}{=}(Y_n)_{n<0} \).
\end{proposition}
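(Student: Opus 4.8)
The plan is to show that the negatively indexed part of the constructed sequence $\tilde Y=(\tilde Y_n)_{n<0}$ has the same law as $(Y_n)_{n<0}$, where $[\mathbb{Z},0,Y]$ is the $R$-probability of $[\mathbb{Z},0,X]$. By Theorem \ref{20230414180529}, the law of $[\mathbb{Z},0,Y]$ is the push-forward of $MEKT(\bar{\pi},\tilde{\pi})$ under the extended backward map $\hat{\Phi}_R$, and $MEKT(\bar{\pi},\tilde{\pi})$ here has the specific marked-bush distribution of Theorem \ref{20230226170801}. So the first step is to describe $(Y_n)_{n<0}$ concretely: writing $U((\mathbf{T}',\mathbf{o}',t'))=(u_n)_{n\in\mathbb{Z}}$ for the succession line of a sample of $MEKT(\bar{\pi},\tilde{\pi})$ rooted on its bi-infinite path, one has $Y_{n-1}=(d_1(u_n)-1)\mathbf{1}\{u_n\notin\neswarrow\}+(d_1(u_n)+t'(u_n)-1)\mathbf{1}\{u_n\in\neswarrow\}$ for all $n$. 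For $n\le 0$, the vertices $u_0,u_{-1},u_{-2},\dots$ enumerate, in RLS order walking backward from the root, the bush of the root $\mathbf{o}_0$ down to its special child $\mathbf{o}_{-1}$, then the bush of $\mathbf{o}_{-1}$ down to $\mathbf{o}_{-2}$, and so on; the bushes $[\mathbf{T}_i,\mathbf{o}_i,Z_i]$ are i.i.d.\ across $i$.

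The heart of the argument is then a regeneration/excursion decomposition of $(Y_n)_{n<0}$ matching the block structure of the construction. Within the bush of $\mathbf{o}_0$: by Step 1 of the proof of Theorem \ref{20230226170801}, $\mathbf{o}_{-1}$ is the \emph{smallest} child of $\mathbf{o}_0$, so $u_{-1},u_{-2},\dots$ first exhaust the descendant trees of the elder siblings of $\mathbf{o}_{-1}$ (each an independent $GW(\tilde{\pi})$), then reach $\mathbf{o}_{-1}$. Using Lemma \ref{lemma:legall_sums} and Remark \ref{remark:legall_sum} exactly as in the zero-mean case (Proposition \ref{prop:R_prob_i_i}, the inequality around Eq. (\ref{eqn:legall_sum})), the partial sums $\sum_{i=0}^{M}Y_{-i}$ (or rather their natural analogue incorporating the type at the root) first attain the value $-K_1$, where $K_1:=\#\{u\in D_1(\mathbf{o}_0): u\prec \mathbf{o}_{-1}\}$, precisely at $M=N-1$ with $u_{-N}=\mathbf{o}_{-1}$; this identifies the regeneration time $\tau_1$ with $N-1$. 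The first block $(Y_{-1},\dots,Y_{-(\tau_1+1)})$ then decomposes as: $Y_{-1}=d_1(\mathbf{o}_0)+t'(\mathbf{o}_0)-1$, which by Eq. (\ref{eqn_joint_dist_type_offsprings}) has the joint law $(K_1,Z_1)\mapsto Z_1+K_1$ with $\mathbb{P}[K_1=m,Z_1=\ell]=\mathbb{P}[X_0=m+\ell]c^\ell$; followed by the RLS-encoding increments of the elder-sibling $GW(\tilde{\pi})$ forests, which by Proposition \ref{20230128172431} applied to the conditioned walk (Lemma \ref{20230113184856}, Lemma \ref{20230116135048}) and Proposition \ref{prop:R-graph_reverse_identity} have the law $(X_1^{(1)},\dots,X_{\tau_1}^{(1)})$ with $\mathbb{P}[X_1^{(1)}=k]=\tilde{\pi}(k+1)$, stopped at the first time the excursion sums return to level $0$ after starting at $-Z_1$. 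That is exactly the block of the construction. One then shows the conditional law of the block given $K_1=m,Z_1=\ell$ matches, and invokes the i.i.d.\ structure of the bushes $[\mathbf{T}_i,\mathbf{o}_i,Z_i]$ (the second defining property of $MEKT$) to conclude the successive blocks $(Y_{-(\tau_1+\cdots+\tau_{n-1}+n)},\dots,Y_{-(\tau_1+\cdots+\tau_n+n)})$ are i.i.d.\ and distributed as $(Z_n+K_n,X_1^{(n)},\dots,X_{\tau_n}^{(n)})$, matching the construction block-for-block. Hence $(\tilde Y_n)_{n<0}\overset{\mathcal{D}}{=}(Y_n)_{n<0}$.

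I expect the main obstacle to be the bookkeeping in the excursion decomposition at the level of the bi-infinite path — specifically, verifying that the type $t'(\mathbf{o}_0)$ enters the first increment $Y_{-1}$ additively in exactly the way encoded by $\hat{\Phi}_R$ (via Lemma \ref{20230405112810}), and that after subtracting off $Z_1=t'(\mathbf{o}_0)$ the remaining excursion of the sibling forests is an \emph{unconditioned} $GW(\tilde{\pi})$-encoding excursion run until it returns to $0$, with the return time being $\tau_1$. This requires carefully distinguishing the role of $K_1$ (the number of elder siblings of $\mathbf{o}_{-1}$ among the children of $\mathbf{o}_0$, equivalently the RLS-order of $\mathbf{o}_{-1}$ minus one) from $Z_1$ (the type at the root), which jointly have the non-product law of Eq. (\ref{eqn_joint_dist_type_offsprings}); the uniformity of the ECS order of $\mathbf{o}_{-1}$ among the children of $\mathbf{o}_0$ is what forces the conditional law of $K_1$ given $d_1(\mathbf{o}_0)$ to be uniform, matching Step 3 of the construction. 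Once this single block is matched in distribution, the rest is a routine transfer via the i.i.d.\ bush structure and the bijectivity and measurability of $\hat{\Phi}_R$ established in Proposition \ref{20230407161152} and Lemma \ref{20230414163734}.
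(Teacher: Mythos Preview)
Your overall strategy is correct and matches the paper's: push forward $MEKT(\bar\pi,\tilde\pi)$ through $\hat\Phi_R$, use the i.i.d.\ bush structure to reduce to a single block, and identify the stopping time via Lemma~\ref{lemma:legall_sums} and Remark~\ref{remark:legall_sum}. But the block-level identification contains a genuine error that would make the argument fail.

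You have the roles of $K_1$ and $Z_1$ swapped, and you have imported a mechanism from the zero-mean proof that does not operate here. In the positive-mean construction (Step~2), the joint law $\mathbb{P}[K_1=m,Z_1=\ell]=\mathbb{P}[X_0=m+\ell]c^\ell$ is designed to match Eq.~(\ref{eqn_joint_dist_type_offsprings}) with $K_1\leftrightarrow t'(\mathbf{o}_0)$ (the \emph{type} of the root) and $Z_1\leftrightarrow d_1(\mathbf{o}_0,\mathbf{T}_0)$ (the number of children of the root in the \emph{bush}). You instead set $K_1=\#\{u\in D_1(\mathbf{o}_0):u\prec\mathbf{o}_{-1}\}$ and $Z_1=t'(\mathbf{o}_0)$. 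This matters because the stopping time depends on $Z_1$ alone: $\tau_1=\inf\{m\ge1:-Z_1-\sum_{l=1}^m X_l^{(1)}=0\}$, and the paper's computation shows $\sum_{i=-N+1}^{-1}(d_1(u_i)-1)=-d_1(\mathbf{o}_0,\mathbf{T}_0)$, hence $\tau_1\overset{\mathcal D}=N-1$; with your identification this equality does not even type-check. Relatedly, your appeal to ``the uniformity of the ECS order of $\mathbf{o}_{-1}$ among the children of $\mathbf{o}_0$'' is false: in the ECS order, $\mathbf{o}_{-1}$ is \emph{deterministically} the smallest child (Step~1 of the proof of Theorem~\ref{20230226170801}), so there is no uniform conditional here, and no analogue of Step~3 of Construction~1/2 to invoke. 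The correct picture is that the succession line, walking backward from $\mathbf{o}_0$, exhausts the descendant trees of \emph{all} $d_1(\mathbf{o}_0,\mathbf{T}_0)$ children in the bush (each an independent $GW(\tilde\pi)$) before reaching $\mathbf{o}_{-1}$; the excursion of the $\tilde\pi$-increments therefore returns to $0$ after exactly $N-1$ steps, and the first increment $Y_{-1}=d_1(u_0,\mathbf{T}')+t'(u_0)-1=d_1(\mathbf{o}_0,\mathbf{T}_0)+t'(\mathbf{o}_0)=Z_1+K_1$. Once you make this correction, your block-by-block argument goes through exactly as the paper's does.
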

 \begin{proof}
    Let \([\mathbf{T},0,t]\) be the record graph of \([\mathbb{Z},0,X]\) with the type function \(t\) associated to \(X\) as the mark function, and \([\mathbf{T}',\mathbf{o}',t']\) be the \(F\)-probability of \([\mathbf{T},0,t]\).
    Note that the distribution of \([\mathbf{T}',\mathbf{o}',t']\) is \(MEKT(\bar{\pi},\tilde{\pi})\) whose bush of the root is distributed as in Eq. (\ref{eqn_joint_dist_type_offsprings}) (see Proposition \ref{20230314185620} and Theorem \ref{20230226170801}).
    Let \(U((\mathbf{T}',\mathbf{o}',t'))=(u_n)_{n \in  \mathbb{Z}}\) be the succession line of \((\mathbf{T}',\mathbf{o}',t')\) with \(u_0=\mathbf{o}'\).
    
    Let \(N>0\) be a positive integer such that \(u_{-N}\) is the smallest child of \(u_0\).
    By the construction of \([\mathbf{T}',\mathbf{o}',t']\), \(u_{-N}\) lies on  the bi-infinite \(F\)-path of \(\mathbf{T}'\).
    We know that \([\mathbf{T}',\mathbf{o}',t']\) is obtained as the joining of i.i.d. Family Trees \([\mathbf{T}_i,\mathbf{o}_i,t_i]_{i \in \mathbb{Z}}\), where \(t_i\) is the restriction of \(t'\) to \(V(\mathbf{T}_i)\) (see Subsection \ref{subsec_bi_variate_ekt}).
    So, we have \(\mathbf{o}_{-1}=u_{-N}\), \(\mathbf{o}_0=u_0\).
    Therefore, \(u_0, u_{-N}\) lie on the bi-infinite path and none of \(\{u_{-1}, \cdots, u_{-N+1}\}\) lie on the bi-infinite path.

    Since \([\mathbb{Z},0,Y] = \hat{\Phi}_R([\mathbf{T}',\mathbf{o}',t'])\) (see Theorem \ref{20230414180529}), we have 
    \[(Y_n)_{n<0} = ((d_1(u_{n+1})-1) \mathbf{1}\{u_{n+1} \not \in \neswarrow\}+(d_1(u_{n+1})+t(u_{n+1})-1) \mathbf{1}\{u_{n+1} \in \neswarrow\})_{n<0}.\]
   In particular, we have \((Y_{-1},Y_{-2},\cdots,Y_{-N}) \overset{\mathcal{D}}{=}(d_1(u_0)+t(u_0)-1,d_1(u_{-1})-1, \cdots, d_1(u_{-N+1})-1)\).

    Consider \(\tau_1\) of the construction of this subsection.
    It is defined as \(\tau_1 = \inf\{m \geq 1: -Z_1- \sum_{l=1}^m X_m^{(1)}=0\}\), where \(Z_1,X_m^{(1)}\) are as defined in the construction.
    In the construction, we generate the sequence \((\tilde{Y}_n)_{n<0}\) iteratively.
    The first iteration gives the sequence \((\tilde{Y}_{-1},\tilde{Y}_{-2},\tilde{Y}_{-3},\cdots, \tilde{Y}_{-(\tau_1+1)}) = (Z_1+K_1,X_1^{(1)}, X_2^{(1)},\cdots, X_{\tau_1}^{(1)})\).

    Because of the i.i.d. joining involved in the construction of \([\mathbf{T}',\mathbf{o}',t']\) and the iterative nature of the algorithm, it is sufficient to show that the joint distribution of the random variables associated to the subtree \([\mathbf{T}_0,\mathbf{o}_0,t_0]\) (i.e.,\((\tilde{Y}_{-1},\tilde{Y}_{-2},\tilde{Y}_{-3},\cdots, \tilde{Y}_{-(\tau_1+1)})\)) is the same as that of those that are generated in the first iteration of the construction.
    That is, it is sufficient to show that
    \begin{equation}\label{eq_rand_stop_20230423153855}
        N \overset{\mathcal{D}}{=} \tau_1+1
    \end{equation}
    and
    \begin{align} \label{eqn_iter_20230423153855}
        (Z_1+K_1,X_1^{(1)}, X_2^{(1)},\cdots, X_{n-1}^{(1)})|_{\tau_1=n-1} &\overset{\mathcal{D}}{=} \nonumber \\
        (d_1(u_0)+t(u_0)-1,d_1(u_{-1})-1,& \cdots, d_1(u_{-n+1})-1)|_{N=n}. 
    \end{align}

  The joint-distribution of \((K_1,Z_1)\) is chosen such that \(\mathbb{P}[K_1=m, Z_1=l] = \mathbb{P}[X_0=m+l]c^l = \mathbb{P}[t_0(\mathbf{o}_0)=m,d_1(\mathbf{o}_0)=l]\) (see Eq. (\ref{eqn_joint_dist_type_offsprings})).
  Since \(d_1(u_0)=d_1(\mathbf{o}_0)-1\),  we have \(Z_1+K_1 \overset{\mathcal{D}}{=}d_1(u_0)-1+t(u_0)\).
  On the event that \(N=n\), \(u_{-k}\) does not belong to the bi-infinite path of \(\mathbf{T}'\) and they have the same offspring distribution \(\tilde{\pi}\), for all \(1\leq k \leq n-1\).
  Since, \((X_i^{(1)}+1)_{i \geq 1}\) are i.i.d. random variables with the common distribution \(\tilde{\pi}\), Eq. (\ref{eqn_iter_20230423153855}) follows.

  We now show Eq. (\ref{eq_rand_stop_20230423153855}).
  Consider the finite ordered Family Tree \([\mathbf{T}_0,\mathbf{o}_0]\).
  The vertices of \(\mathbf{T}_0\) are \(u_{-N+1} \prec \cdots \prec u_{-1} \prec u_0\) listed according to the RLS order.
  For any \(0 \leq k \leq N-1\), by Remark \ref{remark:legall_sum} and Lemma \ref{lemma:legall_sums}, we have
  \[\sum_{i=-k}^{0}(d_1(u_i)-1) \geq 0\]
    and 
    \[\sum_{i=-N+1}^{0}(d_1(u_i)-1)=-1.\]
    Rewriting the above two equations, we obtain
    \[\sum_{i=-k}^{-1}(d_1(u_i)-1) +d_1(u_0) \geq 1\]
    and
    \[\sum_{i=-N+1}^{-1}(d_1(u_i)-1) +d_1(u_0)=0.\]

    Note that \(Z_1 \overset{\mathcal{D}}{=}d_1(u_0)\) (in \(\mathbf{T}_0\)) and \(d_1(u_i)-1 \overset{\mathcal{D}}{=}X_1^{(1)}\) for all \(-N+1 \leq i \leq -1\).
    Thus, we obtain 
    \[\sum_{i=-k}^{-1}X_{-i}^{(1)}+Z_1 \geq 1\]
    and
    \[\sum_{i=-N+1}^{-1}X_{-i}^{(1)}+Z_1 = 0.\]
    In particular, \(\tau_1 \overset{\mathcal{D}}{=}N-1\).
    This completes the proof.

 \end{proof}

\section{R-foils of R-probability for zero mean} \label{subsection:r-foils}

Let $Y=(Y_n)_{n \in \mathbb{Z}}$ be the $R$-probability sequence of $X=(X_n)_{n \in \mathbb{Z}}$, where $X$ is as in Def. \ref{hyp:increments} with $\mathbb{E}[X_0]=0$, and \([\mathbb{Z},0,Y], [\mathbb{Z},0,X]\) be their associated networks respectively.
We study the record vertex-shift on $[\mathbb{Z},0,Y]$ and compare it with that of $[\mathbb{Z},0,X]$.

\begin{remark}
  Observe that the record vertex-shift $R$ is continuous $[\mathbb{Z},0,Y]$-a.s..
  This implies that the map $\theta_R$ preserves the distribution of $[\mathbb{Z},0,Y]$ (see the proof of Corollary \ref{cor_f_prob_sing}).
\end{remark}

In contrast to Remark \ref{remark:foils}, we prove in this section that, almost surely, every $R$-foil of $[\mathbb{Z},0,Y]$ has a minimal element.
This fact can be guessed by comparing the $R$-graph of $[\mathbb{Z},0,X]$ with the $R$-graph of $[\mathbb{Z},0,Y]$.
Let us call the rooted tree $[\mathbf{T}',\mathbf{o}]$ obtained by trimming off the branches that lie above the bi-infinite \(F\)-path shown in Figure \ref{fi:EKT_succession_line} as the {\bf positive half of the Eternal Kesten Tree} (with the root same as before).
As shown in Figure \ref{fi:EKT_succession_line}, it is the subtree induced on the vertices that belong to the succession line of the Eternal Kesten Tree passing through the root.
Observe that the $R$-graph of $[\mathbb{Z},0,Y]$ is the positive half of the Eternal Kesten Tree.
From the construction of \(Y\), it is clear that a.s. \(R^n_Y(0)>R^{(n-1)}_Y(0)\) for all \(n \geq 1\).
This and the fact that \(-w(n,0) \to - \infty \) (see Corollary \ref{corollary:drift_negative}) as \(n \to \infty\) imply that the \(R\)-graph of \([\mathbb{Z},0,Y]\) is a.s. connected.
Every generation (i.e. \(F\)-foils) of $\mathbf{T}'$ has a unique vertex that belongs to the bi-infinite path.
These unique vertices are the smallest elements among their respective foils.

\begin{proposition}\label{proposition:minimal_element}
  Consider the $R$-foliation of $[\mathbb{Z},0,Y]$.
  For any integer $j \in \mathbb{Z}$, the $R$-foil of $j$  has a minimal element.
\end{proposition}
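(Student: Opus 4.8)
The plan is to show that for every integer $j$, the set of elements of the $R$-foil of $j$ in $[\mathbb{Z},0,Y]$ is bounded below, so that a minimal element exists. The key structural input is the relation $R^n_Y(0) > R^{n-1}_Y(0)$ for all $n \geq 1$ (which holds a.s.\ by the construction of $Y$, as noted in the paragraph preceding the statement) together with Corollary \ref{corollary:drift_negative}, which gives $-w(j-n,j) \to -\infty$ as $n \to \infty$ for every fixed $j$. The strategy is to translate ``having a minimal element'' into a statement about the sums $w(\cdot,\cdot)$ via the interval property (Lemma \ref{lemma:record_descendants}) and Lemma \ref{lemma_descendants}, exactly as the corresponding facts for $[\mathbb{Z},0,X]$ were obtained from the $R$-foil / RLS-order dictionary (Lemma \ref{lemma:rls_order}).

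First I would fix $j \in \mathbb{Z}$ and recall that, by Lemma \ref{lemma:rls_order} applied to the sequence $Y$, the RLS order on the vertices of the $R$-graph of $[\mathbb{Z},0,Y]$ restricted to any $R$-foil coincides with the usual order of $\mathbb{Z}$ on that foil; hence the $R$-foil of $j$ has a minimal element (in the RLS sense) if and only if it is bounded below as a subset of $\mathbb{Z}$. So it suffices to show that there is an integer $i_0 < j$ such that no integer $i < i_0$ is in the $R$-foil of $j$, i.e.\ $R^n_Y(i) \neq R^n_Y(j)$ for all $n \geq 1$. By Corollary \ref{corollary:drift_negative}, choose (a.s.) an integer $i_0 < j$ with $-w(i,j) < 0$, equivalently $w(i,j) < 0$, for all $i \leq i_0$; in fact one can take $i_0$ so that $w(i,j) \leq -1$ for all $i \le i_0$ using the skip-free structure and Remark \ref{remark_one_step_negative}. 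I would then argue that such an $i$ is a descendant of $j$ in the record graph: since $w(i,j) \ge 0$ fails but the random walk from $i$ is recurrent-like in the downward direction only, the key point is instead to use Lemma \ref{lemma_descendants}, which says the descendant set of $j$ is exactly $\{i : L_Y(j) \le i < j\}$, and to observe that on the event that the $R$-graph is connected (which holds a.s.\ here, as noted) $L_Y(j)$ is finite. The correct formulation: because the $R$-graph of $[\mathbb{Z},0,Y]$ is a.s.\ connected and of class $\mathcal{I}/\mathcal{F}$ (it is the positive half of $EKT(\pi)$), every vertex $j$ has only finitely many descendants unless $j$ lies on the bi-infinite path; and in either case, Lemma \ref{lemma_descendants} together with $L_Y(j) > -\infty$ (guaranteed by Corollary \ref{corollary:drift_negative}, since $-w(j-n,j)\to-\infty$ forces the infimum defining $L_Y(j)$ to be attained) shows $D(j)$ is bounded below.

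The main step is then a covariant-subset / No Infinite-Finite Inclusion argument, mirroring the proof of Proposition \ref{prop:R_orthogonal_measure_preserving} and Remark \ref{remark:foils}. I would let $\mathfrak{S}$ be the covariant subset consisting of those vertices that are the minimal (smallest in RLS order) vertex of their $R$-foil, and let $\Pi$ be the $R$-foliation of $[\mathbb{Z},0,Y]$, which is a covariant partition. Since $[\mathbb{Z},0,Y]$ is unimodular (the $R$-probability of a unimodular network, or directly because $\theta_R$ preserves its law), Lemma \ref{lemma_non_empty_covariant_set} reduces showing ``a.s.\ every foil has a minimal element'' to showing $\mathbb{P}[0 \in \mathfrak{S}_{\mathbb{Z}}] > 0$, i.e.\ that with positive probability $0$ is the smallest element of its foil. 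But on the event $\{0 \in \neswarrow\}$ — which has probability $1-c>0$ and under which the $R$-probability sequence is, by Remark \ref{remark_rProbability_conditioned} for the positive-mean case and directly from Construction 2 for the zero-mean case, the trajectory conditioned to stay below $1$ in the past — the vertex $0$ lies on the bi-infinite path and is the smallest element of its foil (the unique path vertex in each generation is smallest in RLS order, as explained in the paragraph before the statement). Hence $\mathbb{P}[0 \in \mathfrak{S}_{\mathbb{Z}}] \ge \mathbb{P}[0 \in \neswarrow] > 0$, so a.s.\ every foil contains its minimal vertex; intersecting over all $j$ (countably many) preserves the a.s.\ statement, and for the fixed $j$ one re-roots using unimodularity.

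The hard part will be making the dichotomy clean: for a vertex $j$ \emph{not} on the bi-infinite path, $D(j)$ is finite so its foil is also finite (class $\mathcal{I}/\mathcal{F}$ has finite foils), hence trivially has a minimal element; for $j$ on the bi-infinite path, the foil is infinite and one genuinely needs the quantitative estimate from Corollary \ref{corollary:drift_negative} to locate a lower bound, plus the observation that the unique path-vertex in $j$'s generation is its minimum. Assembling these two cases into one statement ``for all $j$'' and checking the covariance/measurability of $\mathfrak{S}$ is routine but must be done carefully; I would also double-check that the interval property (Lemma \ref{lemma:record_descendants}, valid for arbitrary integer sequences by Remark \ref{remark_intervalProperty_integerValued}) applies verbatim to $Y$ despite $Y$ not being i.i.d., which it does since those lemmas are deterministic.
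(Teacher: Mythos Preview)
Your approach has a fundamental gap: the claim that $[\mathbb{Z},0,Y]$ is unimodular is false, and this invalidates the entire covariant-subset machinery you invoke. Neither justification you offer works. First, being an $f$-probability of a unimodular network does not imply unimodularity (the pushforwards $\mathcal{P}_0^{f,n}$ have Radon--Nikodym derivative $d_n(o)$ by Lemma~\ref{lemma_abs_cont}, so they are biased, and there is no reason the limit recovers unimodularity). Second, $\theta_R$-invariance is Mecke's invariant measure equation, not unimodularity. Concretely, from Construction~2 one sees that $Y_{-1}+1$ has the size-biased law $\hat\pi$ while $Y_0+1$ has law $\pi$, so $Y$ is not stationary and $[\mathbb{Z},0,Y]$ cannot be unimodular.

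Worse, if $[\mathbb{Z},0,Y]$ \emph{were} unimodular with infinite foils, your argument would prove the \emph{opposite} of what you want. With $\mathfrak{S}$ the set of foil-minima, each foil $E$ satisfies $|E\cap\mathfrak{S}|\le 1$, so No Infinite/Finite Inclusion (Lemma~\ref{lemma:no_infinite_finite}) would force $E\cap\mathfrak{S}=\emptyset$ for every infinite foil---exactly the argument of Remark~\ref{remark:foils} showing that the foils of $[\mathbb{Z},0,X]$ have \emph{no} minimal element. The whole point of Proposition~\ref{proposition:minimal_element} is that $[\mathbb{Z},0,Y]$ behaves differently precisely because it is \emph{not} unimodular. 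Also, Lemma~\ref{lemma_non_empty_covariant_set} only gives $\mathbb{P}[\mathfrak{S}\neq\emptyset]>0\iff\mathbb{P}[0\in\mathfrak{S}]>0$; it does not reduce ``every foil has a minimum'' to $\mathbb{P}[0\in\mathfrak{S}]>0$. (There is also a sign slip: $-w(i,j)<0$ is equivalent to $w(i,j)>0$, not $<0$; and the reference to $1-c>0$ belongs to the positive-mean case, whereas here $c=1$.)

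The paper's proof avoids unimodularity entirely and argues directly with the sums. Using Corollary~\ref{corollary:drift_negative} it locates the largest $k\le j$ with $w(n,j)>0$ for all $n<k$; then $w(n,k)>0$ for all $n<k$, so by Lemma~\ref{lemma:descendants} every integer below $k$ is a descendant of $k$. Connectedness gives minimal $a,b\ge 0$ with $R^a(k)=R^b(j)$. If $b=0$ then $j$ has all smaller integers as descendants, so $j$ is its foil's minimum. If $b\ge1$, any $l<k$ in the foil of $j$ must satisfy $R^{b}(l)=R^{b}(j)=R^a(k)$, forcing $l\in D_{b-a}(k)$, a finite set; hence the foil of $j$ is bounded below.
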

\begin{proof}
  By Corollary \ref{corollary:drift_negative}, a.s. the partial sums $-w(j-n,j) \rightarrow - \infty$ as $n  \rightarrow \infty$.
  So, there exists a (random) unique integer $k \leq  j$ such that $-w(n,j)<0$ for all $n<k$ and $w(k,j)\leq 0$.
  Indeed, \(k = \sup\{n \leq j:-w(l,j)<0\ \forall l<n\}\).
  This implies that $-w(n,k)=-w(n,j)+w(k,j)<0$ for all $n<k$.
  Therefore, by Lemma \ref{lemma:descendants}, every integer $n<k$ is a descendant of $k$.
  Since the $R$-graph of $[\mathbb{Z},0,Y]$ is connected  a.s., there exist smallest integers $a,b \geq 0$ such that $R^a(k)=R^b(j)$.
  
\noindent If $b=0$, then every integer smaller than $j$ is a descendant of $j$ by Lemma \ref{lemma:descendants} and from the discussion in the above paragraph.
  Hence, $j$ is the smallest element of its R-foil.

\noindent So, let us assume that $b \geq 1$.
In this case, we show that only finitely many descendants of \(k\) belong to the \(R\)-foil of \(j\).
This implies that the \(R\)-foil of \(j\) has a smallest element because all the integers smaller than \(k\) are descendants of \(k\) and there are only finitely many integers between \(k\) and \(j\).
If no descendant of $k$ belongs to the R-foil of $j$, then nothing to prove.
Suppose that there exists an integer $l<k$ that belongs to the R-foil of $j$.
Since $l$ is a descendant of $k$, we have $R^N(l)=R^N(j)$, for some smallest $N>a$.
Further, $N=b$, since $b>0$ is the smallest for which $R^b(j)=R^a(k)$.
Therefore, $l$ is a $(b-a)$-th descendant of $k$. 
Thus, every descendant of \(k\) that belongs to the \(R\)-foil of \(j\) is a \((b-a)\)-th descendant of \(k\).
But, $k$ has finitely many $(b-a)$-th descendants.
Hence, finitely many descendants of \(k\) belong to the \(R\)-foil of \(j\), and this completes the proof.
\end{proof}


\chapter{Vertex-shifts on processes with stationary increments}\label{chapter_stationary_seq}

In this chapter, we focus on record vertex-shift on stationary sequences and representation of unimodular EFTs as record graphs of a stationary sequence.
We use the following stationary framework taken from \cite{baccelliElementsQueueingTheory2003,bremaudProbabilityTheoryStochastic2020}.
The main results of this chapter are Theorem \ref{thm_phase_transition_stationary} and Theorem \ref{thm_representation}.
Theorem \ref{thm_phase_transition_stationary} shows that the component of \(0\) in the record graph of the network associated to a stationary sequence exhibits a phase transition at mean \(0\) and Theorem \ref{thm_representation} provides sufficient conditions under which a unimodular ordered EFT can be represented as the component of \(0\) in the record graph of the network associated to a stationary sequence.

{\bf (In French)} Dans ce chapitre, nous nous intéressons au décalage de sommet des records sur les suites stationnaires et à la représentation des EFT unimodulaires comme des graphes des records de suites stationnaires.
Nous utilisons le cadre stationnaire suivant, tiré de \cite{baccelliElementsQueueingTheory2003,bremaudProbabilityTheoryStochastic2020}.
Les principaux résultats de ce chapitre sont le théorème \ref{thm_phase_transition_stationary} et le théorème \ref{thm_representation}.
Le théorème \ref{thm_phase_transition_stationary} montre que la composante de \(0\) dans le graphe des records du réseau associé à une suite stationnaire présente une transition de phase à la moyenne \(0\) et le théorème \ref{thm_representation} fournit des conditions suffisantes sous lesquelles un EFT ordonné unimodulaire peut être représenté comme la composante de \(0\) dans le graphe des records du réseau associé à une suite stationnaire.

\section{Record vertex-shift on processes with stationary and ergodic increments }
In this section, we first show that if \(X=(X_n)_{n \in \mathbb{Z}}\) is a stationary and ergodic sequence, then the event \(\{\mathbb{Z}^R_X(0) \text{ is of class } \star\}\) is trivial (i.e., it occurs with probability either \(0\) or \(1\)), where \(\star \in \{\mathcal{I}/\mathcal{I}, \mathcal{I}/\mathcal{F}, \mathcal{F}/\mathcal{F}\}\) and \(\mathbb{Z}^R_X(0)\) is the component of \(0\) in the record graph of \([\mathbb{Z},0,X]\).
This is shown in Proposition \ref{prop_ergodic_trivial} and in Corollary \ref{cor_comp_f_f}.
In Theorem \ref{thm_phase_transition_stationary}, we show that \(\mathbb{Z}^R_X(0)\) exhibit a phase transition at \(0\) when the mean of \(X_0\) is varied between \(-\infty\) and \(+\infty\).
\begin{definition}\label{def_stationary_seq}
    Let \(X=(X_n)_{n \in \mathbb{Z}}\) be a stationary sequence, where \(X_n \in \mathbb{Z}\) for all \(n \in \mathbb{Z}\) and \(\mathbb{E}[|X_0|]<\infty\).
\end{definition}
Define the sequence of sums \((S_n)_{n \in \mathbb{Z}}\) by \(S_0=0\), \(S_n = \sum_{k=0}^{n-1}X_k\), for all \(n>0\), and \(S_n =- \sum_{k=n-1}^{-1}X_k\), for all \(n<0\).
For any sequence \(x=(x_n)_{n \in \mathbb{Z}}\) of integers, let \(\mathbb{Z}^R_x\) denotes the record graph of the network \((\mathbb{Z},x)\), and for any \(i \in \mathbb{Z}\), let \(\mathbb{Z}^R_x(i)\) denotes the component of \(i\) in \(\mathbb{Z}^R_x\).
Let \(T\) be the shift map that maps every sequence \(x=(x_n)_{n \in \mathbb{Z}}\) to \(Tx =(x_{n+1})_{n\in \mathbb{Z}}\).
The shift map naturally maps any network of the form \([\mathbb{Z},0,x]\) to \(T([\mathbb{Z},0,x]) = [\mathbb{Z},0,Tx]\).
For any integer \(i \in \mathbb{Z}\), consider the following events:
\begin{align*}
    E_i&:= \{[\mathbb{Z},0,x]:\mathbb{Z}^R_x(i) \text{ is of class } \mathcal{I}/\mathcal{I}\}\\
    F_i &:= \{[\mathbb{Z},0,x]:\mathbb{Z}^R_x(i) \text{ is of class } \mathcal{I}/\mathcal{F}\},
\end{align*}
and let \(T^{-1}E_{i} := \{[\mathbb{Z},0,x]:[\mathbb{Z},0,Tx]\in E_i\}\), \(T^{-1}F_{i} := \{[\mathbb{Z},0,x]:[\mathbb{Z},0,Tx]\in F_i\}\).
Then, for any \(i\in \mathbb{Z}\), we have
\begin{align}
    T^{-1}E_i  &= \{[\mathbb{Z},0,x]:\mathbb{Z}_{Tx}^R(i) \text{ is of class } \mathcal{I}/\mathcal{I}\} \nonumber\\
                      &=\{[\mathbb{Z},0,x]:\mathbb{Z}_x^R(i+1) \text{ is of class } \mathcal{I}/\mathcal{I}\} \label{eq_shift_I_I}\\
                      &=E_{i+1},\nonumber
\end{align}
where in Eq.(\ref{eq_shift_I_I}), we used the fact that \([\mathbb{Z},i,Tx] = [\mathbb{Z},i+1,x]\).
Similarly, we can show that \(T^{-1}F_i = F_{i+1}\).

Observe  that if \([\mathbb{Z},0,x] \in E_{i}\) (similarly \(F_{i}\)), then, by the interval property (Lemma \ref{lemma:record_descendants} and Remark \ref{remark_intervalProperty_integerValued}), \(i+1\) and \(i\) are in the same connected component, i.e., \(\mathbb{Z}^R_x(i)=\mathbb{Z}^R_x(i+1)\), which implies that  \([\mathbb{Z},0,x] \in E_{i+1}\) (similarly \(F_{i+1}\)).
Thus, we have \(E_i \subseteq T^{-1}(E_i)\).
Similarly, we have \(F_i \subseteq T^{-1}(F_i)\).
Using the fact that, if \((\mathbb{P},T)\) is ergodic and \(A\) is a measurable subset such that \(A \subseteq T^{-1}A\), then \(\mathbb{P}[A] \in \{0,1\}\) (see \cite[Theorem 16.1.9]{bremaudProbabilityTheoryStochastic2020}), we obtain the following proposition.

\begin{proposition}\label{prop_ergodic_trivial}
    Let \(X=(X_n)_{n \in \mathbb{Z}}\) be a stationary sequence as in Def. \ref{def_stationary_seq} and \emph{ergodic}. 
    Then, 
    \begin{align*}
        \mathbb{P}[\mathbb{Z}^R_X(0) \text{ is of class } \mathcal{I}/\mathcal{I}] \in \{0,1\}, \\
        \mathbb{P}[\mathbb{Z}^R_X(0) \text{ is of class } \mathcal{I}/\mathcal{F}] \in \{0,1\}.
    \end{align*}
\end{proposition}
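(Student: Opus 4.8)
The plan is to apply the ergodic-theoretic fact already quoted in the excerpt, namely that if $(\mathbb{P},T)$ is ergodic and $A$ is a measurable subset of the sample space satisfying $A \subseteq T^{-1}A$, then $\mathbb{P}[A] \in \{0,1\}$. So the only work is to produce the right invariant-up-to-inclusion events and to verify the inclusion. The natural candidates are $E_0 = \{[\mathbb{Z},0,x] : \mathbb{Z}^R_x(0) \text{ is of class } \mathcal{I}/\mathcal{I}\}$ and $F_0 = \{[\mathbb{Z},0,x] : \mathbb{Z}^R_x(0) \text{ is of class } \mathcal{I}/\mathcal{F}\}$, both of which were introduced just above the statement.

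First I would record the two ingredients that the preceding paragraphs already establish: (i) $T^{-1}E_i = E_{i+1}$ and $T^{-1}F_i = F_{i+1}$ for every $i \in \mathbb{Z}$, obtained from the identity $[\mathbb{Z},i,Tx] = [\mathbb{Z},i+1,x]$; and (ii) $E_i \subseteq T^{-1}E_i$ and $F_i \subseteq T^{-1}F_i$, obtained from the interval property (Lemma \ref{lemma:record_descendants} together with Remark \ref{remark_intervalProperty_integerValued}): if $\mathbb{Z}^R_x(i)$ is of class $\mathcal{I}/\mathcal{I}$ (resp. $\mathcal{I}/\mathcal{F}$), then $R_x(i) \geq i$ forces $i$ and $i+1$ into the same component, so the component of $i+1$ is the same as that of $i$ and hence also of class $\mathcal{I}/\mathcal{I}$ (resp. $\mathcal{I}/\mathcal{F}$); thus $[\mathbb{Z},0,x] \in E_{i+1}$ (resp. $F_{i+1}$), which combined with (i) gives the claimed inclusions. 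One small point to be careful about: the events should be measurable subsets of $\mathcal{G}_*$, which holds because membership in a Foil Classification class is a measurable property of a rooted network (the three classes are distinguished by countably many local events: finiteness of the component, finiteness of foils, existence of a bi-infinite $F$-path, etc.).

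Then I would invoke \cite[Theorem 16.1.9]{bremaudProbabilityTheoryStochastic2020} with $A = E_0$ and then with $A = F_0$, concluding $\mathbb{P}[\mathbb{Z}^R_X(0) \text{ is of class } \mathcal{I}/\mathcal{I}] = \mathbb{P}[E_0] \in \{0,1\}$ and $\mathbb{P}[\mathbb{Z}^R_X(0) \text{ is of class } \mathcal{I}/\mathcal{F}] = \mathbb{P}[F_0] \in \{0,1\}$, which is exactly the statement. (If one also wants the $\mathcal{F}/\mathcal{F}$ case, it follows by complementation since the three probabilities sum to $1$ by the Foil Classification Theorem \ref{thm_foil_classification}; but the proposition only asserts the two non-trivial cases.)

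I do not expect a genuine obstacle here: the interval property does all the real work and it is already proved in Chapter \ref{chapter_record_v_shift}. The one place that needs a sentence of care is making sure $(\mathbb{P},T)$ — the distribution of $[\mathbb{Z},0,X]$ under the shift $T$ — is indeed ergodic, i.e., that ergodicity of the increment sequence $X=(X_n)_{n\in\mathbb{Z}}$ transfers to ergodicity of the induced dynamical system on $\mathcal{G}_*$; this is immediate because $[\mathbb{Z},0,x]$ is a bijective (bi-measurable) factor of the sequence $x$ intertwining the two shift actions, so shift-invariant events on $\mathcal{G}_*$ pull back to shift-invariant events on $\mathbb{Z}^{\mathbb{Z}}$, which are trivial by hypothesis.
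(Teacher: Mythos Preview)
Your proposal is correct and follows essentially the same approach as the paper: the paper's proof is the paragraph immediately preceding the proposition, which sets up the events $E_i$ and $F_i$, establishes $T^{-1}E_i = E_{i+1}$ (and likewise for $F_i$) via the shift identity, proves the inclusion $E_i \subseteq T^{-1}E_i$ via the interval property, and then applies the ergodic fact from \cite[Theorem 16.1.9]{bremaudProbabilityTheoryStochastic2020}. Your added remarks on measurability of the class events and on the transfer of ergodicity to the $\mathcal{G}_*$-level system are reasonable clarifications that the paper leaves implicit.
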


\begin{corollary}\label{cor_comp_f_f}
    Under the assumptions of Proposition \ref{prop_ergodic_trivial}, the event \\ \(\{\mathbb{Z}^R_X(0) \text{ is of class \(\mathcal{F}/\mathcal{F}\)}\}\) has trivial probability measure.
\end{corollary}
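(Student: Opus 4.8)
The plan is to deduce Corollary \ref{cor_comp_f_f} directly from Proposition \ref{prop_ergodic_trivial} together with the classification of the components of the record graph established earlier. The three classes \(\mathcal{F}/\mathcal{F}\), \(\mathcal{I}/\mathcal{I}\), \(\mathcal{I}/\mathcal{F}\) are exhaustive and mutually exclusive by the Foil Classification Theorem \ref{thm_foil_classification} applied to the unimodular network \([\mathbb{Z},0,X]\) (recall \([\mathbb{Z},0,X]\) is unimodular by Example \ref{ex_network_sequence}, and the record map \(R\) is a vertex-shift). Hence
\[
\mathbb{P}[\mathbb{Z}^R_X(0) \text{ is of class } \mathcal{F}/\mathcal{F}] = 1 - \mathbb{P}[\mathbb{Z}^R_X(0) \in \mathcal{I}/\mathcal{I}] - \mathbb{P}[\mathbb{Z}^R_X(0) \in \mathcal{I}/\mathcal{F}].
\]
By Proposition \ref{prop_ergodic_trivial}, both subtracted probabilities lie in \(\{0,1\}\), so the left-hand side also lies in \(\{0,1\}\); in other words the event \(\{\mathbb{Z}^R_X(0) \text{ is of class } \mathcal{F}/\mathcal{F}\}\) has probability \(0\) or \(1\), which is precisely the assertion that it has trivial probability measure.

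The only subtlety worth recording is that the event \(\{\mathbb{Z}^R_X(0) \text{ is of class } \mathcal{F}/\mathcal{F}\}\) should itself be expressed directly (not merely as a complement) so that its measurability is transparent; but this is immediate since \(\mathcal{F}/\mathcal{F}\) is characterized by \(\mathbb{Z}^R_X(0)\) being finite, which is a measurable event in \(\mathcal{G}_*\). Alternatively, one can mimic the argument preceding Proposition \ref{prop_ergodic_trivial}: letting \(G_i := \{[\mathbb{Z},0,x] : \mathbb{Z}^R_x(i) \text{ is of class } \mathcal{F}/\mathcal{F}\}\), one checks \(G_i \subseteq T^{-1}G_i\) using the interval property (Lemma \ref{lemma:record_descendants}, Remark \ref{remark_intervalProperty_integerValued}), exactly as was done for \(E_i\) and \(F_i\), and then invokes \cite[Theorem 16.1.9]{bremaudProbabilityTheoryStochastic2020}. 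Either route gives the result; I would present the short complement argument as the main proof and perhaps mention the second as a remark.

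There is no real obstacle here: the corollary is a formal consequence of Proposition \ref{prop_ergodic_trivial} and the exhaustiveness of the three-class partition. The one place to be slightly careful is to make sure the three events genuinely partition the sample space up to a null set — this is guaranteed because Theorem \ref{thm_foil_classification} asserts that almost surely every component belongs to exactly one of the three classes, applied to the component \(\mathbb{Z}^R_X(0)\) of the root. Thus the proof is essentially a one-line deduction, and I would write it as such.

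\begin{proof}
  By the Foil Classification Theorem \ref{thm_foil_classification} applied to the unimodular network \([\mathbb{Z},0,X]\) and the record vertex-shift \(R\), almost surely the component \(\mathbb{Z}^R_X(0)\) belongs to exactly one of the classes \(\mathcal{F}/\mathcal{F}\), \(\mathcal{I}/\mathcal{I}\), \(\mathcal{I}/\mathcal{F}\). Hence
  \[
    \mathbb{P}[\mathbb{Z}^R_X(0) \text{ is of class } \mathcal{F}/\mathcal{F}] = 1 - \mathbb{P}[\mathbb{Z}^R_X(0) \text{ is of class } \mathcal{I}/\mathcal{I}] - \mathbb{P}[\mathbb{Z}^R_X(0) \text{ is of class } \mathcal{I}/\mathcal{F}].
  \]
  By Proposition \ref{prop_ergodic_trivial}, each of the two probabilities on the right-hand side belongs to \(\{0,1\}\). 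Therefore the probability on the left-hand side also belongs to \(\{0,1\}\), i.e., the event \(\{\mathbb{Z}^R_X(0) \text{ is of class } \mathcal{F}/\mathcal{F}\}\) has trivial probability measure.
\end{proof}
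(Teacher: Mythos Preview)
Your main proof is correct and essentially identical to the paper's: both invoke the Foil Classification Theorem to partition into the three classes and then deduce triviality of the \(\mathcal{F}/\mathcal{F}\) event from Proposition \ref{prop_ergodic_trivial}.

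One caution about your proposed alternative: the inclusion \(G_i \subseteq T^{-1}G_i = G_{i+1}\) does \emph{not} follow ``exactly as was done for \(E_i\) and \(F_i\)''. The interval-property argument used for \(E_i\) and \(F_i\) relies on \(i\) having infinitely many ancestors, which forces \(i+1\) into the same component. When \(\mathbb{Z}^R_x(i)\) is finite, \(i\) may itself be the top of its component (no record), and then \(i+1\) can lie in a different, possibly infinite, component (e.g.\ take \(S_i\) strictly larger than \(S_{i+1}=S_{i+2}=\cdots\)). In fact the paper later proves the \emph{reverse} monotonicity in Lemma \ref{lemma_comp_0_fin_all_fin}: finiteness of \(\mathbb{Z}^R_x(n_1)\) propagates to all \(k<n_1\), i.e.\ \(T^{-1}G_i \subseteq G_i\). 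That inclusion would also yield triviality under ergodicity (since \(\mathbb{P}(G_i)=\mathbb{P}(T^{-1}G_i)\) forces \(G_i = T^{-1}G_i\) mod null sets), but it is not the direction you stated and not a consequence of the interval property alone. Keep the complement argument as your proof and drop or correct the alternative remark.
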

\begin{proof}
    By the Foil classification Theorem \ref{thm_foil_classification}, \(\mathbb{Z}^R_X(0)\) is either of class \(\mathcal{I}/\mathcal{I}\) or \(\mathcal{I}/\mathcal{F}\) or \(\mathcal{F}/\mathcal{F}\).
    By Proposition \ref{prop_ergodic_trivial}, the first two events have trivial probability measure which implies that the third event has trivial probability measure.
\end{proof}

The proof of the following lemma is similar to that of Proposition \ref{proposition:r-graph_is_eft}.
The proof essentially relies on the interval property of record vertex-shift.
\begin{lemma}\label{lemma_stationary_one_component}
    Let \(X\) be a stationary sequence as in Def. \ref{def_stationary_seq}, \([\mathbb{Z},0,X]\) be its associated network, and \(\mathbb{Z}^R_X\) be the record graph of the network \((\mathbb{Z},X)\).
    If \(0\) has a.s. infinitely many ancestors in \(\mathbb{Z}^R_X\), then the record graph \(\mathbb{Z}^R_X\) is a.s. connected.
\end{lemma}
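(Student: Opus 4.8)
The plan is to mimic the proof of Proposition \ref{proposition:r-graph_is_eft}, using only the interval property (Lemma \ref{lemma:record_descendants} together with Remark \ref{remark_intervalProperty_integerValued}) and the hypothesis that $0$ has almost surely infinitely many ancestors. First I would observe that, by stationarity of $X$, for every integer $i$ the vertex $i$ also has almost surely infinitely many ancestors in $\mathbb{Z}^R_X$; hence, taking a countable intersection over $i \in \mathbb{Z}$, almost surely \emph{every} integer has infinitely many ancestors, i.e. the sequence of higher-order records $(R^n_X(i))_{n \geq 1}$ is strictly increasing and tends to $+\infty$ for all $i \in \mathbb{Z}$ simultaneously. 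Work on this almost sure event.

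Next, fix two integers $i < j$; I want to show they lie in the same connected component of $\mathbb{Z}^R_X$. Since $(R^n_X(i))_{n \geq 1}$ is strictly increasing and unbounded above, there is a smallest integer $k \geq 0$ such that $R^k_X(i) < j \leq R^{k+1}_X(i)$ (using $R^0_X(i) = i < j$ to start the search, and strict increase to guarantee the index is well defined). Apply the interval property, Lemma \ref{lemma:record_descendants}, to the integer $R^k_X(i)$: since $R_X(R^k_X(i)) = R^{k+1}_X(i) > R^k_X(i)$, the set of descendants of $R^{k+1}_X(i)$ contains $\{m : R^k_X(i) \leq m < R^{k+1}_X(i)\}$, and in particular it contains $j$ (as $R^k_X(i) < j \leq R^{k+1}_X(i)$, with the boundary case $j = R^{k+1}_X(i)$ being immediate). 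Therefore $j$ is a descendant of $R^{k+1}_X(i)$, which is itself an ancestor of $i$; so $i$ and $j$ belong to the same connected component of $\mathbb{Z}^R_X$. Since $i < j$ were arbitrary, all of $\mathbb{Z}$ lies in one component, i.e. $\mathbb{Z}^R_X$ is connected on this almost sure event.

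I do not expect a serious obstacle here: the argument is essentially a transcription of the zero-mean random-walk case, with ``recurrence of the random walk'' replaced by the stated hypothesis ``$0$ has infinitely many ancestors'', and the only genuine inputs are the interval property (valid for arbitrary integer-valued sequences by Remark \ref{remark_intervalProperty_integerValued}) and stationarity (to upgrade ``$0$ has infinitely many ancestors'' to ``every integer does''). The one point requiring a little care is the boundary case $j = R^{k+1}_X(i)$ and making sure the smallest such $k$ exists; both are handled by the strict monotonicity and unboundedness of the record sequence. No appeal to unimodularity or to the foil classification is needed for this particular statement.
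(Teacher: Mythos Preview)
Your proposal is correct and follows essentially the same approach as the paper's proof: use stationarity to pass from ``$0$ has infinitely many ancestors a.s.'' to the almost-sure event that every integer does, then use strict monotonicity of the record iterates together with the interval property (Lemma~\ref{lemma:record_descendants} and Remark~\ref{remark_intervalProperty_integerValued}) to trap any second vertex between two consecutive records of the first. The paper fixes one vertex to be $0$ and lets the other be arbitrary, whereas you take two arbitrary integers $i<j$, but this is only a cosmetic difference.
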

\begin{proof}
    For any \(i \in \mathbb{Z}\), let \(A_i\) be the event that \(i\) has infinitely many ancestors in \(\mathbb{Z}^R_X\).
    By the stationarity of \(X\), every event \(A_i\) occurs with probability \(1\) for each \(i\in \mathbb{Z}\) since \(A_0\) occurs with probability \(1\).
    Therefore, the event \(A = \cap_{i \in \mathbb{Z}}A_i\) occurs with probability \(1\).

    We now show that every integer \(i<0\) belongs to the component of \(0\) in  \(\mathbb{Z}^R\).
    Similarly, by interchanging the role of \(i\) and \(0\), we could show that every integer \(i>0\) belongs to the component of \(0\).

    Let $i<0$.
    Since \(A\) occurs with probability \(1\), the sequence $(R^n_X(i))_{n \in \mathbb{Z}}$ is strictly increasing a.s.
    So, there exists a smallest (random) integer $k>0$ such that $R_X^{k-1}(i)<0 \leq R^k_X(i)$ a.s..
    By Lemma \ref{lemma:record_descendants} (applied to $R_X^{k-1}(i)$) and by Remark \ref{remark_intervalProperty_integerValued}, $0$ is a descendant of $R^k_X(i)$ a.s..
    Thus, a.s., $0$ and $i$ are in the same connected component.
\end{proof}

\begin{lemma}\label{lemma_comp_0_fin_all_fin}
    Let \(X=(X_n)_{n \in \mathbb{Z}}\) be a stationary sequence of random variables as in Def. \ref{def_stationary_seq}, \([\mathbb{Z},0,X]\) be its associated network and for all \(n \in \mathbb{Z}\), \(\mathbb{Z}^R_X(n)\) be the component of \(n\) in the record graph of \((\mathbb{Z},X)\).
    Then, we have
    \begin{align}
        \mathbb{P}[\mathbb{Z}^R_X(0) \text{ is finite}] &= \mathbb{P}[\mathbb{Z}^R_X(n) \text{ is finite,} \forall n \geq 1]
        =\mathbb{P}[\mathbb{Z}_X^R(n) \text{ is finite,} \forall n \leq -1] \label{eq_F_F_probability}.
    \end{align}
\end{lemma}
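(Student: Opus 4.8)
The plan is to exploit stationarity together with the interval property of the record vertex-shift. First I would observe that, by Lemma \ref{lemma:record_descendants} and Remark \ref{remark_intervalProperty_integerValued}, whenever $R_x(i) > i$ the descendant set of $R_x(i)$ in $\mathbb{Z}^R_x$ contains the whole integer interval $\{j : i \leq j < R_x(i)\}$. Consequently, if the component $\mathbb{Z}^R_x(0)$ of $0$ is finite, then in particular $0$ has only finitely many ancestors, so $R^n_x(0)$ is eventually constant, which forces $S_n < 0$ for all large $n$ (the last record does not jump). I would use this to set up the key reformulation: the event $\{\mathbb{Z}^R_X(0) \text{ is finite}\}$ should be rewritten, up to a null set, in terms of the two one-sided events $A^+ := \{S_n \to -\infty\} \cap \{\text{some further condition}\}$ and $A^- := \{S_{-n} \to +\infty\}$ controlling the future and the past of the walk respectively. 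The crucial point is that finiteness of $\mathbb{Z}^R_X(0)$ is equivalent to: $0$ has finitely many ancestors \emph{and} $L_X(0) > -\infty$ (finitely many descendants), by Lemma \ref{lemma_descendants}.

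Next I would prove the first equality of \eqref{eq_F_F_probability}. The inclusion $\{\mathbb{Z}^R_X(n) \text{ finite } \forall n \geq 1\} \subseteq \{\mathbb{Z}^R_X(0) \text{ finite}\}$ is immediate since $\mathbb{Z}^R_X(0)$ is one of the components appearing in the family $\{\mathbb{Z}^R_X(n)\}_{n\geq 1}$ (indeed $n=0$; if one insists on $n\geq 1$, note $0$ and $1$ lie in the same component as soon as that component has more than the trivial structure, or argue directly). For the reverse inclusion, suppose $\mathbb{Z}^R_X(0)$ is finite. By the classification theorem (Theorem \ref{thm_foil_classification}) a finite component is exactly class $\mathcal{F}/\mathcal{F}$, so every component of $\mathbb{Z}^R_X$ is finite once one shows they cannot be infinite; here I would use that the event $\{\mathbb{Z}^R_X(0) \text{ finite}\}$ is shift-invariant up to inclusion in the sense already exhibited above for $E_i, F_i$: namely $\{\mathbb{Z}^R_X(0)\text{ finite}\} = (T^{-1})\{\mathbb{Z}^R_X(-1)\text{ finite}\}$ and the events $\{\mathbb{Z}^R_X(i)\text{ finite}\}$ are related across $i$ by the interval property, so that if $\mathbb{Z}^R_X(0)$ is finite then $\mathbb{Z}^R_X(i)$ is finite for every $i$ in the (finite) component, and by stationarity plus a union-of-translates argument one propagates finiteness to all $i \geq 1$. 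Concretely: on $\{\mathbb{Z}^R_X(0)\text{ finite}\}$, let $j$ be the largest vertex of that component; then $R_X^{\infty}$ of the component is its cycle (here a loop or finite cycle), and to the right of $j$ the walk starts afresh — I would make this rigorous by noting that if some $\mathbb{Z}^R_X(n)$, $n\geq1$, were infinite, then by the interval property it would eventually absorb $0$, contradicting finiteness of $\mathbb{Z}^R_X(0)$. This gives $\{\mathbb{Z}^R_X(0)\text{ finite}\} \subseteq \{\mathbb{Z}^R_X(n)\text{ finite } \forall n\geq 1\}$, hence equality of probabilities.

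The second equality $\mathbb{P}[\mathbb{Z}^R_X(0)\text{ finite}] = \mathbb{P}[\mathbb{Z}^R_X(n)\text{ finite } \forall n\leq -1]$ follows by the same argument applied in the other direction, using that the interval property connects $0$ to everything below it whenever a record jump straddles $0$, together with stationarity. Alternatively, both right-hand sides equal $\mathbb{P}[\mathbb{Z}^R_X(n)\text{ finite } \forall n \in \mathbb{Z}]$ by a monotone-limit and shift-invariance argument: the decreasing events $\bigcap_{1\leq n\leq N}\{\mathbb{Z}^R_X(n)\text{ finite}\}$ and $\bigcap_{-N\leq n\leq -1}\{\mathbb{Z}^R_X(n)\text{ finite}\}$ both have the same probability by stationarity, and their limits coincide with $\{\mathbb{Z}^R_X(0)\text{ finite}\}$ by the inclusion just established. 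I expect the main obstacle to be the careful bookkeeping in showing that finiteness of the single component $\mathbb{Z}^R_X(0)$ genuinely forces finiteness of \emph{all} components indexed by $n\geq 1$ (and $n\leq -1$): one must rule out the scenario of a finite component of $0$ coexisting with an infinite component nearby, and this is precisely where the interval property (Lemma \ref{lemma:record_descendants}, Remark \ref{remark_intervalProperty_integerValued}) must be invoked with care, since an infinite component to the right, being of class $\mathcal{I}/\mathcal{F}$ or $\mathcal{I}/\mathcal{I}$, has vertices with arbitrarily large descendant intervals that would swallow $0$. Making that ``swallowing'' step precise — distinguishing the $\mathcal{I}/\mathcal{I}$ case (where descendant sets are finite but generations are infinite, so one argues via ancestors instead) from the $\mathcal{I}/\mathcal{F}$ case — is the delicate heart of the proof.
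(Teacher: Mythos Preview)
Your ``swallowing'' argument has a genuine gap in precisely the case you flag as delicate. The claimed sample-path inclusion $\{\mathbb{Z}^R_X(0)\text{ finite}\} \subseteq \{\mathbb{Z}^R_X(n)\text{ finite } \forall n\geq 1\}$ is simply false. If $b_0$ is the rightmost vertex of the finite component of $0$, so that $S_n < S_{b_0}$ for all $n > b_0$, the walk can still have infinitely many records relative to $b_0+1$ while staying forever below $S_{b_0}$ (e.g.\ oscillating between two levels). Then $\mathbb{Z}^R_X(b_0+1)$ is an infinite component contained entirely in $[b_0+1,\infty)$; every vertex in it has finite descendant set (all $\geq b_0+1$), so it is of the $\mathcal{I}/\mathcal{I}$ type and never swallows $0$. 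Your suggestion to ``argue via ancestors instead'' cannot rescue this, since the ancestors of $b_0+1$ all lie to its right. So the hard inclusion is not deterministic, and a purely interval-property argument cannot close it.

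The paper avoids this by reversing the direction of the deterministic implication and supplying the probabilistic input via Poincar\'e's recurrence lemma. The correct pathwise fact is that finiteness propagates \emph{leftward}: if $\mathbb{Z}^R_X(n_1)$ is finite, then $(S_m)_{m \geq n_1}$ attains its maximum at some finite index $n_1+n'$ and never again, and the same index is the last maximum of $(S_m)_{m \geq k}$ for every $k \leq n_1$, so $\mathbb{Z}^R_X(k)$ is finite for all $k \leq n_1$. Poincar\'e recurrence, applied to $A = \{\mathbb{Z}^R_X(0)\text{ finite}\}$ under the measure-preserving shift $T$, gives that on $A$ (up to a null set) $\mathbb{Z}^R_{T^n X}(0) = \mathbb{Z}^R_X(n)$ is finite for infinitely many $n \geq 1$; combined with the leftward propagation this yields $\mathbb{Z}^R_X(n)$ finite for all $n \geq 1$. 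The second equality follows by the same argument with $T^{-1}$ in place of $T$. So the missing ingredient in your proposal is exactly Poincar\'e recurrence; without it, the $\mathcal{I}/\mathcal{I}$ obstruction you identified cannot be ruled out.
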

\begin{proof}
    Let \(A\) be the event given by \(A = \{\mathbb{Z}^R_X(0) \text{ is finite}\}\).
    By Poincar\'e's recurrence lemma, \(\mathbb{P}[A] = \mathbb{P}[\{\mathbb{Z}_{T^nX}^R(0) \text{ is finite for infinitely many }n \geq 1\}]\).
    We show that for any \(\omega \in A\), if \(\mathbb{Z}_{T^nX(\omega)}^R(0)\) is finite for infinitely many \(n\geq 1\), then \(\mathbb{Z}_{X(\omega)}^R(n)\) is finite for all \(n \geq 1\).
    This implies the first equality in Eq. (\ref{eq_F_F_probability}).
    The second equality of Eq. (\ref{eq_F_F_probability}) follows by taking the shift  \(\widetilde{T} = T^{-1}\) instead of \(T\), and by using the equality of the events \(\{\mathbb{Z}_{\widetilde{T}^nX}^R(0) \text{ is finite}\} = \{\mathbb{Z}_X^R(-n) \text{ is finite}\}\) for all \(n \geq 1\).

    Since \(X\) is stationary, the network \([\mathbb{Z},0,X]\) is unimodular.
    So, we apply the classification theorem (Theorem \ref{thm_foil_classification}) to the record graph \(\mathbb{Z}_X^R\) to obtain the following result:
    for any integer \(n\), its component \(\mathbb{Z}_X^R(n)\) is finite if and only if there exists an integer \(i \in V(\mathbb{Z}_X^R(n))\) that has only finitely many ancestors.

    We \emph{claim} that  if \(\mathbb{Z}_{X(\omega)}^R(n_1)\) is finite for any positive integer \(n_1\), then \(\mathbb{Z}_{X(\omega)}^R(k)\) is finite for all \(k<n_1\).
    This follows because \(\mathbb{Z}_{X(\omega)}^R(n_1)\) is finite if and only if the sequence \((S_{n+n_1})_{n \geq 0}\) attains a maximum value \(a_{n_1}\) finitely many times, i.e., \(S_{n'+n_1}=a_{n_1}\) for some \(n' \geq 0\) and \(S_{n+n_1}<a_{n_1}\) for all \(n>n'\) (equivalently, from the discussion in the above paragraph, \(n_1\) has finitely many ancestors).
    So, if \(k \leq n'+n_1\), then the sequence \((S_{n+k})_{n \geq 0}\) attains the maximum value \(a_{n_1}\) at \(N=n'+n_1-k\) and never attains it for all \(n>N\).
    Thus, \(\mathbb{Z}_{X(\omega)}^R(k)\) is finite and the claim is proved.

    The above claim implies that if there exists a subsequence \((n_k)_{k \geq 1}\) of non-negative integers such that \(n_k \to \infty\) as \(k \to \infty\) and \(\mathbb{Z}^R_{X(\omega)}(n_k)\) is finite for all \(k \geq 1\), then \(\mathbb{Z}^R_{X(\omega)}(n)\) is finite for all \(n \in \mathbb{Z}\).
    Using this and the fact that \(\mathbb{Z}^R_{T^nX}(0)= \mathbb{Z}^R_{X}(n)\) for all \(n \geq 0\), we obtain that the events \(\{\mathbb{Z}_{T^nX}^R(0)\) is finite for infinitely many \(n \geq 1\}\) and \(\{\mathbb{Z}_{X}^R(n) \text{ is finite for all }n \geq 1\}\) are one and the same.
\end{proof}

\begin{remark}\label{rmk_f_f_all_f_f}
    By the above lemma, it follows that if \(\mathbb{Z}^R_X(0)\) is finite a.s., then a.s., \(\mathbb{Z}_X^R(n)\) is finite for all \(n \in \mathbb{Z}\).
\end{remark}

\begin{theorem}[Phase transition]\label{thm_phase_transition_stationary}
    Let \(X=(X_n)_{n \in \mathbb{Z}}\) be a stationary sequence as in Def. \ref{def_stationary_seq} and \emph{ergodic}.
    Let \([\mathbb{Z}^R_X(0),0]\) be the component of \(0\) in the record graph \(\mathbb{Z}_X^R\) of the network \((\mathbb{Z},X)\).
    \begin{enumerate}
        \item If \(\mathbb{E}[X_0]<0\), then a.s. every component of \(\mathbb{Z}^R_X\) is of class \(\mathcal{F}/\mathcal{F}\).
        \item If \(\mathbb{E}[X_0]>0\), then \(\mathbb{Z}^R_X\) is connected, and it is of class \(\mathcal{I}/\mathcal{F}\) a.s.
         \item If \(\mathbb{E}[X_0]=0\), then a.s., \(\mathbb{Z}^R_X\) is connected, and it is either of class \(\mathcal{I}/\mathcal{F}\) or of class \(\mathcal{I}/\mathcal{I}\).
    \end{enumerate}
\end{theorem}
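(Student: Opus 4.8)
The plan is to treat the three cases by analyzing the a.s.\ behavior of the two-sided random walk $(S_n)_{n \in \mathbb{Z}}$ associated to $X$, using the ergodic theorem and the structural lemmas already established (the interval property, Lemma \ref{lemma:descendants}, and Lemma \ref{lemma_stationary_one_component}), together with the Foil Classification Theorem \ref{thm_foil_classification}.

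For the first case, $\mathbb{E}[X_0]<0$: by Birkhoff's ergodic theorem applied to $(X_n)_{n \geq 0}$, a.s.\ $S_n/n \to \mathbb{E}[X_0]<0$, so $S_n \to -\infty$ a.s.; hence $0$ has only finitely many ancestors in $\mathbb{Z}^R_X$ (the sequence of records $(R^n_X(0))_n$ cannot increase indefinitely once the walk has left behind its running maximum). By the classification of components (the $\mathcal{F}/\mathcal{F}$ case is the only one with finitely many ancestors), $\mathbb{Z}^R_X(0)$ is of class $\mathcal{F}/\mathcal{F}$, and by Remark \ref{rmk_f_f_all_f_f} every component is of class $\mathcal{F}/\mathcal{F}$ a.s. For the second case, $\mathbb{E}[X_0]>0$: applying the ergodic theorem to $(X_n)_{n \geq 0}$ gives $S_n \to +\infty$ a.s., so $R^n_X(0)>R^{n-1}_X(0)$ for all $n$ and $0$ has infinitely many ancestors; by Lemma \ref{lemma_stationary_one_component} the record graph is a.s.\ connected. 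Applying the ergodic theorem to $(S_{-n})_{n \geq 0}$ (equivalently to $(-X_{-n})_n$) gives $S_{-n} \to -\infty$ a.s., so $\sup_{n \leq -1} S_n$ is a.s.\ finite and attained; as in the proof of Proposition \ref{remark_pos_drift_i_f}, the integer $u^*$ achieving this supremum has infinitely many descendants by Lemma \ref{lemma:descendants}. A component with a vertex having infinitely many descendants cannot be of class $\mathcal{I}/\mathcal{I}$ (where $D(v)$ is finite for every $v$) nor $\mathcal{F}/\mathcal{F}$, so it is of class $\mathcal{I}/\mathcal{F}$; and by Proposition \ref{prop_ergodic_trivial} this holds a.s.

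For the third case, $\mathbb{E}[X_0]=0$, this is where the real work lies. The ergodic theorem only gives $S_n/n \to 0$, which does not by itself determine whether $S_n$ is recurrent, drifts to $-\infty$ along a subsequence, etc.; unlike the i.i.d.\ case we cannot invoke Chung--Fuchs. The strategy is first to rule out class $\mathcal{F}/\mathcal{F}$: by Lemma \ref{lemma_comp_0_fin_all_fin}, $\{\mathbb{Z}^R_X(0)\text{ is finite}\}$ equals $\{\mathbb{Z}^R_X(n)\text{ finite }\forall n \leq -1\}$, and on this event the reversed walk $(S_{-n})_{n\geq 0}$ stays strictly below its running maximum from some point on, forcing $S_{-n} \to$ a finite limit or, more carefully, forcing $\liminf_n S_n/(-n) \ge 0$ strictly in a way incompatible with $\mathbb{E}[X_0]=0$ and ergodicity — the clean argument is that finiteness of all $\mathbb{Z}^R_X(n)$, $n\le -1$, implies that the running maximum of $(S_{-n})_n$ is eventually constant, i.e.\ $\sup_{n}S_{-n}<\infty$ a.s., whereas if $\mathbb{E}[X_0]=0$ and the event $\{\sup_n S_{-n}<\infty\}$ has positive probability then by ergodicity it has probability one, and one derives a contradiction with $\sum_{k=0}^{n-1} X_k = S_n$ having sublinear growth from below but the reversed walk being bounded above (equivalently, apply the ergodic/Poincaré argument as in Lemma \ref{lemma_comp_0_fin_all_fin} to see $\{\mathbb{Z}^R_X(0)\text{ finite}\}$ forces all of $(S_n)_{n\in\mathbb{Z}}$ to eventually decrease, contradicting $S_n/n\to 0$ a.s.\ unless the probability is $0$). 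Hence $\mathbb{P}[\mathbb{Z}^R_X(0)\text{ is of class }\mathcal{F}/\mathcal{F}]=0$. Next, connectedness: I would show $0$ has infinitely many ancestors a.s. Here the point is that if $0$ had finitely many ancestors with positive probability, then $\mathbb{Z}^R_X(0)$ would be finite with positive probability (finitely many ancestors is the $\mathcal{F}/\mathcal{F}$ signature in the classification theorem), contradicting the previous step; so a.s.\ $0$ has infinitely many ancestors and Lemma \ref{lemma_stationary_one_component} gives a.s.\ connectedness. Finally, since the single component is a.s.\ infinite and not of class $\mathcal{F}/\mathcal{F}$, the Foil Classification Theorem \ref{thm_foil_classification} leaves only classes $\mathcal{I}/\mathcal{I}$ and $\mathcal{I}/\mathcal{F}$, completing the proof; Proposition \ref{prop_ergodic_trivial} additionally tells us exactly one of these two occurs with probability one.

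The main obstacle I anticipate is the rigorous exclusion of $\mathcal{F}/\mathcal{F}$ in the zero-mean case: one must translate "the component of $0$ is finite" into a statement about the trajectory (via Lemma \ref{lemma:descendants} and the classification theorem, this means $(S_{-n})_{n\ge 0}$ has a last record, i.e.\ $\sup_n S_{-n}$ is attained and $S_{-n}$ stays strictly below it afterward), then invoke ergodicity to make that event trivial, and then derive a contradiction from $\mathbb{E}[X_0]=0$. The cleanest route is probably the one already used in Lemma \ref{lemma_comp_0_fin_all_fin}: $\{\mathbb{Z}^R_X(0)\text{ finite}\}$ coincides with $\{\mathbb{Z}^R_X(n)\text{ finite for all }n\le -1\}$, on which $S_{-n}$ is bounded above; by stationarity/ergodicity the event $\{\sup_{n\ge 0} S_{-n}<\infty\}$ has probability $0$ or $1$, and if it were $1$ then $\frac1n S_{-n}$ would not tend to $0$ (it would tend to a strictly negative limit or the walk would be bounded, both contradicting $\mathbb{E}[X_0]=0$ via Birkhoff), forcing probability $0$. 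Everything else is a direct application of the ergodic theorem plus the already-proven structural lemmas and the classification theorem.
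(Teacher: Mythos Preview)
Your treatment of Cases 1 and 2 is essentially the paper's, and your overall plan for Case 3 --- argue by contradiction that $\mathbb{P}[\mathbb{Z}^R_X(0)\text{ is }\mathcal{F}/\mathcal{F}]=0$, then deduce infinitely many ancestors and apply Lemma \ref{lemma_stationary_one_component} --- is also the paper's. The gap is in the contradiction itself.

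First, the implication you extract from ``all components finite'' is backwards. Via Lemma \ref{lemma_comp_0_fin_all_fin} and the classification, if every $\mathbb{Z}^R_X(n)$ is finite then there is a two-sided sequence of strict descent points $n'_k$ with $S_n<S_{n'_k}$ for all $n>n'_k$; since the $S_{n'_k}$ are strictly decreasing \emph{integers}, one gets $S_n\to-\infty$ and $S_{-n}\to+\infty$, not $\sup_n S_{-n}<\infty$. Second, and more importantly, neither $S_n\to-\infty$ nor $\sup_n S_{-n}<\infty$ contradicts $\mathbb{E}[X_0]=0$ through Birkhoff alone: Birkhoff only yields $S_n/n\to 0$, and a stationary ergodic walk can perfectly well drift to $-\infty$ sublinearly. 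Your sentence ``$\frac1n S_{-n}$ would not tend to $0$ (it would tend to a strictly negative limit or the walk would be bounded)'' is simply not justified.

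What the paper does to close this is a quantitative density argument. On the event $\{S_n\to-\infty\}$ it introduces the covariant set of peak points
\[
Q=\Bigl\{k\in\mathbb{Z}:\ \textstyle\sum_{i=k}^{k+n}X_i\le -1\ \text{for all }n\ge 0\Bigr\},
\]
which is a.s.\ nonempty, hence has positive intensity $\mathbb{P}[0\in Q]>0$ by Lemma \ref{lemma_non_empty_covariant_set}. Between consecutive points of $Q$ the partial sum drops by at least $1$ (integer-valuedness is used here), and the cross-ergodic theorem for point processes gives $\#(Q\cap[1,n])/n\to\mathbb{P}[0\in Q]$. Feeding this into $S_{n_i}/n_i\to\mathbb{E}[X_0]$ along the enumeration $(n_i)$ of $Q$ yields $\mathbb{E}[X_0]\le-\mathbb{P}[0\in Q]<0$, the desired contradiction. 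This rate-of-descent step is the missing idea in your proposal; without it, Case 3 does not go through.
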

\begin{proof}
    {\it (Proof of 1.)}
    Let \(\mathbb{E}[X_0]<0\).
    Since, by ergodicity, \(\frac{S_n}{n}\to \mathbb{E}[X_0]\) a.s., we have \(S_n \to -\infty\) a.s.
    Therefore, the sequence \((S_n)_{n \geq 0}\) attains a maximum value \(a_0\) at some (random) \(n'\geq 0\) and \(S_n<a_0\) for all \(n>n'\).
    This implies that \([\mathbb{Z}^R_X(0),0]\) is finite a.s. and hence it is of class \(\mathcal{F}/\mathcal{F}\).
    By Remark \ref{rmk_f_f_all_f_f}, every component of \(\mathbb{Z}^R_X\) is finite and hence it is of class \(\mathcal{F}/\mathcal{F}\).

    {\it (Proof of 2.)}
    Let \(\mathbb{E}[X_0]>0\).
    Since, by ergodicity, \(\frac{S_n}{n}\to \mathbb{E}[X_0]\) a.s., we have \(S_n \to \infty\) a.s. and  \(S_{-n} \to -\infty\) a.s.
    Therefore, \(0\) has infinitely many ancestors and by the interval property of record vertex-shift, some ancestor of \(0\) has infinitely many descendants.
    Hence, \(\mathbb{Z}^R_X\) has a single component (by Lemma \ref{lemma_stationary_one_component}) and it is of class \(\mathcal{I}/\mathcal{F}\).

    {\it (Proof of 3.)}
    Let \(\mathbb{E}[X_0]=0\) and \(A\) be the event
    \begin{equation*}
        A:=\{[\mathbb{Z}^R_X(0),0] \text{ is of class }\mathcal{F}/\mathcal{F}\}.
    \end{equation*}
    We show that \(\mathbb{P}[A]=0\).
    \emph{Suppose} that \(\mathbb{P}[A]>0\).
    Then, by Corollary \ref{cor_comp_f_f}, \(\mathbb{P}[A]=1\).
     Lemma \ref{lemma_comp_0_fin_all_fin} implies that \( \mathbb{P}[\mathbb{Z}^R_X(n) \text{ is finite } \forall n \in \mathbb{Z}]=1\).
     Note that \(\mathbb{Z}^R_{X(\omega)}(n)\) is finite for all \(n  \in \mathbb{Z}\) if and only if there exists a (random) subsequence \((n'_k)_{k \in \mathbb{Z}}\) of \(\mathbb{Z}\) such that \(n'_k \to +\infty\), \(n'_{-k} \to -\infty\) as \(k \to \infty\) and for every \(k \in \mathbb{Z}\), \(S_n(\omega) < S_{n'_k}(\omega)\) for all \(n>n'_{k}\).
    In particular, a.s., \(S_n \to - \infty\) and \(S_{-n} \to \infty\) as \(n \to \infty\).
    Consider the covariant subset \(Q\) consisting of peak points 
    \begin{equation*}
        Q := \left\{k \in \mathbb{Z}: \sum_{i=k}^{k+n} X_i \leq -1 \, \forall n\geq 0\right\}.
    \end{equation*}
    The covariant set \(Q\) is a subset of integers such that the sums starting from these integers are always negative.
    Let us enumerate the integers of the set \(Q\) as \(Q = (n_{i})_{i \in \mathbb{Z}}\), where \(n_i < n_{i+1}\) for all \(i \in \mathbb{Z}\) and \(n_0 > 0\) is the smallest positive integer that belongs to the set \(Q\).
    Such an enumeration is possible because \(S_n \to -\infty\) and thus, \(n_i \to \infty\) as \(i \to \infty\).
    Note that the intermediate sums between two consecutive integers of \(Q\) satisfy
    \begin{equation}\label{eq_inter_sums}
        \sum_{k=n_i}^{n_{i+1}-1}X_k \leq -1, \, \forall i \in \mathbb{Z}.
    \end{equation}
    Since \(\mathbb{P}[Q \text{ is non-empty}]=1\), the intensity \(\mathbb{P}[0 \in Q]\) of the set \(Q\) is positive (by Lemma \ref{lemma_non_empty_covariant_set}).
    By Birkhoff's pointwise ergodic theorem, we have a.s., \(\frac{S_n}{n} \to \mathbb{E}[X_0]\) as \(n \to \infty\).
    Since \(n_i \to \infty\) as \(i \to \infty\), we have a.s.,
    \begin{equation*}
        \lim_{i \to \infty} \frac{S_{n_i}}{n_i} = \lim_{n \to \infty} \frac{S_n}{n} = \mathbb{E}[X_0].
    \end{equation*}

    For all \(i\geq 1\), we have
    \begin{align}
        \frac{S_{n_i}}{n_i} &= \frac{X_0+\cdots+X_{n_0-1}+\sum_{k=0}^{i-1}(X_{n_k}+\cdots+X_{n_{k+1}-1})}{n_i} \nonumber \\
        &\leq \frac{X_0+\cdots+X_{n_0-1}+\sum_{k=0}^{i-1}(-1)}{n_i}\label{eq_upperbound},
    \end{align}
    where Eq. (\ref{eq_upperbound}) is obtained using Eq. (\ref{eq_inter_sums}).
    Taking the limit \(i \to \infty\) on both sides of Eq. (\ref{eq_upperbound}), we obtain a.s.,
    \begin{equation*}
        \mathbb{E}[X_0] \leq -\mathbb{P}[0 \in Q],
    \end{equation*}
    since \(\frac{-i}{n_i} = -\left(\frac{(\sum_{k=1}^{n_i}\mathbf{1}\{k \in Q\})-1}{n_i}\right)\) and the latter converges a.s. to \(-\mathbb{P}[0 \in Q]\) as \(i \to \infty\) by the cross-ergodic theorem for point processes (see \cite[Section 1.6.4]{baccelliElementsQueueingTheory2003}).
    Since, the intensity \(\mathbb{P}[0 \in Q]>0\) and \(\mathbb{E}[X_0]=0\), we obtain a contradiction.
    Hence,  \(\mathbb{P}[[\mathbb{Z}^R(0),0]\text{ is of class }\mathcal{F}/\mathcal{F}]=0\).
\end{proof}

We have already seen an instance when the component is of class \(\mathcal{I}/\mathcal{I}\).
When \(X=(X_n)_{n \in \mathbb{Z}}\) is i.i.d. with mean \(0\), we showed that the record graph is of class \(\mathcal{I}/\mathcal{I}\) (see Proposition \ref{proposition:r-graph_is_eft}).
We now give an example of class \(\mathcal{I}/\mathcal{F}\) when mean is \(0\).

\begin{example}[stationary, ergodic, mean \(0\) but the record graph is of class \(\mathcal{I}/\mathcal{F}\)]\label{example_i_f_mean_0}\normalfont
    See Figure \ref{fig_example_i_f_mean_0} for an illustration of this example.
    Consider an \(M/M/1/\infty\) queue with arrival rate \(\lambda\) and service rate \(\mu\) satisfying \(\lambda<\mu\).
    Let \((N_n)_{n \geq 1}\) be the number of customers in the queue observed at all changes of state.
    The Markov chain \((N_n)_{n \geq 1}\) has a unique stationary distribution \(\eta\) (see \cite{asmussenAppliedProbabilityQueues2003}).
    The Markov chain \((N_n)_{n \geq 1}\) starting with the stationary distribution \(\eta\) is ergodic and \(N_n = 0\) for infinitely many \(n \geq 1\).
    Note that \(N_n \geq 0\) and \(N_{n+1}-N_{n}\in \{-1,+1\}\), for all \(n \geq 1\).
    Consider the stationary version \((-N_n)_{n \in \mathbb{Z}}\) of \((-N_n)_{n\geq 1}\) (which can be done by extending the probability space, see \cite[Theorem 5.1.14]{bremaudProbabilityTheoryStochastic2020}).
    Let \(Z_n = -N_n\) and \(X_n = Z_{n+1}-Z_n\) for all \(n \in \mathbb{Z}\).
    The sequence \(X=(X_n)_{ \in \mathbb{Z}}\) is a stationary sequence of random variables taking values in \(\{-1,+1\}\) and with mean \(\mathbb{E}[X_0] = \mathbb{E}[Z_1-Z_0]=0\) (since the sequence \((Z_n)_{n \in \mathbb{Z}}\) is stationary).
    Consider the record vertex-shift on the network \([\mathbb{Z},0,X]\).
    Let \((S_n)_{n \in \mathbb{Z}}\) be the sums of \(X\) given by \(S_0=0\), \(S_n = \sum_{k=0}^{n-1}X_k\) for all \(n>0\), and \(S_n= \sum_{k=n}^{-1}-X_k\) for all \(n<0\).
    The relation between \(S_n\) and \(N_n\) is given by \(S_n=-N_n+N_0\), for all \(n \in \mathbb{Z}\).
    Since \(N_n\geq 0\) for all \(n\in \mathbb{Z}\) and \(N_n=0\) a.s. for infinitely many \(n \geq 1\), we have \(S_n \leq N_0\) for all \(n \in \mathbb{Z}\) and \(S_n = N_0\) for infinitely many \(n \geq 1\).
    Therefore, \(0\) has infinitely many ancestors (since some \(k\)-th record epoch of \(0\) satisfies \(S_{R^k(0)}=N_0\) and \(S_{R^{(k+i)}(0)}=N_0\), for all \(i \geq 1\)).
    Similarly, since \(S_n \leq N_0\) for all \(n< 0\), some ancestor of \(0\) has infinitely many descendants (by the interval property of the record vertex-shift).
    Thus, the component \([\mathbb{Z}^R(0),0]\) of the record graph of \([\mathbb{Z},0,X]\) is of class \(\mathcal{I}/\mathcal{F}\).
\end{example}

\begin{figure}[htbp]
    \centering 
    \includegraphics[scale=0.95]{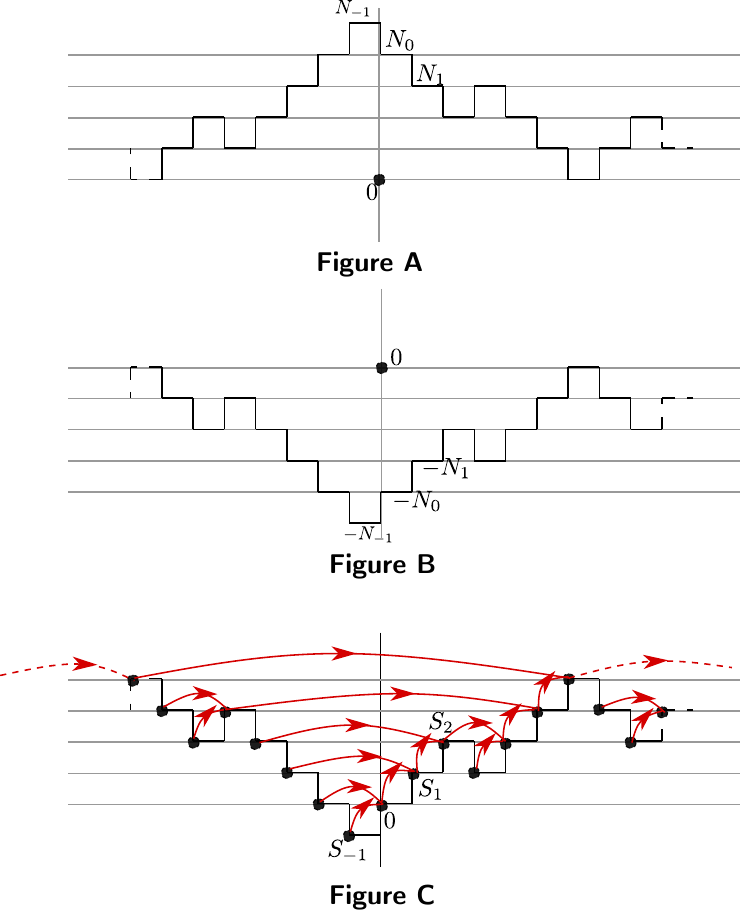}
    \caption{An Illustration of Example \ref{example_i_f_mean_0}, consisting in the construction of a stationary sequence \(X=(X_n)_{n \in \mathbb{Z}}\) with \(\mathbb{E}[X_0]=0\) such that the component of \(0\) of the record graph of the network \([\mathbb{Z},0,X]\) is of class \(\mathcal{I}/\mathcal{F}\).
    Figure A depicts the stationary sequence \((N_n)_{n \in \mathbb{Z}}\), Figure B gives the stationary sequence \((-N_n)_{n \in \mathbb{Z}}\), and Figure C is obtained from Figure B by shifting the x-axis to \(-N_0\). In Figure C, the increment sequence of \((S_n)_{n \in \mathbb{Z}}\) is \(X\), the record graph is drawn in red arrows and the sequence of top most arrows is the bi-infinite path.}
    \label{fig_example_i_f_mean_0}
  \end{figure}

\section{Record representable Eternal Family Trees}

In this section, we focus on the following question.
Given a unimodular ordered EFT \([\mathbf{T},\mathbf{o}]\), does there exist a stationary sequence \(Y=(Y_n)_{n \in \mathbb{Z}}\) of random variables taking values in \(\{-1,0,1,2,\cdots\}\) such that \([\mathbf{T},\mathbf{o}]\) is the component of \(0\) in the record graph of the network \((\mathbb{Z},X)\)?
If there exists such a sequence, then we call \([\mathbf{T},\mathbf{o}]\) a {\bf record representable EFT}.
In Theorem \ref{thm_representation}, we provide sufficient conditions for \([\mathbf{T},\mathbf{o}]\) to be record representable.
In particular, we show that every unimodular ordered EFT of class \(\mathcal{I}/\mathcal{I}\) is record representable.
As for the unimodular ordered EFTs of class \(\mathcal{I}/\mathcal{F}\), we show that those EFTs of this class that have a unique succession line are record representable.
We prove Theorem \ref{thm_representation} in the following way: we observe that on any unimodular ordered EFT, the functions \(a\) and \(b\) are well-defined.
The function \(a\) maps a vertex to its immediate predecessor, whereas the function \(b\) maps it to its immediate successor in an ordered EFT.
We then proceed to show that every unimodular ordered EFT has at most two succession lines.
The backward map \(\Phi_R\) is bijective on the set of ordered EFTs that have a unique succession line.
The stationary sequence \(Y\) is the push-forward of \([\mathbf{T},\mathbf{o}]\) under the backward map \(\Phi_R\).
The stationarity of \(Y\) follows by Mecke-Thorisson point stationarity \ref{prop_mecke}.

An immediate corollary of Theorem \ref{thm_representation} is that if \(X=(X_n)_{n \in \mathbb{Z}}\) is a stationary sequence of random variables taking values in \(\mathbb{Z}\) such that \(\mathbb{E}[|X_0|]<\infty\), \([\mathbb{Z},0,X]\) be its associated network, \(f\) is a vertex-shift on \([\mathbb{Z},0,X]\) and \([\mathbb{Z}^f_X(0),0]\) be the component of \(0\) in the \(f\)-graph of \((\mathbb{Z},X)\) such that \([\mathbb{Z}^f_X(0),0]\) has a unique succession line, then there exist a stationary sequence \(Y=(Y_n)_{n \in \mathbb{Z}}\) of random variables taking values in \(\{-1,0,1,2,\cdots\}\) such that the component \([\mathbb{Z}^R_Y(0),0]\) of $0$ in the record graph of the network \((\mathbb{Z},Y)\) has the same distribution as that of \([\mathbb{Z}^f_X(0),0]\).
In particular, if \([\mathbb{Z}^f_X(0),0]\) is of class \(\mathcal{I}/\mathcal{I}\), then such a stationary sequence \(Y\) exists.
The vertex-shift \(f\) on \([\mathbb{Z},0,X]\) is said to be {\bf record representable} if \([\mathbb{Z}^f_X(0),0]\) is record representable.

\begin{theorem}\label{thm_representation}
    Let \([\mathbf{T},\mathbf{o}]\) be a unimodular ordered EFT such that \(\mathbf{T}\) has a unique succession line.
    Then, there exists a stationary sequence \(Y=(Y_n)_{n \in \mathbb{Z}}\) of random variables taking values in \(\{-1,0,1,2,\cdots\}\) such that \\ \([\mathbf{T},\mathbf{o}] \overset{\mathcal{D}}{=}[\mathbb{Z}^R_Y(0),0]\), where \(R\) is the record vertex-shift and \(\mathbb{Z}^R_Y(0)\) is the component of \(0\) in the record graph \(\mathbb{Z}^R_Y\) of the network \((\mathbb{Z},Y)\).
    (In particular, if \([\mathbf{T},\mathbf{o}]\) is of class \(\mathcal{I}/\mathcal{I}\), then such a stationary sequence \(Y\) exists.)
\end{theorem}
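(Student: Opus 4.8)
The plan is to realize $Y$ as the push-forward of $[\mathbf{T},\mathbf{o}]$ under the backward map $\Phi_R$, and to deduce stationarity of the associated sequence from Mecke--Thorisson point stationarity (Proposition~\ref{prop_mecke}) applied to the shift, which on networks of the form $[\mathbb{Z},0,y]$ corresponds to the parent vertex-shift on the record graph. First I would verify that $\Phi_R$ is well-defined and bijective on the support of $[\mathbf{T},\mathbf{o}]$: since $\mathbf{T}$ has a unique succession line $U((\mathbf{T},\mathbf{o})) = (u_n)_{n \in \mathbb{Z}}$, every vertex of $\mathbf{T}$ appears exactly once in this sequence (this uses that the minimal/maximal vertex of a unimodular ordered EFT does not exist, via the No~Infinite/Finite~Inclusion Lemma~\ref{lemma:no_infinite_finite} together with the classification of unimodular Family Trees, exactly as in the proof of Lemma~\ref{lemma_succ_line_bijective}). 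Define $Y = (Y_n)_{n \in \mathbb{Z}}$ by $Y_n = d_1(u_{n+1}) - 1$ and set $[\mathbb{Z},0,Y] := \Phi_R([\mathbf{T},\mathbf{o}])$; note $Y_n \geq -1$ automatically since out-degree of a Family Tree vertex is $0$ or $1$, so offspring counts are non-negative integers.

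Next I would establish that $\Psi_R \circ \Phi_R = I$ on the support of $[\mathbf{T},\mathbf{o}]$. The argument of Proposition~\ref{prop:reverse_R-graph_identity} was written for realizations of $EGWT(\pi)$, but the only features it actually uses are: (i) the succession line is a bijection $\mathbb{Z} \to V(\mathbf{T})$, and (ii) the descendant sum identities of Lemma~\ref{lemma:legall_sums} hold on the finite subtrees encountered between $u_i$ and its parent. Condition (i) we have just arranged; condition (ii) requires that whenever $u_j$ is the parent of $u_i$ with $j > i$, the vertices strictly between them in the succession line form unions of \emph{finite} descendant subtrees of elder siblings of $u_i$ — this is where the class assumption enters. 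For class $\mathcal{I}/\mathcal{I}$ every vertex has finitely many descendants, so this is immediate; for class $\mathcal{I}/\mathcal{F}$ with a unique succession line, I would argue that a vertex with infinitely many descendants lies on the bi-infinite path and hence cannot be an elder sibling whose whole descendant tree is traversed between two consecutive succession-line passes (if it were, the succession line would get trapped and fail to reach $u_j$, contradicting bijectivity). With (i) and (ii) in hand the computation of $\sum (d_1(u_m)-1)$ over the relevant index range gives the sign pattern forcing $R_Y(i) = j$, so $\Psi_R([\mathbb{Z},0,Y]) = [\mathbf{T},\mathbf{o}]$ and in particular the component of $0$ in the record graph of $(\mathbb{Z},Y)$ is (a copy of) $[\mathbf{T},\mathbf{o}]$, with $Y$ supported on $\{-1,0,1,\dots\}$.

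Finally I would prove stationarity of $Y$. The shift $Ty = (y_{n+1})_{n \in \mathbb{Z}}$ acts on $[\mathbb{Z},0,y]$ by re-rooting $\mathbb{Z}$ at $1$; under the identification $\mathbb{Z} \leftrightarrow V(\mathbf{T})$ via the succession line, re-rooting from $u_0$ to $u_1 = a(u_0)$ is precisely moving one step along the succession line, which on the record graph is the map $\theta_F$ for the parent vertex-shift $F$ composed with the RLS-successor structure; more directly, since $u_1$ is the immediate RLS-successor of $u_0 = \mathbf{o}$, and since $[\mathbf{T},\mathbf{o}]$ is unimodular, the map sending a rooted ordered EFT to its RLS-successor-rooted version preserves the distribution — this is an instance of Mecke--Thorisson (Proposition~\ref{prop_mecke}) applied to the a.s.\ bijective vertex-shift $u \mapsto a(u)$ (bijectivity being $a \circ b = b \circ a = \mathrm{id}$, valid a.s.\ by the same covariant-subset argument as in Lemma~\ref{lemma_succ_line_bijective} and Proposition~\ref{prop:injective_bijective}). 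Hence $[\mathbb{Z},0,TY] \overset{\mathcal{D}}{=} [\mathbb{Z},0,Y]$, i.e.\ $Y$ is stationary. The parenthetical claim for class $\mathcal{I}/\mathcal{I}$ is immediate since such an EFT automatically has a unique succession line (the successor function $a$ is everywhere defined because every sibling preceding a given vertex has finitely many descendants, cf.\ Lemma~\ref{lemma:rls_order_previous_element_exists}).

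The main obstacle I anticipate is Step~2 in the class $\mathcal{I}/\mathcal{F}$ case: making rigorous that the succession line, which may run along the bi-infinite path, interacts correctly with the finite-sum identities of Lemma~\ref{lemma:legall_sums} — i.e.\ that the ``infinite part'' of the tree is always encountered as an ancestor direction (contributing the unbounded excursions of the associated walk) and never as a fully-traversed sibling subtree. Equivalently, one must show that the unique succession line visits the bi-infinite path in the expected monotone fashion, so that $\Phi_R$ genuinely inverts $\Psi_R$ rather than producing a sequence whose record graph is a different (possibly non-isomorphic) EFT. This is where the hypothesis ``unique succession line'' does the real work, and I would spend the bulk of the write-up there.
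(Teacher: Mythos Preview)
Your proposal is correct and follows essentially the same route as the paper: define $Y$ via $\Phi_R$, invoke the analogue of Proposition~\ref{prop:reverse_R-graph_identity} (which the paper states separately as Lemma~\ref{lemma_encoded_sequence_same_tree}) to get $\Psi_R\circ\Phi_R=I$ on trees with a unique succession line, and deduce stationarity from Mecke--Thorisson applied to the a.s.\ bijective succession-line shift $u\mapsto a(u)$. The one place where the paper is crisper than your sketch is the $\mathcal{I}/\mathcal{F}$ obstacle you flag at the end: rather than arguing that the succession line would ``get trapped'', the paper introduces the covariant set $W=\{u:u\prec v\ \forall v\in\neswarrow\}$ and shows (Lemma~\ref{lemma_unique_succession_line}) that $W$ and its complement each carry their own succession line; uniqueness then forces $W=\emptyset$, which immediately gives that every elder sibling of any $u_i$ has finitely many descendants---exactly what you need for the Lemma~\ref{lemma:legall_sums} computation to go through.
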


\begin{corollary}
    Let \(X=(X_n)_{n \in \mathbb{Z}}\) be a stationary sequence that satisfies \(\mathbb{E}[|X_0|]< \infty\) and \([\mathbb{Z},0,X]\) be the network associated to \(X\).
    Let \(f\) be a vertex-shift on \([\mathbb{Z},0,X]\) such that \(0\) has a.s. infinitely many ancestors in the \(f\)-graph \(\mathbb{Z}^f_X\) and the component \(\mathbb{Z}^f_X(0)\) of \(0\) in \(\mathbb{Z}^f_X\) has a unique succession line.
    Then, there exists a stationary sequence \(Y=(Y_n)_{n \in \mathbb{Z}}\) of random variables taking values in \(\{-1,0,1,2,\cdots\}\) such that \([\mathbb{Z}^f_X(0),0] \overset{\mathcal{D}}{=}[\mathbb{Z}^R_Y(0),0]\), where \(R\) is the record vertex-shift and \(\mathbb{Z}^R_Y(0)\) is the component of \(0\) in the record graph \(\mathbb{Z}^R_Y\) of the network \((\mathbb{Z},Y)\).
\end{corollary}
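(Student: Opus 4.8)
The plan is to reduce the corollary to Theorem \ref{thm_representation} by verifying that its hypotheses are met. First I would observe that, since $X=(X_n)_{n \in \mathbb{Z}}$ is stationary with $\mathbb{E}[|X_0|]<\infty$, the network $[\mathbb{Z},0,X]$ is unimodular (Example \ref{ex_network_sequence}). By Lemma \ref{lemma:f_graph_unimodular}, the component $[\mathbb{Z}^f_X(0),0]$ of the root in the $f$-graph is then a unimodular Family Tree. The next step is to upgrade "Family Tree" to "Eternal Family Tree": the hypothesis that $0$ has a.s. infinitely many ancestors in $\mathbb{Z}^f_X$ means that $f^n_X(0)$ is well-defined and distinct for all $n$, so the out-degree of $0$ is a.s. $1$; by the stationarity of $X$ (equivalently by Lemma \ref{lemma_non_empty_covariant_set} applied to the covariant subset of vertices with out-degree $0$, or simply by Proposition \ref{prop:injective_bijective}), a.s. every vertex has out-degree $1$, hence $\mathbb{Z}^f_X(0)$ is a unimodular EFT.

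Second, I would address the order structure. The component $\mathbb{Z}^f_X(0)$ inherits an order on the children of each vertex from the total order of $\mathbb{Z}$ (the children of a vertex form a subset of $\mathbb{Z}$), so it is a unimodular ordered EFT. Combined with the hypothesis that this component has a unique succession line, all hypotheses of Theorem \ref{thm_representation} are satisfied by $[\mathbf{T},\mathbf{o}] := [\mathbb{Z}^f_X(0),0]$. Applying Theorem \ref{thm_representation} then yields a stationary sequence $Y=(Y_n)_{n \in \mathbb{Z}}$ with values in $\{-1,0,1,2,\cdots\}$ such that $[\mathbb{Z}^f_X(0),0] \overset{\mathcal{D}}{=}[\mathbb{Z}^R_Y(0),0]$, which is exactly the claim. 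In short, the corollary is a direct instantiation of the theorem once one checks measurability/unimodularity of $\mathbb{Z}^f_X(0)$ and that the inherited order makes it a unimodular ordered EFT.

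The main obstacle — really the only non-bookkeeping point — is confirming that the unique-succession-line hypothesis transfers cleanly: one must be sure that the notion of "succession line" used in Theorem \ref{thm_representation} is the intrinsic one (depending only on the isomorphism class of the ordered EFT), so that "$\mathbb{Z}^f_X(0)$ has a unique succession line" as stated in the corollary is the same condition as the one required by the theorem. This follows because the functions $a$ and $b$ (immediate predecessor and successor in the RLS order) are covariant and well-defined on any unimodular ordered EFT, as noted in the discussion preceding Theorem \ref{thm_representation}; hence the succession line(s) through the root are determined by $[\mathbf{T},\mathbf{o}]$ alone. I would also remark, as the parenthetical in the corollary's statement suggests is intended, that when $\mathbb{Z}^f_X(0)$ is of class $\mathcal{I}/\mathcal{I}$ the uniqueness of the succession line is automatic (again by the discussion around Theorem \ref{thm_representation}, since a one-ended tree has no bi-infinite $F$-path and the forward recursion $b$ is surjective, forcing a single succession line), so in that case the extra hypothesis may be dropped.
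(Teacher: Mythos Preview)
Your proposal is correct and takes essentially the same approach as the paper's two-line proof, which just records that $[\mathbb{Z}^f_X(0),0]$ is unimodular (since $X$ is stationary) and then applies Theorem \ref{thm_representation}; your additional checks (eternality from the ``infinitely many ancestors'' hypothesis, the order inherited from $\mathbb{Z}$, and intrinsicness of the succession line) simply make explicit what the paper leaves implicit. One minor slip: the parenthetical about class $\mathcal{I}/\mathcal{I}$ belongs to Theorem \ref{thm_representation}, not to this Corollary.
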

\begin{proof}
    Since \(X\) is stationary, the network \([\mathbb{Z}^f_X(0),0]\) is unimodular.
    Apply Theorem \ref{thm_representation} to get the result.
\end{proof}

We first recall the notation of Section \ref{subsec_backward_map} and prove a few lemmas before proving Theorem \ref{thm_representation}.

Let \([\mathbf{T},\mathbf{o}]\) be a unimodular ordered EFT.
Denote the RLS (total) order obtained on the vertices of \(\mathbf{T}\) by \(\prec\) (see Section \ref{subsec_RLS}).
The set of minimal vertices of \(\mathbf{T}\) is a covariant subset.
It can be either empty or a singleton.
Since \(\mathbf{T}\) has infinitely many vertices, by No Infinite/ Finite Inclusion (Lemma \ref{lemma:no_infinite_finite}), a.s. \(\mathbf{T}\) does not have any minimal vertices.
Therefore, the map \(b:V(\mathbf{T})\to V(\mathbf{T})\) given by \(b(u) = \max\{v: v \prec u\}\) (if \(\max\) exists) is a.s. well-defined.
Since \(b\) is injective on \(V(\mathbf{T})\) and \([\mathbf{T},\mathbf{o}]\) is unimodular, \(b\) is bijective a.s. by Proposition \ref{prop:injective_bijective}.
But \(a(b(u)) = u\), for all \(u \in V(\mathbf{T})\), where \(a(v) = \min\{w:w \succ v\}\).
Since the range of \(b\) is \(V(\mathbf{T})\), the map \(a\) is well-defined on \(V(\mathbf{T})\).
Thus, we have the following lemma.

\begin{lemma}\label{lemma_a_b_wellDefined}
    Let \([\mathbf{T},\mathbf{o}]\) be a unimodular ordered EFT.
    Then, the maps \(a:V(\mathbf{T}) \to V(\mathbf{T})\), \(b:V(\mathbf{T}) \to V(\mathbf{T})\) are a.s. well-defined.
\end{lemma}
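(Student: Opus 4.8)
The plan is to reduce the statement to the two facts already assembled in the excerpt: that an ordered EFT has no minimal vertex under the RLS order, and that an injective vertex-shift on a unimodular network is bijective. Concretely, I would first show that the map $b$ (immediate predecessor in the RLS order) is almost surely defined on \emph{all} of $V(\mathbf{T})$; then observe $b$ is an injective vertex-shift, so by Proposition~\ref{prop:injective_bijective} it is a.s.\ bijective; and finally deduce from $a(b(u))=u$ that $a$ (immediate successor) is a.s.\ defined on $V(\mathbf{T})$, since that set is the range of $b$.

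For the first step I would invoke Lemma~\ref{lemma:b_exists}, which already says $b(u)$ exists for every \emph{non-minimal} vertex $u$; hence the only obstruction to $b$ being total is the presence of a smallest vertex. Let $\mathfrak{S}_{\mathbf{T}}$ denote the set of minimal vertices of $\mathbf{T}$ for $\prec$. This is a covariant subset (minimality is preserved by order-preserving isomorphisms and the corresponding subset of $\mathcal{G}_{**}$ is measurable), and because $\prec$ is a total order, $\mathfrak{S}_{\mathbf{T}}$ contains at most one vertex. Now apply the No Infinite/Finite Inclusion Lemma~\ref{lemma:no_infinite_finite} with the trivial covariant partition whose unique block is $V(\mathbf{T})$: since $\mathbf{T}$ is an EFT, $V(\mathbf{T})$ is infinite, while $V(\mathbf{T})\cap\mathfrak{S}_{\mathbf{T}}=\mathfrak{S}_{\mathbf{T}}$ is finite; so a.s.\ it must be empty, i.e.\ $\mathbf{T}$ has no minimal vertex a.s., and therefore $b$ is a.s.\ well-defined on all of $V(\mathbf{T})$.

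For the second step, $b$ is covariant (it is built from the RLS order, which is covariant for order-preserving isomorphisms) and satisfies the measurability condition of a vertex-shift, so on the a.s.\ event where it is total it is a genuine vertex-shift; it is injective because if $b(u)=b(v)=w$ with $u\neq v$ and, say, $u\prec v$, then $w\prec u\prec v$ exhibits $u$ strictly between $w$ and $v$, contradicting that $w$ is the immediate predecessor of $v$. Since $[\mathbf{T},\mathbf{o}]$ is unimodular by hypothesis, Proposition~\ref{prop:injective_bijective} gives that $b$ is a.s.\ bijective; as $a(b(u))=u$ for every $u$, the map $a=b^{-1}$ is then a.s.\ defined on the range of $b$, which is $V(\mathbf{T})$ a.s. I do not expect a genuine obstacle here: the argument is essentially a repackaging of the discussion preceding the statement, and the only point needing a line of care is the reduction via Lemma~\ref{lemma:b_exists} together with the (routine) verification that "being a minimal vertex" defines a covariant, measurable subset so that Lemmas~\ref{lemma:no_infinite_finite} and~\ref{prop:injective_bijective} apply.
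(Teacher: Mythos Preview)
Your proposal is correct and follows essentially the same argument as the paper: the paper's proof (given in the paragraph immediately preceding the lemma) also shows that the covariant subset of minimal vertices is a.s.\ empty via Lemma~\ref{lemma:no_infinite_finite}, concludes $b$ is well-defined via Lemma~\ref{lemma:b_exists}, then uses injectivity of $b$ and Proposition~\ref{prop:injective_bijective} to get bijectivity, and finally obtains $a$ from $a(b(u))=u$. Your write-up is slightly more explicit (e.g., spelling out why $b$ is injective and identifying the trivial covariant partition), but the route is the same.
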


Let \(T\) be an ordered EFT.
For any vertex \(u \in V(T)\), let \(U((T,u)) = (u_n)_{n \in \mathbb{Z}}\) be the succession line passing through \(u\) (see Def. \ref{defn_succession_line}).
The set \(\{u_n:n \in \mathbb{Z}\}\) is called the succession line of \(u\).
It is easy to check that if two succession lines intersect then they are one and the same.

Let \([\mathbf{T},\mathbf{o}]\) be a unimodular ordered EFT.
Lemma \ref{lemma_a_b_wellDefined} implies that \(U((\mathbf{T},u))\) has distinct entries for all \(u \in V(\mathbf{T})\).
In Lemma \ref{lemma_unique_succession_line}, we show that a unimodular ordered EFT \([\mathbf{T},\mathbf{o}]\) has at most two succession lines.
Note that by the classification theorem, \([\mathbf{T},\mathbf{o}]\) is either of class \(\mathcal{I}/\mathcal{I}\) or of class \(\mathcal{I}/\mathcal{F}\).

\begin{lemma}\label{lemma_unique_succession_line}
    Let \([\mathbf{T},\mathbf{o}]\) be a unimodular ordered EFT.
    \begin{itemize}
        \item If \([\mathbf{T},\mathbf{o}]\) is of class \(\mathcal{I}/\mathcal{I}\), then \(\mathbf{T}\) has a unique succession line and it contains all the vertices of \(\mathbf{T}\).
        \item If \([\mathbf{T},\mathbf{o}]\) is of class \(\mathcal{I}/\mathcal{F}\), then \(\mathbf{T}\) has at most two succession lines; together they contain all the vertices of \(\mathbf{T}\).
    \end{itemize}
\end{lemma}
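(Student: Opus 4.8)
\textbf{Proof plan for Lemma \ref{lemma_unique_succession_line}.}

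The plan is to exploit the classification of unimodular Family Trees (Theorem \ref{thm_foil_classification} applied to \(\mathbf{T}\) with the parent vertex-shift \(F\)) together with the structural facts about the RLS order already recorded in Remark \ref{remark:rls_order} and Lemma \ref{lemma:rls_order_previous_element_exists}, and the fact (Lemma \ref{lemma_a_b_wellDefined}) that the successor map \(b\) and predecessor map \(a\) are a.s.\ everywhere-defined bijections on \(V(\mathbf{T})\). Recall that two succession lines are either disjoint or equal, so ``\(\mathbf{T}\) has \(k\) succession lines'' means the orbits of the \(\mathbb{Z}\)-action \(n \mapsto u_n\) (generated by \(a\)) partition \(V(\mathbf{T})\) into \(k\) classes; the claim is that \(k \le 2\), with \(k=1\) in the \(\mathcal{I}/\mathcal{I}\) case.

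\emph{Class \(\mathcal{I}/\mathcal{I}\).} Here every vertex has finitely many descendants (classification theorem). Fix any \(u \in V(\mathbf{T})\); I would show \(u\) lies in the succession line of the root \(\mathbf{o}\). Since \(D(\mathbf{o})\) is finite and every proper subtree hanging off the bi-infinite \(F\)-path... wait, there is no bi-infinite path in class \(\mathcal{I}/\mathcal{I}\); instead, every vertex has finitely many descendants but there is a unique ``infinite'' direction upward. The clean argument: the orbit of \(\mathbf{o}\) under \(a\) is exactly the set of vertices comparable to \(\mathbf{o}\) in the enumeration, and I claim it is all of \(V(\mathbf{T})\). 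Indeed, between \(\mathbf{o}\) and any \(u\) with (say) \(u \prec \mathbf{o}\), the set \(\{v : u \preceq v \preceq \mathbf{o}\}\) is finite: it is contained in \(D(w)\) for the smallest common ancestor \(w\) of \(u\) and \(\mathbf{o}\), by Remark \ref{remark:rls_order} and the tree structure, and \(D(w)\) is finite in class \(\mathcal{I}/\mathcal{I}\). Hence iterating \(b\) from \(\mathbf{o}\) reaches \(u\) in finitely many steps; similarly for \(u \succ \mathbf{o}\), using that every proper ancestor's non-descendant subtree is handled by \(a\)-steps, each finite for the same reason. So \(u \in U((\mathbf{T},\mathbf{o}))\), proving uniqueness.

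\emph{Class \(\mathcal{I}/\mathcal{F}\).} Let \(\Lambda = (\mathbf{o}_n)_{n \in \mathbb{Z}}\) be the unique bi-infinite \(F\)-path; every vertex of \(\Lambda\) has infinitely many descendants, every vertex off \(\Lambda\) has finitely many. I would split \(V(\mathbf{T}) \setminus \Lambda\) into two covariant subsets \(L\) and \(G\): \(v \in L\) if \(v \prec v^\ast\) and \(v \in G\) if \(v \succ v^\ast\), where \(v^\ast \in \Lambda\) is the smallest ancestor of \(v\) lying on \(\Lambda\) (equivalently, \(v\) is a descendant of a sibling of some \(\mathbf{o}_m\) that is respectively smaller or larger than \(\mathbf{o}_{m-1}\)). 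Starting from any \(v \in L \cup \Lambda\), repeatedly applying \(a\) moves strictly forward in the RLS order; since every sibling of a \(\Lambda\)-vertex that precedes the \(\Lambda\)-child has finitely many descendants (it is off \(\Lambda\)), Lemma \ref{lemma:rls_order_previous_element_exists} guarantees \(a\) never stalls, and the orbit sweeps through \(\{\mathbf{o}_m\}\) and all the ``left'' bushes, so \(L \cup \Lambda\) is a single succession line; dually \(G \cup \Lambda\) would be one, but these two share \(\Lambda\), hence are the \emph{same} line — so in fact \(k=1\) here too? No: the subtlety is direction. Applying \(b\) to a \(\Lambda\)-vertex goes into the ``\(G\)'' bush of its child and stays there forever if that bush is infinite — but bushes off \(\Lambda\) are finite, so \(b\) also eventually returns to \(\Lambda\). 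The genuine obstruction to a single line is that there may be a vertex \(v\) with \emph{infinitely many} elder siblings having descendants piling up, blocking \(a\); this can happen precisely when a \(\Lambda\)-vertex \(\mathbf{o}_m\) has its \(\Lambda\)-child \(\mathbf{o}_{m-1}\) not maximal among its children and infinitely many children above it — but \(\mathbf{o}_m\) has only finitely many children (local finiteness), so that cannot happen. Thus a careful finiteness audit should collapse the \(\mathcal{I}/\mathcal{F}\) case to \emph{at most two} lines, the second arising only when the ``positive half'' from the root cannot reach certain descendants of \(\Lambda\)-vertices lying above \(\mathbf{o}\) in some generation — this is exactly the positive/negative split and I would formalize it via the covariant partition into the two classes of bushes and No Infinite/Finite Inclusion (Lemma \ref{lemma:no_infinite_finite}) to rule out a third.

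\emph{Main obstacle.} The hard part is the \(\mathcal{I}/\mathcal{F}\) bookkeeping: precisely identifying when \(a\) or \(b\) fails to be defined along an orbit (equivalently, when an orbit is a half-line rather than a full line), and proving there are at most two orbit-classes and that their union is everything. I expect to handle this by defining the covariant subset of vertices \(v\) for which every elder sibling along the chain \(v, F(v), \dots, v^\ast\) has finite descendant subtree, showing (via local finiteness of \(\Lambda\)-vertices and finiteness of off-\(\Lambda\) subtrees) that this subset is all of \(V(\mathbf{T})\) except possibly one ``exceptional'' orbit attached to the bi-infinite path in the non-root direction, and invoking Lemma \ref{lemma:no_infinite_finite} to exclude any further splitting. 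The \(\mathcal{I}/\mathcal{I}\) case, by contrast, is a short finiteness argument using only Remark \ref{remark:rls_order} and finiteness of \(D(w)\).
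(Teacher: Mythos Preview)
Your \(\mathcal{I}/\mathcal{I}\) argument is correct and is exactly the paper's: for any \(u\), the interval \(\{v : u \preceq v \preceq \mathbf{o}\}\) (or the reverse) is contained in the finite set \(D(w)\) for the least common ancestor \(w\), so the succession line of \(\mathbf{o}\) reaches \(u\).

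Your \(\mathcal{I}/\mathcal{F}\) argument has a genuine gap, and the confusion in your own write-up signals it. First, your stated definition of \(L\) and \(G\) is vacuous: since \(v^\ast\) is an \emph{ancestor} of \(v\), the RLS order gives \(v \prec v^\ast\) always, so \(G = \emptyset\) and \(L = V(\mathbf{T}) \setminus \Lambda\). Your parenthetical ``equivalently'' (classify \(v\) by whether the child of \(v^\ast\) above \(v\) is smaller or larger than the \(\Lambda\)-child) is \emph{not} equivalent to this --- it is a different, and in fact the correct, split. Second, even with the parenthetical definition, your grouping \(L \cup \Lambda\) versus \(G \cup \Lambda\) cannot be right: the two sets overlap on \(\Lambda\), and \(L \cup \Lambda\) is not closed under \(a\) (applying \(a\) to a \(\Lambda\)-vertex whose elder sibling exists lands you in a \(G\)-type bush, not in \(L \cup \Lambda\)). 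This is why your reasoning oscillates between ``\(k=1\)'' and ``at most two''.

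The clean split, which the paper uses, is \emph{global} rather than local: set \(W = \{u : u \prec v \text{ for every } v \in \Lambda\}\) and \(W' = V(\mathbf{T}) \setminus W\). Then \(\Lambda \subset W'\), and \(W\) consists precisely of the bushes hanging below the \(\Lambda\)-child at each \(\Lambda\)-vertex (your ``\(L\)-type'' in the parenthetical sense), while \(W'\) is \(\Lambda\) together with the bushes above (your ``\(G\)-type''). One checks directly that each of \(W\) and \(W'\) is invariant under both \(a\) and \(b\): for instance, if \(u \in W\) then \(a(u) \in W\) because every \(\Lambda\)-vertex lies strictly above \(u\) but no \(\Lambda\)-vertex can be the \emph{immediate} predecessor of \(u\) (there is always a lower one), and any \(W'\)-vertex off \(\Lambda\) has some \(\Lambda\)-vertex strictly between it and \(u\). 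Finally, within each of \(W\) and \(W'\), any two vertices have only finitely many RLS-intermediate vertices (because the off-\(\Lambda\) bushes are finite in class \(\mathcal{I}/\mathcal{F}\)), so each is a single orbit of \(a\). No appeal to Lemma \ref{lemma:no_infinite_finite} is needed.
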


\begin{proof}

    If \([\mathbf{T},\mathbf{o}]\) is of class \(\mathcal{I}/\mathcal{I}\), then every vertex of \(\mathbf{T}\) has finitely many descendants.
    Let \(v \in V(\mathbf{T})\).
    Consider the smallest common ancestor \(w\) of \(v\) and \(\mathbf{o}\).
    The descendant tree of \(w\) is finite and it contains both \(v\) and \(\mathbf{o}\).
    Thus, the succession lines of \(v\) and \(o\) intersect, which implies that they are one and the same.

    Let us assume that \([\mathbf{T},\mathbf{o}]\) is of class \(\mathcal{I}/\mathcal{F}\).
    Let \(\neswarrow\) denote the unique bi-infinite path of \(\mathbf{T}\).
    Consider the set
  \begin{equation}\label{eq_W}
      W=\{u \in V(\mathbf{T}): u \prec v \ \forall v \in \neswarrow\}.
  \end{equation}
Denote its complement \(V(\mathbf{T})\backslash W\) by \(W'\).
We first show that \(W'\) has a unique succession line and all the vertices of \(W'\) belong to this succession line.
See Figure \ref{fig_i_f_succession_lines} for an illustration.

If \(u \in W'\), then there exists a vertex \(v \in \neswarrow\) such that \(u \succ v\) (we may safely assume it because if \(u \in \neswarrow\) then u has a child \(v' \in \neswarrow\) and \(u \succ v' \)).
The vertex \(a(u) = \min\{w: w \succ u\}\) also satisfies \(a(u) \succ v\) since \(a(u) \succ u\), implying that \(a(u) \in W'\).
The vertex \(b(u)= \max\{w: w \prec u\}\) satisfies \(b(u) \succeq v\) since \(v \in \{w:w \prec u\}\), implying that \(b(u) \in W'\).
Therefore, for every vertex \(u \in W'\), the succession line of \(u\) is contained in  \(W'\).
For any two vertices \(u,v\) in \(W'\), there are finitely many in between them, i.e., the set \(\{w:u \prec w \prec v\}\) is finite (assuming that \(u \prec v\)).
Therefore, the succession line of \(u\) and \(v\) are one and the same.
Thus, \(W'\) has a unique succession line and this succession line contains all the vertices of \(W'\).

We now show that if \(W\) is non-empty, then it has a unique succession line and all the vertices of \(W\) belong to this succession line.
Let \(u \in W\).
The vertex \(a(u)= \min\{w: w \succ u\}\) also belongs to \(W\) because \(\neswarrow \subset \{w:w \succ u\}\) and \(a(u) \not \in \neswarrow\).
The vertex \(b(u)= \max\{w: w \prec u\}\) also belongs to \(W\) since \(b(u)\prec u\) and \(u \in W\).
Therefore, the succession line of \(u\) is contained in \(W\).
For any two vertices \(u,v\) in \(W\), there are finitely many vertices between them.
Thus, \(W\) has a unique succession line (if \(W\) is non-empty).
This completes the proof.
\end{proof}

\begin{figure}[htbp]
       \includegraphics[scale=1]{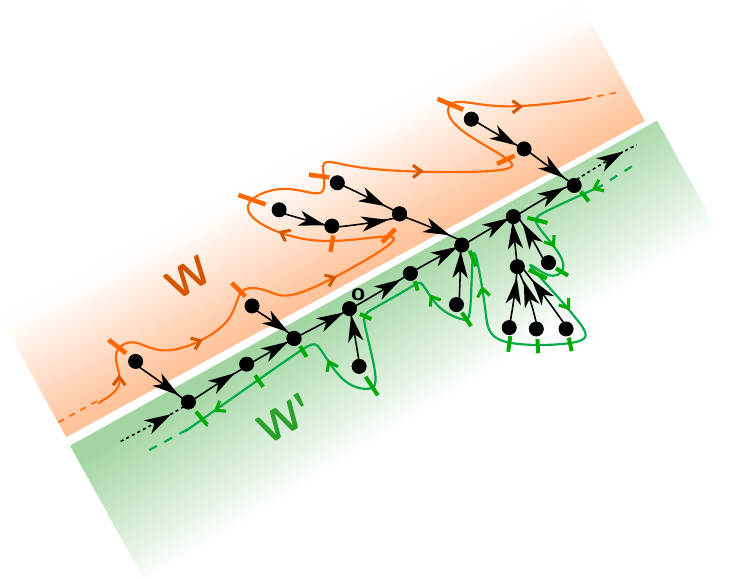}
       \caption{An Illustration of the \(\mathcal{I}/\mathcal{F}\) case in the proof of Lemma \ref{lemma_unique_succession_line}.
       The set \(W\) and its complement \(W'\) are highlighted. Each one of these set of vertices contains a unique succession line passing through all of its vertices. The arrow direction of the succession line indicates the images under the map \(b\), i.e., the immediate successor of the previous vertex. In this figure, the root \(\mathbf{o}\) belongs to the succession line containing the bi-infinite path.}
       \label{fig_i_f_succession_lines}
     \end{figure}

\begin{remark}
    If \([\mathbf{T},\mathbf{o}]\) is unimodular ordered EFT of class \(\mathcal{I}/\mathcal{F}\) that has a unique  succession line, then it has the ECS (Every Child Succeeding) order (see Subsection \ref{subsec_bi_variate_ekt} for the definition).
\end{remark}

Consider the following subset
\begin{equation}
    \widetilde{\mathcal{T}}_*:=\{[T,o]: T \text{ is an ordered EFT with a unique succession line}\}
\end{equation}
of EFTs.
It follows from Lemma \ref{lemma_a_b_wellDefined} and Lemma \ref{lemma_unique_succession_line} that for all \([T,o] \in \widetilde{\mathcal{T}}_*\), the succession line \(U((T,o)) = (u_n)_{n \in \mathbb{Z}}\) passing through \(o\) with \(u_0=o\) is a bijection from \(\mathbb{Z}\) to \(V(T)\) that maps \(n\) to \(u_n\), for all \(n \in \mathbb{Z}\).
Recall the backward map \(\Phi_R\) defined on the set \( \mathcal{T}'_*\) of Family Trees that have at most one bi-infinite \(F\)-path.
For all \([T,o] \in \mathcal{T}'_*\), we have \(\Phi_R([T,o]) = [\mathbb{Z},0,y]\), where \(y = (y_n)_{n \in \mathbb{Z}}\) is the sequence given by \(y_n = d_1(u_{n+1})-1\), for all \(n \in \mathbb{Z}\) and \((u_n)_{n \in \mathbb{Z}} = U((T,o))\) is the succession line passing through \(o\).
The sequence \(y=(y_n)_{n \in \mathbb{Z}}\) takes values in \(\{-1,0,1,2,\cdots\}^{\mathbb{Z}}\).
We show that \(\Psi_R \circ \Phi_R = I\) on \(\widetilde{\mathcal{T}}_*\), where \(\Psi_R([\mathbb{Z},0,y]) = [\mathbb{Z}^R_y(0),0]\) is the component of \(0\) in the record graph of the network \((\mathbb{Z},y)\).
The proof of the following lemma is the same as that of Proposition \ref{prop:reverse_R-graph_identity} except for a few modifications.
The proof relies on the property that every elder sibling of any vertex (if they exist) of a Family Tree with a unique succession line has finitely many descendants.

\begin{lemma}\label{lemma_encoded_sequence_same_tree}
    For all \([T,o] \in \widetilde{\mathcal{T}}_*\), \(\Psi_R \circ \Phi_R([T,o]) = [T,o]\).
\end{lemma}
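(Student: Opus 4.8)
The plan is to adapt the proof of Proposition \ref{prop:reverse_R-graph_identity} almost verbatim, keeping track of the one place where the i.i.d.\ (mean-zero) hypothesis was actually used and replacing it by the structural hypothesis that \(T\) has a unique succession line. Fix \([T,o] \in \widetilde{\mathcal{T}}_*\) and let \((u_n)_{n \in \mathbb{Z}} = U((T,o))\) be its succession line with \(u_0 = o\); by Lemma \ref{lemma_a_b_wellDefined} and Lemma \ref{lemma_unique_succession_line} the map \(\alpha:\mathbb{Z}\to V(T)\), \(\alpha(n)=u_n\), is a bijection. Write \(\Phi_R([T,o]) = [\mathbb{Z},0,y]\) with \(y_n = d_1(u_{n+1})-1\). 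The goal is to show that \(\alpha\) induces a rooted-network isomorphism from \(\Psi_R([\mathbb{Z},0,y]) = [\mathbb{Z}^R_y(0),0]\) onto \((T,o)\). Since \(\alpha(0)=o\) and since \((u_n)\) is the succession line — so that whenever \(u_j\) is the parent of \(u_i\) in \(T\) one has \(j>i\) — it suffices, exactly as in Proposition \ref{prop:reverse_R-graph_identity}, to prove: if \(u_j\) is the parent of \(u_i\) in \(T\) with \(j>i\), then \(j = R_y(i)\).

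First I would handle the trivial case \(j=i+1\): then \(u_i\) is a child of \(u_{i+1}\), so \(y_i = d_1(u_{i+1})-1 \geq 0\), hence \(R_y(i)=i+1\). For \(j>i+1\), the key point is that \(u_{i+1}\) is then a leaf which is either a sibling of \(u_i\) or a descendant of an elder sibling of \(u_i\); and here is where the uniqueness-of-succession-line hypothesis enters in place of ``realization of \(EGWT(\pi)\)''. On a Family Tree with a unique succession line, every elder sibling of \(u_i\) (i.e.\ every sibling \(v\) of \(u_i\) with \(v \prec u_i\), equivalently \(v\) succeeding \(u_i\) toward \(o\)) has finitely many descendants — otherwise the succession line through \(o\) would be blocked and a second succession line would exist below \(v\), contradicting \([T,o] \in \widetilde{\mathcal{T}}_*\) (this is precisely the dichotomy made explicit in the proof of Lemma \ref{lemma_unique_succession_line} via the sets \(W, W'\)). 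Consequently the descendant trees \(T_1,\dots,T_n\) of the elder siblings \(u_{i_1} \prec \cdots \prec u_{i_n}\) of \(u_i\) are all finite, and the parent \(u_j\) has the finitely many children \(u_i, u_{i_1},\dots,u_{i_n}\) plus possibly more. With this finiteness in hand, the sum computations of Proposition \ref{prop:reverse_R-graph_identity} go through unchanged: for \(i<k<j\), grouping \(y(i,k)=\sum_{m=i+1}^{k}(d_1(u_m)-1)\) into complete descendant-tree sums (each \(=-1\) by part 1 of Lemma \ref{lemma:legall_sums}) plus a partial sum over an initial segment of a descendant tree (which is \(<0\) by part 2 of Lemma \ref{lemma:legall_sums}), one gets \(y(i,k)<0\); and \(y(i,j) = \sum_{m=i+1}^{j}(d_1(u_m)-1) = -n + d_1(u_j)-1 \geq 0\) since \(u_j\) has at least \(n+1\) children. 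Hence \(R_y(i)=j\).

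I would then also note the edge case where \(u_i\) has no elder sibling and \(u_{i+1}\) is simply a higher leaf reached by climbing from \(u_i\): in that situation \(u_{i+1}=F(u_i)\) is impossible under \(j>i+1\), so one always lands in the grouping above with \(n\geq 1\); the bookkeeping is identical. Assembling these cases shows that \(\alpha\) is a bijection of vertex sets carrying the parent relation of \(T\) to the record map \(R_y\), i.e.\ it is a rooted-network isomorphism \(\Psi_R \circ \Phi_R([T,o]) \to [T,o]\), which is the claim.

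The main obstacle — really the only non-routine point — is justifying cleanly that, on an ordered EFT with a unique succession line, elder siblings always have finite descendant trees; everything else is a transcription of Proposition \ref{prop:reverse_R-graph_identity}. I expect this to follow directly from the structural analysis already carried out in Lemma \ref{lemma_unique_succession_line} (the sets \(W\) and \(W'\)): an elder sibling \(v\) of a vertex \(u\) with infinite descendant tree would force the bi-infinite path to pass strictly between the succession lines of the two ``sides'' of \(v\), producing a second succession line and contradicting membership in \(\widetilde{\mathcal{T}}_*\). Once this lemma-level fact is stated, the rest of the argument is the verbatim sum estimate via Lemma \ref{lemma:legall_sums} and Remark \ref{remark:legall_sum}.
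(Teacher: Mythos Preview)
Your proposal is correct and matches the paper's proof essentially verbatim: both transcribe Proposition \ref{prop:reverse_R-graph_identity}, replacing the $EGWT$-based finiteness of descendant trees by the observation that on a tree with a unique succession line every sibling $v \succ u_i$ of $u_i$ has a finite descendant tree (else $u_i \in W$ in the notation of Lemma \ref{lemma_unique_succession_line}, contradicting $W=\emptyset$). One small slip to fix: in your parenthetical you write ``$v \prec u_i$'' for the elder siblings, but the relevant siblings are those with $v \succ u_i$ --- these are precisely the ones whose (finite) descendant trees fill the succession-line segment $u_{i+1},\dots,u_{j-1}$ between $u_i$ and its parent $u_j$.
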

\begin{proof}
    Let \((u_n)_{n \in \mathbb{Z}} = U((T,o))\) be the succession line of \((T,o)\) and $[\mathbb{Z},0,y] = \Phi_R([T,o])$, where $y=(y_n)_{n \in \mathbb{Z}}$.

    We prove that the bijective map \(\alpha:\mathbb{Z} \to V(T)\) defined by \(\alpha(n)= u_n\), for all \(n \in \mathbb{Z}\), induces a rooted network isomorphism \(\alpha\) from \([\mathbb{Z}^R_y(0),0]=\Psi_R([\mathbb{Z},0,y])\) to \((T,o)\).
    Note that, if \(u_j\) is a parent of \(u_i\) in \(T\) for some integers \(i\) and \(j\) then \(j>i\), which follows because \((u_n)_{ \in \mathbb{Z}}\) is the succession line of \((T,o)\).
    In view of this, to prove that \(\alpha\) induces a rooted network isomorphism, it is enough to show that for all $i \in \mathbb{Z}$, if \(u_j\) is the parent of \(u_i\) in \(T\) for some  \(j>i\), then \(j=R_y(i)\).
    It is enough to show the latter because (assuming the latter implication) if \(j = R_y(i)\) and \(u_k\) is a parent of \(u_i\) for some \(k>i\), then \(R_y(i)=k\), which implies that \(j=k\), and \(u_k = u_j\).
    
   Let \(i \in \mathbb{Z}\) and \(u_j\) be the parent of \(u_i\) in \(T\) for some integer $j>i$.
    If $j=i+1$, then $x_i = d_1(u_{i+1})-1 \geq 0$, because $u_i$ is a child of $u_{i+1}$.
    Hence, $R_y(i)=i+1$.
  
   So, let us assume that $j>i+1$.
   This implies that either $u_{i+1}$ is a leaf and a descendant of a sibling of $u_i$ or \(u_{i+1}\) is a sibling of \(u_i\).
   Every sibling \(v\) of \(u_i\) such that \(v \succ u_i\) has finitely many descendants.
   This follows because if there exists a sibling \(v \succ u_i\) of \(u_i\) that has infinitely many descendants, then \(u_i \in W\) (see Eq. \ref{eq_W}).
   But \(W\) is empty since \(T\) has a unique succession line.

    Let $u_{i_1} \prec u_{i_2} \prec \cdots  \prec u_{i_n}$ be the elder siblings of $u_i$ (i.e., \(u_i \prec u_{i_1}\)), and $T_1,T_2,\cdots,T_n$ be their descendant trees respectively.
   For $i<k<j$, let $l(k)$ be the smallest element of the set $\{1,2,\cdots,n\}$ such that $\{u_{i+1},u_{i+2},\cdots,u_k\} \subset V(T_1) \cup V(T_2) \cup \cdots V(T_{l(k)})$, and $u_{k_1}$ be the smallest vertex of $T_{l(k)}$.
  
   Then, the sum $z(i,k) = \sum_{m=i}^{k-1} y_m= \sum_{m=i+1}^{k}(d_1(u_m)-1)$ can be written as 
   \begin{align*}
     z(i,k) &= \left(\sum_{u \in V(T_1)}d_1(u)-1 \right)+ \cdots + \left(\sum_{u \in V(T_{l(k)-1})}d_1(u) - 1 \right) + \sum_{m=k_1}^k (d_1(u_m)-1)\\
     &< (-1) + \cdots + (-1) + \sum_{m=k_1}^k (d_1(u_m)-1)< 0,
   \end{align*}
   with the notation that \(V(T_0)\) is an empty set.
   The last steps follow from the first and the second statements of Lemma \ref{lemma:legall_sums}. 
  
    On the other hand, the sum $z(i,j) = \sum_{m=i+1}^j (d_1(u_m)-1)$ can be written as
    \begin{align*}
      z(i,j) &= \left(\sum_{u \in V(T_1)}d_1(u)-1 \right)+ \cdots + \left(\sum_{u \in V(T_n)}d_1(u) - 1 \right) + d_1(u_j)-1\\
      &= -n + d_1(u_j) - 1 \geq 0.
    \end{align*}
    The above steps follow from the first part of Lemma \ref{lemma:legall_sums}, and by the assumption that $u_j$ has at least $n+1$ children, namely $u_i,u_{i_1},\cdots,u_{i_n}$.
    Thus, $R_x(i)=j$, completing the proof.
\end{proof}

\begin{proof}[Proof of Theorem \ref{thm_representation}]
    Let \([\mathbb{Z},0,Y] = \Phi_R([\mathbf{T},\mathbf{o}])\).
    Since \(\mathbf{T}\) has a unique succession line, we have a.s., \(\Psi_R \circ \Phi_R([\mathbf{T},\mathbf{o}]) = [\mathbf{T},\mathbf{o}]\) by Lemma \ref{lemma_encoded_sequence_same_tree}.
    Therefore, a.s., \([\mathbb{Z}^R_Y(0),0] = [\mathbf{T},\mathbf{o}]\).

    We now show that the sequence \(Y\) is stationary.
    Let \((u_n)_{n \in \mathbb{Z}}=U((\mathbf{T},\mathbf{o}))\) be the succession line with \(u_0=\mathbf{o}\).
    For any \(i \in \mathbb{Z}\), the map \(u_n \mapsto u_{n+i}, \forall n \in \mathbb{Z}\)  is bijective.
    Therefore, the map \([\mathbf{T},\mathbf{o}] \mapsto [\mathbf{T},u_i]\) is measure-preserving, for any \(i \in \mathbb{Z}\), by Mecke-Thorisson point stationarity (Proposition\ref{prop_mecke}).
    Since, \([\mathbb{Z},0,T_iY] = \Phi_R([\mathbf{T},u_i])\), the map \((Y_n)_{n \in \mathbb{Z}} \mapsto (Y_{n+i})_{n \in \mathbb{Z}}\), for any \(i \in \mathbb{Z}\), is measure-preserving.
    Thus, \(Y\) is stationary.
\end{proof}

\subsection{Examples of record representable vertex-shifts}

\begin{example}[Strict record vertex-shift on an i.i.d. sequence (class \(\mathcal{I}/\mathcal{I}\))]\normalfont \label{example_strict_record}
    An illustration of this example can be seen in Figure \ref{fig_example_strict_record}.
    The strict record vertex-shift \(SR\) is defined on the set of networks of the form \([\mathbb{Z},0,x]\), where \(x=(x_n)_{n \in \mathbb{Z}}\) is a sequence of integers.
    It is defined by 
    \[SR(i) = \begin{cases*}
        \inf\{n>i:\sum_{k=i}^{n-1}x_k>0\} \text{ if the infimum exists,}\\
        i \text{ otherwise,}
    \end{cases*}\]
    for all \(i \in \mathbb{Z}\).
    Let \(X=(X_n)_{n \in \mathbb{Z}}\) be an i.i.d. sequence of random variables taking values in \(\mathbb{Z}\).
    One can show that the component of \(0\) in the \(SR\)-graph is of class \(\mathcal{I}/\mathcal{I}\) if \(\mathbb{E}[X_0]=0\) and \(\mathbb{E}[X_0^2]>0\), because \(\limsup_{n \to \infty}S_{-n} = \infty\), which implies that the descendant tree of \(0\) is a.s. finite.
    By Theorem \ref{thm_representation}, there exists a stationary sequence \(Y=(Y_n)_{n \in \mathbb{Z}}\) of random variables taking values in \(\{-1,0,1,2,\cdots\}\) such that \([\mathbb{Z}^{SR}_X(0),0] \overset{\mathcal{D}}{=} [\mathbb{Z}_Y^R(0),0]\).
    \begin{figure}[htbp]
        \includegraphics[scale=0.70]{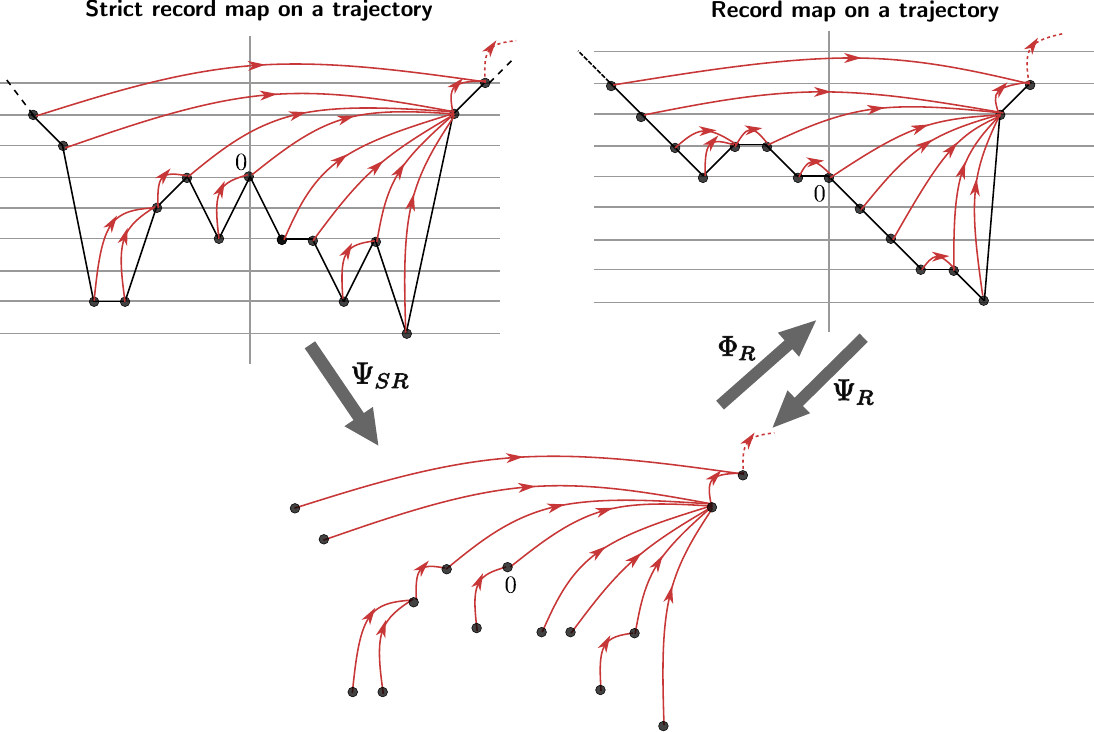}
        \caption{An Illustration of Example \ref{example_strict_record}.
        The top left figure shows the trajectory of a realisation of the i.i.d. sequence \(X=(X_n)_{n \in \mathbb{Z}}\), where \(X_n \in \mathbb{Z}, \forall n\).
        The  \(SR\)-graph (strict record graph) is drawn in red on the trajectory of the top left figure.
        The component of \(0\) in the \(SR\)-graph is drawn in the bottom figure.
        The top right figure shows the trajectory of the sequence \(Y=(Y_n)_{n \in \mathbb{Z}}\) which is obtained by applying the backward map \(\Phi_R\) to the rooted tree of the bottom figure.
        The component of \(0\) in the record graph of the trajectory in the top right figure is the same as the component of \(0\) in the \(SR\)-graph.}
        \label{fig_example_strict_record}
      \end{figure}
\end{example}

\begin{example}[Climbing point vertex-shift that depends on the future (class \(\mathcal{I}/\mathcal{F}\))]\normalfont \label{example_climbing_vs}
    An illustration of this example is shown in Figure \ref{fig_example_climbing_vertex-shift}.
    We define a vertex-shift $C$ (C for climbing point) based on an example taken from \cite[Example 1(a)]{fossStochasticSequencesRegenerative2013a}.
    Let \(X=(X_n)_{n \in \mathbb{Z}}\) be an i.i.d. sequence of random variables taking values in \(\{-1,0,1\}\) with probabilities given by: \(\mathbb{P}[X_0=1]=p, \mathbb{P}[X_0=-1]=q\) and \(\mathbb{P}[X_0 =  0]=1-p-q\), where \(0<q<p\) and \(p+q<1\).
    Define the vertex-shift on the networks of the form \([\mathbb{Z},0,x]\), where \(x=(x_n)_{n \in \mathbb{Z}}\) is a sequence with \(x_n \in \{-1,0,1\}, \forall n\) by
    \[C(i) = \begin{cases*}
        \inf\{n>i:\sum_{k=n}^{m}x_k \geq 0, \forall m>n\} \text{ if the infimum exists},\\
        i \text{ otherwise,}
    \end{cases*}\]
    for all \(i \in \mathbb{Z}\).
    Note that \(C(i)=k\) for some \(k>i\) if and only if \(k\) is the smallest integer larger than \(i \) such that \(S_{k} = \min\{S_n:n>i\}\).
    Since \(\mathbb{P}[X_0]=p-q>0\), the random walk \((S_n)_{n \geq 0}\) drifts to \(+\infty\), where \(S_0=0, S_n = \sum_{k=0}^{n-1}X_k\).
    Therefore, the event \(C(i)>i\) for all \(i \in \mathbb{Z}\) occurs with probability \(1\).
    Since, \(S_{-n}=\sum_{k=-n}^{-1}-X_k\) converges to \(-\infty\) as \( n \to \infty\), the random variable \(\max\{S_{-n}:n \geq 0\}\) is finite a.s.
    Thus, some ancestor of \(0\) in the \(C\)-graph has infinitely many descendants.
    So, the component \([\mathbb{Z}_X^C(0),0]\) of \(0\) in the \(C\)-graph is of class \(\mathcal{I}/\mathcal{F}\).
    One can show that the smallest child of every vertex which lies on the bi-infinite path of \(\mathbb{Z}^C_X(0)\) belongs to the bi-infinite path.
    Therefore, \([\mathbb{Z}_X^C(0),0]\) is of class \(\mathcal{I}/\mathcal{F}\) and has unique succession line (as it has ECS order).
    \begin{figure}[htbp]
           \includegraphics[scale=0.60]{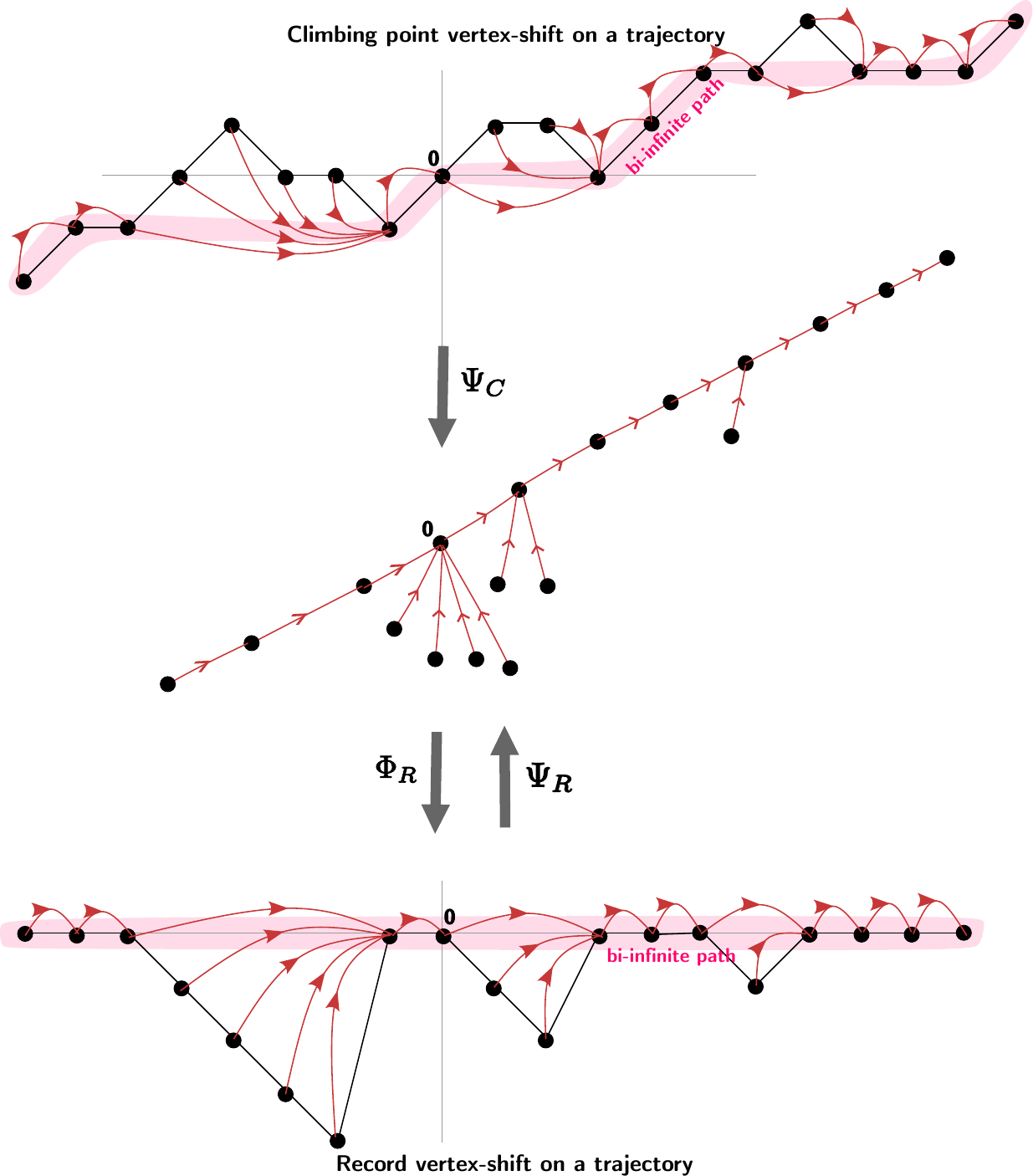}
           \caption{An Illustration of Example \ref{example_climbing_vs}.
           The top figure shows the trajectory of a realisation of the i.i.d. sequence \(X=(X_n)_{n \in \mathbb{Z}}\), where \(X_n \in \mathbb{Z}, \forall n\).
           The  \(C\)-graph (\(C\) for climbing point vertex-shift) is drawn in red on the trajectory of the top figure.
           The vertices on the bi-infinite path are highlighted.
           The component of \(0\) in the \(C\)-graph is drawn in the middle figure.
           The bottom figure shows the trajectory of the sequence \(Y=(Y_n)_{n \in \mathbb{Z}}\) which is obtained by applying the backward map \(\Phi_R\) to the rooted tree shown in the middle figure.
           The component of \(0\) in the record graph of the trajectory in the bottom figure is the same as the component of \(0\) in the \(C\)-graph.}
           \label{fig_example_climbing_vertex-shift}
         \end{figure}
\end{example}


\begin{thebibliography}{10}

    \bibitem{abrahamIntroductionGaltonWatsonTrees2015}
    Romain Abraham and Jean-Fran{\c c}ois Delmas.
    \newblock An introduction to {{Galton-Watson}} trees and their local limits.
    \newblock {\em arXiv:1506.05571 [math]}, June 2015.
    
    \bibitem{aldousAsymptoticFringeDistributions1991}
    David Aldous.
    \newblock Asymptotic {Fringe} {Distributions} for {General} {Families} of {Random} {Trees}.
    \newblock {\em Annals of Applied Probability}, 1(2):228--266, May 1991.
    
    \bibitem{aldousProcessesUnimodularRandom2007}
    David Aldous and Russell Lyons.
    \newblock Processes on {{Unimodular Random Networks}}.
    \newblock {\em Electronic Journal of Probability}, 12:1454--1508, 2007.
    
    \bibitem{asmussenAppliedProbabilityQueues2003}
    Søren Asmussen.
    \newblock {\em Applied probability and queues}.
    \newblock Springer, New York, 2003.
    \newblock OCLC: 51060198.
    
    \bibitem{baccelliElementsQueueingTheory2003}
    Fran{\c c}ois Baccelli and Pierre Br{\'e}maud.
    \newblock {\em Elements of {{Queueing Theory}}}.
    \newblock {Springer Berlin Heidelberg}, {Berlin, Heidelberg}, 2003.
    
    \bibitem{baccelliPointmapprobabilitiesPointProcess}
    Francois Baccelli and Mir-Omid Haji-Mirsadeghi.
    \newblock Point-map-probabilities of a point process and {Mecke}'s invariant measure equation.
    \newblock {\em The Annals of Probability}, 45(3):29.
    
    \bibitem{baccelliEternalFamilyTrees2018a}
    Fran{\c c}ois Baccelli, Mir-Omid Haji-Mirsadeghi, and Ali Khezeli.
    \newblock Eternal {Family} {Trees} and dynamics on unimodular random graphs.
    \newblock In Florian Sobieczky, editor, {\em Contemporary {Mathematics}}, volume 719, pages 85--127. American Mathematical Society, 2018.
    
    \bibitem{benniesRandomWalkApproach2000}
    J{\"u}rgen Bennies and G{\"o}tz Kersting.
    \newblock A {{Random Walk Approach}} to {{Galton}}\textendash{{Watson Trees}}.
    \newblock {\em Journal of Theoretical Probability}, 13(3):777--803, 2000.
    
    \bibitem{bhattacharyaRandomWalkBrownian2021}
    Rabi Bhattacharya and Edward~C. Waymire.
    \newblock {\em Random {{Walk}}, {{Brownian Motion}}, and {{Martingales}}}, volume 292 of {\em Graduate {{Texts}} in {{Mathematics}}}.
    \newblock {Springer International Publishing}, {Cham}, 2021.
    
    \bibitem{bremaudProbabilityTheoryStochastic2020}
    Pierre Brémaud.
    \newblock {\em Probability {Theory} and {Stochastic} {Processes}}.
    \newblock Universitext. Springer International Publishing, Cham, 2020.
    
    \bibitem{ferrariPoissonTreesSuccession2004a}
    Pablo Ferrari, Claudio Landim, and Hermann Thorisson.
    \newblock Poisson trees, succession lines and coalescing random walks.
    \newblock {\em Annales de l'Institut Henri Poincare (B) Probability and Statistics}, 40(2):141--152, April 2004.
    
    \bibitem{fossStochasticSequencesRegenerative2013a}
    Sergey Foss and Stan Zachary.
    \newblock Stochastic {Sequences} with a {Regenerative} {Structure} that {May} {Depend} {Both} on the {Future} and on the {Past}.
    \newblock {\em Advances in Applied Probability}, 45(4):1083--1110, December 2013.
    
    \bibitem{godrecheRecordStatisticsStrongly2017}
    Claude Godr{\`e}che, Satya~N Majumdar, and Gr{\'e}gory Schehr.
    \newblock Record statistics of a strongly correlated time series: Random walks and {{L\'evy}} flights.
    \newblock {\em Journal of Physics A: Mathematical and Theoretical}, 50(33):333001, August 2017.
    
    \bibitem{jimpitmanCombinatorialStochasticProcesses2006}
    {Jim Pitman}.
    \newblock {\em Combinatorial {{Stochastic Processes}}}, volume 1875 of {\em Lecture {{Notes}} in {{Mathematics}}}.
    \newblock {Springer-Verlag}, {Berlin/Heidelberg}, 2006.
    
    \bibitem{kallenbergFoundationsModernProbability2021}
    Olav Kallenberg.
    \newblock {\em Foundations of {{Modern Probability}}}, volume~99 of {\em Probability {{Theory}} and {{Stochastic Modelling}}}.
    \newblock {Springer International Publishing}, {Cham}, 2021.
    
    \bibitem{legallRandomTreesApplications2005b}
    Jean-Fran{\c c}ois Le~Gall.
    \newblock Random trees and applications.
    \newblock {\em Probability Surveys}, 2(0):245--311, 2005.
    
    \bibitem{nicolascurienRandomGraphs}
    {Nicolas Curien}.
    \newblock Lecture notes on random graphs.
    \newblock \url{https://www.math.u-psud.fr/~curien/cours/cours-RG.pdf}, February 2019.
    
    \bibitem{nicolascurienRandomWalksGraphs}
    {Nicolas Curien}.
    \newblock Lecture notes on random walks and graphs.
    \newblock \url{https://www.imo.universite-paris-saclay.fr/~nicolas.curien/enseignement.html}, October 2022.
    
    \bibitem{stepanovCharacterizationTheoremWeak1994}
    Alexei~Vladimirovich Stepanov.
    \newblock A {{Characterization Theorem}} for {{Weak Records}}.
    \newblock {\em Theory of Probability \& Its Applications}, 38(4):762--764, December 1994.
    
    \bibitem{thorissonCouplingStationarityRegeneration2000}
    Hermann Thorisson.
    \newblock {\em Coupling, stationarity, and regeneration}.
    \newblock Probability and its applications ({Springer}-{Verlag}). Springer, New York, 2000.
    
    \bibitem{wergenRecordsStochasticProcesses2013}
    Gregor Wergen.
    \newblock Records in stochastic processes\textemdash theory and applications.
    \newblock {\em Journal of Physics A: Mathematical and Theoretical}, 46(22):223001, June 2013.
    
    \end{thebibliography}
\end{document}